\def\hB{\hspace*{\fill}$\qed$}
\title{A stable $\infty$-category for equivariant $\mathrm{K\!K}$-theory}
\author{
Ulrich Bunke\thanks{Fakult{\"a}t f{\"u}r Mathematik,
Universit{\"a}t Regensburg,
93040 Regensburg,
GERMANY\newline
\href{mailto:ulrich.bunke@mathematik.uni-regensburg.de}{ulrich.bunke@mathematik.uni-regensburg.de}}
\and Alexander Engel\thanks{Institut f{\"u}r Mathematik und Informatik, Universit{\"a}t Greifswald, 17489 Greifswald, GERMANY\newline
\href{mailto:alexander.engel@uni-greifswald.de}{alexander.engel@uni-greifswald.de}}
\and Markus Land\thanks{Mathematisches Institut, Ludwig-Maximilians-Universit\"at M\"unchen, 80333 M\"unchen, GERMANY\newline \href{mailto:markus.land@math.lmu.de}{markus.land@math.lmu.de}}
}
\numberwithin{equation}{section}
\newtheorem{theorem}{Theorem}[section] 
\newtheorem{prop}[theorem]{Proposition}
\newtheorem{lem}[theorem]{Lemma}
\newtheorem{ddd}[theorem]{Definition}
\newtheorem{kor}[theorem]{Corollary}
\theoremstyle{remark}
\theoremstyle{definition}
\newtheorem{ex}[theorem]{Example}
\newtheorem{rem}[theorem]{Remark}
\newtheorem{construction}[theorem]{Construction}
\newcommand{\inj}{\mathrm{inj}}
\newcommand{\GJ}[1]{\mathrm{GJ}^{#1}}
\newcommand{\JG}[1]{\mathrm{GJ}_{#1}}
\newcommand{\kkGeq}{W_{\kkGsk}}
\newcommand{\kkAi}{\mathrm{kk}_{C^{*}\mathbf{Cat},\infty}}
\newcommand{\kkA}{\mathrm{kk}_{C^{*}\mathbf{Cat}}}
\newcommand{\kkGA}{\mathrm{kk}_{C^{*}\mathbf{Cat}}^{G}}
\newcommand{\kkHA}{\mathrm{kk}_{C^{*}\mathbf{Cat}}^{H}}
\newcommand{\kkGAi}{\mathrm{kk}_{C^{*}\mathbf{Cat},\infty}^{G}}
\newcommand{\Bd}{\mathrm{Bd}}
\newcommand{\npCat}{{}_{\mathrm{pre}}C^{*}\mathbf{Cat}^{\mathrm{nu}}}
\newcommand{\compl}{\mathrm{compl}}
\newcommand{\nClincat}{{}^{*}\Cat^{\mathrm{nu}}_{\C}}
\newcommand{\npAlg}{{}_{\mathrm{pre}}C^{*}\mathbf{Alg}^{\mathrm{nu}}}
\newcommand{\nsAlg}{{}^{*}\mathbf{Alg}^{\mathrm{nu}}}
\newcommand{\ccpl}{\mathrm{ccpl}}
\newcommand{\nsCat}{{}^{*}\mathbf{Cat}_{\mathbb{C}}^{\mathrm{nu}}}
\newcommand{\CAT}{\mathbf{CAT}}
\newcommand{\KKHs}{\mathrm{KK}_{\mathrm{sep}}^{H}}
\newcommand{\kks}{\mathrm{kk}_{\mathrm{sep}}}
\newcommand{\kkHs}{\mathrm{kk}^{H}_{\mathrm{sep}}}
\newcommand{\kk}{\mathrm{kk}}
\renewcommand{\Ind}{\mathrm{Ind}}
\newcommand{\std}{\mathrm{std}}
\newcommand{\triv}{\mathrm{triv}}
\newcommand{\topp}{\mathrm{top}}
\newcommand{\Homol}{\mathrm{Hg}}
\newcommand{\bJ}{\mathbf{J}}
\newcommand{\Coind}{\mathrm{Coind}}
\newcommand{\ho}{\mathrm{ho}}
\newcommand{\Res}{\mathrm{Res}}
\newcommand{\Orb}{\mathbf{Orb}}
\newcommand{\Hilb}{\mathbf{Hilb}}
\newcommand{\Fin}{\mathbf{Fin}}
\newcommand{\Ob}{\mathrm{Ob}}
\newcommand{\bB}{{\mathbf{B}}}
\newcommand{\incl}{\mathrm{incl}}
\newcommand{\An}{\mathrm{An}}
\newcommand{\bM}{\mathbf{M}}
\renewcommand{\SS}{\mathrm{SS}}
\newcommand{\bA}{{\mathbf{A}}}
\newcommand{\const}{{\mathtt{const}}}
\newcommand{\cD}{{\mathcal{D}}}
 \newcommand{\Vect}{{\mathbf{Vect}}}
 \newcommand{\Cat}{{\mathbf{Cat}}}
\newcommand{\lf}{G,\mathrm{an}}
\newcommand{\lto}{\longrightarrow}
\newcommand{\Ccat}{{\mathbf{C}^{\ast}\mathbf{Cat}}}
\newcommand{\Calg}{{\mathbf{C}^{\ast}\mathbf{Alg}}}
\newcommand{\op}{\mathrm{op}}
\newcommand{\add}{\mathrm{add}}
\newcommand{\nCcat}{C^{*}\mathbf{Cat}^{\mathrm{nu}}}
\renewcommand{\Ccat}{C^{*}\mathbf{Cat}}
\newcommand{\alg}{\mathrm{alg}}
\renewcommand{\Calg}{C^{*}\mathbf{Alg}}
\newcommand{\nCalg}{C^{*}\mathbf{Alg}^{\mathrm{nu}}}
\newcommand{\Kcat}{K^{C^{*}\mathbf{Cat}}}
\newcommand{\Ass}{\mathrm{Asmbl}}
\newcommand{\Kast}{K^{C^{*}\mathbf{Alg}}}
\newcommand{\an}{\mathrm{an}}
\newcommand{\sepa}{\mathrm{sep}}
\newcommand{\kkG}{\mathrm{kk}^{G}}
\newcommand{\kkGp}{\mathrm{kk}^{G,\prime}}
\newcommand{\kkH}{\mathrm{kk}^{H}}
\newcommand{\KKG}{\mathrm{KK}^{G}}
\newcommand{\KKGp}{\mathrm{KK}^{G,\prime}}
\newcommand{\KKH}{\mathrm{KK}^{H}}
\newcommand{\KK}{\mathrm{KK}}
\newcommand{\KKs}{\mathrm{KK}_{\sepa}}
\newcommand{\KKGs}{\mathrm{KK}_{\sepa}^{G}}
\newcommand{\KKHsk}{\mathrm{KK}_{0}^{H}}
\newcommand{\KKGsk}{\mathrm{KK}_{0}^{G}}
\newcommand{\kkGsk}{\mathrm{kk}_{0}^{G}}
\newcommand{\kkHsk}{\mathrm{kk}_{0}^{H}}
\newcommand{\kkGs}{\mathrm{kk}_{\sepa}^{G}}
\newcommand{\KKth}{K\!K}
\newcommand{\nCalgs}{C^{*}\mathbf{Alg}^{\mathrm{nu}}_{\mathrm{sep}}}
 \newcommand{\kkGstensora}{\mathrm{kk}_{\sepa}^{G,\otimes}}
 \newcommand{\kkHstensora}{\mathrm{kk}_{\sepa}^{H,\otimes}}
  \newcommand{\KKGstensora}{\mathrm{KK}_{\sepa}^{G,\otimes}}
 \newcommand{\KKHstensora}{\mathrm{KK}_{\sepa}^{H,\otimes}}
   \newcommand{\KKGtensora}{\mathrm{KK}^{G,\otimes}}
 \newcommand{\KKHtensora}{\mathrm{KK}^{H,\otimes}}
\newcommand{\KKGstensor}{\mathrm{KK}_{\sepa}^{G,\otimes_?}}
\newcommand{\kkGstensor}{\mathrm{kk}_{\sepa}^{G,\otimes_?}}
\newcommand{\KKGtensor}{\mathrm{KK}^{G,{\otimes}_?}}
\newcommand{\kkGtensor}{\mathrm{kk}^{G,{\otimes}_?}}
\newcommand{\kkGAtensor}{\mathrm{kk}_{C^{*}\mathbf{Cat}}^{G,{\otimes}_?}}
\newcommand{\nCcattensor}{C^{*}\mathbf{Cat}^{\mathrm{nu},\otimes_?}}
\newcommand{\la}{\mathrm{}}
 \newcommand{\exa}{\mathrm{ex}}
\newcommand{\nsttCat}{{}^{*} \mathbf{Cat}^{\mathrm{nu}}}
\newcommand{\pGTop}{G\mathbf{Top}^{\mathrm{prop}}}
\newcommand{\ppGTop}{G\mathbf{Top}^{\mathrm{prop}}_{+}}
\newcommand{\ppGTops}{G\mathbf{Top}_{\mathrm{2nd},+}^{\mathrm{prop}}}
\newcommand{\kkGC}{\mathrm{kk}^{G}C_{0}}
\newcommand{\kkGCtensor}{\mathrm{kk}^{G}C_{0}^{\otimes_?}}
\newcommand{\kkGCs}{\mathrm{kk}_{\mathrm{sep}}^{G}C_{0}}
\newcommand{\countably}{countably }
\begin{document}
	
\maketitle

\vspace*{-3ex}

\begin{abstract}
For a countable group $G$ we construct a small, idempotent complete, symmetric monoidal, stable $\infty$-category $\KKGs$ whose homotopy category recovers the triangulated equivariant Kasparov category of separable $G$-$C^*$-algebras, and exhibit its universal property. Likewise, we consider an associated presentably symmetric monoidal, stable $\infty$-category $\KKG$ which receives a symmetric monoidal functor $\kkG$  from possibly non-separable $G$-$C^*$-algebras and discuss its universal property. In addition to the symmetric monoidal structures, we construct various change-of-group functors relating these KK-categories for varying $G$. We use this to define and establish key properties of a (spectrum valued) equivariant {analytic} $K$-homology theory on proper and locally compact $G$-topological spaces, allowing for coefficients in arbitrary $G$-$C^*$-algebras. Finally, we extend the functor $\kkG$ from $G$-$C^*$-algebras to $G$-$C^*$-categories. These constructions are key in a companion paper about a form of equivariant Paschke duality and assembly maps.
\end{abstract}

\tableofcontents
\setcounter{tocdepth}{5}

\paragraph{Acknowledgements.}

Ulrich Bunke was supported by the SFB 1085 (Higher Invariants) funded by the Deutsche Forschungsgemeinschaft (DFG).

Alexander Engel acknowledges financial support by the Deutsche Forschungsgemeinschaft (DFG, German Research Foundation) through the Priority Programme SPP 2026 ``Geometry at Infinity'' (EN 1163/5-1, project number 441426261, Macroscopic invariants of manifolds) and through Germany's Excellence Strategy EXC 2044-390685587, Mathematics Münster: Dynamics -- Geometry -- Structure.

 {Markus Land was supported by the research fellowship DFG 424239956, and by the Danish National Research Foundation through the Copenhagen Centre for Geometry and Topology (DNRF151). }

\section{Introduction and statements} 

\subsection{A small stable \texorpdfstring{$\infty$}{infty}-category of separable \texorpdfstring{$G$-$C^*$}{G-Cstar}-algebras}

Let $G$ be a countable group and $A$, $B$ be  {$G$}-$C^{*}$-algebras, i.e.\ $C^*$-algebras with an action of $G$ by automorphisms.   In this situation, 
we have the abelian group $\KKth^{G}(A,B)$ introduced  by Kasparov   in his work on the Novikov conjecture \cite{kasparovinvent}. This group depends contravariantly on the first algebra $A$ and covariantly on the second algebra $B$. 

The construction of $\KKth^{G} {(A,B)}$ can be generalized  to graded $G$-$C^{*}$-algebras, to families of $C^{*}$-algebras parametrized by a space as in \cite{kasparovinvent}, or to the case where $G$ is  a locally compact groupoid \cite{MR1686846}.  Though we think that many of our constructions  also work in more general situations,
in the present paper we will stick to the classical situation since this is what immediately generalizes to $C^{*}$-categories, and  what we need for the applications to Paschke duality and assembly maps in  \cite{bel-paschke}.   

If we restrict to separable $G$-$C^{*}$-algebras, then the Kasparov product
$$\KKth^{G}(A,B)\otimes \KKth^{G}(B,C)\to \KKth^{G}(A,C)\, ,$$
which has also been introduced in  \cite{kasparovinvent},  can be considered as the composition law of an 
$\Ab$-enriched category  $\KKGsk$. The objects of this category   are the  separable $G$-$C^{*}$-algebras  
and the morphism groups are given by 
$$\Hom_{\KKGsk}(A,B) \coloneqq \KKth^{G}(A,B)\, .$$
The category $\KKGsk$ is additive  {and the sum} is represented by the direct sum of $C^{*}$-algebras. 
As observed and exploited in \cite{MR2193334}, $\KKGsk$  has a  {canonical} refinement to a triangulated category.

{Often, triangulated categories arise as   homotopy categories
of 
stable $\infty$-categories  \cite[{Sec.\ 1.1.2}]{HA}.  The first objective of this paper is to show that the triangulated categories $\KKGsk$ are no exception to this principle. Standard references for the language of $\infty$-categories in general are \cite{htt}, \cite{Cisinski:2017}. For the definition and properties of   stable $\infty$-categories we refer to
  \cite[Ch.\ 1]{HA}.}

The first  result of this paper is 
 the construction of a stable $\infty$-category  $\KKGs$ whose homotopy category  is canonically equivalent,  {as a triangulated category}, to $\KKGsk$.
This generalizes a construction  of \cite{Land:2016aa}  from the non-equivariant to the equivariant case. 
In the following we provide the precise statement.

 Let $\Fun(BG,\nCalgs)$ denote the category of separable, possibly non-unital $C^{*}$-algebras with $G$-action.  
We then have a canonical functor
\begin{equation}\label{erwwergwergrwegrgerwgwergrew}
\kkGsk \colon \Fun(BG,\nCalgs)\to \KKGsk
\end{equation}
which is the identity on objects and sends $f \colon A\to B$ to the element $[f]$ in $\KKth^{G}(A,B)$ represented by the Kasparov $(A,B)$-module
$(B,f,0)$. 

\begin{ddd}\label{werigwoergreewfefewrf}
A morphism $f$ in $ \Fun(BG,\nCalgs)$ is called a $\kkGsk$-equivalence if $\kkGsk(f)$ is an isomorphism. \end{ddd}
We let $\kkGeq$  denote the  collection  of all $\kkGsk$-equivalences. 
The following definition is the direct generalization of    \cite{Land:2016aa}  to the equivariant case.
\begin{ddd}\label{wtpohwopggergeg}
We define the $\infty$-category
$$\KKGs \coloneqq \Fun(BG,\nCalgs)[\kkGeq^{-1}] $$
and 
  let $$\kkGs \colon \Fun(BG,\nCalgs)\to \KKGs$$
  denote the canonical functor.
\end{ddd}
Here the  $\infty$-category $\Fun(BG,\nCalgs)[\kkGeq^{-1}]$ denotes the Dwyer--Kan localization   
of
$\Fun(BG,\nCalgs)$ at the collection $\kkGeq$ of the $\kkGsk$-equivalences. {Such a Dwyer--Kan localization exists and is characterized by a universal property, }see Remark \ref{weroigujweogerwgregfw} for  {the precise statement}.  We have the following theorem:
\begin{theorem}\label{weighigregwgregw9}\mbox{}
\begin{enumerate}
\item The $\infty$-category $\KKGs$ is stable.
\item \label{weoitgjowergwegregw9} We have a canonical factorization
\begin{equation}\label{fewpoqkopfqewffwqwefqf1}
\xymatrix{
\Fun(BG,\nCalg_{\sepa})\ar[rr]^-{\kkGsk}\ar[rd]_-{\kkGs}&&\KKGsk\\
&\KKGs\ar@{-->}[ur]_-{\ho}&
}
\end{equation} 
and $\ho$ is an equivalence of triangulated categories.
\end{enumerate}
\end{theorem}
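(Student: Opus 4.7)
We follow the strategy used non-equivariantly in \cite{Land:2016aa} and adapt it to the equivariant setting. The Dwyer--Kan localization $\KKGs$ exists by standard $\infty$-categorical machinery, so the two tasks are to establish that $\KKGs$ is stable and to construct a natural equivalence of triangulated categories $\ho\colon \ho(\KKGs) \to \KKGsk$.

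The first step is to verify that $\kkGs$ satisfies the three universal properties that characterize $\KKGsk$ in the sense of Higson, Cuntz and Meyer: homotopy invariance (the inclusion $A \to C([0,1],A)$ becomes an equivalence), equivariant $C^*$-stability (any corner embedding $A \to A \otimes \cK(\mathcal{H}_G)$ associated with an ample $G$-Hilbert space becomes an equivalence) and split exactness (every equivariantly split short exact sequence goes to a split cofibre sequence). The first two reduce to classical statements from equivariant $\KKth^G$-theory because the relevant $*$-homomorphisms are already $\kkGsk$-equivalences and therefore lie in $\kkGeq$ by construction. For split exactness one must argue in $\KKGs$ itself, and the argument of \cite{Land:2016aa} extends once one uses equivariant split surjections as inputs.

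To prove that $\KKGs$ is stable, I would verify that $\KKGs$ has a zero object (represented by the zero $G$-$C^*$-algebra), that finite (co)limits exist (obtained from split extensions combined with Cuntz's $qA$-construction, which allows one to turn an arbitrary equivariant $*$-homomorphism into a split surjection after equivariant $C^*$-stabilization), and that the suspension is an equivalence (this is equivariant Bott periodicity: tensoring with $C_0(\IR)$ models $\Sigma$, and the Bott map in $\KKth^G$ furnishes an inverse up to equivalence). Once stability is established, the universal property of $\KKGsk$ from \cite{MR2193334} applied to the composite $\Fun(BG,\nCalgs) \to \KKGs \to \ho(\KKGs)$ yields an inverse of the canonical functor $\ho$, and the two are seen to be mutually inverse by comparing both sides on objects and on morphisms.

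The principal technical obstacle is controlling the mapping spaces of the Dwyer--Kan localization $\KKGs$ well enough to verify the above axioms at the $\infty$-categorical level rather than merely at the level of homotopy categories. In \cite{Land:2016aa} this is achieved through explicit simplicial models and a careful analysis of homotopies between $*$-homomorphisms; the equivariant analogue of these model-theoretic calculations --- in particular the equivariant identification of morphism spaces in $\KKGs$ with the classical $\KKth^G$-groups --- will absorb the bulk of the technical effort.
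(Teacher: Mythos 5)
Your outline follows the same skeleton as the paper's argument (Dwyer--Kan localization, semiexactness/stability axioms, Bott periodicity for invertibility of the loop functor, universal property of $\KKGsk$ for the comparison), but the step you explicitly defer as ``the principal technical obstacle'' is in fact the entire content of the proof, and your proposed substitute for it points in the wrong direction. The paper controls finite limits in the localization not through the Cuntz $q$-construction but by exhibiting $(\Fun(BG,\nCalgs),\SS,\kkGeq)$ as a \emph{category of fibrant objects} whose fibrations are the equivariantly semisplit surjections: the path object $C\mapsto C([0,1],C)$ and the mapping-cylinder factorization turn an arbitrary $*$-homomorphism into a semisplit surjection, and then \cite[Prop.\ 7.5.6]{Cisinski:2017} identifies semisplit cartesian squares of algebras with cartesian squares in $\KKGs$. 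This is what makes $\KKGs$ pointed with finite limits and shows that $\Omega$ is modeled by $A\mapsto C_{0}((0,1))\otimes A$ (note: this tensor functor models the \emph{loop} functor, not the suspension, via the cone sequence $0\to SA\to CA\to A\to 0$; Bott periodicity then shows it is an equivalence either way). Likewise, for the identification $\ho(\KKGs)\simeq\KKGsk$ the paper does not merely invoke the Thomsen--Meyer universal property among split-exact functors; it first proves the sharper statement that $\kkGsk$ is the \emph{ordinary-category localization} at $\kkGeq$ (using Meyer's Cuntz picture $\KKth^{G}(A,B)\cong[q_{s}(A)\otimes K,q_{s}(B)\otimes K]$), after which the equivalence is automatic because $\ho$ of a Dwyer--Kan localization is the $1$-categorical localization. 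Your route via initiality can be made to work, but only after one knows $\ho(\KKGs)$ is additive and the composite is split exact --- which again requires the fibration-category control you have postponed.

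A second, independent gap: the theorem asserts that $\ho$ is an equivalence of \emph{triangulated} categories, and your proposal never compares shifts or distinguished triangles. The paper needs Lemma \ref{woitjgorwergergegergw}, showing every fibre sequence in $\KKGs$ is equivalent to the image of a mapping cone sequence $0\to C(\phi)\to\mathrm{Cyl}(\phi)\to B\to 0$, so that the distinguished triangles of $\ho(\KKGs)$ and of $\KKGsk$ (as described in \cite{MR2193334}) have the same generators, and it must also match the two shift functors via the descended suspension $S_{0}$. Without these two ingredients --- the semisplit fibration structure and the cone-sequence comparison --- the proposal is a plan rather than a proof.
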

The proof of Theorem \ref{weighigregwgregw9} is a  modification of the argument given for the non-equivariant case in   \cite{Land:2016aa}. It will be given 
in Section \ref{wetgijweogwegwerger}.\footnote{Note that the link points to the end of the proof.}

The  specific motivation for the present paper was the  need,  {in the companion paper \cite{bel-paschke}}, to refine the classical equivariant {analytic} $K$-homology functor with coefficients in a $G$-$C^{*}$-algebra $A$
$$X\mapsto K_{A,*}^{\lf}(X) \coloneqq \KKth_{*}^{G}(C_{0}(X),A)$$
 to a spectrum valued functor.  Using  Theorem \ref{weighigregwgregw9} we get such a refinement  by setting \begin{equation}\label{qwefiojfoiwfweqfqwefewfeqw}
X\mapsto K_{A}^{\lf}(X) \coloneqq \KKGs( C_{0}(X), A)\, ,
\end{equation}
where for the moment $A$ must be separable, and is $X$ a locally compact and second countable topological $G$-space 
so that $C_{0}(X)$ is separable, too. Here $\KKGs( B, A)$ is a short-hand notation for  the spectrum $\map_{\KKGs}(\kkGs(B),\kkGs(A))$. 
In Definition \ref{qffohfiuwehfiowefqewfqwfqwefqewf} below we will   remove the restrictions  on $A$ and $X$.

The next theorem lists properties
of the functor $\kkGs$ and the $\infty$-category $\KKGs$
which reflect well-known properties of Kasparov's bifunctor $\KKth^{G}$.
We  first explain some of the notions appearing in the statement. We consider 
 a functor from   $G$-$C^{*}$-algebras  to a stable $\infty$-category.  It is called reduced if it sends the zero algebra to a zero object. It  is    semiexact if  it   sends every    semisplit exact   sequence    to a fibre sequence, where an exact sequence is semisplit if it admits an equivariant    cpc  
 (completely positive {and} contractive) split (see also Definition \ref{qwroigjqrwgqwrfqewfqewfq}.\ref{qeriughqeiruferwqkfjqr9ifuq1}). It   is   $\mathbb{K}^{G}$-stable
if it sends morphisms of the form 
\begin{equation}\label{342uihuihfewfwefweferf}
A\otimes K(H)\to A\otimes K(H')
\end{equation} to equivalences, where
 $H\to H'$ is an equivariant  isometric inclusion of non-zero separable $G$-Hilbert spaces. It is homotopy invariant if  it
   sends  the morphisms {of the form} \begin{equation}\label{ewgegreggergegeggegwergw}
A\to C([0,1])\otimes A
\end{equation}  given by the embedding $a\mapsto {1\otimes a}$
 to  equivalences.
Finally, we refer to \cite[Def.\ 2.5]{MR2193334}  for the notion of admissibility of a diagram $A \colon \nat\to \Fun(BG,\nCalgs)$.

\begin{theorem}\label{qroifjeriogerggergegegweg}\mbox{}
\begin{enumerate}
\item \label{qoirwfjhqoierggrg0}  $\kkGs$ is reduced.
\item \label{qoirwfjhqoierggrg1} $\kkGs$ is  semiexact.
\item  \label{qoirwfjhqoierggrg2}  $\kkGs$ is $\mathbb{K}^{G}$-stable.
\item  \label{qoirwfjhqoierggrg3}  $\kkGs$ is homotopy invariant.
\item  \label{qoirwfjhqoierggrg4}  $\KKGs$ admits countable colimits  and is therefore   idempotent complete.
\item  \label{qoirwfjhqoierggrg5}  $\kkGs$ preserves countable sums.
\item  \label{qoirwfjhqoierggrg6} $\kkGs$ preserves colimits of admissible diagrams $A \colon \nat\to \Fun(BG,\nCalgs)$.
\end{enumerate}
\end{theorem}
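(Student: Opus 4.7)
The plan is to derive items (1)--(4) directly from Theorem \ref{weighigregwgregw9}, which identifies $\ho(\KKGs)$ with the triangulated category $\KKGsk$, together with the classical properties of Kasparov's bifunctor. A morphism in a stable $\infty$-category is an equivalence if and only if it is an isomorphism in the homotopy category, and a cofibre/fibre sequence in $\KKGs$ corresponds to a distinguished triangle in $\KKGsk$. So I would simply invoke:
\begin{itemize}
\item For (1), that $\KKth^{G}(0,B)=\KKth^{G}(A,0)=0$, so $0$ is a zero object of $\KKGsk$ and hence of $\KKGs$.
\item For (2), that every equivariantly semisplit extension fits into the classical six-term exact sequence and hence, by the construction of the triangulated structure on $\KKGsk$ in \cite{MR2193334}, yields a distinguished triangle, which translates back to a fibre sequence in $\KKGs$.
\item For (3) and (4), that the morphisms in \eqref{342uihuihfewfwefweferf} and \eqref{ewgegreggergegeggegwergw} are classical $\KKth^{G}$-isomorphisms (Kasparov's stability theorem and equivariant homotopy invariance).
\end{itemize}

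For item (5), since $\KKGs$ is stable it has all finite colimits, so it suffices to produce countable coproducts. I would show that for a countable family $(A_{n})_{n\in\nat}$ in $\Fun(BG,\nCalgs)$ the $C^{*}$-algebraic $c_{0}$-direct sum $\bigoplus_{n} A_{n}$ represents $\coprod_{n} \kkGs(A_{n})$ in $\KKGs$. This reduces to showing that
\[
\map_{\KKGs}\!\bigl(\textstyle\bigoplus_{n}A_{n},B\bigr)\;\longrightarrow\;\prod_{n}\map_{\KKGs}(A_{n},B)
\]
is an equivalence of spectra, which on homotopy groups translates to the classical formula $\KKth^{G}_{*}(\bigoplus_{n}A_{n},B)\cong \prod_{n}\KKth^{G}_{*}(A_{n},B)$. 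Once countable coproducts exist, idempotent completeness is automatic in a stable $\infty$-category: any idempotent can be split via a countable mapping telescope. This simultaneously settles item (6), since the construction above literally identifies the $c_{0}$-sum with the $\KKGs$-coproduct of the images.

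For item (7), recall that an admissible diagram $A\colon \nat\to\Fun(BG,\nCalgs)$ in the sense of \cite[Def.~2.5]{MR2193334} comes equipped with a mapping telescope $\mathrm{tel}(A)$ which is a concrete $C^{*}$-algebra model realizing the homotopy colimit of $A$ in $\KKGsk$. There is a canonical map $\mathrm{tel}(A)\to \mathrm{colim}_{n} A_{n}$, and admissibility guarantees precisely that this is a $\kkGsk$-equivalence (the Milnor $\lim^{1}$-term vanishes by hypothesis). Using the countable coproducts from (5), the telescope is also identified, via a standard cofibre sequence $\bigoplus A_{n}\to \bigoplus A_{n}\to \mathrm{tel}(A)$, with the $\infty$-categorical sequential colimit of $\kkGs\circ A$ in $\KKGs$. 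Combining the two identifications yields $\kkGs(\mathrm{colim}_{n}A_{n})\simeq \mathrm{colim}_{n}\kkGs(A_{n})$.

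The main obstacle is step (5): everything else is either a formal consequence of stability and the equivalence $\ho(\KKGs)\simeq \KKGsk$, or reduces to (5) together with a classical calculation. The delicate point there is upgrading Kasparov's abelian-group-level formula $\KKth^{G}(\bigoplus A_{n},B)\cong\prod\KKth^{G}(A_{n},B)$ to an equivalence of mapping spectra; I expect to argue this by replacing $B$ with its shifts and suspensions and using that $\pi_{k}$ commutes with countable products together with the identification of negative homotopy groups of the mapping spectra with higher $\KKth^{G}$-groups, which in turn comes from Bott periodicity via the $\mathbb{K}^{G}$-stability from (3).
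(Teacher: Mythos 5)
Your treatment of items \ref{qoirwfjhqoierggrg0}, \ref{qoirwfjhqoierggrg2}, \ref{qoirwfjhqoierggrg3}, \ref{qoirwfjhqoierggrg4}, \ref{qoirwfjhqoierggrg5} and \ref{qoirwfjhqoierggrg6} is essentially the paper's own argument: reduced/$\mathbb{K}^{G}$-stable/homotopy invariant are inherited from $\kkGsk$ because the homotopy category detects zero objects and equivalences; countable coproducts are represented by the $C^{*}$-algebraic sum via Kasparov's formula $\KKth^{G}(\bigoplus_{n}A_{n},B)\cong\prod_{n}\KKth^{G}(A_{n},B)$ (the paper phrases this as ``coproducts are detected on the homotopy category'' rather than upgrading to mapping spectra by hand, but these are the same computation); idempotent completeness follows from countable cocompleteness; and item \ref{qoirwfjhqoierggrg6} is exactly \cite[Prop.~2.6]{MR2193334} transported along the triangulated equivalence, which your telescope argument merely unfolds.

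The genuine gap is in item \ref{qoirwfjhqoierggrg1}. The step ``the semisplit extension yields a distinguished triangle in $\KKGsk$, which translates back to a fibre sequence in $\KKGs$'' asserts precisely the hard point of the whole construction, namely the control of finite limits in the Dwyer--Kan localization. Knowing that $\kkGs(I)\to\kkGs(A)\to\kkGs(B)$ extends to a distinguished triangle in $\ho(\KKGs)$ only tells you that \emph{some} object isomorphic to $\kkGs(I)$ is the fibre of $\kkGs(A)\to\kkGs(B)$; it does not identify the canonical comparison map $\kkGs(I)\to\mathrm{Fib}(\kkGs(A)\to\kkGs(B))$ as an equivalence. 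To do that you must first be able to compute that fibre, and then run a five-lemma argument on the long exact sequences, which in turn requires identifying $\Omega$ on $\KKGs$ with the $C^{*}$-algebraic suspension $S$ and the negative homotopy groups of mapping spectra with higher $\KKth^{G}$-groups --- none of which is available before one can compute limits in $\KKGs$. The paper resolves this by exhibiting $(\Fun(BG,\nCalgs),\SS,\kkGeq)$ as a category of fibrant objects whose fibrations are the semisplit surjections (Proposition \ref{prop:fibration-structure}); semiexactness is then immediate from \cite[Prop.\ 7.5.6]{Cisinski:2017}, which says that a pullback along a fibration presents an $\infty$-categorical pullback in the localization.

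There is also a circularity to be aware of if you try to patch this by citing the triangulated equivalence of Theorem \ref{weighigregwgregw9}.\ref{weoitgjowergwegregw9}: in the paper that equivalence of \emph{triangulated} structures (Proposition \ref{woiejgwoegregreggwergw}) is itself deduced from semiexactness, via Lemma \ref{woitjgorwergergegergw}.\ref{woifhjoqiefewqewfqef1}, which identifies fibre sequences in $\KKGs$ with images of cone sequences. So the triangulated comparison cannot be used as an input to prove semiexactness without an independent argument. The correct order is: fibrant-objects structure $\Rightarrow$ semiexactness and control of $\Omega$ $\Rightarrow$ comparison of triangulated structures, and your proposal runs this implication backwards at its one nontrivial step.
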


The proof of this theorem will be given 
 in Section \ref{woigwgwgwerg9}.

 The functor \eqref{erwwergwergrwegrgerwgwergrew}  has a characterization by universal properties
 for functors to additive categories   \cite{higsondiss}, \cite{Thomsen-equivariant}, \cite{Meyer:aa}, see Proposition \ref{wtohiwthwgrgwerg}.
Our next theorem states that $\kkGs$ has a similar universal property  for functors 
from  $\Fun(BG,\nCalg_{\sepa})$ to 
objects of 
the large category $\Cat^{\exa}_{\infty}$  of small stable $\infty$-categories and exact functors. 

\begin{theorem}\label{wtohwergerewgrewgregwrg1}
The functor $\kkGs \colon \Fun(BG,\nCalg_{\sepa})\to \KKGs$ is initial among functors
from $\Fun(BG,\nCalg_{\sepa})$ to 
 objects of $\Cat^{\exa}_{\infty}$ which are reduced, 
 semi\-exact and $\mathbb{K}^{G}$-stable.
\end{theorem}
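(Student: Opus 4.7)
The plan is to apply the universal property of the Dwyer--Kan localization from Definition \ref{wtpohwopggergeg}. By the characterization of such localizations (Remark \ref{weroigujweogerwgregfw}), a functor $F : \Fun(BG,\nCalg_\sepa) \to \mathcal{C}$ factors essentially uniquely through $\kkGs$ if and only if it inverts every $\kkGsk$-equivalence. The theorem therefore reduces to the following claim: for any small stable $\infty$-category $\mathcal{C}$, a functor $F$ is reduced, semiexact, and $\mathbb{K}^G$-stable if and only if it inverts $\kkGeq$ and the resulting factoring functor $\bar F : \KKGs \to \mathcal{C}$ is exact. The ``only if'' direction is immediate from Theorem \ref{qroifjeriogerggergegegweg}, since each of the three properties is preserved under post-composition with an exact functor between stable $\infty$-categories (exact functors preserve zero objects, fiber sequences, and equivalences).

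For the ``if'' direction, the crux is to derive homotopy invariance of $F$ automatically from the other three hypotheses --- a phenomenon specific to the stable $\infty$-categorical setting, adapting the strategy of \cite{Land:2016aa} to the equivariant situation. Applying $F$ to the semisplit exact sequence
\[
0 \to CA \longrightarrow A \otimes C([0,1]) \xrightarrow{\mathrm{ev}_{0}} A \to 0,
\]
split by the cpc map $a \mapsto 1 \otimes a$, identifies $F(A \otimes C([0,1]))$ with the biproduct $F(A) \oplus F(CA)$, so that homotopy invariance reduces to the vanishing $F(CA) \simeq 0$. This is established by an Eilenberg swindle using $\mathbb{K}^G$-stability: the isomorphism $CA \otimes \mathbb{K}(H) \cong C(A \otimes \mathbb{K}(H))$ together with an equivariant isometric identification $H \oplus H \cong H$ assembles into a self-absorption $F(CA) \oplus F(CA) \simeq F(CA)$ whose structure map agrees, up to equivalence, with the fold map; stability of $\mathcal{C}$ then forces $F(CA) \simeq 0$. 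Once $F$ is known to be homotopy invariant, post-composition with the canonical functor $\mathcal{C} \to \ho(\mathcal{C})$ yields a functor to an additive category satisfying all four classical hypotheses of Proposition \ref{wtohiwthwgrgwerg}. Hence it factors through $\KKGsk$, and in particular inverts $\kkGeq$; since equivalences in $\mathcal{C}$ are detected by isomorphisms in $\ho(\mathcal{C})$, the functor $F$ itself inverts $\kkGeq$. The induced $\bar F$ is then exact because it preserves zero objects by reducedness of $F$ and preserves the fiber sequences coming from semisplit exact sequences of $G$-$C^*$-algebras by semiexactness, and such fiber sequences generate all fiber sequences in $\KKGs$.

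The principal obstacle is the automatic homotopy invariance step: the argument genuinely depends on the stability of the target $\mathcal{C}$ to upgrade the swindle identity $F(CA) \oplus F(CA) \simeq F(CA)$ to the vanishing $F(CA) \simeq 0$, and it fails in the additive 1-categorical setting, which is precisely why homotopy invariance must be posited as a separate hypothesis in Proposition \ref{wtohiwthwgrgwerg}. The equivariant case introduces only the mild additional bookkeeping of consistently choosing genuinely equivariant isometries and equivariant cpc splittings throughout.
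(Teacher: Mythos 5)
Your overall skeleton agrees with the paper's: reduce to showing that a reduced, semiexact, $\mathbb{K}^{G}$-stable functor inverts $\kkGsk$-equivalences, invoke the Dwyer--Kan universal property of $\KKGs$, and then check exactness of the induced functor $\bar F$ using the fact that every fibre sequence in $\KKGs$ is presented by a semisplit (cone) exact sequence (Lemma \ref{woitjgorwergergegergw}.\ref{woifhjoqiefewqewfqef1}). The paper organizes this as Theorem \ref{wtohwergerewgrewgregwrg} (additive targets, split-exactness) followed by Theorem \ref{4ogijwrtgwergerwgwergegergwerg}; your "only if" direction and your exactness argument for $\bar F$ are fine.

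The genuine problem is in what you call the crux. First, your framing is factually wrong: Proposition \ref{wtohiwthwgrgwerg} does \emph{not} posit homotopy invariance as a hypothesis, precisely because split-exactness plus $\mathbb{K}^{G}$-stability already imply homotopy invariance for functors to \emph{additive $1$-categories} --- this is Higson's theorem (see Remark \ref{rem:homotopy-invariance} and \cite[Thm.\ 3.35]{cmr}), so the phenomenon is not "specific to the stable $\infty$-categorical setting" and does not "fail in the additive $1$-categorical setting." Second, the Eilenberg swindle you sketch does not work: from $H\oplus H\cong H$ and $\mathbb{K}^{G}$-stability one obtains that corner inclusions $CA\otimes K(H)\to CA\otimes K(H\oplus H)$ become equivalences, but this does not produce a self-absorption $F(CA)\oplus F(CA)\simeq F(CA)$ realized by the fold map --- and if it did, the identical argument with an arbitrary $B$ in place of $CA$ would prove $F(B)\simeq 0$ for every $B$, contradicting $\mathbb{K}^{G}$-stability itself. (The actual proof of automatic homotopy invariance is a genuinely different and more delicate argument via the Cuntz picture, not a swindle, and nowhere uses stability of the target.) Fortunately this step is dispensable: you can simply post-compose $F$ with $\cC\to\ho(\cC)$ and apply Proposition \ref{wtohiwthwgrgwerg} as stated (reduced, $\mathbb{K}^{G}$-stable, split exact suffice), which is exactly what the paper does in Corollary \ref{ioqerjgqergrqfefqewfq}; with that repair your argument closes.
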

The proof of this theorem will be given in Section \ref{qeroijgoiergeregergergewrgergewg}.
We also have a version  {for} additive targets,  {see} Theorem~\ref{wtohwergerewgrewgregwrg}, {in which semiexactness is replaced by split-exactness.}

\begin{rem}  
 Note that the universal property stated in Theorem \ref{wtohwergerewgrewgregwrg1} is different from 
 the obvious {one} 
 stating that $\kkGs$ is the initial functor to $\infty$-categories which inverts ${\kkG_{0}}$-equivalences. The latter   holds  true by definition of $\KKGs$ as a Dwyer--Kan localization.
 
 { Note also that in most references, the universal property is stated for functors which are in addition homotopy invariant. In the unequivariant situation it was first proven by Higson that homotopy invariance follows from split-exactness and $\mathbb{K}$-stability, and in the equivariant situation the same is true, see also Remark~\ref{rem:homotopy-invariance}}.  {\hB}
\end{rem}

In the following, we consider the minimal and maximal tensor products $\otimes_{\min}$ and $\otimes_{\max} $ of $C^{*}$-algebras. Both of them equip the category $\Fun(BG,\nCalg)$ of  possibly non-unital $C^{*}$-algebras with $G$-action   with a symmetric monoidal structure and preserve separable algebras.  

\begin{prop}[Proposition \ref{eqrgoerjgpergwegerg1}]\label{eqrgoerjgpergwegerg}\mbox{}
The tensor product $\otimes_{?}$ for $?$ in $\{\min,\max\}$  descends to a  {bi-}exact symmetric monoidal structure  on $\KKGs$, and  $\kkGs$  refines to a 
symmetric monoidal functor 
$$\kkGstensor \colon \Fun(BG,\nCalgs)^{\otimes_{?}}\to \KKGstensor\, .$$ 
{Moreover, the tensor structure $\otimes_{?}$ on $\KKGs$ preserves countable colimits in each variable.}
\end{prop}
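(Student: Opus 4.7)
My plan is to deduce all assertions from the universal property (Theorem~\ref{wtohwergerewgrewgregwrg1}) together with the standard theorem on symmetric monoidal localizations \cite[Thm.\ 4.1.7.4]{HA}. The crucial step is to check that the class $\kkGeq$ of $\kkGsk$-equivalences is closed under tensoring with an arbitrary object, after which bi-exactness and colimit preservation will fall out formally.

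To verify this closure, I would fix $B \in \Fun(BG,\nCalgs)$ and consider the composite
\[
F_B \coloneqq \kkGs \circ (- \otimes_{?} B)\colon \Fun(BG,\nCalgs) \longrightarrow \KKGs.
\]
I claim $F_B$ is reduced, semi-exact, and $\mathbb{K}^G$-stable. Reducedness is immediate from $0 \otimes_{?} B = 0$; $\mathbb{K}^G$-stability uses the canonical associativity isomorphism $(A \otimes \mathbb{K}(H)) \otimes_{?} B \cong (A \otimes_{?} B) \otimes \mathbb{K}(H)$ together with the $\mathbb{K}^G$-stability of $\kkGs$; semi-exactness rests on the classical fact that both $\otimes_{\min}$ and $\otimes_{\max}$ preserve semi-split short exact sequences, since an equivariant cpc split tensors with $\mathrm{id}_B$ to yield an equivariant cpc split of the tensored sequence. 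By Theorem~\ref{wtohwergerewgrewgregwrg1}, $F_B$ then factors essentially uniquely through $\kkGs$ as an exact functor $\KKGs \to \KKGs$; in particular $F_B$ inverts $\kkGsk$-equivalences, which is precisely the required closure of $\kkGeq$ under $- \otimes_{?} B$.

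Granted this closure, \cite[Thm.\ 4.1.7.4]{HA} endows $\KKGs$ with a symmetric monoidal structure and promotes $\kkGs$ to a symmetric monoidal functor $\kkGstensor$. Bi-exactness is then automatic: the factorization $(-) \otimes_{?} \kkGs(B)\colon \KKGs \to \KKGs$ supplied by the universal property is exact, and by symmetry of $\otimes_{?}$ the same holds in the second variable. For the preservation of countable colimits, since $\KKGs$ is stable and admits countable colimits (Theorem~\ref{qroifjeriogerggergegegweg}(\ref{qoirwfjhqoierggrg4})), it suffices to treat finite colimits (covered by bi-exactness) and countable coproducts. The latter follows because $c_0$-direct sums in $\Fun(BG,\nCalgs)$ distribute over both tensor products, so $- \otimes_{?} B$ preserves countable sums at the algebra level, and $\kkGs$ preserves countable sums by Theorem~\ref{qroifjeriogerggergegegweg}(\ref{qoirwfjhqoierggrg5}).

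The principal technical ingredient is the preservation of semi-split exactness under tensor products, in particular for $\otimes_{\min}$ where one must keep track of the completely positive contractive splits surviving the tensor product; once this classical fact is in place, everything else is a formal consequence of the universal property of $\kkGs$ and the symmetric monoidal localization theorem.
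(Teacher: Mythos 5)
Your proposal is correct, and its skeleton coincides with the paper's: both reduce the first assertion to showing that $-\otimes_{?}B$ preserves ${\kkGsk}$-equivalences and then invoke a symmetric monoidal localization theorem (the paper cites \cite[Prop.\ 3.2.2]{hinich} where you cite \cite[Thm.\ 4.1.7.4]{HA}; these play the same role). The differences are in how the two remaining points are handled. For the descent step the paper applies the $1$-categorical universal property (Corollary \ref{ioqerjgqergrqfefqewfq}), which only requires $\kkGsk\circ(-\otimes_{?}B)$ to be reduced, $\mathbb{K}^{G}$-stable and \emph{split} exact --- a strictly weaker hypothesis than your semiexactness, and one that is essentially immediate because equivariant $*$-homomorphism splits tensor to $*$-homomorphism splits. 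For bi-exactness the paper argues by hand: every fibre sequence in $\KKGs$ is represented by a cone sequence (Lemma \ref{woitjgorwergergegergw}.\ref{woifhjoqiefewqewfqef1}), $A\otimes_{\max}-$ preserves cone sequences by exactness of $\otimes_{\max}$, and $A\otimes_{\min}-$ sends cone sequences to semisplit exact sequences. Your observation that the factorization furnished by Theorem \ref{wtohwergerewgrewgregwrg1} is automatically exact is a cleaner way to reach the same conclusion, at the price of needing semiexactness of $F_{B}$ up front; there is no circularity in using that theorem here, since its proof does not depend on the present proposition. The countable-colimit argument is the same in both (finite colimits from bi-exactness, countable sums from Lemma \ref{eriguhwiegugwergwerg} together with preservation of countable sums by $\kkGs$).

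One step deserves more care in your write-up. For $?=\min$, the assertion that $-\otimes_{\min}B$ preserves semisplit exact sequences is not justified by remarking that the cpc split tensors to a cpc split: the split does survive, but the substantive issue is exactness of the tensored sequence, i.e.\ that $\ker\big(A\otimes_{\min}B\to C\otimes_{\min}B\big)$ equals $I\otimes_{\min}B$, which is precisely the failure mode measured by non-exactness of $C^{*}$-algebras. The statement is nevertheless true for cpc-split (more generally, locally split) extensions by the Effros--Haagerup lifting results, and that is the classical input your argument actually rests on; alternatively one can restrict attention, as the paper does, to cone sequences, which suffice because every fibre sequence in $\KKGs$ is represented by one.
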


\subsection{A presentable stable \texorpdfstring{$\infty$}{infty}-category for \texorpdfstring{$G$-$C^*$}{G-Cstar}-algebras}

For the purpose of the application in  \cite{bel-paschke}, the restriction of the definition of {analytic} $K$-homology in  \eqref{qwefiojfoiwfweqfqwefewfeqw} 
to separable coefficient algebras $A$ is not sufficient.
Therefore  we must extend the functor $\kkGs$ from separable $C^{*}$-algebras to all $C^{*}$-algebras with $G$-action. In order to fix size issues
we choose an increasing  sequence  of three Grothendieck universes whose elements will be called  small, large and  very large sets. 
All $C^{*}$-algebras are assumed to be small.  The category $\Fun(BG,\nCalgs)$ is essentially small, and it follows from the details of the proof of Theorem \ref{weighigregwgregw9} that $\KKGs$ is also essentially small. In contrast, 
the category $\Fun(BG,\nCalg)$  is large, but locally small. 
\begin{ddd}\label{wtrohjwrthrthehrth}
We define the 
$\infty$-category $$\KKG \coloneqq \Ind(\KKGs)$$ as the $\Ind$-completion of $\KKGs$ and let 
\begin{equation}\label{ewgfuqgwefughfqlefiewfqwe}
 y^{G} \colon \KKGs \to \KKG 
\end{equation}
denote the canonical functor. 
\end{ddd}  
 

\begin{rem}\label{weoiguheijwogergfregwerf}
If $\cC$ is a small, stable $\infty$-category, then by
 \cite[Prop.\ 3.2]{MR3070515}  an explicit model for the canonical functor
 $\cC\to \Ind(\cC)$ is given by the Yoneda embedding
 $$\cC\to \Fun^{\exa}(\cC^{\op},\Sp^{\la})\, , \quad C\mapsto  \map_{\cC}(-,C)\, .  $$ 
 The $\infty$-category  $\Ind(\cC)$ is   compactly generated, presentable and  stable. {Moreover,} the functor $\cC\to \Ind(\cC)^{\omega}$ exhibits the full subcategory  $\Ind(\cC)^{\omega}$ of compact objects  in $\Ind(\cC)$ as 
 the idempotent completion of $\cC$.
 \hB
\end{rem}

\begin{ddd}\label{qeroigoergrgwg}
We define the functor $$\kkG \colon \Fun(BG,\nCalg)\to \KKG$$ as the  left Kan extension 
of $y^{G}\circ \kkGs$ along the inclusion $\incl$
as indicated in
\begin{equation}\label{qfwoefjkqwpoefkewpfqewfqwfqfwefq}
\xymatrix{
\Fun(BG,\nCalg_{\sepa})\ar[dr]_-{\incl} \ar[r]^-{\kkGs}&\KKGs\ar[r]^-{y^{G} }&\KKG\,.\\
&\Fun(BG,\nCalg)\ar[ur]_-{\kkG}\ar@{}[u]^{\Downarrow}&
}
\end{equation}
\end{ddd}


\subsection{Equivariant {analytic} \texorpdfstring{$K$}{K}-homology}\label{kophetrhrtgetrgtget}

\renewcommand{\pGTop}{G\mathbf{LCH}^{\mathrm{prop}}}
\newcommand{\pTop}{\mathbf{LCH}^{\mathrm{prop}}}

\renewcommand{\ppGTop}{G\mathbf{LCH}_{+}^{\mathrm{prop}}}
 
Using the Definitions \ref{wtrohjwrthrthehrth} and \ref{qeroigoergrgwg} we can  now extend the definition of the spectrum valued {analytic} $K$-homology functor \eqref{qwefiojfoiwfweqfqwefewfeqw}
to all $G$-$C^{*}$-algebras (or even objects of $\KKG$)  $A$ and locally compact $G$-spaces $X$. 

We let  $\pGTop$ denote the category of locally compact Hausdorff spaces with $G$-action and  {equivariant, continuous and} proper maps. We further consider the category $\pGTop_{+}$  with the same objects, but with the larger set of maps 
$$\Hom_{\ppGTop}(X,Y) \coloneqq \Hom_{\pGTop}((X^{+},\infty_{X}),(Y^{+},\infty_{Y}))\, ,$$
where $X^{+}$ and $Y^{+}$ are the one-point compactifications of $X$ and $Y$,
respectively, and a map
$f \colon (X^{+},\infty_{X})\to (Y^{+},\infty_{Y})$ 
is a continuous equivariant map $X^{+}\to Y^{+}$ {with} $f(\infty_{X})=\infty_{Y}$.  
  Equivalently,
a morphism $f \colon X\to Y$ in $\ppGTop$ is a partially defined   map $X\supseteq U\stackrel{f}{\to} Y$ {on an open subset $U$ of $X$} with $f$ in $\pGTop$ which corresponds to the map $f^{+} \colon X^{+}\to Y^{+}$
such that $f^{+}_{|U}=f$ and $f(X^{+}\setminus U)=\{\infty_{Y}\}$.

Two morphisms  $X\to Y$ in $ \ppGTop $
are called properly homotopic if there exists a homotopy
$[0,1]\times X\to Y$  in $ \ppGTop $ between them.

The category $\ppGTop$ is the natural domain of the
 functor
$$C_{0}(-) \colon (\ppGTop)^{\op}\to \Fun(BG,\nCalg)$$
which sends a locally compact $G$-space to the $C^{*}$-algebra
of continuous functions vanishing at $\infty$  {with} the induced $G$-action. 
More precisely, $C_{0}(X)$ is the kernel of the evaluation map $$C_{0}(X) \coloneqq \ker(C(X^{+})\to \C)\, ,$$
where  the evaluation map takes the value at the point $\infty$.
Since we do not assume that $X$ is second countable, the algebra  $C_{0}(X)$ is in general not separable.

If $Y$ is an invariant closed subset of $X$, then we have an exact sequence
\begin{equation}\label{fqefweddqwedwd}
0\to C_{0}(X\setminus Y)\to C_{0}(X)\to C_{0}(Y)\to 0
\end{equation} 
 in $\Fun(BG,\nCalg)$.

\begin{ddd}\label{weotigjoewgfreggw9}
{We} say that $Y$ is split-closed {in $X$}, if the sequence 
\eqref{fqefweddqwedwd} is semisplit{, i.e.\ if it admits an equivariant cpc (completely positive and contractive) split.}
\end{ddd}
The following proposition provides sufficient criteria for $Y$ being split-closed.
 \begin{prop}[Proposition \ref{qeroifjqeriofewqewdwedqwdqewdqwedwed}] The closed invariant subset $Y$ of $X$ is split-closed in the following cases:
 \begin{enumerate}
  \item $G$ acts properly on an  invariant neighbourhood of $Y$ in $X$ {and $Y$ is second countable.}
  \item $Y$ admits a $G$-invariant tubular neighbourhood.
 \end{enumerate}
 \end{prop}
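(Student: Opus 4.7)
In both cases the task is to produce an equivariant, completely positive and contractive (cpc) linear section $s \colon C_{0}(Y) \to C_{0}(X)$ of the restriction $\pi \colon C_{0}(X) \to C_{0}(Y)$.

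For case~(1), the plan is to average a non-equivariant cpc section against a cutoff function for the proper $G$-action. Since $C_{0}(Y)$ is nuclear, the Choi--Effros lifting theorem produces a (non-equivariant) cpc linear section $E \colon C_{0}(Y) \to C_{0}(X)$, which after multiplication by a bump function supported in $U$ we may arrange to take values in $C_{0}(U)$. Properness of the action on $U$, together with $\sigma$-compactness of $G\cdot Y$ (granted by second countability of $Y$ and countability of $G$), yields a cutoff function $c \in C_{c}(U)$ with $\sum_{g\in G} c(g^{-1}x) = 1$ for all $x$ in a $G$-invariant open neighbourhood of $Y$ contained in $U$. Setting
\[
s(f)(x) \;\coloneqq\; \sum_{g\in G} c(g^{-1}x)\, E(g^{-1}\cdot f)(g^{-1}x)\,,
\]
the reindexing $g\mapsto hg$ shows the $G$-equivariance $s(h\cdot f) = h\cdot s(f)$; contractivity is immediate from $\sum_{g} c(g^{-1}x)\le 1$; complete positivity is inherited summand-wise; and $s(f)|_{Y}=f$ because $E$ is a section and $(g^{-1}\cdot f)(g^{-1}y)=f(y)$ for every $y\in Y$, so that $s(f)(y) = f(y)\cdot\sum_g c(g^{-1}y) = f(y)$. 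The sum defining $s(f)$ is locally finite in $x$ by properness, and $s(f)\in C_{0}(X)$ since it is bounded by $\|f\|$ and supported in the closed set $G\cdot \mathrm{supp}(c)$.

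For case~(2), I would use the tubular structure directly. The $G$-invariant tubular neighbourhood provides a $G$-equivariant retraction $r \colon N \to Y$, and together with a $G$-invariant fibrewise norm and a radial bump one obtains a $G$-invariant function $\chi \in C_{0}(X)$ with $\chi|_{Y}=1$ and $\mathrm{supp}\,\chi \subset N$. Defining
\[
s(f) \;\coloneqq\; \chi\cdot (f\circ r)
\]
(extended by zero outside $N$) one verifies directly that $s$ is $G$-equivariant, cpc (the composition of the $*$-homomorphism $f\mapsto f\circ r$ from $C_{0}(Y)$ to $C_{b}(N)$ with multiplication by the positive contraction $\chi$, landing in $C_{0}(N)\subset C_{0}(X)$), and that it sections $\pi$, the latter because $\chi|_{Y}=1$ and $r|_{Y}=\mathrm{id}_{Y}$.

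The main obstacle I expect lies in case~(1): arranging that the averaged map $s$ actually lands in $C_{0}(X)$ rather than merely $C_{b}(X)$, and identifying precisely where second countability of $Y$ enters. The latter is used to guarantee the existence of a compactly supported cutoff function on an invariant neighbourhood of the (in general non-compact) orbit $G\cdot Y$; case~(2) is by comparison formal once the invariant bump $\chi$ and the equivariant retraction $r$ are in hand.
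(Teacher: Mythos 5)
Your overall strategy coincides with the paper's in both cases: for (1), a non-equivariant cpc lift supplied by Choi--Effros (using nuclearity and separability of $C_{0}(Y)$) made equivariant by averaging against a cutoff function for the proper action, with essentially the same averaging formula; for (2), pullback along the equivariant retraction multiplied by a radial bump, then extension by zero. The one concrete problem is that in both cases you over-strengthen the support conditions on the auxiliary functions to the point where they fail to exist. In case (1) you ask for $c\in C_{c}(U)$ with $\sum_{g\in G}c(g^{-1}x)=1$ on an invariant neighbourhood of $Y$; already for $G$ trivial and $Y$ non-compact this forces $c\equiv 1$ near $Y$, which no compactly supported function achieves. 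What one actually needs (and what the paper uses) is only a $\chi\in C(U,[0,1])$ such that the action map $G\times U\to U$ is proper on $G\times\mathrm{supp}(\chi)$ and $\sum_{g}g^{-1,*}\chi\equiv 1$; compact support is available only when the quotient of the neighbourhood is compact. Similarly, in case (2) a $G$-invariant $\chi\in C_{0}(X)$ with $\chi_{|Y}=1$ cannot exist for non-compact $Y$; the correct bump $\rho^{*}\chi$ built from the radial function is merely bounded, and the reason $\rho^{*}\chi\cdot r^{*}f$ lies in $C_{0}$ is the properness of $r$ on $\rho^{-1}([0,1])$, which makes $\{|\rho^{*}\chi\cdot r^{*}f|\ge\epsilon\}\subseteq\rho^{-1}([0,1])\cap r^{-1}(\{|f|\ge\epsilon\})$ compact --- not any decay of $\chi$ itself.

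Relatedly, your justification in case (1) that the averaged map lands in $C_{0}(X)$ --- ``bounded by $\|f\|$ and supported in a closed set'' --- is not a valid criterion for vanishing at infinity (the constant function $1$ on $\mathbb{R}$ satisfies both), so the point you yourself flag as the main obstacle is indeed not yet dispatched by your argument. With the support conditions weakened as above, your construction is the paper's, and the remaining verifications (equivariance by reindexing, complete positivity summand-wise, contractivity from the partition of unity, and the section property on $Y$) are carried out the same way in both.
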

  
 The notion of split-closedness is introduced since $\kkG$ only sends semisplit exact sequences to fibre sequences. 

 \begin{ex}
 Let $\nat \to \beta\nat$ be the Stone--\v{C}ech compactification of the discrete space $\nat$ and $\partial\nat \coloneqq \beta\nat\setminus \nat$. Then
 $\partial \nat $ is not split-closed in $\beta\nat$. In fact, it is known that  $C_{0}(\nat) $  does not even have a closed  linear complementary  subspace in $C_{0}(\beta\nat)=C_{b}(\nat)$.\hB
 \end{ex}


\begin{ddd}\label{qffohfiuwehfiowefqewfqwfqwefqewf}
We define the equivariant {analytic} $K$-homology functor 
$$K^{\lf} \colon \ppGTop\times  \KKG\to \Sp^{\la}$$ by
$$(X,A)\mapsto K^{\lf}_{A}(X) \coloneqq \KKG(C_{0}(X),A)\, .$$
\end{ddd}

The following theorem states the basic properties of equivariant  {analytic} $K$-homology. All  spaces in the statement
belong to $\ppGTop$, and $A$ is in $\KKG$ or $\Fun(BG,\nCalg)$, where in the latter case we  drop the functor $\kkG$ in order to simplify the notation. {In  Remark \ref{rem_explain_thm_Khom} below we will explain these statements in more detail.}
 \begin{theorem}\label{wergojowtpgwergregwreg}\mbox{}
\begin{enumerate}
\item\label{qoeirjgwergwergweg} If $X$ is second countable and $A$ is a $\sigma$-unital $G$-$C^{*}$-algebra, then we have an isomorphism of $\Z$-graded abelian groups
$$ K_{A,*}^{\lf}(X)\cong \KKth_{*}^{G}(C_{0}(X),A)\, .$$
\item \label{qoeirjgwergwergweg1} The functor $K^{\lf}$ is homotopy invariant.
\item \label{qoeirjgwergwergweg2} If $Y$ is   a split-closed $G$-invariant subspace of $X$, then we have a fibre sequence  
\begin{equation}\label{eq_thm_intro_Khom_fibre}
K^{\lf}_{A}( Y )\to K^{\lf}_{A}(X) \to K^{\lf}_{A}( X\setminus Y)\,.
\end{equation}
\item   \label{wetoigjwotigwgergwrerfcerwffsfvf}
We consider an exact sequence   $0\to A\to B\to C\to 0$  in $\Fun(BG,\nCalg)$. 
If the sequence 
 is    semisplit, or if
 $X$ is  {properly} homotopy equivalent to a  finite $G$-CW complex  {with finite stabilizers}, then we have a fibre sequence
\begin{equation}\label{fwefewrfrefwed}
K^{\lf}_{A}( X )\to K^{\lf}_{B}( X )\to K^{\lf}_{C}(   X)\ .
\end{equation} 
If $X$ is second countable, then
\begin{equation}\label{fwefewrfrefwed1}
\KKG\ni A\mapsto K^{\lf}_{A}(X)
\end{equation}  preserves filtered colimits.
 \item \label{qoeirjgwergwergweg3} We have $$K^{\lf}_{A}([0,\infty)\times X)\simeq 0\, .$$ Furthermore, if
$(X_{n})_{n\in \nat}$ is a family of second countable spaces and $A$ is separable, then we have a canonical equivalence. 
$$K^{\lf}_{A}\big(\bigsqcup_{n\in \nat} X_{n}\big)\xrightarrow{\simeq} \prod_{n\in \nat} K^{\lf}_{A}(X_{n})\, .$$ 
\item \label{jfqoifjoerfqewf}
If $X_{0}\supseteq X_{1}\supseteq \ldots \supseteq X_{n} \supseteq \ldots$
is a decreasing sequence of  closed  invariant subspaces of a second countable space $X_{0}$ such that $\bigcap_{n}X_{n}\to {X_0}$ is split-closed, and $A$ is separable,  then we have an equivalence 
$$K_{A}^{\lf}\big(\bigcap_{n\in \nat} X_{n}\big) \xrightarrow{\simeq} \lim_{n\in \nat} K_{A}^{\lf}(X_{n})\, .$$
 \item \label{qoeirjgwergwergweg4}  If $H$ is a finite subgroup of $G$, then we have an equivalence 
$$
K^{\lf}_{A}(G/H)\simeq \Kast( \Res^{G}_{H}(A)\rtimes  H)\,.
$$ 
\item \label{eqrgiheigowergergergerw}  The functor $K^{\lf}$ has a lax symmetric monoidal refinement  $${\ppGTop}^{,\otimes}\times \Fun(BG,\nCalg)^{\otimes_{?}}\to \Sp^{\la \otimes}$$ for $?$ in $\{\min,\max\}$.
\end{enumerate}
\end{theorem}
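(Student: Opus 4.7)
The plan is to deduce each assertion from the corresponding property of $\kkG$, built from $\kkGs$ via the Kan extension~\eqref{qfwoefjkqwpoefkewpfqewfqwfqfwefq} and the Ind-completion of Definition~\ref{wtrohjwrthrthehrth}. A recurring ingredient is that for a second countable space $X$ the algebra $C_{0}(X)$ is separable, so $\kkGs(C_{0}(X))$ is compact in $\KKG$ by Remark~\ref{weoiguheijwogergfregwerf}; this allows commuting the mapping spectrum out of $C_{0}(X)$ with filtered colimits in the second argument. Applied to a $\sigma$-unital $G$-$C^{*}$-algebra $A$, written as the filtered colimit of its separable $G$-$\ast$-subalgebras generated by countably many elements of an approximate unit, part~(\ref{qoeirjgwergwergweg}) reduces to the homotopy-group identification from Theorem~\ref{weighigregwgregw9}.(\ref{weoitgjowergwegregw9}) combined with the classical continuity of $\KKth^{G}_{*}$ in the second variable for a separable first argument.

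Parts~(\ref{qoeirjgwergwergweg1}), (\ref{qoeirjgwergwergweg2}), the semisplit case of~(\ref{wetoigjwotigwgergwrerfcerwffsfvf}) together with the filtered colimit statement~\eqref{fwefewrfrefwed1}, the vanishing and disjoint union formulas of~(\ref{qoeirjgwergwergweg3}), the continuity~(\ref{jfqoifjoerfqewf}), and the lax symmetric monoidal refinement~(\ref{eqrgiheigowergergergerw}) are then formal consequences of Theorem~\ref{qroifjeriogerggergegegweg} and Proposition~\ref{eqrgoerjgpergwegerg}, transported along the Kan extension. Concretely: homotopy invariance uses $C_{0}([0,1]\times X) \cong C([0,1])\otimes C_{0}(X)$; the split-closed fibre sequence uses semiexactness of $\kkG$ on the sequence~\eqref{fqefweddqwedwd}; the vanishing on $[0,\infty)\times X$ uses that $C_{0}([0,\infty))$ is contractible combined with homotopy invariance and reducedness; the disjoint union formula combines $C_{0}(\bigsqcup_{n} X_{n}) \cong \bigoplus_{n} C_{0}(X_{n})$ with Theorem~\ref{qroifjeriogerggergegegweg}.(\ref{qoirwfjhqoierggrg5}) and compactness of $\kkG(A)$ for separable $A$; the continuity~(\ref{jfqoifjoerfqewf}) writes $C_{0}(\bigcap_{n} X_{n})$ as the filtered colimit $\varinjlim_{n} C_{0}(X_{n})$ along the surjections and converts to a limit by mapping out of a compact object; and~(\ref{eqrgiheigowergergergerw}) is the tensor compatibility $C_{0}(X \times Y) \cong C_{0}(X) \otimes C_{0}(Y)$ combined with Proposition~\ref{eqrgoerjgpergwegerg}.

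The substantive work lies in the non-semisplit case of~(\ref{wetoigjwotigwgergwrerfcerwffsfvf}) and in~(\ref{qoeirjgwergwergweg4}). For~(\ref{wetoigjwotigwgergwrerfcerwffsfvf}) with $X$ a finite $G$-CW complex with finite stabilizers, I would induct on the number of cells. The base case is $X \simeq G/H$ with $H$ finite and reduces to~(\ref{qoeirjgwergwergweg4}). The inductive step attaches a cell of the form $G/H \times D^{n}$ along $G/H \times S^{n-1}$; since $H$ is finite, the subcomplex inclusion is split-closed by Proposition~\ref{qeroifjqeriofewqewdwedqwdqewdqwedwed} (finite stabilizers give a proper action on an invariant neighbourhood, and cells admit invariant tubular neighbourhoods), so the fibre sequence of~(\ref{qoeirjgwergwergweg2}) and the already-established semisplit case let the induction proceed. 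Part~(\ref{qoeirjgwergwergweg4}) is the Green--Julg identification at the spectrum level and is the main obstacle. My plan is to use the change-of-group functors constructed earlier in the paper to produce an adjunction equivalence $\KKG(C_{0}(G/H), A) \simeq \KKH(\mathbb{C}, \Res^{G}_{H} A)$, followed by a spectrum-level Green--Julg equivalence $\KKH(\mathbb{C}, B) \simeq \Kast(B \rtimes H)$ for finite $H$, which I would obtain by comparing the universal properties of both sides via Theorem~\ref{wtohwergerewgrewgregwrg1} and extend from separable $B$ to all of $B$ in $\KKH$ by the filtered colimit statement in~(\ref{wetoigjwotigwgergwrerfcerwffsfvf}).
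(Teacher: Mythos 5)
Your overall architecture matches the paper's: everything is pushed through $K^{\lf}_{A}(X)\simeq \KKG(\kkGC(X),A)$, the formal assertions are read off from Theorem~\ref{qroifjeriogerggergegegweg} and Proposition~\ref{eqrgoerjgpergwegerg}, the non-semisplit exactness is handled by cell induction with base case $G/H$ (the paper packages this as $G$-properness via Proposition~\ref{eroigowregwergregwgreg1neu} and Theorem~\ref{opcsjoisjdcoijsdcoiasdcasdcadc}, but your unrolled induction is equivalent), and assertion~(\ref{qoeirjgwergwergweg}) is your inlined version of Proposition~\ref{eroigowregwergregwgreg1}. However, there are two genuine gaps. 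First, the continuity axiom~(\ref{jfqoifjoerfqewf}) is \emph{not} a formal consequence of ``mapping out of a compact object'': while $\map(\colim_n C_n,A)\simeq\lim_n\map(C_n,A)$ is automatic, you first need $\kkGs\bigl(\colim_n C_0(X_n)\bigr)\simeq\colim_n\kkGs(C_0(X_n))$, and $\kkGs$ does \emph{not} preserve arbitrary filtered colimits of $C^{*}$-algebras --- Theorem~\ref{qroifjeriogerggergegegweg}.\ref{qoirwfjhqoierggrg6} only applies to \emph{admissible} diagrams in the sense of \cite[Def.\ 2.5]{MR2193334}. Verifying admissibility is exactly where the hypothesis that $\bigcap_n X_n$ is split-closed in $X_0$ enters: one uses the equivariant cpc split $s_0\colon C_0(\bigcap_n X_n)\to C_0(X_0)$ to manufacture the family $(s_n)$ of cpc maps required by the criterion \cite[Lem.\ 2.7]{MR2193334}. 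Your proposal never uses this hypothesis, which is a sign the step is missing.

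Second, your plan for the spectrum-level Green--Julg equivalence $\KKH(\C,B)\simeq \Kast(B\rtimes H)$ --- ``comparing the universal properties of both sides via Theorem~\ref{wtohwergerewgrewgregwrg1}'' --- does not produce the statement. The universal property characterizes $\kkHs$ as the \emph{initial} reduced, semiexact, $\mathbb{K}^{H}$-stable functor; it lets you factor both functors through $\KKHs$, but it gives no mechanism for identifying two different exact functors out of $\KKHs$ with each other. What is actually needed is a specific comparison map, induced by the unit $\epsilon_{A}\colon A\to \Res_{H}(A)\rtimes H$, $a\mapsto \frac{1}{|H|}\sum_{h}(a,h)$ of \eqref{wregoijwoierfwerfwerf1}, giving the transformation \eqref{efwqewfewdqed}; after the (correct) reductions to separable algebras and to $\pi_{0}$, showing this transformation is an equivalence is the classical Green--Julg theorem, a non-formal input proved by an explicit manipulation of Kasparov $(A,B\rtimes H)$-modules (the projection $\pi_{1}$ and the module $M'$ in the proof of Theorem~\ref{fhfufweiqwfewfqffqwfewf}). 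Without naming this comparison map and this analytic input, the key step of assertion~(\ref{qoeirjgwergwergweg4}) --- and hence the base case of your induction for~(\ref{wetoigjwotigwgergwrerfcerwffsfvf}) --- is not established.
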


\begin{rem}\label{rem_explain_thm_Khom}
In this remark we explain the meaning of the assertions of Theorem \ref{wergojowtpgwergregwreg} in greater detail.

The Assertion \ref{wergojowtpgwergregwreg}.\ref{qoeirjgwergwergweg} shows that, under the conditions on $X$ and $A$ as stated, the  functor $K^{\lf}_{A}$ is a spectrum valued refinement of the  classical equivariant  {analytic} $K$-homology functor. 

The homotopy invariance in $X$ stated in Assertion  \ref{wergojowtpgwergregwreg}.\ref{qoeirjgwergwergweg1} first of all means that the functor $K^{\lf}_{A}$ sends  the projection $[0,1]\times X\to X$ to an equivalence. Since we work in the category of proper equivariant maps,
this implies invariance of $K^{\lf}_{A}$ under proper equivariant  homotopies. Furthermore, for fixed $X$ the functor $K^{\lf}_{-}(X)$ is also homotopy invariant in the algebra variable.  

The Assertion  \ref{wergojowtpgwergregwreg}.\ref{qoeirjgwergwergweg2}
implies that $K^{\lf}_{A}$  satisfies excision for invariant split-closed decompositions $(Z,Y)$ of $X$, i.e., invariant decompositions  such that $Y$ is split-closed in $X$ and $Y\cap Z$ is split-closed in $Z$. Furthermore, the fact that
the fibre of $K^{\lf}_{A}(Y)\to K^{\lf}_{A}(X)$ only depends on the complement $X\setminus Y$ 
is often referred to as the strong excision axiom.
Note that   the map {$K^{\lf}_{A}(X) \to K^{\lf}_{A}(X\setminus Y)$ in \eqref{eq_thm_intro_Khom_fibre}} is induced by 
the partially defined map
$X\supseteq X\setminus Y\xrightarrow{\id_{X\setminus Y}} X\setminus Y$.


It immediately follows from Definition \ref{qffohfiuwehfiowefqewfqwfqwefqewf} that for fixed $X$,
the functor $$\KKG\ni A\mapsto K^{\lf}_{A}(X)\in \Sp$$ preserves limits. 
The Assertion  \ref{wergojowtpgwergregwreg}.\ref{wetoigjwotigwgergwrerfcerwffsfvf} states 
additional exactness properties of $K^{\lf}_{A}(X)$ as a functor  on $\Fun(BG,\nCalg)$ {rather than $\KKG$}. 

 The Assertion  \ref{wergojowtpgwergregwreg}.\ref{qoeirjgwergwergweg3}
  is our expression of local finiteness of $K^{\lf}_{A}$.  The second part is often referred to as the cluster axiom.

The Assertion  \ref{wergojowtpgwergregwreg}.\ref{jfqoifjoerfqewf} is also called the continuity axiom. 

Following \cite{Land:2016aa}, in  Assertion  \ref{wergojowtpgwergregwreg}.\ref{qoeirjgwergwergweg4}
we use the spectrum valued $K$-theory functor for $C^{*}$-algebras \begin{equation}\label{erwfoijoiwefrweffwe}
\Kast(-) \coloneqq  \KK(\C,-) \colon \nCalg\to \Sp
\end{equation}
which is equivalent to the one constructed in e.g.\ \cite{joachimcat}, \cite{mijo}; see \cite[Prop.\ 3.7.1]{Land:2016aa}.   Let $G\Orb$  {be the orbit category of $G$, i.e.\ the full subcategory of $G$-sets consisting of transitive $G$-sets, and let} $G_{\Fin}\Orb$ denote  {its full subcategory on orbits with finite stabilizers.} 
Considering $G$-sets as discrete topological spaces, we have an embedding $G_{\Fin}\Orb\to \ppGTop$ and can define a functor
\begin{equation}\label{wregiojoiwjerogwergrewgwerge}
K^{G,\an}_{A} \colon G_{\Fin}\Orb\to \ppGTop\xrightarrow{K^{\lf}_{A}} \Sp^{\la}\, .
\end{equation}

The Assertion  \ref{wergojowtpgwergregwreg}.\ref{qoeirjgwergwergweg4} implies that 
this functor has the same values as the {Davis--L\"uck} functor  {used} in \cite{davis_lueck}, \cite{joachimcat}, \cite{Land:2017aa} (for $A=\C$) and \cite{kranz} (in general).  In \cite{bel-paschke}, we will  {upgrade this to} an equivalence of functors, see  \eqref{wergoiweforefrfwe} for a precise statement.  
 {In the Appendix \ref{weoigjegregewfr}} we explain {how the functor $K^{G,\an}_{A}$, which is {involved in the definition of the} domain of the spectrum valued Baum--Connes assembly map, features in a comparison of assembly maps, see Diagram \ref{refoijoergwergrefw}.} {Its construction   is  {one of the} motivations for  {the companion paper} \cite{bel-paschke}.}  
  
{In Assertion \ref{eqrgiheigowergergergerw} the symmetric monoidal structure on ${\ppGTop}^{,\otimes}$ is given by the cartesian product of the underlying topological spaces.}
 \hB
\end{rem}

The functor $K^{\lf}$ will be derived  in \eqref{ewrgoeijoiwejgfvsdfvsfdvfdv}
 from the  more fundamental functor 
$$\kkGC \coloneqq \kkG\circ C_{0} \colon \ppGTop \to \KKG $$
whose properties  will be stated in Theorem \ref{wetoighjwtiogewgregregweg}.
  
  The proof of  Theorem \ref{wergojowtpgwergregwreg} (and of Theorem \ref{wetoighjwtiogewgregregweg})   will employ almost all of the general results about $\kkG$ stated below.
It will be completed in Section \ref{uihiueghwegeregewg}.


\subsection{The s-finitary extension}
We now come back to the properties of the  functor $\kkG$ {from Definition \ref{qeroigoergrgwg}.}
\begin{ddd}\label{wtgpkowegrrefrwferf}
A functor $F$ defined on $\Fun(BG,\nCalg)$ is called s-finitary  if for every 
$A$  in $\Fun(BG,\nCalg)$ the canonical map
\begin{equation}\label{efrgveveeerwvfevdsfvsdvfdvdsv}
\colim_{A'\subseteq_{{\sepa}} A} F(A')\to F(A)
\end{equation}
is an equivalence, where $A'$ runs through the separable $G$-invariant subalgebras of $A$.
\end{ddd}
The  prefix `s' stands for separable.
In contrast, in the literature a  finitary functor  is usually required to preserve all filtered colimits. 
The following theorem lists the basic properties of $\kkG$.

\begin{theorem}\label{qeroigjqergfqeewfqewfqewf1}\mbox{}
\begin{enumerate}
\item  \label{qrfghqirwogrgqfqwefq} $\kkG$ is s-finitary. 
\item\label{qeirojqwoifqwefwefqewfqw}  $\kkG$ is reduced.
\item \label{weroigjwergwergwrgwgre1} 
$\kkG$ is   semiexact.
\item \label{jfowejfoiwefewfqwecvqwcwecwcew}   $\kkG$ is homotopy invariant.
\item   \label{jfowejfoiwefewfqwecvqwcwecwcew1}  $\kkG$ is $\mathbb{K}^{G}$-stable.
\end{enumerate}
\end{theorem}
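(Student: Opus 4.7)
The plan is to derive all five properties from the pointwise formula for the left Kan extension defining $\kkG$, using in each case the corresponding property of $\kkGs$ established in Theorem~\ref{qroifjeriogerggergegegweg} together with the fact that $\KKG = \Ind(\KKGs)$ is stable and presentable, so that filtered colimits in it are exact and preserve equivalences.

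The starting point is the s-finitary property \ref{qrfghqirwogrgqfqwefq}. Since $G$ is countable, any finite subset of a $G$-$C^{*}$-algebra $A$ is contained in a separable $G$-invariant $C^{*}$-subalgebra, so the poset $P(A)$ of separable $G$-invariant subalgebras of $A$ is filtered with colimit $A$ in $\Fun(BG, \nCalg)$. The pointwise formula for the left Kan extension in \eqref{qfwoefjkqwpoefkewpfqewfqwfqfwefq} gives $\kkG(A) \simeq \colim_{(A' \to A)} y^{G}\kkGs(A')$ with the colimit running over $\Fun(BG,\nCalgs)_{/A}$, and the inclusion $P(A) \hookrightarrow \Fun(BG,\nCalgs)_{/A}$ is cofinal because any morphism from a separable algebra to $A$ factors through its image in $P(A)$. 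This proves \ref{qrfghqirwogrgqfqwefq}, and \ref{qeirojqwoifqwefwefqewfqw} follows at once since $0 \in P(A)$ and $\kkGs$ is reduced by Theorem~\ref{qroifjeriogerggergegegweg}.\ref{qoirwfjhqoierggrg0}.

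For semiexactness \ref{weroigjwergwergwrgwgre1}, given a semisplit exact sequence $0 \to A \to B \to C \to 0$ with equivariant cpc split $s \colon C \to B$, I would construct a cofinal filtered family of semisplit exact sub-sequences $0 \to A_{i} \to B_{i} \to C_{i} \to 0$ of separable $G$-invariant subalgebras, with the cpc splits obtained by restricting $s$. The construction starts from arbitrary $A_{i}' \in P(A)$ and $C_{i} \in P(C)$, takes $B_{i}$ to be the separable $G$-invariant subalgebra of $B$ generated by $A_{i}'$ and $s(C_{i})$, and then runs an inductive saturation (going back and forth between kernel and image, adjoining countably many generators at each step) that terminates with a separable exact sub-sequence on which $s$ restricts. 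Combining s-finitarity applied to $A$, $B$ and $C$ with semiexactness of $\kkGs$ (Theorem~\ref{qroifjeriogerggergegegweg}.\ref{qoirwfjhqoierggrg1}) and exactness of filtered colimits in the stable presentable $\infty$-category $\KKG$ then yields the desired fibre sequence. The main obstacle is the careful bookkeeping to make this simultaneous three-way approximation genuinely cofinal while preserving exactness on the nose.

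Properties \ref{jfowejfoiwefewfqwecvqwcwecwcew} and \ref{jfowejfoiwefewfqwecvqwcwecwcew1} follow by the same template but with fewer subtleties. For \ref{jfowejfoiwefewfqwecvqwcwecwcew}, the map $A \to C([0,1]) \otimes A$ of \eqref{ewgegreggergegeggegwergw} is the filtered colimit over $A' \in P(A)$ of maps $A' \to C([0,1]) \otimes A'$ in the separable world, since the separable tensor factors exhaust $C([0,1]) \otimes A$; each such map is inverted by $\kkGs$ by Theorem~\ref{qroifjeriogerggergegegweg}.\ref{qoirwfjhqoierggrg3}, and a filtered colimit of equivalences is an equivalence. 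For \ref{jfowejfoiwefewfqwecvqwcwecwcew1}, using that both $K(H)$ and $K(H')$ are separable, the morphism $A \otimes K(H) \to A \otimes K(H')$ of \eqref{342uihuihfewfwefweferf} is the filtered colimit over $A' \in P(A)$ of morphisms $A' \otimes K(H) \to A' \otimes K(H')$, and applying $\mathbb{K}^{G}$-stability of $\kkGs$ (Theorem~\ref{qroifjeriogerggergegegweg}.\ref{qoirwfjhqoierggrg2}) together with the same colimit argument concludes.
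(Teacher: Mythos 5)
Your proposal is correct and follows essentially the same route as the paper: s-finitarity via the pointwise formula for the left Kan extension together with cofinality of the poset of separable $G$-invariant subalgebras (any map from a separable algebra factors through its image), and then transfer of each property from $\kkGs$ along suitable cofinal families of separable subalgebras, using exactness of filtered colimits in $\KKG$. The one place you overcomplicate is semiexactness: no inductive back-and-forth saturation is needed, since given separable subalgebras of the three terms one can in a single step generate $C''$ from the image of the given middle subalgebra together with the given quotient subalgebra, then generate $C$ from the given middle subalgebra, $s(C'')$ and the given kernel subalgebra (so that $\pi(C)=C''$ on the nose and $s$ restricts), and finally take $C'\coloneqq\ker(C\to C'')$, which is automatically separable as a closed subspace of the separable algebra $C$.
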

The proof of this theorem will be finished in Section \ref{weotigjwotgwregergegw}.

In order to formulate the universal property of $\kkG$ we consider the very large $\infty$-category category $\CAT_{\infty}^{\ccpl\cap \exa}$ of cocomplete (with respect to small colimits) stable $\infty$-categories  and  functors preserving
small colimits.  As noted in Remark \ref{weoiguheijwogergfregwerf}  the $\infty$-category $\KKG$ is presentable  stable and
therefore belongs to $\CAT_{\infty}^{\ccpl\cap \exa}$.

\begin{theorem}[Theorem \ref{weoigjwoegerggegwegergecv}]\label{eoiruggwerregergegwo}
\label{ojroiwfewfqefqef_intro}
$\kkG$ is  initial among functors from $\Fun(BG,\nCalg)$ to objects of $\CAT_{\infty}^{\ccpl\cap \exa}$
which are s-finitary, reduced, 
$\mathbb{K}^G$-stable and semiexact. 
\end{theorem}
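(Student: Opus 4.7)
The plan is to reduce the universal property of $\kkG$ to two already-established universal properties: that of $\kkGs$ from Theorem \ref{wtohwergerewgrewgregwrg1}, and that of the $\mathrm{Ind}$-completion defining $\KKG = \mathrm{Ind}(\KKGs)$. Concretely, given a functor $F \colon \Fun(BG,\nCalg) \to \mathcal{D}$ with $\mathcal{D}$ in $\CAT_{\infty}^{\ccpl\cap \exa}$ which is s-finitary, reduced, semiexact and $\mathbb{K}^G$-stable, I want to produce, essentially uniquely, a colimit-preserving functor $\bar{F} \colon \KKG \to \mathcal{D}$ with $\bar{F} \circ \kkG \simeq F$.

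First I would restrict $F$ to separable algebras along $\incl \colon \Fun(BG,\nCalg_{\sepa}) \hookrightarrow \Fun(BG,\nCalg)$, obtaining $F_{\sepa} \coloneqq F \circ \incl$. The properties of being reduced, $\mathbb{K}^G$-stable and semiexact are inherited by $F_{\sepa}$, since the zero algebra, the maps of the form \eqref{342uihuihfewfwefweferf}, and semisplit exact sequences of separable algebras already live in $\Fun(BG,\nCalg_{\sepa})$. To apply Theorem \ref{wtohwergerewgrewgregwrg1}, whose target is a priori required to be small, I would pass to the smallest full stable subcategory $\mathcal{D}_0 \subseteq \mathcal{D}$ containing the essential image of $F_{\sepa}$; since $\Fun(BG,\nCalg_{\sepa})$ is essentially small, $\mathcal{D}_0$ is again essentially small and hence lies in $\Cat^{\exa}_{\infty}$. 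Theorem \ref{wtohwergerewgrewgregwrg1} then produces an essentially unique exact factorization $\bar{F}_{\sepa} \colon \KKGs \to \mathcal{D}_0 \hookrightarrow \mathcal{D}$ with $\bar{F}_{\sepa} \circ \kkGs \simeq F_{\sepa}$.

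Next, since $\mathcal{D}$ is cocomplete stable, the universal property of the $\mathrm{Ind}$-completion produces a unique colimit-preserving extension $\bar{F} \colon \KKG = \mathrm{Ind}(\KKGs) \to \mathcal{D}$ satisfying $\bar{F} \circ y^G \simeq \bar{F}_{\sepa}$. It remains to verify $\bar{F} \circ \kkG \simeq F$. Here the s-finitary condition is crucial on both sides: $F$ is s-finitary by assumption, while $\kkG$ is s-finitary by Theorem \ref{qeroigjqergfqeewfqewfqewf1}.\ref{qrfghqirwogrgqfqwefq} and $\bar{F}$ preserves filtered colimits, so $\bar{F} \circ \kkG$ is s-finitary as well. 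Both functors therefore agree with the pointwise left Kan extension along $\incl$ of their common restriction $F_{\sepa}$, and hence are canonically equivalent.

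For uniqueness, given another colimit-preserving $G \colon \KKG \to \mathcal{D}$ with $G \circ \kkG \simeq F$, restriction to separable algebras together with the uniqueness clause of Theorem \ref{wtohwergerewgrewgregwrg1} gives $G \circ y^G \simeq \bar{F}_{\sepa}$, after which the universal property of $\mathrm{Ind}$ forces $G \simeq \bar{F}$. The step I expect to require the most care is the identification $\bar{F} \circ \kkG \simeq F$: while s-finitarity reduces it to a colimit comparison, verifying that the colimit over separable subalgebras computes the pointwise left Kan extension along $\incl$ depends on the cofinality statement that every morphism $A' \to A$ from a separable algebra factors through its image, which is a separable subalgebra of $A$. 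Establishing this cofinality carefully (including on the level of comma $\infty$-categories) is the technical fulcrum of the argument.
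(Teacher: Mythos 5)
Your proposal is correct and follows essentially the same route as the paper: the paper establishes the chain of equivalences $\Fun^{\colim}(\KKG,\cD)\simeq \Fun^{\exa}(\KKGs,\cD)\simeq \Fun^{rse}(\Fun(BG,\nCalg_{\sepa}),\cD)\simeq \Fun^{frse}(\Fun(BG,\nCalg),\cD)$ using exactly your three ingredients — the universal property of $y^{G}$, Theorem \ref{wtohwergerewgrewgregwrg1}, and left Kan extension along $\incl$ via Lemmas \ref{ergoijegwergrewergwergwerg} and \ref{qwoiefuoqrfwewfqwef9}. The cofinality of separable subalgebras that you flag as the technical fulcrum is precisely the content of the proof of Lemma \ref{ergoijegwergrewergwergwerg}.
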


Assume that $  A, B$ are in $\Fun(BG,\nCalg)$. Then on the one hand, we 
 have the abelian group $\KKth_{0}^{G}(A,B)$  defined by Kasparov in \cite{kasparovinvent}. On the other
 hand, we have  the abelian group 
 $\pi_{0}\KKG(  A, B)$  
 defined by the abstract categorical procedure {in Definition \ref{wtrohjwrthrthehrth}.} 
If both $A$ and $B$ are separable, then  these  two groups  are identified by the morphism $\ho$ in   \eqref{fewpoqkopfqewffwqwefqf}.
The next proposition extends this isomorphism to all degrees and from separable to   $\sigma$-unital   $B$.\footnote{Using \cite[Sec.~3]{zbMATH03973627} on can remove the assumption that $B$ is $\sigma$-unital. For separable $A$ the functor $\KKth_{0}(A,-)$ is $s$-finitary. Thereby various definitions of the functor coincide.}
We let $ \nCalg_{\sigma}$ be the full subcategory of $\nCalg$ of $\sigma$-unital $C^{*}$-algebras. 

\begin{prop}[Proposition \ref{eroigowregwergregwgreg11}]\label{eroigowregwergregwgreg1}
  For any objects $A$   in $\Fun(BG,\nCalg_{\sepa})$ and $ B$ in $ \Fun(BG,\nCalg_{\sigma})$,  the functor $\ho$ induces 
 an isomorphism of $\Z$-graded abelian groups
  $$\pi_{*}\KKG(  A,  B)\cong  \KKth^{G}_{*}( A,  B)\, .$$
  \end{prop}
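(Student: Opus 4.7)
The plan is to fix a separable $A$ and regard both sides as functors of $B$ on $\Fun(BG,\nCalg_\sigma)$, then show that both functors are s-finitary and that they agree on the full subcategory $\Fun(BG,\nCalg_\sepa)$ of separable algebras, which sits inside $\Fun(BG,\nCalg_\sigma)$ as a cofinal system of separable $G$-invariant subalgebras of any $\sigma$-unital algebra.

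First, I would handle the separable case. For $A,B'$ in $\Fun(BG,\nCalg_\sepa)$, Theorem \ref{weighigregwgregw9}.\ref{weoitgjowergwegregw9} gives an equivalence $\ho\colon \KKGs\to \KKGsk$ of triangulated categories; in particular it induces isomorphisms
\[
\pi_0\KKGs(A,B')\xrightarrow{\cong}\KKth^G_0(A,B')\, .
\]
For higher degrees, I would use that $\ho$ is compatible with the triangulated suspension on both sides: on $\KKGs$ this is the categorical suspension $\Sigma$, which identifies $\pi_n\KKGs(A,B')=\pi_0\KKGs(A,\Sigma^{-n}B')$, while on $\KKGsk$ it corresponds (via Bott periodicity) to the standard shift defining $\KKth^G_n$. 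Next, the functor $y^G\colon\KKGs\to\KKG=\Ind(\KKGs)$ from \eqref{ewgfuqgwefughfqlefiewfqwe} is fully faithful (Remark \ref{weoiguheijwogergfregwerf}), so
\[
\pi_*\KKG(A,B')\cong\pi_*\KKGs(A,B')\cong \KKth^G_*(A,B')\, .
\]

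Second, I would upgrade from separable to $\sigma$-unital $B$ by s-finitariness. On the left hand side, compactness of $\kkG(A)=y^G\kkGs(A)$ in $\KKG$ (again Remark \ref{weoiguheijwogergfregwerf}) together with the s-finitariness of $\kkG$ from Theorem \ref{qeroigjqergfqeewfqewfqewf1}.\ref{qrfghqirwogrgqfqwefq} yields
\[
\KKG(A,B)\simeq \colim_{B'\subseteq_\sepa B}\KKG(A,B')\, ,
\]
and taking $\pi_*$ commutes with filtered colimits of spectra. On the right hand side, for separable $A$ the classical KK-functor $\KKth^G_*(A,-)$ sends filtered colimits of separable $G$-invariant subalgebras to colimits of abelian groups; this is the standard s-finitariness for Kasparov KK with separable first variable, as used in the footnote (it can be proved, e.g., by representing classes by Kasparov modules whose defining data—a countable generating set of the Hilbert module, the image of the generators of $A$, and commutators with the Fredholm operator—already lie in a separable subalgebra of $B$).

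Combining these two steps gives the desired isomorphism
\[
\pi_*\KKG(A,B)\cong\colim_{B'\subseteq_\sepa B}\pi_*\KKG(A,B')\cong\colim_{B'\subseteq_\sepa B}\KKth^G_*(A,B')\cong\KKth^G_*(A,B)\, ,
\]
and one verifies naturality so that these isomorphisms assemble into an isomorphism of $\Z$-graded abelian groups induced by $\ho$. The main technical point is the s-finitariness of the classical $\KKth^G_*(A,-)$ in the $\sigma$-unital regime; modulo this standard fact, the argument is a straightforward combination of the universal property of the Ind-completion and the already established comparison in the separable case.
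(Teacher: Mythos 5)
Your proposal is correct and follows the same overall strategy as the paper: fix a separable $A$, observe that $\ho$ gives a natural isomorphism of the two functors on separable coefficient algebras (via the triangulated equivalence of Theorem \ref{weighigregwgregw9}.\ref{weoitgjowergwegregw9} and full faithfulness of $y^{G}$), and then upgrade to $\sigma$-unital $B$ by showing both sides are s-finitary, with the left side handled exactly as you do via compactness of $\kkG(A)$ and the s-finitariness of $\kkG$. The one place where you diverge is the s-finitariness of the classical functor $\KKth^{G}_{*}(A,-)$: you sketch a direct argument with Kasparov modules (tracking a countable generating set of the Hilbert module, the image of $A$, and the commutators with $F$ into a separable subalgebra of $B$), whereas the paper invokes Meyer's Cuntz-picture formula $\KKth^{G}(A,B)\cong[q_{s}(A)\otimes K,B\otimes K]$, which reduces the claim to the observation that equivariant homomorphisms and homotopies out of the separable algebra $q_{s}(A)\otimes K$ factor through separable invariant subalgebras of the target. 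The paper's route is cleaner because it sidesteps the bookkeeping of cutting down Hilbert modules and adjointable operators, and it also makes transparent where the $\sigma$-unitality of $B$ is actually used (it is the hypothesis of Meyer's theorem); your module-level sketch would need a little more care on both of these points, though the underlying fact is standard.
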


  Note that this proposition for separable  $B$
   is a direct consequence of the compatibility of $\ho$ with the 
  triangulated structure stated in Theorem \ref{weighigregwgregw9}.\ref{weoitgjowergwegregw9}. 
   
The following result extends Proposition \ref{eqrgoerjgpergwegerg} from the separable to the general case.

 \begin{prop}[Proposition \ref{togjoigerggwgergwgr1}]\label{togjoigerggwgergwgr}
 The   symmetric monoidal structure $\otimes_{?}$ on $\KKGs$  for $?$ in $\{\min,\max\}$ canonically  induces a
 presentably symmetric monoidal structure on $\KKG$  and $\kkG$ refines to a symmetric monoidal functor
 $$\kkGtensor \colon \Fun(BG,\nCalg)^{\otimes_{?}} \to \KKGtensor\, .$$ 
 \end{prop}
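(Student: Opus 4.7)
The plan is to produce the symmetric monoidal structure on $\KKG$ by Ind-completion, and then to lift $\kkG$ by combining the universal property of the left Kan extension with the s-finitariness of $\kkG$ established in Theorem \ref{qeroigjqergfqeewfqewfqewf1}.

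First, I would invoke the general Ind-completion construction applied to the symmetric monoidal $\infty$-category $\KKGs^{\otimes_?}$ from Proposition \ref{eqrgoerjgpergwegerg}. Since $\otimes_?$ on $\KKGs$ is biexact and preserves countable colimits in each variable, it in particular preserves finite colimits in each variable. By the compatibility of Ind-completion with symmetric monoidal structures \cite[Sec.\ 4.8.1]{HA}, $\KKG = \Ind(\KKGs)$ then admits a unique presentably symmetric monoidal refinement $\KKGtensor$ characterized by the property that $y^G$ from \eqref{ewgfuqgwefughfqlefiewfqwe} refines to a symmetric monoidal functor and the extended tensor product preserves small colimits in each variable. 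This establishes the first half of the proposition.

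For the second half, I would observe that the inclusion $\iota \colon \Fun(BG,\nCalgs) \hookrightarrow \Fun(BG,\nCalg)$ is symmetric monoidal with respect to $\otimes_?$, since both the minimal and the maximal $C^{*}$-tensor products of separable $C^{*}$-algebras remain separable. The composite $y^G \circ \kkGs$ is symmetric monoidal by combining Proposition \ref{eqrgoerjgpergwegerg} with the previous step, so the left Kan extension $\kkG$ of $y^G \circ \kkGs$ along $\iota$ inherits a canonical \emph{lax} symmetric monoidal structure via the operadic left Kan extension formalism \cite[Sec.\ 3.1.2]{HA}.

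The key remaining step is to show the lax structure maps are equivalences, which I expect to be the main technical point. For $A,B$ in $\Fun(BG,\nCalg)$, I would use the s-finitariness of $\kkG$: writing $A$ and $B$ as the canonical filtered colimits over their separable $G$-invariant subalgebras $A'$ and $B'$, Theorem \ref{qeroigjqergfqeewfqewfqewf1}.\ref{qrfghqirwogrgqfqwefq} gives $\kkG(A)\simeq \colim \kkGs(A')$ and similarly for $B$. Since the tensor product on $\KKG$ preserves colimits in each variable and $\kkGs$ is already symmetric monoidal, one then computes
$$ \kkG(A) \otimes \kkG(B) \;\simeq\; \colim_{A',B'} \kkGs(A') \otimes \kkGs(B') \;\simeq\; \colim_{A',B'} \kkGs(A' \otimes_? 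B')\, . $$
The hard part is to identify this last colimit with $\kkG(A \otimes_? B)$, which reduces to the assertion that the $C^{*}$-algebraic tensor product $\otimes_?$ commutes with directed unions of $G$-invariant $C^{*}$-subalgebras. This is classical for $\otimes_{\min}$ (using the faithfulness of the product on elementary tensors and a density argument) and also holds for $\otimes_{\max}$ (directly from its universal property); combined with s-finitariness of $\kkG$ this yields the desired equivalence and upgrades the lax structure to a strong one.
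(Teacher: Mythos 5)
Your overall architecture matches the paper's proof of Proposition \ref{togjoigerggwgergwgr1}: Ind-completion of the stably symmetric monoidal structure on $\KKGs$ gives the presentably symmetric monoidal structure on $\KKG$, the operadic/Day-convolution left Kan extension gives a lax symmetric monoidal refinement of $\kkG$, and s-finitariness reduces the strongness of the structure maps to a statement about separable subalgebras. The minimal tensor product case is also handled as in the paper: $\otimes_{\min}$ preserves isometric inclusions, so the subalgebras $A'\otimes_{\min}B'$ form a genuine subsystem of the separable invariant subalgebras of $A\otimes_{\min}B$, and a cofinality argument (every separable invariant $C\subseteq A\otimes_{\min}B$ sits inside some $A'\otimes_{\min}B'$) finishes that case.

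However, there is a genuine gap in your treatment of $\otimes_{\max}$. You dismiss the identification $\colim_{A',B'}\kkGs(A'\otimes_{\max}B')\simeq\kkG(A\otimes_{\max}B)$ as following "directly from the universal property" of $\otimes_{\max}$. It is true that $A\otimes_{\max}B\cong\colim_{A',B'}A'\otimes_{\max}B'$ as $C^{*}$-algebras (this is Lemma \ref{eriguhwiegugwergwerg}.\ref{weoigjweorrerggwergr}, itself not entirely formal), but this does not suffice: $\kkG$ is only known to be s-finitary, i.e.\ to commute with the filtered colimit over the poset of separable invariant \emph{subalgebras}, not with arbitrary filtered colimits. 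The maximal tensor product does not preserve injections, so the canonical maps $A'\otimes_{\max}B'\to A\otimes_{\max}B$ need not be isometric inclusions, and the system $(A'\otimes_{\max}B')_{A'\subseteq_{\sepa}A,\,B'\subseteq_{\sepa}B}$ is therefore not a system of subalgebras of $A\otimes_{\max}B$. To close this gap one must compare this system with the system of images $A'\bar{\otimes}_{\max}B'\subseteq A\otimes_{\max}B$ and show the two are isomorphic in $\Ind(\nCalg_{\sepa})$; the paper does this by observing that the kernel of $A'\otimes_{\max}B'\to A'\bar{\otimes}_{\max}B'$ is separable and is annihilated after enlarging $(A',B')$ to a suitable $(A'',B'')$, using that the poset of separable invariant subalgebras is countably filtered (Lemma \ref{eroigjowregwrege9}). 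Without this step your chain of equivalences does not terminate at $\kkG(A\otimes_{\max}B)$. (You also omit the verification that the unit map $1^{\otimes_{?}}_{\KKG}\to\kkG(\C)$ is an equivalence, though that part is routine.)
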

  
As an immediate consequence of   Proposition \ref{togjoigerggwgergwgr}
we can define for $?$ in $\{\min,\max\}$ an internal morphism functor
\begin{equation}\label{quhfiufhqewwefqwef}
\kkG_{?}(-,-) \colon (\KKG)^{\op}\times \KKG\to \KKG\, . 
\end{equation}
It is 
characterized by a natural equivalence 
 $\KKG(A,\kkG_{?}(B,C))\simeq \KKG(A\otimes_{?}B, C)$ for all $A,B,C$ in $\KKG$
 and preserves limits in both arguments. 
 
 \subsection{Change of groups functors}

In the following we consider various change of groups functors.    
If $H\to G$ is a homomorphism of groups, then we have an obvious restriction functor
$$\Res^{G}_{H} \colon \Fun(BG,\nCalg)\to \Fun(BH,\nCalg)\, .$$
If $H$ is a subgroup of $G$, then we have an induction functor
$$\Ind_{H}^{G} \colon \Fun(BH,\nCalg) \to \Fun(BG,\nCalg)	$$
which will be explained in Construction \ref{qroigjoerwgergrewgregweg}.
Finally, we have maximal and reduced crossed product functors
$$-\rtimes_{\max} G, -\rtimes_{r}G \colon  \Fun(BG,\nCalg)\to  \nCalg$$  whose details will be recalled in
 Construction \ref{qreogijqeroigergwgwegwegwergw}.
The following results say that these functors descend to functors (denoted by the same symbols) between the corresponding
 stable $\infty$-categories.

\begin{theorem}\label{reguergiweogwergegwergwerv}\mbox{}
\begin{enumerate}
\item  \label{reguergiweogwergegwergwerv1}There exists a factorization $$\xymatrix {\Fun(BG,\nCalg)\ar[r]^{\Res_{H}^{G}}\ar[d]^{\kkG}&\Fun(BH,\nCalg)\ar[d]^{\kkH}\\\KKG\ar@{..>}[r]^{\Res_{H}^{G}}&\KKH}$$
and  $\Res_{H}^{G} \colon \KKG\to \KKH$ preserves colimits and compact objects.
\item   \label{reguergiweogwergegwergwerv2}There exists a factorization $$\xymatrix {\Fun(BH,\nCalg)\ar[r]^{\Ind_{H}^{G}}\ar[d]^{\kkH}&\Fun(BG,\nCalg)\ar[d]^{\kkG}\\\KKH\ar@{..>}[r]^{\Ind_{H}^{G}}&\KKG}$$
and  $\Ind_{H}^{G} \colon \KKH\to \KKG$ preserves colimits and compact objects.
\item   \label{reguergiweogwergegwergwerv3} There exists a factorization 
$$\xymatrix {\Fun(BG,\nCalg)\ar[r]^-{ \rtimes_{?}G}\ar[d]^{\kkG}& \nCalg\ar[d]^{\kk}\\\KKG\ar@{..>}[r]^-{-\rtimes_{?}G}&\KK}$$ for $?\in \{r,\max\}$ 
and  $-\rtimes_{?}G \colon \KKG\to \KK$ preserves colimits and compact objects.
\end{enumerate}
\end{theorem}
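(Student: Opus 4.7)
The strategy proceeds uniformly for the three parts. In each case I first descend the given algebraic functor to the level of small stable $\infty$-categories $\KKGs$, $\KKHs$, and $\KKs$ by invoking the universal property of $\kkGs$ from Theorem~\ref{wtohwergerewgrewgregwrg1}; next I Ind-extend the resulting exact functor via Definition~\ref{wtrohjwrthrthehrth} to obtain a functor between the presentable stable $\infty$-categories; and finally I verify the commutativity of the asserted square on non-separable input using s-finitarity (Theorem~\ref{qeroigjqergfqeewfqewfqewf1}.\ref{qrfghqirwogrgqfqwefq}).

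For the descent step I check in each case that the composition of the underlying algebraic functor with the appropriate target $\kk$-functor is reduced, semiexact, and $\mathbb{K}$-stable. For $\Res^{G}_{H}$ this is immediate: any $G$-equivariant cpc split is a fortiori $H$-equivariant, and a $G$-equivariant isometric inclusion of non-zero separable $G$-Hilbert spaces is in particular such an $H$-equivariant inclusion. For $\Ind_{H}^{G}$ (Construction~\ref{qroigjoerwgergrewgregweg}) I would use the natural identification $\Ind_{H}^{G}(A\otimes K(V))\cong \Ind_{H}^{G}(A)\otimes K(\Ind_{H}^{G}(V))$, the fact that induction of non-zero Hilbert spaces is non-zero, and that induction preserves cpc splittability of extensions. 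For $-\rtimes_{?}G$ I would invoke that crossed products preserve cpc splits in both the reduced and the maximal variant, together with the Morita-type equivalence identifying $(A\otimes K(V))\rtimes_{?}G$ with the stabilization of $A\rtimes_{?}G$ for any non-zero separable $G$-Hilbert space $V$. Crucially, all three algebraic functors preserve separability because $G$ is countable.

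With the descent at the separable level in place, the Ind-extended functors $\KKG\to\KKH$, $\KKH\to\KKG$, and $\KKG\to\KK$ preserve all small colimits by construction, and send compact objects to compact objects because the underlying exact functors between essentially small stable $\infty$-categories preserve finite colimits (Remark~\ref{weoiguheijwogergfregwerf}). To establish the commutativity of each square on non-separable input I observe that $\Res^{G}_{H}$, $\Ind_{H}^{G}$, and $-\rtimes_{?}G$ preserve separability and commute with the filtered colimits of invariant separable subalgebras of the input; hence the two compositions in each square are s-finitary functors on $\Fun(BG,\nCalg)$ (respectively $\Fun(BH,\nCalg)$) which agree on the separable subcategory by construction, and therefore agree everywhere.

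The main obstacle is the verification of $\mathbb{K}^{G}$-stability for the crossed product: a general non-zero separable $G$-Hilbert space $V$ need not admit a $G$-fixed unit vector, so there is no direct corner embedding $A\hookrightarrow A\otimes K(V)$ available to witness the desired equivalence. One works around this via Fell's absorption $V\oplus (L^{2}(G)\otimes V)\cong L^{2}(G)\otimes V$ of $G$-Hilbert spaces, which reduces the claim to the case of the free $G$-Hilbert space $L^{2}(G)$, where $(A\otimes K(L^{2}(G)))\rtimes_{?}G$ is canonically Morita equivalent, hence $\KK$-equivalent, to $A\rtimes_{?}G$. A secondary subtlety is that for the reduced crossed product one must verify continuity along filtered colimits of $G$-invariant separable subalgebras, which is classical since such inclusions remain injective after passing to the regular representation.
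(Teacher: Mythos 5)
Your overall architecture (descend to $\KKGs$ via the universal property of Theorem~\ref{wtohwergerewgrewgregwrg1}, Ind-extend, then use s-finitarity to identify the extension with the algebraically defined functor on non-separable input) is exactly the paper's blueprint (Lemma~\ref{qoi3rgjowrtgwegwergwerg}), and your treatment of $\Res^{G}_{H}$ and, in spirit, of $-\rtimes_{?}G$ is sound. However, there is a genuine error in the induction case: the ``natural identification'' $\Ind_{H}^{G}(A\otimes K(V))\cong \Ind_{H}^{G}(A)\otimes K(\Ind_{H}^{G}(V))$ is false. Take $H=\{e\}$, $A=\C$, $V=\C$: the left-hand side is $C_{0}(G)$, which is commutative, while the right-hand side is $C_{0}(G)\otimes K(\ell^{2}(G))$. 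The correct statement is the projection formula of Lemma~\ref{rehoijerthtrhtrehtrheg}, $\Ind_{H}^{G}(B)\otimes K(W)\cong \Ind_{H}^{G}\bigl(B\otimes \Res^{G}_{H}K(W)\bigr)$, and this only applies when the Hilbert space $W$ carries a $G$-action. The whole difficulty of $\mathbb{K}^{H}$-stability of $\kkGs\circ\Ind^{G}_{H,s}$ is that a general non-zero separable $H$-Hilbert space $V$ is \emph{not} of the form $\Res^{G}_{H}W$; this is resolved either by citing the Meyer--Nest adjunction $(\Res^{G}_{H},\Ind^{G}_{H})$ on triangulated categories (the paper's main-line proof of Lemma~\ref{ewoigjeorgregerwgergew}) or by the stabilization trick of Remark~\ref{wegjoiergergerwge}: replace $\Ind^{G}_{H}$ by $\widetilde{\Ind}^{G}_{H}(B)=\Ind^{G}_{H}(B\otimes_{\min}\Res^{G}_{H}K(L^{2}(G)))$ and use the Fell-type absorption $V\otimes \Res^{G}_{H}L^{2}(G)\cong \Res^{G}_{H}(\Res^{H}V\otimes L^{2}(G))$ to force every $H$-Hilbert space into restricted form. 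Your proposal skips precisely this non-formal step by asserting a false isomorphism, so the induction case is not established.

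Two smaller points. For the crossed product, your Fell absorption statement $V\oplus(L^{2}(G)\otimes V)\cong L^{2}(G)\otimes V$ fails for finite $G$ (compare dimensions), and the reduction ``to the free $G$-Hilbert space'' does not by itself handle an arbitrary equivariant inclusion $V\to V'$; the paper instead proves the untwisting isomorphism $(A\otimes K(V))\rtimes_{?}G\cong (A\rtimes_{?}G)\otimes \Res^{G}(K(V))$ directly for every separable $G$-Hilbert space $V$ (Lemma~\ref{qeroigqfefewfqef}), which immediately converts $\mathbb{K}^{G}$-stability into ordinary $\mathbb{K}$-stability of $\kk$. Finally, your appeal to ``commuting with filtered colimits of separable subalgebras'' hides a real subtlety for $-\rtimes_{\max}G$: the maps $A'\rtimes_{\max}G\to A\rtimes_{\max}G$ need not be injective, which is why the paper introduces the $\Ind$-s-finitary condition (Definition~\ref{wtoijwrgergwrefwef}) and verifies it via preservation of filtered colimits for $\rtimes_{\max}$ versus preservation of isometric inclusions for $\rtimes_{r}$ (Lemma~\ref{qoifjqoifewqqfwedwed}); your sketch should at least acknowledge this dichotomy.
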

The functor in Assertion \ref{reguergiweogwergegwergwerv3}  {induces on mapping spectra a} spectrum level version of Kasparov's descent morphism \cite{kasparovinvent}.

Since $\KKGs$ is idempotent complete by Theorem \ref{qroifjeriogerggergegegweg}.\ref{qoirwfjhqoierggrg4}, the functor $y^{G}$ {in \eqref{ewgfuqgwefughfqlefiewfqwe}} identifies $\KKGs$ with the full subcategory of $\KKG$ of compact objects{, see Remark \ref{weoiguheijwogergfregwerf}.} 
Hence the  assertion   that the functors preserve compact objects means that they induce functors between the 
separable versions of the respective $\KK$-categories. It then follows from the assertion about preservation of colimits
that the functors  {appearing in Theorem~\ref{reguergiweogwergegwergwerv}} are  equivalent to the canonical extensions of their  separable versions.
The proof of Theorem \ref{reguergiweogwergegwergwerv} will be given  in Section~\ref{reogijweogerewg}.

There exists a canonical natural transformation
\begin{equation}\label{wergekgkjgergewg}
 \iota \colon \id\to \Res^{G}_{H}\circ \Ind_{H}^{G} \, ,
\end{equation}
 of endofunctors of $\Fun(BH,\nCalg)$ which will be explained in detail after \eqref{wervwihiowecwecdscac}. It looks like the unit of an adjunction, and it becomes one  {after application of $\kkG$ (see \eqref{qewfoijoqwefewqfwefqewf} below), but it is not one before. }
 The transformation $\iota$  induces the first transformation   of functors in
\begin{equation}\label{fqwfoiofqwefqewfqewf}
{-}\rtimes_{?} H\to \Res^{G}_{H}\circ \Ind^{G}_{H}(-)\rtimes_{?}H\to   \Ind^{G}_{H}(-)\rtimes_{?}G\, ,
\end{equation}
where the second is canonically induced by the inclusion of $H$ into $G$, see \eqref{regerwgwgwergergwergerg}.


We now   assume that $H$ is a finite group. If $A$ is in $\nCalg$, then we
consider the homomorphism
$$\epsilon_{A} \colon A\to \Res_{H}(A)\rtimes H\,, \quad a\mapsto\frac{1}{|H|}  \sum_{h\in H} (a,h)\,,$$ 
 {where $\Res_{H}(A)$ denotes $A$ equipped with the trivial $H$-action} {and we refer to Construction \ref{qreogijqeroigergwgwegwegwergw} for the notation $(a,h)$ for elements in $\Res_{H}(A)\rtimes H$}.
The family 
$\epsilon=(\epsilon_{A})_{A\in \nCalg}$ is a natural transformation
\begin{equation}\label{wregoijwoierfwerfwerf1}
\epsilon \colon \id\to \Res_{H}(-)\rtimes H
\end{equation}
of endofunctors of $\nCalg$.

Let $B$ be in $\nCalg$  {and $G$ as previously a countable discrete group}. Then we have a canonical homomorphism 
 $$\lambda_{B} \colon \Res_{G}(B)\rtimes_{\max}G\to B$$
of $C^{*}$-algebras  which corresponds to the covariant representation 
$(\id_{B},\triv)$ consisting of the identity of $B$ and the trivial representation of $G$. The family 
$\lambda=(\lambda_{B})_{B\in \nCalg}$ is a natural transformation
\begin{equation}\label{qweoifjiowefwqfewfdewd}
\lambda \colon \Res_{G}(-)\rtimes_{\max}G\to \id
\end{equation}
of endofunctors of $\nCalg$.

The Assertions \ref{qrougoegwergreegerewg} and   \ref{gjeriogjreerwgwregwerg} 
of the following theorem 
are     $\infty$-categorical  level versions of Green's Imprimitivity Theorem \cite{greens} (for $?=\max$), \cite{kasparovinvent} and the Green--Julg theorem \cite{julg}. 
Both statements generalize the versions for the triangulated categories  stated in \cite{MR2193334}.
Assertion  \ref{werpogkwergrwegwef} is known as the dual Green--Julg theorem.  
 \begin{theorem}\label{wtgijoogwrewegewgrg}
\mbox{} 
\begin{enumerate}
\item\label{qrougoegwergreegerewg}  The natural transformation \eqref{wergekgkjgergewg} induces the unit of    
an adjunction
\begin{equation}\label{qewfoijoqwefewqfwefqewf}
\Ind_{H}^{G}:\KKH\rightleftarrows \KKG:\Res^{G}_{H}\, .
\end{equation}
\item  \label{eogiowpergwrewrege} The transformation \eqref{fqwfoiofqwefqewfqewf} naturally induces an equivalence of functors 
$${-}\rtimes_{?} H\to \Ind_{H}^{G}(-)\rtimes_{?}G \colon \KKH\to \KK$$
for $?$ in $\{{r},\max\}$.
\item   \label{gjeriogjreerwgwregwerg}
If $H$ is  finite, then  the natural transformation 
\eqref{wregoijwoierfwerfwerf1} induces the unit of an adjunction   
$${\Res_{H}}
:\KK\rightleftarrows \KKH: -\rtimes H\, .$$ 
\item \label{werpogkwergrwegwef} 
The natural transformation \eqref{qweoifjiowefwqfewfdewd}
induces the counit of an adjunction
$$-\rtimes_{\max} G:\KKG \rightleftarrows  \KK :{\Res_{G}}\, .$$
\end{enumerate}
\end{theorem}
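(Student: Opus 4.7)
The plan is to lift four classical Kasparov-categorical adjunctions (Kasparov's induction-restriction adjunction, Green's imprimitivity theorem, Green--Julg and its dual) to the setting of stable $\infty$-categories. In each case a natural transformation---$\iota$, the composition \eqref{fqwfoiofqwefqewfqewf}, $\epsilon$, or $\lambda$---is given at the level of $C^{*}$-algebras, and by Theorem \ref{reguergiweogwergegwergwerv} combined with functoriality of Dwyer--Kan localization we obtain induced natural transformations between functors of the relevant $\KK$-categories. To verify that each is a unit or counit, we invoke the Yoneda-style criterion: the induced natural map of mapping spectra, e.g.\
\begin{equation*}
\map_{\KKG}(\Ind_{H}^{G} A, B) \longrightarrow \map_{\KKH}(\Res^{G}_{H}\Ind_{H}^{G} A, \Res^{G}_{H} B) \xrightarrow{\iota_{A}^{*}} \map_{\KKH}(A, \Res^{G}_{H} B)
\end{equation*}
for Assertion \ref{qrougoegwergreegerewg}, must be an equivalence of spectra.

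Since all functors in Theorem \ref{reguergiweogwergegwergwerv} preserve small colimits, and the $\infty$-categories $\KKG$, $\KKH$ and $\KK$ are compactly generated by their essentially small subcategories $\KKGs$, $\KKHs$ and $\KKs$ (Remark \ref{weoiguheijwogergfregwerf}), it suffices to check the equivalence when $A$ and $B$ are separable. On separable objects, Proposition \ref{eroigowregwergregwgreg1} identifies $\pi_{*}$ of the mapping spectra with Kasparov's $\Z$-graded groups $\KKth^{G}_{*}$ and $\KKth^{H}_{*}$. Classical theorems---Kasparov's induction-restriction adjunction \cite{MR2193334} for Assertion \ref{qrougoegwergreegerewg}, Green's imprimitivity theorem for Assertion \ref{eogiowpergwrewrege} (which applies to both $?=r$ and $?=\max$), the Green--Julg theorem for Assertion \ref{gjeriogjreerwgwregwerg}, and its dual for Assertion \ref{werpogkwergrwegwef}---exhibit the corresponding adjunction bijections on $\KKth$-groups. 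Combining naturality of these classical isomorphisms with the naturality of $\iota$, $\epsilon$ and $\lambda$, we conclude that the induced map of mapping spectra is a $\pi_{*}$-isomorphism, hence an equivalence.

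The main obstacle is checking that the map of mapping spectra arising from our natural transformations $\iota$, $\epsilon$ or $\lambda$ recovers the classical Kasparov adjunction isomorphism on $\pi_{0}$. This is a naturality and representability verification: a natural isomorphism of representable $\KKth$-bifunctors is determined by its behaviour on identity classes, so one must verify that the element of $\KKth$ attached by our abstract construction to the identity of the source object agrees with the classical construction of the unit (or counit). The use of $\otimes_{\max}$ in Assertion \ref{werpogkwergrwegwef} is essential: the universal property of the maximal crossed product is precisely what allows the covariant pair $(\id_{B},\triv)$ to define the counit $\lambda_{B}$, and it is what makes $-\rtimes_{\max} G$ a left adjoint to $\Res_{G}$, whereas $-\rtimes_{r}G$ admits no such adjoint in general. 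Once this identification is established in each of the four cases, the reduction to compact objects and the $\pi_{*}$-comparison with Kasparov groups complete the proof.
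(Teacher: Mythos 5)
Your overall strategy---reduce to compact (separable) objects in each variable, pass to $\pi_{*}$ of mapping spectra via Proposition \ref{eroigowregwergregwgreg1}, and identify the resulting maps with the classical Kasparov-level isomorphisms---is exactly the paper's, and your treatment of Assertion \ref{qrougoegwergreegerewg} (citing \cite[(20)]{MR2193334}) and of Assertion \ref{eogiowpergwrewrege} is sound; for the latter the paper argues even more directly, noting that the comparison map is an honest morphism of $C^{*}$-algebras which by Green's imprimitivity theorem is a Morita equivalence, hence a $\kk_{0}$-equivalence inverted by $\kk$, so no mapping-spectrum bookkeeping is needed there.

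The genuine gap is in Assertions \ref{gjeriogjreerwgwregwerg} and \ref{werpogkwergrwegwef}. You correctly name the ``main obstacle'' (identifying the map on $\pi_{0}$ induced by $\epsilon$, resp.\ $\lambda$, with the classical Green--Julg, resp.\ dual Green--Julg, isomorphism), but you then discharge it by appealing to ``the classical theorems'' and a naturality/representability argument. The difficulty is that the classical references only establish $\KKth^{H}(\C,B)\cong\KKth(\C,B\rtimes H)$ and $\KKth^{G}(A,\C)\cong\KKth(A\rtimes_{\max}G,\C)$ (\cite[Thm.~20.2.7]{blackadar} treats only $A=\C$, resp.\ $B=\C$), whereas what is needed is a natural isomorphism $\KKth^{H}(\Res_{H}(A),B)\cong\KKth(A,B\rtimes H)$ for arbitrary separable $A$, and likewise $\KKth^{G}(A,\Res_{G}(B))\cong\KKth(A\rtimes_{\max}G,B)$ for arbitrary separable $B$, \emph{induced by the specific maps} $(-\rtimes H)\circ\kk(\epsilon)^{*}$ and $\kk(\lambda)_{*}\circ(-\rtimes_{\max}G)$. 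There is no off-the-shelf natural bifunctor isomorphism to compare against via Yoneda; one has to construct it. The paper does this by hand: for Green--Julg it traces an $H$-equivariant Kasparov $(\Res_{H}(A),B)$-module $((M,\rho),\phi,F)$ through the map to the $(A,B\rtimes H)$-module $(M\rtimes H,(\phi\rtimes H)\circ\epsilon_{A},F\rtimes H)$, compresses by the averaging projection $\pi_{1}$ to exhibit the simpler representative $(M',\phi,F)$, and then constructs an explicit inverse (and similarly for the dual statement, using that classes in $\KKth(A\rtimes_{\max}G,B)$ are represented by essential modules). Without supplying this bimodule-level argument, your proof of Assertions \ref{gjeriogjreerwgwregwerg} and \ref{werpogkwergrwegwef} is not complete.
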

Since $H$ is finite, the crossed product in Assertion \ref{gjeriogjreerwgwregwerg} need not be decorated. It is simply the algebraic  crossed product which happens to be equal to the reduced and maximal one in this case.  
Note that Assertion \ref{eogiowpergwrewrege} in the case $?=\max$ is also a consequence of Assertions \ref{qrougoegwergreegerewg} and 
\ref{werpogkwergrwegwef}.
The proof of this theorem will be completed 
in Section \ref{qwfefqqefqffeqwewrgpeoj2oigrtgwffwffqfqewfef}.

Note that $\Ind_{H}^{G}(\C)\cong C_{0}(G/H)$. Let
\begin{equation}\label{eq_ind_res_adjunction_IC}
r^{G}_{H} \colon \KKG(C_{0}(G/H),-)\simeq \KKH(\C,\Res^{G}_{H}(-))
\end{equation}
denote the equivalence of the mapping spectrum functors given by the adjunction in the Theorem \ref{wtgijoogwrewegewgrg}.\ref{qrougoegwergreegerewg}.  
Furthermore, let
$$\GJ{H} \colon \KKH(\C,\Res^{G}_{H}(-))\simeq
\KK(\C,\Res^{G}_{H}(-)\rtimes H)$$
 {denote} the equivalence of the mapping spectrum functors  given by the adjunction 
in the Theorem \ref{wtgijoogwrewegewgrg}.\ref{gjeriogjreerwgwregwerg}.

\begin{kor}\label{qeroigjwoegwergergwerg}
If $H$ is a finite subgroup of $G$, then we have an equivalence
 \begin{equation}\label{weogjwoegwegergwergewrg_intro} 
 \GJ{H}\circ r^{G}_{H}\colon \KKG(C_{0}(G/H),-)\to  { \Kast(\Res_H^G(-)\rtimes H)}
 \end{equation}
 of functors from $\Fun(BG,\nCalg)$ to $\Sp^{\la}$.
\end{kor}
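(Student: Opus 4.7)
The plan is to simply chain the two adjunction equivalences already in hand, after identifying the object $C_{0}(G/H)$ with the induction $\Ind_{H}^{G}(\C)$ of the trivial $H$-$C^{*}$-algebra. The main input has been supplied by Theorem~\ref{wtgijoogwrewegewgrg}; the corollary only has to glue two of its adjunctions together. I do not expect any serious obstacle, the only thing to take care of is the naturality in the argument $A$ in $\Fun(BG,\nCalg)$ rather than just in $\KKG$.

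First I would verify the identification $C_{0}(G/H)\cong \Ind_{H}^{G}(\C)$ in $\Fun(BG,\nCalg)$. This is an instance of the description of $\Ind_{H}^{G}$ from Construction \ref{qroigjoerwgergrewgregweg}: one induces the trivial $H$-algebra $\C$, realized as the $C^{*}$-algebra of $H$-invariant, compactly supported (up to vanishing at infinity) continuous $\C$-valued functions on $G$, which is naturally $G$-equivariantly isomorphic to $C_{0}(G/H)$. Applying $\kkG$ to this isomorphism provides the identification in $\KKG$.

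Next, combining Theorem~\ref{wtgijoogwrewegewgrg}.\ref{qrougoegwergreegerewg} with the identification above, the unit $\iota$ at $\C$ induces the equivalence of mapping spectrum functors
\[ r_{H}^{G}\colon \KKG(C_{0}(G/H),-)\simeq \KKG(\Ind_{H}^{G}(\C),-)\simeq \KKH(\C,\Res^{G}_{H}(-))\, , \]
while Theorem~\ref{wtgijoogwrewegewgrg}.\ref{gjeriogjreerwgwregwerg} yields, via its counit $\epsilon$ at $\C$ (observing that $\Res_{H}(\C)=\C$), the equivalence of mapping spectrum functors
\[ \GJ{H}\colon \KKH(\C,\Res^{G}_{H}(-))\simeq \KK(\C,\Res^{G}_{H}(-)\rtimes H)=\Kast(\Res^{G}_{H}(-)\rtimes H)\, , \]
where the last equality is the definition \eqref{erwfoijoiwefrweffwe} of $\Kast$. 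The composite $\GJ{H}\circ r_{H}^{G}$ is thus an equivalence of functors $\KKG\to \Sp^{\la}$.

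Finally, both $r^{G}_{H}$ and $\GJ{H}$ have been introduced before the statement as equivalences of functors on $\KKG$, respectively $\KKH$. Precomposing with $\kkG\colon \Fun(BG,\nCalg)\to \KKG$ (together with its $H$-version and the factorization of $\Res^{G}_{H}$ and $-\rtimes H$ through the $\kk$-functors provided by Theorem~\ref{reguergiweogwergegwergwerv}) automatically extends the composite to the desired equivalence of functors on $\Fun(BG,\nCalg)$. This finishes the proof.
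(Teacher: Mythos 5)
Your proposal is correct and coincides with the paper's (essentially tautological) argument: the corollary is stated immediately after $r^{G}_{H}$ and $\GJ{H}$ are defined as the mapping-spectrum equivalences coming from the adjunctions of Theorem~\ref{wtgijoogwrewegewgrg}.\ref{qrougoegwergreegerewg} and \ref{wtgijoogwrewegewgrg}.\ref{gjeriogjreerwgwregwerg}, using $C_{0}(G/H)\cong \Ind_{H}^{G}(\C)$ and $\Kast=\KK(\C,-)$, exactly as you do. Your extra care about naturality over $\Fun(BG,\nCalg)$ via the factorizations of Theorem~\ref{reguergiweogwergegwergwerv} is a welcome, if routine, addition.
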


By Theorem \ref{qeroigjqergfqeewfqewfqewf1}.\ref{weroigjwergwergwrgwgre1} the functor $\kkG$ sends exact sequences in $\Fun(BG,\nCalg)$  admitting equivariant cpc splits to fibre sequences. As an immediate  consequence,  the functor
$$\KKG(A,-) \colon \Fun(BG,{\nCalg})\to \Sp^{\la}$$
has the same property for every $A$ in $\KKG$. The Theorem \ref{wtoihgwgregegwerg}
shows that under certain restrictions on $A$ this functor in fact sends all exact sequences to fibre sequences. The conditions are formulated  so that   we can deduce this exactness using Corollary \ref{qeroigjwoegwergergwerg} from the fact that the usual $K$-theory functor for $C^{*}$-algebras in \eqref{erwfoijoiwefrweffwe}
sends all exact sequences to fibre sequences. 

Recall that a thick subcategory of a stable $\infty$-category
is a full stable subcategory which is closed under
taking retracts. A localizing subcategory of a presentable
stable $\infty$-category is a full stable subcategory which is closed under
all colimits.

\begin{ddd}\label{qrogiqoirgergwergergwergwergwrg} \mbox{}
\begin{enumerate}
\item \label{ergwgergwergeg25t} The objects of  the thick subcategory of 
  $\KKG$  generated by the objects $\kkG( C_{0}(G/H))$ for all finite subgroups $H$ of $G$
are called $G$-proper.
\item  \label{weroigjwegrfrw} The objects of  the  localizing subcategory of 
  $\KKG$  generated by the $G$-proper objects   are called ind-$G$-proper.
\end{enumerate}
\end{ddd}


Note that $G$-proper objects are ind-$G$-proper.
The following result provides examples of $G$-proper objects in $\KKG$.
\begin{prop}[Proposition \ref{eroigowregwergregwgreg1neu}]\label{eroigowregwergregwgreg}
If $X$  is in $\pGTop$ and  homotopy equivalent   (in $\pGTop$)  to
a finite $G$-CW complex with finite stabilizers, then 
$\kkG(C_{0}(X))$ is $G$-proper.
\end{prop}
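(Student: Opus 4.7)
The plan is to first reduce to the case where $X$ itself is a finite $G$-CW complex with finite stabilizers. This uses that $\kkG \circ C_0 \colon \ppGTop \to \KKG$ carries proper $G$-homotopy equivalences to equivalences, which one deduces from Theorem~\ref{wergojowtpgwergregwreg}.\ref{qoeirjgwergwergweg1} via Yoneda applied to $K^{\lf}_A(X) = \KKG(\kkG(C_0(X)), A)$. We then proceed by induction on the number of equivariant cells of $X$. The base case (empty $X$) is immediate from reducedness of $\kkG$ (Theorem~\ref{qeroigjqergfqeewfqewfqewf1}.\ref{qeirojqwoifqwefwefqewfqw}), since $\kkG(0) \simeq 0$ lies in every thick subcategory.

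For the inductive step, write $X = Y \cup_\phi (G/H \times D^n)$ with $H$ a finite subgroup of $G$ and $Y$ a $G$-CW subcomplex with one fewer cell. An equivariant collar of the attached cell supplies a $G$-invariant tubular neighbourhood of $Y$ in $X$, so by Proposition~\ref{qeroifjqeriofewqewdwedqwdqewdqwedwed}(2) the subset $Y$ is split-closed in $X$. Semiexactness of $\kkG$ (Theorem~\ref{qeroigjqergfqeewfqewfqewf1}.\ref{weroigjwergwergwrgwgre1}) applied to the semisplit exact sequence
\[
0 \to C_0(G/H \times \mathbb{R}^n) \to C_0(X) \to C_0(Y) \to 0
\]
yields a fibre sequence in $\KKG$. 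Since $G$-proper objects form a thick subcategory—in particular closed under fibres, cofibres and shifts—and $\kkG(C_0(Y))$ is $G$-proper by the inductive hypothesis, it suffices to verify that $\kkG(C_0(G/H \times \mathbb{R}^n))$ is $G$-proper.

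For this, I would invoke the symmetric monoidal structure of Proposition~\ref{togjoigerggwgergwgr}. Equipping $\mathbb{R}^n$ with the trivial $G$-action, the exterior product decomposition gives
\[
\kkG(C_0(G/H \times \mathbb{R}^n)) \simeq \kkG(C_0(G/H)) \otimes \kkG(C_0(\mathbb{R}^n)).
\]
A standard cone argument—applied to the semisplit exact sequence $0 \to C_0(\mathbb{R}) \to C_0((0,1]) \to \mathbb{C} \to 0$, together with contractibility of the cone $C_0((0,1])$ via homotopy invariance (Theorem~\ref{qeroigjqergfqeewfqewfqewf1}.\ref{jfowejfoiwefewfqwecvqwcwecwcew})—identifies $\kkG(C_0(\mathbb{R}))$ with $\Sigma^{-1} \kkG(\mathbb{C})$, and iterating shows $\kkG(C_0(\mathbb{R}^n)) \simeq \Sigma^{-n} \kkG(\mathbb{C})$. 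Hence $\kkG(C_0(G/H \times \mathbb{R}^n)) \simeq \Sigma^{-n} \kkG(C_0(G/H))$, which is $G$-proper. The principal technical obstacle is ensuring the semisplit property at each attaching step, which reduces via Proposition~\ref{qeroifjqeriofewqewdwedqwdqewdqwedwed} to the construction of equivariant tubular neighbourhoods compatible with the cell structure; for finite $G$-CW complexes with finite stabilizers this is standard, but should be carefully verified.
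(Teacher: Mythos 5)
Your proof is correct, and its overall skeleton --- reduction to an actual finite $G$-CW complex via homotopy invariance, induction on the number of $G$-cells, split-closedness of the subcomplex via the tubular-neighbourhood criterion of Proposition \ref{qeroifjqeriofewqewdwedqwdqewdqwedwed}, and the resulting fibre sequence combined with thickness of the $G$-proper subcategory --- is exactly the paper's. The one place you diverge is in identifying $\kkG(C_{0}(G/H\times \R^{n}))$ with $\Sigma^{-n}\kkG(C_{0}(G/H))$: you invoke the symmetric monoidal refinement of $\kkG$ (Proposition \ref{togjoigerggwgergwgr}) to split off $\kkG(C_{0}(\R^{n}))$ and then compute the latter as $\Sigma^{-n}\kkG(\C)$ from the cone sequence $0\to C_{0}(\R)\to C_{0}((0,1])\to \C\to 0$, contractibility of the cone, and bi-exactness of $\otimes_{?}$. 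The paper instead proves the equivalence $\Sigma^{n}\kkGC(\R^{n}\times X)\simeq \kkGC(X)$ directly by an excision argument, decomposing $\R^{n}\times X$ along the two closed half-spaces $[0,\infty)\times \R^{n-1}\times X$ and $(-\infty,0]\times \R^{n-1}\times X$ and using that $\kkGC$ vanishes on these by Theorem \ref{wetoighjwtiogewgregregweg}.\ref{weroigowegrregerwg2}. Both routes are legitimate given what is available at this point in the paper; yours leans on the heavier monoidal machinery but makes the desuspension of $C_{0}(\R)$ explicit without Bott periodicity, while the paper's half-space argument stays entirely within the excision and vanishing axioms of $\kkGC$. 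Your closing caution about equivariant collars is reasonable but the paper treats this exactly as you do, by citing Proposition \ref{qeroifjqeriofewqewdwedqwdqewdqwedwed} without further elaboration.
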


Let $P$ be an object of $\kkG$.
 
\begin{theorem}[Theorem \ref{opcsjoisjdcoijsdcoiasdcasdcadc}]\label{wtoihgwgregegwerg} \mbox{}
\begin{enumerate}
\item \label{ergowejworefrefwerfwerfrefw}
If $P$   is  ind-$G$-proper, then the functor
$$\KKG(P,-) \colon \Fun(BG,\nCalg)\to \Sp^{\la}$$
sends all exact sequences to fibre sequences.
\item \label{ergowejworefrefwerfwerfrefw1} If $P$ is $G$-proper, then the functor $$\KKG(P,-) \colon \Fun(BG,\nCalg)\to \Sp^{\la}$$
preserves filtered colimits. 
\end{enumerate}
\end{theorem}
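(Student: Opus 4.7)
The strategy for both parts is to reduce to the generators of the respective (ind\nobreakdash-)proper subcategories by a standard closure argument, and then to invoke Corollary \ref{qeroigjwoegwergergwerg} to turn the question into a classical statement about $C^*$-algebra $K$-theory.

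For Assertion \ref{ergowejworefrefwerfwerfrefw}, let $\mathcal{P}$ denote the full subcategory of $\KKG$ consisting of those $P$ such that $\KKG(P,-) \colon \Fun(BG,\nCalg)\to \Sp^{\la}$ sends every exact sequence to a fibre sequence. Because $\KKG(-,A)$ converts colimits into limits, and limits of fibre sequences in $\Sp^{\la}$ are fibre sequences, $\mathcal{P}$ is closed under colimits in $\KKG$; it is clearly closed under shifts and retracts, so it is a localizing subcategory. By Definition \ref{qrogiqoirgergwergergwergwergwrg}.\ref{weroigjwegrfrw} it thus suffices to show that $\kkG(C_0(G/H))\in \mathcal{P}$ for every finite subgroup $H$ of $G$. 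By Corollary \ref{qeroigjwoegwergergwerg} this functor is equivalent to
$$\Kast\bigl(\Res^{G}_{H}(-)\rtimes H\bigr)\colon \Fun(BG,\nCalg)\to \Sp^{\la}.$$
The restriction $\Res^{G}_{H}$ is exact as a forgetful functor; for finite $H$ the crossed product $-\rtimes H$ is a finite algebraic construction on $\bigoplus_{h\in H}(-)$ (in particular the maximal and reduced variants agree) and is exact; and $\Kast$ sends every short exact sequence of $C^*$-algebras to a fibre sequence, which is the spectrum-level formulation of the classical six-term exact sequence in operator $K$-theory. Composing these three exactness statements yields the claim for the generators, and hence for all of $\mathcal{P}$.

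For Assertion \ref{ergowejworefrefwerfwerfrefw1}, let $\mathcal{Q}\subseteq \KKG$ denote the full subcategory on those $P$ for which $\KKG(P,-)$ preserves filtered colimits along the canonical functor $\kkG \colon \Fun(BG,\nCalg)\to \KKG$. Since finite limits commute with filtered colimits in $\Sp^{\la}$, the subcategory $\mathcal{Q}$ is closed under finite (co)limits; it is also closed under shifts and retracts, and is therefore thick. By Definition \ref{qrogiqoirgergwergergwergwergwrg}.\ref{ergwgergwergeg25t} it is enough to verify that $\kkG(C_0(G/H))\in \mathcal{Q}$ for every finite subgroup $H$. Using Corollary \ref{qeroigjwoegwergergwerg} again, this reduces to the statements that $\Res^{G}_{H}$, the crossed product $-\rtimes H$ for finite $H$, and the functor $\Kast \colon \nCalg \to \Sp^{\la}$ each preserve filtered colimits: the first two because they are described by purely algebraic finitary formulas, and the last by the standard continuity of operator $K$-theory.

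The main technical input is the adjunction equivalence of Corollary \ref{qeroigjwoegwergergwerg}, which moves the problem from the abstract category $\KKG$ to classical $K$-theory of $C^*$-algebras; once this is in place the argument is formal. The subtlest point to single out is that $\Kast$ converts \emph{every} short exact sequence, not only the semisplit ones, into a fibre sequence; this is the place where ordinary $K$-theory behaves better than $\KKth$-theory, and it is precisely what allows us to upgrade the semiexactness of $\kkG$ from Theorem \ref{qeroigjqergfqeewfqewfqewf1}.\ref{weroigjwergwergwrgwgre1} to the full exactness asserted here for $\KKG(P,-)$ with $P$ ind\nobreakdash-$G$\nobreakdash-proper.
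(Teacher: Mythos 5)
Your proposal is correct and follows essentially the same route as the paper: both arguments observe that the relevant subcategory of $\KKG$ is localizing (resp.\ thick), reduce to the generators $\kkG(C_0(G/H))$ for finite $H$, and then use Corollary \ref{qeroigjwoegwergergwerg} to identify $\KKG(C_0(G/H),-)$ with $\Kast(\Res^G_H(-)\rtimes H)$, concluding from the exactness (resp.\ continuity) of $\Res^G_H$, $-\rtimes H$ and $\Kast$. Your additional spelled-out justifications for why the subcategories are closed under the required operations are consistent with what the paper leaves implicit.
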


 \begin{rem}\label{weriogjoewgrffwerf}{
We define the
 thick subcategory of $G$-nuclear objects 
of $\KKG$  generated by objects of the form $\kkG(\Ind_{H}^{G}(A))$ for finite subgroups $H$ and  nuclear $A$  in $\nCalg_{\sepa}$ equipped with the trivial $H$-action. We further define the localizing subcategory of $\KKG$
of ind-$G$ nuclear objects generated by the $G$-nuclear objects.
 Since $\C$ is nuclear,  (ind)-$G$-proper objects are (ind)-$G$-nuclear.
It follows from \cite{Skand} that the functor
$\KKs(A,-):\nCalg_{\sepa}\to \Sp$ sends all exact sequences to fibre sequences provided $A$ is nuclear. 
 Using this fact one can show that the assertions of Theorem \ref{wtoihgwgregegwerg}
remain true (with essentially the same proof) with  (ind)-$G$-proper replaced by  
 (ind)-$G$-nuclear.}

 {For instance, if $X$  in $\pTop$ is separable and $H$ is a finite subgroup of $G$, then 
$\kkG(C_{0}(G/H\times X))$   is $G$-nuclear, but  not necessarily (ind)-$G$-proper. \hB}
\end{rem}


\subsection{Extensions to \texorpdfstring{$C^*$}{Cstar}-categories}

Again motivated by the applications in \cite{bel-paschke}, we extend the functor
$\kkG$ from $C^{*}$-algebras to $C^{*}$-categories with $G$-action. 
We refer to the beginning of Section \ref{ergoijwogergregwergwrg} for a more detailed introduction to $C^{*}$-categories.
We have a  fully faithful inclusion 
 of the category $\nCalg$ of {(possibly non-unital)} $C^{*}$-algebras into the category $\nCcat$ of small  (possibly non-unital) $C^{*}$-categories  which considers a $C^{*}$-algebra as a $C^{*}$-category with a single object.
 This inclusion fits into an adjunction 
  \begin{equation}\label{rwegkjnkjgngjerkjwgwergregegwrege}
A^{f}:\nCcat\rightleftarrows \nCalg:\incl
\end{equation} 
 {first considered} by \cite{joachimcat} {(see \cite{crosscat} for the non-unital case).}  

\begin{ddd}\label{jgorijgqoiqfewfqwfewf}
We define the functor
$$\kkGA \colon \Fun(BG,\nCcat)\to \KKG$$
as the composition
$$\Fun(BG,\nCcat)\xrightarrow{A^{f}} \Fun(BG,\nCalg)\xrightarrow{\kkG} \KKG\, .$$
\end{ddd}

{Let $\bC$ be in $\nCcat$.
\begin{ddd}\label{dqciojsaoidcjasdoc}
We 
 call $\bC$   separable if $\bC$ has countably many objects and all morphism spaces of $\bC$ are separable. \end{ddd}}

The following extends 
  Definition \ref{wtgpkowegrrefrwferf} from $C^{*}$-algebras to $C^{*}$-categories.
  
  \begin{ddd}  \label{wergoijweiogerfrefe}   {A functor $F$ defined on $\Fun(BG,\nCcat)$ is called }
  s-finitary, if for every $\bC$ in $\Fun(BG,\nCcat)$ the canonical map
\begin{equation}\label{eq_defn_sfinitary}
\colim_{\bC'} F(\bC')\to F(\bC)
\end{equation}
is an equivalence, where $\bC'$ runs through the poset of {separable
$G$-invariant subcategories of $\bC$}. 
\end{ddd}
As in the case of $C^{*}$-algebras, we added the letter `s' (for separable), since this notion differs from the definition of a finitary functor in \cite[Def.\ {13.7}]{cank}. The latter  requires the preservation of all filtered colimits. 
{As an example, using that $G$ is countable one can check that the identity functor on $\Fun(BG,\nCcat)$ is $s$-finitary.}

In the following theorem we list the properties of the functor $\kkGA$ for $C^{*}$-categories.  {For notions appearing in its statement we  {refer to} the following sources: unitary equivalence: \cite[Def.\ 3.19]{cank}, weak Morita equivalence:  \cite[Def.\ 18.3]{cank} and Definition \ref{igjweogierfwerfwerf}, exact sequence: \cite[Def.\ 8.5]{crosscat}, flasque: Definition \ref{weiogjegewrrferfwef} and  \cite[Def.\ 11.3]{cank}, maximal crossed product $-\rtimes G$: \cite[Def.\ 5.9]{crosscat}, reduced crossed product $-\rtimes_{r}G$:  \cite[Thm.\ 12.1]{cank}
 and relative Morita equivalence:  \cite[Def.\ 17.1]{cank}.}
\begin{theorem}\label{qroihjqiofewfqwefqwefqwefqwefqewfq}\mbox{}
\begin{enumerate}
\item  \label{ergoijogwegefwerf} The functor $\kkGA$ is s-finitary.
\item \label{oijqoifefdwefwedqwdqewdqe} The functor $\kkGA$ sends unitary 
{equivalences 
to} 
equivalences. 
\item\label{fiuqwehfiewfeewqfwefqwefqwefqewf} 
The functor $\kkGA$ 
sends weak Morita equivalences to equivalences.
\item \label{wtigjoepgrgrggergwegwergrweg} We have an equivalence
\[\kkA( -\rtimes_{ {?}} G)\simeq  (-\rtimes_{ {?}} G)\circ  \kkGA\]
of functors 
$\Fun(BG,\nCcat)\to \KK$ for $?\in \{r,\max\}$. 
\item\label{efoijqweofiqofweewfewfqfqfefe} If $  P$ in $\KKG$ is ind-$G$-proper, then the functor $\KKG( P, \kkGA(-))$
sends all exact sequences in $\Fun(BG,\nCcat)$ to fibre sequences. 
\item\label{ergoijweiogegfrfwfrefwf} If $P$ in {$\KKG$} is $G$-proper, then
$\KKG(P,\kkGA(-))$ preserves filtered colimits.
\item \label{erogijqorgwefqfewfqfewfqef}  {
If $  P$  in $\KKG$ is ind-$G$-proper,   then  $\KKG(  P,   \kkGA(-))$ annihilates flasques in $\Fun(BG,\Ccat)$.}
  \item \label{ifejhgiowergerdsfvs}
{If $P$ in $\KKG$ is ind-$G$-proper, then   $\KKG(P,\kkGA(-)) $ 
sends relative Morita equivalences in $\Fun(BG,\nCcat)$ to
 equivalences.}
\end{enumerate}
\end{theorem}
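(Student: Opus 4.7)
The plan is to exploit the factorization $\kkGA = \kkG \circ A^{f}$ from Definition \ref{jgorijgqoiqfewfqwfewf} and to reduce each assertion to the corresponding property of $\kkG$ established in Theorems \ref{qeroigjqergfqeewfqewfqewf1}, \ref{wtoihgwgregegwerg}, and \ref{reguergiweogwergegwergwerv}. The reduction proceeds through the following structural properties of the universal-$C^{*}$-algebra functor $A^{f}$, which I would verify or cite from \cite{joachimcat, crosscat, cank}: $A^{f}$ preserves filtered colimits (as a left adjoint) and sends separable $G$-invariant subcategories of $\bC$ cofinally to separable $G$-invariant subalgebras of $A^{f}(\bC)$; $A^{f}$ converts unitary equivalences to morphisms that are invertible up to equivariant homotopy of $*$-homomorphisms; $A^{f}$ converts weak and relative Morita equivalences to maps factoring through a $\mathbb{K}^{G}$-stabilization; $A^{f}$ commutes with crossed products in the sense of a natural isomorphism $A^{f}(\bC \rtimes_{?} G) \cong A^{f}(\bC) \rtimes_{?} G$ for $? \in \{r, \max\}$; and $A^{f}$ preserves short exact sequences.

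Granting these inputs, Assertion (\ref{ergoijogwegefwerf}) follows from the filtered-colimit and separability preservation of $A^{f}$ combined with the s-finitarity of $\kkG$ from Theorem \ref{qeroigjqergfqeewfqewfqewf1}(\ref{qrfghqirwogrgqfqwefq}). Assertions (\ref{oijqoifefdwefwedqwdqewdqe}) and (\ref{fiuqwehfiewfeewqfwefqwefqwefqewf}) follow from Theorem \ref{qeroigjqergfqeewfqewfqewf1}(\ref{jfowejfoiwefewfqwecvqwcwecwcew}) and (\ref{jfowejfoiwefewfqwecvqwcwecwcew1}) applied to the images under $A^{f}$. Assertion (\ref{wtigjoepgrgrggergwegwergrweg}) is immediate from the crossed-product compatibility of $A^{f}$ together with Theorem \ref{reguergiweogwergegwergwerv}(\ref{reguergiweogwergegwergwerv3}). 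Assertions (\ref{efoijqweofiqofweewfewfqfqfefe}) and (\ref{ergoijweiogegfrfwfrefwf}) follow from exactness and filtered-colimit preservation of $A^{f}$ combined with the two parts of Theorem \ref{wtoihgwgregegwerg}. For Assertion (\ref{erogijqorgwefqfewfqfewfqef}), I would show that $A^{f}$ carries a flasque $G$-$C^{*}$-category to a $G$-$C^{*}$-algebra whose class in $\KKG$ vanishes, using a standard Eilenberg-swindle argument based on $\mathbb{K}^{G}$-stability and homotopy invariance of $\kkG$; consequently $\kkGA$ vanishes on flasques, so $\KKG(P, \kkGA(-))$ does too for any $P$, in particular for ind-$G$-proper $P$. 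Assertion (\ref{ifejhgiowergerdsfvs}) then follows again from the $\mathbb{K}^{G}$-stability input and the identification of relative Morita equivalences with $\mathbb{K}^{G}$-equivalences after $A^{f}$.

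The main technical obstacles lie in the verification that $A^{f}$ preserves short exact sequences and commutes with the reduced crossed product, since neither is formal for a left adjoint. For the former, I would use the explicit presentation of $A^{f}(\bC)$ as a universal $C^{*}$-algebra quotient of the free $*$-algebra on the morphisms of $\bC$ and match ideals and quotients on both sides of the adjunction. For the compatibility with $-\rtimes_{r} G$, I would invoke the regular representation description of the reduced crossed product in both the category and the algebra context to produce the required natural isomorphism; the maximal crossed product case is substantially easier via the universal property. A secondary, largely bookkeeping, point is the verification of the cofinality needed for Assertion (\ref{ergoijogwegefwerf}), which reduces to checking that every separable $G$-invariant subalgebra of $A^{f}(\bC)$ is contained in $A^{f}(\bC')$ for some separable $G$-invariant subcategory $\bC'$ of $\bC$; this follows because any countable set of generating morphisms in $A^{f}(\bC)$ only involves countably many objects and morphisms of $\bC$.
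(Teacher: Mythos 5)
Your overall strategy---factor $\kkGA=\kkG\circ A^{f}$ and push everything down to the properties of $\kkG$---is the right starting point, but the proposal attributes to $A^{f}$ several properties that are in fact properties of the \emph{other} algebra functor $A(-)$ from Construction \ref{woitgjowrtgggergwergw}, and it is exactly this discrepancy that the paper's proof has to work around. The compatibility with crossed products ($A(\bC\rtimes_{?}G)\cong A(\bC)\rtimes_{?}G$), the preservation of exact sequences, and the existence of equivariant Morita bimodules are all established for $A$, which is only functorial for functors injective on objects and hence cannot be used to define $\kkGA$ directly. The missing bridge is Proposition \ref{gihergioergregwefeerfwerf}, the equivalence $\kkG(A^{f}(\bC))\simeq\kkG(A(\bC))$, whose proof is a genuinely non-formal argument (Joachim's comparison via a corner embedding into $A^{f}(\bC)\otimes K(H)$, plus an s-finitary reduction to the separable case). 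Without it, your claims that $A^{f}$ preserves short exact sequences and commutes with $-\rtimes_{r}G$ are unsupported, and your treatment of weak Morita equivalences by ``$\mathbb{K}^{G}$-stabilization'' does not go through: a weak Morita equivalence is not a corner inclusion of the form \eqref{342uihuihfewfwefweferf}, and the actual proof needs the identification $\pi_{*}\KKG(A,B)\cong\KKth^{G}_{*}(A,B)$ of Proposition \ref{eroigowregwergregwgreg11} together with an explicit equivariant Morita bimodule at the level of $A(\bC)$, preceded by a delicate cofinality argument reducing to separable subcategories.

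The treatment of Assertion \ref{erogijqorgwefqfewfqfewfqef} contains a step that would fail. You propose to show $\kkGA(\bC)\simeq 0$ for flasque $\bC$ by an Eilenberg swindle, and conclude vanishing of $\KKG(P,\kkGA(\bC))$ for \emph{all} $P$; this would make the ind-$G$-properness hypothesis superfluous, which should already be a warning sign. The swindle requires $\kkG$ to be additive with respect to the infinite orthogonal sums implicit in $S\cong S\oplus\id_{\bC}$, which it is not (compare the classical fact that stable multiplier algebras have vanishing $K$-theory but are not known to be $\KK$-trivial against arbitrary, non-nuclear, test objects). The paper instead reduces to $P=\kkG(C_{0}(G/H))$ via the localizing-subcategory argument, rewrites $\KKG(C_{0}(G/H),\kkGA(\bC))\simeq \Kcat(\bC\rtimes H)$ using Corollary \ref{qeroigjwoegwergergwerg} and the crossed-product compatibility of $A$, checks that $\bC\rtimes H$ is again flasque, and only then invokes the vanishing of the $K$-\emph{theory} functor $\Kcat$ on flasques. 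The same remark applies to Assertion \ref{ifejhgiowergerdsfvs}: relative Morita equivalences are not sent to equivalences by $\kkGA$ itself, only by $\KKG(P,\kkGA(-))$ for ind-$G$-proper $P$, again via $\Kcat(-\rtimes H)$. Your reductions for Assertions \ref{ergoijogwegefwerf}, \ref{efoijqweofiqofweewfewfqfqfefe} and \ref{ergoijweiogegfrfwfrefwf}, and the cofinality argument for separable subcategories, are essentially correct and match the paper.
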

The proof of Theorem \ref{qroihjqiofewfqwefqwefqwefqwefqewfq} will be 
{given in Section \ref{ergoijwogergregwergwrg}.}

{
\begin{rem}\label{wtkoigwergrefw}
Using Remark \ref{weriogjoewgrffwerf} in Assertions 
\ref{efoijqweofiqofweewfewfqfqfefe} 
we could replace the conditions  {(ind)-$G$-proper by (ind)-$G$-nuclear.}
\end{rem}}

\begin{rem}
Let us consider here the case of the trivial group.
In \cite[Def.\ {13.4}]{cank} we introduced the notion of a homological functor $\Homol \colon \nCcat\to \bM$. {By definition, it is a functor to a stable $\infty$-category $\bM$
which   sends unitary equivalences  of $C^{*}$-categories to  equivalences 
and 
exact sequences of $C^{*}$-categories to fibre sequences.} If $\Homol$  {in addition} preserves  {filtered} colimits, then it is called finitary.

If $P$ in  {$\KK$} is ind-proper (or more generally ind-nuclear),  then  it follows from
 Theorem \ref{qroihjqiofewfqwefqwefqwefqwefqewfq}  {(and Remark \ref{wtkoigwergrefw})}
 {that}  $\KK(P,\kkA(-)) \colon \nCcat\to \Sp$ is a homological functor.
If $P$ is proper ({or more generally nuclear}), then 
this functor is also finitary. 

As a{n} example, if $X$ is locally compact and homotopy equivalent to  a finite CW-complex, then 
$P \coloneqq \kk(C_{0}(X))$ is proper by Proposition \ref{eroigowregwergregwgreg}, and therefore
$\KK(C_{0}(X),\kkA(-))$ is a homological functor. 
{More generally, if $A$  in $\nCalg_{\sepa}$ is such that   $\kk(A)$ is nuclear (e.g. if $A$ is $\kk^{G}_{0}$-equivalent to a separable nuclear  $C^{*}$-algebra), then
$\KK(A,\kkA(-))$ is a homological functor.}

The functor $\kkA$ itself is not a $\KK$-valued homological functor since it does not send all exact sequences to fibre sequences. \hB
\end{rem}

The minimal and maximal tensor products for $C^{*}$-algebras can naturally be extended   to $C^{*}$-categories, see  \cite{DellAmbrogio:2010aa} for $\otimes_{\max}$, and  \cite{antoun_voigt} for both. In Section \ref{regijweoijergregrgrev} we will give a comprehensive treatment of both cases. The  definition of the maximal tensor product  in terms of its universal property is  stated in Definition \ref{weogjgewgwrgwg}, while the analoguous definition of the minimal tensor product is given in Definition \ref{toigwjeriogregergergre}.

\begin{theorem} \label{ioeghwjoigrgwergrggrgew}
The functor $\kkGA$ refines to a symmetric monoidal functor
$$\kkGAtensor \colon \Fun(BG,\nCcat)^{\otimes_{?}}\to \KKGtensor$$
for $?$ in $\{\min,\max\}$.
\end{theorem}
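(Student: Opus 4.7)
The plan is to exhibit $\kkGAtensor$ as the composition of two symmetric monoidal functors, using Definition \ref{jgorijgqoiqfewfqwfewf}. Since $\kkGA = \kkG \circ A^{f}$ and Proposition \ref{togjoigerggwgergwgr} already provides a symmetric monoidal refinement $\kkGtensor \colon \Fun(BG,\nCalg)^{\otimes_{?}} \to \KKGtensor$, it suffices to promote $A^{f} \colon \nCcat \to \nCalg$ to a symmetric monoidal functor with respect to $\otimes_{?}$ and then extend this $G$-equivariantly.

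The first and main step is to construct a natural isomorphism
\[ A^{f}(\bC \otimes_{?} \bD) \cong A^{f}(\bC) \otimes_{?} A^{f}(\bD) \]
for $\bC, \bD$ in $\nCcat$ and $? \in \{\min,\max\}$. I would argue this using the universal properties of $\otimes_{?}$ on $\nCcat$ (Definitions \ref{weogjgewgwrgwg} and \ref{toigwjeriogregergergre}) together with the adjunction \eqref{rwegkjnkjgngjerkjwgwergregegwrege}. Concretely, for any $C^{*}$-algebra $E$, I would identify
\[ \Hom_{\nCalg}(A^{f}(\bC) \otimes_{?} A^{f}(\bD), E) \]
with the set of pairs of commuting $*$-homomorphisms $A^{f}(\bC) \to E$ and $A^{f}(\bD) \to E$ satisfying the appropriate continuity/norm condition; this set, by the adjunction, is the same as pairs of commuting $*$-functors $\bC \to E$ and $\bD \to E$, which is the same as bilinear $*$-functors $\bC \times \bD \to E$, which corresponds by the universal property of $\otimes_{?}$ on $\nCcat$ to $*$-functors $\bC \otimes_{?} \bD \to E$, and finally by adjointness to $\Hom_{\nCalg}(A^{f}(\bC \otimes_{?} \bD), E)$. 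Yoneda then yields the desired natural isomorphism, and the compatibility with the associativity, symmetry, and unit constraints is a routine check from the same universal properties.

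The second step is to upgrade $A^{f}$ to a symmetric monoidal functor on $G$-objects. Since both the tensor products $\otimes_{?}$ on $\nCcat$ and $\nCalg$ and the functor $A^{f}$ are defined object-wise relative to $G$-actions, this upgrade is immediate: $A^{f}$ induces a symmetric monoidal functor $\Fun(BG,\nCcat)^{\otimes_{?}} \to \Fun(BG,\nCalg)^{\otimes_{?}}$. Composing with the symmetric monoidal $\kkGtensor$ yields the desired $\kkGAtensor$.

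The principal obstacle is the verification in step one, in particular for $? = \min$, where the universal property is expressed via faithful representations rather than a pure generators-and-relations description. There one has to compare the minimal $C^{*}$-norms computed via faithful representations of $A^{f}(\bC)$ and $A^{f}(\bD)$ on Hilbert spaces graded by the object sets with the minimal $C^{*}$-norm on the algebraic tensor product of morphism spaces of $\bC$ and $\bD$; however, since $A^{f}$ sends faithful representations of $\bC$ (as a $C^{*}$-category on an object-graded Hilbert space) to faithful representations of $A^{f}(\bC)$, and vice versa, both norms are computed by the same family of representations, so the claimed isomorphism holds in the min case as well.
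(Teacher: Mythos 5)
Your first step contains the essential error: the canonical map $A^{f}(\bC \otimes_{?} \bD) \to A^{f}(\bC) \otimes_{?} A^{f}(\bD)$ is in general \emph{not} an isomorphism of $C^{*}$-algebras; it is only the structure map of an op-lax symmetric monoidal structure on $A^{f}$ (this is exactly the content of Corollary \ref{wrtohjwtrohtrgwgwgregwreg}, and if the map were an isomorphism the rest of Section \ref{regijweoijergregrgrev} would be unnecessary). Your universal-property argument breaks at the identification of bilinear $*$-functors $\bC\times\bD\to E$ with homomorphisms $A^{f}(\bC)\otimes_{\max}A^{f}(\bD)\to E$: the latter must be defined on elements such as $f_{1}f_{2}\otimes g$ with $f_{1},f_{2}$ \emph{non-composable} morphisms of $\bC$ (such products are nonzero in $A^{f}(\bC)$ precisely because $A^{f}$, unlike $A$, does not impose the vanishing condition \eqref{eq_universal_property_A}), and a bilinear functor on $\bC\times\bD$ carries no such information. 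Concretely, let $\bC=\bD$ be the $C^{*}$-category with two objects, endomorphism algebras $\C$, and zero morphisms between distinct objects. Then $A^{f}(\bC\otimes_{\max}\bD)$ is the universal $C^{*}$-algebra on four free projections $q_{ij}$ (the identities of the four objects), while their images $p_{i}\otimes p'_{j}$ in $A^{f}(\bC)\otimes_{\max}A^{f}(\bD)$ satisfy $(p_{1}\otimes p'_{1})(p_{1}\otimes p'_{2})(p_{2}\otimes p'_{2})=p_{1}p_{2}\otimes p'_{1}p'_{2}=(p_{1}\otimes p'_{1})(p_{2}\otimes p'_{2})$; the corresponding relation $q_{11}q_{12}q_{22}=q_{11}q_{22}$ fails for free projections (take three generic projections on $\C^{2}$), so the canonical map has nontrivial kernel. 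For $?=\min$ the situation is worse still, since the minimal tensor product admits no universal property for homomorphisms out of it, so no Yoneda argument of this type is available.

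The statement is nevertheless true, but only after applying $\kkG$, and that is where the real work lies. The paper's route is: (i) the op-lax structure on $\kkGA=\kkG\circ A^{f}$ is formal; (ii) for the \emph{other} algebra functor $A$ of Construction \ref{woitgjowrtgggergwergw} the canonical map $A(\bC\otimes_{?}\bD)\to A(\bC)\otimes_{?}A(\bD)$ \emph{is} an isomorphism (Proposition \ref{eroigjweogieogigw}, proved first for categories with finitely many objects and then extended by a filtered-colimit argument requiring Lemma \ref{eriguhwiegugwergwerg}); (iii) the comparison $\alpha_{\bC}\colon A^{f}(\bC)\to A(\bC)$ becomes an equivalence after applying $\kkG$ (Proposition \ref{gihergioergregwefeerfwerf}, a non-formal input using $\mathbb{K}^{G}$-stability and homotopy invariance). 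Combining (ii) and (iii) in a commutative square shows that $\kkG$ applied to the op-lax structure map \eqref{wgregw245t} is an equivalence. The ingredient your proposal is missing is precisely this passage through $A$ together with Proposition \ref{gihergioergregwefeerfwerf}; without it there is no way to promote the op-lax structure to a genuine symmetric monoidal one.
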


The proof of this theorem will be given  in Section \ref{weoigjwoegffrefgrgwgf}.

\section{A stable \texorpdfstring{$\boldsymbol{\infty}$}{infty}-category of separable \texorpdfstring{$\boldsymbol{G}$-$\boldsymbol{C^{*}}$}{G-Cstar}-algebras}
\label{qwkfhifqefewfqfqwefqwefq}

Let $\nCalg_{\sepa}$ and $\nCalg_{\sigma}$ {denote} the full subcategories of $\nCalg$ of separable and of $\sigma$-unital $C^{*}$-algebras, respectively. 
In the present paper we work with  the  bivariant $G$-equivariant Kasparov $K\!K$-theory functor \cite{kasparovinvent} $$\KKth_{*}^{G}\colon \Fun(BG,\nCalg_{\sigma})^{\op}\times  \Fun(BG,\nCalg_{\sigma})\to \Ab^{\Z}\, , $$
where $\Ab^{\Z}$ denotes the category of $\Z$-graded abelian groups.
 {Further} good references for this functor and its properties are
 \cite{Meyer:aa} and  \cite{blackadar}.  


We will use the symbol
$\KKth^{G} \coloneqq \KKth_{0}^{G}$ for the  $\Ab$-valued functor obtained from $\KKth^{G}_{*}$ by extracting the degree-zero component. Using the Kasparov intersection product in order to define the composition we construct the $\Ab$-enriched category $\KKGsk$ and the functor 
$$\kkGsk \colon \Fun(BG,\nCalgs)\to\KKGsk$$
 (appearing in \eqref{erwwergwergrwegrgerwgwergrew})
as explained in the introduction. 
The functor $\kkGsk$ is reduced, homotopy invariant and $\mathbb{K}^{G}$-stable since it inherits these properties from 
$\KKth^{G}$.

{Recall the  Definition \ref{werigwoergreewfefewrf} of a $\kkGsk$-equivalence.}
The functor $\kkGsk$ can be characterized  by the following universal property:

 \begin{prop}\label{eoigjeoigergergwergergegergweg}
The functor $\kkGsk \colon \Fun(BG,\nCalg_\sepa) \to \KKGsk$ 
exhibits the target as    localisation of  $\Fun(BG,\nCalg_\sepa)$ 
   at the set of ${\kkGsk}$-equivalences in the sense of ordinary categories.
\end{prop}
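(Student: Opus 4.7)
Since $\kkGsk$ is the identity on objects and inverts every $\kkGsk$-equivalence by construction, the universal property of the ordinary $1$-categorical localization $L \coloneqq \Fun(BG,\nCalgs)[\kkGeq^{-1}]$ produces a canonical comparison functor $\Phi \colon L \to \KKGsk$ which is bijective on objects. The content of the proposition is that $\Phi$ also induces a bijection on every morphism set, and I would prove this via the Cuntz quasihomomorphism picture of $\KKth^{G}$.

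Recall that, for separable $G$-$C^{*}$-algebras $A,B$, every class $x \in \KKth^{G}(A,B)$ is represented, uniquely up to $G$-equivariant homotopy, by a $G$-equivariant $*$-homomorphism $\phi \colon qA \to B \otimes K(\mathcal{H})$, where $qA = \ker(QA \to A)$ is Cuntz's algebra (with $QA = A \ast A$ the free product) and $\mathcal{H}$ is a separable $G$-Hilbert space containing the trivial representation. The canonical projection $\pi_{A} \colon qA \to A$ and the stabilization $\iota_{B} \colon B \to B \otimes K(\mathcal{H})$ (embedding along a $G$-invariant rank-one projection) are both $\kkGsk$-equivalences: the former by the standard long exact sequence argument combined with contractibility of $QA$ in $\KKGsk$, the latter by $\mathbb{K}^{G}$-stability. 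The defining feature of the Cuntz picture is precisely the identity $x = \kkGsk(\iota_{B})^{-1} \circ \kkGsk(\phi) \circ \kkGsk(\pi_{A})^{-1}$ in $\KKGsk$, so $x$ is the image under $\Phi$ of the zigzag $\iota_{B}^{-1} \phi \pi_{A}^{-1}$ in $L$, which establishes surjectivity on morphism sets.

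For injectivity, any two zigzags in $L$ with common source and target can be brought into a common Cuntz normal form $\iota_{B}^{-1} \phi \pi_{A}^{-1}$ by successively absorbing further $\kkGsk$-equivalences into $\iota_{B}$ (using $\mathbb{K}^{G}$-stability to enlarge $\mathcal{H}$) and into $\pi_{A}$ (using the universal property of $qA$ to lift parallel pairs of $*$-homomorphisms through $q$). Once this reduction has been performed, injectivity reduces to the assertion that if $\phi_{0}, \phi_{1} \colon qA \to B \otimes K(\mathcal{H})$ are $G$-equivariantly homotopic, then their classes in $L$ agree. A witnessing homotopy $h \colon qA \to C([0,1]) \otimes B \otimes K(\mathcal{H})$ together with the $\kkGsk$-equivalence $B \otimes K(\mathcal{H}) \hookrightarrow C([0,1]) \otimes B \otimes K(\mathcal{H})$ furnished by homotopy invariance, and the fact that this inclusion admits both $\mathrm{ev}_{0}$ and $\mathrm{ev}_{1}$ as retractions, forces $\mathrm{ev}_{0} = \mathrm{ev}_{1}$ in $L$ and hence $\phi_{0} = \mathrm{ev}_{0} \circ h = \mathrm{ev}_{1} \circ h = \phi_{1}$ in $L$.

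The main obstacle is the reduction to Cuntz normal form in the injectivity step: a priori, morphisms in $L$ are represented by arbitrarily long zigzags of $*$-homomorphisms and formal inverses of $\kkGsk$-equivalences, and one must compress each such zigzag into the rigid shape $\iota_{B}^{-1} \phi \pi_{A}^{-1}$. I would carry this out inductively on the length of the zigzag, using the lifting property of $qA$ to straighten wrong-way $\kkGsk$-equivalences and $\mathbb{K}^{G}$-stability to freely enlarge the stabilizing Hilbert space; in the non-equivariant setting this is standard, and the equivariant version goes through with essentially the same bookkeeping since all auxiliary $\ast$-homomorphisms can be chosen $G$-equivariantly. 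With this compression in hand, the same prescription $x \mapsto F(\iota_{B})^{-1} F(\phi) F(\pi_{A})^{-1}$ applied to an arbitrary functor $F$ inverting $\kkGeq$ is well-defined and functorial, yielding the required universal factorization $\bar F \colon \KKGsk \to \mathcal{D}$ and completing the identification of $\KKGsk$ with $L$.
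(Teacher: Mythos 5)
Your overall strategy shares the key ingredient with the paper's proof (the equivariant Cuntz picture $\KKth^{G}(A,B)\cong[q_{s}(A)\otimes K,q_{s}(B)\otimes K]$ together with the natural zig-zag of $\kkGsk$-equivalences relating $A$ to its stabilized Cuntz algebra), and your surjectivity and homotopy arguments are sound. But the injectivity step, which you yourself flag as the main obstacle, contains a genuine gap: you need to show that an \emph{arbitrary} zig-zag in the abstract localization $L=\Fun(BG,\nCalgs)[\kkGeq^{-1}]$ can be compressed, \emph{as a morphism of $L$}, into the normal form $\iota_{B}^{-1}\phi\,\pi_{A}^{-1}$. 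The mechanisms you invoke for this do not obviously do the job as described: the universal property of $qA$ lifts pairs of parallel $*$-homomorphisms, but it does not by itself ``straighten'' a wrong-way $\kkGsk$-equivalence $g$ appearing in a zig-zag --- for that you would need an actual homomorphism which is homotopy-inverse to $q_{s}(g)\otimes\id_{K}$ \emph{and} a verification that substituting it leaves the class in $L$ unchanged, which requires tracking the naturality squares of the stabilization zig-zag inside $L$. Asserting that this is ``standard'' in the non-equivariant case is not a substitute for the argument; the standard references establish the universal property of $\mathrm{KK}$ by a different route, not by a zig-zag calculus for the $1$-categorical localization.

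The paper sidesteps this entirely by never analyzing morphisms in $L$: it verifies the universal property of the localization directly, constructing for an arbitrary functor $F$ inverting $\kkGsk$-equivalences the factorization $\bar F$ via $\bar F_{A,B}(x)\coloneqq$ (image of a homomorphism representing $x$ between stabilized Cuntz algebras, conjugated by $F$ applied to the natural zig-zag). For this one needs only (i) surjectivity of $\kkGsk$ on morphisms between the objects $q_{s}(A)\otimes K$, (ii) that $F$ identifies homotopic maps (it inverts the equivalence $B\to C([0,1])\otimes B$, exactly your $\mathrm{ev}_{0}=\mathrm{ev}_{1}$ argument), and (iii) naturality of the zig-zag for functoriality and uniqueness --- no compression lemma. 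Your final paragraph gestures at this construction but makes its well-definedness depend on the compression, which inverts the logical order. If you want to keep your comparison-functor framing, you could instead build the functorial section $S\colon\KKGsk\to L$ from the normal forms, check $\Phi\circ S=\id$ and $S\circ\kkGsk=\ell$, and deduce $S\circ\Phi=\id_{L}$ from the universal property of $L$ applied to $\ell$ itself; that closes the gap without any zig-zag compression. A further small inaccuracy: in the equivariant setting the clean bijection with homotopy classes is the symmetric one, $[q_{s}(A)\otimes K,\,q_{s}(B)\otimes K]$, not $[qA,\,B\otimes K(\mathcal{H})]$ as you write.
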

 \begin{proof}
Let  
\[F\colon \Fun(BG,\nCalg_\sepa) \to \cD\]  be any functor
to an ordinary category $\cD$ which sends $\KKGsk$-equivalences to isomorphisms in $\cD$. We must show that there exists a unique factorisation through $\KKGsk$, indicated by the dashed arrow in the following diagram
\begin{equation}\label{qwefqknkjllnvafvsdfvfv}
\xymatrix{
	\Fun(BG,\nCalg_\sepa) \ar[r]^-{F} \ar[d]^{\kkGsk}& \cD\,. \\
	\KKGsk \ar@{-->}[ur]_{\bar{F}}&}
\end{equation}
 Since   $\kkGsk \colon \Fun(BG,\nCalg_\sepa) \to \KKGsk$ is the identity on objects we are forced to define $\bar{F}( A) \coloneqq F(  A)$ for all objects $A$ of $\Fun(BG,\nCalg_\sepa)$.

For the discussion of morphisms we use 
 the Cuntz picture of $\KKth^{G}$ due to   \cite[Sec.\ 6]{Meyer:aa}.
 Let \begin{equation}\label{vwervewwerjvklvervwevwv}
  K \coloneqq  K(\ell^{2}\otimes L^{2}(G))
\end{equation}  denote the algebra of  compact operators on the $G$-Hilbert space $\ell^{2}\otimes L^{2}(G)$, where $G$ acts on $L^{2}(G)$ by the left-regular representation, and by conjugation on the operators.  We furthermore let  
\[  q_{s}( A):=\ker( A\otimes  K\sqcup  A\otimes  K\to  A\otimes  K)\] be the kernel of the fold map.  Note that the coproduct in $C^{*}$-algebras is realized by the free product. 
By \cite[Thm.\ 6.5]{Meyer:aa}  we have a bijection
\begin{equation}\label{qfrojqoiefjqewfefefqfqewfqewf}
\KKth^{G}(  A,  B) \cong [  q_{s}( A)\otimes  K ,   q_{s}( B)\otimes  K ]\, ,
\end{equation}
where the notation $[-,-]$ stands for norm-continuous homotopy classes of morphisms in $ \Fun(BG,\nCalg_{\sepa})$. 
  Moreover, the composition in $\KKGsk$ corresponds to the composition of homotopy classes.

 We consider {the 
  $G$}-Hilbert space 
 $  H' \coloneqq \C\oplus \ell^{2}\otimes L^{2}(G)$ and the associated $G$-$C^{*}$-algebra $ K' \coloneqq K(  H')$ of compact operators. The equivariant inclusion $\C\to  H'$   induces an inclusion $ d:\C\to  K'$ in $ \Fun(BG,\nCalg_{\sepa})$, where $ \C$ has the trivial $G$-action. We also have a canonical inclusion $e \colon K\to  K'$. 
 
 Let $\pi_{ A} \colon q_{s}( A)\to  A\otimes  K$ be the  restriction of  {$\id_{ A\otimes  K}\sqcup 0 $} to $q_{s}( A)$.  It is shown in  \cite[Sec.\ 6]{Meyer:aa} that $\pi_{ A}$ is a ${\kkGsk}$-equivalence. 
 For every $ A$ in $ \Fun(BG,\nCalg_{\sepa})$  we have a  zig-zag    \begin{equation}\label{rqfhiowfjiowefqwefqwfewqfqwef}
q_{s}( A)\otimes  K\stackrel{\pi_{ A}\otimes \id_{ K}}{\to} { A}\otimes  K\otimes  K \stackrel{\id_{ A}\otimes e\otimes \id_{ K}}{\to}  A\otimes  K' \otimes  K\stackrel{\id_{ A}\otimes d\otimes \id_{ K}}{\leftarrow}  A\otimes  K \stackrel{\id_{ A}\otimes e }{\to} A\otimes  K'\stackrel{\id_{ A}\otimes d\otimes \id_{ K}}{\leftarrow}   A
\end{equation} 
which is natural in  $ A$. By the 
$\mathbb{K}^{G}$-stability of $\kkGsk$ all these maps are ${\kkGsk}$-equivalences.
  We now consider the   diagram  
\begin{align}
\label{ewgregergggwggerg}\\
\notag
\mathclap{
\xymatrix{
\Hom_{\cD}( A, B)\ar@{-}[d]_{\eqref{rqfhiowfjiowefqwefqwfewqfqwef}}^{\cong}& \Hom_{\Fun(BG,\nCalg_\sepa)}(  A    ,  B )\ar[l]_-{F  } \ar[r]^-{\kkGsk}&\KKth^{G}( A, B)\ar@{-->}@/_1.5cm/[ll]_{\bar F_{A,B} }\ar@{-}[d]_{\eqref{rqfhiowfjiowefqwefqwfewqfqwef}}^{\cong}\\
\Hom_{\cD}(q_{s}( A)\otimes  K, q_{s}( B)\otimes  K)&\ar[l]^-{!!!}_-{F} \Hom_{\Fun(BG,\nCalg_\sepa)}(q_s( A)\otimes  K,q_s( B) \otimes  K)\ar[r]_-{!}^-{\kkGsk}\ar@{..>}[d]& \KKth^{G}(q_s( A)\otimes  K,q_s( B) \otimes  K) \ar@{-->}@/_-3cm/[ll]^{!!}_{\bar F_{{q_s( A)\otimes  K,q_s( B)\otimes  K}}}\\
&[q_s( A)\otimes  K,q_s( B) \otimes  K] \ar@{..>}[ul]^{F'} \ar@{..>}[ur]^{ \eqref{qfrojqoiefjqewfefefqfqewfqewf}}_{\cong} &
}
}
\end{align}
The left  (or right) vertical isomorphism is induced by applying $F$ (or $\kkGsk$) to the zig-zag    \eqref{rqfhiowfjiowefqwefqwfewqfqwef} using the fact that $F$ (or $\kkGsk$) sends $\kkGsk$-equivalences to isomorphisms. If $\bar F$ exists, then the outer square 
and the two triangles involving  dashed and bold arrows
commute.

Using the lower right triangle we see that the arrow marked by $!$   
is surjective.
This shows that  the {arrow} 
marked by $!!$ is uniquely determined if it exists. 

Since $F$ inverts $\KKGsk$-equivalences, it also sends homotopic maps to equal maps. 
Consequently, the arrow $F $ marked by $!!!$
  factorizes uniquely through the dotted arrow  
$F' $  as indicated. 
We are therefore forced to define 
$$\bar F_{A,B} : \KKth^{G}( A, B)\to \Hom_{\cD}( A, B)$$ as the composition
going clockwise from the upper right-corner to the upper left corner.
The whole diagram is bi-natural in $ A$ (contravariant) and $ B$ (covariant). 
This implies that
the family $(\bar F_{ A, B})_{ A, B\in \Fun(BG,\nCalg_\sepa) }$
  provides the action of the desired functor  $\bar F$ on morphisms.  The triangle \eqref{qwefqknkjllnvafvsdfvfv} then commutes by construction.
%
\end{proof}

The following universal property of $\kkGsk$ (which differes from the one in Proposition  \ref{eoigjeoigergergwergergegergweg}) was
shown  in \cite[Thm.\ 2.2]{Thomsen-equivariant}, see also
\cite[Thm.\ 6.6]{Meyer:aa}. A
  functor from  $\Fun(BG,\nCalg_\sepa) $ to an additive category is called split exact if it   preserves split-exact sequences, see also Definition \ref{qwroigjqrwgqwrfqewfqewfq}.\ref{qeriughqeiruferwqkfjqr9ifuq2}.
%
%
%
\begin{prop}\label{wtohiwthwgrgwerg}
The functor
\[\kkGsk:\Fun(BG,\nCalg_\sepa) \lto \KKG_0\]  
is initial among all functors to an additive $1$-category which  {are reduced,  
$\mathbb{K}^{G}$-stable  and split exact.  }
\end{prop}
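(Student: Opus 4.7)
The plan is to reduce to the universal property established in Proposition~\ref{eoigjeoigergergwergergegergweg}. Given a reduced, $\mathbb{K}^{G}$-stable, split-exact functor $F\colon \Fun(BG,\nCalg_\sepa)\to \cD$ to an additive category $\cD$, it suffices to verify that $F$ sends every $\kkGsk$-equivalence to an isomorphism in $\cD$. That proposition then yields a unique factorisation $\bar F\colon \KKGsk\to \cD$ as a functor of ordinary categories, and one further checks that $\bar F$ is automatically additive: split-exactness applied to the canonical sequence $0\to A\to A\oplus B\to B\to 0$ shows that $F$ preserves direct sums, while $\kkGsk$ preserves direct sums by construction (direct sums in $\KKGsk$ being represented by direct sums of $C^{*}$-algebras), so $\bar F$ does as well.

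The key step is to show that $F$ inverts $\kkGsk$-equivalences. First I would establish that $F$ is automatically homotopy invariant, which is the equivariant generalisation of Higson's theorem asserting that reducedness, split-exactness and $\mathbb{K}^{G}$-stability together force homotopy invariance; the proof, along the lines of \cite{Thomsen-equivariant} and \cite[Sec.\ 6]{Meyer:aa}, uses an auxiliary algebra admitting a pair of splittings that allow the comparison of evaluations at distinct points of $[0,1]$ after applying $F$. Granted homotopy invariance, I would invoke the Cuntz picture \eqref{qfrojqoiefjqewfefefqfqewfqewf}, namely $\KKth^{G}(A,B)\cong [q_{s}(A)\otimes K,q_{s}(B)\otimes K]$. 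The naturality zig-zag \eqref{rqfhiowfjiowefqwefqwfewqfqwef} then reduces the task to inverting its constituent arrows: those built from the inclusions $e$ and $d$ are inverted by $F$ thanks to $\mathbb{K}^{G}$-stability, while the map $\pi_{A}$ is inverted by applying split-exactness to the defining sequence
\[ 0\to q_{s}(A)\to A\otimes K\sqcup A\otimes K\to A\otimes K\to 0, \]
whose fold map is split by either inclusion of a summand. Assembling these pieces yields a natural map $\KKth^{G}(A,B)\to \mathrm{Hom}_{\cD}(F(A),F(B))$, and compatibility with the Kasparov intersection product is tautological in the Cuntz picture, where the product is realised simply as composition of homotopy classes of $*$-homomorphisms, which $F$ manifestly respects.

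The main technical obstacle is the equivariant Higson theorem on automatic homotopy invariance. The non-equivariant argument is classical, but its equivariant adaptation requires careful bookkeeping to ensure that the auxiliary algebras, the split exact sequences, and the splittings employed remain $G$-equivariant throughout; this is worked out in \cite[Thm.\ 2.2]{Thomsen-equivariant} and \cite[Thm.\ 6.6]{Meyer:aa}, which the present argument follows. Once this input is granted, the remaining verifications — naturality in $A$ and $B$, additivity of $\bar F$, and compatibility with composition — reduce to inspection of the diagrams built from \eqref{rqfhiowfjiowefqwefqwfewqfqwef} and the Cuntz picture.
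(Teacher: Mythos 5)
The paper gives no proof of this proposition: it simply cites \cite[Thm.\ 2.2]{Thomsen-equivariant} and \cite[Thm.\ 6.6]{Meyer:aa}, using Remark~\ref{rem:homotopy-invariance} only to reconcile hypotheses (reduced $+$ split exact is equivalent to additive $+$ split exact, and homotopy invariance is automatic from split-exactness and $\mathbb{K}^{G}$-stability). Your sketch is essentially a reconstruction of the argument of those references, routed through the localization property of Proposition~\ref{eoigjeoigergergwergergegergweg}. Note that your logical order is the reverse of the paper's: the paper deduces Corollary~\ref{ioqerjgqergrqfefqewfq} (such functors invert $\kkGsk$-equivalences) \emph{from} the present proposition, whereas you prove that statement first and then obtain the proposition from the universal property of the localization. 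That reordering is legitimate, and your remaining checks (additivity of $\bar F$ via split-exactness of $0\to A\to A\oplus B\to B\to 0$, compatibility of the Kasparov product with composition in the Cuntz picture) are fine.

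There is, however, one step whose stated justification is genuinely wrong. You claim that $F(\pi_{A})$ is invertible ``by applying split-exactness to the defining sequence $0\to q_{s}(A)\to A\otimes K\sqcup A\otimes K\to A\otimes K\to 0$, whose fold map is split by either inclusion of a summand.'' Split-exactness applied to this extension only yields the decomposition $F(A\otimes K\sqcup A\otimes K)\cong F(q_{s}(A))\oplus F(A\otimes K)$, from which one extracts that $F(\pi_{A})$ admits a section, namely the corestriction of $F(\iota_{0})-F(\iota_{1})$ to the kernel of $F$ applied to the fold map. It does \emph{not} give that this section is a two-sided inverse: the injectivity of $F(\pi_{A})$ is precisely the content of Cuntz's theorem, and its proof requires the rotation-homotopy argument comparing two homomorphisms $q_{s}(A)\to M_{2}(A\otimes K\sqcup A\otimes K)$, which uses homotopy invariance and $\mathbb{K}^{G}$-stability in an essential way (see \cite[Sec.\ 3]{cmr} or \cite[Sec.\ 6]{Meyer:aa}). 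You have both of these hypotheses available at that point of your argument, so the gap is repairable, but as written the reason you give for inverting $\pi_{A}$ is insufficient, and this is the genuinely non-formal core of the whole universal property.
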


\begin{rem}\label{rem:homotopy-invariance}
The statement of Meyer requires additivity  {and homotopy invariance} of the functors. But note that  the condition  {of being }split exact and additive  {are equivalent to the condition of being split exact and reduced.} {Furthermore, we use that 
split-exactness and $\mathbb{K}^{G}$-stability  together imply homotopy invariance (see \cite{higsa} for the non-equivariant case). We refer to \cite[Thm. 3.35]{cmr} for an argument which applies {verbatim} to the equivariant case.}
%
\hB
\end{rem}

\begin{kor}\label{ioqerjgqergrqfefqewfq}
A functor from $\Fun(BG,\nCalg_{\sepa})$ to an additive  
{$\infty$-}category which  {is reduced}, 
$\mathbb{K}^{G}$-stable and split exact sends ${\kkGsk}$-equivalences to {equivalences}.
\end{kor}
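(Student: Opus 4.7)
The plan is to reduce to Proposition \ref{wtohiwthwgrgwerg} by passing to the homotopy category of the target. The key observation is that a morphism in an $\infty$-category $\cD$ is an equivalence if and only if its image in the homotopy category $\ho(\cD)$ is an isomorphism, so it suffices to show that the composition
\[ \ho(F) \colon \Fun(BG,\nCalg_\sepa)\xrightarrow{F}\cD\to \ho(\cD)\]
sends $\kkGsk$-equivalences to isomorphisms. Since $\cD$ is additive as an $\infty$-category, $\ho(\cD)$ is an additive $1$-category, so Proposition \ref{wtohiwthwgrgwerg} will be applicable provided we check that $\ho(F)$ inherits the three assumed properties from $F$.

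First I would verify these properties one by one. Reducedness of $\ho(F)$ is immediate: if $F(0)$ is a zero object in $\cD$, then it is also a zero object in $\ho(\cD)$. For $\mathbb{K}^G$-stability, if $F$ sends the maps of the form \eqref{342uihuihfewfwefweferf} to equivalences in $\cD$, then $\ho(F)$ sends them to isomorphisms in $\ho(\cD)$. For split-exactness, note that a split exact sequence $0\to A\to B\to C\to 0$ in $\Fun(BG,\nCalg_\sepa)$ is determined by the underlying $1$-categorical splitting data, and its image under any additive functor to an additive $(\infty$- or $1$-)category is again a split exact sequence; hence $\ho(F)$ is split exact as a functor of additive $1$-categories.

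By Proposition \ref{wtohiwthwgrgwerg} there is then a (unique) factorization $\bar F \colon \KKGsk\to \ho(\cD)$ with $\bar F\circ \kkGsk\simeq \ho(F)$. If $f$ is a $\kkGsk$-equivalence, then $\kkGsk(f)$ is an isomorphism in $\KKGsk$ by Definition \ref{werigwoergreewfefewrf}, so $\ho(F)(f)=\bar F(\kkGsk(f))$ is an isomorphism in $\ho(\cD)$. By the characterization of equivalences in $\cD$ recalled above, $F(f)$ is an equivalence in $\cD$, which is what we wanted.

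The only conceptual subtlety, and the place where one must be slightly careful, is the compatibility of the various notions between $\infty$-categorical and $1$-categorical contexts, in particular that splitting data for a sequence in $\cD$ descends to splitting data in $\ho(\cD)$ and that additivity passes to the homotopy category. Neither of these is hard, but they are the only places where the argument goes beyond a purely formal manipulation.
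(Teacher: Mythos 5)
Your proposal is correct and follows essentially the same route as the paper: compose $F$ with the canonical functor $\cD\to\ho(\cD)$, apply Proposition \ref{wtohiwthwgrgwerg} to the resulting functor into the additive $1$-category $\ho(\cD)$, and conclude using that $\cD\to\ho(\cD)$ detects equivalences. The only difference is that you spell out the (easy) verification that reducedness, $\mathbb{K}^G$-stability and split-exactness descend to $\ho(F)$, which the paper leaves implicit.
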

\begin{proof}
{Let $F:\Fun(BG,\nCalg_{\sepa})\to \cD$ be a functor  as in the statement of the corollary.
Since $\ho(\cD)$ is an additive $1$-category we can apply Proposition \ref{wtohiwthwgrgwerg} to the composition 
$ \Fun(BG,\nCalg_{\sepa})\stackrel{F}{\to} \cD\to \ho(\cD)$ in order to conclude that it factorizes over $\kkGsk$.   In particular, it sends ${\kkGsk}$-equivalences to isomorphisms.
Since the canonical functor $\cD\to \ho(\cD)$ detects equivalences we conclude that 
$F$ sends ${\kkGsk}$-equivalences to equivalences.} 
\end{proof}

\begin{rem}\label{weroigujweogerwgregfw}  Let $\cC$ be a small $\infty$-category and $W$ be a set of morphisms in $\cC$. Then we can form
the Dwyer--Kan localization  \cite[Def.~1.3.4.1, Rem.~1.3.4.2]{HA}
\begin{equation}\label{qewfqwefqffqwfqfqew}
\ell \colon \cC\to \cC[W^{-1}]\, .
\end{equation} 
It is characterized by the universal property that for any $\infty$-category $\cD$ the restriction along $\ell$ induces an equivalence \begin{equation}\label{erwgoijowiergwergwregerg}
\Fun( \cC[W^{-1}],\cD)\xrightarrow{\simeq} \Fun^{W}(\cC,\cD)\,,
\end{equation}
where $ \Fun^{W}(\cC,\cD)$ is the full subcategory of 
$ \Fun(\cC,\cD)$ of functors which send morphisms in $W$ to equivalences. \hB
\end{rem}

Note that we consider ordinary categories as $\infty$-categories using the nerve, but we will not write the nerve explicitly.   
With these conventions we can consider 
the functor
$$\kkGs:\Fun(BG,\nCalgs)\to \KKGs$$ introduced in Definition \ref{wtpohwopggergeg}.


We now start with the proof of Theorem \ref{weighigregwgregw9} following the lines of  \cite{Land:2016aa} using \cite{Meyer:aa} and \cite{Uuye:2010aa}. 
First of all it follows from the universal property of the Dwyer--Kan localization that  there exists a factorization of $\kkGsk$ over a functor $\ho$ as indicated by the following commuting triangle: 
\begin{equation}\label{fewpoqkopfqewffwqwefqf}
\xymatrix{
\Fun(BG,\nCalg_{\sepa})\ar[rr]^-{\kkGsk}\ar[rd]_-{\kkGs}&&\KKGsk\,.\\
&\KKGs\ar[ur]_-{\ho}&
}
\end{equation} 
This is the commutative triangle in  \eqref{fewpoqkopfqewffwqwefqf1}.

In order to show that ${\KKGs}$ is stable we consider the functor
\begin{equation}\label{qwefewfwefwfqwefeewqfe}
S \colon \Fun(BG,\nCalg_{\sepa})\to \Fun(BG,\nCalg_{\sepa})\, , \quad  A\mapsto C_{0}((0,1))\otimes  A
\end{equation}
which later turns out to be an explicit model for the suspension functor in ${\KKGs}$.

\begin{lem}\label{qefoijhfoiqfejoewfqwef}\mbox{}
\begin{enumerate}
\item \label{weiogjwergwergwegrgwgreg} The functor $S$ uniquely descends to an equivalence $S_{0}$ of additive categories   such that  
\begin{equation}\label{qevlkqmlkqwvqwrv}
 \xymatrix{\Fun(BG,\nCalg_{\sepa})\ar[r]^{S}\ar[d]^{\kkGsk}&\Fun(BG,\nCalg_{\sepa}) \ar[d]^{\kkGsk}\\ \KKGsk\ar@{..>}[r]^{S_{0}}&\KKGsk}
\end{equation} commutes. 
\item \label{qeroigjqergegwegre}The functor 
$S$    essentially uniquely  descends to a functor (also  denoted by) $S$ completing the square \begin{equation}\label{qefljoiewfqwefqewfwefqf}
\xymatrix{\Fun(BG,\nCalg_{\sepa})\ar[d]^{\kkGs}\ar[r]^{S}&\Fun(BG,\nCalg_{\sepa}\ar[d]^{\kkGs}\\ \KKGs\ar@{..>}[r]^{S}&\KKGs}
\end{equation}
 \end{enumerate}
\end{lem}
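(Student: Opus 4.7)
The plan is to handle the two parts in turn, using Proposition~\ref{wtohiwthwgrgwerg} for part~(1) and the Dwyer--Kan universal property from Remark~\ref{weroigujweogerwgregfw} for part~(2).

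For part~(1), I would verify that the composite $\kkGsk \circ S \colon \Fun(BG,\nCalg_{\sepa}) \to \KKGsk$ satisfies the three hypotheses of Proposition~\ref{wtohiwthwgrgwerg}. Reducedness is immediate from $S(0) = 0$. For $\mathbb{K}^{G}$-stability, note that $C_{0}((0,1)) \otimes -$ commutes naturally with $- \otimes \mathbb{K}(H)$, so sending a map of the form \eqref{342uihuihfewfwefweferf} through $\kkGsk \circ S$ agrees with sending the corresponding map at the level of the suspensions through $\kkGsk$, where it becomes an isomorphism by $\mathbb{K}^{G}$-stability of $\kkGsk$. For split-exactness, the functor $C_{0}((0,1)) \otimes -$ is exact and preserves equivariant splittings, hence carries split-exact sequences to split-exact sequences, which $\kkGsk$ then sends to split-exact sequences in the additive target $\KKGsk$. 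Proposition~\ref{wtohiwthwgrgwerg} thus supplies the essentially unique factorization $S_{0} \colon \KKGsk \to \KKGsk$ filling in~\eqref{qevlkqmlkqwvqwrv}.

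To upgrade $S_{0}$ to an equivalence, I would invoke equivariant Bott periodicity: the Bott and Dirac classes in the $\KKth^{G}$-groups of $\mathbb{C}$ and $C_{0}(\mathbb{R}^{2})$ (viewed with trivial $G$-action), together with their inverse Kasparov products, induce a natural isomorphism $S_{0}^{2} \simeq \mathrm{id}_{\KKGsk}$. Hence $S_{0}$ admits an inverse and is an auto-equivalence of additive categories.

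For part~(2), the existence of $S_{0}$ from part~(1) immediately implies that $S$ preserves $\kkGsk$-equivalences: if $\kkGsk(f)$ is invertible, then so is $\kkGsk(S(f)) = S_{0}(\kkGsk(f))$. Consequently $\kkGs \circ S$ inverts every morphism in $\kkGeq$, and the universal property~\eqref{erwgoijowiergwergwregerg} of the Dwyer--Kan localization yields the essentially unique dashed functor $S \colon \KKGs \to \KKGs$ completing~\eqref{qefljoiewfqwefqewfwefqf}.

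The main substantive input is the appeal to equivariant Bott periodicity in part~(1) to conclude that $S_{0}$ is an equivalence rather than merely an endofunctor; everything else is a mechanical application of the relevant universal properties.
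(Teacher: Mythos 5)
Your proposal is correct and follows essentially the same route as the paper: both apply Proposition~\ref{wtohiwthwgrgwerg} to $\kkGsk\circ S$ (checking reducedness, $\mathbb{K}^{G}$-stability and split-exactness, which the paper leaves as "straightforward" and you spell out), both conclude that $S_{0}$ is an equivalence via Bott periodicity, and both deduce part~(2) from part~(1) together with the Dwyer--Kan universal property of $\kkGs$. No gaps.
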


\begin{proof}
In order to obtain $S_{0}$,
we apply {Proposition \ref{wtohiwthwgrgwerg}} 
to the composition $$\kkGsk\circ S \colon \Fun(BG,\nCalg_{\sepa})\to \KKGsk \, .$$ 
 It straightforward to check that the functor   $\kkGsk\circ S$  is  {reduced,} 
 $\mathbb{K}^{G}$-stable and split exact, using that $\kkGsk$ has these properties.
  {The functor $\kkGsk\circ S$}
  therefore factorizes over an additive endofunctor  
   $S_{0}$ as required.  It  {then} follows from Bott periodicity that $S_{0}$ is an equivalence of categories.
   
 Since  {by definition} $\kkGsk$ detects ${\kkGsk}$-equivalences, 
 we conclude from Assertion   \ref{weiogjwergwergwegrgwgreg} that $S$ {in \eqref{qwefewfwefwfqwefeewqfe}} preserves $\kkGsk$-equivalences. Applying the universal property of $\kkGs$  being a Dwyer-Kan localization (see Definition \ref{wtpohwopggergeg}) we obtain the essentially unique factoriaztion of $\kkGs\circ S$ through $\kkGs$ as asserted in Assertion \ref{qeroigjqergegwegre}. 
 \end{proof}

As shown in \cite{kasparovinvent}, for every semisplit  exact sequence
  \begin{equation}\label{fqwefwoijoiqewfeqwfewqfewfqfef1111}
0\to  I\to  A\to  B \to 0 
\end{equation}  we have    a boundary operator
$\partial$ in $ \KKth^{G}(S( B ), I)$ such that
the sequence \begin{equation}\label{qwfeewfqefwfeqwf}
 \KKth^{G}( D,S( A))\to  \KKth^{G}( D,S( B))\stackrel{\partial}{\to}  \KKth^{G}( D, I)\to  \KKth^{G}( D, A)\to  \KKth^{G}( D, B)
\end{equation}
  is exact for every $ D$ in $\Fun(BG,\nCalg_{\sepa})$. 
  The boundary operator  is natural in the sequence in the following sense.
If $$\xymatrix{0\ar[r]& I\ar[d]\ar[r]& A\ar[r]\ar[d]& B\ar[r]\ar[d]&0\\
0\ar[r]& I'\ar[r]& A'\ar[r]& B'\ar[r]&0}$$
is a morphism to a second sequence of this kind, then the square
$$\xymatrix{ \KKth^{G}( D,S( B))\ar[r]^{\partial}\ar[d]&\KKth^{G}( D, I)\ar[d]\\\KKth^{G}( D,S( B'))\ar[r]^{\partial'}&\KKth^{G}( D, I')}$$
commutes.  
%

The main problem in the proof of   Theorem \ref{weighigregwgregw9} is   to control  finite limits  in $\KKGs$. 
To this end we calculate the Dwyer-Kan localization in Definition \ref{wtpohwopggergeg} using  a category of fibrant objects structure on $\Fun(BG,\nCalgs)$.
We now  recall the definition of a category of fibrant objects in the form  used in  \cite[Def.\ 1.1]{Uuye:2010aa}.
Let $\cC$ be a category with a terminal object, and let $F $ and $W$ be collections of morphisms in $\cC$.
\begin{ddd}
The triple $(\cC,F,W)$ is a category of fibrant objects if
the following conditions hold.
 \begin{enumerate}
\item F0: $F$ is closed under compositions.
\item F1: $F$   contains all isomorphisms.
\item F2: Pull-backs of morphisms in $F$ exist and belong again to $F$.
\item F3: For every $C$ in $\cC$ the morphism $C\to *$ belongs to $F$.
\item W1: $W$   contains all isomorphisms.
\item W2: $W$ has the $2$-out-of-$3$-property.
\item FW1: $W\cap F$ is stable under forming pull-backs.
\item  FW2: For every $C$ in $\cC$ the morphism
$C\to C\times C$  has a factorization
$C\stackrel{w}{\to} C^{I}\stackrel{f}{\to} C\times C$, where $w$ is in $W$ and $f$ is in {$F$.}
\end{enumerate}
\end{ddd}
Note that  the product in the formulation of Condition  FW2 exists by a combination of Conditions F3 and F2.

\begin{rem}\label{factorization}
The object $C^I$ appearing in FW2 is called a path object. 
  {From the existence of a path object $C^I$ }
one can construct a factorisation of an arbitrary morphism $\phi\colon A \to  C$ in $\cC$ as   a weak equivalence followed by a fibration as follows.   We choose a factorization $C\stackrel{w}{\to} C^{I}\stackrel{f}{\to} C\times C$ as in $FW2$. Then we 
consider the commut{ative} diagram
\begin{equation}\label{fqwfojofqweqwefwefwef}
\begin{tikzcd}
	A\ar[r,"u"] \ar[ddr,"\phi"] &A \times_{C} C^{I} \ar[r] \ar[d,"f'"] & C^I \ar[d,"f"] \\&
	A \times C\ar[d,"\pr"] \ar[r,"{\phi \times \id}"] & C \times C\\&C&
 \end{tikzcd}
\end{equation} 
 where the square is a pull-back.  It exists by $F2$ which also implies that $f'$ is a fibration.  Using $F3$ and $F2$ one checks that the projection $\pr$ is also a fibration. We conclude by $F0$ that $\pr\circ f'$ is a fibration.
Furthermore, the map $u$ 
 is given by $ A\cong A\times_{C}C\stackrel{A\times w}{\to} A\times_{C}
C^{I}$ and therefore a weak equivalence by $FW1$ and $F3$ (applied to $A\to *$). 
Consequently
\begin{equation}\label{ergfgferqfqfqef}
A\stackrel{u}{\lto} A\times_{C}C^{I}\stackrel{\pr\circ f'}{\lto} C
\end{equation}
is the  desired factorization of $\phi$ as a composition of a weak equivalence and a fibration. \hB
\end{rem}

\begin{rem}\label{cylinder object}


The category $\Fun(BG,\nCalg_\sepa)$ admits a canonical path object  $ C\mapsto C([0,1], C)$. The factorization
\[  C \lto C([0,1], C) \lto  C \times  C\] 
required in $FW2$ is given by the inclusion of constant maps and the evaluation at $0$ and $1$.  
The first map is a homotopy equivalence. Furthermore,
the map $ C^{I}\to  C\times  C$ is surjective and admits an  {equivariant}  contractive completely positive split. For example, one can take the split 
\begin{equation}\label{ewfqewffqeff}
(c_{0},c_{1})\mapsto (t\mapsto (1-t)c_{0}+tc_{1})\, .
\end{equation}

Applying the general factorization \eqref{ergfgferqfqfqef} to a  morphism $ A \to  C$, one obtains the usual cone sequence
\[ 0\lto C( \phi) \lto \mathrm{Cyl}( \phi) \lto  C\to 0 \, .\]
Explicitly,
\begin{equation}\label{qewfpojkopqwefqewf}
\mathrm{Cyl}(  \phi) \coloneqq \{(a,c)\in A\oplus  C([0,1],C)\:|\: c(0)= \phi(a)\}\, , 
\end{equation}  the  homomorphism $\mathrm{Cyl}(  \phi)\to   C$ is given by $(a,c)\mapsto c(1)$, and  
\[C( \phi):= \{(a,c)\in \mathrm{Cyl}(  \phi)\:|\: c(1)=0\} \]
with the induced $G$-actions. 
\hB
\end{rem}

A morphism $ \phi \colon A \to  C$ in $\Fun(BG,\nCalg_\sepa)$ is  {called }a semisplit surjection if it is a surjection and
admits an equivariant cpc split.
We let $\SS$ denote the set of semisplit surjections{.}  Furthermore recall that $ \kkGeq$ is the  set of ${\kkGsk}$-equivalences.

\begin{prop}\label{prop:fibration-structure}
The triple $(\Fun(BG,\nCalg_\sepa),\SS , \kkGeq)$ is a category of fibrant objects.
\end{prop}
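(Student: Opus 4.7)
My plan is to verify the eight axioms in order, noting that most of them are direct and that the real content is in the two axioms (FW1, FW2) that tie fibrations to weak equivalences.

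First I would dispatch the ``purely categorical'' axioms. For F0 one composes cpc equivariant splits; F1 holds because an isomorphism has a *-isomorphic inverse which is automatically cpc and equivariant; F3 is clear since the map to the zero $C^{*}$-algebra is a (trivially split) surjection. For F2 I would construct the pullback of $\phi\colon A\to C$ along $f\colon B\to C$ as the closed $G$-invariant $*$-subalgebra $A\times_{C}B=\{(a,b)\in A\oplus B\mid \phi(a)=f(b)\}$ of $A\oplus B$, which is separable, and if $s\colon C\to A$ is a cpc equivariant split of $\phi$, then $b\mapsto (s(f(b)),b)$ is a cpc equivariant split of the projection $A\times_{C}B\to B$. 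The axioms W1 and W2 follow immediately from the fact that $\kkGsk$ is a functor and that isomorphisms in $\KKGsk$ satisfy $2$-out-of-$3$.

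For FW2, I would recall from Remark \ref{cylinder object} that $C([0,1],C)$ serves as a path object: the constant-functions inclusion $C\to C([0,1],C)$ is a homotopy equivalence in $\Fun(BG,\nCalg_{\sepa})$ and hence a $\kkGsk$-equivalence by homotopy invariance of $\kkGsk$, and the endpoint evaluation $C([0,1],C)\to C\times C$ is a surjection with the explicit cpc equivariant split \eqref{ewfqewffqeff}, so it lies in $\SS$.

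The main obstacle, and the only part that uses nontrivial input, is FW1: if $\phi\colon A\to C$ lies in $\SS\cap\kkGeq$ and $f\colon B\to C$ is arbitrary, then the pullback projection $p\colon A\times_{C}B\to B$ again lies in $\SS\cap\kkGeq$. Membership in $\SS$ is exactly the content of the verification of F2 above. For membership in $\kkGeq$, set $I:=\ker(\phi)$; then the two sequences
\begin{equation*}
0\to I\to A\xrightarrow{\phi} C\to 0,\qquad 0\to I\to A\times_{C}B\xrightarrow{p} B\to 0
\end{equation*}
are both semisplit exact (the split of the second being $b\mapsto(s(f(b)),b)$ with $s$ a cpc equivariant split of $\phi$), and naturality of the boundary map shows that the second sequence is the pullback of the first via $f$. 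Applying the long exact sequence \eqref{qwfeewfqefwfeqwf} to the first sequence and using that $\phi$ is a $\kkGsk$-equivalence, a diagram chase (equivalently, a direct application of Bott periodicity and exactness) yields $\KKth^{G}(D,I)=0=\KKth^{G}(D,SI)$ for every $D$. Feeding this vanishing into the long exact sequence \eqref{qwfeewfqefwfeqwf} for the second sequence, both neighbours of $\KKth^{G}(D,A\times_{C}B)\to \KKth^{G}(D,B)$ vanish, so this map is an isomorphism for every $D$. By Yoneda in $\KKGsk$ this means $\kkGsk(p)$ is an isomorphism, i.e.\ $p\in\kkGeq$. This completes the verification.
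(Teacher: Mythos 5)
Your proof is correct and follows essentially the same route as the paper's: the formal axioms are dispatched directly, F2 via the explicit cpc split $b\mapsto(s(f(b)),b)$ of the pullback projection, FW2 via the path object $C([0,1],C)$, and FW1 by observing $\ker(p)\cong\ker(\phi)$ and running the six-term exact sequence \eqref{qwfeewfqefwfeqwf} to reduce membership in $\kkGeq$ to the vanishing of $\KKth^{G}(D,I)$ for all $D$. The paper phrases the FW1 step as an ``if and only if'' criterion ($\psi\in\kkGeq$ iff $\KKth^{G}(D,\ker\psi)=0$ for all $D$) rather than chasing the two sequences separately, but this is the same argument.
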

\begin{proof}
The axioms $F0$,  $F1$, $F3$, $W1$, and $W2$ are obvious.

We  show $F2$. We consider a pullback 
\begin{equation}\label{qweflkjqlekwfqwfqwefefqefe}
\xymatrix{ A\ar[r]\ar[d]^{ \psi}& C\ar[d]^{ \phi}\\  B\ar[r]& D}
\end{equation}
in $\Fun(BG,\nCalg_\sepa)$ such that $ \phi$ is in $\SS$ with equivariant cpc split $s:D\to C$. The square of the underlying   Banach spaces is a pullback  square of   Banach spaces. Thus the  equivariant  contractive split $s $ of $\phi$ induces an equivariant  contractive split $t:B\to A$.  We now use the fact that an element in $A$ is positive if its images in $C$ and $B$ are positive. The same is true for the extension of the square to finite matrices. Using that $s$ is completely positive we can now conclude that $t$ is completely positive as well. Hence $ \psi$ is in $\SS$, too.

To show $FW2$, we make use of the cylinder discussed in Remark~\ref{cylinder object}.  The map $ C \to  C^I$ is  a homotopy equivalence, hence    in particular a ${\kkGsk}$-equivalence. Furthermore, as explained in Remark~\ref{cylinder object} the projection $ C^{I}\to  C\times  C$ has an equivariant  contractive completely positive split and therefore belongs to $\SS$.

 In order to prepare the proof of   $FW1$ we consider   an  exact sequence admitting an equivariant  cpc split.
\begin{equation}\label{fqwefwoijoiqewfeqwfewqfewfqfef}
0\lto  I\lto  A\stackrel{ \psi}{\lto}  B\lto 0 \, .
\end{equation} 
As explained above,  
then we have  the  boundary operator 
$\partial$ in $\KKth^{G}(S( B ), I)$ 
such that
the  sequence
 \begin{equation}\label{qefefefefqewfqef}
\KKth^{G}(D,S( A))\to \KKth^{G}(D,S(  B ))\stackrel{\partial}{\to} \KKth^{G}( D, I)\to \KKth^{G}( D, A)\stackrel{ \psi_{*}}{\to} \KKth^{G}( D, B )
\end{equation} is exact for every $ D$ in $\Fun(BG,\nCalg_{\sepa})$.
By the Yoneda Lemma, the morphism $ \psi$ is a ${\kkGsk}$-equivalence if and only if $ \psi_{*}$ is an isomorphism for every  $ D$ in $\Fun(BG,\nCalg_\sepa)$.
We conclude from the exactness of \eqref{qefefefefqewfqef} that  this is the case  if and only if $\KKth^{G}( D, I)=0$  for every $ D$.

{We now consider a pull-back square \eqref{qweflkjqlekwfqwfqwefefqefe} in 
$\Fun(BG,\nCalg_\sepa)$ such that $ \phi$ belongs to $\SS  \cap \kkGeq$. By $F2$ we know that $ \psi$ is in $\SS$ and it remains to show that $ \psi$ belongs to $\kkGeq$. This follows from the fact that $\ker( \phi)\cong \ker( \psi)$. 
Since $  \phi$ is in $\kkGeq$ we have 
$\KKth^{G}( D, \ker( \psi))= \KKth^{G}( D, \ker( \phi))=0$ for all $ D$ in $\Fun(BG,\nCalg_\sepa)$. Consequently, $ \psi$ belongs to   $\kkGeq$.}
%
%
\end{proof}

For the formulation of universal properties  it turned out to be useful to reformulate various exactness   properties of functors defined on $G$-$C^{*}$-algebras  using squares so that they make sense not only for stable targets, but also for pointed ones.  But we will show in Lemma \ref{eroigjowegwerreggwerg} below that these new definitions are equivalent  to the old  same-named 
conditions in the situations where they have been used above.
We do these considerations at this point since we will use squares already in the proof of Proposition \ref{KK-stable} below.

We consider a cartesian square 
 \begin{equation}\label{f23rfipojoierfwefrwefwerf}
 \xymatrix{ A\ar[r]\ar[d]^-{ p}& B\ar[d]^-{ q}\\
 \ar[r] C& D}
\end{equation}
in $\Fun(BG,\nCalg)$. We will consider the following two 
conditions on the  
square in \eqref{f23rfipojoierfwefrwefwerf}.

%
%

 \begin{ddd}\label{qoirgqrfqfefqf}\mbox{} \begin{enumerate}
\item   \label{qroifjoqrfewewfqf}
The square   \eqref{f23rfipojoierfwefrwefwerf} is called  semisplit if there exist   equivariant     cpc's   $ s \colon  C\to  A$ and $ t \colon  D\to  B$ such that $ p\circ  s=\id_{ C}$ and $ q\circ  t=\id_{ D}$.

\item \label{wtiogjergergerwgergwerg}
 The square   \eqref{f23rfipojoierfwefrwefwerf} is called  split if there exist equivariant 
 morphisms    $ s \colon  C\to  A$ and $ t \colon  D\to  B$ such that $ p\circ  s=\id_{ C}$ and $ q\circ  t=\id_{ D}$.
\end{enumerate}
\end{ddd}

\begin{rem}
The conditions on $ q$ appearing in Definition~\ref{qoirgqrfqfefqf} imply the corresponding conditions on $ p$, see the verification of the axiom F2 in the proof of Proposition \ref{prop:fibration-structure} for a proof of this {fact.}
\hB
\end{rem}

Note that an exact sequence 
\begin{equation}\label{f23rfipojoierfwefrwefwerfq}
0\to   I\to A\xrightarrow{ q}  B \to 0 
\end{equation}
in $\Fun(BG,\nCalg)$
is semisplit  or split, if and only if the cartesian square 
\begin{equation}\label{vsadvlknalskdvasdvsd}
\xymatrix{ I\ar[r]\ar[d]& A\ar[d]^-{q}\\0\ar[r]&\tilde B}
\end{equation}  
has the corresponding property as defined in Definition \ref{qoirgqrfqfefqf}. 

Let $F \colon \Fun(BG,\nCalg_{\sepa})\to \cD$ (or $F\colon \Fun(BG,\nCalg)\to \cD$) be a 
functor to a pointed $\infty$-category $\cD$. 
Corresponding to the list of conditions in Definition \ref{qoirgqrfqfefqf} we introduce  the following  two  exactness conditions on the functor $F$.


\begin{ddd}\label{qwroigjqrwgqwrfqewfqewfq}\mbox{}
\begin{enumerate}

\item \label{qeriughqeiruferwqkfjqr9ifuq1}
$F$ is semiexact 
if it   is reduced and   sends any    semisplit    {cartesian square} 
to a {cartesian square}  in $\cD$.
 \item  \label{qeriughqeiruferwqkfjqr9ifuq2} F is split exact 
if it   is reduced and sends  any split  cartesian square 
to a cartesian square in $\cD$.
\end{enumerate}
\end{ddd}

%
%
%
%
%
%
   
 Let $F$ be a  functor as in Definition \ref{qwroigjqrwgqwrfqewfqewfq}.    
 \begin{lem}\mbox{}\label{eroigjowegwerreggwerg}
 \begin{enumerate}
\item \label{qeoigwergwerggergewrgwergrwergwg}  If $\cD$ is stable, then  $F$ is semiexact in the sense of Definition  \ref{qwroigjqrwgqwrfqewfqewfq}.\ref{qeriughqeiruferwqkfjqr9ifuq1}  if and only if $F$ sends all  semisplit exact sequences to  fibre sequences.
\item \label{qeoigwergwergwergwg} If $\cD$ is additive, then  $F$ is  split exact   in the sense of Definition  \ref{qwroigjqrwgqwrfqewfqewfq}.\ref{qeriughqeiruferwqkfjqr9ifuq2}  if and only if
 $F$   sends  all  split exact sequences to  fibre sequences.
 \end{enumerate}
  \end{lem}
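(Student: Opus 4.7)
The strategy is to exploit the correspondence between short exact sequences in $\Fun(BG,\nCalg_\sepa)$ (resp.\ $\Fun(BG,\nCalg)$) and cartesian squares of the special form \eqref{vsadvlknalskdvasdvsd}, under which the (semi)split property on both sides matches. This immediately handles the ``$\Rightarrow$'' direction of both parts: a (semi)split exact sequence produces a (semi)split cartesian square which $F$ sends to a cartesian square with $F(0)\simeq 0$ in the lower-left corner, i.e.\ to a fibre sequence in $\cD$.

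For the converse in both parts, take a (semi)split cartesian square as in \eqref{f23rfipojoierfwefrwefwerf}. The split of $q$ forces it to be (semi)split surjective; the pullback argument used to verify $F2$ in the proof of Proposition~\ref{prop:fibration-structure} then shows that $p$ is also (semi)split surjective and that the map $\ker(p)\to\ker(q)$ induced by the top horizontal arrow is an isomorphism. This assembles into a morphism of (semi)split short exact sequences with rows $0\to\ker(p)\to A\to C\to 0$ and $0\to\ker(q)\to B\to D\to 0$ whose splits are restrictions of $s$ and $t$. By hypothesis $F$ sends both rows to fibre sequences, and the induced map on fibres $F(\ker(p))\to F(\ker(q))$ is an equivalence.

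Here the two cases diverge. For part~(1), stability supplies the standard fact that a commutative square in a stable $\infty$-category is cartesian if and only if the induced map between the fibres of its two vertical arrows is an equivalence, and one concludes at once. For part~(2), one first notes that a reduced, split exact functor to an additive $\infty$-category preserves direct sums (apply the hypothesis to $0\to A\to A\oplus B\to B\to 0$); the splits $s$ and $t$ then identify $F$ of the square with
\[
\xymatrix{
F(\ker(p))\oplus F(C) \ar[r]\ar[d] & F(\ker(q))\oplus F(D)\ar[d]\\
F(C) \ar[r] & F(D)
}
\]
in which the top arrow restricts to the equivalence $F(\ker(p))\simeq F(\ker(q))$ on the first summand and the vertical maps are projections; such a square is manifestly cartesian in any additive $\infty$-category. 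A small auxiliary point is that reducedness of $F$, which is part of the ``new'' formulation but not explicit in the ``old'' one, is automatic from the old condition: applying it to the trivial exact sequence $0\to 0\to 0\to 0\to 0$ produces a fibre sequence $F(0)\xrightarrow{\id}F(0)\to F(0)$, forcing $F(0)\simeq 0$. The main subtle point is part~(2), where additive $\infty$-categories lack general pullbacks and one must use the direct sum decomposition supplied by the splits to recognize the square as cartesian.
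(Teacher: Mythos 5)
Your proof is correct and takes essentially the same route as the paper's: both identify the fibres of $F(p)$ and $F(q)$ with $F(\ker p)\simeq F(\ker q)$ via the hypothesis on (semi)split exact sequences, then conclude by the fibres-detect-cartesian criterion in the stable case and by the splittings in the additive case. One cosmetic caveat for part (2): since $s$ and $t$ need not be compatible with the horizontal arrows, the top map of your decomposed square is only upper-triangular with respect to the direct sum decompositions (an equivalence on the $F(\ker p)$-summand plus a possibly nonzero off-diagonal term $F(C)\to F(\ker q)$), not a direct sum of maps; such a square is still cartesian by the same mapping-space computation, so the argument stands.
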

\begin{proof}
In each case, the ''only if`` implication is obvious. To see the ''if`` implication, we extend the square \eqref{f23rfipojoierfwefrwefwerf}  to the left by adding a pullback of $ p$ along $0 \to  C$:
\begin{equation*}\label{rqgrwefwqfewfw}
\begin{tikzcd}
	  I  \ar[r] \ar[d] &  A  \ar[r] \ar[d," p"] &   B \ar[d," q"] \\
	0 \ar[r] &  C \ar[r] &  D
\end{tikzcd}
\end{equation*}
The left and the outer squares are exact sequences which are send by $F$ to fibre sequences.
Thus applying $F$, {in both cases} 
  we obtain the diagram
\begin{equation}\label{F-diagram}
\begin{tikzcd}
	F( I) \ar[r] \ar[d] & F( A) \ar[r] \ar[d,"F( p)"] & F( B) \ar[d,"F( q)"] \\
	0 \ar[r] & F( C) \ar[r] & F( D)
\end{tikzcd}
\end{equation}
in which   the left  and the outer  squares are cartesian and the lower left corner is a zero object. In particular we can  conclude that $F$ is reduced. The right square is cartesian since $F(I)$ identifies with the fibre of both $F(p)$ and $F(q)$. {The stable case follows at once, and in the additive case we use the existence of the splits of $F(p)$ and $F(q)$ to conclude}.

%
%
%
%
%
%
\end{proof}

The following proposition  shows all  assertions of  Theorem \ref{weighigregwgregw9} except the one about triangulated structures which will be obtained  later in Proposition \ref{woiejgwoegregreggwergw}.

\begin{prop}\label{KK-stable}\mbox{}
\begin{enumerate}
	\item \label{qoifujoqfqewfewfqewfe} The functor $\kkGs$  is   semiexact.
 	\item \label{qoifujoqfqewfewfqewfe2}The   functor  $\ho_0 \colon \ho(\KKGs) \to \KKGsk$ is an equivalence.\item\label{qoifujoqfqewfewfqewfe1}  The $\infty$-category $\KKGs$ is stable. 
\end{enumerate}
\end{prop}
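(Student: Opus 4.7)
My plan is to exploit the category of fibrant objects structure $(\Fun(BG,\nCalgs),\SS,\kkGeq)$ from Proposition \ref{prop:fibration-structure} together with the theorem of Cisinski (in the $\infty$-categorical form established by Szuml{\'\i}o) asserting that the Dwyer--Kan localisation $\cC[W^{-1}]$ of a category of fibrant objects $(\cC,F,W)$ admits finite limits, which are moreover computed by pulling back along fibrations in $\cC$. Concretely, every cartesian square in $\cC$ one of whose legs is a fibration is sent to a cartesian square in $\cC[W^{-1}]$, and all finite limits arise this way from the factorisation described in Remark \ref{factorization}. Applied to our situation this immediately gives that $\KKGs$ admits finite limits, which is the key input for both semiexactness and stability.

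\textbf{Assertion (2).} The homotopy category $\ho(\KKGs)$ of the Dwyer--Kan localisation $\Fun(BG,\nCalgs)[\kkGeq^{-1}]$ is tautologically the ordinary $1$-categorical localisation at the collection $\kkGeq$. By Proposition \ref{eoigjeoigergergwergergegergweg}, $\KKGsk$ together with $\kkGsk$ is also the ordinary localisation at $\kkGeq$. Since by construction \eqref{fewpoqkopfqewffwqwefqf} the factorisation $\kkGsk \simeq \ho_0 \circ \kkGs$ holds, the universal property identifies $\ho_0$ as the canonical equivalence between the two presentations of this ordinary localisation.

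\textbf{Assertion (1).} Reducedness: the object $\kkGsk(0)$ is a zero object in $\KKGsk$ by classical Kasparov theory, and Assertion (2) transfers this to $\ho(\KKGs)$; combined with the description of mapping spaces in $\KKGs$ via zig-zags through the path object of Remark \ref{cylinder object} (or more cleanly, using that once stability is established, a zero object in the homotopy category is a zero object in the $\infty$-category) this shows that $\kkGs(0)$ is a zero object in $\KKGs$. For the square condition, consider a semisplit cartesian square in the sense of Definition \ref{qoirgqrfqfefqf}.\ref{qroifjoqrfewewfqf}. By the semisplit hypothesis its right vertical leg belongs to $\SS$, hence to $F$, so the Cisinski/Szuml{\'\i}o theorem guarantees that $\kkGs$ sends it to a cartesian square in $\KKGs$. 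Combined with reducedness this yields semiexactness.

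\textbf{Assertion (3).} By Lurie \cite[Prop.\ 1.4.2.27]{HA} a pointed $\infty$-category admitting finite limits is stable as soon as the loop functor is an equivalence. Pointedness and finite limits are already in hand from the previous steps. For any $A$ in $\Fun(BG,\nCalgs)$, the path object $A^I = C([0,1],A)$ described in Remark \ref{cylinder object} together with the factorisation \eqref{fqwfojofqweqwefwefwef} applied to the zero map $0 \to A$ realises $\Omega \kkGs(A)$ as $\kkGs$ of the algebra $\{ f \in A^I \mid f(0)=0=f(1)\} \cong C_0((0,1)) \otimes A = SA$. Hence $\Omega$ on $\KKGs$ is identified with the descent $S$ of the suspension functor from Lemma \ref{qefoijhfoiqfejoewfqwef}.\ref{qeroigjqergegwegre}. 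By Bott periodicity, recorded in Lemma \ref{qefoijhfoiqfejoewfqwef}.\ref{weiogjwergwergwegrgwgreg} as the statement that $S_0$ is an equivalence on $\KKGsk$, the functor $S$ is an equivalence on $\ho(\KKGs)$ via the identification from Assertion (2); since equivalences in an $\infty$-category are detected on the homotopy category, $\Omega \simeq S$ is an equivalence on $\KKGs$ itself, proving stability.

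\textbf{Main obstacle.} The principal technical burden is the careful application of the category of fibrant objects machinery, in particular (i) that the category of fibrant objects axioms established in Proposition \ref{prop:fibration-structure} indeed suffice to invoke the Cisinski/Szuml{\'\i}o theorem, and (ii) the explicit identification of the loop object $\Omega \kkGs(A)$ computed via the path object \eqref{fqwfojofqweqwefwefwef} with the KK-theoretic suspension $\kkGs(SA)$. A secondary delicate point is the transfer of pointedness from the homotopy category $\KKGsk$ to the $\infty$-category $\KKGs$, which is not formal at the level of $\pi_0$ and requires either a direct mapping-space computation in the category of fibrant objects model or a bootstrap via stability.
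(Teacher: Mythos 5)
Your proposal is correct and follows essentially the same route as the paper: Assertion (2) via the compatibility of Dwyer--Kan localization with homotopy categories together with Proposition \ref{eoigjeoigergergwergergegergweg}, Assertion (1) via the category of fibrant objects structure of Proposition \ref{prop:fibration-structure} and Cisinski's theorem that pullbacks along fibrations descend to cartesian squares in the localization, and Assertion (3) by identifying $\Omega$ with the suspension $S$ through the path-object factorization and invoking Bott periodicity on $\ho(\KKGs)\simeq\KKGsk$. The "delicate points" you flag (pointedness, the identification $\Omega\simeq S$) are handled in the paper exactly as you suggest, with pointedness following formally from the category of fibrant objects having a zero object.
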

\begin{proof} 
Assertion \ref{qoifujoqfqewfewfqewfe} is an immediate consequence of the fact that $\KKGs$ is the $\infty$-category associated to {a  category of fibrant objects}  whose fibrations are the semisplit exact surjections. Indeed, consider a semisplit cartesian square    \eqref{f23rfipojoierfwefrwefwerf}. Since $ p$ and $  q$ are fibrations this square represents a cartesian square in the $\infty$-category $\KKGs$, see \cite[Prop.\ 7.5.6]{Cisinski:2017}.

We now prove Assertion \ref{qoifujoqfqewfewfqewfe2}. 
We will use that forming  localizations is compatible with going over to homotopy categories.
Let $\cC$ be an $\infty$-category with a set of morphisms $W$. Then  
  have a commutative square
  \begin{equation}\label{vwervevwgretgberter}
\xymatrix{\cC\ar[r]^-{\ell}\ar[d]&\cC[W^{-1}]\ar[d]\\\ho(\cC)\ar[r]^-{\ho(\ell)}&\ho(\cC[W^{-1}])}
\end{equation}
 where $\ho(\ell) \colon \ho(\cC)\to   \ho(\cC[W^{-1}])$ presents its target as the  localization of $\ho(\cC)$ at the set $\ho(W)$ in the sense of ordinary categories.
We apply this to $\Fun(BG,\nCalg_{\sepa})$ in place of $\cC$. Since $\Fun(BG,\nCalg_{\sepa})$ is an ordinary category the left vertical functor in \eqref{vwervevwgretgberter} is an equivalence. Hence 
the functor 
$$\Fun(BG,\nCalg_{\sepa})\xrightarrow{\kkGs} \KKGs \to \ho(\KKGs)$$
presents its target as the localization of 
 $\Fun(BG,\nCalg_{\sepa})$ at the set of ${\kkGsk}$-equivalences in the sense of ordinary categories. By Proposition 
 \ref{eoigjeoigergergwergergegergweg} the  functor  $$\kkGsk \colon \Fun(BG,\nCalg_{\sepa})\to \KKGsk$$ has the same universal property. 
 This implies that $\ho_{0}\colon \ho(\KKGs)\to \KKGsk$ is an equivalence.

The proof Assertion \ref{qoifujoqfqewfewfqewfe1}  that $\KKGs$ is stable can now be copied from \cite[Prop.\ 3.3]{Land:2016aa}.  First we note that $\KKGs$, being the $\infty$-category associated to a category of fibrant objects with a zero object, is pointed and has finite limits. 
It remains to show that the loop functor {$\Omega$} in $\KKGs$ is an equivalence. It is well-known that it suffices to show that  $\Omega$ induces an equivalence on the homotopy category, see for instance \cite[Lem.\ 3.4]{Land:2016aa}.

Using the explicit description of the fibrant replacement of $0\to  A$ given in Remark \ref{cylinder object}  we see that
 \begin{equation}\label{}
\xymatrix{\kkGs(S( A))\ar[r]\ar[d]&0\ar[d]\\0\ar[r]&\kkGs( A)} 
\end{equation}
is a pull-back in $\KKGs$. Therefore  $\Omega$ is {equivalent to  
the suspension functor given by the dotted arrow $S$ in \eqref{qefljoiewfqwefqewfwefqf}.  
In view of Assertion \ref{qoifujoqfqewfewfqewfe2} shown above  the induced action of this functor $S$ on the homotopy category can be identified with the action of the functor $S_{0}$ in Lemma~\ref{qefoijhfoiqfejoewfqwef}.\ref{qeroigjqergegwegre} on $\KKGsk$. As asserted in the same statement $S_{0} $   is an equivalence.} \end{proof}

Our next task is the verification of the properties of $\KKGs$ and $\kkGs$ stated in Theorem \ref{qroifjeriogerggergegegweg}.
Assertion  \ref{qroifjeriogerggergegegweg}.\ref{qoirwfjhqoierggrg1} is already shown as a part of Proposition \ref{KK-stable}.


The next Lemma settles Assertions  \ref{qroifjeriogerggergegegweg}.\ref{qoirwfjhqoierggrg0},   \ref{qroifjeriogerggergegegweg}.\ref{qoirwfjhqoierggrg3} and  \ref{qroifjeriogerggergegegweg}.\ref{qoirwfjhqoierggrg4} together.

\begin{lem}
The functor $\kkGs$ is reduced, $\mathbb{K}^{G}$-stable, and homotopy invariant.
\end{lem}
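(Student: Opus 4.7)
The plan is to deduce all three properties from the already-established reducedness, homotopy invariance and $\mathbb{K}^{G}$-stability of the $1$-categorical functor $\kkGsk$ (these are inherited from the classical properties of Kasparov's bifunctor $\KKth^{G}$), combined with the defining universal property of $\kkGs$ as a Dwyer--Kan localization at the class $\kkGeq$ of $\kkGsk$-equivalences (Definition~\ref{wtpohwopggergeg}), together with Proposition~\ref{KK-stable} which provides stability of $\KKGs$ and the triangulated equivalence $\ho(\KKGs) \simeq \KKGsk$.

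For homotopy invariance, I would argue as follows: the canonical inclusion $A \to C([0,1]) \otimes A$ of \eqref{ewgegreggergegeggegwergw} is sent by $\kkGsk$ to an isomorphism in $\KKGsk$, because $\kkGsk$ inherits homotopy invariance from $\KKth^{G}$. By Definition~\ref{werigwoergreewfefewrf} this map therefore lies in $\kkGeq$, and by construction of $\kkGs$ as the Dwyer--Kan localization at $\kkGeq$ it is sent to an equivalence in $\KKGs$. The argument for $\mathbb{K}^{G}$-stability is identical, applied to the morphisms \eqref{342uihuihfewfwefweferf} induced by equivariant isometric inclusions $H \to H'$ of nonzero separable $G$-Hilbert spaces: these are sent to isomorphisms by $\kkGsk$ and hence belong to $\kkGeq$, so are inverted by $\kkGs$.

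For reducedness, I need to show $\kkGs(0) \simeq 0$ in $\KKGs$. Since $\KKGs$ is stable by Proposition~\ref{KK-stable}.\ref{qoifujoqfqewfewfqewfe1}, it has a zero object, and detection of zero objects in a stable $\infty$-category can be tested on the homotopy category: an object $X$ of $\KKGs$ is a zero object if and only if its image in $\ho(\KKGs)$ is a zero object. Using the equivalence of triangulated categories $\ho_{0}\colon \ho(\KKGs) \simeq \KKGsk$ from Proposition~\ref{KK-stable}.\ref{qoifujoqfqewfewfqewfe2}, this reduces to checking $\kkGsk(0) \simeq 0$ in $\KKGsk$, which holds because $\kkGsk$ is reduced.

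Overall there is no serious obstacle here — the proof is a bookkeeping exercise combining the universal property of the Dwyer--Kan localization with the already-known corresponding properties of $\kkGsk$. The only mild subtlety is the reducedness statement, which requires the prior knowledge that $\KKGs$ is stable (to detect zero objects via the homotopy category); however, this has already been established in Proposition~\ref{KK-stable} and so can be used freely.
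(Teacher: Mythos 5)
Your proof is correct and follows essentially the same strategy as the paper: transfer the three properties from $\kkGsk$ to $\kkGs$, using for reducedness that $\KKGs$ is pointed and that zero objects are detected in $\ho(\KKGs)\simeq\KKGsk$. The only (harmless) difference is that for homotopy invariance and $\mathbb{K}^{G}$-stability you invoke the defining universal property of the Dwyer--Kan localization directly — the relevant morphisms are $\kkGsk$-equivalences, hence in $\kkGeq$ — whereas the paper routes all three properties through the equivalence $\ho(\KKGs)\simeq\KKGsk$ and the fact that the functor to the homotopy category detects equivalences and zero objects; your shortcut is if anything slightly more economical.
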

\begin{proof} We know that 
  $\kkGsk$ is reduced, {$\mathbb{K}^G$}-stable and homotopy invariant.  Therefore 
Proposition \ref{KK-stable}.\ref{qoifujoqfqewfewfqewfe2} together with the fact that  the canonical functor from an $\infty$-category to its homotopy category detects equivalences and zero elements  implies the  assertion.
\end{proof}

The next lemma collects, for later reference, some simple consequences of the details proof of Proposition \ref{KK-stable}.   \begin{lem}\label{woitjgorwergergegergw} \mbox{}
\begin{enumerate}
\item\label{woifhjoqiefewqewfqef} Every morphism in $\KKGs$ is equivalent to a morphism $\kkGs(f)$ for a fibration $f$ in $\Fun(BG,\nCalg_{\sepa})$.  
\item\label{ergpoggergwegregweg}  {Every product in $\KKGs$ is equivalent to the image under $\kkGs$ of  a product in $\Fun(BG,\nCalg_{\sepa})$.}
\item \label{woifhjoqiefewqewfqef1} 
Every fibre sequence in $\KKGs$ is equivalent to the image under $\kkGs$ of a cone sequence.
\end{enumerate}
 \end{lem}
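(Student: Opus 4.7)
The plan is to use throughout that $\KKGs$ arises, via Proposition \ref{prop:fibration-structure}, as the Dwyer--Kan localization of the category of fibrant objects $(\Fun(BG,\nCalgs), \SS, \kkGeq)$, so that mapping spaces and finite limits in $\KKGs$ are controlled by spans and pull-backs of fibrations in $\Fun(BG,\nCalgs)$, cf.\ \cite[Prop.\ 7.5.6]{Cisinski:2017}.

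For Assertion \ref{woifhjoqiefewqewfqef}, given a morphism $\alpha \colon X \to Y$ in $\KKGs$, I first choose representatives $X = \kkGs(A_{0})$ and $Y = \kkGs(B_{0})$. By the calculus of right fractions available in the Dwyer--Kan localization of a category of fibrant objects, $\alpha$ is presented by a span $A_{0} \xleftarrow{w} A' \xrightarrow{g} B_{0}$ with $w \in \kkGeq$. Applying the factorization from Remark \ref{factorization} to $g$, I write $g = p \circ u$ with $u$ a $\kkGsk$-equivalence and $p$ a fibration. Since both $\kkGs(w)$ and $\kkGs(u)$ are equivalences in $\KKGs$, the morphism $\alpha$ becomes equivalent, in the arrow $\infty$-category of $\KKGs$, to $\kkGs(p)$.

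For Assertion \ref{ergpoggergwegregweg}, a product of $X$ and $Y$ in $\KKGs$ is realized as the pull-back of $X \to 0 \leftarrow Y$. After choosing representatives $X = \kkGs(A)$ and $Y = \kkGs(B)$, the terminal maps $A \to 0$ and $B \to 0$ are fibrations by axiom F3, and their pull-back in $\Fun(BG,\nCalgs)$ is the $C^{*}$-algebraic direct sum $A \oplus B$, which is also the product of $A$ and $B$ in $\Fun(BG,\nCalgs)$. By the same reference to \cite[Prop.\ 7.5.6]{Cisinski:2017} that is used in the proof of Proposition \ref{KK-stable}.\ref{qoifujoqfqewfewfqewfe1}, this pull-back of fibrations is preserved by $\kkGs$, so $\kkGs(A \oplus B)$ is a product of $X$ and $Y$ in $\KKGs$.

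For Assertion \ref{woifhjoqiefewqewfqef1}, a fibre sequence in a stable $\infty$-category is determined up to equivalence by its right-hand morphism $\alpha \colon X \to Y$, which by Assertion \ref{woifhjoqiefewqewfqef} is represented by some $\phi \colon A \to B$ in $\Fun(BG,\nCalgs)$. The associated cone sequence $0 \to C(\phi) \to \mathrm{Cyl}(\phi) \to B \to 0$ from Remark \ref{cylinder object} admits the explicit equivariant cpc split obtained from \eqref{ewfqewffqeff}, and is therefore semisplit. By Proposition \ref{KK-stable}.\ref{qoifujoqfqewfewfqewfe} its image under $\kkGs$ is a fibre sequence, and since the inclusion $A \to \mathrm{Cyl}(\phi)$ is a homotopy equivalence, hence a $\kkGsk$-equivalence, the resulting fibre sequence is canonically equivalent to the original one. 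The main obstacle lies in Assertion \ref{woifhjoqiefewqewfqef}, namely in justifying the span representation and the subsequent identification of $\alpha$ with $\kkGs(p)$; once this is in hand the other two assertions follow routinely from semiexactness and the explicit form of finite limits in a category of fibrant objects.
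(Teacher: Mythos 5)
Your proof is correct. For Assertions \ref{woitjgorwergergegergw}.\ref{ergpoggergwegregweg} and \ref{woitjgorwergergegergw}.\ref{woifhjoqiefewqewfqef1} you follow essentially the same path as the paper (the paper phrases the product argument via split-exactness of $\kkGs$ applied to the split cartesian square $A\oplus B\to A,B\to 0$ rather than via \cite[Prop.\ 7.5.6]{Cisinski:2017} directly, but this is the same underlying fact). The genuine difference is in Assertion \ref{woitjgorwergergegergw}.\ref{woifhjoqiefewqewfqef}: you invoke the abstract calculus of right fractions for the Dwyer--Kan localization of a category of fibrant objects (Brown's span representation) to present a morphism as $\kkGs(g)\circ\kkGs(w)^{-1}$ and then apply the mapping-cylinder factorization, whereas the paper instead uses the equivalence $\ho_{0}\colon\ho(\KKGs)\xrightarrow{\simeq}\KKGsk$ together with the Cuntz-picture computation \eqref{qfrojqoiefjqewfefefqfqewfqewf}, which shows that every class in $\KKth^{G}(A,B)$ is realized by an honest homomorphism $q_{s}(A)\otimes K\to q_{s}(B)\otimes K$ between algebras $\kkGsk$-equivalent to $A$ and $B$. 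The paper explicitly acknowledges that the "general facts about categories of fibrant objects" route you take is available, and chooses the concrete argument only because some of its details are reused later; your route is cleaner but rests on the (standard, though uncited in your text) span calculus, for which you should supply a precise reference rather than reusing \cite[Prop.\ 7.5.6]{Cisinski:2017}, which concerns pull-backs along fibrations and not the representation of morphisms by spans.
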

\begin{proof}  These assertions   follow from general facts about the associated $\infty$-category of a category of fibrant objects, where for 
{Assertion} \ref{woifhjoqiefewqewfqef1} we use in addition that  $\KKGs$ is stable. 
{For completeness we give  alternative arguments  which are specific to the  present situation as we will use some of the details later.}

{Assertion \ref{woifhjoqiefewqewfqef} follows from the 
equivalence $\ho_{0}\colon \ho(\KKGs)\stackrel{\simeq}{\to} \KKGsk$  
and the surjectivity of the arrow marked by $!!$ in \eqref{ewgregergggwggerg}.}

For Assertion \ref{ergpoggergwegregweg} we note that any
  object in $\KKGs$ is equivalent to an object of the form $\kkGs(   A)$ for some $   A$ in $\Fun(BG,\nCalg_{\sepa})$. Let $   \kkGs(   B)$ be a second object.  
   Then  the square
  \begin{equation*}
\xymatrix{   A\oplus    B\ar[r]\ar[d]&   A\ar[d]\\    B\ar[r]&0}
\end{equation*} 
     is  a {split} cartesian square in $\Fun(BG,\nCalg_{\sepa})$ (Definition \ref{qoirgqrfqfefqf}.\ref{wtiogjergergerwgergwerg}).
  Since
  $\kkGs$ is semiexact  by  Prop. \ref{KK-stable}\ref{qoifujoqfqewfewfqewfe}   (and hence split exact)
 we conclude that  the square 
  \begin{equation}\label{ertwoijeoigwergeggwg}
\xymatrix{\kkGs(   A\oplus    B)\ar[r]\ar[d]&\kkGs(   A)\ar[d]\\ \kkGs (   B)\ar[r]&0}
\end{equation} is cartesian.
 This implies that
 \begin{equation}\label{ergverwfwerferf}
\kkGs(   A)\times  \kkGs (   B)  \simeq \kkGs(   A\oplus    B)\, .
\end{equation}

   The assertion  now follows from  $$\kkGs(A\times B)\simeq \kkGs(A\oplus B)\stackrel{\eqref{ergverwfwerferf}
}{\simeq}   \kkGs(A)\times \kkGs(B)\, ,$$
 where we use that the product of $C^{*}$-algebras is given by the sum. 
     

We {finally} show Assertion \ref{woifhjoqiefewqewfqef1}.  Since $\KKGs$ is stable, a morphism in $\KKGs$ 
can be extended to a fibre sequence, and every fibre sequence is obtained in this way up to equivalence.  By  Assertion \ref{woifhjoqiefewqewfqef} it suffices to consider fibre sequences obtained by extending the morphism $\kkGs(   p)$ for a morphism $   p:   A\to    B$ in $\Fun(BG,\nCalg_{\sepa})$. 
We next argue that one can further assume that $   p$ is a fibration. To this end we use 
  the cylinder defined in \eqref{qewfpojkopqwefqewf}  in order to construct the  commut{ative} square
\[  \xymatrix{
	     A \ar[r]^{   p} \ar[d]^{   e} &    B \ar@{=}[d] \\
	  \mathrm{Cyl}(   p) \ar[r]^{   q} &    B}
 \]
 in $ \Fun(BG,\nCalg_{\sepa})$, where $   q(a,b)=b(1)$ and $   e(a)=(a,\const_{   p(a)})$. The morphism $   e$ is a ${\kkGsk}$-equivalence since it is an instance of the morphism $u$ in \eqref{fqwfojofqweqwefwefwef}.  Therefore 
   the two morphisms $\kkGs(   p)$ and $\kkGs(   q)$ are equivalent.

Note that $   q$ is surjective and that the exact cone sequence  
\begin{equation}\label{vsfdvfsdvfvsvfdv}
0\to  C(   p) \to  \mathrm{Cyl}(   p)\stackrel{   q}{\to}    B\to 0
\end{equation}  is   semisplit.  Since $\kkGs$ is semiexact by Proposition~\ref{KK-stable}.{\ref{qoifujoqfqewfewfqewfe}},  
it  sends this sequence to a fibre sequence. {This finishes the proof of  Assertion \ref{woifhjoqiefewqewfqef1}}. \end{proof} 

As a homotopy  category of a stable $\infty$-category,  the 
category $\ho(\KKGs)$ acquires   a triangulated structure
 \cite[Thm.~1.1.2.14]{HA}. On the other hand $\KKGsk$ has a triangulated structure 
 described in \cite[Sec. 2.1]{MR2193334}.
 
\begin{prop}\label{woiejgwoegregreggwergw}
The functor
$\ho_{0}\colon \ho(\KKGs)\to \KKGsk$ is an equivalence of triangulated categories.
\end{prop}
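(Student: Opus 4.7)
By Proposition \ref{KK-stable}.\ref{qoifujoqfqewfewfqewfe2} we already know that $\ho_0$ is an equivalence of the underlying $1$-categories, so the task is to verify compatibility with the triangulated structures on both sides. A triangulated structure is determined by the shift functor (together with its inverse) and the class of distinguished triangles, so the plan splits naturally into these two checks.

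For the shift, the one on $\ho(\KKGs)$ is $[1]=\Sigma=\Omega^{-1}$ where, by the proof of Proposition \ref{KK-stable}.\ref{qoifujoqfqewfewfqewfe1}, we have an equivalence $\Omega\simeq S$ for the suspension $S$ of Lemma \ref{qefoijhfoiqfejoewfqwef}.\ref{qeroigjqergegwegre}. On the other hand, the Meyer--Nest triangulated structure on $\KKGsk$ uses as its shift the inverse of the descended suspension $S_0$ from Lemma \ref{qefoijhfoiqfejoewfqwef}.\ref{weiogjwergwergwegrgwgreg} (which is an equivalence by Bott periodicity). The commuting square \eqref{qefljoiewfqwefqewfwefqf}, together with the commuting square \eqref{qevlkqmlkqwvqwrv} and the identification of $\ho_0\circ\kkGs$ with $\kkGsk$, shows that $\ho_0\circ S\simeq S_0\circ \ho_0$ on objects and morphisms, hence that $\ho_0$ intertwines $[1]=\Sigma$ on the left with the Meyer--Nest shift $S_0^{-1}$ on the right.

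For the distinguished triangles, recall that the triangulated structure on $\ho(\KKGs)$ declares a triangle distinguished if it is isomorphic to the image of a fibre sequence in $\KKGs$ \cite[Thm.~1.1.2.14]{HA}, while the Meyer--Nest distinguished triangles in $\KKGsk$ are those isomorphic to images of mapping cone triangles $S(B)\to C(f)\to A\to B$ attached to morphisms $f\colon A\to B$ in $\Fun(BG,\nCalg_{\sepa})$. Applying $\ho_0$ to a fibre sequence in $\KKGs$: by Lemma \ref{woitjgorwergergegergw}.\ref{woifhjoqiefewqewfqef1} it is equivalent to the image under $\kkGs$ of a cone sequence \eqref{vsfdvfsdvfvsvfdv}, hence $\ho_0$ sends it to the Meyer--Nest mapping cone triangle of $f$, which is distinguished. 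Conversely, the cone sequence \eqref{vsfdvfsdvfvsvfdv} is semisplit (via the natural splitting analogous to \eqref{ewfqewffqeff}), so by Proposition \ref{KK-stable}.\ref{qoifujoqfqewfewfqewfe} it is sent by $\kkGs$ to a fibre sequence, and thus the preimage under $\ho_0$ of a Meyer--Nest distinguished triangle is distinguished.

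Combining the two items, $\ho_0$ is an equivalence of triangulated categories. The main subtlety is the sign/shift convention: one must keep track that the equivalence $\Omega\simeq S$ obtained from the fibrant replacement of $0\to A$ matches the shift on the Meyer--Nest side through Bott periodicity, rather than differing by a sign. This is handled by the naturality of the square \eqref{qefljoiewfqwefqewfwefqf} and the fact that both triangulations are generated, as pretriangulated structures, by the cone sequences \eqref{vsfdvfsdvfvsvfdv}, so any potential sign ambiguity is fixed by the same construction on both sides.
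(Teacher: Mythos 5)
Your proposal is correct and follows essentially the same route as the paper: the underlying equivalence from Proposition \ref{KK-stable}.\ref{qoifujoqfqewfewfqewfe2}, compatibility of shifts via the fact that both are implemented by the $C^{*}$-algebraic suspension through Lemma \ref{qefoijhfoiqfejoewfqwef}, and matching of distinguished triangles via Lemma \ref{woitjgorwergergegergw}.\ref{woifhjoqiefewqewfqef1}, which identifies fibre sequences with images of mapping cone sequences on both sides. The only point the paper records that you omit is compatibility with sums (hence the $\Ab$-enrichment), but that is automatic for an equivalence of additive categories.
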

 \begin{proof}
 We know from Proposition \ref{KK-stable}.\ref{qoifujoqfqewfewfqewfe2} that $\ho_{0}$ is an equivalence of categories. 
 Since sums in $\ho(\KKGs)$ and $\KKGsk$ are represented by sums in $\Fun(BG,\nCalgs)$ we conclude further
 that $\ho_{0}$ preserves sums and is therefore compatible with the $\Ab$-enrichment.
 
 The inverse shift  functor for $\KKGs$ and therefore on $\ho(\KKGs)$ is implemented by the suspension functor \eqref{qwefewfwefwfqwefeewqfe} on the level of $C^{*}$-algebras via Lemma \ref{qefoijhfoiqfejoewfqwef}.\ref{qeroigjqergegwegre}.
 Similarly, by the description given in   \cite[Sec. 2.1]{MR2193334}   the    inverse shift  functor on $\KKGsk$
 is also implemented  by  \eqref{qwefewfwefwfqwefeewqfe}  via Lemma \ref{qefoijhfoiqfejoewfqwef}.\ref{qeroigjqergegwegre}.
 This shows that $\ho_{0}$ commutes with the  shift functor. 
 
  On the one hand, by  \cite[Sec. 2.1]{MR2193334} the exact triangles in $\KKGsk$ are generated by the mapping cone sequences for morphisms in  $\Fun(BG,\nCalgs)$. 
 On the other hand, by Lemma \ref{woitjgorwergergegergw}.\ref{woifhjoqiefewqewfqef1} 
 the fibre sequences in $\KKGs$, and hence the exact triangles in  $\ho(\KKGs)$
 are also generated by the mapping cone sequences for morphisms in  $\Fun(BG,\nCalgs)$. 
 We conclude that the functor $\ho_{0}$ is compatible with the collections of distinguished triangles.
    \end{proof}
\phantomsection \label{wetgijweogwegwerger} This finishes the proof of Theorem \ref{weighigregwgregw9}.

$\KKGs$ is stable by  Proposition \ref{KK-stable}.\ref{qoifujoqfqewfewfqewfe2}  and therefore admits all finite colimits. The following
proposition strengthens this  from finite to countable colimits and settles the remaining Assertions  \ref{qroifjeriogerggergegegweg}.\ref{qoirwfjhqoierggrg4}, \ref{qroifjeriogerggergegegweg}.\ref{qoirwfjhqoierggrg5} and \ref{qroifjeriogerggergegegweg}.\ref{qoirwfjhqoierggrg6}.
For the notion of an admissible diagram $\nat\to \Fun(BG,\nCalgs)$ we refer to \cite[Def.\ 2.5]{MR2193334}.  We will not repeat the definition here since  the exact  details are not  relevant. We  will only use 
its   consequence  \cite[Prop.\ 2.6]{MR2193334}.

\begin{lem}\label{ugoierugogergwergwregw}\mbox{}
\begin{enumerate}
\item\label{eroigjerwogwregwergwergwerg} The category $\KKGs$ admits all countable colimits and is  {therefore} 
 idempotent complete.
\item\label{regpoqgowergwergwregwregwreg} The functor $\kkGs$ sends countable sums in $\Fun(BG,\nCalg_{\sepa})$ to coproducts.
\item\label{ewrpoigjeogrewgregewf} The {functor} $\kkGs$ preserves colimits of all diagrams $A \colon \nat\to \Fun(BG,\nCalgs)$ which are admissible.
\end{enumerate}
\end{lem}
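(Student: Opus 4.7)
The plan is to derive the three assertions in the order \ref{ewrpoigjeogrewgregewf}, \ref{regpoqgowergwergwregwregwreg}, \ref{eroigjerwogwregwergwergwerg}, with the Milnor sequence of \cite[Prop.\ 2.6]{MR2193334} as the central input.

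\textbf{Strategy for \ref{ewrpoigjeogrewgregewf} (admissible colimits).} Let $A\colon \nat\to \Fun(BG,\nCalgs)$ be admissible and let $A_\infty \coloneqq \colim_n A_n$ in $\nCalg$; since this colimit is the closure of the countable union of separable subalgebras, it again lies in $\nCalgs$. The structure maps $A_n\to A_\infty$ induce a canonical comparison $\colim_n \kkGs(A_n)\to \kkGs(A_\infty)$, which I would show is an equivalence by testing against arbitrary $B$ in $\KKGs$. Representing $B$ by some $B_0$ in $\Fun(BG,\nCalgs)$, the comparison translates, via the Yoneda argument, into the assertion that
$$\map_{\KKGs}(\kkGs(A_\infty),B)\lto \lim_n \map_{\KKGs}(\kkGs(A_n),B)$$
is an equivalence of spectra. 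By Theorem~\ref{weighigregwgregw9} and the suspension identification of Lemma~\ref{qefoijhfoiqfejoewfqwef}, the homotopy groups of these mapping spectra are the classical $\KKth^G_*$-groups. The target sits in the standard Milnor short exact sequence for an inverse limit of spectra with terms $\lim^1_n \KKth^G_{*+1}(A_n,B_0)$ and $\lim_n \KKth^G_*(A_n,B_0)$, while the source sits in the analogous Milnor sequence supplied by \cite[Prop.\ 2.6]{MR2193334}. A direct comparison exhibits the map as an isomorphism of Milnor sequences, hence an equivalence.

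\textbf{Strategy for \ref{regpoqgowergwergwregwregwreg} (countable sums).} Given $(A_n)_{n\in\nat}$ in $\Fun(BG,\nCalgs)$, the $c_0$-direct sum $\bigoplus_n A_n$ is the colimit in $\nCalg$ of the diagram $N\mapsto \bigoplus_{n\le N} A_n$, with transition maps given by the canonical inclusions. These inclusions are equivariant $*$-homomorphisms admitting equivariant cpc splits (the projections onto the first $N$ summands), so the diagram is admissible in the sense of \cite[Def.\ 2.5]{MR2193334}. Applying \ref{ewrpoigjeogrewgregewf} and using that finite products in $\KKGs$ are represented by finite direct sums (Lemma~\ref{woitjgorwergergegergw}.\ref{ergpoggergwegregweg}, together with the equivalence of finite products and coproducts in the stable category $\KKGs$), one obtains
$$\kkGs\big(\bigoplus\nolimits_n A_n\big)\simeq \colim_N \coprod_{n\le N} \kkGs(A_n) \simeq \coprod_n \kkGs(A_n).$$

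\textbf{Strategy for \ref{eroigjerwogwregwergwergwerg} (countable colimits and idempotent completeness).} With \ref{regpoqgowergwergwregwregwreg} in hand, $\KKGs$ admits countable coproducts. Combined with stability (Proposition~\ref{KK-stable}), this suffices to build all countable colimits: an arbitrary sequential colimit is realized as the cofibre of $\id-\mathrm{shift}\colon\bigoplus_n X_n\to\bigoplus_n X_n$, and a general countable colimit is a coequalizer of maps between countable coproducts, hence exists by stability. Idempotent completeness is then automatic in a stable $\infty$-category admitting countable filtered colimits, as an idempotent $e\colon X\to X$ is split by the colimit of the diagram $X\xrightarrow{e}X\xrightarrow{e}\cdots$.

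\textbf{Main obstacle.} The delicate step is the identification in \ref{ewrpoigjeogrewgregewf} of the two Milnor sequences --- the spectrum-level Milnor sequence for $\lim_n \map_{\KKGs}(\kkGs(A_n),B)$ and the $\KKth^G$-Milnor sequence of \cite[Prop.\ 2.6]{MR2193334} --- and the verification that the comparison map induces a morphism between them which is the identity on the $\lim$ and $\lim^1$ summands. Once this compatibility is established using the naturality of the triangulated equivalence $\ho_0$ on cofibre sequences, all three assertions follow as outlined.
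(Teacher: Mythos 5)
Your overall architecture is sound in two of its three steps: deriving countable coproducts from the admissible-colimit statement applied to the tower of finite partial sums (with the coordinate projections $s_N\colon\bigoplus_n A_n\to\bigoplus_{n\le N}A_n$ verifying admissibility via \cite[Lem.\ 2.7]{MR2193334}), and then getting all countable colimits and idempotent completeness from stability plus countable coproducts, is correct and matches standard arguments. The problem is the foundation: your proof of Assertion \ref{ewrpoigjeogrewgregewf} is circular as planned. The ``delicate step'' you flag --- showing that the comparison map induces an isomorphism on the $\lim^{1}$-terms of the two Milnor sequences --- is not a formality: both kernels are \emph{abstractly} $\lim^{1}\KKth^{G}_{*+1}(A_n,B_0)$, but the induced map between them is only seen to be an isomorphism by exhibiting a map from the defining triangle $\bigoplus_n A_n\to\bigoplus_n A_n\to A_\infty$ of Meyer--Nest's homotopy colimit to the defining fibre sequence $\lim_n M_n\to\prod_n M_n\to\prod_n M_n$. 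That comparison requires the spectrum-level equivalence $\map_{\KKGs}(\kkGs(\bigoplus_n A_n),B)\simeq\prod_n\map_{\KKGs}(\kkGs(A_n),B)$, i.e.\ exactly Assertion \ref{regpoqgowergwergwregwregwreg} --- which you propose to deduce \emph{from} Assertion \ref{ewrpoigjeogrewgregewf}. A five-lemma argument that only knows the identity on the $\lim$-quotients cannot close this loop.

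The missing independent input is Kasparov's theorem \cite[Thm.\ 2.9]{kasparovinvent}, which states that the inclusions induce an isomorphism $\KKth^{G}(\bigoplus_i A_i,B)\cong\prod_i\KKth^{G}(A_i,B)$. Since coproducts in a stable $\infty$-category are detected on the homotopy category, this proves Assertion \ref{regpoqgowergwergwregwregwreg} (and hence \ref{eroigjerwogwregwergwergwerg}) directly, with no reference to admissible colimits; this is the paper's route. With the coproduct in hand, Assertion \ref{ewrpoigjeogrewgregewf} then follows without any $\lim^{1}$ bookkeeping: a map in $\KKGs$ is an equivalence iff it becomes an isomorphism in $\ho(\KKGs)\simeq\KKGsk$, the image of the $\infty$-categorical sequential colimit in the homotopy category is the triangulated homotopy colimit (the cone on $\mathrm{id}-\mathrm{shift}$ on the coproduct, which $\ho$ preserves), and \cite[Prop.\ 2.6]{MR2193334} says precisely that for admissible diagrams the $C^{*}$-algebraic colimit is this homotopy colimit. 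I recommend you reverse your order of deduction accordingly; your Milnor-sequence comparison then becomes an (unnecessary) consequence rather than the engine of the proof.
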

 \begin{proof}
Since ${\KKGs}$ is stable, in order to show that $\KKGs$ admits all countable colimits   it suffices to show that $\KKGs$ admits countable coproducts. The 
  functor $\kkGs$ is essentially surjective by construction. Hence it suffices to show that for  every countable family $( A_{i})_{i\in I}$ in $\Fun(BG,\nCalg_{\sepa})$ the family
$(\kkGs( A_{i}))_{i\in I}$ in $\KKGs$ admits a coproduct.
We consider the sum $ A \coloneqq \bigoplus_{i\in I} A_{i}$ in $\Fun(BG,\nCalg_{\sepa})$
and let $ e_{i} \colon A_{i}\to  A$ be the canonical inclusion for every $i$ in $I$.
Then we claim that
$(\kkGs( A),(\kkGs( e_{i}))_{i\in I})$ represents the coproduct of the family $(\kkGs( A_{i})_{i\in I})$.  In general, coproducts in a stable $\infty$-category can be detected on the homotopy category.  In view of the equivalence   $\kkGsk\simeq \ho\circ \kkGs$ given by \eqref{fewpoqkopfqewffwqwefqf1} it thus suffices to show that  
$(\kkGsk( A),(\kkGsk( e_{i}))_{i\in I})$ represents the coproduct in 
$\KKGsk$. But this follows from  \cite[Thm. 2.9]{kasparovinvent} stating that the family of maps $( e_{i})_{i\in I}$ induces an isomorphism
$$\KKGsk( A, B)\xrightarrow{\cong} \prod_{i\in I} \KKGsk( A_{i}, B)$$ for every $ B$ in $\Fun(BG,\nCalg_{\sepa})$. 
The proof of Assertion  \ref{eroigjerwogwregwergwergwerg} is finished with the general observation that a stable and countably cocomplete $\infty$-category is idempotent complete.

The  claim also  implies Assertion  \ref{regpoqgowergwergwregwregwreg}.

We finally show the Assertion \ref{ewrpoigjeogrewgregewf}. 
Let $A\colon\nat \to \Fun(BG,\nCalgs)$ be an admissible diagram.
We must show that the canonical map $\colim_{\nat} \kkGs(A)\to \kkGs(\colim_{\nat} A)$ is an equivalence.
To this end it suffices to show that  the map 
$\ho(\colim_{\nat} \kkGs(A){)}\to \ho( \kkGs(\colim_{\nat} A))$ obtained by composing with $\ho$ from \eqref{fewpoqkopfqewffwqwefqf1} is an isomorphism. 
Using the isomorphism $$\ho(\colim_{\nat} \kkGs(A))\cong \hocolim_{\nat} \ho( \kkGs(A)) \cong  \hocolim_{\nat} \kkGsk(A)$$ 
{and Proposition~\ref{woiejgwoegregreggwergw} (to translate homotopy colimits formed in the homotopy category of the stable $\infty$-category $\KKGs$ to homotopy colimits formed in the triangulated category $\KKGsk$ used in \cite{MR2193334})}, this is exactly the assertion of \cite[Prop. 2.6]{MR2193334}. 
%
\end{proof}
The existence of coproducts in $\KKGsk$ and the analogue of  Assertion \ref{ugoierugogergwergwregw}.\ref{regpoqgowergwergwregwregwreg} for $\kkGsk$ has previously been shown in \cite[Prop. 2.1]{MR2193334}. 

The proof of Theorem \ref{qroifjeriogerggergegegweg} is now complete. \phantomsection \label{woigwgwgwerg9}

 
 In the following we consider the minimal and the maximal tensor products $\otimes_{\min}$ and $\otimes_{\max} $  of $C^{*}$-algebras. Both equip $\Fun(BG,\nCalgs)$ with a symmetric monoidal structure.  
Recall that a symmetric monoidal structure on a stable $\infty$-category is called bi-exact  if the tensor product  preserves {co}fibre sequences, {and hence finite colimits}, in each variable. 
In this case the $\infty$-category  {together with its symmetric monoidal structure} is called stably symmetric monoidal.

 \begin{prop}\label{eqrgoerjgpergwegerg1}\mbox{}
The tensor product $\otimes_{?}$ for $?$ in $\{\min,\max\}$  descends to a  {bi-}exact symmetric monoidal structure  on $\KKGs$ and  $\kkGs$  refines to a 
symmetric monoidal functor 
$$\kkGstensor \colon \Fun(BG,\nCalgs)^{\otimes_{?}}\to \KKGstensor \, .$$ 
{Moreover, the tensor structure $\otimes_{?}$ on $\KKGs$ preserves countable colimits in each variable.}
\end{prop}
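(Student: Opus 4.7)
The plan is first to show that for every $A$ in $\Fun(BG,\nCalgs)$ the endofunctor $A \otimes_? -$ preserves $\kkGsk$-equivalences, which will allow $\otimes_?$ to descend to $\KKGs$. Second, I would invoke the compatibility of symmetric monoidal structures with Dwyer--Kan localizations (\cite[Prop.~4.1.7.4]{HA}) to obtain the symmetric monoidal refinement $\kkGstensor$ of $\kkGs$. Bi-exactness and preservation of countable colimits in each variable then follow from the corresponding algebraic properties of $\otimes_?$ on $\Fun(BG,\nCalgs)$ transported along $\kkGs$.

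To verify that $A \otimes_? -$ preserves $\kkGsk$-equivalences, the strategy is to apply Corollary \ref{ioqerjgqergrqfefqewfq} to the composition $F \coloneqq \kkGs \circ (A \otimes_? -) \colon \Fun(BG,\nCalgs) \to \KKGs$. Thus I must check that $F$ is reduced, $\mathbb{K}^G$-stable, and split exact. Reducedness is immediate from $A \otimes_? 0 = 0$. For $\mathbb{K}^G$-stability, an equivariant isometric inclusion $H \hookrightarrow H'$ of non-zero separable $G$-Hilbert spaces induces a morphism $B \otimes K(H) \to B \otimes K(H')$; tensoring with $A$ and reassociating (using nuclearity of the compact operators, so that $\otimes_{\min}$ and $\otimes_{\max}$ coincide when one factor involves $K$) yields the corresponding $\mathbb{K}^G$-stability morphism for $A \otimes_? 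B$, which is inverted by $\kkGs$. The delicate point, where $?=\min$ and $?=\max$ behave differently, is semiexactness (which is stronger than split exactness): given a semisplit exact sequence $0 \to I \to B \to C \to 0$ with equivariant cpc split $s \colon C \to B$, both $\otimes_{\min}$ and $\otimes_{\max}$ are functorial on equivariant cpc maps (for $\otimes_{\min}$ this uses injectivity of the minimal tensor product together with a Stinespring-type extension), so $\id_A \otimes s$ is an equivariant cpc split of the tensored sequence, which therefore remains semisplit exact and is sent to a fibre sequence by Proposition \ref{KK-stable}.\ref{qoifujoqfqewfewfqewfe}. By symmetry, $- \otimes_? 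B$ likewise preserves $\kkGsk$-equivalences.

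Bi-exactness of the induced tensor structure on $\KKGs$ then follows from Lemma \ref{woitjgorwergergegergw}.\ref{woifhjoqiefewqewfqef1}: every fibre sequence in $\KKGs$ is the image under $\kkGs$ of a cone sequence \eqref{vsfdvfsdvfvsvfdv}, which is semisplit exact and hence preserved under $\otimes_?$ by the argument above. For the preservation of countable colimits in each variable, I would combine two inputs: first, the algebraic identity that both $\otimes_{\min}$ and $\otimes_{\max}$ distribute over countable $c_{0}$-direct sums of $C^*$-algebras, which together with the preservation of countable sums by $\kkGs$ (Theorem \ref{qroifjeriogerggergegegweg}.\ref{qoirwfjhqoierggrg5}) yields preservation of countable coproducts; second, the fact that $\KKGs$ is stable and countably cocomplete (Theorem \ref{qroifjeriogerggergegegweg}.\ref{qoirwfjhqoierggrg4}), so that an exact functor preserving countable coproducts automatically preserves all countable colimits, via the standard presentation of a countable filtered colimit as the cofibre of a shift endomorphism on a countable coproduct. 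The main obstacle I anticipate is the semiexactness step in the $\otimes_{\min}$ case, as it relies on promoting the minimal tensor product from a bifunctor on $*$-homomorphisms to one on equivariant cpc maps.
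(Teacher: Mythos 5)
Your proposal is correct and follows essentially the same route as the paper: descend $\otimes_?$ via monoidal localization by checking that $A\otimes_?-$ preserves $\kkGsk$-equivalences (using Corollary \ref{ioqerjgqergrqfefqewfq}), deduce bi-exactness from the fact that every fibre sequence in $\KKGs$ comes from a (semisplit) cone sequence, and reduce countable colimits to countable sums via stability. The only cosmetic differences are the citation for monoidal localization (the paper uses Hinich's result rather than \cite[Prop.~4.1.7.4]{HA}) and that you handle $\otimes_{\max}$ by cpc-functoriality where the paper uses exactness of $A\otimes_{\max}-$ — a variant the paper itself notes in a remark.
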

 \begin{proof} 
In order to show that the tensor product descends 
{along} the localization $$\kkGs \colon \Fun(BG,\nCalg_{\sepa})\to \KKGs$$
such that $\kkGs$ refines to a symmetric monoidal functor 
we use \cite[Prop.\ 3.2.2]{hinich}. By this result,   it suffices to show that for every $ A$ in $\Fun(BG,\nCalg_{\sepa})$ the functor
$ A\otimes_{?}- $ preserves ${\kkGsk}$-equivalences, where $?$ is in $\{\min,\max\}$. 
It is easy to see that the functor
$\kkGsk\circ ( A\otimes_{?}- ):\Fun(BG,\nCalg_{\sepa})\to \KKGsk$ preserves zero objects and is  
$\mathbb{K}^{G}$-stable and split exact since $\kkGsk$
has these properties. We now apply Corollary \ref{ioqerjgqergrqfefqewfq} in order to conclude that this composition sends $\kkGsk$-equivalences to isomorphisms. 
Hence $ A\otimes_{?}-$  preserves  $\kkGsk$-equivalences.

In order to show that the resulting symmetric monoidal structure on $\KKGs$ is exact 
we must show that $\kkGs(A)\otimes_{?}-$ in $\KKGs$ preserves fibre sequences for every $A$ in $\Fun(BG,\nCalgs)$.
 We  will use the observation made in the proof of 
Proposition \ref{woiejgwoegregreggwergw} that every fibre sequence in $\KKGs$ is equivalent to a cone sequence.   

It suffices to show that 
$\kkGs(A \otimes_{?}- )$ sends cone sequences to fibre sequences.

For the maximal tensor product, using the exactness of $ A \otimes_{\max} -$ (see e.g.  \cite[Prop.\ 3.7.1]{brown_ozawa}) and
the explicit description of the cone sequences in Remark \ref{cylinder object} 
one  checks that $ A\otimes_{\max} -$ preserves cone sequences in $\Fun(BG,\nCalg_{\sepa})$.  
We then use that the functor $\kkGs$ sends cone sequences to fibre sequences.

For the minimal tensor product we use the fact 
that    cone sequences admit completely positive contractive splits and that the
minimal tensor product is functorial for 
completely positive contractive maps. 
We conclude that $ A \otimes_{\min}- $ sends cone sequences to semisplit  exact sequences. 
We now use that $\kkGs$ is   semi{exact}.
%
%
%

{Since we already know that $\otimes_{?}$ on $\KKGs$ is bi-exact, in order to show that the corresponding tensor structure  on $\KKGs$ preserves countable colimits in each argument it suffices to show that it preserves countable sums.  As observed in the proof of Lemma  \ref{ugoierugogergwergwregw}, countable sums in $\KKGs$ are presented by countable sums in $\Fun(BG,\nCalg_{\sepa})$. It follows from 
Lemma \ref{eriguhwiegugwergwerg} and the fact that the tensor products of $C^{*}$-algebras preserves finite sums, that 
  $\otimes_{?}$ on $\Fun(BG,\nCalg_{\sepa})$ preserves countable sums in each argument.   Since $\kkGs$ preserves countable sums by Lemma \ref{ugoierugogergwergwregw}.\ref{regpoqgowergwergwregwregwreg} we can conclude that $\otimes_{?}$ on $\KKGs$ preserves 
countable sums, too.
}
 \end{proof}

\begin{rem}
Alternatively, the argument in the proof of Proposition \ref{eqrgoerjgpergwegerg} given for the minimal tensor product also applies to the maximal tensor product
since the latter is also functorial for completely positive contractive maps \cite[Cor.\ 4.18]{Pisier}.
\hB
\end{rem}

%

\begin{rem}
As a consequence of Proposition \ref{eqrgoerjgpergwegerg1} the homotopy category
$\ho(\KKGs)$    and therefore  $ \KKGsk$ has two tensor triangulated structures 
induced by $\otimes_{\min}$ and $\otimes_{\max}$, respectively. For $\otimes_{\min}$ this fact is well known, see e.g.\ \cite[Sec.~2.5]{MR2193334}.  It seems that the symmetric monoidal structure  coming from the maximal tensor product has not been studied so much, but for the non-equivariant case see e.g.\ \cite[Lem.~3.13]{Land:2016aa}.
\hB
\end{rem}

\color{black}

Note that  $\kkGs$  is, by Definition~\ref{wtpohwopggergeg},  the initial   functor from $\Fun(BG,\nCalg_{\sepa})$ to $\infty$-categories which  sends ${\kkGsk}$-equivalences to equivalences.  In 
Theorem   \ref{wtohwergerewgrewgregwrg1}  we 
{stated} a different universal property which better reflects the  standard properties of $\KKth$-theory.  We will now first state an intermediate 
Theorem \ref{wtohwergerewgrewgregwrg} involving additive $\infty$-categories  in order to formulate a direct $\infty$-categorial analog of   \cite[Thm.\ 6.6]{Meyer:aa} which appeared in {the present} paper as Proposition \ref{wtohiwthwgrgwerg}.

Recall from \cite[6.1.6.13]{HA} that an $\infty$-category is called semi-additive if it is pointed, admits finite products and coproducts, and if the canonical morphism
from a coproduct to the product of any two objects is an equivalence. The homotopy category of a semi-additive category is canonically enriched in abelian monoids. If all these morphism monoids are abelian groups then the $\infty$-category is said to be additive. Stable $\infty$-categories are additive. 
In particular, 
since  $\KKGs$ is stable by Proposition \ref{KK-stable},  it is additive.

\begin{theorem}\label{wtohwergerewgrewgregwrg}
The functor $\kkGs \colon \Fun(BG,\nCalg_{\sepa})\to \KKGs$ is initial among functors
from $\Fun(BG,\nCalg_{\sepa})$ to  objects of $\Cat^{\add}_{\infty}$ which are reduced, 
$\mathbb{K}^{G}$-stable, and split exact.
\end{theorem}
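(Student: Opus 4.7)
The plan is to reduce the statement to the universal property of $\kkGs$ as a Dwyer--Kan localization (Definition \ref{wtpohwopggergeg} and Remark \ref{weroigujweogerwgregfw}). Let $F \colon \Fun(BG,\nCalg_{\sepa}) \to \cD$ be a functor to an additive $\infty$-category $\cD$ which is reduced, $\mathbb{K}^{G}$-stable, and split exact. By Corollary \ref{ioqerjgqergrqfefqewfq}, $F$ sends every $\kkGsk$-equivalence to an equivalence in $\cD$. The universal property \eqref{erwgoijowiergwergwregerg} of the Dwyer--Kan localization then produces an essentially unique functor of $\infty$-categories $\bar F \colon \KKGs \to \cD$ such that $\bar F \circ \kkGs \simeq F$.

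The main task is to verify that $\bar F$ is a morphism in $\Cat^{\add}_{\infty}$, i.e.\ that it preserves finite products. Preservation of the zero object is immediate: since $\kkGs(0)$ is a zero object of the stable $\infty$-category $\KKGs$ and $F$ is reduced, one has $\bar F(0) \simeq F(0) \simeq 0$. For binary products I would invoke Lemma \ref{woitjgorwergergegergw}.\ref{ergpoggergwegregweg}: any binary product in $\KKGs$ is represented by $\kkGs(A \oplus B)$ with product projections induced by the canonical algebra projections $A \oplus B \to A$ and $A \oplus B \to B$. The resulting cartesian square in $\Fun(BG,\nCalg_{\sepa})$ (with zero in the remaining corner) is manifestly split in the sense of Definition \ref{qoirgqrfqfefqf}.\ref{wtiogjergergerwgergwerg}, so split exactness of $F$ (Definition \ref{qwroigjqrwgqwrfqewfqewfq}.\ref{qeriughqeiruferwqkfjqr9ifuq2}) yields a cartesian square in $\cD$ with zero opposite $F(A\oplus B)$, which is precisely the statement that $F(A \oplus B) \to F(A) \times F(B)$ is an equivalence. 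Hence $\bar F$ preserves binary products.

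Conversely, for any morphism $\bar G \colon \KKGs \to \cD$ in $\Cat^{\add}_{\infty}$, the composition $\bar G \circ \kkGs$ is automatically reduced, $\mathbb{K}^{G}$-stable, and split exact: these properties follow from the analogous properties of $\kkGs$ established in Theorem \ref{qroifjeriogerggergegegweg} (using that semiexactness implies split exactness) together with the fact that $\bar G$ preserves the zero object and sends split cartesian squares in $\KKGs$ to cartesian squares in $\cD$, as their cartesianness in $\KKGs$ is witnessed by product decompositions which $\bar G$ preserves by additivity. These two constructions are mutually inverse by the essential uniqueness in the Dwyer--Kan universal property, giving the initiality of $\kkGs$.

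The only substantive, and relatively mild, obstacle in this argument is the verification that $\bar F$ is additive; everything else is a direct invocation of the Dwyer--Kan universal property, Corollary \ref{ioqerjgqergrqfefqewfq}, and the structural results for $\kkGs$ already assembled in Theorem \ref{qroifjeriogerggergegegweg} and Lemma \ref{woitjgorwergergegergw}.
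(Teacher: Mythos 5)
Your proposal is correct and follows essentially the same route as the paper's proof: factor $F$ through the Dwyer--Kan localization via Corollary \ref{ioqerjgqergrqfefqewfq}, use split exactness together with Lemma \ref{woitjgorwergergegergw}.\ref{ergpoggergwegregweg} to see that the factorization preserves finite products, and check conversely that precomposition with $\kkGs$ lands in the reduced, $\mathbb{K}^{G}$-stable, split exact functors. The paper merely packages the two directions as an equivalence of full subcategories of functor $\infty$-categories rather than as mutually inverse constructions, but the content is identical.
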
\begin{proof}
 For an additive $\infty$-category $\cD$ we consider the full subcategory \begin{equation}\label{rgqrgregegergerg}
\Fun^{{rse}}(\Fun(BG,\nCcat_{\sepa}),\cD)
\end{equation}
{of $  \Fun(\Fun(BG,\nCcat_{\sepa}),\cD)$} on functors which are reduced,  
$\mathbb{K}^{G}$-stable, and split exact.
By Corollary \ref{ioqerjgqergrqfefqewfq}   the functor  category \eqref{rgqrgregegergerg}
 is  {a full subcategory  of the category}   
 $\Fun^{{\kkGsk}}(\Fun(BG,\nCalg_\sepa),\cD) $ 
  {of functors} sending ${\kkGsk}$-equivalences to equivalences. 
 
We now {build the following commutative} diagram
 \begin{equation}\label{fwroihoiegerwggergewgerggrewgegeg}
\begin{tikzcd}
	\Fun^{\coprod}(\KKGs,\cD) \ar[r,dashed] \ar[d] & \Fun^{{rse}}(\Fun(BG,\nCalg_\sepa),\cD) \ar[d] \\
	\Fun(\KKGs,\cD) \ar[r,"\simeq"] & \Fun^{{\kkGsk}}(\Fun(BG,\nCalg_\sepa),\cD) 
\end{tikzcd}
\end{equation}
{The  vertical morphisms  are inclusions of full subcategories
and the horizontal functors are induced by precomposition with $\kkGs$.
The superscript $\coprod$ stands for coproduct preserving functors which are the morphisms in $\Cat^{\add}_{\infty}$. By the universal property of 
 $\kkGs$ as a Dwyer-Kan localization 
 the lower horizontal functor is an equivalence.}
We now justify  {that the dashed arrow exists, making the diagram commute.} 
 For this, we note that the localisation functor $\kkGs$ is reduced, 
 $\mathbb{K}^G$-stable and {semiexact by Theorem \ref{qroifjeriogerggergegegweg}.   In particular it is split-exact.}  Thus if $H \colon \KKGs \to \cD$ preserves {co}products, {then    the composition $H \circ \kkGs$ is  reduced, 
 $\mathbb{K}^G$-stable and split exact.}

{In order to prove  Theorem  \ref{wtohwergerewgrewgregwrg} we must show}
that the dashed arrow in \eqref{fwroihoiegerwggergewgerggrewgegeg} is an equivalence.   {As all other functors in {the} diagram \ref{fwroihoiegerwggergewgerggrewgegeg} are fully faithful, so is the dashed one. It remains to show that it is essentially surjective.}

 {To this end we consider 
  $F$ in $\Fun^{{rse}}(\Fun(BG,\nCalg_\sepa),\cD) $. In view of the lower horizontal equivalence in \eqref{fwroihoiegerwggergewgerggrewgegeg}  there exists a 
 functor $\bar F$ in $\Fun(\KKGs,\cD)$  and an equivalence $\bar F\circ \kkGs\simeq F$.
 We must check that $\bar F\in\Fun^{\coprod}(\KKGs,\cD) $, i.e., that   $\bar F$ preserves coproducts.}

 First of all $F$ preserves the empty coproduct since $\kkGs$ and $F$ are  reduced.

  
We now show that $\bar F$ preserves binary coproducts. Since $\kkGs$ and $\cD$ are additive, in both $\infty$-categories coproducts and products coincide.
Therefore it suffices to show that $\bar F$ preserves binary products.
   Since $F$ is split-exact and reduced, it sends a split exact square of the form
   \begin{equation*}
\xymatrix{   A\oplus    B\ar[r]\ar[d]&   A\ar[d]\\    B\ar[r]&0}
\end{equation*} 
 to a cartesian square with zero in the lower right corner. 
  We conclude that $F$ preserves products. 
      Now by Proposition \ref{woitjgorwergergegergw}.\ref{ergpoggergwegregweg}  any product in $\KKGs$ is equivalent to the image of a product in
  $\Fun(BG,\nCalg_{\sepa})$. In view of the equivalence $\bar F\circ \kkGs\simeq F$ we can conclude that $\bar F$ preserves binary products.


{Hence $\bar F$ preserves binary coproducts.}
{Since $F$ preserves   empty and binary coproducts it preserves all finite coproducts.}
%
\end{proof}

Theorem \ref{wtohwergerewgrewgregwrg} says that if $\cD$ is  an object of $\Cat^{\add}_{\infty}$   and $F \colon \Fun(BG,\nCalg_{\sepa})\to 
\cD$ is a reduced,  
$\mathbb{K}^G$-stable, and split exact functor, 
  then there exists  an essentially unique, finite coproduct-preserving factorization $\bar{F}$ as indicated
in the diagram
\[\xymatrix{
	\Fun(BG,\nCalg_\sepa) \ar[r]^-{F} \ar[d]_-{\kkGs}& \cD\,. \\
	\KKGs \ar@{-->}[ur]_-{\bar{F}} 
}
\]

Since $\cD$ and  $\KKGs$ are additive, in both $\infty$-categories products and coproducts coincide. Hence $\bar F$ also  preserves all finite products.
If $\cD$ admits finite  limits, e.g., if $\cD$ is stable, one might wonder whether the functor $\bar{F}$ in addition preserves finite  limits. In general it does not, see Remark~\ref{L-theory} for an example, but we have the following characterization.

 Let $\cD$ be an additive $\infty$-category{,} and let
 $F \colon \Fun(BG,\nCalg_\sepa) \to \cD$ be a functor 
 which  is  reduced,  
  $\mathbb{K}^{G}$-stable and split exact.
  Let $\bar F \colon \KKGs\to \cD$ be the factorization as explained above.
  In the following statement we use the Definition \ref{qwroigjqrwgqwrfqewfqewfq}.\ref{qeriughqeiruferwqkfjqr9ifuq1} of semiexactness for functors with additive targets.

\begin{theorem}\label{4ogijwrtgwergerwgwergegergwerg}
If in addition $\cD$ admits finite  limits and $F$ is semiexact,
  then 
  $\bar{F}$ preserves finite  limits. 
\end{theorem}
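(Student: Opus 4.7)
The plan is to reduce the preservation of finite limits by $\bar F$ to the preservation of fibres of morphisms, and then to derive the latter from the semiexactness assumption using Lemma \ref{woitjgorwergergegergw}.

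First, $\bar F$ is reduced since $F$ is, and $\bar F$ preserves finite products because it preserves finite coproducts by Theorem \ref{wtohwergerewgrewgregwrg} and both $\KKGs$ and $\cD$ are additive. Consequently $\bar F$ is an additive functor, so it is compatible with taking differences of parallel morphisms. In any additive $\infty$-category with finite limits, a pullback of $f\colon X \to Z$ and $g\colon Y \to Z$ is computed as the fibre of the difference map $(f,-g)\colon X \oplus Y \to Z$; the same formula applies in $\KKGs$ by stability. In order to show that $\bar F$ preserves all pullbacks (and hence all finite limits, together with the terminal object and binary products), it therefore suffices to show that $\bar F$ preserves fibres of arbitrary morphisms.

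To this end, let $g\colon A \to B$ be a morphism in $\KKGs$. By Lemma \ref{woitjgorwergergegergw}.\ref{woifhjoqiefewqewfqef} we may assume $g = \kkGs(f)$ for a semisplit surjection $f\colon X \to Y$ in $\Fun(BG,\nCalg_\sepa)$. Since $\kkGs$ is semiexact by Proposition \ref{KK-stable}.\ref{qoifujoqfqewfewfqewfe}, the semisplit exact sequence $0 \to \ker f \to X \xrightarrow{f} Y \to 0$ is sent to a fibre sequence $\kkGs(\ker f) \to \kkGs(X) \to \kkGs(Y)$ in $\KKGs$, so that $\mathrm{fib}(g) \simeq \kkGs(\ker f)$, and hence $\bar F(\mathrm{fib}(g)) \simeq F(\ker f)$. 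On the other hand, $F = \bar F\circ \kkGs$ is semiexact by hypothesis, so it sends the corresponding semisplit cartesian square to a cartesian square in $\cD$. This precisely says that $F(\ker f)$ is the fibre of $F(f) = \bar F(g)$ in $\cD$. Combining the two equivalences yields $\bar F(\mathrm{fib}(g)) \simeq \mathrm{fib}(\bar F(g))$, as required.

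The main obstacle is the reduction from pullbacks to fibres of morphisms. This depends on recognizing that $\bar F$ is an additive functor between additive $\infty$-categories and therefore compatible with the ``fibre of a difference'' presentation of a pullback, which is valid both in the stable category $\KKGs$ and in the additive $\infty$-category $\cD$ with finite limits. Once this identification is in place, the argument is a direct combination of Lemma \ref{woitjgorwergergegergw}.\ref{woifhjoqiefewqewfqef} with the semiexactness of $\kkGs$ and of $F$.
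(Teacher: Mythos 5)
Your proof is correct and follows essentially the same route as the paper: both reduce the preservation of finite limits to the statement that fibre sequences in $\KKGs$ are presented by semisplit data (Lemma \ref{woitjgorwergergegergw}) and then invoke the semiexactness of $F$. The only differences are cosmetic — you make explicit the reduction from pullbacks to fibres of difference maps (which the paper compresses into the assertion that it suffices for $\bar F$ to be reduced and to preserve fibre sequences), and you use part \ref{woifhjoqiefewqewfqef} of that lemma where the paper uses part \ref{woifhjoqiefewqewfqef1}, which is itself deduced from it.
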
  
 \begin{proof}  It suffices to prove that $\bar{F}$ is reduced and sends fibre sequences to fibre sequences. {We already know that $\bar F$ is reduced
 by   Theorem  \ref{wtohwergerewgrewgregwrg}.}
 
By Proposition~\ref{woitjgorwergergegergw}.\ref{woifhjoqiefewqewfqef1}, any fibre sequence in $\KKGs$ is the image {under $\kkGs$} of a semisplit exact sequence in $\Fun(BG,\nCalg_\sepa)$. 
   By assumption, $F$ sends such   {a  semisplit exact sequence} to a fibre sequence. {In view of the relation $\bar F\circ \kkGs\simeq F$ we conclude that $\bar F$  sends the image of this  semisplit exact sequence   under $\kkGs$ to a fibre sequence.
 Consequently $\bar F$ preserves fibre sequences.}  
  \end{proof}
 
\phantomsection \label{qeroijgoiergeregergergewrgergewg}
 \begin{proof}[Proof of Theorem \ref{wtohwergerewgrewgregwrg1}]
 The theorem immediately follows by specializing  the Theorems \ref{wtohwergerewgrewgregwrg} and \ref{4ogijwrtgwergerwgwergegergwerg} to stable target categories.
 \end{proof}

  

\begin{rem}\label{L-theory}
A natural example of a functor $\nCalg_\sepa \to \Sp$ which {is reduced,} 
$\mathbb{K}$-stable and split exact is the composite
\[ \nCalg_\sepa \lto \mathrm{Rings}_{\mathrm{inv}}^{\mathrm{nu}} \stackrel{L}{\lto} \Sp \]
where the first functor is the forgetful functor taking the underlying ring with involution of a $C^{*}$-algebra and the second takes the (projective, symmetric) L-theory spectrum of a ring with involution. This functor descends to a functor ${\KKs} \to \Sp$ which  preserves finite products, but is not exact. Indeed, the failure of exactness in this case can be described explicitly, we refer to \cite[Thm.~4.2]{Land:2016aa} for a general treatment.
\hB
\end{rem}

%
%

\section{The s-finitary extension}

%

Let $F \colon \Fun(BG,\nCalg)\to \cD$ be a functor to a target  {category $\cD$} admitting all small filtered colimits.
Then  we have a canonical natural transformation $ \hat F\to   F$, where  $\hat F$ {is}
  the left Kan extension of $F_{s} \coloneqq F_{|\Fun(BG,\nCalg_{\sepa})}$ 
 as indicated in the following diagram:
$$\xymatrix{
\Fun(BG,\nCalg_{\sepa})\ar[rrr]^-{F_{s}}\ar[d]^{\incl} &&& \cD\\
\Fun(BG,\nCalg) \ar@{}[urr]^-{\Downarrow\ } \ar[urrr]^{\hat F}_{\:\Downarrow} \ar@/_1cm/[urrr]^-{F\quad} &&&
}$$


 \begin{lem}\label{ergoijegwergrewergwergwerg}
The functor $F$ is s-finitary if and only if the natural transformation $\hat F\to F$ is an equivalence. 
\end{lem}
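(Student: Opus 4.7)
The plan is to identify both colimits via a cofinality argument. The pointwise formula for left Kan extensions gives
$$\hat F(A) \simeq \colim_{(A' \to A) \in \Fun(BG,\nCalg_{\sepa})_{/A}} F_{s}(A')$$
whenever the right-hand side is defined, with the natural transformation $\hat F \to F$ arising from the tautological cocone over $F(A)$. To conclude the lemma it therefore suffices to identify this colimit with the one of Definition \ref{wtgpkowegrrefrwferf} by producing a cofinal functor between their indexing diagrams.

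The cofinal functor I will use is the inclusion $\iota_A$ of the poset of separable $G$-invariant subalgebras of $A$ (with morphisms the inclusions) into $\Fun(BG,\nCalg_{\sepa})_{/A}$, sending $A' \mapsto (A' \hookrightarrow A)$. By Joyal's criterion \cite[Thm.~4.1.3.1]{htt}, it suffices to verify that for each $(A', f \colon A' \to A)$ the slice of $\iota_A$ over $(A', f)$ has contractible nerve. An object of this slice consists of a pair $(B, g)$ where $B$ is a separable $G$-invariant subalgebra of $A$ and $g \colon A' \to B$ is a morphism with $(B \hookrightarrow A)\circ g = f$; since $B \hookrightarrow A$ is a monomorphism, such a $g$ is unique whenever it exists, and it exists iff $f(A') \subseteq B$. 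Thus the slice is identified with the sub-poset of separable $G$-invariant subalgebras of $A$ containing $f(A')$.

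Since $G$ is countable and $f$ is a $G$-equivariant $*$-homomorphism, the norm-closure $\overline{f(A')} \subseteq A$ is a separable $G$-invariant $C^*$-subalgebra; it is manifestly a minimum of the sub-poset, so the sub-poset has contractible nerve and $\iota_A$ is cofinal. The sub-poset is moreover filtered, since countably many separable $G$-invariant subalgebras generate a separable $G$-invariant $C^*$-subalgebra, which guarantees that the relevant colimits exist in $\cD$ by the standing hypothesis on $\cD$. Cofinality then produces a natural equivalence $\hat F(A) \simeq \colim_{A' \subseteq_{\sepa} A} F(A')$ under which the map $\hat F \to F$ is the canonical comparison of Definition \ref{wtgpkowegrrefrwferf}, and both directions of the lemma follow at once. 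The only substantive step is the cofinality argument, and it reduces to the single observation that $\overline{f(A')}$ provides a minimum of the slice.
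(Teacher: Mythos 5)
Your proof is correct and follows essentially the same route as the paper: the pointwise formula for the left Kan extension, followed by the observation that the poset of separable $G$-invariant subalgebras is cofinal in $\Fun(BG,\nCalg_{\sepa})_{/A}$ because any $f\colon A'\to A$ from a separable algebra factors through its (already closed, separable, invariant) image. You merely spell out the cofinality check via Joyal's criterion, where the paper leaves it implicit; note in passing that the relevant comma categories for cofinality are the under-categories $(A',f)/\iota_A$, which is in fact what you describe despite calling them slices "over" $(A',f)$.
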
 \begin{proof} 
  The pointwise formula for the left Kan-extension
 shows that
 $$ \colim_{(   A'\to    A)\in {\Fun(BG,\nCalg_{\sepa})}_{/A}} F(   A' )\simeq  \hat F(   A)\,.$$
 If we compare this formula with the morphism \eqref{efrgveveeerwvfevdsfvsdvfdvdsv} appearing in the condition for being s-finitary  it becomes clear that
 we must show that the poset of separable subalgebras of $   A$ is cofinal in ${\Fun(BG,\nCalg_{\sepa})}_{/A}$. This follows from the following observation.
 
 If $f \colon   A'\to    A$ is any   morphism  in $\Fun(BG,\nCalg)$, then  we have a factorization 
$$   A'\to  f(   A') \to     A\, ,$$ 
where the image $f(   A')$ is a $G$-$C^{*}$-subalgebra of $   A$.  If $   A'$ is separable, then    $f(   A')$ is separable, too.  
\end{proof}


The next lemma is the essential step for the derivation of Theorem \ref{qeroigjqergfqeewfqewfqewf1} from Theorem \ref{qroifjeriogerggergegegweg}.
Let $F$ and $F_{s}$ be as above

 \begin{lem}\label{qwoiefuoqrfwewfqwef9}
 Assume that $F$ is s-finitary.
 \begin{enumerate}
 \item \label{qroijoqrfqqewffqewf}  {Assume that $\cD$ is pointed.} If  $F_s$ is reduced, then so is $F$.
 \item  \label{qroijoqrfqqewffqewf2}  If $F_{s}$ is $\mathbb{K}^{G}$-stable, then so is $F$.
  \item \label{qroijoqrfqqewffqewf1}  If $F_{s}$ is homotopy invariant, then so is $F$.
 \item \label{qroijoqrfqqewffqewf3}  {Assume that $\cD$ is pointed, admits fibres, and that filtered colimits preserve fibre sequences.}\footnote{E.g., that $\cD$ is stable or pointed and compactly generated.}
 If $F_{s}$ is semiexact, then so is $F$.
 \item \label{efqiuwehfiuewfewfewfqwef} {Assume that $\cD$ is pointed, admits fibres, and that filtered colimits preserve fibre sequences.}
 If $F_{s}$ sends exact sequences to fibre sequences, then so does $F$. 
 \end{enumerate}
 \end{lem} \begin{proof} 
We {begin with} Assertion \ref{qroijoqrfqqewffqewf1}.
For every $t$ in $[0,1]$ let $\ev_{t}:C([0,1])\to \C$ be the evaluation at $t$. 
 If  $   B$ is a  $G$-invariant (we will also just say {\em invariant}) separable subalgebra of $C([0,1])\otimes    A$, then the values $(\ev_{t}\otimes \id_{   A})(b)$  for all $b$ in  $   B$ and $t$ in $[0,1]$ generate  an invariant separable subalgebra $   A'$ of $   A$ such that $   B\subseteq C([0,1])\otimes    A'$.
 Hence the   invariant subalgebras  of $C([0,1])\otimes    A$ of the form $C([0,1])\otimes    A'$ with $   A'$ an   invariant separable subalgebra of $   A$ are cofinal in all  invariant separable subalgebras of $C([0,1])\otimes    A$.  Since we assume that $F$ is s-finitary
   we have  the chain of equivalences 
 $$F(   A)\simeq   \colim_{   A'{\subseteq_{\sepa}A}}  F_{s}(   A')\simeq \colim_{   A'{\subseteq_{\sepa}A}}  F_{s}(C([0,1])\otimes    A')\simeq 
 F(C([0,1])\otimes    A)\, ,$$ where the colimit runs over the poset of all   invariant separable subalgebras $   A'$ of $   A$ and
 the middle equivalence is a consequence 
 of the assumption on $F_{s}$. Since this morphism is induced by the canonical map \eqref{ewgegreggergegeggegwergw}
 this
 shows that $F$ is homotopy invariant

 {To see} Assertion \ref{qroijoqrfqqewffqewf2}, 
let $   A $ be in $\Fun(BG,\nCalg)$, 
 let $     H\to     H'$ be an equivariant isometric  inclusion of separable  $G$-Hilbert spaces {such that $   H\not=0$}, and $ K(   H)\to  K(    H')$  be the corresponding inclusion of the algebras of compact operators. 
  We note that these algebras  are separable. It follows that the family of  invariant subalgebras $   A'\otimes   K(    H)$ for all invariant separable subalgebras $   A'$ of $   A$ is cofinal in all separable  invariant subalgebras of $    A\otimes   K(   H)$, and similar for $   H'$.
Using that $F$ is s-finitary  we conclude that  the   morphism obtained by applying $F$ to \eqref{342uihuihfewfwefweferf}  has a factorization over the chain of equivalences
\begin{eqnarray*}
F(   A\otimes K(   H))&\simeq& \colim_{   A'} F_{s}(   A'\otimes  K(   H))\\&\simeq&
\colim_{   A'} F_{s}(   A'\otimes     K(   H'))\\&\simeq&
F(   A\otimes  K(   H))\, ,
\end{eqnarray*}
 where the colimit runs over the poset of all  invariant separable subalgebras $A'$ of $   A$
and  the middle equivalence  follows from the assumption on $F_{s}$.

Assertion \ref{qroijoqrfqqewffqewf} is obvious since the zero algebra is separable.
%
%

 {To} show Assertion \ref{qroijoqrfqqewffqewf3}, we  now  consider  a     semisplit  exact sequence  \begin{equation}\label{23rgfj23of3f23f234f34f}
 0\to    A'\stackrel{i}{\to}   A\stackrel{\pi}{\to}    A'' \to 0 
\end{equation}
in $\Fun(BG,\nCalg)$.
Let $s:   A''\to    A$ denote the equivariant  cpc split of $\pi$.
  We consider the 
  family $\cS$  of  exact sequences 
 $$\cC\quad: \quad 0\to    C'\to    C\to    C''\to 0$$       of separable $G$-invariant subalgebras of \eqref{23rgfj23of3f23f234f34f} such that $s(   C'')\subseteq    C$.
 Such a sequence is again equivariantly semisplit since we can use the restriction of $s$. Since $F$ is s-finitary
 it is enough to show that 
the  families of constituents $   C'$, $   C$, and $   C''$ for $\cC$ running in $\cS$ are cofinal families of invariant separable $C^{*}$-algebras in $   A'$, $   A$ and $   A''$, respectively.
Indeed, then  using the assumption on $F_{s}$ the fibre sequence
$$F(   A')\to F(   A)\to F(   A'')\to \Sigma F(A')$$ is the colimit over $\cS$ of the family of fibre sequences
$$F_{s}(   C')\to F_{s}(   C)\to F_{s}(   C'')\to \Sigma F_{s}(   C')\, .$$

Let $   B'$, $   B$ and $   B''$ be separable $C^{*}$-subalgebras of $   A'$, $   A$, and $   A''$, respectively. 

Then  the invariant $C^{*}$-subalgebra $   C'':=   A''(\pi(   B),   B'')$ of $A''$ generated by the subsets $\pi(   B)$  and $    B''$ is separable and contains $   B''$. 
  Then $   C:=   A(   B,s(   C''),i(   B'))$ is a separable subalgebra of $   A$ containing $   B$  such that the restriction of $\pi$ induces a surjection
$    C\to    C''$ and $s(   C'')\subseteq    C$. 
We finally define $   C':= \ker(    C \to    C'')$ considered as a subalgebra of $   A'$. Then 
$   B'\subseteq    C$. 
Since $   B'$ is a closed subspace of the separable 
$C^{*}$-algebra  it is again separable.  

By construction we have an equivariantly  semisplit exact   sequence
$$0\to     C'\to    C\to    C'' \to 0$$
in $\cS$
and $$   B'\subseteq    C'\, , \quad    B\subseteq    C\, , \quad    B''\subseteq    C''\, .$$


 The proof of Assertion \ref{efqiuwehfiuewfewfewfqwef} is a small modification of the proof
 of Assertion \ref{qroijoqrfqqewffqewf3}. Since we do not have a split at all, we define
 $   C:=   A(   B, G\hat C'',i(   B'))$, where $\hat C''$ is a set of choices of preimages of  a countable dense subset of $C''$.
 Otherwise the argument is the same.
\end{proof}

 \phantomsection \label{weotigjwotgwregergegw}
 \begin{proof}[Proof of Theorem \ref{qeroigjqergfqeewfqewfqewf1}]
 Since    the functor  $\kkG$  is equivalent  by    Definition \ref{qeroigoergrgwg} to the left Kan-extension of its restriction $\kkGs$ to 
 separable $G$-$C^{*}$-algebras 
it is s-finitary by Lemma \ref{ergoijegwergrewergwergwerg}.
  This is   Assertion  \ref{qeroigjqergfqeewfqewfqewf1}.\ref{qrfghqirwogrgqfqwefq} which verifies the assumption
  in Lemma \ref{qwoiefuoqrfwewfqwef9} for $F=\kkG$ and $F_{s}=\kkGs$. 

The remaining assertions of Theorem \ref{qeroigjqergfqeewfqewfqewf1} now follow by applying  Lemma \ref{qwoiefuoqrfwewfqwef9}
and   Theorem \ref{qroifjeriogerggergegegweg} asserting the corresponding properties for $\kkGs$.
%
 \end{proof}
%

Our next theorem settles the universal property of $\kkG$ stated as Theorem \ref{eoiruggwerregergegwo} in the introduction.
 \begin{theorem} \label{weoigjwoegerggegwegergecv}
 \label{ojroiwfewfqefqef} The functor  $\kkG$ is  initial  {among} functors from $\Fun(BG,\nCalg)$ to  {objects of} {$\CAT_{\infty}^{\ccpl\cap {\exa}}$}
which  {are s-finitary, reduced,  
$\mathbb{K}^G$-stable and semiexact.} 
\end{theorem}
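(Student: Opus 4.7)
The plan is to realize the initiality by chaining together three universal properties: that of $\kkGs$ (Theorem \ref{wtohwergerewgrewgregwrg1}), that of $\Ind$-completion applied to $\KKG = \Ind(\KKGs)$, and that of the left Kan extension along the fully faithful inclusion $\incl \colon \Fun(BG,\nCalg_{\sepa}) \hookrightarrow \Fun(BG,\nCalg)$. Concretely, fix $\cE$ in $\CAT_{\infty}^{\ccpl\cap \exa}$ and a functor $F \colon \Fun(BG,\nCalg)\to \cE$ which is s-finitary, reduced, $\mathbb{K}^G$-stable, and semiexact. Set $F_s \coloneqq F\circ \incl$; the three latter properties pass trivially to the restriction. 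By Theorem \ref{wtohwergerewgrewgregwrg1}, $F_s$ factors essentially uniquely as $\bar F_s \circ \kkGs$ with an exact functor $\bar F_s \colon \KKGs \to \cE$. Since $\cE$ is cocomplete stable, the universal property of $\Ind(-)$ yields an essentially unique filtered-colimit-preserving extension $\tilde F \colon \KKG \to \cE$ with $\tilde F \circ y^G \simeq \bar F_s$; together with exactness of $\bar F_s$ this forces $\tilde F$ to preserve all small colimits, so $\tilde F$ is a morphism in $\CAT_{\infty}^{\ccpl\cap \exa}$.

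Next I would verify the equivalence $\tilde F \circ \kkG \simeq F$. The functor $\kkG$ is s-finitary by Theorem \ref{qeroigjqergfqeewfqewfqewf1}.\ref{qrfghqirwogrgqfqwefq}, and $\tilde F$ preserves colimits, hence $\tilde F \circ \kkG$ is s-finitary as well. By hypothesis $F$ is also s-finitary, so by Lemma \ref{ergoijegwergrewergwergwerg} both $\tilde F\circ\kkG$ and $F$ are canonically equivalent to the left Kan extension along $\incl$ of their restrictions to $\Fun(BG,\nCalg_\sepa)$. Using the factorization $\kkG\circ \incl \simeq y^G \circ \kkGs$ built into Definition \ref{qeroigoergrgwg}, those restrictions agree:
\[ \tilde F \circ \kkG \circ \incl \;\simeq\; \tilde F \circ y^G \circ \kkGs \;\simeq\; \bar F_s \circ \kkGs \;\simeq\; F_s \;=\; F\circ \incl, \]
so $\tilde F \circ \kkG \simeq F$.

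Finally, for essential uniqueness, suppose $\tilde F' \colon \KKG \to \cE$ is any other colimit-preserving functor with $\tilde F' \circ \kkG \simeq F$. Restricting to $\Fun(BG,\nCalg_\sepa)$ and using $\kkG\circ \incl \simeq y^G\circ \kkGs$ shows that $\tilde F' \circ y^G$ is an exact factorization of $F_s$ through $\kkGs$, so by the uniqueness part of Theorem \ref{wtohwergerewgrewgregwrg1} we have $\tilde F' \circ y^G \simeq \bar F_s$; the uniqueness part of the $\Ind$-completion universal property then yields $\tilde F' \simeq \tilde F$. The substantive point throughout is the compatibility of the property list ``s-finitary $+$ reduced $+$ $\mathbb{K}^G$-stable $+$ semiexact'' on $\Fun(BG,\nCalg)$ with restriction to, and left Kan extension from, $\Fun(BG,\nCalg_\sepa)$; this is precisely the content of Lemmas \ref{ergoijegwergrewergwergwerg} and \ref{qwoiefuoqrfwewfqwef9}, and is the only place where the non-trivial input beyond Theorem \ref{wtohwergerewgrewgregwrg1} and the universal property of $\Ind$-completion enters.
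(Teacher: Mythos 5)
Your proposal is correct and follows essentially the same route as the paper: the paper phrases the argument as a chain of equivalences of functor categories $\Fun^{\colim}(\KKG,\cD)\simeq\Fun^{\exa}(\KKGs,\cD)\simeq\Fun^{rse}(\Fun(BG,\nCalg_{\sepa}),\cD)\simeq\Fun^{frse}(\Fun(BG,\nCalg),\cD)$, obtained from exactly the three inputs you use — the universal property of $y^{G}$, Theorem \ref{wtohwergerewgrewgregwrg1}, and Lemmas \ref{ergoijegwergrewergwergwerg} and \ref{qwoiefuoqrfwewfqwef9}. Your object-wise existence-plus-uniqueness formulation is just a repackaging of the same argument.
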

\begin{proof}
 Let $\cD$ be a  {cocomplete stable} 
 $\infty$-category.
 We then have the following chain of equivalences
\begin{eqnarray*}
\Fun^{\colim}(\KKG,\cD)&\stackrel{y^{G,*},\simeq}{\to}& 
\Fun^{ \exa}(\KKGs,\cD)\\
&\stackrel{(\kkGs)^{*},\simeq}{\to}& 
\Fun^{rse}( \Fun(BG,\nCalg_{\sepa}),\cD)\\&\stackrel{\incl^{*},\simeq}{\leftarrow}& \Fun^{frse}( \Fun(BG,\nCalg ),\cD)\, ,
\end{eqnarray*}
 where the superscript $\colim$ stands  for colimit preserving functors. Furthermore the superscript   ${(f)rse}$  stands   the full subcategory of functors  which are (s-finitary,) reduced,  
 $\mathbb{K}^G$-stable and semiexact, respectively.\footnote{Note that in the proof of Theorem~\ref{wtohwergerewgrewgregwrg}, the $e$ in the superscript $rse$ referred to split exact as opposed to semiexact in the present situation. We apologise for the notational clash.}

Here for the first {two} steps  we use the universal properties of $y^{G}$ in \eqref{ewgfuqgwefughfqlefiewfqwe} and $\kkGs$ {(Theorem \ref{wtohwergerewgrewgregwrg1})}.
The last equivalence is a consequence of the   Lemmas \ref{ergoijegwergrewergwergwerg} and  \ref{qwoiefuoqrfwewfqwef9} which imply that  
if we left Kan-extend a functor in
$\Fun^{{rse}}( \Fun(BG,\nCalg_{\sepa}),\cD)$ along the inclusion $\incl$ in \eqref{qfwoefjkqwpoefkewpfqewfqwfqfwefq},
 then the result is in 
$ \Fun^{{frse}}( \Fun(BG,\nCalg_{\sepa}),\cD)$.  
Since $\kkG\circ \incl\simeq y^{G}\circ \kkGs$ by  \eqref{qfwoefjkqwpoefkewpfqewfqwfqfwefq}
we conclude 
$$\Fun^{\colim}(\KKG,\cD)\stackrel{\simeq}{\lto} \Fun^{frse}(\Fun(BG,\nCalg ),\cD)$$
which is the desired equivalence.
%
%
\end{proof}

\begin{rem}
Note that the functor $y^{G}\colon \KKGs\to \KKG$ in \eqref{ewgfuqgwefughfqlefiewfqwe} only preserves finite colimits. 
Therefore, in contrast to Proposition \ref{ugoierugogergwergwregw}.\ref{regpoqgowergwergwregwregwreg}, the functor $\kkG$ only  sends finite sums to coproducts. 
One could improve this situation by forming the {Bousfield} localization 
$$L:\KKG\rightleftarrows \KKGp:\incl$$
of $\kkG$ at the set of maps $\bigoplus_{n\in \nat} \kkG(A_{n})\to \kkG(\bigoplus_{n\in \nat} A_{n})$ for all families
$(A_{n})_{n\in \nat}$ in $\Fun(BG,\nCalgs)$ and setting
$$\kkGp:=L\circ \kkG:\Fun(BG,\nCalg)\to \KKGp\ .$$
We restrict to families of separable algebras in order have a set of generators.

The functor $\kkGp$ {is}
s-finitary. In addition, it preserves countable sums. Indeed, it  preserves countable sums of
families of separable $G$-$C^{*}$-algebras by construction. But using that $\kkGp$ is s-finitary
we can  remove the assumption on separability.

As an immediate consequence of Theorem  \ref{weoigjwoegerggegwegergecv}  and the definitions the universal property of $\kkGp$ is as follows: The functor 
$\kkGp$ is  initial  {among} functors from $\Fun(BG,\nCalg)$ to  {objects of} {$\CAT_{\infty}^{\ccpl\cap \exa}$}
which  are s-finitary, reduced, 
$\mathbb{K}^G$-stable, semiexact, and
which send countable sums to coproducts.
\hB
\end{rem}

%
%
  The next proposition {is} Proposition \ref{eroigowregwergregwgreg1} from the introduction.
  \begin{prop}\label{eroigowregwergregwgreg11}
  If $   A$ is in $\Fun(BG,\nCalg_{\sepa})$ and $   B$ is in $ \Fun(BG,\nCalg_{\sigma})${,} 
  then {the functor $\ho$ induces}    an isomorphism of $\Z$-graded groups \begin{equation}\label{wergiuh3iugjknegfds}
\pi_{*}\KKG(   A,   B)\cong  \KKth^{G}_{*}(   A,   B)\, .
\end{equation}
  \end{prop}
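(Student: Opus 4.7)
The plan is to reduce to the case of separable $B$, which is handled by Theorem \ref{weighigregwgregw9}, and then extend to $\sigma$-unital $B$ by writing $B$ as a filtered colimit of its separable $G$-invariant subalgebras and exploiting compactness of $\kkG(A)$ together with the s-finitariness of $\KKth^G_*(A,-)$ on the $\sigma$-unital side.

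\textbf{Separable case.} When both $A$ and $B$ are separable, the functor $y^G \colon \KKGs \to \KKG$ from \eqref{ewgfuqgwefughfqlefiewfqwe} is fully faithful by Remark \ref{weoiguheijwogergfregwerf}, hence $\KKG(\kkG(A),\kkG(B)) \simeq \map_{\KKGs}(\kkGs(A),\kkGs(B))$. Since $\KKGs$ is stable, $\pi_n$ of this mapping spectrum identifies with $[\kkGs(A),\Omega^n \kkGs(B)]_{\ho(\KKGs)}$. By Lemma \ref{qefoijhfoiqfejoewfqwef}.\ref{qeroigjqergegwegre} the loop functor on $\KKGs$ is implemented by the $C^*$-algebraic suspension $S(-)=C_0((0,1))\otimes -$, and by Theorem \ref{weighigregwgregw9}.\ref{weoitgjowergwegregw9} (equivalence of triangulated categories) these hom-sets agree with $\KKth^G(A,S^n B) = \KKth^G_n(A,B)$.

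\textbf{Extension to $\sigma$-unital $B$.} Write $B \simeq \colim_{i} B_i$ in $\Fun(BG,\nCalg)$, where $i$ runs through the filtered poset of $G$-invariant separable $C^*$-subalgebras of $B$ (using countability of $G$ and the existence of a countable approximate unit for $B$, every countable subset sits inside such a $B_i$). By s-finitariness of $\kkG$ (Theorem \ref{qeroigjqergfqeewfqewfqewf1}.\ref{qrfghqirwogrgqfqwefq}), we obtain an equivalence $\kkG(B) \simeq \colim_{i} \kkG(B_i)$ in $\KKG$. Since $A$ is separable, $\kkG(A) = y^G(\kkGs(A))$ is a compact object of $\KKG$ by Remark \ref{weoiguheijwogergfregwerf}. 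Hence mapping out of $\kkG(A)$ commutes with filtered colimits, giving
\[
\KKG(A,B) \simeq \colim_{i} \KKG(A,\kkG(B_i))
\]
as spectra. Taking $\pi_n$ (which commutes with filtered colimits of spectra) and applying the separable case to each $B_i$ yields
\[
\pi_n \KKG(A,B) \cong \colim_{i} \KKth^G_n(A,B_i).
\]

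\textbf{Conclusion and main obstacle.} To identify the right-hand side with $\KKth^G_n(A,B)$, invoke the classical fact that $\KKth^G_n(A,-)$, for separable $A$, is s-finitary on $\Fun(BG,\nCalg_{\sigma})$, as cited in the footnote after the proposition (based on \cite[Sec.~3]{zbMATH03973627}). This is the sole nontrivial analytic input: everything else is a formal consequence of the universal properties of $\kkGs$, $\kkG$ and of the $\Ind$-completion. The main subtlety lies in ensuring that the countable approximate unit of $B$ together with countability of $G$ actually produces a cofinal directed family of separable $G$-invariant subalgebras; this is straightforward but must be verified carefully in order to reduce to the classical statement on the Kasparov side.
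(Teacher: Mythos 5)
Your proposal is correct and follows essentially the same route as the paper: both arguments reduce to the separable case (where the claim follows from the triangulated equivalence $\ho_0$ of Theorem \ref{weighigregwgregw9}) by showing that $\pi_{*}\KKG(A,-)$ and $\KKth^{G}_{*}(A,-)$ are both s-finitary in the second variable, the former via compactness of $\kkG(A)$ and s-finitariness of $\kkG$. The one step you cite rather than prove --- s-finitariness of $\KKth^{G}_{*}(A,-)$ on $\sigma$-unital algebras --- is precisely where the paper does its work, using Meyer's Cuntz-picture formula $\KKth^{G}(A,B)\cong[q_{s}(A)\otimes K,\,B\otimes K]$ (valid for $\sigma$-unital $B$) together with the observation that homomorphisms and homotopies out of the separable algebra $q_{s}(A)\otimes K$ factor through invariant separable subalgebras of $B\otimes K$, among which those of the form $B'\otimes K$ are cofinal.
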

 \begin{proof} 
Let $   A$ be in $\Fun(BG,\nCalg_{\sepa})$. We consider the functors
\begin{equation}\label{werferferfrferfwfrew}
\pi_{*}\KKG(   A,-), \, \KKth^{G}_{*}(   A,-)  \colon \Fun(BG,\nCalg_{\sigma}) \to \Ab^{\Z}\, .
\end{equation}  
 By the compatibility of $\ho$ with the triangulated structure stated in Theorem \ref{weighigregwgregw9}.\ref{weoitgjowergwegregw9} 
it induces a natural isomorphism  $$\pi_{*}\KKG(   A,(-)_{s})\xrightarrow{\cong} \KKth^{G}_{*}(   A,(-)_{s})$$ between the restriction of these functors to
separable $G$-$C^{*}$-algebras.
It therefore  suffices to show that both functors in \eqref{werferferfrferfwfrew} are s-finitary. 
Since $\kkG(   A)\simeq y^{G}(\kkGs(   A))$ is a compact object of $\KKG$ and $\kkG$ is $s$-finitary, the functor $\KKG(   A,-)$ is  s-finitary, too.   Since $\pi_{*}$ preserves filtered colimits also 
$\pi_{*}\KKG(   A,-)$ is s-finitary. 

In order to see that $\KKth^{G}_{*} (   A,-)$ is s-finitary, using the shift equivalence of the triangulated structure on $\KKGsk$, it suffices to show this for $*=0$.   We now use the formula \cite[Thm.~5.5]{Meyer:aa}\footnote{Alternatively, we could use \cite[Rem.~3.2]{zbMATH03973627}.} (at this point we need the assumption that $B$ is $\sigma$-unital)
stating that
$$\KKth^{G}(   A,   B)\cong [q_{s}(   A)\otimes    K,   B\otimes     K]\, .$$
Here $   K \coloneqq K(\ell^{2}\otimes L^{2}(G))$, $q_{s}(   A) \coloneqq \ker(   A\otimes    K\sqcup    A\otimes    K\to    A\otimes    K)$, and $[-,-]$  stands for taking   homotopy classes of morphisms in $\Fun(BG,\nCalg)$.
We note that  $   K$ is separable and $q_{s}(   A)$ is separable.
Every equivariant homomorphism $q_{s}(   A)\otimes    K \to    B\otimes     K$
and every homotopy between such homomorphisms factorizes over an invariant separable subalgebra of the target. Furthermore, the separable subalgebras of the form $   B'\otimes    K$  for invariant  separable subalgebras $   B'$ of $   B$ are cofinal. 
Consequently, $\KKth^{G}(   A, -)$ is s-finitary, too.
  \end{proof}

 We now turn to the proof of Proposition \ref{togjoigerggwgergwgr} from the introduction (repeated here as Proposition \ref{togjoigerggwgergwgr1}) concerning symmetric monoidal structures on $\kkG$ and $\KKG$.
As a preparation we  show  how left Kan extensions interact with symmetric monoidal structures.
Let $r\colon \cC\to \cC^{\prime}$ be a   symmetric monoidal functor between small symmetric monoidal $\infty$-categories, and let $\cD$  be a presentably symmetric monoidal $\infty$-category. 
 \begin{lem}\label{ergiowerg43frefref}
The left Kan extension $r_{!}F:\cC'\to \cD$ of a lax symmetric monoidal functor $F\colon \cC\to \cD$ along $r$ has a naturally induced lax symmetric monoidal refinement.
\end{lem}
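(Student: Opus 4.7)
The plan is to realize this as an instance of the Day convolution formalism. Since $\cD$ is presentably symmetric monoidal and $\cC,\cC'$ are small symmetric monoidal $\infty$-categories, both $\Fun(\cC,\cD)$ and $\Fun(\cC',\cD)$ acquire Day convolution symmetric monoidal structures, and under these structures lax symmetric monoidal functors $\cC\to\cD$ are classified precisely as $\E_\infty$-algebra objects in $\Fun(\cC,\cD)$ (and analogously for $\cC'$).

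The key input I would invoke is that whenever $r\colon \cC\to\cC'$ is symmetric monoidal, the left Kan extension functor
$$r_!\colon \Fun(\cC,\cD)\lto \Fun(\cC',\cD)$$
inherits a canonical symmetric monoidal refinement with respect to Day convolution on both sides; this is a standard consequence of the Day convolution machinery developed in \cite{HA} (see also the treatments of Glasman and Hinich), where it appears as the left adjoint, in symmetric monoidal $\infty$-categories, to the symmetric monoidal restriction $r^*$. Being symmetric monoidal, $r_!$ sends $\E_\infty$-algebras to $\E_\infty$-algebras.

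Applying this passage to the $\E_\infty$-algebra in $\Fun(\cC,\cD)$ which encodes the lax symmetric monoidal functor $F$ produces an $\E_\infty$-algebra in $\Fun(\cC',\cD)$ whose underlying functor is naturally equivalent to $r_! F$. Translating back through the Day convolution correspondence, this is exactly a lax symmetric monoidal refinement of $r_! F$, and the construction is natural in $F$ since all steps are functorial.

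The only real obstacle is ensuring the correct hypotheses: both the existence of Day convolution as a symmetric monoidal structure on the functor categories and the symmetric monoidality of $r_!$ require that $\cD$ be presentably symmetric monoidal (so that the pertinent colimits exist and distribute over the tensor product of $\cD$) and that $\cC,\cC'$ be small; these are exactly the assumptions made in the statement. With these in place, no further computation is needed beyond invoking the Day convolution formalism and translating between $\E_\infty$-algebras and lax symmetric monoidal functors.
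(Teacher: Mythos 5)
Your proposal is correct and follows essentially the same route as the paper: identify lax symmetric monoidal functors with commutative algebras for the Day convolution structures, use that $r_!$ is a symmetric monoidal left adjoint to the lax symmetric monoidal restriction $r^*$ (the paper cites Nikolaus for this), and conclude that $r_!$ preserves commutative algebras. No gaps.
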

\begin{proof}

 The restriction functor
$r^{*}\colon \Fun(\cC^{\prime},\cD)\to \Fun(\cC,\cD)$ is lax symmetric monoidal (with respect to the Day convolution structures on the functor categories). Furthermore it has a symmetric monoidal left-adjoint
$r_{!}\colon \Fun(\cC,\cD)\to \Fun(\cC^{\prime},\cD)$ which is a symmetric monoidal refinement of the left Kan-extension functor{, see \cite[Cor. 3.8]{Nikolaus:2016aa}.}
This implies that $r_{!}$ preserves commutative algebras. 

By  \cite[Prop. 2.16]{Glasman:aa}, \cite[Sec. 2.2.6]{HA}
commutative algebras in these functor categories correspond to lax symmetric monoidal functors. 
Therefore, if $F\colon \cC\to \cD$ is lax symmetric monoidal, it is a commutative algebra in $\Fun(\cC,\cD)$.  Consequently, $r_{!}F$  is a commutative algebra in $\Fun(\cC^{\prime},\cD)$, and hence $r_{!}F$ is lax symmetric monoidal.
\end{proof}

\begin{rem}\label{ewgoijwgioergwergregwg}
In proof of Proposition  \ref{togjoigerggwgergwgr1} below
we will  use that the $\Ind$-completion $\Ind(\cD)$ of  a symmetric monoidal $\infty$-category $\cD$ admits a canonical symmetric monoidal structure preserving filtered colimits in each argument
 such that the canonical functor $\cD\to \Ind(\cD)$ has a canonical refinement 
 to a symmetric monoidal functor.  If $\cD$ is stable and the monoidal structure on $\cD$ is exact, then   the induced structure on $\Ind(\cD)$  is presentably symmetric monoidal, and the canonical functor is symmetric monoidal. A reference for these statements is \cite[Cor.\ 4.8.1.14]{HA}. 
 \hB 
 \end{rem}

 \begin{prop}\label{togjoigerggwgergwgr1}
 The   symmetric monoidal structure $\otimes_{?}$ on $\KKGs$  for $?$ in $\{\min,\max\}$  induces a
 presentably symmetric monoidal {structure} on $\KKG$  and $\kkG$ refines to a symmetric monoidal functor
 \begin{equation}\label{qregiuhiuwfqef}
\kkGtensor \colon \Fun(BG,\nCalg)^{\otimes_{?}} \to \KKGtensor\, .
\end{equation}
 \end{prop}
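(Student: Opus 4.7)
The plan is to assemble the result from three ingredients that are already in place: the symmetric monoidal refinement of $\kkGs$ (Proposition \ref{eqrgoerjgpergwegerg1}), the canonical symmetric monoidal structure on an $\Ind$-completion (Remark \ref{ewgoijwgioergwergregwg}), and the behaviour of symmetric monoidal structures under left Kan extension (Lemma \ref{ergiowerg43frefref}). Since $\KKGs$ is stably symmetric monoidal by Proposition \ref{eqrgoerjgpergwegerg1} and $\KKG = \Ind(\KKGs)$ by Definition \ref{wtrohjwrthrthehrth}, Remark \ref{ewgoijwgioergwergregwg} produces a presentably symmetric monoidal structure on $\KKG$ for which the tensor product preserves filtered (indeed all) colimits separately in each variable, and for which the Yoneda map $y^{G}\colon \KKGs\to\KKG$ refines to a symmetric monoidal functor. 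This takes care of the first assertion.

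For the second assertion, composing with the symmetric monoidal $\kkGstensor$ gives a symmetric monoidal functor $y^{G}\circ \kkGstensor\colon \Fun(BG,\nCalgs)^{\otimes_{?}}\to \KKGtensor$. The inclusion $\incl\colon \Fun(BG,\nCalgs)\to \Fun(BG,\nCalg)$ is itself symmetric monoidal for $\otimes_{?}$. I would then apply Lemma \ref{ergiowerg43frefref} to the left Kan extension along $\incl$, which by Definition \ref{qeroigoergrgwg} is precisely $\kkG$. This endows $\kkG$ with a canonical \emph{lax} symmetric monoidal refinement $\kkGtensor$.

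The main obstacle is to upgrade the lax structure to a genuine symmetric monoidal structure, i.e.\ to show that for all $A,B$ in $\Fun(BG,\nCalg)$ the structure map
\[
\kkG(A)\otimes_{?}\kkG(B)\longrightarrow \kkG(A\otimes_{?}B)
\]
is an equivalence. Here I would use that $\kkG$ is s-finitary (Theorem \ref{qeroigjqergfqeewfqewfqewf1}.\ref{qrfghqirwogrgqfqwefq}), so that both $\kkG(A)$ and $\kkG(B)$ are the filtered colimits of $\kkG(A')$ resp.\ $\kkG(B')$ over separable $G$-invariant subalgebras. Because $\otimes_{?}$ on $\KKG$ preserves filtered colimits separately in each variable, the left-hand side is equivalent to $\colim_{A',B'}\kkG(A')\otimes_{?}\kkG(B')$, and since $y^{G}\circ \kkGstensor$ is already strong symmetric monoidal on separables, this in turn identifies with $\colim_{A',B'}\kkG(A'\otimes_{?} B')$.

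It remains to identify this double colimit with $\kkG(A\otimes_{?} B)$, which by s-finitariness equals $\colim_{C\subseteq_{\sepa} A\otimes_{?} B}\kkG(C)$. The key point is the cofinality statement that every separable $G$-invariant subalgebra $C\subseteq A\otimes_{?}B$ is contained in the image of $A'\otimes_{?}B'\to A\otimes_{?}B$ for some separable $G$-invariant $A'\subseteq A$ and $B'\subseteq B$: choose a countable dense subset of $C$, approximate each element by finite sums of elementary tensors, and let $A'$ and $B'$ be the separable $G$-invariant $C^{*}$-subalgebras generated by the first and second tensor factors appearing in these approximations (using that $G$ is countable to keep them separable). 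This gives the needed cofinality in the indexing posets, and hence the desired equivalence of colimits. Combining everything, $\kkGtensor$ is symmetric monoidal as claimed.
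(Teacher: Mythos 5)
Your overall strategy coincides with the paper's: Remark \ref{ewgoijwgioergwergregwg} for the presentably symmetric monoidal structure on $\Ind(\KKGs)$, Lemma \ref{ergiowerg43frefref} for the lax refinement of $\kkG$, and then an s-finitary colimit comparison to upgrade lax to strong. For $\otimes_{\min}$ your argument is essentially complete, because $A'\otimes_{\min}B'\to A\otimes_{\min}B$ is an isometric inclusion, so the subalgebras $A'\otimes_{\min}B'$ literally form a cofinal family of separable invariant subalgebras of $A\otimes_{\min}B$ and your elementary-tensor approximation closes the loop.

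For $\otimes_{\max}$, however, there is a genuine gap. The map $A'\otimes_{\max}B'\to A\otimes_{\max}B$ is in general \emph{not} injective, so its image $A'\bar\otimes_{\max}B'$ is a proper quotient of $A'\otimes_{\max}B'$. Your cofinality argument (approximating a dense subset of $C$ by elementary tensors) only shows that every separable invariant $C\subseteq A\otimes_{\max}B$ lies in some image $A'\bar\otimes_{\max}B'$, hence identifies $\kkG(A\otimes_{\max}B)$ with $\colim_{A',B'}\kkG(A'\bar\otimes_{\max}B')$. But the chain coming from the lax structure map produces $\colim_{A',B'}\kkG(A'\otimes_{\max}B')$ with the abstract maximal tensor products, and these two colimits do not obviously agree. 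The paper bridges this by showing that the two inductive systems $(A'\otimes_{\max}B')$ and $(A'\bar\otimes_{\max}B')$ are isomorphic in $\Ind(\nCalg_{\sepa})$: the kernel $I$ of $A'\otimes_{\max}B'\to A'\bar\otimes_{\max}B'$ is separable, and since the poset of separable invariant subalgebras is countably filtered one can find larger $A''\supseteq A'$, $B''\supseteq B'$ such that $A'\otimes_{\max}B'\to A''\otimes_{\max}B''$ already annihilates $I$ (Lemma \ref{eroigjowregwrege9}); this uses that $\otimes_{\max}$ commutes with filtered colimits so that $A\otimes_{\max}B\cong\colim A'\otimes_{\max}B'$. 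Without this pro-isomorphism step your argument does not establish the equivalence in the maximal case. (You also omit the verification that the unit map $1^{\otimes_?}_{\KKG}\to\kkG(\C)$ is an equivalence, though that is a minor formal point.)
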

   \begin{proof} Applying Remark \ref{ewgoijwgioergwergregwg} to $\KKGs$ with  the symmetric monoidal structure $\otimes_{?}$ for $?$ in $\{\min,\max\}$  constructed in Proposition \ref{eqrgoerjgpergwegerg} 
we get a presentably symmetric monoidal structure $\otimes_{?}$ on $\KKG$ and a    symmetric monoidal refinement 
$$y^{G,\otimes_{?}} \colon \KKGstensor \to \KKGtensor$$
of the functor in \eqref{ewgfuqgwefughfqlefiewfqwe}.
Since $\kkGs$ has a symmetric monoidal structure  by   Proposition \ref{eqrgoerjgpergwegerg},
 the upper horizontal composition in the diagram   
$$\xymatrix{\Fun(BG,\nCalg_{\sepa}) \ar[r]^-{\kkGs}\ar[d]^{\incl}&\KKGs\ar[r]^{y^{G}}&\KKG\\\Fun(BG,\nCalg)\ar[rru]_-{\kkG}&}$$
has a symmetric  monoidal structure, too.
 Applying Lemma \ref{ergiowerg43frefref} to  the symmetric monoidal functor $\incl$ in place of $r$ we conclude that 
 $\kkG$ {acquires a canonical} lax symmetric monoidal refinement \eqref{qregiuhiuwfqef}.

To show that $\kkG$ is in fact symmetric monoidal,  we {have to} show   that for
$   A$ and $   B$ in $ \Fun(BG,\nCalg)$  the canonical map
\begin{equation}\label{qefoqwepofqwefqewffqewfqwef}
\kkG(   A)\otimes_{{?}} \kkG(   B)\to \kkG(   A\otimes_{?}    B)
\end{equation} 
is an equivalence.  Furthermore, we must show that the
unit morphism
\begin{equation}\label{qefoqwepofqwefqewffqewfqwefq}
1^{{\otimes_{?}}}_{\KKG}\to \kkG(\C)
\end{equation}
is an equivalence,
where $1^{\otimes_{?}}_{\KKG}$ is the tensor unit for the structure $\otimes_{?}$ on $\KKG$, and  $\C$ is the tensor unit for $\Fun(BG,\nCalg)$ for both structures.

We start with the discussion of the unit morphism. By  Remark \ref{ewgoijwgioergwergregwg} we have an equivalence $1^{\otimes_{?}}_{\KKG}\simeq y^{G}(1_{\KKGs})$.
By Proposition \ref{eqrgoerjgpergwegerg} we also know that
$1_{\KKGs}\simeq \kkGs(\C)$. If we now apply $y^{G}$ to the second equivalence
and compose  with the first, then we get the desired equivalence {\eqref{qefoqwepofqwefqewffqewfqwefq}}.

    We now give separate arguments for \eqref{qefoqwepofqwefqewffqewfqwef} in the cases  where $?= \max$  and $?=\min$.

We first consider the case of $\otimes_{\min}$. We will employ the following fact: 
 If $A$ and $B$ are in $\nCalg$
and $A'$ is a closed subalgebra of $ A$ and $B'$ is a closed subalgebra of $B'$, then the canonical map
$A'\otimes_{\min}B'\to A\otimes_{\min}B$ is an isometric inclusion {\cite[Prop. 3.6.1]{brown_ozawa}.}


 The desired equivalence \eqref{qefoqwepofqwefqewffqewfqwef} for $\otimes_{\min}$ is given by the  following chain of equivalences:
\begin{eqnarray*}
\kkG(   A) \otimes_{\min} \kkG(   B) & \simeq& \colim\limits_{   A' \subseteq_{{\sepa}}    A,    B' \subseteq_{{\sepa}}   B} \kkGs(   A') \otimes_{\min} \kkGs(   B') \\
	& \stackrel{\simeq}{\to}& \colim\limits_{   A' \subseteq_{{\sepa}}      A,    B' \subseteq_{{\sepa}}   B} \kkGs(   A' \otimes_{\min}    B')  \\
	& \stackrel{(!)}{\simeq} &\kkG(   A \otimes_{\min}    B)\,.
\end{eqnarray*}
In the first step, we have used that $\kkG$ is s-finitary and {that} the tensor product in $\kkG$ commutes with colimits in each variable. In the  second step we use that $\kkGs$ is symmetric monoidal for $\otimes_{\min}$ by Proposition \ref{eqrgoerjgpergwegerg}. To justify the final equivalence $(!)$, it suffices to show that the poset of subalgebras of $   A \otimes_{\min}    B$ of the form $   A' \otimes_{\min}    B'$ (note that here we use the fact from above), for separable subalgebras $   A'$ and $   B'$ of $   A$ and $   B$, respectively, is cofinal in the poset of all separable subalgebras. To see this, let $   C$ be an invariant separable subalgebra of $   A\otimes_{\min}    B$. Let $(c_{i})_{i\in I}$ be a countable dense subset of $   C$.
For every $i$ in $I$ we can choose a countable family of elementary tensors
$(a_{i,j}\otimes b_{i,j})_{j\in J_{i}}$ such that $c_{i}$ belongs to the closure of the linear span of this family. We then let $   A'$ be the subalgebra generated by the $G$-orbits of the elements $a_{i,j}$ for all $i$ in $I$ and $j$ in $J_{i}$.
It is an invariant  separable subalgebra of $   A$. We define $   B'$ similarly using the elements $b_{i,j}$.
By construction we have $   C\subseteq    A'\otimes_{\min}    B'$. 
This finishes the argument in the case of $\otimes_{\min}$.

We now consider the maximal tensor product $\otimes_{\max}$. The problem here is that if $   A'$ and $   B'$ are as above then {in general the} 
canonical map $   A'\otimes_{\max}    B'\to    A\otimes_{\max}    B$ is {not} an isometric inclusion. Denoting the image of this map by 
 $   A'\bar{\otimes}_{\max}    B'$ we get the surjection
 $   A'\otimes_{\max}    B'\to    A'\bar{\otimes}_{\max}    B'$.
 
 We use that $   A\cong \colim_{   A'\subseteq_{{\sepa}}      A}   A'$, $   B\cong \colim_{   B'\subseteq_{{\sepa}}      B}    B'$ and the fact that  the maximal tensor product commutes with filtered colimits {(see  Lemma \ref{eriguhwiegugwergwerg}.\ref{weoigjweorrerggwergr})} in order to conclude that 
 \begin{equation}\label{dvavdfsvdvsvqrwvdsfva1}   A\otimes_{\max}    B\cong \colim_{   A'\subseteq_{{\sepa}}      A,   B'\subseteq_{{\sepa}}      B}    A'\otimes_{\max} B'\, ,
 \end{equation} 
 where as before the colimit runs over the poset of $(   A',   B')$ of pairs of invariant separable subalgebras of $   A$ and $   B$, respectively.
 By a similar cofinality argument as in the case of the minimal tensor product we also get 
 \begin{equation}\label{dvavdfsvdvsvqrwvdsfva}
   A\otimes_{\max}    B\cong \colim_{   A'\subseteq_{{\sepa}}      A,   B'\subseteq_{{\sepa}}      B}    A'\bar{\otimes}_{\max}    B'\, .
\end{equation} 

In the following we will show the claim that 
the inductive systems $(   A'\otimes_{\max}    B')_{   A'\subseteq_{{\sepa}}      A,   B'\subseteq_{{\sepa}}      B}$ and
$(   A'\bar{\otimes}_{\max}    B')_{   A'\subseteq_{{\sepa}}      A,   B'\subseteq_{{\sepa}}      B}$ are {isomorphic} in $\Ind(\nCalg_{\sepa})$.

For the moment we fix $   A'$ and $   B'$.  We  then get the outer part of the following diagram
$$\xymatrix{A'\otimes_{\max}    B'\ar[rr]\ar[dr]\ar[dd]&&   A'\bar{\otimes}_{\max}    B'\ar@{..>}[dl]\ar[dd]\\
&   A''\otimes_{\max}    B''\ar[ld]&\\    A\otimes_{\max}    B\ar@{=}[rr]&& A\otimes_{\max} B}$$
The vertical morphisms are the canonical inclusions into the colimits which have been identified using  \eqref{dvavdfsvdvsvqrwvdsfva1} and  \eqref{dvavdfsvdvsvqrwvdsfva}. 
 
 Since   $   A'\otimes_{\max}    B'$ is separable, the kernel $   I$ of $   A'\otimes_{\max}    B'\to    A'\bar{\otimes}_{\max}    B'$ is {separable. 
 By} Example \ref{wegoijwoegwergwregwreg}
     the poset of separable subalgebras in a $C^{*}$-algebra is \countably filtered\footnote{{This means that} every countable subset admits an upper bound.}. 
     Since $G$ is countable, using Lemma \ref{eroigjowregwrege9} we can find   invariant separable subalgebras $   A''$ and $   B''$ containing $   A'$ and $   B'$, respectively, such that $   I $ is  annihilated by  the map
$   A'\otimes_{\max}    B'\to    A''\otimes_{\max}    B''$. This provides the dotted morphism.  The existence of $(   A'',   B'')$ and  the dotted arrow for any given $(   A',   B') $ proves the claim.

 The claim justifies the equivalence marked by {$(!)$} in the following {chain of equivalences}
\begin{eqnarray*}
\kkG(   A) \otimes_{\max}  \kkG(   B) & \simeq& \colim\limits_{   A' \subseteq_{{\sepa}}      A,    B' \subseteq_{{\sepa}}   B} \kkGs(   A') \otimes_{\max} \kkGs(   B') \\
	& \stackrel{\simeq}{\to}& \colim\limits_{   A' \subseteq_{{\sepa}}      A,    B' \subseteq_{{\sepa}}   B} \kkGs(   A' \otimes_{\max}     B')  \\&{\stackrel{(!)}{\simeq}} & \colim\limits_{   A' \subseteq_{{\sepa}}      A,    B' \subseteq_{{\sepa}}   B} \kkGs(   A' \bar \otimes_{\max}     B')\\
	& \stackrel{}{{\simeq}} &\kkG(   A \otimes_{\max}    B)
\end{eqnarray*}
providing the equivalence \eqref{qefoqwepofqwefqewffqewfqwef} in the case of the maximal tensor product.
The remaining equivalences in this chain are justified in the same way as in the case of the minimal tensor product.
\end{proof}
  
%
%
%

\section{Change of groups functors}
\label{sec:change-of-groups}

In this section we show that the restriction, induction and crossed product functors  of $C^*$-algebras
descend to functors between the corresponding universal KK-theoretic stable $\infty$-categories. The following lemma is the blue-print for
the assertions about the change of groups functors below.
 
 Let $G$ and $H$ be  countable groups and consider a  functor
  $$C \colon \Fun(BG,\nCalg ) \to  \Fun(BH,\nCalg ) \, .$$  
  If $A\to B$ is a morphism in 
 $\Fun(BG,\nCalg ) $, then we let ${C(A)}^{C(B)}$ denote the  image of the induced morphism $C(A)\to C(B)$. If   $C$  preserves separable algebras and $A$ is  separable, then $C(A)^{C(B)}$ is separable, too.
 
 {For any category $\cC$ let $\Ind(\cC)$ denote the category of inductive systems in $\cC$.
 For $A$ in $ \Fun(BG,\nCalg ) $ we consider the inductive system
 $(A')_{A'{\subseteq_{\sepa} A}}$  of the invariant separable subalgebras $A'$ of $A$   in  $\Ind(\Fun(BG,\nCalg ))$.
 We then  have a canonical map of inductive systems
 $(C(A'))_{A'{\subseteq_{\sepa} A}}\to (C(A')^{C(A)})_{A' \subseteq_{\sepa} A}$ in $\Ind(\Fun(BH,\nCalg ))$.
 
 {Assume that $C$ preserves separable algebras.}
 \begin{ddd}\label{wtoijwrgergwrefwef}
 We say that $C$ is $\Ind$-s-finitary if it has the following properties:
 \begin{enumerate}
 \item\label{egijegoergwergwreg1} For every $A$ in $ \Fun(BG,\nCalg ) $ the inductive system $(C(A')^{C(A)})_{A'\subseteq_{\sepa}A}$  is cofinal in the  inductive system of all invariant  separable
 subalgebras  of $C(A)$.
 \item\label{egijegoergwergwreg2} The canonical map $(C(A'))_{A'{\subseteq_{\sepa} A}}\to (C(A')^{C(A)})_{A'{\subseteq_{\sepa} A}}$ is an isomorphism in
 $\Ind(  \Fun(BH,\nCalg ))$. 
 \end{enumerate}
 \end{ddd}}

{This definition is designed in order to ensure the following fact.
\begin{lem}\label{weiogwoegwer9}
If $F$ is some $s$-finitary functor on  $\Fun(BH,\nCalg ) $ and $C$ is $\Ind$-s-finitary, then the composition
$F\circ C$ is   an  $s$-finitary functor on  $\Fun(BG,\nCalg ) $.
\end{lem}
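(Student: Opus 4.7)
The goal is to verify, for any $A$ in $\Fun(BG,\nCalg)$, that the canonical comparison map
\[ \colim_{A'\subseteq_{\sepa}A} F(C(A'))\to F(C(A)) \]
is an equivalence. The plan is to manipulate this colimit through a sequence of equivalences that exploit the two defining conditions of $\Ind$-$s$-finitariness together with the $s$-finitariness of $F$, reducing everything to a single cofinality/rewriting argument in an $\Ind$-category.

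First, since $F$ is $s$-finitary, the right hand side can be computed as
\[ F(C(A))\simeq \colim_{B'\subseteq_{\sepa} C(A)} F(B'), \]
where $B'$ runs over the poset of invariant separable subalgebras of $C(A)$. By Condition \ref{egijegoergwergwreg1} of Definition \ref{wtoijwrgergwrefwef}, the sub-poset of those $B'$ of the form $C(A')^{C(A)}$ for $A'\subseteq_{\sepa}A$ is cofinal. A standard cofinality argument for filtered colimits therefore yields
\[ F(C(A))\simeq \colim_{A'\subseteq_{\sepa}A} F\bigl(C(A')^{C(A)}\bigr). \]

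Next, Condition \ref{egijegoergwergwreg2} asserts that the two inductive systems $(C(A'))_{A'\subseteq_{\sepa}A}$ and $(C(A')^{C(A)})_{A'\subseteq_{\sepa}A}$ become isomorphic in $\Ind(\Fun(BH,\nCalg))$. The universal property of the $\Ind$-completion tells us that any functor defined on $\Fun(BH,\nCalg)$ with values in an $\infty$-category admitting filtered colimits extends essentially uniquely to a filtered-colimit-preserving functor out of $\Ind(\Fun(BH,\nCalg))$; in particular, isomorphic $\Ind$-objects are sent by such an extension to equivalent objects. Applying this to $F$ (whose target admits the relevant filtered colimits since $F$ is $s$-finitary) gives
\[ \colim_{A'\subseteq_{\sepa}A} F\bigl(C(A')^{C(A)}\bigr)\simeq \colim_{A'\subseteq_{\sepa}A} F(C(A')). \]

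Chaining these equivalences and checking that the resulting equivalence is indeed induced by the canonical map from the colimit to $F(C(A))$ then finishes the argument. The only step requiring any care is the very last one: one must verify that the abstract equivalence produced through the $\Ind$-universal property is compatible with the concrete structure maps $C(A')\to C(A)$ which define the canonical comparison map. This is routine because the isomorphism of $\Ind$-systems in Condition \ref{egijegoergwergwreg2} is given by the natural surjections $C(A')\to C(A')^{C(A)}$, and these are precisely the maps entering the factorizations of $C(A')\to C(A)$.
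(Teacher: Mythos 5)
Your proof is correct and is essentially the paper's own argument: the same chain of three equivalences (the $\Ind$-isomorphism of Condition 2, the cofinality of Condition 1, and the $s$-finitariness of $F$), merely composed in the reverse order. The extra remark about compatibility of the $\Ind$-level equivalence with the canonical comparison map is a point the paper leaves implicit, but it does not change the approach.
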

\begin{proof}
{For any $A$  in $\Fun(BH,\nCalg)$ we must show that the canonical morphism \begin{equation}\label{giewruhgwiueghewirfwefwerf}
\colim_{A'\subseteq_{\sepa} A} F(C(A'))\to F(C(A))
\end{equation}
is an equivalence. It first follows from Condition \ref{wtoijwrgergwrefwef}.\ref{egijegoergwergwreg2}  that
$$\colim_{A'\subseteq_{\sepa} A} F(C(A'))\stackrel{\simeq}{\to}
 \colim_{A'\subseteq_{\sepa} A} F(C(A')^{C(A)})\, .$$
By Condition   \ref{wtoijwrgergwrefwef}.\ref{egijegoergwergwreg1}
we have an equivalence 
  $$ \colim_{A'\subseteq_{\sepa} A} F(C(A')^{C(A)})\stackrel{\simeq}{\to} \colim_{B'\subseteq_{\sepa} C(A)} F(B')
 \, .$$
 Finally, since $F$ is $s$-finitary, we have an equivalence 
  $$\colim_{B'\subseteq_{\sepa} C(A)} F(B')\stackrel{\simeq}{\to} F(C(A))\, .$$
  The composition of these equivalences is the desired equivalence \eqref{giewruhgwiueghewirfwefwerf}.}
\end{proof}

}

In order to check that a functor $C\colon \Fun(BG,\nCalg)\to \Fun(BH,\nCalg)$ is $\Ind$-s-finitary we  will use the following lemma.
\begin{lem}\label{qoifjqoifewqqfwedwed}
Assume that $C $  {preserves separable algebras,   satisfies the Condition \ref{wtoijwrgergwrefwef}.\ref{egijegoergwergwreg1}} and  one of the following:
\begin{enumerate}
\item\label{wegiowergwergwerf} $C$ preserves \countably filtered colimits.
\item\label{wegiowergwergwerf1}  $C$ preserves  isometric inclusions.\footnote{Note that inclusions of $C^{*}$-algebras are automatically isometric.}  \end{enumerate}
 Then $C$ is $\Ind$-s-finitary.
\end{lem}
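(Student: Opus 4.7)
The plan is to verify condition \ref{wtoijwrgergwrefwef}.\ref{egijegoergwergwreg2} only, since \ref{wtoijwrgergwrefwef}.\ref{egijegoergwergwreg1} is among the hypotheses. Concretely, I would show that the canonical surjective natural transformation
\[\phi \colon (C(A'))_{A' \subseteq_{\sepa} A} \longrightarrow (C(A')^{C(A)})_{A' \subseteq_{\sepa} A}\]
is an isomorphism in $\Ind(\Fun(BH, \nCalg))$, where at each level $\phi_{A'}$ has kernel $K_{A'} \coloneqq \ker(C(A') \to C(A))$ -- a closed, separable ideal in the separable $C^{*}$-algebra $C(A')$ (invoking that $C$ preserves separable algebras). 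My first reduction is the claim that it suffices to prove $(\ast)$: for each $A' \subseteq_{\sepa} A$ there exists $A'' \supseteq A'$ with $A'' \subseteq_{\sepa} A$ such that the transition map $C(A') \to C(A'')$ annihilates $K_{A'}$. Given $(\ast)$, this transition factors through the quotient $C(A')/K_{A'} \cong C(A')^{C(A)}$ as a $*$-homomorphism $\psi_{A', A''}$, and one checks directly that the collection of $\psi_{A',A''}$'s assembles into an inverse of $\phi$ in the Ind-category.

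The second case ($C$ preserves isometric inclusions) makes $(\ast)$ immediate: the inclusion $A' \hookrightarrow A$ is isometric, hence sent by $C$ to an injection, so $K_{A'} = 0$ and one takes $A'' \coloneqq A'$.

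The first case, where $C$ preserves countably filtered colimits, is the main content. I would write $A \cong \colim_{A'' \subseteq_{\sepa} A} A''$ as a countably filtered colimit (Example \ref{wegoijwoegwergwregwreg}) and apply $C$ to obtain $C(A) \cong \colim_{A''} C(A'')$ in $\nCalg$. The key $C^{*}$-theoretic point, which is also the main obstacle, is that this filtered colimit is realised as the Hausdorff completion of the algebraic filtered colimit, so that $K_{A'}$ equals only the \emph{closure} of the directed union $\bigcup_{A'' \supseteq A'} \ker(C(A') \to C(A''))$ rather than the union itself. The plan is to overcome this by combining separability of $K_{A'}$ with countable filteredness: fix a countable dense sequence $(x_n)_n$ in $K_{A'}$; for each pair $(n,k)$ with $k \geq 1$, pick $A''_{n,k} \supseteq A'$ and $y_{n,k} \in \ker(C(A') \to C(A''_{n,k}))$ with $\|x_n - y_{n,k}\| < 1/k$; then invoke countable filteredness of the poset of separable $G$-invariant subalgebras of $A$ (as in Lemma \ref{eroigjowregwrege9}) to produce a common upper bound $A''$ for the countable family $(A''_{n,k})$. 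The ideal $\ker(C(A') \to C(A''))$ is closed and contains every $y_{n,k}$, hence each $x_n$ (letting $k \to \infty$), and hence all of $K_{A'}$ by density. This will establish $(\ast)$ and finish the verification.
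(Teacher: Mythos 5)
Your proof is correct and follows essentially the same route as the paper: verify Condition \ref{wtoijwrgergwrefwef}.\ref{egijegoergwergwreg2} only, observe that the isometric case gives $K_{A'}=0$ outright, and in the colimit-preserving case use separability of $K_{A'}$ together with countable filteredness of the poset of invariant separable subalgebras to find one $A''$ whose transition map kills $K_{A'}$. The only cosmetic difference is that the paper delegates this last step to Lemma \ref{eroigjowregwrege9} (phrased via the norm formula for filtered colimits rather than via approximation by elements of the kernels, which is equivalent by the quotient-norm identity), whereas you inline it.
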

\begin{proof}
We must show that each of the Assumptions 
\ref{wegiowergwergwerf} or \ref{wegiowergwergwerf1} implies Condition \ref{wtoijwrgergwrefwef}.\ref{egijegoergwergwreg2}.

In the case of Assumption \ref{wegiowergwergwerf1} we actually have isomorphisms
$C(A')\stackrel{\cong}{\to} {C(A')^{C(A)}}$ for every invariant separable subalgebra $A'$ of $A$  so that Condition \ref{wtoijwrgergwrefwef}.\ref{egijegoergwergwreg2} is clear. 

We now consider the more complicated case of Assumption \ref{wegiowergwergwerf}.
   The argument is very similar to the case of 
$\otimes_{\max}$ in the corresponding part of the proof of Proposition \ref{togjoigerggwgergwgr1}.
 We consider an invariant  separable subalgebra $A'$ of $A$ giving the outer part of the following diagram 
 \begin{equation}\label{sfgsdgsfgsfgfd}  \xymatrix{C(A') \ar[rr]\ar[dr]\ar[dd]&& {C(A')^{C(A)}} \ar@{..>}[dl]\ar[dd]\\
&   C(A'') \ar[ld]&\\    C(A)  \ar@{=}[rr]&& C(A) }
\end{equation}
 Every $C^{*}$-algebra is isomorphic to the colimit of its separable subalgebras.
 In the case of $A$ in $\Fun(BG,\nCalg)$   for a 
 countable group $G$ we have the same assertion for the system of  invariant separable subalgebras $(A')_{A'{\subseteq_{\sepa} A}}$, i.e., 
we have an isomorphism $ \colim_{A'{\subseteq_{\sepa} A}} A' \cong A$ in $\Fun(BG,\nCalg)$.
The  poset of invariant separable subalgebras of $A$ is \countably filtered  (see Example \ref{wegoijwoegwergwregwreg}). 
Since $C$ preserves  \countably filtered colimits we have
$ \colim_{A'{\subseteq_{\sepa} A}} C(A') \cong C(A)$, and the left vertical arrow is the canonical inclusion into the colimit.  
Let $I$ be the kernel of the map
$C(A')\to {C(A')^{C(A)}} $. Since $C(A')$ is separable, also $I$ is separable. Furthermore it is contained in (in fact equal to)  the kernel of $C(A')\to C(A)$.   
By   Lemma \ref{eroigjowregwrege9} 
we find an invariant  separable subalgebra $A''$ of $A$
such that $I$ is annihilated by $C(A')  \to 
C(A'')$. 
 This implies the existence of the dotted arrow. 
The existence of $A''$ for given $A'$ shows that
the canonical map of inductive systems 
$(C(A'))_{A'{\subseteq_{\sepa} A}}\to ( {C(A')^{C(A)}} )_{A'{\subseteq_{\sepa} A}}$ has an inverse in $\Ind(\Fun(BH,\nCalg))$.
 \end{proof}

\color{black}

{We set  $C_{s} \coloneqq C_{| \Fun(BG,\nCalgs )}$.}
\begin{lem} \label{qoi3rgjowrtgwegwergwerg}Assume:
  \begin{enumerate}
\item  \label{wetoigwtgwwefwefwefergrwegwreg} $C$  is {$\Ind$-s}-finitary. 
   \item \label{wergwrpogkrpogwreg} {$C_{s}$ preserves countable sums.}
   \item  \label{wetoigwtgergrwegwreg}The composition $\kkHs\circ C_{s}$ inverts ${\kkGsk}$-equivalences.   
   \item   \label{wetoigwtgergrwegwreg2} $C_{s}$   preserves   semisplit exact sequences.
     \end{enumerate}
Then {there are functors $F$ and $F_s$ taking part in the} following commutative diagram
\begin{equation}\label{tgiohoigwegregwegw}
\xymatrix{
&&\KKGs \ar[d]_{F_{s}}\ar@/^1cm/[ddd]^{y^{G}}\\
\Fun(BG,\nCalg_{\sepa})\ar[urr]^-{\kkGs} \ar[r]_{C_{s}}\ar[d]^{\incl}&\ar[d]^{\incl}\Fun(BH,\nCalg_{\sepa})\ar[r]_-{\kkHs}&\KKHs\ar[d]^{y^{H}}\\
\ar[drr]_-{\kkG}\Fun(BG,\nCalg )\ar[r]^{C}&\Fun(BH,\nCalg )\ar[r]^-{\kkH}&\KKH\\
&&\KKG\ar[u]^{F}
}
\end{equation}
{In addition:}
 \begin{enumerate}
 \item $F_{s}$ is exact  {and preserves countable coproducts}.
 \item $F$ preserves colimits and compact objects.
 \end{enumerate}
 Both $F$ and $F_s$ are characterized by the diagram  and these properties.
    \end{lem}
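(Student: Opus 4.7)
The construction of $F_s$ proceeds via the universal property of the Dwyer--Kan localization defining $\KKGs$ (Definition \ref{wtpohwopggergeg} and Remark \ref{weroigujweogerwgregfw}). By assumption \ref{wetoigwtgergrwegwreg}, the composite $\kkHs \circ C_s \colon \Fun(BG,\nCalgs) \to \KKHs$ inverts $\kkGsk$-equivalences, and therefore factors essentially uniquely as $F_s \circ \kkGs$ for some functor $F_s \colon \KKGs \to \KKHs$; this gives the commutativity of the upper triangle in \eqref{tgiohoigwegregwegw}. Since $\KKGs$ and $\KKHs$ are stable, exactness of $F_s$ amounts to preservation of zero objects and fibre sequences. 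The former follows because $C_s(0) = 0$ and $\kkHs$ is reduced (Theorem \ref{qroifjeriogerggergegegweg}). For the latter, I would invoke Lemma \ref{woitjgorwergergegergw}.\ref{woifhjoqiefewqewfqef1}: every fibre sequence in $\KKGs$ is equivalent to the image under $\kkGs$ of a cone sequence in $\Fun(BG,\nCalgs)$, which is in particular a semisplit exact sequence. By assumption \ref{wetoigwtgergrwegwreg2}, $C_s$ preserves such sequences, and $\kkHs$ sends the resulting semisplit exact sequence in $\Fun(BH,\nCalgs)$ to a fibre sequence since it is semiexact (Theorem \ref{qroifjeriogerggergegegweg}).

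For countable coproducts, by Lemma \ref{ugoierugogergwergwregw}.\ref{regpoqgowergwergwregwregwreg}, countable coproducts in $\KKGs$ (respectively $\KKHs$) are realised by countable sums in $\Fun(BG,\nCalgs)$ (respectively $\Fun(BH,\nCalgs)$). Combined with assumption \ref{wergwrpogkrpogwreg} that $C_s$ preserves countable sums, and the fact that $\kkGs$ is essentially surjective, this yields that $F_s$ preserves countable coproducts.

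To construct $F$, I would use that $\KKG = \Ind(\KKGs)$ (Definition \ref{wtrohjwrthrthehrth}). The Ind-completion enjoys the universal property that any functor from $\KKGs$ to a cocomplete target extends essentially uniquely to a colimit-preserving functor from $\KKG$; applied to $y^H \circ F_s \colon \KKGs \to \KKH$, this yields the desired $F \colon \KKG \to \KKH$ with $F \circ y^G \simeq y^H \circ F_s$. Since $F_s$ lands in $\KKHs$, which by Remark \ref{weoiguheijwogergfregwerf} identifies with the full subcategory of compact objects in $\KKH$, the functor $F$ preserves compact objects. Commutativity of the lower portion of \eqref{tgiohoigwegregwegw}, namely $F \circ \kkG \simeq \kkH \circ C$, is checked first on separable algebras $A'$, where by construction
\[ F(\kkG(A')) \simeq F(y^G(\kkGs(A'))) \simeq y^H(F_s(\kkGs(A'))) \simeq y^H(\kkHs(C_s(A'))) \simeq \kkH(C(A'))\, . \]
For general $A$ in $\Fun(BG,\nCalg)$, using that $\kkG$ is s-finitary (Theorem \ref{qeroigjqergfqeewfqewfqewf1}.\ref{qrfghqirwogrgqfqwefq}) and $F$ preserves colimits, we obtain
\[ F(\kkG(A)) \simeq \colim_{A' \subseteq_\sepa A} F(\kkG(A')) \simeq \colim_{A' \subseteq_\sepa A} \kkH(C(A'))\, . \]
The remaining step identifies this colimit with $\kkH(C(A))$. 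This is precisely where assumption \ref{wetoigwtgwwefwefwefergrwegwreg} enters: since $\kkH$ is s-finitary and $C$ is $\Ind$-s-finitary, Lemma \ref{weiogwoegwer9} gives that $\kkH \circ C$ is s-finitary, which supplies the required equivalence.

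For the characterization, $F_s$ is uniquely determined by the universal property of the Dwyer--Kan localization, and $F$ is uniquely determined by being a colimit-preserving extension of its restriction to the compact objects $\KKGs \subseteq \KKG$. The main technical obstacle is the interaction of the various finiteness conditions in the last paragraph; the $\Ind$-s-finitary hypothesis on $C$ is precisely engineered so that s-finitariness of $\kkG$ (which is s-finitary essentially by definition as a left Kan extension) propagates through $C$ to s-finitariness of $\kkH \circ C$, and this compatibility is what makes $F$ agree with $\kkH \circ C$ on all of $\Fun(BG,\nCalg)$ rather than only on its separable full subcategory.
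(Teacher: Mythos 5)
Your proposal is correct, and it agrees with the paper on the construction of $F_s$ (Dwyer--Kan universal property via Assumption \ref{wetoigwtgergrwegwreg}), on the coproduct argument, and on deducing compact-object preservation from $F\circ y^{G}\simeq y^{H}\circ F_{s}$. Two points differ in route rather than in substance. First, for exactness of $F_s$ you inline the argument of Theorem \ref{4ogijwrtgwergerwgwergegergwerg} (fibre sequences in $\KKGs$ are images of cone sequences, which are semisplit), whereas the paper observes that $\kkHs\circ C_{s}$ is semiexact and cites that theorem; same content. Second, and more substantively, you construct $F$ as the $\Ind$-extension of $y^{H}\circ F_{s}$ and then verify the commutativity $F\circ\kkG\simeq\kkH\circ C$ by hand, reducing to separable algebras and using that both sides are left Kan extensions of their separable restrictions (s-finitariness of $\kkH\circ C$ via Lemma \ref{weiogwoegwer9}). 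The paper instead obtains $F$ from the universal property of $\kkG$ (Theorem \ref{weoigjwoegerggegwegergecv}), which requires additionally checking that $\kkH\circ C$ is reduced, $\mathbb{K}^{G}$-stable and semiexact via Lemma \ref{qwoiefuoqrfwewfqwef9}, and then checks the right-hand square afterwards; Remark \ref{wetoigjowergerfgrfw} confirms the two constructions of $F$ agree. Your route trades the verification of those exactness properties of $\kkH\circ C$ for a direct check of the bottom square, but in both cases the $\Ind$-s-finitary hypothesis on $C$ together with Lemma \ref{weiogwoegwer9} is the load-bearing ingredient. One small gloss worth making explicit: reducedness of $C_{s}$ (needed for $F_{s}(0)\simeq 0$) is not a stated hypothesis, but it follows from Assumption \ref{wetoigwtgergrwegwreg2} applied to the semisplit sequence $0\to 0\to A\to A\to 0$.
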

 \begin{proof} 

%
%
%
%
 The  existence of the factorization $C_{s}$ and the commutativity of the left square in \eqref{tgiohoigwegregwegw} expresses the fact that $C$ preserves separable algebras
 {as part of Assumption \ref{wetoigwtgwwefwefwefergrwegwreg}.}
 The middle square  is an instance of \eqref{qfwoefjkqwpoefkewpfqewfqwfqfwefq}.  
 
 The existence of the factorization  $F_{s}$ follows from Assumption \ref{wetoigwtgergrwegwreg} and the defining universal property of $\kkGs$ as a Dwyer--Kan localization.   
 Using  Assumption \ref{wetoigwtgergrwegwreg2} and the semiexactness of $\kkH$ 
 we conclude that $\kkHs\circ C_{s}$ is semiexact, too. 
  This implies by Theorem \ref{4ogijwrtgwergerwgwergegergwerg} that $F_{s}$ is exact.  To see that $F_s$ preserves countable coproducts, consider a countable family ${(A_i)_{i\in I}}$ of objects of $\Fun(BG,\nCalg_\sepa)$. 
  As seen in the proof of Lemma \ref{ugoierugogergwergwregw}
  countable sums in $\KKGs$ are presented by countable sums in $\Fun(BG,\nCalg_{\sepa})$. Furthermore, by Lemma~\ref{ugoierugogergwergwregw}{.\ref{regpoqgowergwergwregwregwreg}}, $\kkGs$ sends countable sums to countable coproducts. {This implies the  equivalences marked by $(1)$ in the chain} 
{ \begin{align*} F_s(\bigoplus\limits_{i\in I} \kkGs(A_i))&\stackrel{(1)}{ \simeq}F_s(\kkGs(\bigoplus\limits_{i\in I} A_i)) \stackrel{(2)}{ \simeq} \kkHs(C_s(\bigoplus\limits_{i\in I} A_i)) \stackrel{(3)}{ \simeq}  \kkHs(\bigoplus\limits_{i\in I}  C_s( A_i))\\&\stackrel{(1)}{ \simeq} \bigoplus\limits_{i\in I} \kkHs(C_s(  A_i))\stackrel{(2)}{ \simeq}\bigoplus\limits_{i\in I} F_{s}(\kkGs(A_{i})) \ .\end{align*}}{The equivalences marked by $(2)$ are  given by  the upper triangle in \eqref{tgiohoigwegregwegw}.} 
{In order to argue that $(3)$ is an equivalence we {use} that $C_s$ preserves countable sums of algebras. 
{Using} once more that $\kkHs$ sends countable sums to countable coproducts, we obtain that $F_s$ preserves countable coproducts.}

  It follows from Assumption \ref{wetoigwtgwwefwefwefergrwegwreg} in combination with  {{Lemma} \ref{weiogwoegwer9} and the fact that $\kkH$ is s-finitary}  that also $\kkH\circ C$ is s-finitary. Using the 
   existence of $F_{s}$ together with  Proposition \ref{qwoiefuoqrfwewfqwef9} we then verify that $\kkH\circ C$ satisfies  the assumptions of   Theorem  \ref{ojroiwfewfqefqef}.      We therefore    obtain the colimit-preserving  factorization $F$ from the
  universal property   of  $\kkG$.
  
 We claim that then also the right square commutes, i.e., that $y^{H}\circ F_{s}\simeq F\circ y^{G}$. In order to see this we observe that the fillers of the other squares provide an equivalence  $y^{H}\circ F_{s}\circ \kkGs\simeq F\circ y^{G}\circ \kkGs$. Then the desired equivalence is  again a consequence of the defining universal property of $\kkGs$ as a Dwyer--Kan localization.  
 
 The
   compact objects in $\KKG$ are given by the essential image of $y^{G}$  since $\KKGs$ is idempotent complete by the Theorem \ref{qroifjeriogerggergegegweg}.\ref{qoirwfjhqoierggrg4}. Therefore
  the 
   commutativity of the right square implies that $F$ also preserves compact objects.
\end{proof}

\begin{rem}\label{wetoigjowergerfgrfw} Note that $F$ can alternatively be described as the functor $\Ind(F_{s})$ obtained by the applying the $\Ind$-completion functor to $F_{s}$. 

The fact that $F$ preserves colimits and compact objects implies that
$F$ admits a right-adjoint  which preserves filtered colimits. By stability  
it  hence preserves all colimits and therefore has a further right-adjoint.
Below we will describe these right-adjoints explicitly in some cases. 
\hB
\end{rem}

%
%
%

Let $H\to G$ be a homomorphism of  countable  groups. 
The following lemma verifies Theorem \ref{reguergiweogwergegwergwerv}.\ref{reguergiweogwergegwergwerv1}.

\begin{lem}\label{erwoigjwegerregwefwefewfewfwefwefwefwefwefeg}
We have the following commutative diagram 
\begin{equation}\label{vweivhweviuhevievvwevevevewvev}
 \xymatrix{
 &&\KKGs \ar[d]_{\Res^{G}_{H,s}}\ar@/^1cm/[ddd]^{y^{G}}\\
 \Fun(BG,\nCalg_{\sepa})\ar[urr]^-{\kkGs} \ar[r]_{\Res^{G}_{H,s}}\ar[d]^{\incl}&\ar[d]^{\incl}\Fun(BH,\nCalg_{\sepa})\ar[r]_-{\kkHs}&\KKHs\ar[d]^{y^{H}} \\
 \ar[drr]_-{\kkG}\Fun(BG,\nCalg )\ar[r]^{\Res^{G}_{H}}&\Fun(BH,\nCalg )\ar[r]^-{\kkH}&\KKH\\
 &&\KKG\ar[u]^{\Res^{G}_{H}}
 }
\end{equation} 
where
 \begin{enumerate}
 \item $\Res^{G}_{H,s} {\colon \KKGs \to \KKHs}$ is exact {and preserves countable coproducts}.
 \item $\Res^{G}_{H} {\colon \KKG \to \KKH}$ preserves colimits and compact objects. 
 
 \end{enumerate}
%
%
%
%
\end{lem}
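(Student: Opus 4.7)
The plan is to apply Lemma \ref{qoi3rgjowrtgwegwergwerg} to the functor $C \coloneqq \Res^{G}_{H} \colon \Fun(BG,\nCalg) \to \Fun(BH,\nCalg)$, which clearly preserves separable algebras. The conclusion of that lemma then provides the diagram \eqref{vweivhweviuhevievvwevevevewvev} together with the stated exactness and colimit preservation properties, and in particular the essentially unique factorizations $\Res^{G}_{H,s}$ and $\Res^{G}_{H}$. Hence the task reduces to verifying the four hypotheses of Lemma \ref{qoi3rgjowrtgwegwergwerg}.

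For Assumption \ref{qoi3rgjowrtgwegwergwerg}.\ref{wetoigwtgwwefwefwefergrwegwreg} I would use Lemma \ref{qoifjqoifewqqfwedwed}.\ref{wegiowergwergwerf1}: the restriction functor manifestly preserves isometric inclusions, since it leaves the underlying $C^{*}$-algebra untouched. What is left to check is Condition \ref{wtoijwrgergwrefwef}.\ref{egijegoergwergwreg1}, i.e.\ cofinality. Given $A$ in $\Fun(BG,\nCalg)$ and an $H$-invariant separable subalgebra $B'$ of $\Res^{G}_{H}(A)$, take $A'$ to be the $G$-invariant $C^{*}$-subalgebra of $A$ generated by $\bigcup_{g \in G} g\cdot B'$; since $G$ is countable and $B'$ is separable, $A'$ is separable, and by construction $B' \subseteq \Res^{G}_{H}(A')$. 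Assumption \ref{qoi3rgjowrtgwegwergwerg}.\ref{wergwrpogkrpogwreg} is immediate: countable sums in $\Fun(BG,\nCalg_{\sepa})$ and in $\Fun(BH,\nCalg_{\sepa})$ are computed on the underlying $C^{*}$-algebras.

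For Assumption \ref{qoi3rgjowrtgwegwergwerg}.\ref{wetoigwtgergrwegwreg} the idea is to invoke Corollary \ref{ioqerjgqergrqfefqewfq} applied to the composition $\kkHs \circ \Res^{G}_{H,s}$. This composition is reduced because both factors are. It is split exact: a $G$-equivariant splitting of an exact sequence in $\Fun(BG,\nCalg_\sepa)$ restricts to an $H$-equivariant splitting, so $\Res^{G}_{H,s}$ sends split exact sequences to split exact sequences, and $\kkHs$ is split exact by Theorem \ref{qroifjeriogerggergegegweg}.\ref{qoirwfjhqoierggrg1}. For $\mathbb{K}^{G}$-stability, observe that if $V \to V'$ is an equivariant isometric inclusion of non-zero separable $G$-Hilbert spaces, then, viewed through $\Res^{G}_{H}$, it becomes an $H$-equivariant isometric inclusion of non-zero separable $H$-Hilbert spaces, so $\kkHs \circ \Res^{G}_{H,s}$ inverts the corresponding morphism of type \eqref{342uihuihfewfwefweferf} by $\mathbb{K}^{H}$-stability of $\kkHs$.

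Assumption \ref{qoi3rgjowrtgwegwergwerg}.\ref{wetoigwtgergrwegwreg2} follows by the same observation: a $G$-equivariant cpc splitting of an exact sequence is in particular an $H$-equivariant cpc splitting after restriction, so $\Res^{G}_{H,s}$ preserves semisplit exact sequences. The expected main obstacle is the cofinality argument for $\Ind$-s-finitariness, but this is handled cleanly by the countability of $G$ which guarantees that the $G$-invariant closure of a separable subalgebra remains separable. With all four hypotheses in place, Lemma \ref{qoi3rgjowrtgwegwergwerg} produces the diagram and properties claimed; uniqueness is part of the conclusion of that lemma.
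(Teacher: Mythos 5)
Your proposal is correct and follows essentially the same route as the paper: both reduce the statement to verifying the four hypotheses of Lemma \ref{qoi3rgjowrtgwegwergwerg} for $\Res^{G}_{H}$, with the same cofinality argument (the $G$-invariant subalgebra generated by a separable $H$-invariant subalgebra is separable since $G$ is countable) and the same appeal to Lemma \ref{qoifjqoifewqqfwedwed} via preservation of isometric inclusions. The only cosmetic difference is that you invert $\kkGsk$-equivalences via Corollary \ref{ioqerjgqergrqfefqewfq} (split exactness) while the paper cites Theorem \ref{wtohwergerewgrewgregwrg1} (semiexactness); both are valid.
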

\begin{proof} The assertions follow from  Lemma \ref{qoi3rgjowrtgwegwergwerg} once we have verified its assumptions.
It is obvious that $\Res^{G}_{H}$ preserves separable algebras{,}
 {semisplit exact sequences {and} countable sums.}
   It is furthermore also obvious that the composition ${\kkHs}\circ \Res^{G}_{H,s}$  is reduced, 
  $\mathbb{K}^{G}$-stable,   
  and   semiexact.
 By the Theorem \ref{wtohwergerewgrewgregwrg1} it therefore inverts ${\kkGsk}$-equivalences.

 It remains to show that $C$ is $\Ind$-s-finitary.
 {We first verify}  Condition  \ref{wtoijwrgergwrefwef}.\ref{egijegoergwergwreg1}. 
 Let $A$  be in $\Fun(BG,\nCalg)$ and $B'$ be an $H$-invariant separable subalgebra of $\Res^{G}_{H}(A)$. Since $G$ is countable the $G$-invariant  subalgebra $A'$  generated by 
 $B'$ is again separable and we have $B'\subseteq \Res^{G}_{H}(A')$.
 Condition \ref{wtoijwrgergwrefwef}.\ref{egijegoergwergwreg2} follows from Lemma \ref{qoifjqoifewqqfwedwed} since $\Res^{G}_{H}$ preserves isometric inclusions.
    \end{proof}
 
%
%
%

 \begin{rem} From now one we will use the same notation $\Res^{G}_{H}$ for all restriction functors
induced by $H\to G$ and hope that this does not produce confusion as the argument of the functor determines which version has to be considered. The same convention will later also apply to the functors $\Ind_{H}^{G}$ and $-\rtimes_{{?}} H$ considered below.
\hB
\end{rem}

The following corollary generalizes  \cite[(12)]{MR2193334} from the triangulated to the 
stable $\infty$-categorical level. 
\begin{kor}
The  functor
$\Res^{G}_{H} \colon \KKG\to \KKH$ has a symmetric monoidal refinement for {both symmetric monoidal structures $\otimes_{\min}$ and $\otimes_{\max}$.}
\end{kor}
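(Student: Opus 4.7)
The plan is to obtain the symmetric monoidal refinement in two steps: first descend $\Res^{G}_{H,s}$ to a symmetric monoidal functor $\KKGs^{\otimes_{?}}\to \KKHs^{\otimes_{?}}$ on separable algebras, and then apply $\Ind$-completion to propagate to the presentable setting.

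For the first step, I observe that the underlying restriction functor $\Res^{G}_{H}\colon \Fun(BG,\nCalg)\to \Fun(BH,\nCalg)$ is strictly symmetric monoidal for each $? \in \{\min, \max\}$: the $C^{*}$-algebra underlying $\Res^{G}_{H}(A)\otimes_{?} \Res^{G}_{H}(B)$ is literally $A\otimes_{?}B$ with the $H$-action obtained by restriction, which is exactly $\Res^{G}_{H}(A\otimes_{?}B)$. The same holds for $\Res^{G}_{H,s}$. Composing with the symmetric monoidal functor $\kkHs^{\otimes_{?}}$ from Proposition \ref{eqrgoerjgpergwegerg1} produces a symmetric monoidal functor $\kkHs^{\otimes_{?}}\circ \Res^{G}_{H,s} \colon \Fun(BG,\nCalg_{\sepa})^{\otimes_{?}}\to \KKHs^{\otimes_{?}}$. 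From the verification in the proof of Lemma \ref{erwoigjwegerregwefwefewfewfwefwefwefwefwefeg} this composition is reduced, $\mathbb{K}^{G}$-stable and semiexact, and hence by Theorem \ref{wtohwergerewgrewgregwrg1} inverts $\kkGsk$-equivalences. Since the construction of $\KKGs^{\otimes_{?}}$ in Proposition \ref{eqrgoerjgpergwegerg1} exhibits $\kkGs^{\otimes_{?}}$ as the symmetric monoidal Dwyer--Kan localization at the $\kkGsk$-equivalences (via Hinich's Proposition 3.2.2, whose hypothesis was checked there), the composition descends to an essentially unique symmetric monoidal functor $\Res^{G}_{H,s}\colon \KKGs^{\otimes_{?}}\to \KKHs^{\otimes_{?}}$ whose underlying functor agrees with the one constructed in Lemma \ref{erwoigjwegerregwefwefewfewfwefwefwefwefwefeg}.

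For the second step, I apply the $\Ind$-completion. By \cite[Cor.\ 4.8.1.14]{HA}, as recalled in Remark \ref{ewgoijwgioergwergregwg}, the $\Ind$-completion of an exact symmetric monoidal functor between stably symmetric monoidal small $\infty$-categories produces a colimit-preserving symmetric monoidal functor between presentably symmetric monoidal $\infty$-categories, and this construction recovers the symmetric monoidal structures on $\KKG^{\otimes_{?}}$ and $\KKH^{\otimes_{?}}$ described in Proposition \ref{togjoigerggwgergwgr1}. Applied to the symmetric monoidal exact functor $\Res^{G}_{H,s}\colon \KKGs^{\otimes_{?}}\to \KKHs^{\otimes_{?}}$ constructed above, this yields a symmetric monoidal functor $\Ind(\Res^{G}_{H,s})\colon \KKG^{\otimes_{?}}\to \KKH^{\otimes_{?}}$. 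Its underlying functor coincides with $\Res^{G}_{H}\colon \KKG\to \KKH$ by Remark \ref{wetoigjowergerfgrfw}, since both are the colimit-preserving extension of $\Res^{G}_{H,s}$.

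There is no serious obstacle: every ingredient (symmetric monoidality of $\kkGs^{\otimes_{?}}$, the fact that $\kkHs\circ \Res^{G}_{H,s}$ inverts $\kkGsk$-equivalences, and the symmetric monoidality of $\Ind$-completion of exact symmetric monoidal functors) is already assembled in the paper. The only thing to be careful about is to explicitly identify the resulting symmetric monoidal refinement with the $\Res^{G}_{H}$ produced by the universal property in Lemma \ref{erwoigjwegerregwefwefewfewfwefwefwefwefwefeg}, which is immediate from the commutativity of the square \eqref{vweivhweviuhevievvwevevevewvev} combined with the essential uniqueness of colimit-preserving extensions.
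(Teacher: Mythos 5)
Your proposal is correct and follows essentially the same route as the paper: first descend $\Res^{G}_{H,s}$ symmetric monoidally along the localization $\kkGstensora$ using Hinich's result that it is a symmetric monoidal Dwyer--Kan localization (the hypothesis having been checked in Proposition \ref{eqrgoerjgpergwegerg1}), then pass to the presentable setting via $\Ind$-completion as in Remark \ref{ewgoijwgioergwergregwg}. The only cosmetic difference is that the paper phrases the second step as $\Ind\colon \Cat^{\exa}_{\infty}\to \mathrm{Pr}^{L}_{\mathrm{st}}$ being symmetric monoidal and hence acting on morphisms of commutative algebras, which is the same mechanism you invoke through \cite[Cor.\ 4.8.1.14]{HA}.
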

\begin{proof} 
We let $\otimes $ denote one the symmetric monoidal structures $\otimes_{\min}$ or $\otimes_{\max}$.
 
We first show the claim that  
 the functor $\Res^G_{H,s}\colon \KKGs \to \KKHs$ has a symmetric monoidal refinement. 
 We consider the bold part of the diagram
\[\begin{tikzcd}
	\Fun(BG,\nCalg_\sepa) \ar[r,"F"] \ar[d,"\kkGs"] & \Fun(BH,\nCalg_\sepa) \ar[d,"\kkHs"]  \\
	\KKGs \ar[r,dashed,"\bar{F}"] & \KKHs
\end{tikzcd}\ .\]
Assuming that $\kkHs\circ F$ sends $\kkGsk$-equivalences to equivalences we obtain the factorization $\bar F$ indicated by the dashed arrow. Note that $\kkHs$ and $\kkGs$ have a  symmetric monoidal refinements $\kkHstensora$ and $\kkGstensora$    by Proposition~\ref{togjoigerggwgergwgr}. 
We now use 
  \cite[Prop.\ 3.2.2]{hinich}  saying that
  $\kkGstensora$ has the universal property of a Dwyer-Kan localization
  for symmetric monoidal functors which is compatible with the operation of
  taking underlying functors.
     Applying this to
 the composition $\kkHstensora\circ F^{\otimes}$ we get a symmetric monoidal refinement $\bar F^{\otimes}$ of $\bar F$.
Applying this argument to $\Res^{G}_{H,s}$ in place of $F$ shows that claim. 

%

%

 The ind-completion functor  $\Ind \colon \Cat^{\exa}_{\infty} \to \mathrm{Pr}^{L}_\mathrm{st}$ is a symmetric monoidal functor  and hence induces a functor on commutative algebras. Interpreting the symmetric monoidal functor
   $ \Res^{G,\otimes}_{H,s}:
 \KKGstensora\to \KKHstensora$ as a morphism of {commutative} algebras in $  \Cat^{\exa}_{\infty} $,
 we get the desired symmetric monoidal refinement
  $\Res^{G,\otimes}_{H}:=\Ind( \Res^{G,\otimes}_{H,s}):
   \KKGtensora\to  \KKHtensora$ of the functor $\Res^{G}_{H}$.
\end{proof}

 We  now assume that $H$ is a subgroup of $G$.  
 The following construction describes the induction functor $\Ind_{H}^{G}$.
\begin{construction}\label{qroigjoerwgergrewgregweg}
 For $B$ in $\Fun(BH,\nCalg)$ we let $C_{b}(G,   B)$ denote the $C^{*}$-algebra of  bounded $B$-valued functions on $G$ with the $\sup$-norm.  The group $G$ acts on $C_{b}(G,   B)$ by $(g,f)\mapsto (g'\mapsto f(g^{-1}g'))$. 
We define the induction functor
$$ \Ind_{H}^{G} \colon \Fun(BH,\nCalg)\to \Fun(BG,\nCalg)$$ as follows:
\begin{enumerate}
\item  objects: The induction functor 
  sends $   B$ in $\Fun(BH,\Calg)$ to the invariant subalgebra   $ \Ind_{H}^{G}(   B)$ of $C_{b}(G,B)$ generated    by the  functions
$f \colon G\to    B$ satisfying
\begin{enumerate}
\item \label{ergjweiogergregwgr} $f(gh)=h^{-1}f(g)$ for all $h$ in $H$ and $g$ in $G$.
\item  \label{ergjweiogergregwgr1} The 
  projection of $\supp(f)$ to $G/H$ is finite.
  \end{enumerate}
  \item 
morphisms: The induction functor sends a  morphism $\phi \colon    B\to    B'$ to the morphism
$\Ind_{H}^{G}(\phi) \colon \Ind_{H}^{G}(   B)\to \Ind_{H}^{G}(   B')$ given by
$\Ind_{H}^{G}(\phi)(f)(g) \coloneqq \phi(f(g))$.
\end{enumerate} 
  We have a natural transformation of functors
  \begin{equation}\label{wervwihiowecwecdscac}
  \iota \colon \id\to \Res^{G}_{H}\circ \Ind^{G}_{H} \colon \Fun(BH,\nCalg)\to \Fun(BH,\nCalg)\, .
  \end{equation}
  Its evaluation  at  $  B$   in $\Fun(BH ,\nCalg)$ is given by the homomorphism
  \[
  \iota_{  B} \colon   B\to  \Res^{G}_{H} ( \Ind^{G}_{H}(  B))
  \]  
which sends $b$ in $  B$ to the function
\[
G\to   B\ , \quad g\mapsto \begin{cases} g^{-1}(b) & \text{ if }g \in H\,,\\0 & \text{ if }g \notin H\,,\end{cases}
\]
on $G$.
\hB
\end{construction}

We frequently need the following well-known result.
Let $B$ be in $\Fun(BH,\nCalg)$ and $A$ be in $\Fun(BG,\nCalg)$.
\begin{lem}\label{rehoijerthtrhtrehtrheg}
For $?$ in $\{\min,\max\}$
we have a canonical isomorphism
\begin{equation}\label{ergqrqffewfqfef}
 \Ind_{H}^{G}(B)\otimes_{?} A \cong \Ind_{H}^{G}(B\otimes_{?} \Res^{G}_{H}(A))  \, .
\end{equation}
\end{lem}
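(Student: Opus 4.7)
The plan is to construct the isomorphism explicitly by an evident formula, verify it algebraically, and then extend to both completions using the structure of $\Ind_H^G(B)$ as a "twisted bundle" over $G/H$.

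First, on the algebraic tensor product I define the candidate map
\[
\Phi \colon \Ind_H^G(B) \odot A \to \Ind_H^G(B \otimes_? \Res^G_H(A))\,, \qquad f \otimes a \mapsto \bigl[g \mapsto f(g) \otimes g^{-1}(a)\bigr]\,.
\]
The target lies in $\Ind_H^G(B \otimes_? \Res^G_H(A))$: at $gh$ the value is $f(gh) \otimes (gh)^{-1}(a) = h^{-1} f(g) \otimes h^{-1} g^{-1}(a) = h^{-1}(f(g) \otimes g^{-1}(a))$, verifying the $H$-equivariance condition \ref{ergjweiogergregwgr} from Construction \ref{qroigjoerwgergrewgregweg}, and the projection of the support to $G/H$ remains finite. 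A direct pointwise computation on elementary tensors shows that $\Phi$ is a $*$-homomorphism and $G$-equivariant (for the latter, if $g^{-1}s = s'h$ one checks both sides yield $h^{-1}f(s') \otimes s^{-1}g(a)$).

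Next, I fix a set $S \subseteq G$ of representatives for the cosets in $G/H$. Evaluation at $S$, i.e.\ $f \mapsto (f(s))_{s \in S}$, identifies $\Ind_H^G(B)$ with $c_0(S, B)$ as a $C^*$-algebra (not as a $G$-algebra): functions with finite support in $G/H$ correspond precisely to finitely supported $B$-valued functions on $S$, the $C^*$-norm inherited from $C_b(G,B)$ agrees with the sup norm over $S$, and the finite-support condition exhibits $\Ind_H^G(B)$ as the $c_0$-completion. Since $c_0(S)$ is nuclear, for both $? \in \{\min, \max\}$ one has canonical isomorphisms $c_0(S,B) \otimes_? A \cong c_0(S) \otimes_? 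B \otimes_? A \cong c_0(S, B \otimes_? A)$. Applying the same identification to the right-hand side gives $\Ind_H^G(B \otimes_? \Res^G_H(A)) \cong c_0(S, B \otimes_? A)$.

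Under these identifications the map $\Phi$ becomes the pointwise operation $x \mapsto \bigl(s \mapsto (\id_B \otimes s^{-1})(x(s))\bigr)$ on $c_0(S, B \otimes_? A)$. Each $\id_B \otimes s^{-1}$ is a $C^*$-automorphism of $B \otimes_? A$ (functoriality of the $C^*$-tensor product under $*$-automorphisms), so the assembled map is a $C^*$-automorphism of $c_0(S, B \otimes_? A)$. Consequently $\Phi$ extends to both completions and is an isomorphism; together with the $G$-equivariance checked above, this produces the desired isomorphism of $G$-$C^*$-algebras.

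The main obstacle is the identification $\Ind_H^G(B) \cong c_0(S, B)$ and the subsequent tensor identification for $\otimes_{\max}$: one must ensure that the nuclearity of $c_0(S)$ (which equates $\otimes_{\min}$ and $\otimes_{\max}$ against it) is being applied legitimately in the non-unital setting with possibly infinite $S$, and that the finite-support condition on $\Ind_H^G(B)$ really yields the $c_0$-completion rather than some larger algebra. Once that identification is justified, the isomorphism reduces to an essentially pointwise statement.
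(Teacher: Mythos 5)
Your proof is correct and follows essentially the same route as the paper: both trivialize $\Ind_{H}^{G}$ by a choice of coset representatives (a section of $G\to G/H$), reduce the claim to the identification $C_{0}(G/H,B)\otimes_{?}A\cong C_{0}(G/H,B\otimes_{?}A)$, and rely on nuclearity of the commutative algebra $C_{0}(G/H)$ to handle both tensor norms at once. Your version is merely more explicit about the formula for the canonical map (the twist $g\mapsto f(g)\otimes g^{-1}(a)$) and its $G$-equivariance, which the paper leaves implicit.
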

\begin{proof}

Let $\iota_{B} \colon \Ind_{H}^{G}(B)\to C_{b}(G,B)$ denote the canonical inclusion. By construction of the  induction functor we have the following  commutative square
in $\Fun(BG,\nCalg)$
$$\xymatrix{C_{b}(G,B)\otimes_{?}A\ar[r] &C_{b}(G,B\otimes_{?}A)\ \\\Ind_{H}^{G} (B)\otimes_{?}A\ar[r]\ar[u]^{\iota_{B}\otimes \id_{A}}&\Ind^{G}_{H} (B\otimes_{?}\Res^{G}_{H}(A))\ar[u]^{\iota_{B\otimes \id_{A}}}}$$
In order to show that the lower horizontal map is an isomorphism
   we choose a section $r$ of the projection map $G\to G/H$ and extend the diagram non-equivariantly as follows: 
  $$\xymatrix{C_{b}(G,B)\otimes_{?}A\ar[r]  \ar[d]^{r^{*}\otimes \id_{A}}&C_{b}(G,B\otimes_{?}A)\ar[d]^{r^{*}}  \\C_{b}(G/H,B)\otimes_{?}A \ar[r] &C_{b}(G/H,B\otimes_{?}A)\\C_{0}(G/H,B)\otimes_{?}A  \ar[u]\ar[r]^{!}&C_{0}(G/H,B\otimes_{?}A)\ar[u]\\ \Ind_{H}^{G} (B)\otimes_{?}A\ar@/^2.5cm/[uuu]^{\iota_{B}\otimes \id_{A}}\ar[u]_{\cong}\ar[r]&\ar@/^-2.5cm/[uuu]_{\iota_{B \otimes \id_{A}}}\ar[u]_{\cong}\Ind^{G}_{H} (B\otimes_{?}\Res^{G}_{H}(A))}\ .$$
  The lower vertical isomorphisms are 
  immediate consequences of the construction of the induction functor.
  The  arrow marked by $!$ is an isomorphism
  because of
  $$C_{0}(G/H,B)\otimes_{?}A\cong C_{0}(G/H)\otimes_{?} B\otimes_{?}A\cong C_{0}(G/H,B\otimes_{?}A)\, .$$
  Hence the lower horizontal morphism is an isomorphism, too. 
%
  \end{proof}

The following lemma verifies Theorem  \ref{reguergiweogwergegwergwerv}.\ref{reguergiweogwergegwergwerv2}.
 \begin{lem}\label{ewoigjeorgregerwgergew} 
 We have the following commutative diagram 
\begin{equation}\label{vweivhweviuhevievvwevevevewvevdddddd}
 \xymatrix{
 &&\KKHs \ar[d]_{\Ind^{G}_{H,s}}\ar@/^1cm/[ddd]^{y^{H}}\\
 \Fun(BH,\nCalg_{\sepa})\ar[urr]^-{\kkHs} \ar[r]_-{\Ind^{G}_{H,s}}\ar[d]^{\incl}&\ar[d]^{\incl}\Fun(BG,\nCalg_{\sepa})\ar[r]_-{\kkHs}&\KKGs\ar[d]^{y^{G}} \\
 \ar[drr]_-{\kkH}\Fun(BH,\nCalg )\ar[r]^{\Ind^{G}_{H}}&\Fun(BG,\nCalg )\ar[r]^-{\kkH}&\KKG\\
 &&\KKH\ar[u]^{\Ind^{G}_{H}}
 }
 \end{equation} 
 where
 \begin{enumerate}
 \item $\Ind^{G}_{H,s} {\colon \KKHs \to \KKGs}$ is exact {and preserves countable coproducts}.
 \item $\Ind^{G}_{H} {\colon \KKH \to \KKG}$ preserves colimits and compact objects.
 \end{enumerate}
%
%
%
\end{lem}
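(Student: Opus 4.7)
The plan is to apply Lemma \ref{qoi3rgjowrtgwegwergwerg} to $C = \Ind^G_H$, mirroring the treatment of the restriction functor in Lemma \ref{erwoigjwegerregwefwefewfewfwefwefwefwefwefeg}. The four hypotheses to check are: preservation of separable algebras, preservation of countable sums, preservation of semisplit exact sequences, and that $\kkGs \circ \Ind^G_{H,s}$ inverts $\kkHsk$-equivalences. Since $G$ is countable so is $G/H$, and $\Ind^G_H(B)$ sits isometrically inside the separable algebra $c_0(G/H,B)$ when $B$ is separable; this handles preservation of separable algebras. Preservation of countable sums is immediate from the finite-$G/H$-support condition in Construction \ref{qroigjoerwgergrewgregweg}. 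If $s\colon A''\to A$ is an equivariant cpc split, the pointwise induced map $\Ind^G_H(s)$ is an equivariant cpc split of $\Ind^G_H(A)\to \Ind^G_H(A'')$, giving preservation of semisplit exact sequences.

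For the $\Ind$-s-finitary property I will invoke Lemma \ref{qoifjqoifewqqfwedwed}. Condition \ref{wtoijwrgergwrefwef}.\ref{egijegoergwergwreg1} is verified as follows: given a $G$-invariant separable subalgebra $C\subseteq \Ind^G_H(A)$, pick a countable dense subset $\{f_n\}\subset C$, let $A_0\subseteq A$ be the $H$-invariant separable subalgebra generated by all values $f_n(g)$ for $g\in G$ (countably many generators since $G$ is countable), and note that $C\subseteq \Ind^G_H(A_0)$. The remaining Condition \ref{wtoijwrgergwrefwef}.\ref{egijegoergwergwreg2} follows from the second alternative of Lemma \ref{qoifjqoifewqqfwedwed}: an inclusion $A'\subseteq A$ produces a pointwise isometric inclusion $\Ind^G_H(A')\subseteq \Ind^G_H(A)$ in the sup-norm.

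The main obstacle is checking that $\kkGs\circ \Ind^G_{H,s}$ inverts $\kkHsk$-equivalences. By Corollary \ref{ioqerjgqergrqfefqewfq} it suffices that this composition is reduced, split exact and $\mathbb{K}^H$-stable. The first two are straightforward from the analogous properties of $\kkGs$ together with the easy fact that $\Ind^G_H$ preserves zero objects and equivariantly split-exact sequences. The hard step is $\mathbb{K}^H$-stability: given an equivariant isometric inclusion $\mathcal{H}_1\hookrightarrow \mathcal{H}_2$ of non-zero separable $H$-Hilbert spaces and $B\in \Fun(BH,\nCalg_\sepa)$, I form the induced $G$-Hilbert spaces $\tilde{\mathcal{H}}_i \coloneqq \Ind^G_H(\mathcal{H}_i)$, which come equipped with equivariant $H$-inclusions $\mathcal{H}_i\hookrightarrow \Res^G_H(\tilde{\mathcal{H}}_i)$ supported at the base coset $eH$. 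Using Lemma \ref{rehoijerthtrhtrehtrheg} applied to $A=K(\tilde{\mathcal{H}}_i)$ yields a natural isomorphism $\Ind^G_H(B\otimes \Res^G_H K(\tilde{\mathcal{H}}_i)) \cong \Ind^G_H(B)\otimes K(\tilde{\mathcal{H}}_i)$, and $\mathbb{K}^G$-stability of $\kkGs$ then forces the corresponding vertical map of induced $G$-algebras to be a $\kkGsk$-equivalence; a two-out-of-three argument in the resulting commutative diagram reduces the remaining claim to the base case $\mathcal{H}_1 = \C$, where $\Ind^G_H(B)\to \Ind^G_H(B)\otimes K(\ell^2(G/H))$ is itself an instance of $\mathbb{K}^G$-stability of $\kkGs$.

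Once all four hypotheses of Lemma \ref{qoi3rgjowrtgwegwergwerg} are established, its conclusion directly produces the commutative diagram \eqref{vweivhweviuhevievvwevevevewvevdddddd}, together with the asserted exactness and countable coproduct preservation of $\Ind^G_{H,s}$ and the colimit- and compact-object-preservation of $\Ind^G_H$.
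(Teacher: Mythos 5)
Your overall strategy --- feeding $C=\Ind_H^G$ into Lemma \ref{qoi3rgjowrtgwegwergwerg} and checking its four hypotheses --- is exactly the paper's, and your verifications of preservation of separable algebras, sums, semisplit exact sequences (modulo the small point that one must check the pointwise split extends to a cpc map, which the paper does via $\Res^{G}(\Ind_{H}^{G}(B))\cong C_{0}(G/H)\otimes_{\min}\Res^{H}(B)$) and of the $\Ind$-s-finitary conditions all match the paper. The problem is the $\mathbb{K}^{H}$-stability step, which is the genuinely hard part of this lemma and where your argument has a real gap.

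Concretely: after applying the projection formula, the ``vertical'' map $\Ind^{G}_{H}(B\otimes K(\mathcal{H}_{i}))\to \Ind^{G}_{H}(B\otimes \Res^{G}_{H}K(\tilde{\mathcal{H}}_{i}))\cong \Ind^{G}_{H}(B)\otimes K(\tilde{\mathcal{H}}_{i})$ is \emph{not} an instance of $\mathbb{K}^{G}$-stability: its source is not of the form $D\otimes K(\mathcal{W})$ for a $G$-Hilbert space $\mathcal{W}$ compatibly with the map, and tracing the projection-formula isomorphism shows the embedding becomes a ``diagonal'' family $g\mapsto p_{gH}$ of rank-one projections rather than a corner embedding along a $G$-equivariant isometry. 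Your two-out-of-three therefore has only one known edge. Even your base case fails: for $[G:H]=\infty$ the space $\ell^{2}(G/H)$ has no non-zero $G$-fixed vector, so $\Ind^{G}_{H}(B)\to \Ind^{G}_{H}(B)\otimes K(\ell^{2}(G/H))$ is not of the form \eqref{342uihuihfewfwefweferf} and cannot be inverted by appealing to $\mathbb{K}^{G}$-stability of $\kkGs$. The mechanism that makes this work in the paper (Remark \ref{wegjoiergergerwge}) is missing from your construction: one must stabilize by $K(\Res^{G}_{H}L^{2}(G))$, because $\Res^{G}_{H}L^{2}(G)\cong L^{2}(H)\otimes L^{2}(G/H)$ contains the regular representation of $H$, and the Fell absorption isomorphism $W\otimes L^{2}(H)\cong \Res^{H}W\otimes L^{2}(H)$ then \emph{untwists} the $H$-action on $V$, turning $V\to V'$ into an inclusion of $G$-Hilbert spaces with trivial action on the $V$-factor, to which $\mathbb{K}^{G}$-stability genuinely applies. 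Inducing $\mathcal{H}_{i}$ itself does not supply this regular-representation factor (take $H$ normal and $\mathcal{H}_{1}$ a conjugation-invariant irreducible to see the untwisting fail). One then separately compares $\Ind^{G}_{H}$ with its stabilized version via the zigzag through $\C\oplus L^{2}(G)$, whose $\C$-summand provides the $G$-equivariant isometry that $\ell^{2}(G/H)$ lacks. (The paper's main proof instead just quotes the $(\Res^{G}_{H},\Ind^{G}_{H})$-adjunction on the Meyer--Nest triangulated categories plus Yoneda; either route would close your gap.)
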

\begin{proof} 
The assertion will again follow from  Lemma \ref{qoi3rgjowrtgwegwergwerg}.
In comparison with the case of $\Res^{G}_{H}$ the verification of Assumption  \ref{qoi3rgjowrtgwegwergwerg}.\ref{wetoigwtgergrwegwreg}
 is considerably more complicated. It   will employ  non-formal results of Kasparov  \cite{kasparovinvent} which we will use in the form stated in \cite{MR2193334}  (see Remark \ref{wegjoiergergerwge}). 
 
It is easy to see that $\Ind^{G}_{H}$ preserves separable algebras {and}   sums.
{In order to see that it is $\Ind$-s-finitary we must check the Conditions  \ref{wtoijwrgergwrefwef}.\ref{egijegoergwergwreg1} and  \ref{wtoijwrgergwrefwef}.\ref{egijegoergwergwreg2}. 
Note that $\Ind_{H}^{G}$ preserves isometric inclusions.
Let $A$ be in $\Fun(BH,\nCalg)$ and $B'$ be a separable  
subalgebra of $\Ind_{H}^{G}(A)$. Then we let $A'$ be the $H$-invariant subalgebra of $A$ generated by the values $f(g)$ for all $f$ in $B$ and $g$ in $G$ (see Construction \ref{qroigjoerwgergrewgregweg} for notation). Since $G$ is countable $A'$ is separable. Furthermore $B'\subseteq \Ind_{H}^{G}(A')$. Finally, 
 Condition   \ref{wtoijwrgergwrefwef}.\ref{egijegoergwergwreg2} follows from 
 Lemma \ref{qoifjqoifewqqfwedwed} using again that $\Ind_{H}^{G}$ preserves isometric inclusions.}


{Next we show that}   $\Ind_{H}^{G}$  {preserves} semisplit {exact} sequences.   To this end let $   p \colon   A\to    B$ be a surjective morphism in $\Fun(BH,\nCalg)$ and $s \colon   B\to    A$ be an equivariant  completely positive contraction
such that $   p\circ    s=\id_{   B}$. 
Then we define (using notation from Construction \ref{qroigjoerwgergrewgregweg}) a map of vector spaces $\Ind_{H}^{G}(   s) \colon \Ind_{H}^{G}(   B)\to \Ind_{H}^{G}(   A)$
by $\Ind_{H}^{G}(   s)(f)(g) \coloneqq   s( f(g))$.  This map preserves the generators (i.e., functions satisfying the Conditions \ref{qroigjoerwgergrewgregweg}.\ref{ergjweiogergregwgr} and \ref{qroigjoerwgergrewgregweg}.\ref{ergjweiogergregwgr1})  by the linearity, continuity and equivariance of $   s$.
We now show that $\Ind_{H}^{G}(   s)$ extends by continuity  to a completely positive contraction.
We choose a section of the projection map $G\to G/H$. The restriction along this section
 induces an isomorphism of $C^{*}$-algebras 
$$\Res^{G}(\Ind_{H}^{G}(   B))\cong C_{0}(G/H)\otimes_{\min} \Res^{H}(   B)$$
and similarly for $   A$. Under this identification the map $\Res^{G}(\Ind_{H}^{G}(   s))$ acts as
  $\id_{C_{0}(G/H)}\otimes  s$. This map  extends by continuity to a completely positive contraction.
For this last step we use that completely positive contractions  are compatible with minimal tensor products {\cite[Thm.\ 3.5.3]{brown_ozawa}}.

 {We now show that t}he composition $\kkGs\circ \Ind_{H,s}^{G}\colon \Fun(BH,\nCalg_{\sepa})\to \KKGs$ inverts
  ${\kkHsk}$-equivalences.
 {We give} a short argument using results from the literature.
  Using the adjunction {$(\Res^{G}_{H},\Ind^{G}_{H})$ on the level of 
  triangulated categories}
  \cite[(20)]{MR2193334} and the Yoneda lemma we see that the composition 
  $$\ho\circ {\kkGs}\circ  \Ind_{H,{s}}^{G} \colon \Fun(BH,\nCalg_{\sepa})\to \KKGsk$$
  sends ${\kkHsk}$-equivalences to isomorphisms.
  Since $\ho$ detects equivalences {we 
  conclude} that  ${\kkGs}\circ  \Ind_{H,{s}}^{G}$
  sends    ${\kkHsk}$-equivalences to  equivalences.   
%
%
%
%
\end{proof}

 \color{black}
  \begin{rem}\label{wegjoiergergerwge}
  The details of the arguments leading to  \cite[(20)]{MR2193334} are not very well documented in the literature.\footnote{The text before (9) in {\cite{MR2193334}}     suggests that the authors only consider $\mathbb{K}$-stability while one needs $\mathbb{K}^{G}$-stability in order to apply the universal property {from Corollary} \ref{ioqerjgqergrqfefqewfq}.}
   Therefore we sketch now an alternative argument for  {the fact that $\kkGs\circ \Ind_{H,s}^{G}\colon \Fun(BH,\nCalg_{\sepa})\to \KKGs$ inverts
  $\KKHsk$-equivalences}
  which is close to the general philosophy of the present paper.  We consider the $G$-Hilbert spaces  $   V_{G} \coloneqq {L^{2}(G)}$ and  $   V_{G}' \coloneqq \C\oplus    V_{G}$, where $\C$ has the trivial $G$-action.
  We define $$\widetilde{\Ind}^{G}_{H} \colon \Fun(BH,{\nCalg})\to \Fun(BG,{\nCalg} )$$
  by
  \begin{equation}\label{erwglkgnolgwgergwerg}
\widetilde{\Ind}^{G}_{H}(   B) \coloneqq \Ind_{H }^{G}(   B\otimes_{\min} \Res^{G}_{H}K(    V_{G}))\, .
\end{equation}
  Since $K(    V_{G})$ is separable, the functor   $\widetilde{\Ind}^{G}_{H}$   preserves separable algebras. As before we indicate the restriction of an induction functor to separable algebras by a subscript `s', but in order to simplify the notation we drop this subscript at restriction functors.
  We claim:
  \begin{enumerate}
  \item \label{qerpgojgoerqlmfl} $\kkGs \circ \widetilde{\Ind}^{G}_{H {,s} } $ sends  ${\kkHsk}$-equivalences to equivalences.
  \item \label{wegoiuergregwegergw}We have an equivalence
  $ \kkGs \circ  \Ind^{G}_{H {,s}} \simeq {\kkGs}\circ \widetilde{\Ind}^{G}_{H,s}$.
  \end{enumerate}
 Both assertions together imply {that $\kkGs\circ \Ind_{H,s}^{G}$ inverts ${\kkHsk}$-equivalences.}


We {first} show Assertion \ref{wegoiuergregwegergw} of the claim as follows.
We consider the diagram
$$   B\to      B\otimes_{\min} \Res^{G}_{H}K(   V'_{G} )\leftarrow       B\otimes_{\min}  \Res^{G}_{H} K(   V_{G})$$
where the maps are induced by the obvious $G$-equivariant isometric embeddings   
$\C\to    V_{G}'$ and $    V_{G}\to     V'_{G}$.
We now assume that $   B$ is separable, apply $ {\kkGs}\circ \Ind_{H{,s}}^{G}$ and use \eqref{ergqrqffewfqfef} in order to get   
\begin{align*}
\mathclap{
{\kkGs}(\Ind^{G}_{H{,s}}(   B))\to {\kkGs}(\Ind_{H{,s}}^{G}  (    B) \otimes_{\min}  K(    V_{G}'))\leftarrow   {\kkGs}(   \Ind_{H{,s}}^{G}(    B) \otimes_{\min} K(    V_{G} )) \stackrel{\text{def.}}=  {\kkGs}(\widetilde{\Ind}_{H{,s}}^{G}(   B)) \, .
}
\end{align*}
By {$\mathbb{K}^{G}$-}stability of $ {\kkGs}$, the first two arrows are equivalences.  
The whole construction is natural in $   B$ and provides the equivalence claimed in  Assertion   \ref{wegoiuergregwegergw}.

In order to show Assertion \ref{qerpgojgoerqlmfl} we show that
\begin{equation}\label{jqwelkfjwelfkqrwrgrwgr}
 {\kkGs}\circ \widetilde{\Ind}_{H{,s}}^{G} \colon \Fun(BH,\nCalg_{\sepa})\to {\KKGs}
\end{equation}
is reduced, 
 $\mathbb{K}^{{H}}$-stable and semiexact.  Then we apply the universal property of $ \kkHs$ stated in {Theorem}  \ref{wtohwergerewgrewgregwrg1}.
 
 Since $\widetilde{\Ind}_{H{,s}}^G(0) = 0$ and $\kkGs$ is reduced, {the functor in  \eqref{jqwelkfjwelfkqrwrgrwgr}} is reduced as well.

%
%
%
%


Since the operations $-\otimes_{\min}  K(   V_{G})$ and
$\Ind_{H{,s}}^{G}$ (as seen {in the proof of Lemma \ref{ewoigjeorgregerwgergew}}) preserve   semisplit exact sequences 
we conclude that $\widetilde{\Ind}_{H{,s}}^{G}$ also preserves  semisplit exact sequences. 
Since ${\kkGs}$ is semiexact we conclude that  \eqref{jqwelkfjwelfkqrwrgrwgr}   is semiexact.


The most complicated part of the argument is $\mathbb{K}^{{H}}$-stability.  
Let $   V\to    V'$ be a unitary inclusion of non-trivial $H$-Hilbert spaces and $   B$ be in $\Fun(B{H},\nCalg_{\sepa})$.
We must show that the induced map
\begin{equation}\label{gwergrewgregwregg}
\kkGs(\widetilde{\Ind}^{G}_{H,s}(   B\otimes_{\min} K(   V)))\to \kkGs(\widetilde{\Ind}^{G}_{H,s}(   B\otimes_{\min} K(   V')))
\end{equation}
is an equivalence.

If $   W$ is any $H$-Hilbert space, then we have an   isomorphism of $H$-Hilbert spaces
 \begin{equation}\label{arwfiojofdafedrferferf}
   W\otimes L^{2}(H)\xrightarrow{\cong}  \Res^{H}{(W)}\otimes  {L^2}(H)\, , \quad w\otimes [h]\mapsto {h^{-1}}w\otimes [h]\, ,
   \end{equation} 
where $[h]$ in $L^{2}(H)$ denotes the basis element corresponding to $h$ in $H$.
Here $H$ acts  diagonally on the left hand side, and  only on the factor $L^{2}(H)$ on  the right-hand side 

Decomposing $G$ into $H$-orbits we obtain an ${H}$-equivariant isomorphism   
\begin{equation}\label{arwfiojofdafed}
\Res^{G}_{H}(L^{2}(G))\cong L^{2}(H)  \otimes  L^{2}(G/H)  \, , 
\end{equation}
where $H$ acts by left-translations on the first tensor factor  {and trivially on the second}. Combining \eqref{arwfiojofdafedrferferf} and \eqref{arwfiojofdafed}
we get an isomorphism of $H$-Hilbert spaces 
\begin{align*}   W\otimes \Res^{G}_{H}(L^{2}(G))&\stackrel{\eqref{arwfiojofdafed}}{\cong}
    W \otimes  L^{2}(H)  \otimes L^{2}(G/H) \stackrel{\eqref{arwfiojofdafedrferferf}}{\cong}
  \Res^{H}{(W)}\otimes  L^{2}(H)  \otimes L^{2}(G/H) \\& \stackrel{\eqref{arwfiojofdafed}}{\cong} 
  \Res^{G}_{H}(\Res^{H}{(W)}\otimes   L^{2}(G))\, .
  \end{align*}
This isomorphism is natural in the $H$-Hilbert space $   W$.
 
We conclude that   the  homomorphism of $H$-algebras  $$K(   V)\otimes_{\min}  \Res^{G}_{H}(K(L^{2}(G)))\to 
K(   V')\otimes_{\min}  \Res^{G}_{H}(K(L^{2}(G)))$$ is isomorphic to the homomorphism 
$$  \Res^{G}_{H}{(K( \Res^{H} (V)\otimes  L^{2}(G)))}\to 
  \Res^{G}_{H}{(K( \Res^{H}(V')\otimes  L^{2}(G)))}\, .$$
  In view of \eqref{erwglkgnolgwgergwerg} this implies that 
   the map
  $$\widetilde{\Ind}_{H,s}^{G}(   B\otimes_{\min} K(   V) )\to 
    \widetilde{\Ind}_{H,s}^{G}(B\otimes_{\min} K(V') )$$
 is isomorphic to the map 
\begin{align*}
\mathclap{
{\Ind}_{H,s}^{G}(   B\otimes_{\min}  \Res^{G}_{H}{(K( \Res^{H} (V)\otimes  L^{2}(G))))}\to 
\Ind_{H,s}^{G}(   B\otimes_{\min}  \Res^{G}_{H}{(K( \Res^{H} (V')\otimes  L^{2}(G))))}\, .
}
\end{align*}
Applying \eqref{ergqrqffewfqfef} we furthermore see that the latter   is isomorphic to 
 $$\Ind_{H,s}^{G}(   B)\otimes_{\min}  K( \Res^{H}{(V)}\otimes  L^{2}(G))\to 
\Ind_{H,s}^{G}(   B)\otimes_{\min}  K( \Res^{H}{(V')}\otimes  L^{2}(G))\, .$$
Since the functor $\kkGs$ is $\mathbb{K}^{G}$-stable,  it sends this map to an equivalence.
Consequently, \eqref{gwergrewgregwregg}
  is an equivalence.
  \hB \color{black}
\end{rem}

%
%
%
%
%
%
%

The following corollary generalizes the projection formula  \cite[(16)]{MR2193334} from the triangulated to the $\infty$-categorical level.  
\begin{kor} \label{evigjwelgwbfgbdb} For $?$ in $\{\min,\max\}$
we have an equivalence of functors 
$$\Ind_{H}^{G}(-)\otimes_{?} (-)\simeq \Ind_{H}^{G}((-)\otimes_{?} \Res^{G}_{H}(-)) \colon \KKH\times \KKG\to \KKG\, .$$
\end{kor}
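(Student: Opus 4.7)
The plan is to promote the $C^{*}$-algebra level isomorphism of Lemma \ref{rehoijerthtrhtrehtrheg} to an equivalence of functors between the stable $\infty$-categories, in two steps: first descend along the localizations $\kkHs$ and $\kkGs$ to obtain the equivalence on the separable KK-categories, and then extend to the $\Ind$-completions by a colimit-preservation argument.

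For the first step, consider the two composite functors
\[
F, F' \colon \Fun(BH,\nCalgs) \times \Fun(BG,\nCalgs) \lto \KKGs
\]
given by $F(B,A) = \kkGs(\Ind_H^G(B) \otimes_? A)$ and $F'(B,A) = \kkGs(\Ind_H^G(B \otimes_? \Res_H^G(A)))$. Lemma \ref{rehoijerthtrhtrehtrheg} provides a natural isomorphism $F \simeq F'$ already on the level of $C^{*}$-algebras. I claim both factor through $\kkHs \times \kkGs$ to yield the two functors in the statement restricted to compact objects: Lemma \ref{ewoigjeorgregerwgergew} gives the exact functor $\Ind_{H,s}^G \colon \KKHs \to \KKGs$, Lemma \ref{erwoigjwegerregwefwefewfewfwefwefwefwefwefeg} gives $\Res^G_{H,s} \colon \KKGs \to \KKHs$, and Proposition \ref{eqrgoerjgpergwegerg1} provides the bi-exact tensor product on each of $\KKGs$ and $\KKHs$. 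Concretely, to descend $F$ I would fix $A$ and note that $B \mapsto \kkGs(\Ind_H^G(B) \otimes_? A)$ inverts $\kkHsk$-equivalences (since $\kkGs \circ \Ind_H^G$ does, and tensoring with $\kkGs(A)$ is a functor on $\KKGs$), yielding a factorization through $\KKHs$; then vary $A$ and repeat the argument in the second variable. The same descent works for $F'$. Since the descended functors become equal after precomposition with the essentially surjective $\kkHs \times \kkGs$, they are equivalent.

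For the second step, I would verify that both functors $G(B,A) = \Ind_H^G(B) \otimes_? A$ and $G'(B,A) = \Ind_H^G(B \otimes_? \Res_H^G(A))$ on $\KKH \times \KKG$ preserve colimits in each variable separately. For $G$ this uses that $\Ind_H^G \colon \KKH \to \KKG$ preserves colimits (Theorem \ref{reguergiweogwergegwergwerv}.\ref{reguergiweogwergegwergwerv2}) and that the monoidal structure on $\KKG$ is presentably symmetric monoidal (Proposition \ref{togjoigerggwgergwgr}); for $G'$ one additionally uses that $\Res_H^G$ preserves colimits (Theorem \ref{reguergiweogwergegwergwerv}.\ref{reguergiweogwergegwergwerv1}) and the presentable symmetric monoidal structure on $\KKH$. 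Since $\KKH = \Ind(\KKHs)$ and $\KKG = \Ind(\KKGs)$, every object is a filtered colimit of compact objects, so $G$ and $G'$ are the essentially unique such extensions of their restrictions to $\KKHs \times \KKGs$. The restrictions agree by Step~1, so the extensions agree.

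The main technical nuisance is the bivariate descent in Step~1; the cleanest approach is the one-variable-at-a-time argument above, which avoids having to spell out a product-of-Dwyer--Kan-localizations statement. The rest is formal, assembled from results already proved in the paper.
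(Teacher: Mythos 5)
Your proof is correct and is essentially the paper's argument written out in detail: the paper deduces the corollary directly from the algebra-level isomorphism of Lemma \ref{rehoijerthtrhtrehtrheg}, the descent of $\Res^{G}_{H}$ and $\Ind_{H}^{G}$ (Lemmas \ref{erwoigjwegerregwefwefewfewfwefwefwefwefwefeg} and \ref{ewoigjeorgregerwgergew}), and the presentably symmetric monoidal structure of Proposition \ref{togjoigerggwgergwgr}, which are exactly the ingredients of your two-step descend-then-$\Ind$-extend argument. One small wording point: in Step 1 the conclusion does not follow from essential surjectivity of $\kkHs\times\kkGs$ alone but from the universal property of the Dwyer--Kan localization (restriction along it is fully faithful on functor categories), which is what your descent argument is implicitly using anyway.
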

\begin{proof}
This is
  an immediate consequence of Lemma \ref{rehoijerthtrhtrehtrheg},
Lemma \ref{erwoigjwegerregwefwefewfewfwefwefwefwefwefeg}, Lemma \ref{ewoigjeorgregerwgergew} and Proposition \ref{togjoigerggwgergwgr}.  
\end{proof}

 For $A$ in $\Fun(BG,\nCalg)$ we can form the maximal and reduced crossed products $A\rtimes_{\max}G$ and $A\rtimes_{r}G$ in $\nCalg$. In the arguments below we need some  details of their construction which we will therefore recall at this point. 
 \begin{construction}\label{qreogijqeroigergwgwegwegwergw}
Both crossed products are defined as completions of the algebraic crossed product $A\rtimes^{\alg}G$. The latter is the $*$-algebra generated by elements $(a,g)$ with $a$ in $A$ and $g$ in $G$ with   multiplication $(a',g')(a,g) \coloneqq (g^{-1}(a')a,g'g)$ and the involution 
$(a,g)^{*} \coloneqq (g(a^{*}),g^{-1})$  subject to the relations $(a,g)+\lambda(a',g)=(a+\lambda a',g)$ for all $a,a'$ in $A$, $\lambda$ in $\C$ and $g$ in $G$. The maximal crossed product  $   A\rtimes_{\max}G $ is the completion of $  A\rtimes^{\alg}G$  in the maximal norm,  and the reduced  crossed product  $A\rtimes_{r}G$ is the completion of 
$A\rtimes^{\alg}G$ in the norm induced by the canonical representation on  the $A$-Hilbert $C^{*}$-module $L^{2}(G,A)$, see e.g.\ \cite[{Constr.\ 12.20}]{cank}  for an explicit formula.

If $H$ is a subgroup of $G$, then  we have a canonical homomorphism
\begin{equation}\label{regerwgwgwergergwergerg}
\Res^{G}_{H}(A)\rtimes_{?} H\to  A\rtimes_{?}G
\end{equation}
given on generators by
$(a,h)\mapsto (a,h)$, where $h$ in the target is considered as an element of $G$.
\hB
\end{construction}

Below we will need the following properties of the crossed products.
 \begin{lem}\label{qeriogjowergwergefefwefw}
 \mbox{}
 \begin{enumerate}
 \item \label{trhijwothgwwergwergewg} $- \rtimes_{\max}G$ preserves filtered colimits and   sums.
\item \label{trhijwothgwwergwergewg1} $- \rtimes_{r}G$ preserves isometric inclusions and  sums.
 \end{enumerate}
 \end{lem}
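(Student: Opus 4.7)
The plan for both assertions is to reduce to either the universal property (for the maximal crossed product) or the concrete defining representation (for the reduced crossed product).

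For Assertion (\ref{trhijwothgwwergwergewg}), I will use that by Construction \ref{qreogijqeroigergwgwegwegwergw}, $A \rtimes_{\max} G$ is the completion of the algebraic crossed product $A \rtimes^{\alg} G$ in the maximal $C^{*}$-norm, which is characterized universally: $*$-homomorphisms out of $A \rtimes_{\max} G$ into a $C^{*}$-algebra $D$ correspond to covariant pairs for $(A, G)$ with values in the multiplier algebra $M(D)$. The algebraic functor $-\rtimes^{\alg} G$ tautologically preserves filtered colimits and countable direct sums on the level of $*$-algebras, which handles the dense $*$-subalgebras. To promote these statements to the $C^{*}$-completions, I will construct an inverse to the canonical map $\phi \colon \colim_{i} (A_i \rtimes_{\max} G) \to (\colim_{i} A_i) \rtimes_{\max} G$ (and similarly for direct sums) by assembling the tautological covariant pairs of each $A_i$ into a covariant pair on the colimit and applying the universal property of the target. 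For direct sums, the mutually orthogonal embeddings $A_j \hookrightarrow M(\bigoplus_{i}(A_i \rtimes_{\max} G))$ coming from the $c_{0}$-structure yield the required orthogonal decomposition.

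For Assertion (\ref{trhijwothgwwergwergewg1}), the plan is to work with explicit Hilbert-module representations. Recall from Construction \ref{qreogijqeroigergwgwegwegwergw} that $A \rtimes_{r} G$ is defined as the completion of $A \rtimes^{\alg} G$ in the norm induced by its canonical action on the Hilbert $A$-module $L^{2}(G, A)$. Equivalently, for any faithful equivariant $*$-representation $A \hookrightarrow \mathcal{L}(H)$ the reduced crossed product is the closure of $A \rtimes^{\alg} G$ inside $\mathcal{L}(L^{2}(G) \otimes H)$ via the regular covariant representation, independently of the choice. For preservation of an equivariant isometric inclusion $A \hookrightarrow B$, I will pick a faithful equivariant representation of $B$, restrict it to $A$ (where it remains faithful and equivariant), and observe that computing both reduced crossed products inside the common ambient algebra $\mathcal{L}(L^{2}(G) \otimes H)$ exhibits $A \rtimes_{r} G$ as a $C^{*}$-subalgebra of $B \rtimes_{r} G$. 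For direct sums, I will use the natural equivariant isomorphism $L^{2}(G, \bigoplus_{i} A_{i}) \cong \bigoplus_{i} L^{2}(G, A_{i})$ with respect to which the canonical representation of $\bigoplus_{i} A_{i}$ decomposes as the orthogonal direct sum of the canonical representations of the $A_{i}$, immediately yielding the desired isomorphism.

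The main technical care required, as opposed to any serious obstacle, concerns the non-unital conventions: covariant representations have to be phrased via the multiplier algebra and non-degenerate $*$-homomorphisms, and one has to verify that $c_{0}$-direct sums interact compatibly with passage to multipliers and with the respective covariance conditions. These are routine manipulations and do not require any further input beyond the standard theory recalled in Construction \ref{qreogijqeroigergwgwegwegwergw}.
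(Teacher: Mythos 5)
Your proposal is correct in substance, but it takes a genuinely different route from the paper on every count. For the maximal crossed product the paper does not argue via covariant pairs at all: it identifies $\incl\circ(-\rtimes_{\max}G)$ with the composite $\colim_{BG}\circ\, L\circ \incl$ landing in $\nCcat$ (citing \cite[Prop.\ 7.3.2]{crosscat}), so preservation of filtered colimits becomes the tautology that colimits commute with colimits, and arbitrary sums then follow from finite sums plus filtered colimits. For the reduced crossed product and sums, the paper again routes through $C^{*}$-categories: it encodes the family $(A_i)_{i}$ as a $C^{*}$-category with object set $I$ and no morphisms between distinct objects, and uses that $A(-)$ intertwines $-\rtimes_r G$ for categories and algebras. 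Your arguments --- the universal property of $\rtimes_{\max}$ for assertion (1), and the regular representation together with $L^{2}(G,\bigoplus_i A_i)\cong\bigoplus_i L^{2}(G,A_i)$ for assertion (2) --- are the classical, self-contained proofs and are perfectly valid; what the paper's route buys is brevity given the machinery of \cite{crosscat} and \cite{cank} already in place, and consistency with its general strategy of proving $C^{*}$-algebra statements by passage to $C^{*}$-categories. One caution on your assertion (1): the structure maps of a filtered colimit of $C^{*}$-algebras need not be nondegenerate, so the ``tautological covariant pairs'' of the $A_i$ do not directly yield multipliers of $\colim_j(A_j\rtimes_{\max}G)$; the standard fix is to realize that algebra faithfully and nondegenerately on a Hilbert space, cut each $A_i\rtimes_{\max}G$ down to its essential subspace to extract a covariant pair there, and assemble these into a covariant pair for $(\colim_i A_i, G)$. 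This is indeed routine, but it is the one step where your sketch glosses over a genuine (if standard) technicality.
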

\begin{proof}
In order to show Assertion \ref{trhijwothgwwergwergewg}
we use the fact 
that the inclusion  $\incl:\nCalg\to \nCcat$ preserves and detects filtered colimits.
Using  \cite[Prop.\ 7.3.2]{crosscat} we see that  the composition of $ -\rtimes_{\max}G$ with this embedding
is equivalent to  the composition 
\begin{equation}\label{qwefqewdqwedqwedewdqed}
\Fun(BG,\nCalg)       \stackrel{L\circ \incl}{\to} \Fun(BG,\nCcat)
\stackrel{\colim_{BG}}{\to} \nCcat\ ,
\end{equation} 
where $L$ is the functor from \cite[(7.1)]{crosscat}.
It follows from the explicit description of this functor that $L\circ \incl$ preserves
filtered colimits. Since $\colim_{BG}$ preserves all colimits
the composition  in \eqref{qwefqewdqwedqwedewdqed} preserves filtered
colimits. We conclude that $ -{\rtimes_{\max}} G$ preserves filtered colimits.
Since this functor preserves finite sums, it also preserves
arbitrary sums.

We now show Assertion \ref{trhijwothgwwergwergewg1}. It is well-known
and easy to check from the explicit description of 
$-\rtimes_{r}G$ that this functor preserves isometric inclusions (see  \cite[Prop.\ 12.24]{cank} for the more general case of $C^{*}$-categories). 
In order to see that this functor preserves sums we argue as follows. 
If $(A_{i})_{i\in I}$ is a family in $\Fun(BG,\nCalg)$, then considering this family as a $C^{*}$-category  $\bC((A_{i})_{i\in I})$ in $\Fun(BG,\nCcat)$ with the set of objects $I$ and no non-trivial morphisms
between different objects we have a canonical isomorphism
 \begin{equation}\label{qwefqwdeqdqwedq} A(\bC((A_{i})_{i\in I}))\simeq \bigoplus_{i\in I} A_{i}\end{equation} in $\Fun(BG,\nCalg)$,  where the functor $A$ is
as in Construction \ref{woitgjowrtgggergwergw}.   By \cite[Lem.\ 12.22]{cank}
the reduced crossed product functor for $C^{*}$-categories  $-\rtimes_{r}G:\Fun(BG,\nCcat)\to \nCcat$ from \cite[Thm.\ 12.1]{cank}
extends the reduced crossed product functor for $C^{*}$-algebras.  
Since this extension  also   sends  the isometric inclusions $A_{i}\to \bC((A_{i})_{i\in I})$ to isometric inclusions by  \cite[Prop.\ 12.24]{cank}
we can conclude that \begin{equation}\label{qefqwefqweqdddwqw}\bC((A_{i})_{i\in I})\rtimes_{r}G\cong \bC((A_{i}\times_{r}G)_{{i\in I}})\, .
\end{equation}
Using that the functor $A$ preserves reduced crossed products by \cite[Thm.\ 12.23]{cank}
we conclude that
\begin{align*}
\mathclap{
(\bigoplus_{i\in I} A_{i})\rtimes_{r}G\stackrel{\eqref{qwefqwdeqdqwedq}}{\cong}
A(\bC((A_{i})_{i\in I}))\rtimes_{r}G\cong  A(\bC((A_{i})_{i\in I})\rtimes_{r}G) \stackrel{\eqref{qefqwefqweqdddwqw}}{\cong}A(\bC((A_{i} \rtimes_{r}G)_{i\in I}))\stackrel{\eqref{qwefqwdeqdqwedq}}{\cong} \bigoplus_{i\in I} (A_{i} \rtimes_{r}G)\, .
}
\end{align*}
  %
%
%
%
%
%
\end{proof}

Note that it follows from Proposition \ref{wervervfsdfvsdfvsdfvfsdv}
that $-\rtimes_{r}G$ preserves filtered colimits of diagrams whose structure maps are isometric inclusions.

 The next lemma provides  the remaining  {Assertion \ref{reguergiweogwergegwergwerv3} of} Theorem  \ref{reguergiweogwergegwergwerv}.
%
%
 \begin{lem}\label{wterpohgjwprtegeergwg}
We have the following commutative diagram
\begin{equation}\label{eqrwoigjweorgwerergerw}
\xymatrix{
&&&\KKGs \ar[d]_{(-\rtimes_{?} G)_{s}}\ar@/^1cm/[ddd]^{y^{G}}\\
\Fun(BG,\nCalg_{\sepa})\ar[urrr]^-{\kkGs} \ar[rr]_-{(-\rtimes_{?} G)_{s}}\ar[d]^{\incl}&&\ar[d]^{\incl} \nCalg_{\sepa}\ar[r]_-{\kks}&{\KKs}\ar[d]^{y} \\
\ar[drrr]_-{\kkG}\Fun(BG,\nCalg )\ar[rr]^-{-\rtimes_{?} G}&& \nCalg \ar[r]^-{\kk}&\KK\\
&&&\KKG\ar[u]^{-\rtimes_{?} G}
}
\end{equation}
 where
 \begin{enumerate}
 \item $(-\rtimes_{?} G)_{s}{\colon \KKGs \to \KKs}$ is exact  {and preserves countable coproducts}. 
 \item $-\rtimes_{?} G{\colon \KKG \to \KK}$ preserves colimits and compact objects. 
 \end{enumerate}
%
%
%
%
\end{lem}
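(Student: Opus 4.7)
The strategy is to apply the blueprint Lemma \ref{qoi3rgjowrtgwegwergwerg} to $C \coloneqq -\rtimes_{?} G \colon \Fun(BG,\nCalg) \to \nCalg$ for $? \in \{r,\max\}$. The crossed product evidently preserves separable algebras, since $G$ is countable and the algebraic crossed product of a separable algebra over a countable group is separable. It then remains to verify the four hypotheses of Lemma \ref{qoi3rgjowrtgwegwergwerg}, after which the diagram \eqref{eqrwoigjweorgwerergerw} and the properties of $(-\rtimes_{?}G)_{s}$ and $-\rtimes_{?}G$ follow at once.

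For Hypothesis \ref{qoi3rgjowrtgwegwergwerg}.\ref{wetoigwtgwwefwefwefergrwegwreg} ($\Ind$-s-finitarity), I will invoke Lemma \ref{qoifjqoifewqqfwedwed}. The preservation clause \ref{wegiowergwergwerf}/\ref{wegiowergwergwerf1} is settled by Lemma \ref{qeriogjowergwergefefwefw}: $-\rtimes_{\max}G$ preserves filtered colimits, and $-\rtimes_{r}G$ preserves isometric inclusions. For the cofinality Condition \ref{wtoijwrgergwrefwef}.\ref{egijegoergwergwreg1}, given $A$ in $\Fun(BG,\nCalg)$ and a separable subalgebra $B \subseteq A \rtimes_{?} G$, I would pick a countable dense subset of $B$, approximate each element by finite algebraic sums of generators $(a,g)$, collect the occurring coefficients $a \in A$ together with their (countable) $G$-orbits, and take the $C^{*}$-subalgebra $A' \subseteq A$ they generate. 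Since $G$ is countable, $A'$ is $G$-invariant and separable, and $B$ is contained in the image of $A' \rtimes_{?}G \to A \rtimes_{?} G$. Hypothesis \ref{qoi3rgjowrtgwegwergwerg}.\ref{wergwrpogkrpogwreg} (preservation of countable sums) is precisely Lemma \ref{qeriogjowergwergefefwefw}.

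For Hypothesis \ref{qoi3rgjowrtgwegwergwerg}.\ref{wetoigwtgergrwegwreg2} (preservation of semisplit exact sequences), I will argue that if $0 \to A' \to A \to A'' \to 0$ is a semisplit exact sequence in $\Fun(BG,\nCalg_{\sepa})$ with equivariant cpc split $s \colon A'' \to A$, then $0 \to A' \rtimes_{?} G \to A \rtimes_{?}G \to A'' \rtimes_{?} G \to 0$ is exact (for $? = \max$ this is automatic, for $?=r$ it is classical and follows e.g.\ from the existence of the split extended below) and the equivariant cpc map $s$ induces a cpc split $A'' \rtimes_{?} G \to A \rtimes_{?}G$; this is well known and can be verified on generators $(a,g) \mapsto (s(a),g)$, extending by continuity using complete positivity of $s$ at the level of $L^{2}(G,-)$.

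The main work, and the essential obstacle, is Hypothesis \ref{qoi3rgjowrtgwegwergwerg}.\ref{wetoigwtgergrwegwreg}, namely that $\kks \circ (-\rtimes_{?}G)_{s}$ inverts $\kkGsk$-equivalences. Here I will invoke the universal property from Corollary \ref{ioqerjgqergrqfefqewfq}: it suffices to show that this composition is reduced, split exact and $\mathbb{K}^{G}$-stable. Reducedness is immediate, and split-exactness follows from the previous paragraph (applied to split-exact, hence semisplit, sequences) combined with split-exactness of $\kks$ (Theorem \ref{qroifjeriogerggergegegweg}). For $\mathbb{K}^{G}$-stability I need to check that for every equivariant isometric inclusion $H \hookrightarrow H'$ of nonzero separable $G$-Hilbert spaces and every $A$ in $\Fun(BG,\nCalg_{\sepa})$ the map
\[ \kks\bigl((A \otimes K(H)) \rtimes_{?} G\bigr) \to \kks\bigl((A \otimes K(H')) \rtimes_{?} G\bigr) \]
is an equivalence. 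The standard way to see this is to use the canonical Morita equivalence between $(A \otimes K(H)) \rtimes_{?} G$ and $A \rtimes_{?} G$ implemented by the $G$-equivariant $A$-Hilbert module $A \otimes H$ (see Kasparov's descent \cite{kasparovinvent} and \cite[Thm.~3.4]{MR2193334} in the triangulated setting), which is compatible with the inclusion $H \hookrightarrow H'$; since $\kks$ is $\mathbb{K}$-stable and sends Morita equivalences to equivalences, the claim follows.

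Once all four hypotheses are verified, Lemma \ref{qoi3rgjowrtgwegwergwerg} produces the commutative diagram \eqref{eqrwoigjweorgwerergerw} together with the asserted properties of $(-\rtimes_{?}G)_{s}$ (exact, preserves countable coproducts) and of $-\rtimes_{?}G$ (preserves colimits and compact objects).
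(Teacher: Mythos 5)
Your proposal is correct and its overall architecture coincides with the paper's: apply the blueprint Lemma \ref{qoi3rgjowrtgwegwergwerg}, verify $\Ind$-s-finitarity via the same explicit cofinality argument together with Lemma \ref{qoifjqoifewqqfwedwed} and Lemma \ref{qeriogjowergwergefefwefw}, check preservation of semisplit exact sequences (the paper simply cites \cite[Prop.\ 9]{Meyer:ab} for the fact that an equivariant cpc split descends to the crossed products, which is the nontrivial point you sketch), and then invert $\kkGsk$-equivalences via the universal property (you use the split-exact Corollary \ref{ioqerjgqergrqfefqewfq}, the paper the semiexact Theorem \ref{wtohwergerewgrewgregwrg1}; both suffice). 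The one step where you take a genuinely different route is $\mathbb{K}^{G}$-stability. The paper proves an explicit $C^{*}$-algebra isomorphism $(A\otimes K(V))\rtimes_{?}G\cong (A\rtimes_{?}G)\otimes \Res^{G}(K(V))$ (Lemma \ref{qeroigqfefewfqef}), constructing $\phi((a\otimes k,h))=(a,h)\otimes hk$ and its inverse and checking isometry for both completions; this reduces equivariant stability to ordinary $\mathbb{K}$-stability of $\kks$ in a self-contained way. You instead invoke the classical Kasparov descent of the equivariant Morita equivalence between $A\otimes K(H)$ and $A$ implemented by $A\otimes H$, and compatibility with $H\hookrightarrow H'$ (the inclusion composed with the $H'$-bimodule differs from the $H$-bimodule by a degenerate summand), to conclude that the inclusion is a $\kk_{0}$-equivalence. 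That argument is valid and shorter, but it imports the $\KK_{0}$-level descent homomorphism for both the reduced and maximal crossed products as a black box, whereas the lemma being proved is precisely the spectrum-level refinement of that descent; the paper's explicit isomorphism keeps the proof independent of this machinery (and is reused elsewhere). If you keep your route, you should at least spell out the commuting triangle of $\KK^{G}$-classes before applying descent, since ``$\kks$ sends Morita equivalences to equivalences'' is not literally a statement about morphisms of algebras.
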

\begin{proof} 

The assertion will  follow from  Lemma \ref{qoi3rgjowrtgwegwergwerg}.
Using that $G$ is countable it  is easy to check that the  functors $ -\rtimes_{\max} G$ and 
$ -\rtimes_{r} G$  preserve separable algebras. 
By Lemma \ref{qeriogjowergwergefefwefw}  they preserve sums. 
In order to show that they are $\Ind$-s-finitary it remains to 
check the Conditions  \ref{wtoijwrgergwrefwef}.\ref{egijegoergwergwreg1} and  \ref{wtoijwrgergwrefwef}.\ref{egijegoergwergwreg2}. 

Let $A$ be in $\Fun(BG,\nCalg)$ and $B'$ be a separable subalgebra of
$A\rtimes_{\max}G$. We let $\tilde B$ be a countable dense subset of $B'$. 
Then every element $\tilde b$ in $\tilde B$  is given by a sum
$\tilde b=\sum_{g\in G}  (\tilde b_{g},g)$ converging in $A\rtimes_{\max}G$.
We let $A'$ be the $G$-invariant subalgebra of $A$ generated by the  elements $\tilde b_{g}$ for all $g$ in $G$ in $\tilde b$ in $\tilde B$. By construction it is separable and we have 
 $B'\subseteq \overline{A'\rtimes_{\max}G}^{A\rtimes_{\max}G}$.
This shows   Condition  \ref{wtoijwrgergwrefwef}.\ref{egijegoergwergwreg1} in the case of the maximal crossed product.
The case of $-\rtimes_{\min}G$ is analogous.

For $-\rtimes_{\max}G$ the 
  Condition  \ref{wtoijwrgergwrefwef}.\ref{egijegoergwergwreg2} now follows from 
Lemma \ref{qoifjqoifewqqfwedwed} since the functor preserves filtered colimits by Lemma \ref{qeriogjowergwergefefwefw}.\ref{trhijwothgwwergwergewg}.
For $-\rtimes_{\min}G$ we again use Lemma \ref{qoifjqoifewqqfwedwed} and the fact 
that the functor preserves isometric inclusions by Lemma
\ref{qeriogjowergwergefefwefw}.\ref{trhijwothgwwergwergewg1}.

{The} restriction $(-{\rtimes_{\max}} G)_{s}$ preserves semisplit exact sequences by \cite[Prop.\ 9]{Meyer:ab}. Note that the latter is also true for the reduced crossed product by the same reference.
     
{In order to verify Assumption  \ref{qoi3rgjowrtgwegwergwerg}.\ref{wetoigwtgergrwegwreg}
we will show that the functor    
\begin{equation}\label{egwegrergwegergewrgwergreg}
\kks\circ (-\rtimes_{\max} G)_{s} \colon \Fun(BG,\nCalg_{\sepa}) \to  \KK_\sepa
\end{equation}
 is    reduced, 
 $\mathbb{K}^{G}$-stable and semiexact.
 It then follows from
 the universal property stated in Theorem \ref{wtohwergerewgrewgregwrg1} that it  inverts ${\kkGsk}$-equivalences.}
 
 The functor \eqref{egwegrergwegergewrgwergreg} and its  variant for $\rtimes_{r}$ are clearly reduced. 

 {Using that 
     $(-\rtimes_{\max} G)_{s}$  sends  semisplit exact sequences in $\Fun(BG,\nCalg_{\sepa})$   to semisplit exact sequences in $\nCalgs$ and the semiexactness of  $\kks$ we see that  that the functor \eqref{egwegrergwegergewrgwergreg} is semiexact. A similar statement holds for the reduced crossed product.} 
     
 {The rest of this argument is devoted to the verification of $\mathbb{K}^{G}$-stability of the functor in \eqref{egwegrergwegergewrgwergreg} and its reduced version. 
 This will follow  from    Lemma   \ref{qeroigqfefewfqef} below and the $\mathbb{K}$-stability of $\kks$.
}


If $B$ is in $ {\nCalg}$ and $  A$ is in  $\Fun(BG, {\nCalg})$, then  we have a canonical isomorphism \begin{equation}\label{reoijeoivvffvvsvdfvsfd}
 (  A\rtimes_{!} G)\otimes_{?} B\cong  (  A\otimes_{?} B)\rtimes_{!} G \ ,
\end{equation} 
where $(?,!)\in \{(\max,\max),({\min},r)\}$.  
  On elementary tensors it is induced by the map
$(a,h)\otimes b\mapsto (a\otimes b,h)$.
%
For a proof in the case $(?,!)=(\max,\max)$ (also stated in \cite[(3)]{Meyer:ab}) we refer to \cite[Lem.\ 2.75]{williams}. 
For the case $(?,!)=(\min,r)$ see \cite[Lem.\ 4.1]{echter}.
In both cases the argument is similar to 
  the proof of Lemma \ref{qeroigqfefewfqef}  below.
The isomorphism \eqref{reoijeoivvffvvsvdfvsfd} {could be used to show $\mathbb{K}$-stability, but it} is not sufficient to show that
{\eqref{egwegrergwegergewrgwergreg}  is $\mathbb{K}^{G}$-stable} since there we would need it  for algebras of the form $K({   V})$ in place of $B$ which have a non-trivial $G$-action.
This problem  is settled  by the following lemma which is probably well-known to  experts. 

 \begin{lem}\label{qeroigqfefewfqef}\mbox{}
 If $  V$ is a separable $G$-Hilbert space, then {for $?\in \{\max, r\}$} we have a canonical isomorphism  $$(  A\otimes K(  V))\rtimes_{{?}} G\cong (  A\rtimes_{{?}} G)\otimes  {\Res^{G}(K(  V))}\, .$$
\end{lem}
\begin{proof}
{Since} $K(V)$ is nuclear we can omit the decoration $\min$ or $\max$ at the tensor products.
As a first step we construct an isomorphism
$$\phi \colon (  A\otimes^{\alg} K(  V))\rtimes^{\alg} G\xrightarrow{\simeq} (  A\rtimes^{\alg} G)\otimes^{\alg}  {\Res^{G}(K(  V))}\, .$$
We define
$$\phi((a\otimes k,h)) \coloneqq (a,h)\otimes hk\, .$$
It is straightforward to check that $\phi$ is a homomorphism
and compatible with the involution.
 The inverse of $\phi$ is given by
 $$\psi \colon  (   A\rtimes^{\alg} G)\otimes^{\alg}  {\Res^{G}(K(  V))}\xrightarrow {\simeq} (  A\otimes^{\alg} K(  V))\rtimes^{\alg} G   \, , \quad
\psi ((a,h)\otimes k) \coloneqq (a\otimes h^{-1}k,h)\, .$$
We must show that these isomorphisms extend to the corresponding completions. 
We first discuss the case of the maximal crossed product.


Assume that we have a  homomorphism of $*$-algebras 
$ (  A\otimes^{\alg} K(  V))\rtimes^{\alg} G\to B$ with $B$ a $C^{*}$-algebra. 
We can consider 
  $
  A\otimes^{\alg} K(  V)$  naturally as a subalgebra of $(  A\otimes^{\alg} K(  V))\rtimes^{\alg} G$ by $a\otimes h\mapsto (a\otimes h,e)$.
The restriction of the representation above extends to $A\otimes K(  V)$ and then further to $ (  A\otimes K(  V))\rtimes^{\alg} G$, {and hence to $ (  A\otimes K(  V))\rtimes  G$.}

Similarly, consider a  
 homomorphism of $*$-algebras 
$ (   A\rtimes^{\alg} G)\otimes^{\alg}  {\Res^{G}(K(  V))}\to B$. 
For every finite-dimensional subalgebra $E$ in $  {\Res^{G}(K(  V))}$
we get a representation of 
$(   A\rtimes^{\alg} G)\otimes E$
by restriction along 
$(   A\rtimes^{\alg} G)\otimes^{\alg} E \to  (   A\rtimes^{\alg} G)\otimes^{\alg}  {\Res^{G}(K(  V))}$.
These representations extend to
$(  A\rtimes G)\otimes E$.  Since every operator in $ {\Res^{G}(K(  V))}$ can be
approximated by finite-dimensional ones we can further extend
the representation above to a representation of
$(  A\rtimes G)\otimes^{\alg}  {\Res^{G}(K(  V))}$, {and hence again to $(  A\rtimes G)\otimes  {\Res^{G}(K(  V))}$}.

{The two observations above show that $\phi$ and $\psi$ are isometries with respect to the maximal norms. This finishes the case of the maximal crossed product.}

For the reduced crossed product we interpret $\otimes$ as the minimal, i.e.\ spatial, tensor product.  
 The norm on $(  A\rtimes^{\alg} G)\otimes^{\alg}  {\Res^{G}(K(  V))}$ is induced from the representation on
$L^{2}(G,{  A}) \otimes  {\Res^{G}(V)}$, 
and the norm 
on $(  A\otimes^{\alg}  K(  V))\rtimes^{\alg} G $ is induced from
the representation on $L^{2}(G,{  A}\otimes   V)$. We have an isometry
$U \colon L^{2}(G,{  A})\otimes   {\Res^{G}(V)}\to L^{2}(G,{  A}\otimes   V)$ which sends
$\alpha\otimes v$ for $\alpha$ in $L^{2}(G,{  A})$ and $v$ in $V$ to the function $h\mapsto \alpha(h)\otimes {h}v$. The isometry $U$ intertwines the representation of $(  A\rtimes^{\alg} G)\otimes^{\alg} {\Res^{G}(K(  V))}$ on $
L^{2}(G,{  A})\otimes   {\Res^{G}(V)}$ with its representation via $\psi$ on $L^{2}(G,{  A}\otimes   V)$.
 Hence $\psi$ is isometric for the reduced crossed product and minimal tensor product.
 This finishes the argument in the case of the reduced crossed product. \end{proof}

{This finishes the proof of Lemma \ref{wterpohgjwprtegeergwg}.}
 \end{proof}
 
%
%

\phantomsection \label{reogijweogerewg}
Thus we have completed the proof of Theorem \ref{reguergiweogwergegwergwerv} from the introduction.

 The existence of a 
 right-adjoint of $\Res^{G}_{H} \colon \KKG\to \KKH$  for abstract reasons has been observed in Remark \ref{wetoigjowergerfgrfw}.
 In the following we identify it   
 explicitly with a functor induced by a functor on the algebra level provided $H$ is a subgroup of $G$ of finite index.
\begin{construction}
Let $H$ be any subgroup of $G$.
We define the coinduction functor
$$\Coind_{H}^{G} \colon \Fun(BH,\nCalg)\to \Fun(BG,\nCalg)\, , \quad A\mapsto \Coind_{H}^{G}(A) \coloneqq C_{b}(G,A)^{H}\, ,$$
where $ C_{b}(G,A)^{H}$ is the subspace of the algebra
$C_{b}(G,A)$ of  bounded functions $f \colon G\to A$   which are $H$-invariant in the sense that $f(gh)=h^{-1}f(g)$ for all $g$ in $G$ and $h$ in $H$.  The $*$-algebra structure on $ C_{b}(G,A)^{H}$ is pointwise induced by the $*$-algebra structure on $A$, and the $G$-action is by left-translation of functions.
Note that $ C_{b}(G,A)^{H}$ is not separable, in general.
We furthermore define a natural transformation 
\begin{equation}\label{qedoiqjoiqwedqewdq}
\Res^{G}_{H} \circ \Coind_{H}^{G}\to  \id 
\end{equation}
whose value at $A$ is the $H$-equivariant evaluation map
$C_{b}(G,A)^{H} \ni  f\mapsto f(e) \in A$.
\hB
\end{construction}
It is straightforward to check that
\eqref{qedoiqjoiqwedqewdq} is a counit of an adjunction \begin{equation}\label{regepojwerkpgergergwef}
\Res_{H}^{G}:\Fun(BG,\nCalg)\rightleftarrows\Fun(BH,\nCalg):\Coind_{H}^{G}\, ,
\end{equation} {see also \cite[Lem.\ 4.2]{Balmer_2015}.}
 
\begin{prop}\label{efoiwegerregwerg} If $H$ is a subgroup of finite index in $G$, then  the adjunction \eqref{regepojwerkpgergergwef} descends to an adjunction 
\begin{equation}\label{regepojwerkpgergergwef1}
\Res_{H}^{G}:\KKG\rightleftarrows \KKH:\Coind_{H}^{G}\, .
\end{equation}
\end{prop}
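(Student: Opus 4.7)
The key observation is that when $[G:H]$ is finite, the coinduction functor $\Coind_H^G$ and the induction functor $\Ind_H^G$ coincide as functors $\Fun(BH,\nCalg) \to \Fun(BG,\nCalg)$: comparing Construction \ref{qroigjoerwgergrewgregweg} with $\Coind_H^G(B) = C_b(G,B)^H$, the only difference is the finite support condition \ref{qroigjoerwgergrewgregweg}.\ref{ergjweiogergregwgr1}, which is vacuous when $G/H$ is finite. In particular, $\Coind_H^G$ preserves separability, and by Lemma \ref{ewoigjeorgregerwgergew} the composite $\kkG \circ \Coind_H^G \colon \Fun(BH,\nCalg) \to \KKG$ is $s$-finitary and sends $\kkHsk$-equivalences to equivalences; hence $\Coind_H^G$ descends to an exact, colimit- and compact-object-preserving functor $\KKH \to \KKG$, again denoted by $\Coind_H^G$.

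To descend the algebra-level adjunction \eqref{regepojwerkpgergergwef} to the KK-level, I would invoke the following general principle: a $1$-categorical adjunction $L \dashv R \colon \cC \rightleftarrows \cD$ with unit $\eta$ and counit $\epsilon$ descends to an $\infty$-adjunction between Dwyer--Kan localizations $\cC[W_\cC^{-1}] \rightleftarrows \cD[W_\cD^{-1}]$ whenever $L$ preserves $W_\cC$ and $R$ preserves $W_\cD$. This is because the universal property \eqref{erwgoijowiergwergwregerg} of the Dwyer--Kan localization allows one to descend $\eta$ and $\epsilon$ to natural transformations between the induced functors, while the triangle identities, being equalities of morphisms of $C^*$-algebras, are preserved under localization. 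In our situation, $\Res_H^G$ preserves $\kkGsk$-equivalences by Lemma \ref{erwoigjwegerregwefwefewfewfwefwefwefwefwefeg} and $\Coind_H^G$ preserves $\kkHsk$-equivalences by the first step, so the principle applies and yields the adjunction $\Res_H^G \dashv \Coind_H^G$ on $\KKG \rightleftarrows \KKH$.

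The main subtlety to handle carefully is the interaction with the $s$-finitary extension: strictly speaking, $\KKG = \Ind(\KKGs)$ and the KK-level functors arise by $\Ind$-completion from the separable case. One therefore needs to first descend the adjunction on the separable level---using that both $\Res_H^G$ and $\Coind_H^G$ preserve separability in the finite-index case---and then verify compatibility with $\Ind$-completion via the fact that $\Res_H^G$ preserves colimits and compact objects (Theorem \ref{reguergiweogwergegwergwerv}.\ref{reguergiweogwergegwergwerv1}). Alternatively, one may verify the adjunction directly by checking, for compact generators $A \in \KKGs$ and $B \in \KKHs$, that the map $\map_{\KKG}(A, \Coind_H^G(B)) \to \map_{\KKH}(\Res_H^G(A), B)$ induced by the counit is an equivalence, and then extending to arbitrary $A, B$ by compact generation and the preservation properties of $\Res_H^G$ and $\Coind_H^G$.
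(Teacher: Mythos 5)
Your proof is correct and takes essentially the same route as the paper: the key point in both is that $\Coind_{H}^{G}\cong \Ind_{H}^{G}$ for a finite-index subgroup, so that Lemma \ref{ewoigjeorgregerwgergew} (together with Lemma \ref{erwoigjwegerregwefwefewfewfwefwefwefwefwefeg} for $\Res^{G}_{H}$) shows both functors descend. The paper then simply asserts that descending both functors suffices, while you spell out the remaining formalities (descent of unit, counit and triangle identities through the Dwyer--Kan localization, and compatibility with the $\Ind$-completion) that the paper leaves implicit.
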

\begin{proof}
By Lemma \ref{erwoigjwegerregwefwefewfewfwefwefwefwefwefeg}  we know that  $\Res_{H}^{G}$ descends to the stable $\infty$-categories.  
In order to show the proposition 
it therefore suffices to show that
$\Coind_{H}^{G}$ also descends. But this follows from 
Lemma \ref{ewoigjeorgregerwgergew} since  we have an isomorphism
$\Ind_{H}^{G}\cong \Coind_{H}^{G}$ for subgroups $H$ of finite index.
\end{proof}

\begin{rem}
 In this remark we explain  why it is not clear whether   Proposition \ref{efoiwegerregwerg} holds true
  in the case when the index of the subgroup $H$ of $G$ is not finite. 
  We would like to show that 
 $$\kkG\circ \Coind_{H}^{G} \colon \Fun(BH,\nCalg)\to \KKG$$
 is $s$-finitary,  reduced, 
 $\mathbb{K}^{H}$-stable and split exact. 
The properties    split exact, reduced  and
$s$-finitary are straightforward. The 
problematic {property is  
$\mathbb{K}^{H}$-stability. For example, in order to verify the special case of $\mathbb{K}$-stability,}
we 
would like to use that the canonical map 
$$ {K(\ell^{2})}\otimes 
  C_{b}(G,  A)^{H}   \to 
 C_{b}(G,  {K(\ell^{2})}\otimes A)^{H}$$ is an isomorphism. But this is wrong,
 see e.g.\ \cite{dwill} for a detailed discussion. 
In the verification of  $\mathbb{K}^{G}$-stablity we would need      the analog of  Lemma \ref{rehoijerthtrhtrehtrheg} for $\Coind_{H}^{G}$ in place of $\Ind_{H}^{G}$ which fails for a similar reason.
\hB %
\end{rem}

The natural transformation \eqref{wervwihiowecwecdscac} descends to a natural transformation of functors
\begin{equation}\label{frqfwefwefwefqef}
\kkG(\iota) \colon \id\to \Res^{G}_{H}\circ \Ind^{G}_{H} \colon \KKH\to \KKH\, .
 \end{equation}   
 
%
The following proposition {settles Assertions \ref{qrougoegwergreegerewg} and \ref{eogiowpergwrewrege}} of Theorem \ref{wtgijoogwrewegewgrg}.
\begin{prop}\label{fhfufweiqwfewfqffqwfewf1}
\mbox{}
\begin{enumerate} 
\item   \label{qioerjoiqregrqfqffewfq} The transformation  
 $\kkG(\iota)$ 
  is the unit of an adjunction  
$$\Ind^{G}_{H}: \KKH\rightleftarrows \KKG:\Res^{G}_{H}\, .$$
\item\label{rthoijrohrrhrtrehth} The transformation  \eqref{fqwfoiofqwefqewfqewf}  naturally induces an equivalence of functors 
$${-}\rtimes_{?} H\to \Ind_{H}^{G}(-)\rtimes_{?}G \colon \KKH\to \KK$$
{for $?$ in $\{{r},\max\}$.}
	\end{enumerate}
\end{prop}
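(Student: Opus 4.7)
My approach is to exhibit $\kkG(\iota)$ as the unit of the desired adjunction by verifying that the associated mate
\[
\theta_{A,B} \colon \map_{\KKG}(\Ind_H^G(A), B) \lto \map_{\KKH}(A, \Res_H^G(B))\, ,
\]
obtained by applying $\Res_H^G$ and pre-composing with $\kkG(\iota)_A$, is an equivalence of spectra for all $A$ in $\KKH$ and $B$ in $\KKG$. By Theorem \ref{reguergiweogwergegwergwerv} the functors $\Ind_H^G$ and $\Res_H^G$ preserve colimits and compact objects; since both $\KKG$ and $\KKH$ are compactly generated (Remark \ref{weoiguheijwogergfregwerf}), each side of $\theta_{A,B}$ sends colimits in $A$ to limits and preserves limits in $B$. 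It therefore suffices to check that $\theta_{A,B}$ is an equivalence when $A$ and $B$ are compact, i.e.\ lie in the images of $y^H$ and $y^G$ applied to separable algebras. Both $\Ind_H^G$ and $\Res_H^G$ are exact and hence commute with the suspension equivalence (Lemma \ref{qefoijhfoiqfejoewfqwef}), so the check further reduces to $\pi_0$ of $\theta_{A,B}$. By Proposition \ref{eroigowregwergregwgreg1} and Proposition \ref{woiejgwoegregreggwergw}, this $\pi_0$-group computes the classical $\KKth^G$- or $\KKth^H$-group, and by construction of $\iota$ the map on $\pi_0$ coincides with the classical adjunction unit from \cite[(20)]{MR2193334}, which is an isomorphism.

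\textbf{Plan for Part 2.} Both functors $-\rtimes_? H$ and $\Ind_H^G(-) \rtimes_? G$ from $\KKH$ to $\KK$ preserve all colimits by Theorem \ref{reguergiweogwergegwergwerv}.\ref{reguergiweogwergegwergwerv2} and Theorem \ref{reguergiweogwergegwergwerv}.\ref{reguergiweogwergegwergwerv3}. By the same compact-generation argument as in Part 1, it therefore suffices to verify that the natural transformation \eqref{fqwfoiofqwefqewfqewf} is a $\kks$-equivalence when evaluated on $\kkH(A)$ for a separable $A$ in $\Fun(BH, \nCalgs)$. At this level, Green's imprimitivity theorem (\cite{greens} for $? = \max$ and its reduced counterpart from \cite{kasparovinvent}) produces a strong Morita equivalence between $A \rtimes_? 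H$ and $\Ind_H^G(A) \rtimes_? G$. The corresponding $\KKth$-class is represented, after $\mathbb{K}$-stabilization, by the explicit $C^*$-algebra homomorphism underlying \eqref{fqwfoiofqwefqewfqewf}: a direct computation on the generators $(a,h)$ described in Construction \ref{qreogijqeroigergwgwegwegwergw} identifies the composition \eqref{fqwfoiofqwefqewfqewf} with a corner embedding into $\Ind_H^G(A) \rtimes_? G \otimes K(\ell^2(G/H))$. Since $\kks$ is $\mathbb{K}$-stable by Theorem \ref{qroifjeriogerggergegegweg}.\ref{qoirwfjhqoierggrg2}, this corner embedding is sent to an equivalence, providing the desired result.

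\textbf{Main obstacle.} The principal technical difficulty in both parts lies in correctly identifying a natural transformation constructed at the $C^*$-algebra level and propagated to the $\infty$-categorical KK-categories with classical $\KKth$-theoretic data. For Part 1, this amounts to checking that $\kkG(\iota)$ induces the classical adjunction unit on $\KKGsk$-morphism groups, which follows from the compatibility of $\ho$ with triangulated structures asserted in Theorem \ref{weighigregwgregw9}.\ref{weoitgjowergwegregw9} together with the fact that $\iota$ is the same natural transformation used to construct the triangulated adjunction. For Part 2, the analogous difficulty is matching the explicit $C^*$-algebraic composition with Green's imprimitivity bimodule; this is essentially a bookkeeping exercise identifying the natural left $\Ind_H^G(A) \rtimes_? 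G$-module structure on $\ell^2(G/H) \otimes (A \rtimes_? H)$ with the one arising from the corner embedding.
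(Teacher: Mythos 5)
Your argument is correct and matches the paper's proof in both structure and substance: Part 1 reduces via compact generation and exactness to a $\pi_0$-statement that is identified with the Meyer--Nest adjunction \cite[(20)]{MR2193334}, and Part 2 reduces to separable coefficients and invokes Green's imprimitivity theorem realized by the $C^{*}$-homomorphism underlying \eqref{fqwfoiofqwefqewfqewf}. Two cosmetic points: reducing to compact $B$ in Part 1 requires that both sides preserve filtered colimits (not limits) in $B$ once $A$ is compact (which holds because $\Ind_{H}^{G}$ preserves compact objects and $\Res^{G}_{H}$ preserves colimits), and the last step of Part 2 is cleaner if you argue, as the paper does, that a homomorphism inducing a Morita equivalence is a $\kk_{0}$-equivalence and that $\kk$ inverts $\kk_{0}$-equivalences by construction, rather than appealing to $\mathbb{K}$-stability, which as defined only inverts maps of the form \eqref{342uihuihfewfwefweferf} and does not by itself cover full corner embeddings.
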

\begin{proof}  
For Assertion \ref{qioerjoiqregrqfqffewfq} we
  must show that    the natural transformation   of functors $(\KKH)^{\op}\times  \KKG\to \Sp^{\la}$
 \begin{eqnarray}\label{qwfefqqefqffeqwffwffqfqewfef}
r^{G}_{H} \colon \KKG( \Ind^{G}_{H}(-),-)&\xrightarrow{\Res^{G}_{H}}&\KKH(  \Res^{G}_{H}(\Ind^{{G}}_{H}(-)),
 \Res^{G}_{H}(-))\\&\xrightarrow{\eqref{frqfwefwefwefqef}}& \KKH( -,
 \Res^{G}_{H}(-))\, .\nonumber
\end{eqnarray} 
is an equivalence.  The functors in the domain and the target of  \eqref{qwfefqqefqffeqwffwffqfqewfef} send filtered colimits in $\KKH$ to limits in $\Sp$. Since $\KKH$ is generated by $\KKHs$ under filtered colimits it suffices to show that the restriction of 
\eqref{qwfefqqefqffeqwffwffqfqewfef} to
$(\KKHs)^{\op} \times \KKG$ is an equivalence. Since now both sides preserve filtered colimits in $\KKG$  it suffices to consider the restriction to 
$(\KKHs)^{\op} \times \KKGs$. Finally,   
since both sides are compatible with fibre sequences
it actually  suffices to show that the transformation
$$\pi_{0}r^{G}_{H} \colon \pi_{0}\KKGs( \Ind^{G}_{H}(-),-)\to \pi_{0}\KKHs(-,\Res^{G}_{H}(-))$$ of group-valued functors on $(\KKHs)^{\op} \times \KKGs$
{is} an  isomorphism. 
But this is the same as that in  \cite[(20)]{MR2193334}, which is shown to be an isomorphism in loc.\,cit. This finishes the proof that \eqref{qwfefqqefqffeqwffwffqfqewfef} is an equivalence and hence of Assertion \ref{qioerjoiqregrqfqffewfq}.

To see the Assertion \ref{rthoijrohrrhrtrehth} first observe that  the domain and the target of 
the transformation are s-finitary. 
Hence it suffices to check the equivalence after restriction to $\Fun(BH,\nCalgs)$. 
We now use 
Green's Imprimitivity Theorem which states that  for $A$ in $\Fun(BH,\nCalgs)$ (and also more general $H$-$C^{*}$-algebras)
the map of $C^{*}$-algebras $$A\rtimes_{?}H\to \Ind_{H}^{G}(A)\rtimes_{?}G$$ induced by   \eqref{fqwfoiofqwefqewfqewf}
is a Morita equivalence  and hence a $\kk_{0}$-equivalence. We finally use that $\kk$ inverts $\kk_{0}$-equivalences.\footnote{The text before  \cite[(9)]{MR2193334}
suggests that the authors wanted to state the version of  Green's Imprimitivity Theorem
on the level of their triangulated categories. This version {follows} from our $\infty$-categorical version by going to the homotopy category.
Our proof is essentially the same argument  as envisaged by Meyer--Nest for the  justification of \cite[(9)]{MR2193334} with the crucial addendum, that the Morita equivalence is induced by a morphism on the level of $C^{*}$-algebras.}  
\end{proof}

\begin{rem}
The Proposition \ref{fhfufweiqwfewfqffqwfewf1}.\ref{qioerjoiqregrqfqffewfq} identifies the right adjoint of $\Ind_{H}^{G} \colon \KKH\to \KKG$  whose existence was predicted by Remark \ref{wetoigjowergerfgrfw} with $\Res^{G}_{H} \colon \KKG\to \KKH$.
\hB
\end{rem}

       Recall the internal morphism object functor $\kkG_{?}(-,-)$ from  \eqref{quhfiufhqewwefqwef}. 
    \begin{kor} For $?$ in $\{\min,\max\}$ we have an equivalence of functors  
    \begin{equation}\label{fvfdsvkmsfdklvmlsdvsfdvsfv} 
\Res^{G}_{H}\circ \kkG_{?}(-,-)\simeq \kkH_{?}(\Res^{G}_{H}(-),\Res^{G}_{H}(-))\colon{(\KKG)}^{\op} \times \KKG\to \KKH\, .
\end{equation}
 \end{kor}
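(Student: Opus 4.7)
The plan is to verify the equivalence by applying the Yoneda lemma in $\KKH$, reducing the claim to a chain of equivalences of mapping spectra that combine the induction/restriction adjunction with the projection formula.

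More precisely, fix $B, C$ in $\KKG$ and let $D$ be an arbitrary object of $\KKH$. I would compute $\KKH(D, \Res^G_H \kkG_?(B,C))$ by the following chain:
\begin{align*}
\KKH(D, \Res^G_H \kkG_?(B,C)) &\simeq \KKG(\Ind_H^G D, \kkG_?(B,C)) \\
&\simeq \KKG(\Ind_H^G D \otimes_? B, C) \\
&\simeq \KKG(\Ind_H^G(D \otimes_? \Res^G_H B), C) \\
&\simeq \KKH(D \otimes_? \Res^G_H B, \Res^G_H C) \\
&\simeq \KKH(D, \kkH_?(\Res^G_H B, \Res^G_H C))\,.
\end{align*}
The first and fourth equivalences use the adjunction $(\Ind_H^G, \Res^G_H)$ from Proposition \ref{fhfufweiqwfewfqffqwfewf1}.\ref{qioerjoiqregrqfqffewfq} (equivalently, Theorem \ref{wtgijoogwrewegewgrg}.\ref{qrougoegwergreegerewg}). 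The second and fifth equivalences are the defining universal property of the internal morphism functors $\kkG_?$ and $\kkH_?$ discussed around \eqref{quhfiufhqewwefqwef}. The middle equivalence is the projection formula from Corollary \ref{evigjwelgwbfgbdb}, applied in $\KKG$.

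All equivalences are natural in $D$, $B$ and $C$, so the Yoneda lemma in $\KKH$ produces the asserted equivalence of functors $(\KKG)^{\op}\times \KKG\to \KKH$. The main (and in fact only) non-formal ingredient here is the projection formula (Corollary \ref{evigjwelgwbfgbdb}), which in turn rests on Lemma \ref{rehoijerthtrhtrehtrheg} at the level of $C^{*}$-algebras together with the symmetric monoidal refinements of $\kkG$, $\kkH$, and the change-of-groups functors established earlier in the paper. The remaining steps are purely formal manipulations of adjunctions and universal properties, so I do not anticipate any obstacle beyond carefully checking naturality of the composite equivalence in all three variables.
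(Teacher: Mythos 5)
Your proof is correct and is exactly the argument the paper intends: the paper's own proof simply states that the claim is "a formal consequence of the adjunction in Proposition \ref{fhfufweiqwfewfqffqwfewf1}.\ref{qioerjoiqregrqfqffewfq} together with Corollary \ref{evigjwelgwbfgbdb}", and your Yoneda-lemma chain is the standard unwinding of that statement using precisely those two ingredients plus the defining property of the internal hom.
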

 \begin{proof}
 This is  a formal consequence of the adjunction in Proposition \ref{fhfufweiqwfewfqffqwfewf1}.\ref{qioerjoiqregrqfqffewfq}  {together with} Corollary \ref{evigjwelgwbfgbdb}.
 \end{proof}



We assume that $H$ is a finite group. If $A$ is in $\nCalg$, then we
consider the homomorphism
$$\epsilon_{A} \colon A\to \Res_{H}(A)\rtimes H\,, \quad a\mapsto\frac{1}{|H|}  \sum_{h\in H} (a,h)\, .$$ 
{Recall that $\Res_{H}(A)$ denotes the $C^*$-algebra $A$ equipped with the trivial $H$-action.}
The family 
$\epsilon=(\epsilon_{A})_{A\in \nCalg}$ is a natural transformation
\begin{equation}\label{wregoijwoierfwerfwerf}
\epsilon \colon \id\to \Res_{H}(-)\rtimes H
\end{equation}
of endofunctors of $\nCalg$.
It gives rise to a natural transformation 
 \begin{equation}\label{efwqewfewdqed}
\GJ{H}\colon \KKH(\Res_{H}(-),-)\xrightarrow{-\rtimes_{H}} \KK(\Res_{H}(-)\rtimes H,-\rtimes H)\xrightarrow{\kk(\epsilon)^{*}} \KK(-,-\rtimes H)
\end{equation}
of functors from
$\KK^{\op}\times \KKH$ to $\Sp$. 
Here $\GJ{}$ stands for Green--Julg.  


 The first assertion of the following theorem is a spectrum-level generalization of the classical Green--Julg theorem.
For a finite group $ H$ it explicitly identifies the right-adjoint 
of $\Res_{H} \colon \KK\to \KK^{H}$   whose existence was predicted   by Remark \ref{wetoigjowergerfgrfw}.  
\begin{theorem}\label{fhfufweiqwfewfqffqwfewf}  \mbox{}
Let  $H$ be a finite group. Then  the transformation \eqref{wregoijwoierfwerfwerf} induces the unit of an adjunction
\begin{equation}\label{qwriogjqoigjofewffwfwefqffqef}
\Res_{H}(-):\KK\rightleftarrows  \KKH:- \rtimes H\, .
\end{equation} 
\end{theorem}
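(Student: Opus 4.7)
The plan is to show that the natural transformation
\[
\GJ{H}\colon \KK^H(\Res_H(-),-)\lto \KK(-,-\rtimes H)
\]
of functors $\KK^{\op}\times \KK^H\to \Sp^{\la}$ from \eqref{efwqewfewdqed} is an equivalence. By the standard $\infty$-categorical adjunction criterion (e.g.\ \cite[Prop.\ 5.2.2.8]{htt}), this will automatically exhibit $\epsilon$ as the unit of an adjunction $\Res_H\dashv -\rtimes H$. The strategy closely parallels the proof of Proposition \ref{fhfufweiqwfewfqffqwfewf1}.\ref{qioerjoiqregrqfqffewfq}: reduce first to separable algebras, then to $\pi_0$, and finally invoke the classical Green--Julg theorem.

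The first reduction uses compactness. The functor $\Res_H\colon \KK\to \KK^H$ arises from the trivial homomorphism $H\to \{e\}$, so Theorem \ref{reguergiweogwergegwergwerv}.\ref{reguergiweogwergegwergwerv1} implies that it preserves colimits and compact objects. Similarly, by Theorem \ref{reguergiweogwergegwergwerv}.\ref{reguergiweogwergegwergwerv3}, $-\rtimes H\colon \KK^H\to \KK$ preserves colimits and compact objects. Consequently, for $A$ in $\KK_{\sepa}$ and $B$ in $\KK^H_{\sepa}$, both $\KK^H(\Res_H(A),-)$ and $\KK(A,-\rtimes H)$ preserve filtered colimits in the second variable, while both functors send filtered colimits to limits in the first variable. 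Since $\KK\simeq \Ind(\KK_{\sepa})$ and $\KK^H\simeq \Ind(\KK^H_{\sepa})$ by Definition \ref{wtrohjwrthrthehrth}, it suffices to verify that $\GJ{H}_{A,B}$ is an equivalence of spectra for $A$ in $\KK_{\sepa}$ and $B$ in $\KK^H_{\sepa}$.

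For the second reduction, I would observe that for fixed separable $A$, both sides are exact functors from $\KK^H_{\sepa}$ to $\Sp^{\la}$, and symmetrically in the first variable. Using the stability of source and target together with the shift structure on $\KK_{\sepa}$ and $\KK^H_{\sepa}$, the natural transformation $\GJ{H}_{A,B}$ is an equivalence of spectra if and only if $\pi_0\GJ{H}_{A,\Sigma^n B}$ is an isomorphism of abelian groups for all $n\in \Z$. Hence it suffices to show that $\pi_0\GJ{H}_{A,B}$ is an isomorphism for all separable $A$ and $B$. Since separable algebras are $\sigma$-unital, Proposition \ref{eroigowregwergregwgreg1} provides identifications $\pi_0\KK^H(\Res_H(A),B)\cong \KK^H_0(\Res_H(A),B)$ and $\pi_0\KK(A,B\rtimes H)\cong \KK_0(A,B\rtimes H)$.

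The main obstacle is to verify that under these identifications the map $\pi_0\GJ{H}_{A,B}$ is precisely the classical Green--Julg map given by Kasparov products with the class of $\epsilon_A$ and the descent homomorphism. Concretely, one must check that $\kk$ applied to the algebra-level homomorphism $\epsilon_A$ represents the corresponding class in $\KK_0$, and that the equivalence $\kk(-\rtimes H)\simeq (-\rtimes H)\circ \kk^H$ from Theorem \ref{reguergiweogwergegwergwerv}.\ref{reguergiweogwergegwergwerv3} induces Kasparov's descent isomorphism on $\pi_0$. Both compatibilities should follow from the naturality of the identification in Proposition \ref{eroigowregwergregwgreg1} applied to the homomorphism $\epsilon_A$ of $C^{*}$-algebras, together with the fact that the descent map at the level of $\kk$ is by construction induced by $\rtimes_{?} H$ on morphisms. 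Once this bookkeeping is in place, the classical Green--Julg isomorphism of Julg \cite{julg}, stated on the triangulated level as \cite[(14)]{MR2193334}, finishes the proof.
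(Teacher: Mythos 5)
Your proposal is correct and follows essentially the same route as the paper: reduce by compactness and preservation of filtered colimits to separable algebras, reduce by stability and suspension to an isomorphism on $\pi_{0}$, and then invoke the classical Green--Julg theorem. The only difference is one of detail at the last step --- the paper explicitly sketches the Green--Julg isomorphism on the level of Kasparov modules (which in particular carries out the ``bookkeeping'' you flag, namely that the abstractly defined map $\pi_{0}\GJ{H}$ agrees with the classical one), whereas you defer this to the literature.
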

 \begin{proof}   
 We must show that \eqref{efwqewfewdqed} is an equivalence.
   Since the functors in the domain and the target of   \eqref{efwqewfewdqed} send filtered colimits  in   $\KK$  to limits and $\KK$ is generated by $\KKs$ under filtered colimits it suffices to show that the restriction of \eqref{efwqewfewdqed}
to $\KKs^{\op}\times \KKH$ is an equivalence. Since this restriction preserves filtered colimits in $\KKH$ it suffices to consider the restriction to $\KKs^{\op}\times \KKHs$. 
Since both  sides are compatible with suspensions it
suffices to show that we get an isomorphism of group-valued functors after applying $\pi_{0}$.
It thus suffices to show that we get an isomorphism 
\begin{equation}\label{qdewdewdqe}
\KKth^{H}(\Res_{H}(A),B)\xrightarrow{\cong} \KKth( A,B\rtimes H)
\end{equation}
for every $A$ in $\nCalg_{\sepa}$ and
$B$ in $\Fun(BG,\nCalg_{\sepa})$. This is the classical
Green--Julg theorem (see \cite[Thm.\ 20.2.7]{blackadar} for  the case $A=\C)$.

In the following we sketch the argument.
For $\KKth^{H}$ we work with Kasparov $(\Res_{H}(A),B)$-modules $((M,\rho),\phi,F)$ such that $F$ is $H$-equivariant, i.e.\ $\rho(h)F=F\rho(h)$ for all $h$ in $H$.
Here $\rho(h) \colon M\to M$  denotes the action of $h$ in $H$ on $M$    such that  
$$\langle \rho(h) (m),m'\rangle= h( \langle m, \rho(h^{-1})(m') \rangle)$$ for all $m,m'$ in $M$.  If $H$ acts non-trivially on $B$, then $\rho(h)$ is not right-$B$-linear in general.   Following the definitions {the} map \eqref{qdewdewdqe} sends 
the $(\Res_{H}(A),B)$-module
$((M,\rho),\phi,F)$ to the   $(A,B\rtimes H)$-module
\begin{equation}\label{qewfpokqwpofqewfewf}
(M\rtimes H,  ( \phi\rtimes H)\circ \epsilon_{A}, F\rtimes H)\, .
\end{equation} 
Here
 $M\rtimes H$ is the Hilbert $(B\rtimes H)$-module whose underlying $\C$-vector space is
given by $\bigoplus_{h\in H} M$. We write the generators, which have only one non-zero entry $m$ in the summand indexed by $h$,
  in the form $(m,h)$. Then the  right-action of $  B\rtimes H$ is given by
  $(m,h)(b,h')=(\rho(h^{\prime,-1})(m)b, hh')$. Furthermore, the $(B\rtimes H)$-valued scalar product
  is given by $\langle (m,h),(m',h') \rangle \coloneqq ( \langle \rho(h^{\prime,-1}h)(m),m'  \rangle_{E}, h^{-1}h')$. The operator
  $F\rtimes H$ acts as $F(m,h)=(F(m),h)$. 
  Finally, the representation $\phi\rtimes H$ of $A\rtimes H$ on $M\rtimes H$ is given by $(\phi\rtimes H)(a,h)(m,h')=(\phi(a)(m),hh')$.

In the following we find a simpler representative of the class of \eqref{qewfpokqwpofqewfewf} which is more directly related with $((M,\rho),\phi,F)$.
We consider the projection $\pi_{1}\in B(M\rtimes H)$ given by
$$\pi_{1}(m,h) \coloneqq \frac{1}{H}\sum_{h'\in H} (\rho(h^{\prime})(m),h'h) \, .$$
This projection commutes with $F\rtimes H$ and
$( \phi\rtimes H)(\epsilon_{A}(a))$ for every $a$ in $A$. Furthermore,
$$\pi_{1}\circ ( \phi\rtimes H)(\epsilon_{A}(a))=( \phi\rtimes H)(\epsilon_{A}(a)) \quad \text{and}\quad (1-\pi_{1})\circ ( \phi\rtimes H)( \epsilon_{A}(a))=0$$
for all $a$ in $A$.
We decompose  
\begin{align*}
(M\rtimes H,  ( \phi\rtimes H)\circ \epsilon_{A}, F\rtimes H)&\\
\cong  
(\pi_{1}(M\rtimes H),  ( \phi\rtimes H)\circ \epsilon_{A}&,  \pi_{1} \circ (F\rtimes H))\oplus ((1-\pi_{1})M\rtimes H,   0, (1-\pi_{1})\circ (F\rtimes H))\, .
\end{align*}
 The second summand is degenerate. Hence the image of 
$((M,\rho),\phi,F)$ under \eqref{qdewdewdqe} is also  represented by 
$$(\pi_{1}(M\rtimes H),  ( \phi\rtimes H)\circ \epsilon_{A},  \pi_{1}\circ (F\rtimes H))\, .$$
The map
$$ \psi \colon M\to M\rtimes H\, , \quad m\mapsto \frac{1}{|H|}\sum_{h\in H} (m,h)$$ induces a $\C$-linear isomorphism
of $M$ with $\pi_{1}(M\rtimes H)$.
For $a$ in $A$ and $m$ in $M$ we have
$$\psi(\phi(a)m)= ( \phi\rtimes H)(\epsilon_{A}(a))\psi(m)\, .$$
Furthermore,
$$\psi(F(m))=\pi_{1}( (F\rtimes H)(\psi(m)))\, .$$
 We equip $M$ with the right-$H$-action by
$m h \coloneqq \rho(h^{-1})(m)$, then we get a covariant right-representation of $B$ on $M$ and hence a right-$(B\rtimes H)$-module structure. 
We let $M'$ be $M$ with this  right $(B\rtimes H)$-module structure.
The map $\psi$ then becomes $(B\rtimes H)$-linear from $ M'$ to $\pi_{1}(M\rtimes H)$. If we finally define the $(B\rtimes H)$-valued scalar product on $M'$ by 
$$\langle m,m'\rangle' \coloneqq \frac{1}{|H|}\sum_{h\in H} (\langle \rho(h)(m),m'\rangle_{E} ,h^{-1})\, ,$$
 then $\psi$ becomes an isomorphism of right Hilbert $(B\rtimes H)$-modules from $M'$ to $\pi_{1}(M\rtimes H)$.
We conclude that   
 the image of 
$((M,\rho),\phi,F)$ under \eqref{qdewdewdqe}  is represented by the  Kasparov 
$(A,B\rtimes H)$-module 
$(M',\phi,F)$.

 For the inverse consider a class in
 $\KKth(A,B\rtimes H)$ represented by 
 $(M',  \phi, F)$.
 We consider $M'$ as  
a right Hilbert $B$-module  $M$ by restriction along $B\mapsto B\rtimes H$, $b\mapsto (b,1)$.  
We further define the map
 $$t \colon B\rtimes H\to B\, , \quad   \sum_{h\in H} (b_{h},h)\mapsto b_{e}$$ 
 and the  $B$-valued scalar product  on $M$ by 
 $$\langle m,m'\rangle \coloneqq t(\langle m,m'\rangle')\, .$$
 We note that the right $B$-module $M$ is essential so that 
we get a right $H$-action  $(m,h)\mapsto mh$ on $M$. We then define
$\rho(h)(m) \coloneqq m h$.
Then the  inverse sends the class of   $(M',  \phi, F)$ to the class of 
$((M,\rho),\phi,F)$.

  It is easy to see that these constructions are inverse to each other up to isomorphism.  Since they preserve degenerate modules  and
  are compatible with direct sums and  homotopies they induce inverse to each other isomorphisms between Kasparov groups.
 This finishes the verification that \eqref{qdewdewdqe} is an isomorphism.
\end{proof}

Let $B$ be in $\nCalg$. Then we have a canonical homomorphism 
 $$\lambda_{B} \colon \Res_{G}(B)\rtimes_{\max}G\to B$$
of $C^{*}$-algebras  which corresponds to the covariant representation 
$(\id_{B},\triv)$ consisting of the identity of $B$ and the trivial representation of $G$. The family 
$\lambda=(\lambda_{B})_{B\in \nCalg}$ is a natural transformation
$\lambda \colon \Res_{G}(-)\rtimes_{\max}G\to \id$ of endofunctors of $\nCalg$. 
The following theorem is the spectrum-version   of the dual Green-Julg theorem.\footnote{We thank J.\ Echterhoff for convincing us that the statement is true in this generality.} It explicitly identifies the right-adjoint 
of $-\rtimes_{\max}G \colon \KKG\to \KK$ whose existence was predicted   by Remark \ref{wetoigjowergerfgrfw}.
\begin{theorem}
The natural  transformation  $\lambda$ is the counit of an adjunction 
$$-\rtimes_{\max}G: \KKG \rightleftarrows \KK: {\Res_{G}}\, .$$
\end{theorem}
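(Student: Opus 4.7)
The plan is to show that the natural transformation
\[
\mu_{A,B}\colon \KKG(A,\Res_G(B)) \xrightarrow{-\rtimes_{\max}G} \KK(A\rtimes_{\max}G,\Res_G(B)\rtimes_{\max}G)\xrightarrow{\lambda_{B,*}} \KK(A\rtimes_{\max}G,B)
\]
induced by $\lambda$ is an equivalence of bifunctors $(\KKG)^{\op}\times\KK\to \Sp^{\la}$. This will proceed by a standard reduction to separable algebras and then to $\pi_0$, after which the statement becomes the classical dual Green--Julg isomorphism.

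First, I would reduce to separable algebras in both variables. By Theorem \ref{reguergiweogwergegwergwerv}.\ref{reguergiweogwergegwergwerv3}, the functor $-\rtimes_{\max}G\colon \KKG\to \KK$ preserves colimits and compact objects, and by Lemma \ref{erwoigjwegerregwefwefewfewfwefwefwefwefwefeg} the same holds for $\Res_G\colon \KK\to \KKG$. Consequently both $\KKG(-,\Res_G(-))$ and $\KK(-\rtimes_{\max}G,-)$ turn filtered colimits in the first variable into limits of spectra, and preserve filtered colimits in the second variable once the first is held compact. Since $\KKG$ is generated under filtered colimits by the compact objects, namely the essential image of $y^G\colon \KKGs\to \KKG$, and similarly for $\KK$, it suffices to prove the equivalence on $(\KKGs)^{\op}\times \KKs$. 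Since both sides are compatible with the suspension equivalences, the claim further reduces to an isomorphism of abelian groups on $\pi_0$.

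After these reductions, for $A$ in $\Fun(BG,\nCalg_\sepa)$ and $B$ in $\nCalg_\sepa$ (equipped implicitly with the trivial $G$-action via $\Res_G$), the task is to show that the composition
\[
\KKth^{G}(A,\Res_G(B)) \xrightarrow{-\rtimes_{\max}G} \KKth(A\rtimes_{\max}G,\Res_G(B)\rtimes_{\max}G) \xrightarrow{(\lambda_B)_*} \KKth(A\rtimes_{\max}G,B)
\]
is an isomorphism. This is the classical dual Green--Julg theorem, and an explicit inverse can be given at the level of Kasparov cycles: a Kasparov $(A\rtimes_{\max}G,B)$-module $(M,\psi,F)$ restricts along the canonical inclusion $A\to A\rtimes_{\max}G$ and the unitary representation $u\colon G\to M(B(M))$ implementing the action of $G\subseteq M(A\rtimes_{\max}G)$, giving a covariant representation of $(A,G)$ on $M$; the $G$-action $u$ endows $M$ with the structure of a Hilbert module over $\Res_G(B)$ making $(M,u,\psi,F)$ (after an $F$-averaging step as in the finite case) a Kasparov $G$-equivariant $(A,\Res_G(B))$-module. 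The universal property of $\rtimes_{\max}G$ and the definition of $\lambda$ as integration against the trivial representation ensure that this construction is inverse to $\mu_{A,B}$ up to operator homotopy and addition of degenerate cycles.

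The main obstacle is the careful verification that the Kasparov-level inverse just sketched really represents a well-defined map on $\KKth$-groups and is two-sided inverse to the map induced by $\lambda$; one must handle the standard issues concerning $G$-continuity of the operator $F$ and the need to $G$-average to produce an equivariant representative. Alternatively, and more economically, one can invoke the corresponding adjunction on the level of triangulated categories established in \cite{MR2193334} (where $(-\rtimes_{\max}G,\Res_G)$ is shown to be an adjoint pair) and simply observe that the unit and counit there agree with those constructed from $\lambda$ on our $\infty$-categorical level after passing to homotopy categories, invoking Proposition \ref{eroigowregwergregwgreg11} to identify the $\pi_0$ of our mapping spectra with Kasparov's groups.
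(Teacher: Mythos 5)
Your proposal follows essentially the same route as the paper: reduce via filtered colimits and compactness to $(\KKGs)^{\op}\times\KKs$, then via suspension-compatibility to $\pi_0$, where the statement becomes the classical dual Green--Julg isomorphism, proved by an explicit inverse on Kasparov cycles. One small correction: for the inverse the relevant subtlety is not an ``$F$-averaging step'' (which is unavailable for infinite $G$) but the need to represent classes by \emph{essential} modules so that the unitary representation of $G$ can be recovered from the multiplier action --- exactly the point the paper handles by citing \cite[Prop.\ 18.3.6]{blackadar}.
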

\begin{proof}
We must show that 
the  composition
\begin{equation}\label{werferfefwefref}
 \JG{G} \colon \KK^{G}(-, \Res_{G}(-))\xrightarrow{-\rtimes_{\max}G}
\KK(-\rtimes_{\max}G, \Res_{G}(-)\rtimes_{\max}G)
\xrightarrow{\kk(\lambda)_{*}} \KK(-\rtimes_{\max}G, -)
\end{equation} 
is an equivalence of functors from $(\KKG)^{\op}\times \KK$ to $\Sp$.
%
%
%
In the first step we use the fact that the functors in the domain and target of  \eqref{werferfefwefref}  send filtered colimits 
in $\KKG$ to limits. Since $\KKGs$ generates $\KKG$ under filtered colimits it suffices to show that the restriction of the transformation to $ (\KKGs)^{\op}\times \KK$ is an equivalence.   But
this restriction
preserves filtered colimits in $\KK$. Hence it suffices to consider the restriction 
to   $(\KKGs)^{\op}\times \KKs$.
  Since both sides  are compatible with  suspensions
it suffices to check 
 that we get an isomorphism  of group-valued functors  after applying $\pi_{0}$.
It thus suffices to show that 
 we get an isomorphism  of groups
 \begin{equation}\label{adsfasdfdfafda}
 \KKth^{G}(A,\Res_{G}(B))\xrightarrow{\cong}
 \KKth(A\rtimes_{\max}G,B )
\end{equation}   for every  
 $A$ in $\Fun(BG,\nCalg_{\sepa})$ and $B$ in $\nCalg_{\sepa}$.
 This is the classical dual Green--Julg theorem (see \cite[Thm.\ 20.2.7]{blackadar} for  the case $B=\C)$.

 For completeness we sketch the argument.
 On   level of Kasparov modules  the map   \eqref{adsfasdfdfafda}  sends the $(A,B)$-module 
 $((H,\rho),\phi,F)$ to the $(A\rtimes_{\max}G,B)$-module $(H,\tilde \phi,F)$, where
 $\tilde \phi \colon A\rtimes_{\max}G\to B(H)$ is the homomorphism canonically induced by the universal property of the maximal crossed product  by the covariant representation $(\phi,\rho)$, where  
  $\rho$ is    the unitary representation of $G$ on $H$.
   For the inverse we use that we can actually represent every 
   class in  $\KKth(A\rtimes_{\max}G,B )$ by an essential module \cite[Prop.\ 18.3.6]{blackadar}.    The 
    inverse sends such a module
$(H,\tilde \phi,F)$ to $((H,\rho),\phi,F)$, where 
    $\phi$ is the restriction of $\tilde \phi$ via the canonical embedding $A\to A\rtimes_{\max}G$ which exists since  $G$ is discrete,  and where $\rho$ is the unitary representation on 
$H$ induced by  
$\tilde \phi$ {using its essentialness.}
\end{proof}

\phantomsection \label{qwfefqqefqffeqwewrgpeoj2oigrtgwffwffqfqewfef}
This completes the proof of Theorem \ref{wtgijoogwrewegewgrg}.

\section{{Analytic} \texorpdfstring{$\boldsymbol{K}$}{K}-homology}
 
 Let $X$ be in $\ppGTop$ and $Y$ be a $G$-invariant closed subset {of $X$.}
 Then we have an exact sequence
 \begin{equation}\label{fqefweddqwedwd2}
0\to C_{0}(X\setminus Y)\to C_{0}(X)\to C_{0}(Y)\to 0
\end{equation}
 in $\Fun(BG,\nCalg)$. 
 Recall the Definition \ref{weotigjoewgfreggw9} of the notion of split-closedness of $Y$.

  \begin{prop} \label{qeroifjqeriofewqewdwedqwdqewdqwedwed}The closed invariant subset $Y$ of $X$ is split-closed in the following cases:
 \begin{enumerate}
  \item \label{wetgpowkgprgewfr}$G$ acts properly on an invariant neighbourhood of $Y$ in $X$ and $Y$ is second countable.
  \item  \label{wetgpowkgprgewfr1} $Y$ admits a $G$-invariant tubular neighbourhood.
 \end{enumerate}
 \end{prop}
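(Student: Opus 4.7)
The plan is to construct, in both cases, an explicit $G$-equivariant completely positive contractive section $s\colon C_{0}(Y)\to C_{0}(X)$ of the restriction homomorphism $C_{0}(X)\to C_{0}(Y)$; such a section immediately witnesses that the sequence \eqref{fqefweddqwedwd2} is semisplit in the sense of Definition \ref{weotigjoewgfreggw9}.

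For Case \ref{wetgpowkgprgewfr1} I would unwind the tubular neighbourhood hypothesis to produce an invariant open neighbourhood $U\subseteq X$ of $Y$, an equivariant continuous retraction $\pi\colon U\to Y$, and a $G$-invariant radial cutoff $\chi\in C_{c}(U)$ with $\chi|_{Y}=1$, extended by zero outside $U$. I then set $s(f)\coloneqq \chi\cdot\pi^{*}f$. Since $\pi^{*}\colon C_{0}(Y)\to C_{b}(U)$ is an equivariant $*$-homomorphism and $\chi\in C_{c}(U)^{+}$ is $G$-invariant of norm at most $1$, the product is an equivariant cpc map taking values in $C_{c}(U)\subseteq C_{0}(X)$, and one has $s(f)|_{Y}=\chi|_{Y}\cdot f=f$.

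For Case \ref{wetgpowkgprgewfr} I plan to combine a non-equivariant lift with averaging over the proper action on the given invariant neighbourhood $U$. Since $Y$ is second countable, $C_{0}(Y)$ is separable, and being commutative it is nuclear; hence the Choi--Effros lifting theorem yields a non-equivariant cpc section $s_{0}\colon C_{0}(Y)\to C_{0}(U)$ of the restriction $C_{0}(U)\to C_{0}(Y)$. Since the action on $U$ is proper and $Y/G$ is second countable locally compact Hausdorff, a standard partition-of-unity argument on $Y/G$ followed by a Tietze extension into $U$ produces a cutoff function $c\in C_{c}(U)^{+}$ satisfying $\sum_{g\in G}c(g^{-1}y)=1$ for all $y\in Y$ and $\sum_{g\in G}c(g^{-1}x)\le 1$ for all $x\in X$. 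I then set
\begin{equation*}
s(f)(x)\coloneqq \sum_{g\in G}c(g^{-1}x)\,s_{0}(g^{-1}f)(g^{-1}x),
\end{equation*}
extended by zero outside $U$. Properness makes the sum locally finite; the substitution $g\mapsto hg$ gives equivariance; the normalisation of $c$ on $Y$ yields $s(f)|_{Y}=f$; and each summand is a cp map of norm at most $\sup_{x}c(g^{-1}x)$ with total weight bounded by $1$, so $s$ is cpc.

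The main obstacle will be to verify that $s(f)$ in Case \ref{wetgpowkgprgewfr} actually lies in $C_{0}(X)$ rather than merely in $C_{b}(X)$: each individual summand is compactly supported, but an infinite sum of such functions need not vanish at infinity a priori. I plan to attack this by first reducing, via norm-density and the contractivity of $s$, to $f\in C_{c}(Y)$, and then analysing the behaviour of $s(f)$ along a sequence $x_{n}\to\infty$ with $x_{n}=g_{n}k_{n}$ and $k_{n}\in\mathrm{supp}(c)$: properness on $U$ forces $g_{n}\to\infty$ in $G$, so the translated supports $g_{n}^{-1}\mathrm{supp}(f)\subseteq Y$ escape every compact subset of $Y$, and combining this with the $C_{0}$-property of $s_{0}$ and the uniformly bounded number of nonzero summands at each $x_{n}$ will force $s(f)(x_{n})\to 0$.
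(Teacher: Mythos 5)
Your strategy is the same as the paper's: in Case \ref{wetgpowkgprgewfr} a non-equivariant Choi--Effros lift averaged against a cutoff for the proper action, and in Case \ref{wetgpowkgprgewfr1} the pullback along the retraction multiplied by a cutoff concentrated near $Y$; your averaging formula is essentially the one in the paper. The genuine problem is that you insist both cutoffs be \emph{compactly} supported, and such functions do not exist in general. In Case \ref{wetgpowkgprgewfr1}, a $\chi\in C_{c}(U)$ with $\chi|_{Y}=1$ forces $Y\subseteq\supp(\chi)$ and hence $Y$ compact; in Case \ref{wetgpowkgprgewfr}, the identity $\sum_{g}c(g^{-1}y)=1$ for all $y\in Y$ forces $Y\subseteq G\cdot\supp(c)$ and hence the image of $Y$ in $U/G$ to be relatively compact. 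Neither is assumed. The repair is to use cutoffs that are only \emph{properly} supported, which is exactly what the hypotheses provide and what the paper does: in Case \ref{wetgpowkgprgewfr1} one takes $\chi=\rho^{*}\chi_{0}$ for the radial function $\rho$ of the tubular neighbourhood, with $\chi_{0}$ supported in $[0,1]$, and the properness of $r|_{\rho^{-1}([0,1])}$ is then precisely what makes $\rho^{*}\chi_{0}\cdot r^{*}f$ vanish at infinity; in Case \ref{wetgpowkgprgewfr} one only requires that $G\times\supp(\chi)\to U$ be proper and that the translates of $\chi$ sum to $1$. With this replacement your formulas go through, but as written the functions you average against do not exist.

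Concerning the verification that $s(f)$ vanishes at infinity in Case \ref{wetgpowkgprgewfr} --- which you rightly single out as the delicate point, and which the paper's proof leaves implicit --- your proposed argument rests on a false premise. You want to deduce that $s_{0}(g^{-1}f)$ is small on $\supp(c)$ once $g^{-1}\supp(f)$ has left every compact subset of $Y$, but a cpc lift is in no way local: only the restriction of $s_{0}(h)$ to $Y$ is controlled by $h$, and one can construct cpc sections $s_{0}\colon C_{0}(Y)\to C_{0}(U)$ for which $\sup_{u\in K}|s_{0}(g^{-1}f)(u)|$ stays bounded away from $0$ as $g\to\infty$ for a fixed compact $K\subseteq U$ (let $s_{0}$ deposit a bump of height $\tfrac12$, located in $K$ at a position depending on $n$, for each evaluation $f\mapsto f(n)$). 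So an argument based purely on the supports of the $g^{-1}f$ cannot close; the decay of the individual summands has to be produced by other means, for instance by inserting the restricted cutoff into the argument of $s_{0}$, i.e.\ using terms of the form $s_{0}\bigl((c|_{Y})\cdot g^{-1}f\bigr)$, whose norms do tend to $0$ as $g\to\infty$, and adjusting the normalisation of the partition of unity accordingly.
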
 
\begin{proof}
We start with the proof of Assertion \ref{wetgpowkgprgewfr}.
Let $U$ be an invariant open neighbouhood of $Y$ in $X$.
 It suffices to construct an equivariant cpc split
 $s \colon C_{0}(Y)\to C_{0}(U)$. Composing with the extension-by-zero map $C_{0}(U)\to C_{0}(X)$  we then get an equivariant  cpc split for {the sequence} in \eqref{fqefweddqwedwd2}. 
 
 Since $Y$ is second countable, the $C^{*}$-algebra $C_{0}(Y)$ is separable    (see e.g.\ \cite{chou}). 
Since it is also nuclear, we can apply the Choi--Effros lifting theorem  
  {\cite{choi-effros} (see also   \cite[IV.3.2.5]{blackadar_operator_algebras})} in order to get
 a cpc split $s \colon C_{0}(Y)\to C_{0}(U)$ which is not  necessarily equivariant. 
 Since here we work with commutative algebras such a split is cpc if and only if it is contractive and preserves positive functions.
 
Since $G$ acts properly on $U$ we can choose a function
$\chi$ in $C(U,[0,1])$ such that the action map
$G\times U\to U$ restricts to a proper map on $G\times \supp(\chi)$ and $\sum_{g\in G} g^{-1,*}\chi\equiv 1$.
We now define
$$\bar s \colon  C_{0}(Y)\to  C_{0}(U)\, ,  \quad \bar s(f) \coloneqq \sum_{g\in G}  g^{-1,*} [\chi s(g^{*}f)] \, .  $$
One   verifies that this {is 
an} equivariant cpc left inverse of the restriction map $C_{0}(U)\to C_{0}(Y)$. 

Under {Assumption     \ref{wetgpowkgprgewfr1}}
we have an invariant tubular neighbourhood $U$
of $Y$, an invariant  retraction map $r \colon U\to Y$, and an invariant  radial function
$\rho \colon U\to[0,\infty)$ such that  $r_{|\rho^{-1}([0,1])}$ is proper.
We choose a function $\chi \colon [0,\infty)\to [0,1]$ such that $\chi(0)=1$ and $\chi(t)=0$ for $t\ge 1$.
Then we define $s \colon C_{0}(Y)\to C_{0}(U)$ by 
$s(f) \coloneqq \rho^{*}\chi\cdot  r^{*}s$.
This is an equivariant cpc split of the restriction
$C_{0}(U)\to C_{0}(Y)$. 
As above, 
composing with the extension-by-zero map $C_{0}(U)\to C_{0}(X)$  we    get an equivariant  cpc split for
 \eqref{fqefweddqwedwd2}.
\end{proof}

 \renewcommand{\ppGTops}{G\mathbf{LCH}_{\mathrm{2nd},+}^{\mathrm{prop}}}

We consider the functor 
$$\kkGC(-) \coloneqq \kkG\circ C_{0}(-) \colon (\ppGTop)^{\op}\to \KKG\, .$$
We furthermore let 
$\ppGTops$ be the full subcategory of $\ppGTop$ consisting of second countable spaces and set 
$$\kkGCs(-) \coloneqq \kkGs\circ C_{0}(-) \colon (\ppGTops)^{\op}\to \KKGs\, .$$
 These functors can be considered as the universal versions of the {analytic} $K$-homology functors from \eqref{qwefiojfoiwfweqfqwefewfeqw} and Definition \ref{qffohfiuwehfiowefqewfqwfqwefqewf}. The following diagram commutes by definition
\begin{equation}\label{diagram_locfinKhom_sep_yG}
\xymatrix{(\ppGTops)^{\op}\ar[rr]^-{\kkGCs}\ar[d]^{\incl}&&\KKGs\ar[d]^{y^{G}}\\(\ppGTop)^{\op}\ar[rr]^-{\kkGC}&&\KKG}
\end{equation}
{where $y^G$ is the functor from \eqref{ewgfuqgwefughfqlefiewfqwe}.}

In the following, we list the basic properties of these functors.
The spaces in the statement belong to $\ppGTop$ introduced at the beginning of Section \ref{kophetrhrtgetrgtget}.
\begin{theorem}\label{wetoighjwtiogewgregregweg}\mbox{} \begin{enumerate}
\item\label{weroigowegrregerwg} The functor $\kkGC$ is homotopy invariant.
\item\label{weroigowegrregerwg1} If $Y$ is an invariant  split-closed subspace of $X$, then we have a natural fibre sequence
$$\kkGC(X\setminus Y)\to \kkGC(X)\to \kkGC(Y)
\,.$$
\item\label{weroigowegrregerwg2} We have $\kkGC([0,\infty)\times X)\simeq 0$.
\item\label{weroigowegrregerwg3} If $(X_{n})_{n\in \nat}$ is a family in $\ppGTops$, then we have a canonical equivalence
$$\bigoplus_{n\in \nat}\kkGCs(X_{n}) \xrightarrow{\simeq} \kkGCs\big(\bigsqcup_{n\in \nat} X_{n}\big)\, .$$
\item \label{weroigowegrregerwg4}The functor $\kkGC$ has a symmetric monoidal refinement
$$\kkGCtensor \colon {\ppGTop}^{{,\otimes}}\to \KKGtensor$$ for $?$ in $\{\min,\max\}$.
\end{enumerate}
\end{theorem}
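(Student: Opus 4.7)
The overall strategy is to deduce each assertion from a property of $\kkG$ (or $\kkGs$) already established in Theorem~\ref{qeroigjqergfqeewfqewfqewf1}, Theorem~\ref{qroifjeriogerggergegegweg}, or Proposition~\ref{togjoigerggwgergwgr1}, via the factorisation $\kkGC = \kkG\circ C_{0}$.

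The first two assertions are essentially immediate. For \ref{weroigowegrregerwg}, the projection $[0,1]\times X\to X$ in $\ppGTop$ pulls back under $C_{0}$ to the unit map $C_{0}(X)\to C([0,1])\otimes C_{0}(X)\cong C_{0}([0,1]\times X)$ of \eqref{ewgegreggergegeggegwergw}, which $\kkG$ inverts by homotopy invariance (Theorem~\ref{qeroigjqergfqeewfqewfqewf1}.\ref{jfowejfoiwefewfqwecvqwcwecwcew}). For \ref{weroigowegrregerwg1}, by Definition~\ref{weotigjoewgfreggw9} the sequence \eqref{fqefweddqwedwd} is semisplit, and semiexactness of $\kkG$ (Theorem~\ref{qeroigjqergfqeewfqewfqewf1}.\ref{weroigjwergwergwrgwgre1}) turns it into the desired fibre sequence.

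Assertion \ref{weroigowegrregerwg2} hinges on the identification $C_{0}([0,\infty))\cong C\C = \{f\in C([0,1],\C)\mid f(0)=0\}$ (e.g.\ via the homeomorphism $[0,\infty)\xrightarrow{\sim}(0,1]$, $t\mapsto e^{-t}$), which exhibits $C_{0}([0,\infty))$ as a cone and hence as $C^{*}$-homotopy equivalent to $0$; homotopy invariance then gives $\kkG(C_{0}([0,\infty)))\simeq 0$. Since $C_{0}([0,\infty))$ is commutative and nuclear, $C_{0}([0,\infty)\times X)\cong C_{0}([0,\infty))\otimes_{?} C_{0}(X)$ for either $?\in\{\min,\max\}$, and the bi-exact symmetric monoidal structure on $\KKG$ (Proposition~\ref{togjoigerggwgergwgr1}) yields $\kkGC([0,\infty)\times X)\simeq 0\otimes \kkGC(X)\simeq 0$. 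For \ref{weroigowegrregerwg3}, second countability of the $X_{n}$ makes each $C_{0}(X_{n})$ separable; the partial maps $\bigsqcup_{n}X_{n}\supseteq X_{k}\xrightarrow{\id}X_{k}$ in $\ppGTop$ induce the extension-by-zero maps $C_{0}(X_{k})\to C_{0}(\bigsqcup_{n}X_{n})$ that exhibit $C_{0}(\bigsqcup_{n}X_{n})$ as the $c_{0}$-direct sum $\bigoplus_{n}C_{0}(X_{n})$ in $\Fun(BG,\nCalg_{\sepa})$, and Theorem~\ref{qroifjeriogerggergegegweg}.\ref{qoirwfjhqoierggrg5} (that $\kkGs$ sends countable sums to coproducts) delivers the equivalence.

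Finally, for \ref{weroigowegrregerwg4}, the functor $C_{0}$ is symmetric monoidal from $(\ppGTop)^{\op}$ with its cartesian product to $\Fun(BG,\nCalg)$ equipped with either tensor product (the two agree on commutative nuclear $C^{*}$-algebras, and $C_{0}(X\times Y)\cong C_{0}(X)\otimes C_{0}(Y)$), so composing with the symmetric monoidal $\kkGtensor$ from Proposition~\ref{togjoigerggwgergwgr1} produces the refinement $\kkGCtensor$. No step is substantially difficult; the only point requiring a moment's thought is the cone identification underlying \ref{weroigowegrregerwg2}, which forces us to invoke the monoidal structure even though the statement itself does not mention tensor products.
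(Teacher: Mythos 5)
Your proposal is correct. Assertions \ref{wetoighjwtiogewgregregweg}.\ref{weroigowegrregerwg}, \ref{weroigowegrregerwg1}, \ref{weroigowegrregerwg3} and \ref{weroigowegrregerwg4} are argued exactly as in the paper: the projection $[0,1]\times X\to X$ becomes an instance of \eqref{ewgegreggergegeggegwergw} under $C_{0}$, split-closedness feeds the semisplit sequence \eqref{fqefweddqwedwd} into semiexactness of $\kkG$, $C_{0}(\bigsqcup_{n}X_{n})\cong\bigoplus_{n}C_{0}(X_{n})$ combines with Theorem~\ref{qroifjeriogerggergegegweg}.\ref{qoirwfjhqoierggrg5}, and the monoidal refinement is $\kkGtensor\circ C_{0}$.

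The one place where you diverge is Assertion \ref{weroigowegrregerwg2}. The paper does not use the tensor product at all: it constructs the translation map $h\colon C_{0}([0,\infty)\times X)\to C_{0}([0,\infty]\times[0,\infty)\times X)$, $f\mapsto((u,t,x)\mapsto f(t+u,x))$, and uses homotopy invariance in the $u$-variable to conclude $\kkG(\ev_{u=0}\circ h)\simeq\kkG(\ev_{u=\infty}\circ h)$, i.e.\ $\kkG(\id)=\kkG(0)$ directly on $C_{0}([0,\infty)\times X)$. Your route -- identify $C_{0}([0,\infty))\cong C_{0}((0,1])$ as the cone on $\C$, kill it by homotopy invariance, then tensor with $\kkGC(X)$ using Proposition~\ref{togjoigerggwgergwgr1} and the fact that the presentably symmetric monoidal structure preserves zero objects in each variable -- is equally valid, but it imports the (nontrivial) monoidal results, whereas the paper's argument needs only homotopy invariance. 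If you want to keep your cone-based viewpoint while avoiding the monoidal machinery, note that $C_{0}([0,\infty)\times X)\cong\{f\in C([0,1],C_{0}(X))\mid f(0)=0\}$ is itself the cone on $C_{0}(X)$, equivariantly contractible via $f\mapsto(t\mapsto f(st))$, which gives the vanishing from homotopy invariance alone.
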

\begin{proof}
We start with Assertion \ref{weroigowegrregerwg}.
The functor $C_{0}$ sends the projection $[0,1]\times X\to X$ to the  {embedding}
$$C_{0}(X)\to  C_{0}([0,1]\times X)\cong C([0,1])\otimes C_{0}(X)\, ,$$ which is an instance  of \eqref{ewgegreggergegeggegwergw}.
Since $\kkG$ 
is  homotopy invariant, the assertion follows.

We now show Assertion \ref{weroigowegrregerwg1}. If $Y$ is a split-closed subspace of $X$, then we have a semisplit exact sequence
\begin{equation}\label{rewfweoirjoiferfwefr}
0\to C_{0}(X\setminus Y)\to C_{0}(X)\to C_{0}(Y)\to 0\, ,
\end{equation}
where the first map is given by extension by zero, while the second map is the restriction of functions from $X$ to $Y$.
We   apply the semiexactness of $\kkG$ in order to get the desired fibre sequence.

{For Assertion  \ref{weroigowegrregerwg2} we consider
\begin{align*}h:C_{0}([0,\infty)\times X)&\to C([0,\infty])\otimes C_{0}([0,\infty)\times X)\cong C_{0}([0,\infty]\times [0,\infty)\times X)\\&f\mapsto \left( (u,t,x)\mapsto \left\{\begin{array}{cc} f(t+u,x) &u\not=\infty\\ 0&u=\infty\end{array}\right. \right)    \ .\end{align*}
By the homotopy invariance of $\kkG$  we have an equivalence
$\kkG(\ev_{u=0})\circ \kkG(h)\simeq  \kkG(\ev_{u=\infty})\circ \kkG(h) $. 
But $\ev_{u=\infty} \circ h=0$ and $\ev_{0}\circ h=\id_{C_{0}([0,\infty)\otimes X)}$. This implies that
$\kkG(\id_{C_{0}([0,\infty)\otimes X)})=0$ and hence the Assertion.}


For Assertion  \ref{weroigowegrregerwg3} we first observe that
$$C_{0}\big(\bigsqcup_{n\in \nat} X_{n}\big)\cong \bigoplus_{n\in \nat} C_{0}(X_{n})\, .$$
We then use that $\kkGs$ preserves countable sums by Theorem \ref{qroifjeriogerggergegegweg}.\ref{qoirwfjhqoierggrg5}.

For Assertion  \ref{weroigowegrregerwg4} we use that the functor 
$$C_{0} \colon ({\ppGTop}^{{,\otimes}})^{\op} \to \Fun(BG,\nCalg)^{\otimes_{?}}$$ is symmetric monoidal, where the structure map is induced by 
$$C_{0}(X)\otimes_{?} C_{0}(X')\to C_{0}(X\times X')\, , \quad f\otimes f'\mapsto ((x,x')\mapsto f(x)f(x'))\, ,$$ {see Remark \ref{rem_explain_thm_Khom} for the {symmetric monoidal} structure on $ \ppGTop$.}
This is true in both cases $?=\min$ and $?=\max$. We then use Proposition \ref{togjoigerggwgergwgr} stating that $\kkG$ is symmetric monoidal.
%
%
%
%
%
\end{proof}

\begin{rem}\label{troihiohhehtrhehth}
 In Assertion \ref{wetoighjwtiogewgregregweg}.\ref{weroigowegrregerwg1} we   require that  $Y$ is  split-closed. In fact, for   an arbitrary invariant closed subset $Y$ of $X$ 
  we do not know whether the sequence  \eqref{rewfweoirjoiferfwefr} is semisplit. 
Furthermore, note that in Assertion \ref{wetoighjwtiogewgregregweg}.\ref{weroigowegrregerwg3} we must restrict to countable  unions of 
second countable spaces since we only know that $\kkGs$ preserves countable sums, while $\kkG$ is not expected to  have this property.
Finally note that all the assertions
stated for $\kkGC$ have an obvious version for $\kkGCs$. \hB
\end{rem}

\begin{ex}
For every $X$ in $\ppGTops$ and $n$ in $\nat$ we have
an equivalence
\begin{equation}\label{rgwergegergregwef}
  \Sigma^{n} \kkGC(\R^{n}\times X)\simeq \kkGC(X)\, .
\end{equation}
In order to see this we argue by induction on $n$.
The assertion is evidently  true for $n=0$. We now assume that $n>0$ and consider the commutative square
$$\xymatrix{\kkGC(\R^{n}\times X)\ar[r]\ar[d]&\kkGC([0,\infty)\times \R^{n-1}\times X)\ar[d]\\\kkGC((-\infty,0]\times \R^{n-1}\times X)\ar[r]&
\kkGC(\R^{n-1}\times X)}$$
where the maps are induced by the obvious inclusions of  closed subspaces which are split-closed by  Proposition  \ref{qeroifjqeriofewqewdwedqwdqewdqwedwed}.\ref{wetgpowkgprgewfr1}. Since
$$((-\infty,0]\times \R^{n-1}\times X)\setminus( \R^{n-1}\times X) \cong( \R^{n}\times X)\setminus ([0,\infty)\times \R^{n-1}  {\times X})\cong (-\infty,0)\times \R^{n-1}\times X$$
we conclude, using Theorem \ref{wetoighjwtiogewgregregweg}.\ref{weroigowegrregerwg1}, that the vertical morphisms induce an equivalence between 
the cofibres of the horizontal maps. Therefore the square is a push-out square in $\KKG$.
We now use  Theorem \ref{wetoighjwtiogewgregregweg}.\ref{weroigowegrregerwg2} in order to see  that the lower-left and the upper-right corners are zero objects. The square therefore  yields an equivalence
$$ \kkGC(\R^{n-1}\times X)\simeq \Sigma \kkGC(\R^{n}\times X)\, .$$
By induction we now get \eqref{rgwergegergregwef}.
\hB
\end{ex}

The following is Proposition \ref{eroigowregwergregwgreg} from the introduction. Recall the Definition \ref{qrogiqoirgergwergergwergwergwrg}.\ref{ergwgergwergeg25t}
of $G$-proper {objects in $\KKG$.}
\begin{prop}\label{eroigowregwergregwgreg1neu}
If $X$  is in $\ppGTop$ and  proper  homotopy equivalent     to
a finite $G$-CW complex with finite stabilizers, then 
$\kkGC(X)$ is $G$-proper.
\end{prop}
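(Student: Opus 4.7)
The plan is to reduce to a finite $G$-CW complex via homotopy invariance and then proceed by induction on the number of equivariant cells, using the cofibre sequence for split-closed subspaces together with the suspension formula \eqref{rgwergegergregwef}.

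First, by the homotopy invariance of $\kkGC$ (Theorem \ref{wetoighjwtiogewgregregweg}.\ref{weroigowegrregerwg}), I may assume that $X$ is itself a finite $G$-CW complex with finite stabilizers. The base case $X = \emptyset$ is trivial since $\kkGC(\emptyset) = 0$ lies in any thick subcategory. For the inductive step, I write $X$ as obtained from a subcomplex $Y$ by attaching a single equivariant cell, so that $X \setminus Y \cong G/H \times \mathbb{R}^n$ for some finite subgroup $H \le G$ and some $n \ge 0$. The subcomplex $Y$ admits a $G$-invariant tubular (collar) neighbourhood in $X$, so by Proposition \ref{qeroifjqeriofewqewdwedqwdqewdqwedwed}.\ref{wetgpowkgprgewfr1} it is split-closed. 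Theorem \ref{wetoighjwtiogewgregregweg}.\ref{weroigowegrregerwg1} therefore yields a fibre sequence
\[
\kkGC(G/H \times \mathbb{R}^n) \to \kkGC(X) \to \kkGC(Y)
\]
in $\KKG$.

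By the inductive hypothesis $\kkGC(Y)$ is $G$-proper. For the left term, the iterated application of the excision computation in \eqref{rgwergegergregwef} (with $X$ there replaced by $G/H$) gives an equivalence
\[
\kkGC(G/H \times \mathbb{R}^n) \simeq \Omega^n \kkGC(G/H) \simeq \Omega^n \kkG(C_0(G/H)),
\]
which is a shift of one of the generators of the thick subcategory of $G$-proper objects (Definition \ref{qrogiqoirgergwergergwergwergwrg}.\ref{ergwgergwergeg25t}) and is therefore itself $G$-proper. Since the $G$-proper objects form a thick (hence stable and retract-closed) subcategory of $\KKG$, the middle term of a fibre sequence whose other two terms are $G$-proper is $G$-proper, and the induction is complete.

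The main thing to double-check is that the CW inclusion $Y \hookrightarrow X$ really gives rise to the exact sequence of $C_0$-algebras to which Theorem \ref{wetoighjwtiogewgregregweg}.\ref{weroigowegrregerwg1} applies, i.e.\ that $Y$ is genuinely split-closed in $X$ in the sense of Definition \ref{weotigjoewgfreggw9}; this is the only non-formal ingredient, and it is handled by the tubular neighbourhood criterion (alternatively, since $X$ has finite stabilizers and is second countable, by the properness criterion \ref{wetgpowkgprgewfr} of Proposition \ref{qeroifjqeriofewqewdwedqwdqewdqwedwed}). Everything else is a formal induction in the stable $\infty$-category $\KKG$.
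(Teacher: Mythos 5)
Your proof is correct and follows essentially the same route as the paper: reduce to a finite $G$-CW complex by homotopy invariance, induct on $G$-cells using the split-closedness of the subcomplex (via the tubular neighbourhood criterion), the resulting fibre sequence, and the suspension formula \eqref{rgwergegergregwef} to identify $\kkGC(G/H\times\R^{n})$ with $\Sigma^{-n}\kkGC(G/H)$. The only cosmetic difference is your use of $\Omega^{n}$ in place of $\Sigma^{-n}$, which is the same thing in the stable $\infty$-category $\KKG$.
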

\begin{proof} 
Since $\kkGC$ is homotopy invariant by Theorem \ref{wetoighjwtiogewgregregweg}.\ref{weroigowegrregerwg}
we can assume that $X$ is a  $G$-CW complex with finite stabilizers.
%
We  argue by a finite induction on the number of $G$-cells. 
Assume that $X$ is obtained from $X'$ by attaching a $G$-cell of the form $G/H\times D^{n}$, where $H$ is a finite subgroup of $G$. 
Then $X'$ is a split-closed subspace of $X$ by  Proposition \ref{qeroifjqeriofewqewdwedqwdqewdqwedwed}.\ref{wetgpowkgprgewfr1}  such that $X\setminus X'\cong G/H\times \R^{n}$.
 By Theorem \ref{wetoighjwtiogewgregregweg}.\ref{weroigowegrregerwg1} and \eqref{rgwergegergregwef}  we have a fibre sequence  
$$ \Sigma^{-n} \kkGC(G/H)   \to    \kkGC(   X)\to  \kkGC  (X')
\, .$$
Since $ \Sigma^{-n} \kkGC(G/H) $ is $G$-proper by definition and $\kkGC  (X')$ is $G$-proper by {the} induction hypothesis
we can conclude that $\kkGC(X)$ is $G$-proper, too.
%
\end{proof}

The following is Theorem \ref{wtoihgwgregegwerg} from the introduction.

\begin{theorem} \label{opcsjoisjdcoijsdcoiasdcasdcadc} \mbox{}
\begin{enumerate}
\item \label{qwfiojoegggegwergweg}
If $P$   is  ind-$G$-proper, then the functor
$$\KKG(P,-) \colon \Fun(BG,\nCalg)\to \Sp^{\la}$$
sends all exact sequences to fibre sequences.
\item  \label{erwgoijweogwgwergwergeg}If $P$ is $G$-proper, then the functor $$\KKG(P,-) \colon \Fun(BG,\nCalg)\to \Sp^{\la}$$
preserves filtered colimits. 
\end{enumerate}
\end{theorem}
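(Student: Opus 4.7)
The plan is to reduce both statements to the case $P = \kkG(C_{0}(G/H))$ for a finite subgroup $H$ of $G$, and then invoke Corollary~\ref{qeroigjwoegwergergwerg} to rewrite the mapping functor in terms of classical $C^*$-algebra $K$-theory, where the required properties are standard.

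For Assertion~\ref{qwfiojoegggegwergweg}, let $\mathcal{P}_{1}$ denote the full subcategory of $\KKG$ consisting of objects $P$ such that $\KKG(P,-)$ sends exact sequences in $\Fun(BG,\nCalg)$ to fibre sequences in $\Sp^{\la}$. Since $\KKG(-,A)$ sends colimits in $\KKG$ to limits in $\Sp^{\la}$, and since fibre sequences in $\Sp^{\la}$ are closed under arbitrary limits and under retracts, $\mathcal{P}_{1}$ is a localizing subcategory of $\KKG$; by Definition~\ref{qrogiqoirgergwergergwergwergwrg}.\ref{weroigjwegrfrw} it therefore suffices to show that $\mathcal{P}_{1}$ contains each $\kkG(C_{0}(G/H))$ with $H$ a finite subgroup of $G$. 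Analogously, for Assertion~\ref{erwgoijweogwgwergwergeg}, let $\mathcal{P}_{2}$ be the full subcategory of objects for which $\KKG(P,-)$ preserves filtered colimits. Since finite limits commute with filtered colimits in $\Sp^{\la}$, the class $\mathcal{P}_{2}$ is closed under finite colimits and retracts and is hence a thick subcategory; by Definition~\ref{qrogiqoirgergwergergwergwergwrg}.\ref{ergwgergwergeg25t} it again suffices to verify that each $\kkG(C_{0}(G/H))$ lies in $\mathcal{P}_{2}$.

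For the generators, I would apply Corollary~\ref{qeroigjwoegwergergwerg}, which provides a natural equivalence
\[
\KKG(\kkG(C_{0}(G/H)), A) \simeq \Kast\bigl(\Res^{G}_{H}(A) \rtimes H\bigr)
\]
of functors in $A \in \Fun(BG,\nCalg)$. It thus suffices to check that the composite $\Kast \circ (-\rtimes H) \circ \Res^{G}_{H}$ sends exact sequences of $G$-$C^{*}$-algebras to fibre sequences in $\Sp^{\la}$ and preserves filtered colimits. The restriction $\Res^{G}_{H}$ is exact and preserves all colimits, since it only changes the group action. For $H$ finite, $-\rtimes H$ is linearly identified with the finite direct sum $\bigoplus_{h\in H}(-)$ equipped with a twisted algebra structure, and therefore preserves both exact sequences and filtered colimits (the latter being a special case of Lemma~\ref{qeriogjowergwergefefwefw}.\ref{trhijwothgwwergwergewg}). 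Finally, the spectrum-valued $K$-theory functor $\Kast \colon \nCalg \to \Sp^{\la}$ is a homological functor sending short exact sequences of $C^{*}$-algebras to fibre sequences and preserving filtered colimits.

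I do not expect a serious obstacle: the proof is a formal reduction combined with standard facts about $C^*$-algebra $K$-theory. The one point deserving attention is the verification that $\mathcal{P}_{1}$ and $\mathcal{P}_{2}$ really are localizing (respectively thick) subcategories of $\KKG$, but this is immediate from the contravariance of $\KKG(-,A)$ in its first variable together with the fact that being a fibre sequence and preservation of filtered colimits are pointwise conditions on the target stable under the relevant limit-type operations in $\Sp^{\la}$.
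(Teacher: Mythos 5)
Your proposal is correct and follows essentially the same route as the paper: reduce to the generators $\kkG(C_{0}(G/H))$ via the localizing/thick subcategory argument, invoke Corollary~\ref{qeroigjwoegwergergwerg} to identify $\KKG(C_{0}(G/H),-)$ with $\Kast(\Res^{G}_{H}(-)\rtimes H)$, and conclude from the exactness and continuity of $\Res^{G}_{H}$, $-\rtimes H$ and $\Kast$. The only difference is that you spell out a few justifications (e.g.\ why the subcategories are localizing resp.\ thick, and why $-\rtimes H$ is exact for finite $H$) that the paper leaves implicit.
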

\begin{proof}  
%
%
We first show Assertion \ref{qwfiojoegggegwergweg}.
The full subcategory of $\KKG$ of objects $P$ such that $\KKG(P,-)$ 
sends all exact sequences in $\Fun(BG,\nCalg)$  to fibre sequences is localizing.
In view of the  Definition \ref{qrogiqoirgergwergergwergwergwrg}.\ref{weroigjwegrfrw} of  ind-$G$-properness  it suffices to check  that  the functor $$\KKG( C_{0}(G/H),-) \colon \Fun(BG,\nCalg)\to \Sp^{\la}$$  sends exact sequences in $\Fun(BG,\nCalg)$ to fibre sequences for all finite subgroups $H$ of $G$.
 
 We will use that the usual $K$-theory functor    
 for $C^{*}$-algebras \eqref{erwfoijoiwefrweffwe} sends all exact sequences to fibre sequences. 
 By Corollary \ref{qeroigjwoegwergergwerg} 
 we have the first equivalence  in
 \begin{equation}\label{wervfervihieuvervdfvsfdv}
{\KKG}(C_{0}(G/H),{-})\simeq  \KK(\C,{\Res^{G}_{H}(-)}\rtimes H)\stackrel{\eqref{erwfoijoiwefrweffwe} }{=}  \Kast({\Res^{G}_{H}(-)}\rtimes H)\, .
\end{equation}  
 Since  ${\Res^{G}_{H}(-)}\rtimes H$ preserves exact sequences and $\Kast $ sends
 exact sequences in $\nCalg$ to fibre sequence we see that 
${\KKG}(C_{0}(G/H),{-})$ sends exact sequences in $\Fun(BG,\nCalg_{\sepa})$ to fibre sequences.

The proof of Assertion \ref{erwgoijweogwgwergwergeg} is completely analogous.
The full subcategory of $\KKG$ of objects $P$
such that $\KKG(P,-) \colon \Fun(BG,\nCalg)\to \Sp^{\la}$ preserves filtered colimits is thick.
Since the functors $\Kast$ and ${\Res^{G}_{H}(-)}\rtimes H$ preserve filtered colimits 
    we see that the category in question contains the algebras $C_{0}(G/H)$ for all finite subgroups $H$ of $G$. Consequently it contains
all $G$-proper objects.
 \end{proof}

\begin{proof}[Proof of Theorem   \ref{wergojowtpgwergregwreg}] \phantomsection \label{uihiueghwegeregewg}  
The proof is based on the defining relation
\begin{equation}\label{ewrgoeijoiwejgfvsdfvsfdvfdv}
K^{\lf}_{A}(X)\simeq \KKG(\kkGC(X),A)\, .
\end{equation}
Assertion \ref{qoeirjgwergwergweg} is an immediate consequence of Proposition \ref{eroigowregwergregwgreg1}.

Assertion \ref{qoeirjgwergwergweg1} follows from Theorem \ref{wetoighjwtiogewgregregweg}.\ref{weroigowegrregerwg}.

Assertion \ref{qoeirjgwergwergweg2} follows 
by applying the exact functor $\KKG(-,A)$ to the fibre sequence in  Theorem \ref{wetoighjwtiogewgregregweg}.\ref{weroigowegrregerwg1}  

For Assertion \ref{wetoigjwotigwgergwrerfcerwffsfvf} we consider {the cases separately.} If the sequence of algebras is semisplit, then
the assertion immediately follows from the fact that $\kkG$ is  semiexact by Theorem  \ref{qeroigjqergfqeewfqewfqewf1}.\ref{weroigjwergwergwrgwgre1}. If it is just exact, then we use Theorem \ref{opcsjoisjdcoijsdcoiasdcasdcadc}.\ref{qwfiojoegggegwergweg} and Proposition
\ref{eroigowregwergregwgreg1neu} instead.
For the last statement note that if 
 $X$ is  second countable, then $C_{0}(X)$ is separable, and hence $\kkGC(X)\simeq y^{G}(\kkGs(C_{0}(X)))$ is a compact object of $\KKG${, see \eqref{diagram_locfinKhom_sep_yG} and Remark \ref{weoiguheijwogergfregwerf}.}
 
The first equivalence in Assertion \ref{qoeirjgwergwergweg3} is an immediate consequence of Theorem \ref{wetoighjwtiogewgregregweg}.\ref{weroigowegrregerwg2}. The second equivalence uses Theorem \ref{wetoighjwtiogewgregregweg}.\ref{weroigowegrregerwg3}
and the equivalence $$K^{\lf}_{A}(-)\simeq  \map_{\KKGs}(\kkGCs(-), \kkGs(A))$$ for separable $A$
as functors on $\ppGTops$ which follows from the fact that $y^{G}$ in  \eqref{ewgfuqgwefughfqlefiewfqwe} is fully faithful.

In order to see Assertion \ref{jfqoifjoerfqewf} first note that for $Y$  in $\ppGTops$ and  $A$ in $\Fun(BG,\nCalgs)$ we have an equivalence
$$K^{\lf}_{A}(Y)\simeq \map_{\kkGs}(\kkGs(C_{0}(Y)),\kkGs(A))  \, .$$ 
We use this expression in terms of the mapping spectrum in order to make it obvious that a colimit in the first argument
can be pulled out as a limit.
We can express the intersection of the decreasing family of subspaces   in categorical terms as a limit
$\bigcap_{n\in \nat} X_{n}\cong \lim_{n\in \nat} X_{n}$
which is interpreted in $\ppGTop$. By Gelfand duality we get 
$$C_{0}\big(\bigcap_{n\in \nat} X_{n}\big)\cong \colim_{n\in \nat} C_{0}(X_{n})\, ,$$
where the colimit is taken in $\Fun(BG,\nCalgs)$.
We now show that the diagram $ n\mapsto C_{0}(X_{n})$ is admissible in the sense of  \cite[Def.\ 2.5]{MR2193334}.
According to   the criterion \cite[Lem.\ 2.7]{MR2193334}  it suffices to construct   a  family of equivariant cpc maps $(s_{n} \colon C_{0}(\bigcap_{n\in \nat} X_{n}) \to C_{0}(X_{n}))_{n\in \nat}$ such that we have $\lim_{n\to \infty} \iota_{n}\circ s_{n}=\id_{C_{0}(\bigcap_{n\in \nat} X_{n})}$
in the norm topology, where $(\iota_{n} \colon C_{0}(X_{n})\to C_{0}(\bigcap_{n\in \nat} X_{n}))_{n\in \nat}$ is the family of  restriction maps.  
By our assumption we can choose   an equivariant  cpc left-inverse  $s_{0} \colon C_{0}(\bigcap_{n\in \nat} X_{n})\to C_{0}(X_{0})$ of the surjection~$\iota_{0}$. Then we can define the sought equivariant  cpc maps $s_{n} $ as the composition of $s_{0}$ with the restrictions $C_{0}(X_{0})\to C_{0}(X_{n})$. Since $\iota_{n}\circ s_{n}=\id_{C_{0}(\bigcap_{n\in \nat} X_{n})}$ for every $n$ in $\nat$  the family $(s_{n})_{n\in \nat}$ has the required property.  
We finally use that
$\kkGs$ preserves filtered colimits of admissible diagrams by Theorem \ref{qroifjeriogerggergegegweg}.\ref{qoirwfjhqoierggrg6}.

The Assertion \ref{qoeirjgwergwergweg4} follows from  
Corollary \ref{qeroigjwoegwergergwerg}
 together
with \eqref{wervfervihieuvervdfvsfdv}.  

Finally, Assertion \ref{eqrgiheigowergergergerw} follows from Theorem \ref{wetoighjwtiogewgregregweg}.\ref{weroigowegrregerwg4}, Proposition \ref{togjoigerggwgergwgr}, and the general fact that for a  stable symmetric monoidal $\infty$-category $\cC$ the functor 
$\map \colon \cC^{\op}\times \cC\to \Sp$ has a natural lax symmetric monoidal refinement.
\end{proof}

\section{Extension to \texorpdfstring{$\boldsymbol{C^{*}}$}{Cstar}-categories}\label{ergoijwogergregwergwrg}

In this section we consider the extension $\kkGA$ (see  Definition \ref{jgorijgqoiqfewfqwfewf}) of the functor $\kkG$ to $C^{*}$-categories. 
Basic references for $C^{*}$-categories are \cite{ghr}, \cite{joachimcat}, \cite{mitchc}, \cite{DellAmbrogio:2010aa}, \cite{antoun_voigt}.
We will in particular use the  language introduced in \cite{startcats}, \cite{crosscat} which we recall in the following.
 
We start with the category $\nsCat$ of possibly non-unital $*$-categories which are $\C$-vector space enriched categories $\bC$ with an involution $* \colon \bC^{\op}\to \bC$ which fixes objects and acts anti-linearly on morphisms.  Morphisms in $\nsCat$ are functors which are compatible with the involution and the enrichment. 
The category $\nsCat$ contains the full subcategory $\nsAlg $ of $*$-algebras considered as categories with a single object. 
Using the uniqueness of the norm on a $C^{*}$-algebra and the automatic continuity of $*$-homomorphisms between $C^{*}$-algebras we view the category $\nCalg$ of $C^{*}$-algebras as a full subcategory of $\nsAlg$ so that we can talk about 
functors {(i.e.\ morphisms in $\nsCat$)} from $*$-categories to $C^{*}$-algebras.

Given a $*$-category $\bC$ we define a maximal norm on morphisms
by $\|f\|_{\max} \coloneqq \sup_{\rho}\|\rho(f)\|_{B}$, where the supremum runs over all functors $\rho$ {from $\bC$} to $C^{*}$-algebras $B$. This norm may be infinite.
We call $\bC$ a pre-$C^{*}$-category if all morphisms have a finite maximal norm. 
In this way we get the full subcategory $\npCat$ of $\nsCat$  of pre-$C^{*}$-categories. Its intersection with $\nsAlg$ is the category $\npAlg$ of pre-$C^{*}$-algebras. A $C^{*}$-category is a pre-$C^{*}$-category in which all morphism spaces are complete with respect to the
maximal norm. As explained in   \cite{startcats}, \cite{crosscat} this definition is equivalent {to} other definitions in the literature. Again, we get the category $\nCalg$ of $C^{*}$-algebras by intersecting $\nCcat$ with $\nsAlg$.

There is also a unital version of all {the above. The corresponding categories will be denoted in the same way but without the superscript '$\mathrm{nu}$'.} 

The categories explained above are connected by 
 adjunctions  
 \begin{equation}\label{dsoivjaosivvdsdsvasdvadsvad}
\compl:\npCat\rightleftarrows \nCcat:\incl \quad \text{and} \quad \incl:\npCat\rightleftarrows \nsCat:\Bd^{\infty}
\end{equation}
constructed in  \cite{crosscat}, 
where $\compl$ is the completion functor  and the right adjoint
$\Bd^{\infty}$ is the bounded morphisms functor {(since we only need the existence of $\Bd^{\infty}$ we refrain from explaining it more precisely).}
These adjunctions restrict {correspondingly to adjunctions}
\begin{equation}\label{refoihioewrferfwrferfwf}
\compl:\npAlg\rightleftarrows \nCalg:\incl \quad \text{and} \quad \incl:\npAlg\rightleftarrows \nsAlg:\Bd^{\infty}\,.
\end{equation}
Using that $\nsCat$ is complete and cocomplete we can conclude formally  from the existence of the 
adjunctions (which are localizations or colocalizations, respectively) that all categories introduced above are complete and cocomplete, as well.  We refer to \cite[Thm.\ 4.1]{crosscat} for details.

While we define the extension $\kkGA$ using the left adjoint functor $A^{f}$ from the adjunction \eqref{rwegkjnkjgngjerkjwgwergregegwrege}  the verification of most of the properties of $\kkGA$
uses another $C^{*}$-algebra $A(\bC)$ associated to a $C^{*}$-category $\bC$. 
The problem with $A(-)$ is that it is only functorial for functors between $C^{*}$-categories which are injective on objects 
so that $A(-)$ can not directly be used to define $\kkGA$.
 
 \begin{construction}\label{woitgjowrtgggergwergw}
Let  $\bC$ be in {$\nsCat$}.  Following  \cite{joachimcat} we can form
the  object 
\begin{equation}\label{cwecpoijcopiqwcqwecqec}
A^{{\alg}}(\bC) \coloneqq \bigoplus_{C,C'\in \bC} \Hom_{ \bC}(C,C')
\end{equation}
of $\nsAlg$
with the obvious matrix multiplication and involution.  {If $\bC$ is in $\npCat$, then}
  $A^{\alg}(\bC)$ actually belongs to $\npAlg$  so that we define the  object of  $\nCalg$
\begin{equation}\label{erwgoijwioergewrgergwreg}
A(\bC) \coloneqq \compl(A^{\alg}(\bC))
\end{equation}
   by applying the completion functor from \eqref{refoihioewrferfwrferfwf}.
  
 The construction of $A(\bC)$ from $\bC$ is {only} functorial for functors in $\nCcat$ which are injective on objects. {We therefore introduce the wide subcategory $\nCcat_{\inj}$ of $\nCcat$ of injective
  functors and obtain a functor
  \begin{equation}\label{}
A:\nCcat_{\inj}
 \to \nCalg\ .
\end{equation}}We have a canonical isometric functor  \begin{equation}\label{qfqwefqewdqdqwqd} \bC\to A(\bC)\end{equation}  which sends a morphism $f \colon C\to C'$ in $\bC$ to the corresponding one-entry matrix $[f_{C',C}]$.  
As further  shown in  \cite{joachimcat}  the functor  $\bC\to A(\bC)$ is  initial for functors $\rho \colon \bC\to B$  with $B$ a $C^{*}$-algebra
  with the property that
  \begin{equation}\label{eq_universal_property_A}
  \rho(f)  \rho(f')=\begin{cases}\rho(f\circ f')&\text{if the composition is defined}\\0&\text{else}\end{cases}
  \end{equation}
{for any morphisms $f,f'$ in $\bC$.}
\hB
\end{construction}
%
 
Let $\bC\to \bD$ be {a morphism in $\nCcat_{\inj}$.} \begin{lem}\label{wtrhiotrhterhthebg}
{If $\bC\to \bD$ is an isometric inclusion, then 
 $A(\bC)\to A(\bD)$ is an isometric inclusion.}
\end{lem}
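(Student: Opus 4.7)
The map $\phi \colon A(\bC) \to A(\bD)$ induced by $\bC \hookrightarrow \bD$ is a $*$-homomorphism of $C^{*}$-algebras and hence automatically contractive. To establish isometricity it suffices, by density, to prove the inequality $\|x\|_{A(\bC)} \le \|\phi(x)\|_{A(\bD)}$ for every $x$ in the $*$-subalgebra $A^{\alg}(\bC) \subseteq A(\bC)$. By the universal property recorded after Construction \ref{woitgjowrtgggergwergw}, the left-hand side is the supremum of $\|\rho(x)\|$ over all functors $\rho \colon \bC \to B$ into $C^{*}$-algebras satisfying the orthogonality relation \eqref{eq_universal_property_A}. The opposite inequality $\|\phi(x)\|_{A(\bD)} \le \|x\|_{A(\bC)}$ is automatic, since any such functor out of $\bD$ restricts along the inclusion $\bC \hookrightarrow \bD$ to one out of $\bC$; isometricity of the inclusion is not used here.

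For the nontrivial inequality I would show that every representation $\rho \colon \bC \to B(H)$ admits an extension $\tilde\rho \colon \bD \to B(\tilde H)$ (still satisfying \eqref{eq_universal_property_A}) together with an isometric embedding $H \hookrightarrow \tilde H$ intertwining $\rho$ with the restriction of $\tilde\rho$ to $\bC$. Such a $\rho$ corresponds, via the canonical orthogonal decomposition, to a family of Hilbert spaces $(H_{C})_{C \in \Ob\bC}$ and operators $\rho(f) \colon H_{C} \to H_{C'}$ for morphisms $f$ of $\bC$, compatibly with composition and involution. The extension is produced by the standard induction construction: for each $D \in \Ob\bD$ let $\tilde H_{D}$ be the Hausdorff completion of
\[\bigoplus_{C \in \Ob\bC} \Hom_{\bD}(C,D) \odot_{\C} H_{C}\]
modulo the relations $fg \otimes v - f \otimes \rho(g) v$ for $g$ in $\bC$, equipped with the sesquilinear form $\langle f \otimes v,\, f' \otimes v'\rangle \coloneqq \langle v,\, \rho(f^{*}f')\, v'\rangle$ for composable pairs; morphisms $h$ of $\bD$ act on $\tilde H_{D}$ by left composition. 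The canonical map $v \mapsto u_{C} \otimes v$ (with $u_{C}$ an approximate unit at $C$) provides the required isometric inclusion $H_{C} \hookrightarrow \tilde H_{C}$, so that $\|\rho(x)\| \le \|\tilde\rho(\phi(x))\| \le \|\phi(x)\|_{A(\bD)}$ for all $x \in A^{\alg}(\bC)$.

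The principal obstacle is the positivity of the sesquilinear form and the boundedness of $\tilde\rho(h)$. Both are handled by realizing $\bigoplus_{C} \Hom_{\bD}(C,D)$ as a right Hilbert $A(\bC)$-module, so that $\tilde H_{D}$ becomes its interior tensor product with $H$; positivity and boundedness then follow from the standard theory of Hilbert $C^{*}$-modules. The isometric nature of the inclusion $\bC \hookrightarrow \bD$ enters precisely in showing that the $A(\bC)$-valued matrix of inner products is positive and that the left action of $\bD$ by composition is norm-contractive for the maximal norm on $A(\bC)$. When $\bC$ is a full $C^{*}$-subcategory of $\bD$ on the same set of objects, this whole argument reduces to the clean observation that $A(\bC)$ is the hereditary corner $p\, A(\bD)\, p$ cut out by the (multiplier) projection $p = \sum_{C \in \Ob\bC} \id_{C}$, and the conclusion is immediate from the fact that a corner of a $C^{*}$-algebra inherits the ambient norm.
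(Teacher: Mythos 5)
Your overall strategy (compute $\|x\|_{A(\bC)}$ for $x\in A^{\alg}(\bC)$ as a supremum over representations of $\bC$ satisfying \eqref{eq_universal_property_A}, then show every such representation is contained in the restriction of one of $\bD$) would indeed yield the claim, but the construction you give of the extension breaks down precisely because an isometric inclusion $\bC\to\bD$ is \emph{not} assumed to be full. For $f\in\Hom_{\bD}(C,D)$ and $f'\in\Hom_{\bD}(C',D)$ with $C,C'\in\Ob(\bC)$, the product $f^{*}f'$ lies in $\Hom_{\bD}(C',C)$ and in general not in the closed subspace $\Hom_{\bC}(C',C)$; hence $\rho(f^{*}f')$ is undefined and your sesquilinear form $\langle f\otimes v, f'\otimes v'\rangle=\langle v,\rho(f^{*}f')v'\rangle$ does not make sense. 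For the same reason $\bigoplus_{C}\Hom_{\bD}(C,D)$ is not a right Hilbert $A(\bC)$-module: its natural inner product takes values in $A(\bD')$ for $\bD'$ the \emph{full} subcategory of $\bD$ on $\Ob(\bC)$, and Rieffel induction along a non-full (equivalently, non-hereditary) inclusion requires a conditional expectation you do not have. This is where the argument actually fails, not at the positivity and boundedness issues you flag as "the principal obstacle". (Your closing remark about the corner $pA(\bD)p$ only treats the full case — and as stated, a full subcategory on the same objects is all of $\bD$ — whereas the lemma is applied in the paper to generated, typically non-full, subcategories.)

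There is also a structural problem: the representation-extension statement you want is essentially equivalent to the isometry you are trying to prove (one deduces it from the fact that an injective $*$-homomorphism of $C^{*}$-algebras is isometric, or from Hahn--Banach extension of states), so building it by hand risks circularity and is in any case far heavier than necessary. The paper's route avoids all of this: for a full subcategory $\bC'$ of $\bC$ with finitely many objects and $\bD'$ the full subcategory of $\bD$ on the image objects, $A^{\alg}(\bC')$ and $A^{\alg}(\bD')$ are already $C^{*}$-algebras, the maps $A^{\alg}(\bC')\to A^{\alg}(\bD')$ and $A^{\alg}(\bC')\to A(\bC)$, $A^{\alg}(\bD')\to A(\bD)$ are injective $*$-homomorphisms (injectivity on the diagonal corners $\End(C)$ forces injectivity of the whole matrix algebra), hence isometric by spectral permanence; since $A^{\alg}(\bC)$ is the union of the $A^{\alg}(\bC')$, the maximal norm on it is induced from $A(\bD)$, and one concludes by density. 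If you want to keep a representation-theoretic flavour, the honest shortcut is: prove injectivity of $A(\bC)\to A(\bD)$ on the dense subalgebra via the diagonal-entry argument and then quote the automatic isometry of injective $*$-homomorphisms, rather than constructing induced representations.
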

\begin{proof}
Let $\bC'$ be a full subcategory of $\bC$ with finitely many objects, and  let $\bD'$ be the 
full subcategory of $\bD$ on the image of the  objects of $\bC'$.
As said above, for every object $C$ in $\bC$ the map  $\End_{\bC}(C)\to A(\bC)$ is an isometry, and similarly for objects of $\bD$.
This easily implies that the upper horizontal   and the  vertical maps  in 
$$\xymatrix{A^{\alg}(\bC')\ar[r]\ar[d] &A^{\alg}(\bD')\ar[d] \\ A(\bC)\ar[r]&A(\bD)}$$
are   isometries. 
 Since $A^{\alg}(\bC)$ is  the union of the subalgebras of the form $A^{\alg}(\bC')$ 
we conclude that the maximal norm on this $*$-algebra is the norm induced from the representation on $A(\bD)$.
This implies the assertion.
\end{proof}

Let $\bC \colon \bI\to \nCcat_{{\inj}}$ be a   diagram.    \begin{lem}\label{wegoihjiogewrgergegrewf}
 {If  $I$ is filtered, then}  the canonical map
$\colim_{\bI}A(\bC)\to A(\colim_{\bI}\bC)$ is an isomorphism.
\end{lem}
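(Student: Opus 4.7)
The plan is to reduce the statement to the algebraic level and then invoke that the completion functor is a left adjoint. First I would describe the filtered colimit $\bD \coloneqq \colim_{\bI} \bC$ in $\nCcat$ explicitly: since all the transition functors $\bC_i \to \bC_j$ are injective on objects, the set of objects of $\bD$ is $\colim_{\bI} \Ob(\bC_i)$, and for a pair of objects $[C], [C']$ in $\bD$ represented by objects in some $\bC_{i_0}$, we have
\[
\Hom_{\bD}([C],[C']) \;\cong\; \colim_{i \geq i_0} \Hom_{\bC_i}(C,C')\, ,
\]
where the filtered colimit is taken in Banach spaces (equivalently, is the completion of the algebraic filtered colimit of the morphism vector spaces with respect to the maximal $C^*$-norm on $\bD$).

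Next, I would prove the algebraic analogue of the claim, namely that
\[
A^{\alg}\big(\colim_{\bI} \bC\big) \;\cong\; \colim_{\bI} A^{\alg}(\bC_i)
\]
holds in $\npAlg$. This is a direct computation using the explicit description from \eqref{cwecpoijcopiqwcqwecqec}: filtered colimits commute with direct sums indexed over filtered systems, so
\[
\colim_{\bI} \bigoplus_{C,C' \in \bC_i} \!\!\Hom_{\bC_i}(C,C') \;\cong\; \bigoplus_{[C],[C'] \in \bD} \colim_{i \geq i_0} \Hom_{\bC_i}(C,C')\, ,
\]
and the inner colimit (taken algebraically here, which agrees with the Banach space colimit up to dense inclusion and will be absorbed by completion in the next step) identifies with $\Hom_{\bD}([C],[C'])$ at the algebraic level after passing to completions.

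Finally, I would combine this with the fact that, by the adjunction \eqref{refoihioewrferfwrferfwf}, the completion functor $\compl \colon \npAlg \to \nCalg$ is a left adjoint and therefore preserves colimits. This gives the chain
\[
A(\bD) \;=\; \compl\big(A^{\alg}(\bD)\big) \;\cong\; \compl\big(\colim_{\bI} A^{\alg}(\bC_i)\big) \;\cong\; \colim_{\bI} \compl\big(A^{\alg}(\bC_i)\big) \;=\; \colim_{\bI} A(\bC_i)\, ,
\]
where the last colimit is formed in $\nCalg$. Tracing the construction shows that this composed isomorphism is precisely the canonical comparison map of the lemma.

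The main technical obstacle is the first step: verifying carefully that the filtered colimit $\bD$ in $\nCcat$, built from a diagram of functors that are only injective on objects (not necessarily isometric on morphisms), does have the pointwise description of morphism spaces I use above. Once this is established, the algebraic identification reduces to swapping a filtered colimit with a direct sum indexed by a filtered system of index sets, which is straightforward, and the passage to $\nCalg$ is formal from the left adjointness of~$\compl$.
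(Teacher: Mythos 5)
Your proposal is correct and follows essentially the same route as the paper: an explicit description of the filtered colimit in $\nsCat$, the algebraic identification $\colim_{\bI}^{\nsAlg} A^{\alg}(\bC)\cong A^{\alg}(\colim_{\bI}^{\nsCat}\bC)$, and the left-adjointness of $\compl$ (and of $\incl\colon\npAlg\to\nsAlg$) to pull the completion out of the colimit. The one point you gloss over --- your parenthetical ``absorbed by completion'' --- is the interchange $A(\compl(\bD))\cong\compl(A^{\alg}(\bD))$ for $\bD$ in $\npCat$, which the paper isolates as a separate claim and proves by a universal-property diagram chase; as literally stated your algebraic analogue (an isomorphism in $\npAlg$ before completing) is only a dense inclusion, so it should be formulated as an isomorphism after applying $\compl$.
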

\begin{proof} 
As a  formal consequence of the adjunctions in \eqref{dsoivjaosivvdsdsvasdvadsvad}
 we have the following formula for  colimits  in $\nCcat$:
  \begin{equation}\label{qwefqfjoiqfwefqwefeww}
\colim_{\bI} \bC\cong \compl({\colim_{\bI}}^{\nsCat} \bC)\, .
\end{equation} Here 
  ${\colim_{\bI}}^{\nsCat} \bC$ stands for the colimit  interpreted in the category $\nsCat$.    The latter happens to belong to $\npCat$   so that we can apply  the completion functor $\compl$.

 We claim  that for any $\bD$  in $\npCat$ there is  canonical isomorphism
 \begin{equation}\label{wregoijowergwergrf}
A(\compl( \bD))\cong \compl(A^{\alg}(\bD))\, .
\end{equation} 
In order to show the claim we
 form the square
 $$\xymatrix{
 \bD \ar[r]^-{(1)}\ar[d]_-{(2)}&\compl(\bD)\ar[dd]^-{(3)}\\
 A^{\alg}(\bD)\ar@{..>}[dr] \ar[d]_-{(4)}&\\\compl(A^{\alg}(\bD))\ar@{-->}[r]&A(\compl(\bD))
 }$$
 The   maps $(1)$ and $(4)$ are the canonical   completion maps {and}  the maps 
 $(3)$ and $(2)$ 
   {are instances of \eqref{qfqwefqewdqdqwqd}.} 
 The dotted arrow is induced from the universal property of $(2)$ applied to the composition $(3)\circ (1)$. 
  Finally, the dashed map comes from the universal
 property of  $(4)$ applied to the dotted arrow. This dashed arrow induces the desired isomorphism:
 In order to construct an inverse   we consider the diagram
 $$\xymatrix{
 \bD \ar[r]^-{(1)}\ar[d]_-{(2)}&\ar@{..>}[ddl]\compl(\bD)\ar[dd]^-{(3)}\\
 A^{\alg}(\bD ) \ar[d]_-{(4)}&\\\compl(A^{\alg}(\bD))&\ar@{-->}[l] A(\compl(\bD))
 }$$
 We get the dotted arrow from the universal property of $(1)$ applied to $(4)\circ (2)$, and then the dashed arrow 
 from the universal property of $(3)$ applied to the dotted arrow. It is {straightforward} to check that the dashed arrows in the two diagrams are inverse to each other.
 This finishes the proof of the   isomorphism \eqref{wregoijowergwergrf}.
 
Since taking objects is a left adjoint \cite[Lem. 2.4]{crosscat} and therefore commutes with colimits we have  a bijection $$\Ob( {\colim_{\bI}}^{\nsCat} \bC)\cong {\colim_{\bI}}^{\Set} \Ob(\bC)\ .$$ Furthermore, if $C,C'$ are objects  $\colim^{\nsCat}_{\bI}\bC $, then we can find  $i$ in $ \bI$  and objects $\tilde C,\tilde C'$  in $\bC(i)$
such that $\iota_{i}(\tilde C)=C$ and $\iota_{i}(\tilde C')=C'$, where $\iota_{i}:\bC(i)\to {\colim_{\bI}}^{\nsCat} \bC$ is the canonical functor.
Then
$$\Hom_{ \colim^{\nsCat}_{\bI}\bC}(C,C')\cong \colim_{(\kappa:i\to i')\in \bI_{i/}} \Hom_{\bC(i')}(\bC(\kappa)(\tilde C),\bC(\kappa)(\tilde C'))\, .$$
From this description and the formula  \eqref{cwecpoijcopiqwcqwecqec} we easily conclude that
\begin{equation}\label{wqef09uqj09efqwef}
{\colim_{\bI}}^{\nsAlg} A^{\alg}(\bC)\stackrel{\cong}{\to} A^{\alg}({\colim_{\bI}}^{\nsCat}\bC)
\end{equation}
is an isomorphism.  
We get the  isomorphisms
\begin{align*}
\colim_{\bI} A(\bC)\stackrel{\eqref{erwgoijwioergewrgergwreg}}{\cong}   \colim_{\bI}   \compl(A^{\alg}(\bC)) & \stackrel{!}{\cong}  \compl({\colim_{\bI}}^{\nsAlg} A^{{\alg}}(\bC))\\
\stackrel{\eqref{wqef09uqj09efqwef}}{\cong}  \compl( A^{\alg}({\colim_{\bI}}^{\nsCat} \bC)) & \stackrel{\eqref{wregoijowergwergrf}}{\cong} A(\compl ({\colim_{\bI}}^{\nsCat} \bC))\stackrel{\eqref{qwefqfjoiqfwefqwefeww}}{\cong} A(\colim_{{\bI}} \bC)\, ,
\end{align*}
where for the marked isomorphism we use that $\compl$   and  the  
inclusion $\incl \colon \npAlg\to \nsAlg$  are left-adjoints (see \eqref{refoihioewrferfwrferfwf}) and   therefore commute with all colimits. 
The colimits without superscripts are interpreted in $C^{*}$-algebras or $C^{*}$-categories, respectively.
\end{proof}


Recall Definition \ref{dqciojsaoidcjasdoc} of a separable $C^{*}$-category.
For every  {$G$-}$C^{*}$-category $\bC$ we consider the inductive system $(\bC')_{\bC'{\subseteq_{\sepa}\bC}}$ of  {$G$-}invariant separable subcategories. Using that $G$ is countable we have an isomorphism
\begin{equation}\label{qwefqewfqwpop} \colim_{\bC'{\subseteq_{\sepa}\bC}} \bC'\cong \bC \ .\end{equation}  

Let  $H$ be a second group and $R\colon \Fun(BG,\nCcat_{{\inj}})\to \Fun(BH,\nCalg)$
be a functor. For an invariant separable subcategory $\bC'$ of $\bC$ we let
$R(\bC')^{R(\bC)}$  be the image of
$R(\bC')\to R(\bC)$. The following generalizes   Definition \ref{wtoijwrgergwrefwef}. {Assume that $R$  sends  separable  categories to  separable  algebras.}
  \begin{ddd}\label{wtoijwrgergwrefwef1}
 We say that $R$ is $\Ind$-s-finitary if it has the following properties:
 \begin{enumerate}
 \item\label{e2gijegoergwergwreg1} For every $\bC$ in $ \Fun(BG,\nCcat ) $ the inductive system $(R(\bC')^{R(\bC)})_{\bC'{\subseteq_{\sepa}\bC}}$ is cofinal in the  inductive system of all invariant  separable
 subcategories  of $R(\bC)$.
 \item\label{e2gijegoergwergwreg2} The canonical map $(R(\bC'))_{\bC'{\subseteq_{\sepa}\bC}}\to (R(\bC')^{R(\bC)})_{\bC'{\subseteq_{\sepa}\bC}}$ is an isomorphism in
 $\Ind(  \Fun(BH,\nCcat ))$.
 \end{enumerate}
 \end{ddd}  
Note that  Definition \ref{wergoijweiogerfrefe} of an $s$-finitary functor on $G$-$C^{*}$-categories makes sense for functors just defined on $\Fun(BG,\nCcat_{{\inj}})$. 
 If $R$ is $\Ind$-s-finitary functor,  then it  is $s$-finitary.  The converse of this statement is not true.

 The  Definition  \ref{wtoijwrgergwrefwef1} is again designed in order to ensure the following fact analogous to Lemma \ref{weiogwoegwer9}. 
\begin{lem}\label{weiogwoegwer91}
If $F$ is some $s$-finitary functor on  $\Fun(BH,\nCalg) $ and $R$ is $\Ind$-s-finitary, then the composition
$F\circ R$ is   an  $s$-finitary functor on  $\Fun(BG,\nCcat_{{\inj}} ) $.
\end{lem}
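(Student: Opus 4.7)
The plan is to follow the template of the proof of Lemma \ref{weiogwoegwer9} essentially verbatim, since Definition \ref{wtoijwrgergwrefwef1} of $\Ind$-s-finitary for functors $R\colon \Fun(BG,\nCcat_{\inj})\to \Fun(BH,\nCalg)$ was designed to make exactly that argument go through in the $C^*$-categorical setting. Concretely, for $\bC$ in $\Fun(BG,\nCcat_{\inj})$ we must show that the canonical map
\[
\colim_{\bC'\subseteq_{\sepa}\bC} F(R(\bC'))\to F(R(\bC))
\]
is an equivalence.

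First, I would apply Condition \ref{wtoijwrgergwrefwef1}.\ref{e2gijegoergwergwreg2}: since the canonical map of inductive systems $(R(\bC'))_{\bC'\subseteq_{\sepa}\bC}\to (R(\bC')^{R(\bC)})_{\bC'\subseteq_{\sepa}\bC}$ is an isomorphism in $\Ind(\Fun(BH,\nCalg))$, and $F$ as a functor to some cocomplete target descends to a functor on $\Ind$-objects (via $\colim\circ F$), we obtain an equivalence
\[
\colim_{\bC'\subseteq_{\sepa}\bC} F(R(\bC'))\xrightarrow{\simeq}\colim_{\bC'\subseteq_{\sepa}\bC} F(R(\bC')^{R(\bC)}).
\]
Next, by Condition \ref{wtoijwrgergwrefwef1}.\ref{e2gijegoergwergwreg1}, the inductive system $(R(\bC')^{R(\bC)})_{\bC'\subseteq_{\sepa}\bC}$ is cofinal in the system of all invariant separable subalgebras $B'$ of $R(\bC)$, so the cofinality argument gives
\[
\colim_{\bC'\subseteq_{\sepa}\bC} F(R(\bC')^{R(\bC)})\xrightarrow{\simeq}\colim_{B'\subseteq_{\sepa}R(\bC)} F(B').
\]
Finally, the assumption that $F$ is $s$-finitary (Definition \ref{wtgpkowegrrefrwferf}) yields
\[
\colim_{B'\subseteq_{\sepa}R(\bC)} F(B')\xrightarrow{\simeq} F(R(\bC)).
\]
Composing these three equivalences gives the desired one, and it is immediate from the construction that this composition agrees with the canonical comparison map.

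There is no real obstacle here; the conditions in Definition \ref{wtoijwrgergwrefwef1} were essentially extracted to make exactly this three-step cofinality/replacement chain work, just as in Lemma \ref{weiogwoegwer9}. The only mild point of care is the first step, where one checks that replacing an inductive system by an isomorphic one in $\Ind(\Fun(BH,\nCalg))$ does not affect the value of $\colim F$; this follows from the universal property of $\Ind$, since $F$ factors through the filtered-colimit-preserving extension from $\Fun(BH,\nCalg)$ to its $\Ind$-completion.
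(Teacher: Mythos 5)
Your proof is correct and is essentially the paper's own argument: the paper omits an explicit proof of this lemma precisely because it is the verbatim transcription of the proof of Lemma \ref{weiogwoegwer9}, and your three-step chain (replace $R(\bC')$ by its image using Condition \ref{wtoijwrgergwrefwef1}.\ref{e2gijegoergwergwreg2}, then cofinality via Condition \ref{wtoijwrgergwrefwef1}.\ref{e2gijegoergwergwreg1}, then $s$-finitariness of $F$) matches that argument exactly.
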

 
 Lemma \ref{qoifjqoifewqqfwedwed} has the following generalization for a
 functor $R\colon \Fun(BG,\nCcat_{{\inj}})\to \Fun(BH,\nCalg)$.  \begin{lem}\label{q2oifjqoifewqqfwedwed}
Assume that $R $ {sends  separable  categories to  separable  algebras,  satisfies the Condition \ref{wtoijwrgergwrefwef1}.\ref{e2gijegoergwergwreg1}} and  one of the following:
\begin{enumerate}
\item\label{w2egiowergwergwerf} $R$ preserves \countably filtered colimits.
\item\label{w2egiowergwergwerf1}  $R$ preserves isometric inclusions.
\end{enumerate}
 Then $R$ is $\Ind$-s-finitary.
\end{lem}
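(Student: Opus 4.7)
The plan is to mimic, step by step, the argument given for Lemma \ref{qoifjqoifewqqfwedwed}, translating every assertion about $C^{*}$-algebras into its $C^{*}$-category counterpart. Condition \ref{wtoijwrgergwrefwef1}.\ref{e2gijegoergwergwreg1} is given by assumption, so the only thing to check in both cases is Condition \ref{wtoijwrgergwrefwef1}.\ref{e2gijegoergwergwreg2}, namely that the canonical map of ind-systems $(R(\bC'))_{\bC'\subseteq_{\sepa}\bC}\to (R(\bC')^{R(\bC)})_{\bC'\subseteq_{\sepa}\bC}$ is invertible in $\Ind(\Fun(BH,\nCcat))$.

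In case \ref{w2egiowergwergwerf1}, the argument is immediate: for every invariant separable subcategory $\bC'$ of $\bC$ the inclusion $\bC'\hookrightarrow\bC$ is isometric, so by hypothesis $R(\bC')\to R(\bC)$ is an isometric inclusion, hence already injective, and therefore the tautological map $R(\bC')\to R(\bC')^{R(\bC)}$ is an isomorphism. Levelwise isomorphisms induce an isomorphism of ind-objects.

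In case \ref{w2egiowergwergwerf}, I will reproduce the square \eqref{sfgsdgsfgsfgfd} in the categorical context. Fix an invariant separable subcategory $\bC'\subseteq_{\sepa}\bC$ and consider
\[\xymatrix{R(\bC') \ar[rr]\ar[dr]\ar[dd]&& R(\bC')^{R(\bC)} \ar@{..>}[dl]\ar[dd]\\ & R(\bC'') \ar[ld]&\\ R(\bC) \ar@{=}[rr]&& R(\bC).}\]
Using \eqref{qwefqewfqwpop} together with the fact (analogous to Example \ref{wegoijwoegwergwregwreg} and needed in Lemma \ref{qoifjqoifewqqfwedwed}) that the poset of invariant separable subcategories of a $G$-$C^{*}$-category is countably filtered when $G$ is countable, the assumption that $R$ preserves countably filtered colimits yields $\colim_{\bC'\subseteq_{\sepa}\bC}R(\bC')\cong R(\bC)$, and the left vertical arrow is the canonical map into this colimit. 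The kernel $I$ of $R(\bC')\to R(\bC')^{R(\bC)}$ is separable because $R(\bC')$ is separable by hypothesis, and it maps to zero in the colimit $R(\bC)$. Invoking Lemma \ref{eroigjowregwrege9} in the countably filtered system $(R(\bC'))_{\bC'\subseteq_{\sepa}\bC}$, we find an invariant separable subcategory $\bC''\supseteq \bC'$ such that $I$ is annihilated by the structure map $R(\bC')\to R(\bC'')$. This provides the dotted arrow, and the universal nature of the construction (every $\bC'$ produces such a $\bC''$) shows that the canonical map of ind-objects admits an inverse.

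The only place where the categorical setting might cause trouble is the countable filteredness of the poset of invariant separable subcategories: this is needed both to apply the countably-filtered-colimit hypothesis and to invoke Lemma \ref{eroigjowregwrege9}. But since $G$ is countable and separable $C^{*}$-categories have countably many objects and separable morphism spaces, joins of countable families of invariant separable subcategories stay separable, so this poset is indeed countably filtered. Beyond this point the proof is a routine transcription of the algebra case.
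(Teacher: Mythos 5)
Your proposal is correct and follows essentially the same route as the paper, which itself simply declares the proof to be "completely analogous" to the algebra case (Lemma \ref{qoifjqoifewqqfwedwed}) using the countable filteredness of the poset of invariant separable subcategories recorded in Example \ref{wegoijwoegwergwregwreg}. You have merely written out the transcription that the paper leaves implicit, and the details (the kernel $I$ being separable and killed in the colimit, the appeal to Lemma \ref{eroigjowregwrege9}, and the trivial isometric-inclusion case) are all handled correctly.
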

\begin{proof}
The proof is completely analoguous to the    proof of  Lemma \ref{qoifjqoifewqqfwedwed}
taking advantage of the fact that Example \ref{wegoijwoegwergwregwreg}
also shows that the poset invariant separable subcategories of $\bC$ in
$\Fun(BG,\nCcat)$ is \countably filtered. 
 \end{proof}

 \begin{lem}\label{qirotwrq9}
 The functors
 $A,A^{f} \colon \Fun(BG,\nCcat_{{\inj}})\to \Fun(BG,\nCalg)$ are $\Ind$-s-finitary.
 \end{lem}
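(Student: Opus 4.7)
The plan is to invoke Lemma \ref{q2oifjqoifewqqfwedwed} in each case. For $A$, the preservation of isometric inclusions provided by Lemma \ref{wtrhiotrhterhthebg} will supply the second hypothesis. For $A^f$, its role as the left adjoint in \eqref{rwegkjnkjgngjerkjwgwergregegwrege} ensures preservation of all colimits, and in particular of \countably filtered ones. It therefore suffices, in both cases, to check that the functor sends separable $G$-$C^*$-categories to separable $G$-$C^*$-algebras and that it satisfies Condition \ref{wtoijwrgergwrefwef1}.\ref{e2gijegoergwergwreg1}.

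For preservation of separability, if $\bC$ has countably many objects and separable morphism spaces, then the object $A^{\mathrm{alg}}(\bC) = \bigoplus_{C,C' \in \bC}\Hom_{\bC}(C,C')$ from \eqref{cwecpoijcopiqwcqwecqec} is a countable direct sum of separable Banach spaces, hence separable; its $C^*$-completion $A(\bC)$ is therefore separable. For $A^f(\bC)$, the unit $\bC \to A^f(\bC)$ of the adjunction \eqref{rwegkjnkjgngjerkjwgwergregegwrege} exhibits $A^f(\bC)$ as the $C^*$-algebra generated by the images of the morphisms of $\bC$; choosing a countable dense subset in each of the countably many morphism spaces of $\bC$ supplies a countable $C^*$-generating set, so $A^f(\bC)$ is separable.

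For Condition \ref{wtoijwrgergwrefwef1}.\ref{e2gijegoergwergwreg1}, let $R \in \{A, A^f\}$, let $\bC$ be in $\Fun(BG,\nCcat)$, and let $B'$ be a $G$-invariant separable $C^*$-subalgebra of $R(\bC)$. I pick a countable dense subset of $B'$; by construction of $R(\bC)$ -- either as the $C^*$-completion of the matrix algebra in \eqref{cwecpoijcopiqwcqwecqec} for $R = A$, or as the universal $C^*$-algebra receiving a functor from $\bC$ for $R = A^f$ -- each such element is approximated by $*$-polynomials whose variables are images of morphisms of $\bC$. Collecting these yields a countable subset $\mathcal{M} \subseteq \Mor(\bC)$, and since $G$ is countable the orbit $G \cdot \mathcal{M}$ is again countable. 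I then take $\bC'$ to be the smallest $G$-invariant subcategory of $\bC$ containing $G \cdot \mathcal{M}$: its objects are the (countably many) sources and targets of elements of $G \cdot \mathcal{M}$, and its morphism spaces are the norm-closures of the $\C$-linear $*$-subspaces generated by the appropriate subsets of $G \cdot \mathcal{M}$ under composition (where defined). Then $\bC'$ is separable and $G$-invariant with $B' \subseteq R(\bC')^{R(\bC)}$, which provides the desired cofinality.

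I do not anticipate any serious obstacle: the argument is essentially combinatorial bookkeeping. The only point requiring care is to preserve countability when closing up under the $G$-action and the various categorical operations, and this is ensured precisely by the standing assumption that $G$ is countable.
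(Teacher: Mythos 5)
Your proposal is correct and follows essentially the same route as the paper: both verify preservation of separability, establish Condition \ref{wtoijwrgergwrefwef1}.\ref{e2gijegoergwergwreg1} by approximating a countable dense subset of $B'$ by elements built from countably many morphisms of $\bC$ and closing up under the (countable) $G$-action, and then invoke Lemma \ref{q2oifjqoifewqqfwedwed} using Lemma \ref{wtrhiotrhterhthebg} for $A$ and the left-adjointness of $A^{f}$ for $A^{f}$.
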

\begin{proof}
It follows from their constructions that the functors
$A$ and $A^{f}$ send separable $G$-$C^{*}$-categories to separable $G$-$C^{*}$-algebras.

We now verify Condition  \ref{wtoijwrgergwrefwef1}.\ref{e2gijegoergwergwreg1}.
We consider the case of $A$. Let $\bC$ be in $\Fun(BG,\nCcat)$.   
Let $B'$ be a $G$-invariant  separable subalgebra of $A(\bC)$. Let $\tilde B$ be a countable dense subset of $B'$.
For every $\tilde b$ in $\tilde B$ we choose a sequence $(\tilde b_{n})_{n\in \nat}$ in $A^{\alg}(\bC)$ such that
$\lim_{n\to \infty} \tilde b_{n}=\tilde b$. The union of the  $G$-orbits of the matrix elements of $\tilde b_{n}$ for all $n$ and $\tilde b$ in $\tilde B$  together generate a $G$-invariant separable subcategory $\bC'$. By construction we have $B'\subseteq {A(\bC')^{A(\bC)}}$.

The argument in the case of $A^{f}$ is similar.   We use that $A^{f}(\bC)$ the constructed as the closure of the free algebra
$A^{f,\alg}(\bC)$ generated by the morphisms of $\bC$
subject to natural relations \cite[Def.\ 3.7]{joachimcat}. 
Let $B'$ be a separable subalgebra of $A^{f}(\bC)$.   Let $\tilde B$ be a countable dense subset of $B'$.
 For every $\tilde b$ in $\tilde B$ we choose a sequence $(\tilde b_{n})_{n\in \nat}$ in $A^{f,\alg}(\bC)$ such that $\lim_{n\to \infty} \tilde b_{n}=\tilde b$. 
 We write $\tilde b_{n}$ as a finite linear combination of finite products of morphisms from $\bC$. This finite set of  morphisms 
  will be called the set of  components of $\tilde b_{n}$  (it is irrelevant that this definition involves choices). 
  We let $\bC'$ be the $G$-invariant subcategory of $\bC$ generated by the union of $G$-orbits of the sets of components for all $n$ in $\nat$ and $\tilde b$ in $\tilde B$.
  Then $\bC'$ is separable by construction and we have 
   $B'\subseteq {A^{f}(\bC')^{A^{f}(\bC)}}$.

We now employ Lemma \ref{q2oifjqoifewqqfwedwed} in order to finish the argument.
In the case of $A$ we use that this functor preserves isometric inclusions by Lemma \ref{wtrhiotrhterhthebg}. 
In the case of $A^{f}$ we use that this functor preserves all colimits since it is the left-adjoint   of  the  adjunction in \eqref{rwegkjnkjgngjerkjwgwergregegwrege}.
\end{proof}

 The following corollary   proves  
Theorem \ref{qroihjqiofewfqwefqwefqwefqwefqewfq}.\ref{ergoijogwegefwerf}. 
\begin{kor}\label{wtroihjrthertherthtrhetrht}
The functor $\kkGA$ is s-finitary. 
\end{kor}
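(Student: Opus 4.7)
The proof is essentially an immediate combination of the previous lemma with the s-finitary property of $\kkG$ established in Theorem~\ref{qeroigjqergfqeewfqewfqewf1}. The plan is as follows.

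By Definition~\ref{jgorijgqoiqfewfqwfewf}, we have $\kkGA = \kkG \circ A^f$. Lemma~\ref{qirotwrq9} gives us that $A^f \colon \Fun(BG,\nCcat_{\inj}) \to \Fun(BG,\nCalg)$ is $\Ind$-s-finitary, and Theorem~\ref{qeroigjqergfqeewfqewfqewf1}.\ref{qrfghqirwogrgqfqwefq} tells us that $\kkG$ is s-finitary. Applying Lemma~\ref{weiogwoegwer91} with $H = G$, $R = A^f$ and $F = \kkG$ then yields that the composition $\kkG \circ A^f = \kkGA$ is s-finitary as a functor on $\Fun(BG,\nCcat_{\inj})$.

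The only point that warrants a comment is that the s-finitary condition of Definition~\ref{wergoijweiogerfrefe} is formulated for $\kkGA$ on the whole category $\Fun(BG,\nCcat)$, whereas Lemmas \ref{qirotwrq9} and \ref{weiogwoegwer91} are phrased for the subcategory $\Fun(BG,\nCcat_{\inj})$. This is harmless: the defining colimit $\colim_{\bC'\subseteq_{\sepa}\bC} \kkGA(\bC')$ in \eqref{eq_defn_sfinitary} only involves inclusions of separable $G$-invariant subcategories, all of which are injective on objects and therefore morphisms in $\nCcat_{\inj}$. Hence the s-finitary property of $\kkGA$ on all of $\Fun(BG,\nCcat)$ is equivalent to the s-finitary property of its restriction to $\Fun(BG,\nCcat_{\inj})$, which we have just verified. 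I do not anticipate any genuine obstacle here; the work has been done in Lemma~\ref{qirotwrq9} and in setting up the compatibility of $s$-finitary and $\Ind$-s-finitary functors under composition.
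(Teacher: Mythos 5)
Your proof is correct and follows exactly the paper's argument: the paper likewise deduces the corollary by combining the s-finitariness of $\kkG$ with Lemma~\ref{weiogwoegwer91} and Lemma~\ref{qirotwrq9} applied to $A^{f}$. Your extra remark that the defining colimit only involves inclusions of subcategories (so working on $\Fun(BG,\nCcat_{\inj})$ suffices) is a correct clarification of a point the paper leaves implicit.
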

\begin{proof}
We combine the fact that $\kkG$ is $s$-finitary with 
Lemma \ref{weiogwoegwer91} and Lemma \ref{qirotwrq9} for $A^{f}$. 
 \end{proof}

If 
 $  \bC$ is in $\Fun(BG,\nCcat)$, then 
  the universal property of $A^{f}$ being the left-adjoint in \eqref{rwegkjnkjgngjerkjwgwergregegwrege} {applied to $\bC\to A(\bC)$  from \eqref{qfqwefqewdqdqwqd} provides the} canonical morphism 
 \begin{equation}\label{fwerfrewevevfdvsdfvsdfv}
\alpha_{\bC} \colon A^{f}(  \bC)\to A(  \bC) 
\end{equation} 
in $\Fun(BG,\nCalg)$ such that
$$\xymatrix{&  \bC\ar[dr]^{\eqref{qfqwefqewdqdqwqd}}\ar[dl]&\\A^{f}(  \bC)\ar[rr]^{\alpha_{\bC}}&&A(  \bC)}$$
commutes.  Here the left-down  arrow is the unit of the adjunction  \eqref{rwegkjnkjgngjerkjwgwergregegwrege}. {The family $\alpha=(\alpha_{\bC})_{\bC}$ is a natural transformation of functors on $\Fun(BG,\nCcat_{{\inj}})$.}     

The following proposition is the main technical result which makes all other arguments {further below work.} 
\begin{prop}\label{gihergioergregwefeerfwerf} For every  $  \bC$ in $\Fun(BG,\nCcat)$ the morphism
$$\kkG(\alpha_{\bC}) \colon \kkG(A^{f}(  \bC))\to \kkG(A(  \bC))$$ is an equivalence. 
\end{prop}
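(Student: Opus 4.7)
The plan is a two-stage reduction: first to separable $\bC$ via $s$-finitariness, and then an explicit equivariant construction implementing the $\kkG$-inverse of $\alpha_\bC$.

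\textbf{Stage 1: Reduction to separable $\bC$.} We verify that, restricted to $\Fun(BG,\nCcat_{\inj})$, both composites $\kkG\circ A^{f}$ and $\kkG\circ A$ are $s$-finitary. For $\kkG\circ A^{f}=\kkGA$, this is Corollary~\ref{wtroihjrthertherthtrhetrht}. For $\kkG\circ A$, combine the $\Ind$-$s$-finitariness of $A$ from Lemma~\ref{qirotwrq9} with the $s$-finitariness of $\kkG$ (Theorem~\ref{qeroigjqergfqeewfqewfqewf1}.\ref{qrfghqirwogrgqfqwefq}) via Lemma~\ref{weiogwoegwer91}. Since the inclusions of $G$-invariant separable subcategories of $\bC$ are in $\nCcat_{\inj}$, the natural transformation $\alpha$ is compatible with the filtered colimit $\bC\cong\colim_{\bC'\subseteq_{\sepa}\bC}\bC'$ analogous to \eqref{qwefqewfqwpop}. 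It therefore suffices to prove that $\kkG(\alpha_{\bC})$ is an equivalence for every separable $\bC$.

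\textbf{Stage 2: The separable case.} For separable $\bC$, construct an explicit $\kkG$-inverse to $\alpha_{\bC}$. The starting point is that $A(\bC)$ is the universal $C^{*}$-algebra among non-unital quotients of $A^{f}(\bC)$ imposing the orthogonality relations $[1_{C}]\,[1_{C'}]=0$ for distinct objects $C\neq C'$ (this is the extra content of condition \eqref{eq_universal_property_A} beyond functoriality). The natural injection $\bC\to A(\bC)$ from \eqref{qfqwefqewdqdqwqd} is available because $A(\bC)$, as a matrix algebra, already carries the orthogonal family $\{[1_{C}]\}$. After equivariant stabilization, i.e.\ tensoring with $K(\ell^{2}(G)\otimes\ell^{2}(\Ob(\bC)))$ and using the $\IK^{G}$-stability of $\kkG$ (Theorem~\ref{qeroigjqergfqeewfqewfqewf1}.\ref{jfowejfoiwefewfqwecvqwcwecwcew1}), one uses the universal property of $A^{f}$ applied to an explicit functor $\bC\to A(\bC)\otimes K$ (which factors the $\{[1_{C}]\}$ through an orthogonal family of rank-one projections) to construct a map $\beta_{\bC}\colon A(\bC)\to A^{f}(\bC)\otimes K$. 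Both composites $\beta_{\bC}\circ(\alpha_{\bC}\otimes\id_{K})$ and $(\alpha_{\bC}\otimes\id_{K})\circ\beta_{\bC}$ agree with the canonical inclusions into the stabilizations up to an equivariant inner automorphism, and hence become equivalent to the identity after $\kkG$ by $\IK^{G}$-stability and homotopy invariance.

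\textbf{Main obstacle.} The technical heart is the equivariant construction in Stage 2: the projections $[1_{C}]\in A^{f}(\bC)$ need not be mutually orthogonal, and the orthogonalization by tensoring with $K$ must be performed equivariantly and in a way that is compatible with the (possibly countably infinite) set of $G$-orbits of objects of $\bC$. This is an equivariant refinement of the classical relationship between $A^{f}(\bC)$ and $A(\bC)$ (cf.\ \cite{joachimcat,mijo}), where the key point is to verify that the auxiliary inner unitary implementing the identification is bounded and equivariant, which is ensured by the separability of $\bC$ and the countability of $G$.
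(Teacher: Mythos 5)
Your proposal is correct and follows essentially the same route as the paper: the separable case is handled by the equivariant adaptation of Joachim's comparison map $\beta\colon A(\bC)\to A^{f}(\bC)\otimes K(H)$ together with $\mathbb{K}^{G}$-stability and homotopy invariance of $\kkG$, and the general case is reduced to it using that both $\kkG\circ A^{f}$ and $\kkG\circ A$ are $s$-finitary (via Lemmas~\ref{qirotwrq9} and~\ref{weiogwoegwer91}). The only cosmetic differences are the order of the two stages and your choice of stabilizing Hilbert space ($\ell^{2}(G)\otimes\ell^{2}(\Ob(\bC))$ versus the paper's $L^{2}(\{\C\}\cup\Ob_{\neq 0}(\bC))$ with its distinguished $G$-fixed vector providing the embedding $\eta\colon\C\to K(H)$), neither of which affects the argument.
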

\begin{proof}
We first assume that $\bC$ {is separable.} 
In this case we {can repeat 
the} proof of \cite[Prop.\ 3.8] {joachimcat}.   We consider the separable $G$-Hilbert space $  H \coloneqq L^{2}(\{\C\}\cup \Ob_{\not=0}(\bC))$, where $ \Ob_{\not=0}(\bC)$ is the set of non-zero objects in $\bC$ and   the $G$-action is induced by the action of $G$ on the set of objects of $\bC$. The additional  $G$-fixed point $\{\C\}$ induces an embedding ${\eta} \colon   \C\to  K(  H)$.
The argument in the {cited} reference provides   a morphism $\beta \colon A(  \bC)\to A^{f}(  \bC)\otimes   K(   H)$. It furthermore shows that
the composition
$\beta\circ \alpha  $  (we omit the subscript $\bC$ for better readability) is homotopic to {$ \id_{A^{f}(  \bC)}\otimes \eta$}, and {that} the composition
$ ( \alpha\otimes \id_{K(   H)})\circ \beta$ is homotopic to $\id_{A(  \bC)}\otimes \eta$.
Using {$\mathbb{K}^{G}$}-stability and homotopy invariance of $\kkG$ we see that
$\kkG(\beta)\circ \kkG(\alpha)\simeq \kkG({  \id_{A^{f}(  \bC)}\otimes \eta })$ is an equivalence, and that
{$ \kkG(\alpha)\circ \kkG(\beta)  \simeq \kkG(\alpha\otimes \id_{ K(  H)})  \circ  \kkG(\beta)  \simeq \kkG(\id_{A(  \bC)}\otimes \eta)$} is an equivalence, too. Consequently,
$\kkG(\alpha)$ is an equivalence.

{We now drop the assumption that $\bC$ is separable. We then consider the poset $(\bC')_{\bC'{\subseteq_{\sepa}\bC}}$ of invariant  separable subcategories of $\bC$. We have the following commutative diagram. 
 \begin{equation}\label{rpogjoprtgwegerf}
\xymatrix{\colim_{\bC'{\subseteq_{\sepa}\bC}} \kkG(A^{f}(\bC'))\ar[rrr]^-{\colim_{\bC'{\subseteq_{\sepa}\bC}}\kkG(\alpha_{\bC'})}_{\simeq}\ar[d]^{\simeq} &&&\colim_{\bC'{\subseteq_{\sepa}\bC}} \kkG(A(\bC'))\ar[d]^{\simeq} \\%
\kkG(A^{{f}}(\bC))\ar[rrr]^-{\kkG(\alpha_{\bC})}&&&\kkG(A(\bC))}
\end{equation}
The upper horizontal arrow is an equivalence by the case discussed above.
The vertical morphisms are equivalences by a combination of Lemma \ref{qirotwrq9}, 
{Lemma} \ref{weiogwoegwer91} and the fact that $\kkG$ is $s$-finitary.
We   conclude   that $\kkG(\alpha_{\bC})$ is an equivalence.}
   \end{proof}

The following result will be used in \cite{bel-paschke}. 
Note that  if $\bC$  in $\nCcat$ {is separable}, 
then $A^{f}(\bC)$ and $A(\bC)$  are   separable $C^{*}$-algebras by {Lemma \ref{qirotwrq9}.}
Let $(\bC_{i})_{i\in I}$ be a countable family  of {separable categories  in $ \Fun(BG,\nCcat)$.} 
\begin{lem}\label{dsfasfsdfdsfadsf}
 We have a canonical equivalence 
 $$\bigoplus_{i\in I} \kkGs(A^{f}(\bC_{i})) \simeq  \kkGs(A^{f}(\coprod_{i\in I} \bC_{i}))\, .$$
\end{lem}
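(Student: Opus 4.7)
The plan is to reduce the statement for $A^{f}$ to the analogous statement for $A$ via Proposition \ref{gihergioergregwefeerfwerf}, and then to prove the statement for $A$ using its good behaviour with respect to filtered colimits of isometric inclusions.

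First, I would observe that in $\Fun(BG,\nCcat)$ the coproduct $\coprod_{i\in I}\bC_{i}$ is obtained from the finite subcoproducts via
$$ \coprod_{i\in I}\bC_{i}\cong \colim_{J\subseteq I,\, |J|<\infty} \coprod_{i\in J}\bC_{i}\,,$$
where the structure maps of this \countably filtered diagram are canonical isometric inclusions and in particular belong to $\nCcat_{\inj}$. By the explicit formula \eqref{cwecpoijcopiqwcqwecqec} for $A^{\alg}$ together with \eqref{erwgoijwioergewrgergwreg}, for each finite $J$ one has a canonical isomorphism
$$A\bigl(\coprod_{i\in J}\bC_{i}\bigr)\cong \bigoplus_{i\in J}A(\bC_{i})$$
in $\Fun(BG,\nCalg)$. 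Applying Lemma \ref{wegoihjiogewrgergegrewf} to the filtered diagram of finite subcoproducts and passing to the colimit over $J$ then yields a canonical isomorphism
$$A\bigl(\coprod_{i\in I}\bC_{i}\bigr)\cong \bigoplus_{i\in I}A(\bC_{i})\,.$$

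Next I would apply $\kkGs$ to this isomorphism. Since $I$ is countable and each $\bC_{i}$ is separable, each $A(\bC_{i})$ is separable (by Lemma \ref{qirotwrq9}), and hence so is the countable sum $\bigoplus_{i\in I}A(\bC_{i})$; similarly $\coprod_{i\in I}\bC_{i}$ is separable so that $A(\coprod_{i\in I}\bC_{i})$ is separable. Using that $\kkGs$ preserves countable sums by Theorem \ref{qroifjeriogerggergegegweg}.\ref{qoirwfjhqoierggrg5}, we obtain
$$\kkGs\bigl(A\bigl(\coprod_{i\in I}\bC_{i}\bigr)\bigr)\simeq \bigoplus_{i\in I}\kkGs(A(\bC_{i}))\,.$$

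Finally, I would invoke Proposition \ref{gihergioergregwefeerfwerf} to swap $A$ for $A^{f}$. Note that for separable $G$-$C^{*}$-categories the proof of that proposition given in the separable case works verbatim with $\kkGs$ in place of $\kkG$, since it only uses $\mathbb{K}^{G}$-stability and homotopy invariance of $\kkGs$ (Theorem \ref{qroifjeriogerggergegegweg}). As $\coprod_{i\in I}\bC_{i}$ and each $\bC_{i}$ are separable, $\kkGs(\alpha_{-})$ is an equivalence on all these categories. Combining these equivalences we arrive at the desired chain
$$\kkGs\bigl(A^{f}\bigl(\coprod_{i\in I}\bC_{i}\bigr)\bigr)\simeq \kkGs\bigl(A\bigl(\coprod_{i\in I}\bC_{i}\bigr)\bigr)\simeq \bigoplus_{i\in I}\kkGs(A(\bC_{i}))\simeq \bigoplus_{i\in I}\kkGs(A^{f}(\bC_{i}))\,.$$
The main obstacle is the identification $A(\coprod_{i\in J}\bC_{i})\cong \bigoplus_{i\in J}A(\bC_{i})$ for finite $J$, which however is essentially immediate from the construction of $A$ and the fact that in $\coprod_{i\in J}\bC_{i}$ there are no non-zero morphisms between objects coming from distinct components $\bC_{i}$.
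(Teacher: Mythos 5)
Your proof is correct and follows essentially the same route as the paper: swap $A^{f}$ for $A$ via Proposition \ref{gihergioergregwefeerfwerf}, use that $A$ turns coproducts of $C^{*}$-categories into direct sums of algebras, and apply Theorem \ref{qroifjeriogerggergegegweg}.\ref{qoirwfjhqoierggrg5}. The only (harmless) difference is in the justifications of two steps: the paper deduces that $\kkGs(\alpha_{\bC})$ is an equivalence directly from the $\kkG$-statement because $y^{G}$ detects equivalences, rather than re-running the proof for $\kkGs$, and it takes the identification $A(\coprod_{i}\bC_{i})\cong\bigoplus_{i}A(\bC_{i})$ as immediate from the construction rather than passing through finite subcoproducts and Lemma \ref{wegoihjiogewrgergegrewf}.
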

\begin{proof}
Since $y^{G}:\KKGs\to \KKG$ detects equivalences,  Proposition \ref{gihergioergregwefeerfwerf}
implies that for a {separable} $\bC$ in $\Fun(BG,\nCcat)$ 
we have an equivalence
$$\kkGs(\alpha_{\bC}):\kkGs(A^{f}(\bC))\stackrel{\simeq}{\to} \kkGs(A(\bC))\ .$$
We use this for the equivalences marked by (1) in the following chain:
\begin{eqnarray*}
\bigoplus_{i\in I} \kkGs(A^{f}(\bC_{i}))&\stackrel{(1)}{\simeq}&
\bigoplus_{i\in I} \kkGs(A(\bC_{i}))\\&\stackrel{(2)}{\simeq}& 
\kkGs(\bigoplus_{i\in I}A(\bC_{i}))\\&\stackrel{(3)}{\simeq}&
\kkGs( A(\coprod_{j\in I} \bC_{i}))\\&\stackrel{(1)}{\simeq}&\kkGs( A^{f}(\coprod_{j\in I} \bC_{i}))
\, .
\end{eqnarray*}
For $(2)$ we use that $\kkGs$   preserves countable sums by Theorem \ref{qroifjeriogerggergegegweg}.\ref{qoirwfjhqoierggrg5}.
For the equivalence marked by $ (3)$ we use the fact that 
 $A$  sends coproducts of $C^{*}$-categories to direct sums of algebras. 
 This is immediate from the Construction \ref{woitgjowrtgggergwergw} of $A$.
 \end{proof}

{Before we} prove  Theorem    \ref{qroihjqiofewfqwefqwefqwefqwefqewfq}.\ref{fiuqwehfiewfeewqfwefqwefqwefqewf}, {we first 
 recall} 
the notion of a weak Morita equivalence from \cite{cank}.
  %
%
Let $\bD$ be in $\nCcat$   and
{$S$ be a subset of objects   of    $\bD$.}
  
\begin{ddd}[{\cite[Def.\ 1{8}.1]{cank}}]\label{q3rioghoergerwgregwg9}
$S$ is weakly generating if
for every object $D$ in $\bD$, finite family
$(A_{i})_{i\in I}$ of morphisms $A_{i} \colon D_{i}\to D$ in $\bD$, and any $\epsilon$ in $(0,\infty)$ 
there exists a {multiplier}  isometry $u \colon C\to D$  such that $\|A_{i}-uu^{*}A_{i}\|\le \epsilon$ for all $i$ in $I$ and
$C$ is unitarily isomorphic {in $\bM\bD$}  to a finite orthogonal sum  {in $\bM\bD$}
of objects in $S$.
\end{ddd}

Let $\phi \colon \bC\to \bD$ be a morphism in $\Fun(BG,\nCcat)$.

\begin{ddd}[{\cite[Def.\ 1{8.3}]{cank}}]\label{igjweogierfwerfwerf} The functor
$\phi$ is called a weak Morita equivalence if:
\begin{enumerate}
\item $\phi$ is fully faithfull,
\item $\phi(\Ob(\bC))$ is weakly generating.
\end{enumerate}
\end{ddd}

Note that being a weak Morita equivalence only depends on the underlying morphism between $C^{*}$-categories obtained by forgetting the $G$-action. As shown in the argument for \cite[Cor.\ 18.7]{cank} a unitary equivalence is a weak Morita equivalence.

\begin{prop} \label{eriuhuweirgwgerwf}The functor $\kkGA$ 
sends weak Morita equivalences to equivalences.
\end{prop}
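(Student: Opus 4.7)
The strategy is a three-step reduction from weak Morita equivalences of $G$-$C^*$-categories to Morita equivalences of $C^*$-algebras, where classical results apply. I will reduce successively to the separable case, to the object-injective case, and then invoke the comparison of $A^f$ with $A$.

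First, I use that $\kkGA$ is $s$-finitary (Corollary~\ref{wtroihjrthertherthtrhetrht}) to reduce to weak Morita equivalences between separable $G$-$C^*$-categories. For a weak Morita equivalence $\phi \colon \bC \to \bD$, I construct a cofinal system of $G$-invariant separable subcategories $\bC' \subseteq \bC$ and $\bD' \subseteq \bD$ such that the restriction $\phi' \colon \bC' \to \bD'$ is still a weak Morita equivalence. The construction requires, for each object in a countable set generating $\bD'$ and each $\epsilon = 1/n$, adjoining finitely many multiplier isometries provided by Definition~\ref{q3rioghoergerwgregwg9} which witness the weak generation, together with the corresponding finite orthogonal sums of objects in $\phi(\bC')$; countability of $G$ keeps everything separable.

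Second, I reduce to the case where $\phi$ is injective on objects. By full faithfulness, whenever $\phi(C)=\phi(C')$ the image of $\id_{\phi(C)}$ under $\phi^{-1} \colon \Hom_\bD(\phi(C'),\phi(C)) \xrightarrow{\cong} \Hom_\bC(C',C)$ is a unitary $C' \to C$. Choosing a $G$-equivariant section of the map $\Ob(\bC) \to \phi(\Ob(\bC))$ and the associated unitary identifications determines a $G$-invariant subcategory $\bC'' \subseteq \bC$ whose inclusion is a unitary equivalence and whose composition with $\phi$ is injective on objects. Since $\kkGA$ inverts unitary equivalences by Theorem~\ref{qroihjqiofewfqwefqwefqwefqwefqewfq}.\ref{oijqoifefdwefwedqwdqewdqe}, this reduction is harmless.

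Third, assuming $\phi$ is an injective-on-objects weak Morita equivalence of separable $G$-$C^*$-categories, the naturality of $\alpha$ on $\Fun(BG,\nCcat_{\inj})$ together with Proposition~\ref{gihergioergregwefeerfwerf} gives a commutative square
\[
\xymatrix{
\kkG(A^f(\bC))\ar[r]^-{\kkG(A^f(\phi))}\ar[d]_-{\simeq}^-{\kkG(\alpha_\bC)} & \kkG(A^f(\bD))\ar[d]^-{\simeq}_-{\kkG(\alpha_\bD)} \\
\kkG(A(\bC))\ar[r]^-{\kkG(A(\phi))} & \kkG(A(\bD))
}
\]
in which the vertical maps are equivalences. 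It thus suffices to show that $\kkG(A(\phi))$ is an equivalence. The weak generating hypothesis provides, for each finite set of objects in $\bD$, multiplier isometries realizing them (up to arbitrarily small norm) as orthogonal summands of objects in $\phi(\Ob(\bC))$; translating this via the universal property of $A$ to the $C^*$-algebra level exhibits $A(\phi)$ as a Morita--Rieffel equivalence of $C^*$-algebras, which is inverted by $\kkG$ by $\mathbb{K}^G$-stability (Theorem~\ref{qeroigjqergfqeewfqewfqewf1}.\ref{jfowejfoiwefewfqwecvqwcwecwcew1}) applied to the standard crossed-product/compacts picture of Morita--Rieffel equivalence.

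The main obstacle is the last step: the multiplier isometries and $\epsilon$-approximations in Definition~\ref{q3rioghoergerwgregwg9} make the passage from categorical weak Morita equivalence to an actual Morita--Rieffel equivalence of the associated $C^*$-algebras subtle. I expect to extract this from the treatment of weak Morita equivalences in \cite{cank}, which develops exactly the necessary machinery at the $C^*$-categorical level; the remaining equivariant bookkeeping is routine since all constructions are $G$-equivariant by hypothesis.
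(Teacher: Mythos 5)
Your overall route coincides with the paper's: reduce to separable categories via $s$-finitarity, reduce to functors injective on objects, replace $A^{f}$ by $A$ via Proposition~\ref{gihergioergregwefeerfwerf}, and then invoke the Morita machinery of \cite{cank} for $A(\phi)$. However, two of your steps do not go through as written.

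First, the reduction to object-injectivity via ``a $G$-equivariant section of $\Ob(\bC)\to\phi(\Ob(\bC))$'' fails in general: the stabilizer of an object $D$ in the image acts on the fibre $\phi^{-1}(D)$ and need not have a fixed point (e.g.\ $G=\Z/2$ swapping two objects $C_{1},C_{2}$ of $\bC$ that are both sent to a single $G$-fixed object of $\bD$), so no $G$-invariant subcategory of $\bC$ meeting each fibre once exists. The paper instead performs this reduction by the method at the end of the proof of \cite[Thm.\ 18.6]{cank}, which modifies the \emph{target} rather than selecting representatives in the source; you should do the same. Second, and more seriously, your final step is not a proof: $\mathbb{K}^{G}$-stability inverts maps of the form $A\otimes K(H)\to A\otimes K(H')$, and the homomorphism $A(\phi)$ underlying a weak Morita equivalence is not of this shape, nor is it a corner embedding, because of the $\epsilon$-approximations in Definition~\ref{q3rioghoergerwgregwg9}. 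What is actually needed is the statement that the equivariant Morita $(A(\bC),A(\bD))$-bimodule constructed in the proof of \cite[Thm.\ 18.6]{cank} \emph{induces the map} $\KKth^{G}_{*}(A,A(\phi))$ on Kasparov modules and admits an inverse bimodule; the paper then concludes by testing $\kkG(A(\phi))$ against all $A$ in $\Fun(BG,\nCalg_{\sepa})$ on mapping spectra, passing to homotopy groups via Proposition~\ref{eroigowregwergregwgreg1} (which requires knowing that $A(\bC)$ and $A(\bD)$ are separable, hence the order of reductions), and reading off that $\KKth^{G}_{*}(A,A(\phi))$ is an isomorphism. Your proposal correctly flags this as the main obstacle and points to \cite{cank}, but the identification of the bimodule class with the class of the homomorphism $A(\phi)$, together with the passage through mapping spectra and $\pi_{*}$, is the substance of the argument rather than ``routine equivariant bookkeeping.''
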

\begin{proof}
{Using the same method as at the end of the proof of \cite[Thm.\ {18.6}]{cank}  
we can reduce to the case of weak Morita equivalences  which are in addition
injective on objects. So from now one we assume  that $\phi$ is injective on objects.
We first assume that $\bC$ and $\bD$ are separable.}

By Proposition~\ref{gihergioergregwefeerfwerf} and the fact that the image of $y^{G}$ in \eqref{ewgfuqgwefughfqlefiewfqwe} generates $\KKG$, {we see that} it suffices to  show that
\begin{equation}\label{fvsdvfvfdvfdvsfdvsfdvsfdvsd}
\KKG(  A,A(\phi)) \colon \KKG(  A,A(  \bC))\to  \KKG(  A,A(  \bD))
\end{equation} 
is an equivalence for every $  A$ in $\Fun(BG,\nCalg_{\sepa})$. It furthermore suffices to show that this map induces an isomorphism on the level of homotopy groups.

Our {assumptions   imply} that  $A(  \bC)$ and $A(  \bD)$ belong to
$\Fun(BG,\nCalg_{\sepa})$.  
{By Proposition \ref{eroigowregwergregwgreg1}} it suffices to show that 
$$\KKth^{G}_{*}(  A,A(\phi)) \colon \KKth^{G}_{*}(  A, A(  \bC))\to \KKth^{G}_{*}(  A, A(  \bD))$$
is an isomorphism.  
In the proof of \cite[Thm.\ 1{8.6}]{cank} we have constructed an equivariant Morita
$(A(  \bC),A(  \bD))$-bimodule (the $G$-action is induced by naturality of the construction) which induces the map $\KKth^{G}_{*}(  A,A(\phi))$ on the level of Kasparov modules. Since there exists an inverse Morita
$(A(  \bD),A(  \bC))$-bimodule it is now clear that $\KKth^{G}_{*}(  A,A(\phi))$ is an isomorphism. 
We conclude that $\kkGA(\phi) \colon \kkGA(\bC)\to \kkGA(\bD)$ is an equivalence  {if  $\bC$ and $\bD$ are separable}.

In order to extend to the general case we again
 use that the functor $\kkGA $  is s-finitary
  by Corollary \ref{wtroihjrthertherthtrhetrht}. 
 {Let $\bC$ and $\bD$ be in $\Fun(BG,\nCcat)$ and $\phi:\bC\to \bD$ be a weak Morita equivalence which is also injective on objects. 
 In the following we show that we can obtain $\phi$ as a colimit of weak Morita equivalences between cofinal families of  separable invariant subcategories of $\bC$ and $\bD$.}
 
   Let $\bC'$ and $\bD'$ be  invariant {separable} subcategories of $\bC$ and $\bD$, respectively.
We choose a countable dense subset $M $ of the morphisms of $\bD$. 
For every non-zero $D'$ in $\bD'$ and  finite family $(A_{i})_{i}$  with $A_{i} \colon {D_{i}'\to D'}$ in    $M$ and $n$ in $\nat$  we choose a finite family $(C_{j})_{j\in J}$ of objects in $\bC$  and a {multiplier} isometry $u \colon \bigoplus_{j\in  J} {\phi(C_{j})}\to D'$ {(the sum is interpreted in $\bM\bD$)}
such that $\|A_{i}-u u^{*}A_{i}\|\le \frac{1}{n}$ for all $i$ in $I$. This is possible since $\phi$ is a weak Morita equivalence.

We define the set objects of the  invariant subcategory   $\bC''$ of $\bC$ as the set 
 of objects of $\bC'$ together with   all $G$-orbits of the objects $C_{j}$   appearing in  these families above. 
   We then let $\bD''$ be the  smallest $G$-invariant    $C^{*}$-subcategory of $\bM\bD $ 
 containing  $\bD'$, the images of the objects of $\bC''$ under $\phi$, the sums, their structure maps and the $u'$s chosen above.
 The subcategy   $\bD''$  is invariant and separable,
 and we have  $\bD'\subseteq \bD''$.

  Using that $\phi$ is fully faithful we then define the morphisms of $\bC''$ such that $\phi$  restricts to a fully faithful functor  $\phi_{|\bC''} \colon \bC''\to \bD''$
 {which is} a weak Morita equivalence. 
 {The subcategory $\bC''$ also   invariant and separable and satisfies  
  $\bC'\subseteq \bC''$.}

%
%
%
%
%
%
%
%
%
%
%
%
%
%
{Using these observations in a cofinality argument and the separable case already shown above we conclude that
the upper horizontal map in the  following commutative square 
is an eqivalence.
$$\xymatrix{ \colim_{\bC'{\subseteq_{\sepa}\bC}}\kkGA{(  \bC')} \ar[r]^{\simeq}\ar[d]^{\simeq}&\colim_{\bD'{\subseteq_{\sepa}\bD}}\kkGA{(  \bD')} \ar[d]^{\simeq}\\
\kkGA (  \bC) \ar[r]^{\kkGA(\phi)}& \kkGA(\bD)}$$ 
The colimits run over the posets of separable subcategories of $\bC$ and $\bD$, respectively.
The vertical morphisms are equivalence since $\kkGA$ is $s$-finitary by Corollary \ref{wtroihjrthertherthtrhetrht}. 
It implies that the lower horizontal map is an equivalence.} 
 \end{proof}

{The following corollary verifies Theorem    \ref{qroihjqiofewfqwefqwefqwefqwefqewfq}.\ref{oijqoifefdwefwedqwdqewdqe}.
\begin{kor}\label{weoigjwoegerrgwregrweg}
The functor $\kkGA$ sends unitary equivalences  to equivalences.
\end{kor}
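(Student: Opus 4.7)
The plan is to deduce this corollary immediately from Proposition \ref{eriuhuweirgwgerwf}, which has just been established. Concretely, I would argue that every unitary equivalence in $\Fun(BG,\nCcat)$ is, in particular, a weak Morita equivalence (in the sense of Definition \ref{igjweogierfwerfwerf}). Once this is in place, the conclusion is formal: applying Proposition \ref{eriuhuweirgwgerwf} to the given unitary equivalence shows that $\kkGA$ sends it to an equivalence.

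The key observation, which the paper already flags in the remark preceding Proposition \ref{eriuhuweirgwgerwf}, is that a unitary equivalence $\phi\colon \bC\to \bD$ in $\Fun(BG,\nCcat)$ is in particular a unitary equivalence of the underlying (non-equivariant) $C^{*}$-categories, and that the property of being a weak Morita equivalence depends only on this underlying datum. Thus it suffices to invoke the $C^{*}$-categorical statement \cite[Cor.~18.7]{cank}, where it is shown that any unitary equivalence of $C^{*}$-categories is fully faithful and essentially surjective up to unitary isomorphism, and in particular its image is weakly generating in the sense of Definition \ref{q3rioghoergerwgregwg9}. Hence $\phi$ satisfies both conditions of Definition \ref{igjweogierfwerfwerf} and is a weak Morita equivalence.

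With this identification, the corollary follows at once: by Proposition \ref{eriuhuweirgwgerwf}, $\kkGA(\phi)$ is an equivalence. Since there is essentially no non-trivial step beyond invoking the cited characterization of unitary equivalences, the only potential obstacle is verifying that the $G$-equivariance plays no role, i.e.\ that being a weak Morita equivalence is detected after forgetting the $G$-action; this is already built into the way Definition \ref{igjweogierfwerfwerf} is formulated (it only refers to the underlying $C^{*}$-category), so no additional work is required.
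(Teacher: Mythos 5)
Your proposal is correct and follows exactly the paper's own argument: the paper likewise observes (in the remark preceding Proposition \ref{eriuhuweirgwgerwf}, citing the argument for \cite[Cor.\ 18.7]{cank}) that a unitary equivalence is a weak Morita equivalence and that this property only depends on the underlying non-equivariant functor, and then applies Proposition \ref{eriuhuweirgwgerwf}. No further comment is needed.
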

\begin{proof}
We use that a unitary equivalence   is a weak Morita equivalence
and apply Proposition \ref{eriuhuweirgwgerwf}.
\end{proof}}

\begin{construction}
We let $\Ccat$ denote the subcategory of $\nCcat$ of unital $C^{*}$-categories
and unital functors. We form   the    {Dwyer--Kan localization}
\begin{equation}\label{adfasdfsxas}
\ell\colon \Ccat\to \Ccat_{\infty}
\end{equation}  {of} $\Ccat$ at the unitary equivalences.
As shown in \cite{DellAmbrogio:2010aa}, \cite{startcats} the $\infty$-category $\Ccat_{\infty}$ is complete and cocomplete and modelled by  a combinatorial model category. 
By \cite[Sec.\ 4.2.4]{htt} or  \cite[Prop.\ 7.9.2]{Cisinski:2017} the functor 
\begin{equation}\label{toihgjiojoervgfdvsfdvfsdvsfv}
{\ell_{BG}:=\ell \circ-} \colon \Fun(BG,\Ccat)\to \Fun(BG,\Ccat_{\infty})
\end{equation}
  exhibits $ \Fun(BG,\Ccat_{\infty})$ as the Dwyer--Kan localization 
 of $\Fun(BG,\Ccat)$ again at the unitary equivalences. 
 Since the functor
$\kkGA$ sends unitary equivalences  to equivalences, we obtain a natural factorization
\begin{equation}\label{tgwergergreewff}
\xymatrix{\Fun(BG,\Ccat)\ar[dr]_{{\ell_{BG}}}\ar[rr]^-{\kkGA}&&\KKG\\
&\Fun(BG,\Ccat_{\infty})\ar@{..>}[ur]_-{\quad\kkGAi}&}
\end{equation}
by the universal property of the Dwyer--Kan localization.
\hB
\end{construction}

The following proposition shows Theorem \ref{qroihjqiofewfqwefqwefqwefqwefqewfq}.\ref{wtigjoepgrgrggergwegwergrweg}.
\begin{prop} \label{wegoijoihggrgwergegwe} We have an equivalence
\[\kkA( -\rtimes_{ {?}} G)\simeq  (-\rtimes_{ {?}} G)\circ  \kkGA\]
of functors from
$\Fun(BG,\nCcat)\to \KK$ for $?\in \{r,\max\}$. 
\end{prop}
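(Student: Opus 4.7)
The plan is to reduce the desired equivalence to a comparison between $A^f$ and the related (but only partially functorial) construction $A$ from Construction \ref{woitgjowrtgggergwergw}, exploiting Proposition \ref{gihergioergregwefeerfwerf} to bridge the two.

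First, using Theorem \ref{reguergiweogwergegwergwerv}.\ref{reguergiweogwergegwergwerv3}, the composition $(-\rtimes_? G) \circ \kkG$ is canonically equivalent to $\kk \circ (-\rtimes_? G)$ as functors $\Fun(BG,\nCalg)\to\KK$. Substituting, the proposition reduces to exhibiting a natural equivalence
\[
\kk \circ A^f \circ (-\rtimes_? G) \;\simeq\; \kk \circ (-\rtimes_? G) \circ A^f
\]
of functors $\Fun(BG,\nCcat)\to\KK$.

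For the maximal case ($?=\max$), I would establish a direct natural isomorphism $A^f(\bC \rtimes_{\max} G) \cong A^f(\bC) \rtimes_{\max} G$ in $\nCalg$ by comparing universal properties. For any $C^*$-algebra $B$, morphisms out of either $C^*$-algebra to $B$ classify the same data, namely a functor $\bC \to B$ satisfying the orthogonality relation \eqref{eq_universal_property_A} together with a unitary representation of $G$ in the multiplier algebra compatible with the $G$-action on $\bC$ (covariance). Applying $\kk$ yields the claim for $?=\max$.

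For the reduced case ($?=r$), I would use the natural transformation $\alpha:A^f\Rightarrow A$ on the wide subcategory $\nCcat_{\inj}$ together with the identification $A(\bC \rtimes_r G) \cong A(\bC) \rtimes_r G$ from \cite[Thm.\ 12.23]{cank}. For each $\bC$ these assemble into a zigzag
\[
\kk(A^f(\bC \rtimes_r G)) \xrightarrow{\kk(\alpha_{\bC\rtimes_r G})} \kk(A(\bC \rtimes_r G)) \;\cong\; \kk(A(\bC) \rtimes_r G) \xleftarrow{\kk(\alpha_\bC \rtimes_r G)} \kk(A^f(\bC) \rtimes_r G)
\]
in which the left arrow is an equivalence by Proposition \ref{gihergioergregwefeerfwerf} (applied non-equivariantly to the $C^*$-category $\bC \rtimes_r G$), and the right arrow is an equivalence by Theorem \ref{reguergiweogwergegwergwerv}.\ref{reguergiweogwergegwergwerv3}: it identifies with $(-\rtimes_r G)(\kkG(\alpha_\bC))$, and $\kkG(\alpha_\bC)$ is an equivalence in $\KKG$ again by Proposition \ref{gihergioergregwefeerfwerf}.

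The main obstacle is upgrading this pointwise (indeed, naturally on $\Fun(BG,\nCcat_{\inj})$) equivalence in the reduced case to a natural equivalence of functors on all of $\Fun(BG,\nCcat)$, since $A$ is only functorial for morphisms that are injective on objects. I would address this by invoking the $s$-finitary property: both $\kkA \circ (-\rtimes_r G)$ and $\kk \circ (-\rtimes_r G) \circ A^f$ are $s$-finitary (using Corollary \ref{wtroihjrthertherthtrhetrht} together with the fact, from Theorem \ref{reguergiweogwergegwergwerv}.\ref{reguergiweogwergegwergwerv3}, that $-\rtimes_r G\colon\KKG\to\KK$ preserves filtered colimits). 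Hence each functor is the left Kan extension of its restriction to $\Fun(BG,\nCcat_\sepa)$ along inclusions, which are in particular injective on objects. On this indexing diagram the zigzag is manifestly natural, and left Kan extension produces the desired natural equivalence on the whole category.
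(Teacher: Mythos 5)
Your treatment of the reduced case is essentially the paper's argument: pass from $A^{f}$ to $A$ via $\alpha$ and Proposition \ref{gihergioergregwefeerfwerf}, use $A(\bC\rtimes_{r}G)\cong A(\bC)\rtimes_{r}G$ from \cite[Thm.\ 12.23]{cank}, and return via the descent of $-\rtimes_{r}G$ to the $\KK$-categories (Theorem \ref{reguergiweogwergegwergwerv}.\ref{reguergiweogwergegwergwerv3}). The genuine gap is in your maximal case. You assert a natural isomorphism $A^{f}(\bC\rtimes_{\max}G)\cong A^{f}(\bC)\rtimes_{\max}G$ of $C^{*}$-algebras and justify it by saying that morphisms out of either side classify functors $\bC\to B$ satisfying the orthogonality relation \eqref{eq_universal_property_A} together with a covariant unitary representation of $G$. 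This conflates the two constructions from Construction \ref{woitgjowrtgggergwergw}: the orthogonality relation \eqref{eq_universal_property_A} is the universal property of $A(\bC)$, not of $A^{f}(\bC)$ (which is the \emph{free} $C^{*}$-algebra on $\bC$, classifying arbitrary functors $\bC\to B$ with no orthogonality imposed). Moreover, morphisms out of a maximal crossed product correspond to covariant pairs with unitaries in the multiplier algebra only under nondegeneracy hypotheses, while morphisms out of $A^{f}(\bC\rtimes_{\max}G)$ see the group through honest morphisms of the crossed-product category; these data do not obviously match. The isomorphism at the level of $A^{f}$ is therefore not established (and the whole point of Proposition \ref{gihergioergregwefeerfwerf} is that $A^{f}$ and $A$ differ as algebras and are only identified after applying $\kkG$). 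The repair is immediate: run the maximal case through exactly the same zigzag as your reduced case, replacing the citation for compatibility of $A$ with the crossed product by \cite[Thm.\ 6.10]{crosscat}; this is what the paper does, treating $?=r$ and $?=\max$ uniformly.

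A secondary remark on your naturality discussion: the concern that $A$ is only functorial on $\nCcat_{\inj}$ is legitimate (the paper's chain of equivalences glosses over it), but your proposed fix does not quite close it. Writing both sides as left Kan extensions of their restrictions to $\Fun(BG,\nCcat_{\sepa})$ reduces the comparison to a natural equivalence on that \emph{full} subcategory, whose morphisms are arbitrary functors between separable $G$-$C^{*}$-categories — not only the (injective-on-objects) inclusions appearing in the pointwise colimit formula. So one still needs naturality of the zigzag with respect to non-injective functors, e.g.\ by first replacing an arbitrary functor by an injective-on-objects one up to unitary equivalence and using that all functors in sight invert unitary equivalences. Since the paper does not address this either, I only flag it; the substantive error to fix is the maximal case.
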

\begin{proof} We have the following chain of equivalences
\begin{eqnarray*}
\kkA(-\rtimes_? G)&\stackrel{\text{Def.~}\ref{jgorijgqoiqfewfqwfewf}}{=}& \kk(A^{f}(-\rtimes_? G))\\
&\stackrel{\text{Prop.~}\ref{gihergioergregwefeerfwerf}}{\simeq}& \kk(A(-\rtimes_? G))\\
&\stackrel{!}{\simeq}& \kk(A(-)\rtimes_? G)\\
&\stackrel{\eqref{eqrwoigjweorgwerergerw}}{\simeq} &(-\rtimes_? G)\circ  \kkG(A(-))\\
&\stackrel{\text{Prop.~}\ref{gihergioergregwefeerfwerf}}{\simeq}&(-\rtimes_? G)\circ  \kkG(A^{f}(-))\\
&\stackrel{\text{Def.~}\ref{jgorijgqoiqfewfqwfewf}}{=}& (-\rtimes_? G)\circ  \kkGA(-)\,,
\end{eqnarray*} 
where for $!$ we use the fact shown in \cite[Thm.~6.{10}]{crosscat} that $A(-)$ preserves the maximal crossed product, or  \cite[Thm.~{12.23}]{cank}  that $A(-)$ preserves the reduced crossed product, respectively.
\end{proof}

An exact sequence $0\to \bA\to \bB\to \bC\to 0$ in $\Fun(BG,\nCcat)$ is a 
sequence of functors which induce  bijections between  the sets of objects and exact sequences
$$0\to \Hom_{\bA}(C,C')\to \Hom_{\bB}(C,C')\to \Hom_{\bC}(C,C')\to 0$$
for all pairs of objects $C,C'$ in $\bC$ (which  will be considered also as objects of $\bA$ and $\bB$ in the natural way). {This is equivalent to the definition of an exact sequence of $G$-$C^*$-categories given in \cite[Def.\ 8.3]{crosscat}.} 

The following proposition shows  Theorem \ref{qroihjqiofewfqwefqwefqwefqwefqewfq}.\ref{efoijqweofiqofweewfewfqfqfefe} and \ref{qroihjqiofewfqwefqwefqwefqwefqewfq}.\ref{ergoijweiogegfrfwfrefwf}.
Recall Definition  \ref{qrogiqoirgergwergergwergwergwrg} of $G$-properness and  ind-$G$-properness.
\begin{prop}\label{giuigoerggregrgrweg}\mbox{}
\begin{enumerate}
\item \label{wrthopwergerwgwreferfefw}
If $  P$ in $\KKG$ is ind-$G$-proper, then the functor $\KKG( P, \kkGA(-))$
sends all exact sequences in $\Fun(BG,\nCcat)$ to fibre sequences.
\item \label{weogjwegerfrefwref}If $P$ in $\kkG$ is $G$-proper, then
$\KKG(P,\kkGA(-))$ preserves filtered colimits.
\end{enumerate}
\end{prop}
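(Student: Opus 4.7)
The plan is to reduce both statements to the analogous facts about $\kkG$ applied to the functor $A$ from \eqref{erwgoijwioergewrgergwreg}, rather than the left-adjoint functor $A^{f}$ used in the definition of $\kkGA$. The bridge between the two is Proposition \ref{gihergioergregwefeerfwerf}: the natural transformation $\alpha$ from \eqref{fwerfrewevevfdvsdfvsdfv} becomes a natural equivalence after composition with $\kkG$ on the subcategory $\nCcat_{\inj}$.

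For assertion \ref{wrthopwergerwgwreferfefw}, let $0 \to \bA \to \bB \to \bC \to 0$ be an exact sequence in $\Fun(BG,\nCcat)$. By the definition of an exact sequence, all functors in the sequence are bijective on objects and in particular lie in $\nCcat_{\inj}$, so the transformation $\alpha$ yields a natural commutative ladder between the sequence $A^{f}(\bA) \to A^{f}(\bB) \to A^{f}(\bC)$ and the sequence $A(\bA) \to A(\bB) \to A(\bC)$ in $\Fun(BG,\nCalg)$ whose vertical arrows are sent by $\kkG$ to equivalences. It therefore suffices to check that $A$ sends the exact sequence of $G$-$C^{*}$-categories to an exact sequence of $G$-$C^{*}$-algebras, which is a known property of the algebra-of-a-$C^{*}$-category construction and can be extracted from \cite{crosscat}. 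Once this is in place, Theorem \ref{opcsjoisjdcoijsdcoiasdcasdcadc}.\ref{qwfiojoegggegwergweg} applied to the ind-$G$-proper object $P$ gives the desired fibre sequence after applying $\KKG(P,-)$.

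For assertion \ref{weogjwegerfrefwref}, we exploit that $A^{f}$ is the left adjoint in the adjunction \eqref{rwegkjnkjgngjerkjwgwergregegwrege} and therefore preserves all colimits. Hence for any filtered diagram $\bC \colon I \to \Fun(BG,\nCcat)$ the canonical map $\colim_{I} A^{f}(\bC) \to A^{f}(\colim_{I} \bC)$ is an isomorphism in $\Fun(BG,\nCalg)$. Combining this with Theorem \ref{opcsjoisjdcoijsdcoiasdcasdcadc}.\ref{erwgoijweogwgwergwergeg}, which states that the $G$-properness of $P$ implies that $\KKG(P,\kkG(-)) \colon \Fun(BG,\nCalg) \to \Sp^{\la}$ preserves filtered colimits, we obtain
\[
\KKG(P,\kkGA(\colim_{I} \bC)) \simeq \KKG(P,\kkG(\colim_{I} A^{f}(\bC))) \simeq \colim_{I}\KKG(P,\kkGA(\bC))\,,
\]
which is exactly the claim.

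The main obstacle is the assertion used in the proof of the first part, that $A$ sends exact sequences of $G$-$C^{*}$-categories to exact sequences of $G$-$C^{*}$-algebras. At the purely algebraic level this follows immediately from the matrix description \eqref{cwecpoijcopiqwcqwecqec} of $A^{\alg}$; surjectivity of $A(\bB) \to A(\bC)$ is a density argument, and injectivity of $A(\bA) \to A(\bB)$ follows from Lemma \ref{wtrhiotrhterhthebg} applied to the faithful (hence isometric) inclusion $\bA \to \bB$. The nontrivial point is exactness in the middle, which requires identifying the kernel of $A(\bB) \to A(\bC)$ with $A(\bA)$ after completion; we appeal to the corresponding statement in \cite{crosscat}. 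An alternative that avoids this input would be to verify directly that $A^{f}$ sends exact sequences of $G$-$C^{*}$-categories to exact sequences of $G$-$C^{*}$-algebras by using that cokernels in $\nCalg$ are computed by quotienting out the closed two-sided ideal generated by the image; however the route through $A$ is cleaner because of the direct matrix description.
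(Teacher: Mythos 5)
Your proposal is correct and follows essentially the same route as the paper: for the first assertion the paper likewise passes from $A^{f}$ to $A$ via Proposition \ref{gihergioergregwefeerfwerf}, cites \cite{crosscat} for the fact that $A$ preserves exact sequences, and then applies Theorem \ref{opcsjoisjdcoijsdcoiasdcasdcadc}.\ref{qwfiojoegggegwergweg}; for the second it likewise uses that $A^{f}$ is a left adjoint together with Theorem \ref{opcsjoisjdcoijsdcoiasdcasdcadc}.\ref{erwgoijweogwgwergwergeg}. Your extra remarks on why the exactness of $A$ is the only nontrivial input are accurate but do not change the argument.
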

\begin{proof}   We start with Assertion \ref{wrthopwergerwgwreferfefw}.
 Let  $$0\to \bA\to \bB\to \bC\to 0$$ be an exact sequence in $\Fun(BG,\nCcat)$.
 Then by  \cite[Prop.\ 8.{9.2}]{crosscat}  we have an exact sequence
 $$0\to A(\bA)\to A(\bB)\to A(\bC)\to 0$$ in
 $\Fun(BG,\nCalg)$.
By Theorem \ref{wtoihgwgregegwerg}.\ref{ergowejworefrefwerfwerfrefw} and the assumption on $P$ we {then} get the fibre sequence 
$$\KKG(P,A(\bA))\to \KKG(P,A(\bB))\to \KKG(P,A(\bC)) 
\, .$$
Finally, we get the desired fibre sequence
$$\KKG(P, \kkGA(\bA))\to \KKG(P,\kkGA(\bB))\to \KKG(P,\kkGA(\bC{))} 
$$
from Proposition \ref{gihergioergregwefeerfwerf}.

In order to show Assertion \ref{weogjwegerfrefwref}
we note that $A^{f}$ preserves all colimits since it is a left adjoint. The assertion now immediately follows from Theorem \ref{wtoihgwgregegwerg}.\ref{ergowejworefrefwerfwerfrefw1}.
\end{proof}

In the following we need the notion of  weakly equivariant functors and  of  natural transformations between them from  \cite[Def.\ 7.10]{crosscat}. Furthermore we will use that  the
 orthogonal  sum of two weakly equivariant functors 
 is again weakly equivariant in a canonical way, see  \cite[{Prop.\ 11.4}.]{cank}. 
 Let $  \bC$ be $ \Fun(BG,\Ccat)$. The following definition generalizes 
\cite[Def.~{11}.3]{cank} from the non-equivariant to the equivariant case.
Recall that $\bC$ is called additive if the underlying $C^{*}$-category obtained by forgetting the $G$-action admits orthogonal sums for all finite families of objects \cite[Def.\ {5.5}]{cank}.
\begin{ddd} \label{weiogjegewrrferfwef}
$\bC$ is called flasque if it is additive  and admits a weakly equivariant endomorphism $  S \colon     \bC\to    \bC$ such that  we have a natural unitary isomorphism of weakly equivariant functors $S\cong S\oplus \id_{\bC}$. 
\end{ddd}

The following proposition {shows   Assertion} \ref{erogijqorgwefqfewfqfewfqef} of Theorem \ref{qroihjqiofewfqwefqwefqwefqwefqewfq}.

\begin{prop}\label{roigwjtogwerfferfw}
 If $  \bC$  in $\Fun(BG,\Ccat)$  is flasque, then  $\KKG(  P,   \kkGA(\bC))\simeq 0$ for all   
  ind-$G$-proper
$  P$ in $\KKG$.
\end{prop}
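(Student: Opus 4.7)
The plan is a standard Eilenberg swindle applied to the functor $F := \KKG(P, \kkGA(-)) \colon \Fun(BG, \Ccat) \to \Sp^{\la}$. By Proposition~\ref{giuigoerggregrgrweg}.\ref{wrthopwergerwgwreferfefw} the ind-$G$-properness of $P$ ensures that $F$ sends exact sequences in $\Fun(BG, \nCcat)$ to fibre sequences, and by Corollary~\ref{weoigjwoegerrgwregrweg} together with Proposition~\ref{eriuhuweirgwgerwf} the functor $F$ inverts unitary and weak Morita equivalences. The factorization~\eqref{tgwergergreewff} of $\kkGA$ through $\Fun(BG, \Ccat_\infty)$ moreover implies that a weakly invariant endofunctor $T \colon \bC \to \bC$ induces a well-defined endomorphism $F(T)$ of $F(\bC)$ in the stable target, and that a unitary natural isomorphism $T \cong T'$ between weakly invariant endofunctors yields an equality $F(T) = F(T')$ in the abelian group $\pi_0\map_\Sp(F(\bC), F(\bC))$.

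The technical heart of the argument is then an additivity formula: for weakly invariant endofunctors $T_1, T_2 \colon \bC \to \bC$ and their orthogonal sum $T_1 \oplus T_2$ (which exists since $\bC$ is additive), one has
\[ F(T_1 \oplus T_2) \;=\; F(T_1) + F(T_2) \]
in $\pi_0\map_\Sp(F(\bC), F(\bC))$. I would derive this by exploiting the canonical weak Morita equivalence $\bC \hookrightarrow M_2(\bC)$: after applying $F$, the Morita invariance identifies $F(M_2(\bC))$ with $F(\bC)$, and the two diagonal embeddings $\bC \rightrightarrows M_2(\bC)$ coming from the matrix entries decompose $F(T_1 \oplus T_2)$ into the sum of $F(T_1)$ and $F(T_2)$. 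Equivalently, one can construct a split exact sequence $0 \to \bC \to \bC \oplus \bC \to \bC \to 0$ in $\Fun(BG, \nCcat)$ from the orthogonal sum structure and apply the exactness of $F$, using that a split fibre sequence in a stable category forces the total object to decompose additively.

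With additivity available the swindle closes: the flasqueness of $\bC$ provides a unitary natural isomorphism $S \cong S \oplus \id_\bC$ of weakly invariant endofunctors, whence
\[ F(S) \;=\; F(S \oplus \id_\bC) \;=\; F(S) + \id_{F(\bC)} \]
in $\pi_0\map_\Sp(F(\bC), F(\bC))$. Cancelling $F(S)$ in this abelian group forces $\id_{F(\bC)} = 0$, and hence $F(\bC) \simeq 0$, which is the claim.

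I expect the main obstacle to be the additivity step. The language of weakly invariant endofunctors and their natural transformations requires careful coherence bookkeeping, and one must ensure that the chosen matrix construction or split short exact sequence genuinely lives in $\Fun(BG, \nCcat)$---rather than merely in a weakened equivariant setting---so that the exactness and Morita-invariance of $F$ apply cleanly to produce the desired decomposition.
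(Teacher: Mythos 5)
Your strategy is a direct equivariant Eilenberg swindle, which is genuinely different from the paper's route: the paper first uses that the ind-$G$-proper objects are generated (as a localizing subcategory) by the $\kkG(C_0(G/H))$ for finite $H$, then applies Corollary~\ref{qeroigjwoegwergergwerg} and Proposition~\ref{gihergioergregwefeerfwerf} to identify $\KKG(C_0(G/H),\kkGA(\bC))$ with the non-equivariant $K$-theory $\Kcat(\bC\rtimes H)$, checks that $\bC\rtimes H$ is again flasque (using that $-\rtimes H$ is functorial for weakly invariant functors and unitary transformations), and then simply cites the known non-equivariant fact that $\Kcat$ annihilates flasques. Your plan skips this reduction and tries to run the swindle for $F=\KKG(P,\kkGA(-))$ directly on the equivariant category. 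That is where the two real gaps sit.

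First, $F(T)$ is not defined for a weakly invariant endofunctor $T$: such a $T$ is not a morphism in $\Fun(BG,\nCcat)$, and the claim that the factorization \eqref{tgwergergreewff} "implies" that $T$ induces an endomorphism of $F(\bC)$, compatibly with unitary isomorphisms of weakly invariant functors, is exactly the coherence statement the paper avoids having to prove. Nothing in the paper establishes that $\kkGA$ is functorial for weakly invariant functors; the only such functoriality available is for the crossed product ($-\rtimes H$ applied to weakly invariant functors), which is precisely why the paper transports the problem to $\bC\rtimes H$ before invoking flasqueness. Second, your additivity formula $F(T_1\oplus T_2)=F(T_1)+F(T_2)$ is asserted but not proven, and the proposed split exact sequence $0\to\bC\to\bC\oplus\bC\to\bC\to 0$ is not an exact sequence of $C^*$-categories in the sense used here: exact sequences are required to induce bijections on objects, whereas any reasonable "$\bC\oplus\bC$" implementing the orthogonal sum does not. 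One would instead have to prove an additivity lemma for $F$ on orthogonal sums of (weakly invariant) functors, which is the non-trivial content of the cited result that homological functors annihilate flasques. In short, your swindle is the right mechanism, but to make it rigorous you would essentially have to redevelop in the equivariant setting the machinery that the paper imports from the non-equivariant case after reducing to $\Kcat(\bC\rtimes H)$; as written, the two central steps are unsupported.
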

\begin{proof}  
 Assume that $  \bC$  in $\Fun(BG,\Ccat)$  is flasque. 
The full subcategory of objects $P$ in $\KKG$ such that $\KKG(P,  \kkGA(\bC))\simeq 0$ is localizing.
In view of Definition \ref{qrogiqoirgergwergergwergwergwrg}   it therefore suffices to show  that 
$\KKG(C_{0}(G/H), \kkGA( \bC))\simeq 0$  for all finite subgroups $H$ of $G$. In this case   by Definition \ref{jgorijgqoiqfewfqwfewf},   Proposition \ref{gihergioergregwefeerfwerf} and  Corollary \ref{qeroigjwoegwergergwerg} 
we have the following  equivalences (we omit   $\Res^{G}_{H}$ to simplify the notation):
\begin{eqnarray*}
 \KKG(  C_{0}(G/H),  \kkGA( \bC))&{\simeq}&\KKG(  C_{0}(G/H), A^{f}(  \bC)) \\&{\simeq}& \KKG(  C_{0}(G/H), A(  \bC)) \\&{\simeq} &\KK(\C,A(\bC)\rtimes H)\, . 
\end{eqnarray*}
We now use that  $A(-)$ commutes with crossed products \cite[Thm.\ 6.10]{crosscat} and  
 again  Proposition \ref{gihergioergregwefeerfwerf}
(for the trivial group) and the definition of $\Kcat \coloneqq \KKth(\C,A^{f}(-))$
 in order to get an  equivalence  
 $$\KKth(\C,A(\bC)\rtimes H) \simeq \KKth(\C,A(  \bC\rtimes H)) \simeq
 \KKth(\C,A^{f}(  \bC\rtimes H)) \simeq \Kcat (\bC\rtimes  H) \, .$$
 We claim that $  \bC\rtimes H$ is again flasque. 
 Let $  S \colon   \bC\to   \bC$ be the weakly equivariant  functor  implementing the flasqueness of $  \bC$ such that we have a  natural  unitary  $S \oplus \id_{\bC} \cong S$  of weakly equivariant functors.
 By \cite[Prop.\ 7.12]{crosscat} 
 the crossed product is functorial with respect to weakly equivariant functors  and unitary transformations between them. Hence  
 we get a unitary isomorphism
 $$ {S} \rtimes H\oplus  \id_{ \bC} \rtimes H \cong     ( {S \oplus \id_{\bC}})\rtimes H\cong  {S} \rtimes H\, .$$  
Hence  $  S\rtimes H$ implements  the flasqueness of   $  \bC\rtimes H$.
 By \cite[Prop.\ {13.13} \& Thm.\ 1{4}.4]{cank}  the functor
 $\Kcat$ annihilates flasques so that 
 $\Kcat(\bC\rtimes H)\simeq 0$.   By going back through the equivalences  above we conclude that
 $\KKG({C_{0}(G/H)}, \kkGA( \bC)) \simeq 0$ as desired.
\end{proof}

We finish the proof of Theorem \ref{qroihjqiofewfqwefqwefqwefqwefqewfq}
 by showing Assertion \ref{ifejhgiowergerdsfvs}.
We consider a morphism $\phi\colon \bC\to \bD$ in $\Fun(BG,\nCcat)$. Recall the notion \cite[Def.\ 17.1]{cank} of a relative Morita equivalence. Being a relative Morita equivalence is again a property of the underlying morphism obtained by forgetting the $G$-actions. 
\begin{prop}If $P$ in $\KKG$ is ind-$G$-proper and 
 $\phi\colon \bC\to \bD$ is a   relative Morita equivalence, then
$\KKG(P,\kkGA(\phi)) $ is an equivalence.
\end{prop}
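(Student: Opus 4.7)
The plan is to reduce to the case of the distinguished generators $\kkG(C_0(G/H))$ for $H$ a finite subgroup of $G$, and then identify the relevant map with an instance of the $C^{*}$-category $K$-theory functor $\Kcat$ applied to a relative Morita equivalence in $\Ccat$.

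First, I would observe that the full subcategory $\mathcal{L}$ of $\KKG$ consisting of those $Q$ for which $\KKG(Q,\kkGA(\phi))$ is an equivalence is a localizing subcategory. Indeed, $\KKG(-,X)$ sends colimits to limits, equivalences of spectra are detected on fibres, and the formation of fibres commutes with limits in $X$; thus $\mathcal{L}$ is closed under arbitrary colimits, retracts, shifts and equivalences. By Definition \ref{qrogiqoirgergwergergwergwergwrg}.\ref{weroigjwegrfrw} it therefore suffices to show that $\kkG(C_0(G/H)) \in \mathcal{L}$ for every finite subgroup $H$ of $G$.

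Next, I would apply Corollary \ref{qeroigjwoegwergergwerg} to obtain a natural equivalence
\[
\KKG(C_0(G/H),\kkGA(-)) \simeq \Kast\bigl(\Res^{G}_{H}(A^{f}(-))\rtimes H\bigr).
\]
Using Proposition \ref{gihergioergregwefeerfwerf} (for the group $H$, applied after restriction) to replace $A^{f}$ by $A$, together with the compatibility of $A(-)$ with restriction and with the crossed product \cite[Thm.~6.10]{crosscat} (or \cite[Thm.~12.23]{cank}), this rewrites as $\Kcat(\Res^{G}_{H}(-)\rtimes H)$, where $\Kcat=\KK(\C,A^{f}(-))$ is the spectrum-valued $K$-theory functor for $C^{*}$-categories of \cite{cank}. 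Hence the map $\KKG(\kkG(C_0(G/H)),\kkGA(\phi))$ is identified up to equivalence with $\Kcat(\Res^{G}_{H}(\phi)\rtimes H)$.

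It remains to verify that $\Res^{G}_{H}(\phi)\rtimes H \colon \Res^{G}_{H}(\bC)\rtimes H \to \Res^{G}_{H}(\bD)\rtimes H$ is a relative Morita equivalence in $\Ccat$, for then \cite{cank} (the non-equivariant invariance of $\Kcat$ under relative Morita equivalences, which is the non-equivariant analogue quoted in Theorem~\ref{qroihjqiofewfqwefqwefqwefqwefqewfq}.\ref{ifejhgiowergerdsfvs}) finishes the proof. The property of being a relative Morita equivalence depends only on the underlying functor of $C^{*}$-categories (fully faithfulness plus a weak generation condition in the multiplier category, cf.\ \cite[Def.~17.1]{cank}), so it is obviously preserved by $\Res^{G}_{H}$. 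The main and only non-formal step is therefore to show that the finite crossed product $-\rtimes H$ preserves relative Morita equivalences: fully faithfulness is preserved because $-\rtimes H$ for a finite group is given on morphism spaces by the matrix-type construction recalled in Construction~\ref{qreogijqeroigergwgwegwegwergw}, and the weak generation condition is preserved by averaging the witnessing multiplier isometries and orthogonal decompositions over the finite group $H$ (using that $-\rtimes H$ commutes with finite orthogonal sums in $\bM\bD$). I expect that this last verification, while elementary, is the actual technical content of the argument; it may also be available directly in the companion paper \cite{cank} as a functoriality property of relative Morita equivalences under finite crossed products, in which case it can simply be cited.
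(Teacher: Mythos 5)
Your proposal is correct and follows essentially the same route as the paper: reduce via the localizing subcategory to the generators $\kkG(C_{0}(G/H))$, identify $\KKG(C_{0}(G/H),\kkGA(\phi))$ with $\Kcat(\phi\rtimes H)$, and conclude from the invariance of $\Kcat$ under relative Morita equivalences (\cite[Prop.\ 17.4]{cank}). The only step you leave as a sketch --- that $-\rtimes H$ preserves relative Morita equivalences --- is exactly the one the paper disposes of by citation, namely \cite[Prop.\ 17.2]{cank} applied to $-\rtimes_{r}H$ (using that finite groups are exact) together with the isomorphism $-\rtimes H\cong -\rtimes_{r}H$, so no averaging argument is needed.
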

\begin{proof}
The argument is very similar to the proof of Proposition \ref{roigwjtogwerfferfw}. 
The subcategory of all $P$ in $\KKG$ such that $\KKG(P,\kkGA(\phi))$ is an equivalence is localizing. We then observe as in the proof of  Proposition \ref{roigwjtogwerfferfw} that for any finite subgroup $H$ of $G$ the morphism 
$\KKG(C_{0}(G/H),\kkGA(\phi))$ is equivalent to 
$\Kcat(\phi\rtimes H)$. 

We then use that $-\rtimes  H$ preserves  
relative Morita equivalences  \cite[Prop.\ 17.2.1]{cank}. 
  We finally use that   
 $\Kcat$   sends     relative   Morita  equivalences to equivalences by  \cite[Prop.\ 17.3 \& Thm.\ 14.4]{cank}.  \end{proof}

{Recall the well-known result that the $K$-theory of stable multiplier algebras is trivial \cite[Sec.\ 12]{blackadar}. As an application of the techniques developed so far we will provide a generalization of this fact to the equivariant situation, see Corollary \ref{kor_KKG_stable_multiplier_vanishes} below.}

Let $H \coloneqq \ell^{2}\otimes L^{2}(G)$ be the standard $G$-Hilbert space.
Let $A$ be in $\Fun(BG,\nCalg)$. Then we consider the stable multiplier algebra $\cM(A\otimes K(H))$ of $A$ as a $G$-$C^{*}$-category 
{$ \bM(A\otimes K(H))$}
with a single object  {$[A\otimes K(H)]$.}
 
\begin{lem}\label{lem_stable_multiplier_flasque}
The category ${\bM}(A\otimes K(H))$ is flasque.
\end{lem}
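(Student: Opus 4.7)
The plan is to apply the classical Eilenberg swindle, adapted to the equivariant setting by exploiting the fact that the $G$-action on $H=\ell^{2}\otimes L^{2}(G)$ is concentrated in the second tensor factor. First I will fix a $G$-equivariant orthogonal decomposition $H\cong\bigoplus_{n\in\nat}H$ coming from $\ell^{2}\cong\bigoplus_{n\in\nat}\ell^{2}$ by acting only on the $\ell^{2}$-factor. This yields a sequence of $G$-equivariant isometries $v_{n}\in B(H)^{G}$ with $v_{n}^{*}v_{m}=\delta_{nm}$ and $\sum_{n}v_{n}v_{n}^{*}=\id_{H}$ strongly. Via the canonical map $B(H)=\cM(K(H))\to\cM(A\otimes K(H))$, we obtain $G$-invariant multipliers $V_{n}:=1\otimes v_{n}\in\cM(A\otimes K(H))^{G}$ satisfying the same orthogonality and completeness relations in the strict topology.

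Next I will verify that $\bM(A\otimes K(H))$ is additive in the sense of Definition \ref{weiogjegewrrferfwef}. The isometries $V_{1},V_{2}$ exhibit the unique object $[A\otimes K(H)]$ as its own orthogonal self-sum $[A\otimes K(H)]\oplus[A\otimes K(H)]\cong[A\otimes K(H)]$ in the underlying $C^{*}$-category; since additivity is measured after forgetting the $G$-action, this suffices. I will then define the weakly invariant endofunctor $S\colon\bM(A\otimes K(H))\to\bM(A\otimes K(H))$ as the identity on the unique object and by $S(a):=\sum_{n\in\nat}V_{n}aV_{n}^{*}$ on morphisms, the sum being strictly convergent. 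Because each $V_{n}$ is $G$-invariant, $S$ is in fact \emph{strictly} $G$-equivariant, and therefore weakly invariant.

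Finally I will construct the natural unitary isomorphism $S\oplus\id_{\bC}\cong S$. Consider the shift $W:=\sum_{n\in\nat}V_{n+1}V_{n}^{*}\in\cM(A\otimes K(H))^{G}$, which is again a $G$-invariant isometry satisfying $WW^{*}=\sum_{n\geq 2}V_{n}V_{n}^{*}=1-V_{1}V_{1}^{*}$ and $W^{*}V_{1}=0$. Taking $W$ and $V_{1}$ as the structure maps for the orthogonal decomposition $[A\otimes K(H)]\oplus[A\otimes K(H)]\cong[A\otimes K(H)]$, the functor $S\oplus\id_{\bC}$ is given explicitly by
\[ a\longmapsto WS(a)W^{*}+V_{1}aV_{1}^{*}=\sum_{n\in\nat}V_{n+1}aV_{n+1}^{*}+V_{1}aV_{1}^{*}=\sum_{n\in\nat}V_{n}aV_{n}^{*}=S(a), \]
so that $S\oplus\id_{\bC}=S$ literally with this choice, and the identity morphism furnishes the desired natural unitary isomorphism of weakly invariant functors. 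Together with additivity and the weak invariance of $S$ established in the previous paragraph, this verifies Definition \ref{weiogjegewrrferfwef} and proves the lemma.

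The main technical point is not conceptual but bookkeeping: one must be careful that the $v_{n}$ really do promote to $G$-equivariant \emph{multipliers} of $A\otimes K(H)$ and that all relevant sums converge strictly (as opposed to in norm), so that formulas such as $S(a)=WS(a)W^{*}+V_{1}aV_{1}^{*}$ make literal sense in $\cM(A\otimes K(H))$. Placing the $v_{n}$ in the $\ell^{2}$-factor, where $G$ acts trivially, resolves the equivariance issue, and the strict convergence is standard because $\sum V_{n}V_{n}^{*}=1$ strictly.
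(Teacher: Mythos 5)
Your proof is correct and rests on the same key construction as the paper's: the family of mutually orthogonal $G$-invariant isometries obtained by decomposing the $\ell^{2}$-factor (where $G$ acts trivially) and promoting them to multipliers of $A\otimes K(H)$. The only difference is that the paper stops after verifying that $[A\otimes K(H)]$ is a countable orthogonal self-sum (i.e.\ that the category admits countable AV-sums) and then cites \cite[Ex.\ 11.5]{cank} for the implication ``countably additive $\Rightarrow$ flasque,'' whereas you unwind that citation and perform the Eilenberg swindle explicitly with $S(a)=\sum_{n}V_{n}aV_{n}^{*}$ and the shift $W=\sum_{n}V_{n+1}V_{n}^{*}$; your version is self-contained, at the cost of redoing a general lemma. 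One cosmetic slip: $V_{1},V_{2}$ alone do not exhibit the binary self-sum, since $V_{1}V_{1}^{*}+V_{2}V_{2}^{*}\neq 1$; the correct pair is $V_{1}$ together with your shift $W$ (for which $V_{1}V_{1}^{*}+WW^{*}=1$), which you in any case use in the final computation.
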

\begin{proof} {We consider $A\otimes K(H)$ as a $G$-$C^{*}$-category with a single object $[A\otimes K(H)]$.
We will show that $A\otimes K(H)$ admits countable AV-sums \cite[Def.\ 7.1]{cank}.
Then $\bM(A\otimes K(H))$ is flasque by \cite[Ex.\ 11.5]{cank}.}
For any countable set $I$  we can choose 
 a pairwise orthogonal family of isometries $(u_{i})_{i\in I}$, $u_{i} \colon \ell^{2}\to \ell^{2}$, 
such that $\sum_{i\in I} u_{i}u_{i}^{*}=\id_{\ell^{2}}$ {in the strict topology of $B(\ell^{2})$ which we interpret as the multiplier algebra of $K(\ell^{2})$.}
 We have 
 an isomorphism $K(H)\cong K(\ell^{2})\otimes K(L^{2}(G))$
 and define the multiplier 
   $e_{i} $ in $\cM(A\otimes K(H))$  such that
  $e_{i}(a\otimes b\otimes c) \coloneqq a\otimes {u_i b} \otimes c$ for every $a$ in $A$, $b$ in $K(\ell^2)$ and $c$ in $K(L^2(G))$.
Then $(e_{i})_{i\in I}$ is a mutually orthogonal family of isometries such that
$\sum_{i\in I} e_{i}e_{i}^{*}=1_{{[A\otimes K(H)]}}$, where the sum converges strictly.
According to \cite[Prop.\  {7.10}]{cank} the pair   $({[A\otimes K(H)]},(e_{i})_{i\in I})$ represents the orthogonal sum of the family $({[A\otimes K(H)]})_{I}$ in the $C^{*}$-category ${\bM}(A\otimes K(H))$. The latter is therefore countably additive.
\end{proof}
 {Combining Lemma \ref{lem_stable_multiplier_flasque} with  Proposition \ref{roigwjtogwerfferfw} immediately implies:}
\begin{kor}\label{kor_KKG_stable_multiplier_vanishes} 
We have 
$$\KKG(P,\cM(A\otimes K(H)))\simeq 0$$
for all ind-$G$-proper $P$ in $\KKG$.
\end{kor}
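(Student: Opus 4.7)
The plan is to observe that the statement is a direct combination of the two results immediately preceding it, interpreting the stable multiplier algebra $\cM(A\otimes K(H))$ via its associated one-object $C^{*}$-category $\bM(A\otimes K(H))$ so that $\kkGA$ can be applied and the notation $\KKG(P,\cM(A\otimes K(H)))$ is read as $\KKG(P,\kkGA(\bM(A\otimes K(H))))$.

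First, I would invoke Lemma \ref{lem_stable_multiplier_flasque}, which says that $\bM(A\otimes K(H))$ lies in $\Fun(BG,\Ccat)$ and is flasque in the sense of Definition \ref{weiogjegewrrferfwef}. Concretely, the countable family of mutually orthogonal isometries $(e_{i})_{i\in I}$ constructed from an orthogonal family on $\ell^{2}$ witnesses that $\bM(A\otimes K(H))$ has countable AV-sums, which by \cite[Ex.\ 11.5]{cank} implies flasqueness via a suitable shift endofunctor $S$ equipped with a natural unitary $S\cong S\oplus \id$ of weakly invariant endofunctors.

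Then I would apply Proposition \ref{roigwjtogwerfferfw}: since $\bM(A\otimes K(H))$ is flasque and $P$ is ind-$G$-proper by hypothesis, we conclude
\[
\KKG\bigl(P,\kkGA(\bM(A\otimes K(H)))\bigr)\simeq 0,
\]
which is the desired vanishing.

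There is no real obstacle here; everything has been reduced to verifying the two ingredients above, and this reduction is formal. The only point worth flagging in the write-up is the convention that converts the multiplier $C^{*}$-algebra into a one-object $C^{*}$-category (already made in the paragraph before Lemma \ref{lem_stable_multiplier_flasque}), so that Proposition \ref{roigwjtogwerfferfw}, which is stated for objects of $\Fun(BG,\Ccat)$, is directly applicable.
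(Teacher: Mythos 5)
Your proposal is correct and follows exactly the paper's argument: the corollary is stated as an immediate combination of Lemma \ref{lem_stable_multiplier_flasque} (flasqueness of $\bM(A\otimes K(H))$) with Proposition \ref{roigwjtogwerfferfw} (vanishing of $\KKG(P,\kkGA(-))$ on flasques for ind-$G$-proper $P$). Your remark about reading $\cM(A\otimes K(H))$ as the one-object $C^{*}$-category $\bM(A\otimes K(H))$ matches the convention set up just before the lemma.
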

If $G$ is trivial and $P=\kk(\C)$, then  this is{, as already noted above,} the well-known result that the $K$-theory of stable multiplier algebras is trivial.  The classical proof is different and shows that the unitary group of such an algebra is contractible.

\section{Tensor products of \texorpdfstring{$\boldsymbol{C^{*}}$}{Cstar}-categories}\label{regijweoijergregrgrev}
   
 The main goal of the present section is to prove  Theorem \ref{ioeghwjoigrgwergrggrgew} from the introduction stating that
 $\kkGA$ has   symmetric monoidal refinements for the maximal and minimal  tensor products on $C^{*}$-categories.  {This result features in our companion paper on equivariant Paschke duality \cite{bel-paschke}.}
 In order to show this result we give an essentially complete account for the maximal and minimal tensor products  on $\nCcat$,  {which we believe to be of independent use. For instance, it was also used in \cite{Nadig}.} As an easy consequence of the definitions in Corollary \ref{wrtohjwtrohtrgwgwgregwreg} we obtain an op-lax symmetric
 monoidal refinement of $\kkGA$ in both cases. So the main (and most complicated part) is the verification, stated as  Proposition \ref{weogijwegoijrregrggergwegerwg},  that this structure
 is actually symmetric monoidal. The crucial 
 technical result used in its proof  is the Proposition \ref{eroigjweogieogigw} asserting the compatibility of $A$ from \eqref{erwgoijwioergewrgergwreg} with the minimal and maximal tensor products of $C^{*}$-categories.

%
%
%
Our starting point is the symmetric monoidal structure on $\nsCat$   given by the algebraic tensor product.
\begin{ddd}For $\bC$ and $\bD$ in $\nsCat$  the algebraic  tensor product is characterized by the property that the  morphism
$$\bC\times \bD\to \bC\otimes^{\alg}\bD$$  in   $\nsttCat$ (possibly non-unital categories with involution)  is universal for morphisms  from 
$\bC\times \bD$ to objects from  $\nsCat$ which are bilinear on morphism spaces.
\end{ddd}
Here is an 
  description of the algebraic tensor product   of
  $\bC$ and $\bD$   in $\nsCat$:
\begin{enumerate}
\item objects: We have $\Ob(\bC\otimes^{\alg} \bD)\cong \Ob(\bC)\times \Ob(\bD)$.
\item morphisms: For objects $(C,D)$ and $(C',D')$ in $ \bC\otimes^{\alg} \bD$ we have
\[\Hom_{\bC\otimes^{\alg} \bD}((C,D),(C',D'))\cong \Hom_{\bC}(C,C'){\otimes^{\alg}}  \Hom_{\bD}(D,D')\, .\]
\item composition and involution:   These structures are defined in the obvious manner.
\end{enumerate}
 
The maximal tensor product in $\Ccat$  or $\nCcat$ has a similar description by a universal property:
\begin{ddd}\label{weogjgewgwrgwg}
For $\bC$ and $\bD$ in   $\nCcat$  the maximal tensor product is characterized by the property that the  morphism
$$\bC\times \bD\to \bC\otimes_{\max} \bD$$   in $\nsttCat$  is universal for morphisms  from 
$\bC\times \bD$ to objects from   $\nCcat$ which are bilinear on morphism spaces.
\end{ddd}
One must check that the maximal tensor product exists. In the unital case  this  {has} been shown in \cite[Prop.\ 3.12]{DellAmbrogio:2010aa}, but the proof given there explicitly uses identity morphisms and does not directly  apply in the non-unital case. 
The first step in the verification is the following lemma.
Assume that $ \bC$, $\bD$ and $\bE$ are in   $\nCcat$. 
 \begin{lem}\label{rgiojiogwgreewreg}
 The algebraic tensor products $\bC\otimes^{\alg} \bD$ and
 $\bC\otimes^{\alg} \bD\otimes^{\alg} \bE $ are pre-$C^{*}$-categories. 
 \end{lem}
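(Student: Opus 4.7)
My plan is to prove the lemma directly from the definition, by producing a uniform bound on $\|\rho(x)\|_B$ for any morphism $x$ in $\bC\otimes^{\alg}\bD$ and any functor $\rho\colon\bC\otimes^{\alg}\bD\to B$ to a $C^*$-algebra. The bound will depend only on the $C^*$-norms of the constituents of $x$ in $\bC$ and $\bD$, and its finiteness follows from the $C^*$-identity in $B$ together with the universal property of the maximal tensor product of $C^*$-algebras.

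The central step concerns elementary tensors. Let $f\colon C\to C'$ in $\bC$ and $g\colon D\to D'$ in $\bD$, and consider the elementary tensor $f\otimes g\colon (C,D)\to (C',D')$ in $\bC\otimes^{\alg}\bD$. The key observation is that
\[(f\otimes g)^{*}(f\otimes g)=(f^{*}\otimes g^{*})(f\otimes g)=f^{*}f\otimes g^{*}g\]
is an endomorphism of $(C,D)$, and hence lies in the algebraic tensor product
\[\mathrm{Hom}_{\bC\otimes^{\alg}\bD}((C,D),(C,D))=\mathrm{Hom}_{\bC}(C,C)\otimes^{\alg}\mathrm{Hom}_{\bD}(D,D),\]
which is the algebraic tensor product of two $C^{*}$-algebras. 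The restriction of $\rho$ to this endomorphism algebra is a $*$-algebra homomorphism, and by the universal property of the maximal $C^{*}$-tensor product it extends to a contractive $*$-homomorphism $\mathrm{Hom}_{\bC}(C,C)\otimes_{\max}\mathrm{Hom}_{\bD}(D,D)\to B$. Using $\|f^{*}f\otimes g^{*}g\|_{\max}=\|f\|^{2}\|g\|^{2}$ and the $C^{*}$-identity in $B$, I conclude that $\|\rho(f\otimes g)\|_{B}\leq \|f\|\,\|g\|$.

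For a general morphism $x=\sum_{i=1}^{n} f_{i}\otimes g_{i}$ in a fixed hom-space of $\bC\otimes^{\alg}\bD$, the triangle inequality then yields $\|\rho(x)\|_{B}\leq \sum_{i=1}^{n} \|f_{i}\|\,\|g_{i}\|$, a finite bound independent of $\rho$. Taking the supremum over all $\rho$ gives $\|x\|_{\max}<\infty$, so $\bC\otimes^{\alg}\bD$ is a pre-$C^{*}$-category. The triple tensor product $\bC\otimes^{\alg}\bD\otimes^{\alg}\bE$ is handled in exactly the same way: for an elementary tensor $f\otimes g\otimes h$, the element $(f\otimes g\otimes h)^{*}(f\otimes g\otimes h)$ equals $f^{*}f\otimes g^{*}g\otimes h^{*}h$ and lies in a triple algebraic tensor product of three $C^{*}$-algebras, whose maximal $C^{*}$-tensor product exists by iterating the two-fold construction. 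This yields $\|\rho(f\otimes g\otimes h)\|_{B}\leq \|f\|\,\|g\|\,\|h\|$, and the general case follows by the triangle inequality.

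I do not anticipate any genuine obstacle: the main subtlety is the bookkeeping required to verify that $(f\otimes g)^{*}(f\otimes g)$ is an endomorphism in $\bC\otimes^{\alg}\bD$, and the rest is a formal consequence of the universal property of the maximal $C^{*}$-tensor product of (non-unital) $C^{*}$-algebras. The bound $\sum_{i}\|f_{i}\|\,\|g_{i}\|$ is a projective-type bound and is not sharp, but sharpness is irrelevant here; only finiteness is required.
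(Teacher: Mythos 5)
Your treatment of $\bC\otimes^{\alg}\bD$ is exactly the paper's: use the $C^{*}$-identity to replace $\rho(f\otimes g)$ by $\rho(f^{*}f\otimes g^{*}g)$, observe that the latter lives in the image of the endomorphism algebra $\End_{\bC}(C)\otimes^{\alg}\End_{\bD}(D)$, which is an algebraic tensor product of honest $C^{*}$-algebras, and invoke the cross-norm bound for $*$-homomorphisms out of such a tensor product; general morphisms then follow by the triangle inequality. For the triple product you diverge slightly. The paper proves the stronger estimate $\|\rho(\phi\otimes h)\|\le\|\phi\|_{\max}\,\|h\|_{\bE}$ for an \emph{arbitrary} $\phi$ in $\bC\otimes^{\alg}\bD$ — a statement it reuses later to build the associativity constraint of $\otimes_{\max}$, and whose $C^{*}$-algebra input is the nontrivial multiplier-algebra/approximate-unit argument of Remark~\ref{wrgporekfwporwefwerfref} — whereas you bound only elementary tensors $f\otimes g\otimes h$, which indeed suffices for the lemma as stated. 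The one soft spot in your version is the phrase ``whose maximal $C^{*}$-tensor product exists by iterating the two-fold construction'': since $A\otimes^{\alg}B$ is not itself a $C^{*}$-algebra, the bound $\|\rho(a\otimes b\otimes c)\|\le\|a\|\,\|b\|\,\|c\|$ for a $*$-homomorphism out of a threefold algebraic tensor product of $C^{*}$-algebras is not a formal consequence of the twofold case; you should either run the standard commuting-representations/approximate-unit argument directly with three factors, or prove the intermediate estimate $\|\rho(\psi\otimes c)\|\le\|\psi\|_{\max}\|c\|$ as the paper does. Once that point is made precise, your proof is complete and, for the purposes of this lemma alone, marginally more economical than the paper's.
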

\begin{proof} 
It suffices to check that for morphisms  
  $ f$ in $\bC$ and $g $ in $\bD$ or $\phi$ in $\bC\otimes^{\alg} \bD$ and $h$ in $\bE$ the  morphisms
  $f\otimes g$  in $\bC\otimes^{\alg} \bD$ or $\phi\otimes h$ in $\bC\otimes^{\alg} \bD\otimes^{\alg} \bE $ have  finite maximal norms.   
  We will show that 
  \begin{equation}\label{qergwergwregwregrwegw}
\|f\otimes g\|_{\max}\le \|f\|_{\bC}\|g\|_{\bD} \qquad \text{and} \qquad \| \phi\otimes h\|_{\max}\le \|\phi\|_{\max} \|h\|_{\bE}\, .
\end{equation}
Let $\rho \colon \bC\otimes^{\alg} \bD\to A$ be a  functor to a $C^{*}$-algebra (considered as a  morphism in $\nsCat$). We will show that $\|\rho(f\otimes g)\|_{A}\le \|f\|_{\bC}\|g\|_{\bD}$.  This fact is well-known for homomorphisms from   algebraic tensor products of $C^{*}$-algebras \cite[Cor.\ 6.3.6]{murphy}, see also Remark \ref{wrgporekfwporwefwerfref} below. Using the $C^{*}$-equality for the norm on $C^{*}$-categories, the case of $C^{*}$-categories can be reduced to  the case of $C^{*}$-algebras as follows.
 We have
 \[\|\rho(f\otimes g)\|_{A}^{2}=\|\rho(f^{*}\otimes g^{*})\rho(f\otimes g)\|_{A}=\|\rho(f^{*}f\otimes g^{*}g) \|_{A}\le \|f^{*}f\|_{\bC}\|g^{*}g\|_{\bD}=\|f\|_{\bC}^{2} \|g\|^{2}_{\bD}\,,\]
 {where} for the inequality we use that $\rho$ induces a representation of the algebraic tensor product of $C^{*}$-algebras  $\End_{\bC}(C)\otimes_{{\C}}^{\alg}\End_{\bD}(D)$ to $A$.
 Since $\rho$ is arbitrary the first inequality \eqref{qergwergwregwregrwegw} follows.
 
In order to show the second inequality we argue similarly
 using the corresponding fact for $C^{*}$-algebras: If $A,B,C$ are in $\nCalg$,  $\psi$ is in $A\otimes^{\alg}B$ and $c$ is in $C$, then  
 for every representation $\rho \colon A\otimes^{\alg}B\otimes^{\alg}C\to D$ with $D$ in $\nCalg$
 we have  
\begin{equation}\label{sdfvpokvpfovvsfd}\|\rho(\psi\otimes c)\|_{D}\le \|\psi\|_{\max}\|c\|_{C}\,,
\end{equation}
see Remark \ref{wrgporekfwporwefwerfref}.
 \end{proof}

 \begin{rem}\label{wrgporekfwporwefwerfref}
 The estimate  \eqref{sdfvpokvpfovvsfd} is clearly well-known, but since 
  we do not know any reference we provide the argument  in some detail.  
 We consider    $A,B,C,D$ in $\nCalg$ and  a homomorphism 
  $\rho \colon A\otimes^{\alg}B\otimes^{\alg}C\to D$.  
  For $\psi$ in $A\otimes^{\alg}B$ and $c$ in $C$ we then want to show that
  $\|\rho(\psi\otimes c)\|_{D}\le \|\psi\|_{\max}\|c\|_C$.
  
  In a first step, after replacing $D$ by a subalgebra, we can assume that 
  $\rho$ has dense range.
   In the following we construct homomorphisms 
  $$\rho_{A\otimes^{\alg}B} \colon A\otimes^{\alg}B\to \cM(D)\, , \quad \rho_{B\otimes^{\alg}C} \colon B\otimes^{\alg}C\to \cM(D)\, , \quad \rho_{A\otimes^{\alg}C} \colon A\otimes^{\alg }C\to \cM(D)$$
  such that 
  \begin{equation}\label{sfvfdpovkopwevfdsvfdv}\rho(aa'\otimes bb'\otimes cc')=\rho_{A\otimes^{\alg}B}(a\otimes b)\rho_{B\otimes^{\alg}C}(b'\otimes c)\rho_{A\otimes^{\alg}C}(a'\otimes c')
\end{equation} 
for all $a,a'$ in $A$, all $b,b'$ in $B$ and all $c,c'$ in $C$. We discuss  the  construction of
 $\rho_{A\otimes^{\alg}B} $  in detail. The other two cases are analogous.
  We identify elements in the multiplier algebra $\cM(D)$ with pairs $(l,r)$ of maps
  $l,r \colon D\to D$ satisfying the multiplier identity $r(d)d'=dl(d')$. For $a$ in $A$ and $b$ in $B$ with thus have to construct
  the pair 
   $$\rho_{A\otimes^{\alg}B}(a\otimes b)=(\rho_{A\otimes^{\alg}B}^{L}(a\otimes b), \rho_{A\otimes^{\alg}B}^{R}(a\otimes b))\, .$$
   We explain the construction of $\rho_{A\otimes^{\alg}B}^{L}(a\otimes b)$.
      It is based on the following diagram:
   \begin{equation}\label{asdvposadjvaodspacdscac}
   \xymatrix{
   \ker(\rho)\ar@{..>}[drrr]^-{0}\ar[dr]&&&\\
   & A\otimes^{\alg} B\otimes^{\alg}C\ar[rr]_-{a'\otimes b'\otimes c'\mapsto \rho(aa'\otimes bb'\otimes c')}\ar[dr]_-{\rho}&&D\\
   &&\rho(A\otimes^{\alg} B\otimes^{\alg} C)\ar@{..>}[ur]\ar[r]^-{\incl}&\ar@{-->}[u]_{\rho^{L}_{A\otimes^{\alg}B}(a\otimes b)}D
   }
\end{equation}     
We start from the bold part. In a first step we show that the upper dotted arrow is zero. This implies
        the existence of the lower dotted  linear map. The argument will furthermore provide an estimate
        showing that the latter is continuous. We then get the dashed arrow by continuous extension since $\incl$ has dense range by our first reduction step.

Here are the details. For $a'$ in $A$ and $b'$ in $B$ we consider the linear map 
$\rho_{a'\otimes b'} \colon C\to D$ defined by  $c\mapsto a'\otimes b'\otimes c$. If $a'$ and $b'$ are positive elements, then 
$\rho_{a'\otimes b'}$ is a positive map between $C^{*}$-algebras and hence continuous. General elements $a'$ and $b'$ can be written as finite linear combinations of positive elements. We conclude that $\rho_{a'\otimes b'}$ is continuous in general. 

We now consider        $t \coloneqq \sum_{i=1}^{n}a_{i}\otimes b_{i}\otimes c_{i}$ in $A\otimes^{\alg}B\otimes^{\alg}C$ and let 
  $(w)$ denote  a normalized approximate unit of $C$. Then
  \begin{align*}&\lim_{w}\rho_{a\otimes b}(w)\rho(t)=
\lim_{w} \rho(a\otimes b\otimes w)\rho(t)=
 \lim_{w}\rho\big(\sum_{i=1}^{n}aa_{i}\otimes bb_{i}\otimes wc_{i}\big) \\&=
 \sum_{i=1}^{n}  \lim_{w}\rho( aa_{i}\otimes bb_{i}\otimes wc_{i}) =
 \sum_{i=1}^{n} \lim_{w} \rho_{ aa_{i}\otimes bb_{i}}(wc_{i})\stackrel{!}{=}
  \sum_{i=1}^{n} \rho_{ aa_{i}\otimes bb_{i}}(c_{i})\\&=
   \rho \big( \sum_{i=1}^{n} aa_{i}\otimes bb_{i}\otimes c_{i} \big)  
\end{align*}
using the continuity of $\rho_{ aa_{i}\otimes bb_{i}}$ at the marked equality. 
This equality first of all implies that if $\rho(t)=0$, then $ \rho( \sum_{i=1}^{n} aa_{i}\otimes bb_{i}\otimes c_{i})  =0$, i.e., that 
the upper dotted arrow in \eqref{asdvposadjvaodspacdscac} vanishes.
It further implies the estimate
$$  \rho \big( \sum_{i=1}^{n} aa_{i}\otimes bb_{i}\otimes c_{i} \big)  \le \|\rho_{a\otimes b}\| \|\rho(t)\|\, ,$$
hence the continuity of the lower dotted arrow in  \eqref{asdvposadjvaodspacdscac}.

One constructs $\rho_{A\otimes^{\alg}B}^{R}(a\otimes b)$ in a similar manner. 
The multiplier identity $$\rho_{A\otimes^{\alg}B}^{R}(a\otimes b)(d)d'=d \rho_{A\otimes^{\alg}B}^{L}(a\otimes b)(d')$$ is easy to check for
$d,d'$ in $ \rho(A\otimes^{\alg} B\otimes^{\alg} C)$ and extends by continuity to all of $D$. 
 
 In order to check  \eqref{sfvfdpovkopwevfdsvfdv} one  observes that this equality holds if one multiplies it from the right or left  by an element of  
 $ \rho(A\otimes^{\alg} B\otimes^{\alg} C)$.

In order to derive the estimate \eqref{sdfvpokvpfovvsfd} we  choose normalized approximate units 
$(u)$  of $A$ and 
$(v)$  of $B$. We write $\psi=\sum_{i=1}^{n}a_{i}\otimes b_{i}$ and calculate
\begin{align*}
 & \lim_{u}\lim_{v}\lim_{w} \rho_{A\otimes^{\alg}B}(\psi) \rho_{A\otimes^{\alg}C}(u\otimes c)\rho_{B\otimes^{\alg}C}(v\otimes w) = 
 \lim_{u}\lim_{v}\lim_{w} \rho \big( \sum_{i=1}^{n}a_{i}u\otimes b_{i}v\otimes  cw \big) \\&=
\sum_{i=1}^{n} \lim_{u}\lim_{v}\lim_{w}  \rho_{a_{i}u\otimes b_{i}v}(cw) =
\sum_{i=1}^{n} \lim_{u}\lim_{v}   \rho_{a_{i}u\otimes b_{i}v}(c)=
\sum_{i=1}^{n} \lim_{u}\lim_{v}   \rho_{a_{i}u\otimes c}( b_{i}v)\\&=
\sum_{i=1}^{n} \lim_{u}     \rho_{a_{i}u\otimes c}( b_{i})  = 
\sum_{i=1}^{n} \lim_{u}     \rho_{ b_{i}\otimes c}( a_{i}u)=
\sum_{i=1}^{n}      \rho_{ b_{i}\otimes c}( a_{i})=
\rho \big( \sum_{i=1}^{n}    a_{i}\otimes b_{i}\otimes c \big)\\&=
\rho(\psi\otimes c)\,.
\end{align*}
This gives
$$\|\rho(\psi \otimes c)\|_{D}\le   \|\rho_{A\otimes^{\alg} B}(\psi)\|_{\cM(D)} \|c\|_{C}\le  \|\psi\|_{\max}\|c\|_{C}$$
as desired.
\hB
\end{rem}

 \begin{prop}\label{woigjwergwergegw9}
 The maximal tensor product  $\otimes_{\max}$ on $\nCcat$ exists and equips this category with a symmetric monoidal structure.
 \end{prop}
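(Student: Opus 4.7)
The plan is to define $\bC \otimes_{\max} \bD \coloneqq \compl(\bC \otimes^{\alg} \bD)$ using the completion functor from \eqref{dsoivjaosivvdsdsvasdvadsvad}. This is meaningful because Lemma \ref{rgiojiogwgreewreg} guarantees that $\bC \otimes^{\alg} \bD$ belongs to $\npCat$, so the completion functor applies. The unit object is $\C$, viewed as a $C^*$-category with a single object, and the canonical morphism $\bC \times \bD \to \bC \otimes_{\max} \bD$ is the composition of the universal bilinear map into $\bC \otimes^{\alg} \bD$ with the unit $\bC \otimes^{\alg} \bD \to \compl(\bC \otimes^{\alg} \bD)$ of the adjunction $(\compl, \incl)$.

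To verify the universal property, consider a morphism $F \colon \bC \times \bD \to \bE$ in $\nsttCat$ with $\bE$ in $\nCcat$ which is bilinear on morphisms. By the universal property of the algebraic tensor product, $F$ corresponds to a unique morphism $\tilde F \colon \bC \otimes^{\alg} \bD \to \bE$ in $\nsCat$. Since $\bE$ is a $C^*$-category, the definition of the maximal norm implies that $\tilde F$ is automatically contractive with respect to the maximal norm on $\bC \otimes^{\alg} \bD$, and the universal property of the completion functor then yields a unique extension to a morphism $\bC \otimes_{\max} \bD \to \bE$ in $\nCcat$. This factorization is essentially unique because the image of $\bC \otimes^{\alg} \bD$ is dense in $\bC \otimes_{\max} \bD$.

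For the symmetric monoidal structure, the symmetry constraint $\bC \otimes_{\max} \bD \xrightarrow{\cong} \bD \otimes_{\max} \bC$, the left and right unitors with respect to $\C$, and the pentagon/hexagon coherences all follow from the corresponding data and coherences for $\otimes^{\alg}$ on $\nsCat$ by applying $\compl$: indeed, $\compl$ being left adjoint preserves all colimits, and the universal property characterizes all of these isomorphisms up to unique isomorphism. The only non-formal input required is the associator, since a priori one would like to compare $\compl(\compl(\bC \otimes^{\alg} \bD) \otimes^{\alg} \bE)$ with $\compl(\bC \otimes^{\alg} \compl(\bD \otimes^{\alg} \bE))$ and with $\compl(\bC \otimes^{\alg} \bD \otimes^{\alg} \bE)$. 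This is precisely where the second estimate in Lemma \ref{rgiojiogwgreewreg} is used: since $\bC \otimes^{\alg} \bD \otimes^{\alg} \bE$ lies in $\npCat$, the triple completion makes sense, and both iterated tensor products satisfy the universal property for morphisms out of $\bC \times \bD \times \bE$ which are trilinear on morphisms into objects of $\nCcat$. Uniqueness of the representing object then provides the associator, and the pentagon identity follows by testing against the universal quadrilinear maps using the analogous estimate for four factors.

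The main obstacle will be the careful bookkeeping to show that all the coherence isomorphisms descend from the algebraic to the completed level. The crucial technical point—which makes the whole argument run—is that Lemma \ref{rgiojiogwgreewreg} (and its straightforward generalization to any finite number of factors, with the same proof reducing via the $C^*$-equality to the algebraic tensor product of $C^*$-algebras) ensures that all iterated algebraic tensor products remain pre-$C^*$-categories, so that completion can be applied at any stage and the maximal norms satisfy the expected submultiplicativity $\|f_1 \otimes \cdots \otimes f_n\|_{\max} \le \prod_i \|f_i\|$. Everything else is then formal consequence of the adjunction $\compl \dashv \incl$ and of the fact that $\otimes^{\alg}$ already gives $\nsCat$ a symmetric monoidal structure.
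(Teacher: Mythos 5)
Your proposal is correct and follows essentially the same route as the paper: define $\bC\otimes_{\max}\bD\coloneqq\compl(\bC\otimes^{\alg}\bD)$ via Lemma \ref{rgiojiogwgreewreg}, observe that only the associator is non-formal, and obtain it from the second estimate in \eqref{qergwergwregwregrwegw}. The paper builds the associator by an explicit continuous extension of the algebraic associativity constraint rather than by invoking a universal property for trilinear maps, but the underlying content (and the required estimate) is the same; note also that the pentagon identity follows simply by density of the algebraic part, so no four-factor version of the lemma is actually needed.
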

\begin{proof}
In view of  {Lemma \ref{rgiojiogwgreewreg}} 
the algebraic tensor product induces a symmetric monoidal functor
$\nCcat\to \npCat$. Using the completion functor we 
 define
$$\bC\otimes_{ {\max}} \bD \coloneqq \compl(\bC\otimes^{\alg} \bD)\, .$$
It remains to define the unit, associativity and symmetry constraints. Thereby only the associativity is not completely straightforward.
In order to construct it we consider the bold part of the commutative diagram 
 $$\xymatrix{(\bA\otimes^{\alg} \bB)\otimes^{\alg} \bC\ar[r]^{\cong}\ar[d]&\bA\otimes^{\alg} (\bB\otimes^{\alg} \bC)\ar[d]\\ (\bA\otimes_{\max}  \bB)\otimes^{\alg} \bC\ar@{..>}[dr]\ar[d]& \bA\otimes^{\alg} (\bB\otimes_{\max}  \bC)\ar[d]\\   (\bA\otimes_{\max}  \bB)\otimes_{\max}  \bC  \ar@{-->}[r]  & \bA\otimes_{\max} (\bB\otimes_{\max}  \bC)}$$
whose vertical morphisms are all given by the unit of the first  adjunction in \eqref{dsoivjaosivvdsdsvasdvadsvad} and the functoriality of the algebraic tensor product. The upper horizontal functor is the associativity constraint of the algebraic tensor product.
We obtain
the dotted arrow   from the universal property of the algebraic tensor product:
To this end we must show that the bilinear functor
$$(\bA\otimes^{\alg} \bB)\times^{\alg} \bC \to \bA\otimes_{\max} (\bB\otimes_{\max}  \bC)$$
induced by the right-down composition extends by continuity  to a bilinear functor 
$$(\bA\otimes_{\max} \bB)\times^{\alg} \bC \to \bA\otimes_{\max} (\bB\otimes_{\max}  \bC)\, .$$
For a morphism $\phi$ in $ \bA\otimes^{\alg} \bB$ and $h$ in $\bC$ we have by the second inequality in \eqref{qergwergwregwregrwegw} that
$$\| \phi\otimes h\|_{ \bA\otimes_{\max} (\bB\otimes_{\max}  \bC)}\le \|\phi\|_{\max}\|h\|_{\bC}\, .$$
 This estimate implies  that the bilinear functor extends as desired, and the existence of the dotted arrow follows.
 
 We finally get the dashed arrow from the universal property of the lower left vertical arrow applied to the dotted arrow. 
 In order to show that it is an isomorphism we construct an inverse by a similar argument starting from the inverse of 
 the upper horizontal arrow.
 \end{proof}

 It is clear from the universal property of $\otimes_{\max}$, or alternatively    from its construction, that the inclusion functor
 $\incl \colon \nCalg\to \nCcat$
has a canonical symmetric monoidal refinement for the maximal tensor structures on the domain and the target.

 We now turn to the minimal tensor product on $\nCcat$.

The category $\Hilb$ of small Hilbert spaces is a commutative algebra in $\nsCat$   such that the structure morphism
$$\Hilb\otimes^{\alg}\Hilb \to \Hilb$$ is induced by the universal property of $\otimes^{\alg}$ by the functor $\Hilb\times \Hilb\to \Hilb$ given as follows:
\begin{enumerate}
\item objects: A pair $(H,H')$ of Hilbert spaces is sent to $H\otimes H'$ (tensor product in the sense of Hilbert spaces).
\item morphisms: A pair of morphism $(f,g) \colon (H_{0},H_{0}')\to (H_{1},H_{1}')$ is sent to the morphism $f\otimes g \colon H_{0}\otimes H_{0}'\to H_{1}\otimes H_{1}'$.
\end{enumerate}
 The unit of this algebra is the inclusion functor $\C\to \Hilb$.

Let $\bC, \bD$ be in $\nCcat$ and
  $c \colon \bC\to \Hilb$ and $d \colon \bD\to \Hilb$ be functors. Then we can define
a functor 
$$c\otimes d \colon \bC\otimes^{\alg} \bD\to  \Hilb\otimes^{\alg}\Hilb\to \Hilb\, .$$ 

\begin{ddd}\label{toigwjeriogregergergre}
The minimal tensor product $\bC\otimes_{\min} \bD$ is defined as the  completion of the algebraic tensor product such that  
for every $c,d$ as above we have a factorization
$$\xymatrix{\bC\otimes^{\alg} \bD\ar[rr]^{c\otimes d}\ar[dr]&&\Hilb\,.\\& \bC\otimes_{\min}\bD\ar[ur]&}$$
\end{ddd}
In other words,
the minimal norm  of a morphism $\phi$ in $\bC\otimes^{\alg} \bD$ is given by 
\begin{equation}\label{qwefoiqjefoifwefefqwfqewfefqwef}
\|\phi\|_{\min} \coloneqq {\sup}_{c,d}\, \|(c\otimes d)(\phi)\|_{\Hilb}\, .
\end{equation} 

\begin{prop}
The minimal tensor product $\otimes_{\min}$ equips $\nCcat$ with a symmetric monoidal structure.
\end{prop}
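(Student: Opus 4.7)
My plan is to mirror the strategy used for $\otimes_{\max}$ in Proposition~\ref{woigjwergwergegw9}. First I would verify that the minimal norm defined by \eqref{qwefoiqjefoifwefefqwfqewfefqwef} is a well-defined $C^*$-seminorm on $\bC\otimes^{\alg}\bD$. Finiteness is automatic from the estimate $\|\cdot\|_{\min}\le \|\cdot\|_{\max}$, which holds because every functor $c\otimes d\colon \bC\otimes^{\alg}\bD\to \Hilb$ is, by definition of the maximal norm (applied to the $C^*$-category $\Hilb$), dominated by $\|\cdot\|_{\max}$; Lemma~\ref{rgiojiogwgreewreg} then makes $\|\cdot\|_{\max}$ finite. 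The $C^*$-identity for $\|\cdot\|_{\min}$ passes through each Hilbert space representation. That the resulting seminorm is actually a norm is the analogue of the Gelfand--Naimark theorem for $C^*$-categories: every $C^*$-category admits a faithful Hilbert space representation on each endomorphism algebra $\End_\bC(C)$, and these assemble to a family of functors $\bC\to\Hilb$ computing the correct norm. Tensoring two such families yields $\|x\|_{\min}>0$ whenever $x\neq 0$ in $\bC\otimes^{\alg}\bD$. With this established, I define $\bC\otimes_{\min}\bD$ as the completion of $\bC\otimes^{\alg}\bD$ in the minimal norm.

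Next I would check functoriality. Given morphisms $\phi\colon\bC\to\bC'$ and $\psi\colon\bD\to\bD'$ in $\nCcat$, every pair of functors $c'\colon \bC'\to\Hilb$, $d'\colon\bD'\to\Hilb$ pulls back to $c' \circ \phi$ and $d'\circ \psi$, showing
\[
\|(c'\otimes d')((\phi\otimes^{\alg}\psi)(x))\|_{\Hilb} = \|((c'\circ\phi)\otimes(d'\circ\psi))(x)\|_{\Hilb} \le \|x\|_{\min}
\]
for every $x$ in $\bC\otimes^{\alg}\bD$, so $\phi\otimes^{\alg}\psi$ extends by continuity to $\bC\otimes_{\min}\bD\to \bC'\otimes_{\min}\bD'$. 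The unit object is $\C$, and the unit and symmetry constraints are extensions of the corresponding constraints on $\otimes^{\alg}$; the required norm estimates are immediate because the algebraic constraints are isomorphisms that intertwine representations.

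The main obstacle, as in the maximal case, is the construction of the associativity constraint, since it requires extending the associativity isomorphism $(\bA\otimes^{\alg}\bB)\otimes^{\alg}\bC \cong \bA\otimes^{\alg}(\bB\otimes^{\alg}\bC)$ continuously through two successive completions. Following the pattern of the proof of Proposition~\ref{woigjwergwergegw9}, I would set up the square
\[
\xymatrix{(\bA\otimes^{\alg} \bB)\otimes^{\alg} \bC\ar[r]^{\cong}\ar[d] & \bA\otimes^{\alg} (\bB\otimes^{\alg} \bC)\ar[d]\\
(\bA\otimes_{\min}\bB)\otimes^{\alg}\bC\ar@{..>}[dr]\ar[d] & \bA\otimes^{\alg}(\bB\otimes_{\min}\bC)\ar[d]\\
(\bA\otimes_{\min}\bB)\otimes_{\min}\bC\ar@{-->}[r] & \bA\otimes_{\min}(\bB\otimes_{\min}\bC)}
\]
and obtain the dotted and dashed arrows by universal properties, provided I can prove the key estimate
\[
\|\phi\otimes h\|_{(\bA\otimes_{\min}\bB)\otimes_{\min}\bC}\ \le\ \|\phi\|_{\bA\otimes_{\min}\bB}\cdot \|h\|_{\bC}
\]
for $\phi$ in $\bA\otimes^{\alg}\bB$ and $h$ in $\bC$ (and symmetrically). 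By definition, the left-hand side equals $\sup_{e,c}\|e(\phi)\otimes c(h)\|_{\Hilb}$ where $e\colon \bA\otimes_{\min}\bB\to\Hilb$ and $c\colon\bC\to\Hilb$. Since tensor products of bounded operators on Hilbert spaces satisfy $\|T\otimes S\|=\|T\|\|S\|$, this sup equals $(\sup_e\|e(\phi)\|)\cdot(\sup_c\|c(h)\|)$; the first factor is $\|\phi\|_{\bA\otimes_{\min}\bB}$ by the Gelfand--Naimark property for $C^*$-categories applied to $\bA\otimes_{\min}\bB$, and the second is $\|h\|_{\bC}$. The crucial point where care is needed is that representations $e$ of $\bA\otimes_{\min}\bB$ need not factor as $a\otimes b$; but the inequality only requires that they be bounded by the minimal norm, which holds by the defining universal property of the completion. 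The inverse associativity isomorphism is constructed symmetrically, and coherence (pentagon, hexagon, unit triangle) follows because the algebraic tensor product satisfies them and the canonical maps to the completions are dense.
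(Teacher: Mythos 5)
Your proposal follows the paper's proof essentially verbatim: both treat the unit and symmetry constraints as routine and obtain the associativity constraint by extending the algebraic associator through the same two-step diagram of completions, using the estimate $\|(a\otimes b\otimes c)(\phi\otimes h)\|_{\Hilb}\le \|(a\otimes b)(\phi)\|_{\Hilb}\,\|c(h)\|_{\Hilb}\le \|\phi\|_{\min}\|h\|_{\bC}$ coming from the multiplicativity of operator norms under Hilbert-space tensor products. The preliminaries you add (that $\|\cdot\|_{\min}$ is a finite $C^*$-norm, and functoriality) are not repeated in the paper's proof of this proposition but are already implicit in Definition~\ref{toigwjeriogregergergre} and Lemma~\ref{rgiojiogwgreewreg}, so they do not change the route.
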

\begin{proof}
We must provide the unit, associativity, and symmetry constraints. 
As in the case of the maximal tensor product only  the associativity constraint is non-straightforward.  
In order to construct it we consider the bold part of the commutative diagram 
 $$\xymatrix{(\bA\otimes^{\alg} \bB)\otimes^{\alg} \bC\ar[r]^{\cong}\ar[d]&\bA\otimes^{\alg} (\bB\otimes^{\alg} \bC)\ar[d]\\ (\bA\otimes_{\min}  \bB)\otimes^{\alg} \bC\ar@{..>}[dr]\ar[d]& \bA\otimes^{\alg} (\bB\otimes_{\min}  \bC)\ar[d]\\   (\bA\otimes_{\min}  \bB)\otimes_{\min}  \bC  \ar@{-->}[r]  & \bA\otimes_{\min} (\bB\otimes_{\min}  \bC)}$$ 
where the vertical maps are given by the canonical maps from the algebraic tensor products  to the respective completions.

As in the case of the maximal tensor product, in order to show the existence
of the dotted arrow we must show that the bilinear functor 
$$(\bA\otimes^{\alg} \bB)\times^{\alg} \bC \to \bA\otimes_{\min} (\bB\otimes_{\min}  \bC)$$
induced by the right-down composition extends by continuity  to a bilinear functor 
$$(\bA\otimes_{\min} \bB)\times^{\alg} \bC \to \bA\otimes_{\min} (\bB\otimes_{\min}  \bC)\, .$$
Let $a \colon \bA\to \Hilb$, $b \colon \bB\to \Hilb$ and $c \colon \bC\to \Hilb$ be representations. 
Let $\phi$ be in $(\bA\otimes^{\alg} \bB)$ and $h$ be in $\bC$.
Then we have the inequalities
$$\|(a\otimes b\otimes c)(\phi\otimes h)\|_{\Hilb}\le \|(a\otimes b){(\phi)} \|_{\Hilb} \|c(h)\|_{\Hilb}\le  \|{\phi}\|_{{\min}} \|h\|_{\bC}\,.$$
Since $a,b,c$ are arbitrary we conclude that
$\|\phi\otimes h\|_{ \bA\otimes_{\min} (\bB\otimes_{\min}  \bC)}\le \|\phi\|_{\min}\|h\|_{\bC}$.
This estimate implies that the bilinear functor extends as desired and that the dotted arrow exists.

 The first part of the estimate above shows that the dotted arrow further extends by continuity to the dashed arrow. 
An inverse of the dashed arrow can be constructed in a similar manner starting from the inverse of the upper horizontal arrow. 
  \end{proof}

It is again clear from the universal property of $\otimes_{\min}$, or  alternatively from the construction of the minimal norm in \eqref{qwefoiqjefoifwefefqwfqewfefqwef}, 
that the inclusion functor
 $\incl \colon \nCalg\to \nCcat$
has a canonical symmetric monoidal refinement for the minimal tensor structures on the domain and the target.

{Let us now} collect some facts about the minimal tensor product which we will use at various places in the present section.

If $A$ is in $\nCalg$, then a representation $\alpha \colon A\to \Hilb$ of $A$ is the same datum as a homomorphism $\alpha \colon A\to B(H)$ for some Hilbert space $H$. If $\beta \colon B\to B(H')$ is a second homomorphism, then  their tensor product in the sense of representations to $\Hilb$ is simply
 the tensor product 
 $$\alpha\otimes \beta \colon A\otimes^{\alg}B\to B(H)\otimes^{{\alg}} B(H')\to B(H\otimes H')\, .$$
It is known that if $\alpha$ and $\beta$ are faithful representations, then 
\begin{equation}\label{dcdcdascadscdascadc}
\|x\|_{\min}  =\| (\alpha\otimes \beta)(x)\|_{B(H\otimes H')}
\end{equation} 
for all $x$ in $A\otimes^{\alg} B$.
Thus {for $C^*$-algebras} the supremum in   
 \eqref{qwefoiqjefoifwefefqwfqewfefqwef} is realized by any pair of faithful representations.

\begin{kor}\label{wrtohjwtrohtrgwgwgregwreg}
The functor $\kkGA$ canonically refines to an op-lax symmetric monoidal functor 
$$ {\kkGAtensor} \colon \nCcattensor \to \KKGtensor$$
for $?\in \{\min,\max\}$.
\end{kor}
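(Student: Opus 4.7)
The plan is to construct the op-lax refinement via the general principle that the left adjoint of a strong symmetric monoidal functor is canonically op-lax symmetric monoidal, and then compose with the strong symmetric monoidal refinement of $\kkG$.

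First I would record that the inclusion $\incl\colon \nCalg\to \nCcat$ in the adjunction \eqref{rwegkjnkjgngjerkjwgwergregegwrege} has a canonical \emph{strong} symmetric monoidal refinement for both $\otimes_{\min}$ and $\otimes_{\max}$. For $\otimes_{\max}$ this is immediate from the universal property in Definition \ref{weogjgewgwrgwg}: the image of the $C^{*}$-algebraic maximal tensor product under $\incl$ satisfies the universal property on the $C^{*}$-category side. The case of $\otimes_{\min}$ follows from Definition \ref{toigwjeriogregergergre} using the computation \eqref{dcdcdascadscdascadc}, which shows the minimal norm of an element of $A\otimes^{\alg}B$ regarded in $\incl A\otimes^{\alg}\incl B$ coincides with its minimal $C^{*}$-algebra norm.

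Next, since $\incl$ is strong symmetric monoidal and $A^{f}\dashv \incl$, the standard doctrinal adjunction principle endows $A^{f}$ with a canonical op-lax symmetric monoidal refinement. Explicitly, the structure maps
\[
A^{f}(\bC \otimes_{?} \bD) \to A^{f}(\bC) \otimes_{?} A^{f}(\bD)
\]
are adjoint to the bilinear functor $\bC\times \bD \to \incl(A^{f}(\bC)\otimes_{?} A^{f}(\bD))$ built from the adjunction units $\bC\to \incl A^{f}(\bC)$, $\bD\to \incl A^{f}(\bD)$ and the strong monoidality of $\incl$. Post-composing pointwise with $\Fun(BG,-)$ promotes $A^{f}$ to an op-lax symmetric monoidal functor $\Fun(BG,\nCcat)^{\otimes_{?}}\to \Fun(BG,\nCalg)^{\otimes_{?}}$.

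Finally, by Proposition \ref{togjoigerggwgergwgr1} the functor $\kkG$ has a strong symmetric monoidal refinement $\kkGtensor$. Since the composition of a strong symmetric monoidal functor with an op-lax symmetric monoidal one is again op-lax symmetric monoidal, the composite $\kkGAtensor\coloneqq \kkGtensor\circ A^{f,\otimes_{?}}$ provides the desired op-lax symmetric monoidal refinement of $\kkGA=\kkG\circ A^{f}$. The one delicate point is that all of the above has to be performed at the $\infty$-categorical level; this presents no additional obstruction because $\incl$ and $A^{f}$ live between $1$-categories (so their adjunction and op-lax structure are strict), and the passage to $\KKG$ with its presentably symmetric monoidal structure has already been carried out in Proposition \ref{togjoigerggwgergwgr1}.
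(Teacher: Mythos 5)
Your proposal is correct and follows essentially the same route as the paper: the symmetric monoidal refinement of $\incl\colon\nCalg\to\nCcat$ for $\otimes_{\min}$ and $\otimes_{\max}$ (which the paper records right after constructing each tensor product), doctrinal adjunction to endow the left adjoint $A^{f}$ with an op-lax structure, and composition with the symmetric monoidal functor $\kkGtensor$ from Proposition \ref{togjoigerggwgergwgr}. Your write-up merely spells out the structure maps and the passage through $\Fun(BG,-)$ in more detail than the paper does.
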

\begin{proof}
{As observed previously,} the inclusion functor in \eqref{rwegkjnkjgngjerkjwgwergregegwrege} has a symmetric monoidal refinement for the structures $\otimes_{?}$. Hence its left-adjoint 
 $A^{f}$  acquires a canonical op-lax symmetric monoidal structure. Since $\kkG$  is symmetric monoidal by Proposition \ref{togjoigerggwgergwgr} we conclude that the composition $\kkGA=\kkG\circ A^{f}$ has a canonical op-lax symmetric monoidal structure.
 \end{proof}

 The following proposition finishes the verification of Theorem \ref{ioeghwjoigrgwergrggrgew} from the introduction.
\begin{prop}\label{weogijwegoijrregrggergwegerwg}
For $?\in \{\min,\max\}$ the op-lax symmetric monoidal functor 
$$ {\kkGAtensor} \colon \nCcattensor \to \KKGtensor$$
is  symmetric monoidal.
 \end{prop}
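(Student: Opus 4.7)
The plan is to reduce the claim, via Proposition \ref{gihergioergregwefeerfwerf}, from the functor $A^{f}$ to the more tractable functor $A$, and then invoke the promised compatibility (Proposition \ref{eroigjweogieogigw}) of $A$ with the tensor products $\otimes_{?}$ for $?\in\{\min,\max\}$.

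More precisely, recall that $\kkGA=\kkG\circ A^{f}$, that the op-lax symmetric monoidal structure on $\kkGAtensor$ arises from the op-lax structure on $A^{f}$ (being a left adjoint to the symmetric monoidal inclusion $\incl\colon \nCalg\to\nCcat$, cf.\ \eqref{rwegkjnkjgngjerkjwgwergregegwrege}) combined with the strong symmetric monoidal structure on $\kkGtensor$ given by Proposition \ref{togjoigerggwgergwgr}. Concretely, the op-lax structure map of $\kkGAtensor$ on objects $\bC,\bD$ in $\Fun(BG,\nCcat)$ is the composition
\[
\kkGA(\bC\otimes_{?}\bD)=\kkG(A^{f}(\bC\otimes_{?}\bD))\to\kkG(A^{f}(\bC)\otimes_{?}A^{f}(\bD))\xrightarrow{\simeq}\kkG(A^{f}(\bC))\otimes_{?}\kkG(A^{f}(\bD)),
\]
where the first arrow is induced by the op-lax structure of $A^{f}$ and the second equivalence comes from the fact that $\kkG$ is strong symmetric monoidal. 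The task is to show that the first arrow becomes an equivalence after applying $\kkG$.

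My first step would be to use the natural transformation $\alpha$ from \eqref{fwerfrewevevfdvsdfvsdfv} and Proposition \ref{gihergioergregwefeerfwerf}, which together yield a natural equivalence $\kkG\circ A^{f}\simeq \kkG\circ A$ of functors $\Fun(BG,\nCcat_{\inj})\to \KKG$. Although $A$ is only functorial on $\nCcat_{\inj}$, the op-lax structure we need to verify can be checked after passage to $\kkG\circ A$, by naturality of $\alpha$ and by working on diagrams in which all relevant functors are injective on objects (which can always be arranged by replacing $\bC,\bD$ up to unitary equivalence with disjoint-object representatives, cf.\ Corollary \ref{weoigjwoegerrgwregrweg}). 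Via this replacement, the op-lax structure map on $\kkGAtensor(\bC,\bD)$ is identified with the image under $\kkG$ of the comparison morphism $A(\bC\otimes_{?}\bD)\to A(\bC)\otimes_{?}A(\bD)$.

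Next, I invoke Proposition \ref{eroigjweogieogigw}, which will establish that the canonical comparison $A(\bC)\otimes_{?}A(\bD)\to A(\bC\otimes_{?}\bD)$ (or its inverse in the direction relevant here) is an isomorphism in $\Fun(BG,\nCalg)$ for $?\in\{\min,\max\}$. Applying $\kkG$ and using that it is strong symmetric monoidal then produces the required equivalences. For the unit, the tensor unit of $\nCcat$ is $\C$ viewed as a one-object $C^{*}$-category, and both $A(\C)$ and $A^{f}(\C)$ identify with $\C$ in $\Fun(BG,\nCalg)$; applying $\kkG$ and using that $\kkG(\C)$ is the tensor unit of $\KKG$ (Proposition \ref{togjoigerggwgergwgr}) shows that the unit structure map is an equivalence as well.

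The main obstacle in this strategy is not the reduction itself, which is essentially formal once $\kkG(\alpha_\bC)$ is known to be an equivalence, but rather the proof of Proposition \ref{eroigjweogieogigw} asserting that $A$ commutes with the two tensor products. For $\otimes_{\max}$ this should follow from matching universal properties (the maximal tensor product is characterized by bilinear functors to $C^{*}$-algebras, which correspond exactly to bilinear functors that satisfy the multiplicativity relation \eqref{eq_universal_property_A}). For $\otimes_{\min}$ one has to identify the faithful representations: a pair of faithful $\Hilb$-valued representations of $\bC$ and $\bD$ induces a faithful representation of $A(\bC)$ and $A(\bD)$ on the corresponding orthogonal-sum Hilbert spaces, and comparing the two norms via \eqref{dcdcdascadscdascadc} yields the isomorphism. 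Granting this, the proof of Proposition \ref{weogijwegoijrregrggergwegerwg} reduces to the assembly of the statements already established.
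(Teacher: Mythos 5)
Your proposal is correct and follows essentially the same route as the paper: both reduce the op-lax structure map of $\kkGAtensor$ to the canonical comparison for $A$ via the natural transformation $\alpha$ and Proposition \ref{gihergioergregwefeerfwerf}, then conclude from Proposition \ref{eroigjweogieogigw} together with the symmetric monoidality of $\kkG$ (Proposition \ref{togjoigerggwgergwgr}). The only cosmetic difference is that your detour through disjoint-object representatives is unnecessary — the paper simply writes down the commutative square whose vertical arrows are $\alpha_{\bC\otimes_{?}\bD}$ and $\alpha_{\bC}\otimes_{?}\alpha_{\bD}$ and applies $\kkG$ directly.
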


  Let $\bC$, $\bD$ be in $\nCcat$. Then 
we have  functors $\bC\to A(\bC)$ and $\bD\to A(\bD)$.
We consider the composition 
$$\bC\times \bD\to A(\bC)\times A(\bD)\to  A(\bC)\otimes_{?} A(\bD)$$
in $\nsttCat$. 
This functor is  bilinear and hence, by the universal property of the respective tensor products, factorises uniquely over the  functor
$i$ in
$$\xymatrix{\bC\times \bD\ar[d]\ar[dr]&\\
\bC\otimes_{?} \bD\ar@{..>}[r]^-{i} \ar[d]_{{\eqref{qfqwefqewdqdqwqd}}}& A(\bC)\otimes_{?} A(\bD)\,.\\
A(\bC\otimes_{?} \bD)\ar@{-->}[ur]&
}$$
We now use the universal property of the functor {$A$}
 formulated in Construction \ref{woitgjowrtgggergwergw} (the conditions are straightforward to check),  
that the functor  $i$ further factorizes over  the dashed homomorphism as indicated. {W}e will call this the canonical homomorphism  in what follows.

 \begin{prop}\label{eroigjweogieogigw}
 For all $\bC,\bD$ in $\nCcat$ and $?$ in $\{\min,\max\}$ the
 canonical homomorphism
 \begin{equation}\label{sdfvoihiovsdfvfdvvsfdv}
A(\bC \otimes_{?}  \bD)\to A(\bC)\otimes_{?} {A}(\bD)
\end{equation}
 is an isomorphism 
 \end{prop}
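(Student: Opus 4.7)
The strategy is to first exhibit the canonical morphism as the completion of a natural $*$-algebra isomorphism, and then verify that the relevant norms on the dense $*$-subalgebra coincide on both sides.

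\emph{Algebraic isomorphism.} A direct inspection of the formula \eqref{cwecpoijcopiqwcqwecqec} for $A^{\alg}$ together with the description of the algebraic tensor product of $*$-categories shows that both $A^{\alg}(\bC) \otimes^{\alg} A^{\alg}(\bD)$ and $A^{\alg}(\bC \otimes^{\alg} \bD)$ have underlying $\C$-vector space
$\bigoplus_{(C,C'),(D,D')} \Hom_{\bC}(C,C') \otimes^{\alg} \Hom_{\bD}(D,D')$,
and the canonical morphism of the proposition restricts on this level to a natural $*$-algebra isomorphism $\Phi^{\alg}$. The only non-trivial point is that multiplication matches, which follows from \eqref{eq_universal_property_A} applied in both factors and the composition rule in $\bC \otimes^{\alg} \bD$.

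\emph{Maximal case.} Applying Lemma \ref{wregoijowergwergrf} to the pre-$C^{*}$-category $\bC \otimes^{\alg} \bD$ (well-defined by Lemma \ref{rgiojiogwgreewreg}), one identifies
\[ A(\bC \otimes_{\max} \bD) = A(\compl(\bC \otimes^{\alg} \bD)) \cong \compl(A^{\alg}(\bC \otimes^{\alg} \bD)), \]
the completion being taken in the maximal $C^{*}$-envelope of the underlying $*$-algebra. Via $\Phi^{\alg}$ this identifies with $\compl(A^{\alg}(\bC) \otimes^{\alg} A^{\alg}(\bD))$. On the other hand, $A(\bC) \otimes_{\max} A(\bD)$ is by construction the maximal $C^{*}$-tensor product, whose norm on the dense $*$-subalgebra $A^{\alg}(\bC) \otimes^{\alg} A^{\alg}(\bD)$ is precisely the maximal $C^{*}$-enveloping norm, i.e.\ $\sup_{\pi} \|\pi(-)\|$ taken over all $*$-representations on Hilbert space. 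Both completions therefore coincide.

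\emph{Minimal case.} Here Lemma \ref{wregoijowergwergrf} does not apply directly because $\bC \otimes_{\min} \bD$ is not the maximal completion of $\bC \otimes^{\alg} \bD$. Instead one argues by an explicit norm comparison on the $*$-algebra $A^{\alg}(\bC) \otimes^{\alg} A^{\alg}(\bD)$. By the universal property of $A$ from Construction \ref{woitgjowrtgggergwergw}, a faithful $*$-representation $\alpha \colon A(\bC) \to B(H)$ is the same datum as a faithful functor $c \colon \bC \to \Hilb$ (obtained by decomposing $H$ along the essential subspaces $\overline{\alpha(\End_{\bC}(C))H}$), and analogously for $\bD$. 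For such a faithful pair $(c,d)$ and any $x \in A^{\alg}(\bC) \otimes^{\alg} A^{\alg}(\bD)$, the operator $(\alpha \otimes \beta)(x)$ acts on $H \otimes H'$ as the direct sum over objects of $(c \otimes d)(x)$ acting on the pieces $c(C) \otimes d(D)$. By \eqref{dcdcdascadscdascadc} its norm realizes the minimal $C^{*}$-tensor norm on $A(\bC) \otimes_{\min} A(\bD)$, while by the categorical analogue of the same fact (the supremum in \eqref{qwefoiqjefoifwefefqwfqewfefqwef} is attained by any faithful pair of representations to $\Hilb$) it simultaneously realizes the norm on $\bC \otimes_{\min} \bD$ restricted to the corresponding component of hom spaces. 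Hence the two norms on $A^{\alg}(\bC) \otimes^{\alg} A^{\alg}(\bD)$ agree, and their completions yield the same $C^{*}$-algebra.

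The main obstacle is the asymmetric treatment of the two tensor structures: for $\otimes_{\max}$ the argument is essentially formal via the adjunction \eqref{dsoivjaosivvdsdsvasdvadsvad}, whereas for $\otimes_{\min}$ the definition \eqref{toigwjeriogregergergre} involves a supremum over \emph{all} pairs of representations into $\Hilb$, and one must justify that this supremum is in fact attained by faithful pairs---the categorical counterpart of the classical statement \eqref{dcdcdascadscdascadc} for $C^{*}$-algebras.
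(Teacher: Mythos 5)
Your algebraic identification $A^{\alg}(\bC)\otimes^{\alg}A^{\alg}(\bD)\cong A^{\alg}(\bC\otimes^{\alg}\bD)$ is correct, but both analytic steps rest on claims you do not prove, and the second you yourself flag as ``the main obstacle'' without resolving it. In the maximal case you assert that the norm of $A(\bC)\otimes_{\max}A(\bD)$ restricted to the dense $*$-subalgebra $A^{\alg}(\bC)\otimes^{\alg}A^{\alg}(\bD)$ is the maximal $C^{*}$-enveloping norm of that subalgebra. This is not automatic: the maximal tensor norm is a supremum over representations of $A(\bC)\otimes^{\alg}A(\bD)$, so to compare it with the supremum over \emph{all} representations of the smaller dense subalgebra you must extend an arbitrary representation of $A^{\alg}(\bC)\otimes^{\alg}A^{\alg}(\bD)$ to a pair of commuting representations of the completions $A(\bC)$ and $A(\bD)$ --- a non-unital multiplier/approximate-unit argument of exactly the kind carried out at length in Remark \ref{wrgporekfwporwefwerfref}, which you omit. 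In the minimal case the entire argument hinges on the statement that the supremum in \eqref{qwefoiqjefoifwefefqwfqewfefqwef} is attained at any faithful pair of representations; this categorical analogue of \eqref{dcdcdascadscdascadc} is genuinely the hardest point of your route, yet it is only named, not proved. Without it you cannot even conclude that the hom-spaces of $\bC\otimes_{\min}\bD$ embed isometrically into $A(\bC)\otimes_{\min}A(\bD)$, let alone control the norms of finite matrices over them.

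The paper sidesteps both issues by a different decomposition: it first proves the statement for $\bC,\bD$ with finitely many objects (Lemma \ref{eroigjerwoigerwgergrwegwerg}), where $A^{\alg}=A$, so the universal property of $\otimes_{\max}$ on $\nCalg$ applies on the nose to $A(\bC)\otimes^{\alg}A(\bD)$, and where in the minimal case one only needs that every pair of representations of $A(\bC)$, $A(\bD)$ restricts to a pair of functors $\bC,\bD\to\Hilb$ and is therefore dominated by the categorical minimal norm --- no faithfulness or attainment statement enters. The general case then follows by writing $\bC$ and $\bD$ as filtered colimits of full subcategories with finitely many objects and invoking the compatibility of $A$ (Lemma \ref{wegoihjiogewrgergegrewf}) and of $\otimes_{\max}$ and $\otimes_{\min}$ (Lemma \ref{eriguhwiegugwergwerg}) with such colimits. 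If you wish to keep your dense-subalgebra route, you must supply the two missing analytic lemmas; as written the proof is incomplete.
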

  
  \begin{rem}
One can use  Proposition \ref{eroigjweogieogigw} in the case $?=\max$ in order to show that  the definition of the maximal tensor product of $C^{*}$-categories given in \cite[Sec.\ 3.1]{antoun_voigt}  is equivalent to the Definition \ref{weogjgewgwrgwg} used in the present paper. 
\hB
\end{rem}

\begin{proof}[Proof of Prop.\ \ref{weogijwegoijrregrggergwegerwg} assuming Prop.\ \ref{eroigjweogieogigw}]
Let $  \bC$ and $  \bD$  be in $\Fun(BG,\nCcat)$.
The structure map of the op-lax symmetric monoidal structure on $A^{f}$ is a homomorphism 
\begin{equation}\label{wgregw245t}
A^{f}(\bC\otimes_? \bD) \to A^{f}(\bC)\otimes_? A^{f}(\bD)\, .
\end{equation}
 We must show that the morphism
$$\kkG(A^{f}(  \bC\otimes_?   \bD))\xrightarrow{\kkG(\eqref{wgregw245t})} \kkG({A^{f}(\bC)\otimes_? A^{f}(\bD)})\simeq \kkG(A^{f}(  \bC))\otimes_? 
 \kkG(A^{f}(  \bD))$$
 is an equivalence in $\KKG$, where the second equivalence is the inverse of the structure map of the symmetric monoidal structure of   $\kkG$. It is easy to see that  we have the following commutative diagram
 $$\xymatrix{ A^{f}(  \bC\otimes_?   \bD)\ar[r]^-{\eqref{wgregw245t}}\ar[d]^{\alpha_{  \bC\otimes_?   \bD} }& A^{f}(  \bC)\otimes_?  A^{f}(  \bD) \ar[d]^{\alpha_{  \bC}\otimes_? \alpha_{  \bD}}\\
 A(  \bC\otimes_?   \bD)\ar[r]_-{\cong}^-{\eqref{sdfvoihiovsdfvfdvvsfdv}}&A (  \bC)\otimes_?  A (  \bD)}$$
 where the vertical morphisms are induced by instances of 
\eqref{fwerfrewevevfdvsdfvsdfv}, and the lower horizontal map is an isomorphism  {by 
Proposition} \ref{eroigjweogieogigw}.
 We now apply $\kkG$ and get 
 $$\xymatrix{ \kkG(A^{f}(  \bC\otimes_?   \bD))\ar[rr]^-{\kkG(\eqref{wgregw245t})}\ar[d]^{\kkG(\alpha_{  \bC\otimes_?   \bD} )} && \kkG( A^{f}(  \bC)\otimes_?  A^{f}(  \bD)) \ar[d]^{\kkG(\alpha_{  \bC}\otimes_? \alpha_{  \bD})}\\
 \kkG(A(  \bC\otimes_?   \bD))\ar[rr]_-{\simeq} && \kkG(A (  \bC)\otimes_?  A (  \bD))}$$
 Using Proposition \ref{gihergioergregwefeerfwerf}  {for the left vertical arrow} and Proposition \ref{togjoigerggwgergwgr} for the equivalence 
 $$ \kkG(\alpha_{  \bC}\otimes_? \alpha_{  \bD})\simeq \kkG(\alpha_{  \bC})\otimes_? \kkG(\alpha_{  \bD})$$ in order to deal with the right vertical arrow, we conclude that the vertical arrows are equivalences. We conclude that $\kkG(\eqref{wgregw245t})$ is an equivalence.
\end{proof}

{The following lemma is the first step of the proof of Proposition \ref{eroigjweogieogigw}.}
\begin{lem}\label{eroigjerwoigerwgergrwegwerg}
 If $\bC$, $\bD$  in $\nCcat$ have at most finitely many   objects, then the canonical homomorphism
 $A(\bC\otimes_{?} \bD)\to A(\bC)\otimes_{?} A(\bD)$ is an isomorphism for $?$ in $\{\min,\max\}$.
 \end{lem}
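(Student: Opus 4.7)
My plan is to reduce the statement to a direct computation that is feasible because both $C^{*}$-categories have only finitely many objects. Under this finiteness assumption, no outer completion is needed in the construction of $A(\bC)$: if $\Ob(\bC)=\{C_{1},\dots,C_{n}\}$ then by \eqref{cwecpoijcopiqwcqwecqec} and \eqref{erwgoijwioergewrgergwreg} the $C^{*}$-algebra $A(\bC)$ is just the finite direct sum $\bigoplus_{i,j}\Hom_{\bC}(C_{i},C_{j})$ of Banach spaces, equipped with the matrix multiplication and involution, and similarly for $\bD$. Consequently, the objects of $\bC\otimes_{?}\bD$ form the finite set $\Ob(\bC)\times\Ob(\bD)$, and $A(\bC\otimes_{?}\bD)$ is the finite direct sum of morphism spaces in $\bC\otimes_{?}\bD$, each of which is by construction the $?$-completion of $\Hom_{\bC}(C_{i},C_{j})\otimes^{\alg}\Hom_{\bD}(D_{k},D_{l})$.

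First I would verify that the canonical map of Proposition \ref{eroigjweogieogigw} is an isomorphism on underlying $*$-algebras (before completion), which amounts to the purely algebraic observation that the algebraic tensor product distributes over finite direct sums, i.e.\ $A^{\alg}(\bC\otimes^{\alg}\bD)\cong A^{\alg}(\bC)\otimes^{\alg}A^{\alg}(\bD)$ compatibly with the matrix $*$-algebra structures. Given this, the two sides of \eqref{sdfvoihiovsdfvfdvvsfdv} have the same dense $*$-subalgebra, and the only thing left to check is the identification of the two $C^{*}$-norms.

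For the norm identification I would use the universal properties of Construction \ref{woitgjowrtgggergwergw} (see \eqref{eq_universal_property_A}) together with the universal properties of the tensor products. For $?=\max$: representations $\rho\colon A(\bC)\otimes_{\max}A(\bD)\to B$ into a $C^{*}$-algebra correspond bijectively to pairs of representations $A(\bC)\to B$ and $A(\bD)\to B$ with commuting ranges, hence by the universal property of $A$ to pairs of functors $\bC\to B$ and $\bD\to B$ (taking $B$ as a one-object $C^{*}$-category) satisfying the orthogonality condition \eqref{eq_universal_property_A} and with commuting ranges, which in turn correspond to bilinear such functors out of $\bC\times\bD$, i.e.\ to functors $\bC\otimes_{\max}\bD\to B$ satisfying \eqref{eq_universal_property_A}, i.e.\ to representations $A(\bC\otimes_{\max}\bD)\to B$. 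Since the maximal norms on both sides are defined as the sup over such representations, they coincide.

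For $?=\min$: I would instead use the description of the minimal norm via Hilbert space representations, combined with the identity \eqref{dcdcdascadscdascadc}. A faithful representation of $A(\bC)$ on a Hilbert space $H$ is equivalent, via the universal property, to a functor $\bC\to\Hilb$ that is faithful on each morphism space and has orthogonal images on distinct objects (concretely, $H=\bigoplus_{i}H_{i}$ for chosen faithful representations of each $\End_{\bC}(C_{i})$ on $H_{i}$). Analogous statements hold for $\bD$ and for $\bC\otimes_{\min}\bD$, and under these correspondences the Hilbert space representations match, so both candidate norms on the common dense $*$-subalgebra coincide with the operator norm on $B(H\otimes H')$. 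The main obstacle is keeping track of this matching carefully in the $\min$-case, since the supremum in \eqref{qwefoiqjefoifwefefqwfqewfefqwef} is over \emph{all} pairs of representations of $\bC$ and $\bD$, not just faithful ones, but finiteness of the object sets guarantees the existence of faithful Hilbert space representations and reduces the comparison to the well-known algebra case.
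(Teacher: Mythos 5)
Your first two steps coincide with the paper's: finiteness of the object sets makes $A^{\alg}(\bC)\to A(\bC)$ and $A^{\alg}(\bD)\to A(\bD)$ isomorphisms, and distributivity of $\otimes^{\alg}$ over finite direct sums identifies the two sides of \eqref{sdfvoihiovsdfvfdvvsfdv} on a common dense $*$-subalgebra. The divergence, and the problem, is in how you identify the two norms in the case $?=\max$. You assert a bijection between representations $A(\bC)\otimes_{\max}A(\bD)\to B$ and pairs of representations of $A(\bC)$ and $A(\bD)$ into $B$ with commuting ranges, and further between such pairs and bilinear functors out of $\bC\times\bD$. Both of these correspondences fail as stated in the non-unital setting: extracting the two ``marginal'' homomorphisms from a homomorphism defined only on the (algebraic) tensor product requires approximate units and lands in the multiplier algebra $\cM(B)$ rather than in $B$, and one must restrict to suitably nondegenerate representations to get anything like a bijection. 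This is precisely the difficulty that the paper's Remark \ref{wrgporekfwporwefwerfref} spends a page resolving in a related situation, so it cannot be waved through here. The same issue recurs at the step from a bilinear functor on $\bC\times\bD$ to a commuting pair of functors on $\bC$ and $\bD$.

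The paper avoids this entirely by using the universal properties only in one direction: the composite $A(\bC)\otimes^{\alg}A(\bD)\cong A(\bC\otimes^{\alg}\bD)\to A(\bC\otimes_{\max}\bD)$ is a $*$-homomorphism into a $C^{*}$-algebra, hence factors through the maximal completion $A(\bC)\otimes_{\max}A(\bD)$; the resulting map is then checked to be inverse to the already-constructed canonical homomorphism because this holds on the dense algebraic subalgebra. No decomposition of representations is ever needed. Your argument for $?=\min$ is closer to being complete, but it too takes a detour the paper does not: the paper shows that \emph{every} pair of representations $c,d$ of $A(\bC)$, $A(\bD)$ restricts along \eqref{qfqwefqewdqdqwqd} to functors $c',d'$ whose tensor product is dominated by the categorical minimal norm \eqref{qwefoiqjefoifwefefqwfqewfefqwef}, whereas your route through faithful representations and \eqref{dcdcdascadscdascadc} additionally requires (and does not prove) that a faithful pair realizes the supremum defining the minimal norm of the $C^{*}$-category $\bC\otimes_{\min}\bD$, not just of the algebras. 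I would recommend reorganizing both cases around constructing the inverse homomorphism by a one-directional appeal to the relevant universal property and verifying mutual inversion on the dense subalgebra.
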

\begin{proof}
The assumption implies that $A^{\alg}(\bC) \to A(\bC)$ and $A^{\alg}(\bD)\to A(\bD)$ are isomorphisms,
where $A^{\alg}$ is as in \eqref{cwecpoijcopiqwcqwecqec}. Since the algebraic tensor product in $\nsAlg$ is formed
on the level of underlying complex vector spaces and direct sums commute with tensor products we furthermore conclude that
the algebraic analog 
$$A(\bC\otimes^{\alg} \bD)\to A(\bC)\otimes^{\alg} A(\bD)$$  of the canonical homomorphism is an isomorphism. 
We now consider the diagram 
$$\xymatrix{A(\bC\otimes^{\alg} \bD)\ar[r]\ar[d]_{\cong}&A(\bC\otimes_{?} \bD) \\A(\bC)\otimes^{\alg} A(\bD) \ar[r]&A(\bC)\otimes_{?}  A(\bD)\ar@{..>}[u]}$$
If $?=\max$, then we obtain the dotted arrow from the universal property of the lower horizontal arrow applied to the up-right composition.

In the case of $?=\min$ we argue as follows. In order to show that the dotted arrow exists, by the universal property of $\otimes_{\min}$ on $\nCalg$ we must show that for every pair of representations $c \colon A(\bC)\to \Hilb$ and $d \colon A(\bD)\to \Hilb$ we have a factorization as indicated by the dashed arrow in the  extended diagram 
$$\xymatrix{
\bC\otimes^{\alg} \bD\ar@{..>}[d]\ar@{..>}[r]&\bC\otimes_{?} \bD\ar@{.>}[dr]^{c'\otimes d'}\ar@{.>}[d]_{!}&\\
A(\bC\otimes^{\alg} \bD)\ar[r]\ar[d]_{\cong}&A(\bC\otimes_{?} \bD)\ar@{-->}[r]&\Hilb\,.\\
A(\bC)\otimes^{\alg} A(\bD) \ar[r]&A(\bC)\otimes_{?}  A(\bD) \ar[ur]_{c\otimes d}&
}$$
To this end we consider the dotted part of the diagram, where $c' \colon \bC\to \Hilb$ and $d' \colon \bD\to \Hilb$ are the restrictions of $c$ and $d$ along $\bC\to A(\bC)$ and $\bD\to A(\bD)$.
The arrow $c'\otimes d'$ exists by the universal property of the minimal tensor product on $\nCcat$. 
We get the dashed arrow from the universal property of the arrow marked by $!$. 

In order to see in both cases   {of ?} that the homomorphism
$A(\bC)\otimes_{?}A(\bD)\to A(\bC\otimes_{?} \bD)$ just constructed
is inverse to the canonical homomorphism $A(\bC\otimes_{?} \bD)\to A(\bC)\otimes_{?} A(\bD)$ one observes that this is the case by construction 
after restriction to the algebraic tensor products.
\end{proof}

In order to show Proposition \ref{eroigjweogieogigw} in general
we must extend Lemma \ref{eroigjerwoigerwgergrwegwerg} from $C^{*}$-categories with finitely may objects to arbitrary $C^{*}$-categories. {Our argument for this} will {depend on}  the following lemma {which already has been used earlier in the proof of Proposition \ref{togjoigerggwgergwgr1}.} 
\begin{lem}\label{eriguhwiegugwergwerg}
\mbox{}
\begin{enumerate}
\item \label{weoigjweorrerggwergr} The functor $\otimes_{\max}$ on $\nCalg$ preserves filtered colimits in each argument.
\item The functor $\otimes_{\min}$ on $\nCalg$ preserves filtered colimits in each argument whose structure maps are isometric inclusions.
\end{enumerate}
\end{lem}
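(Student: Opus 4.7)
For part (1), my plan is to exhibit $-\otimes_{\max} B$ as a colimit-preserving functor via its defining universal property. From Definition \ref{weogjgewgwrgwg}, for any $C$ in $\nCalg$ one has a natural bijection
\[
\Hom_{\nCalg}(A\otimes_{\max} B,\,C)\;\cong\;\Hom_{\nsAlg}(A\otimes^{\alg}B,\,C),
\]
and the right-hand side is in bijection with pairs $(\alpha\colon A\to C,\,\beta\colon B\to C)$ of $*$-homomorphisms with commuting images (the multiplication is encoded by $\alpha(a)\beta(b):=f(a\otimes b)$ for a $*$-homomorphism $f$; in the non-unital case one verifies that such $f$ are in natural bijection with such commuting pairs via bilinearity). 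Since filtered colimits in $\nCalg$ corepresent compatible families of homomorphisms, for a filtered diagram $(A_i)_i$ with colimit $A$ we obtain
\[
\Hom(A\otimes_{\max}B,C)\cong \lim_i\{(\alpha_i,\beta)\text{ commuting}\}\cong \lim_i\Hom(A_i\otimes_{\max}B,C),
\]
naturally in $C$. Yoneda then gives the desired isomorphism $\colim_i(A_i\otimes_{\max}B)\xrightarrow{\cong} A\otimes_{\max}B$.

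For part (2), the key input is that $\otimes_{\min}$ preserves isometric inclusions in each variable: if $\iota\colon A\hookrightarrow A'$ is isometric in $\nCalg$, then $\iota\otimes\id_B\colon A\otimes_{\min} B\to A'\otimes_{\min}B$ is isometric. This follows from \eqref{dcdcdascadscdascadc}: choose a faithful representation $\alpha'\colon A'\to B(H)$; then $\alpha'\circ\iota$ is faithful on $A$ (an injective $*$-homomorphism of $C^{*}$-algebras is automatically isometric), so for any faithful $\beta\colon B\to B(H')$ and any $x\in A\otimes^{\alg}B$,
\[
\|x\|_{\min}=\|(\alpha'\iota\otimes\beta)(x)\|=\|(\iota\otimes\id_B)(x)\|_{\min}.
\]
Now let $(A_i)_i$ be a filtered diagram in $\nCalg$ whose structure maps are isometric inclusions, with colimit $A$. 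Since filtered colimits in $\nCalg$ along isometric inclusions are computed as the norm completion of the union, $\bigcup_i A_i\subseteq A$ is dense, and hence $\bigcup_i(A_i\otimes^{\alg}B)$ is dense in $A\otimes^{\alg}B$ for the minimal norm (approximate each $a$ in an elementary tensor by some $a_i\in A_i$). By the compatibility established above, the minimal norm on $A\otimes^{\alg}B$ restricts to the minimal norm on each $A_i\otimes^{\alg}B$. Completing in the minimal norm therefore identifies $A\otimes_{\min}B$ with the completion of $\bigcup_i(A_i\otimes_{\min}B)$, which is exactly $\colim_i(A_i\otimes_{\min}B)$ in $\nCalg$.

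The only nontrivial step is the isometry statement in part (2), and I expect that the main obstacle is a notational rather than a conceptual one: one must juggle faithful representations of $A'$ and $B$, ensure the restriction argument is valid without unit issues, and carefully identify filtered colimits along isometric inclusions with completions of unions in $\nCalg$. Part (1) is purely formal once the non-unital universal property is stated carefully.
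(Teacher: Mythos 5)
Your part (2) is essentially the paper's own argument: the isometry statement is exactly Lemma \ref{roigwerogwregwergwregwreg} (proved via faithful representations and \eqref{dcdcdascadscdascadc}), and the density/completion step matches the paper's use of the explicit colimit description in Proposition \ref{roigjowegwregrgweg}. No issues there.

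Part (1), however, has a genuine gap. Your key claim is that $\Hom_{\nsAlg}(A\otimes^{\alg}B,\,C)$ is in natural bijection with pairs $(\alpha\colon A\to C,\ \beta\colon B\to C)$ of $*$-homomorphisms with commuting images. This is false for non-unital algebras: given $f\colon A\otimes^{\alg}B\to C$ there is in general no way to extract a component $\alpha\colon A\to C$ \emph{with values in $C$}. (Take $A=B=C=C_{0}(\R)$ and $f\colon C_{0}(\R^{2})\to C_{0}(\R)$ restriction to the diagonal; the would-be $\alpha$ is ``multiplication by $1$'', which lives in $\cM(C)=C_{b}(\R)$, not in $C$.) The correct statement replaces $C$ by the multiplier algebra of the hereditary subalgebra generated by the image and requires nondegeneracy hypotheses; this is precisely the delicate point the paper's Remark \ref{wrgporekfwporwefwerfref} spends two pages on, using approximate units to build multiplier-valued component maps. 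Once the target of the components varies with the image (and hence with $i$ in your limit $\lim_i\{(\alpha_i,\beta)\}$), the clean corepresentability you invoke, and with it the Yoneda step, no longer goes through as written.

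The paper's route around this is worth noting: it first proves the statement when $B$ and all $A_{i}$ are \emph{unital} (and the structure maps unital), where your commuting-pairs description is valid because one can set $\pi_{i}(a)=\rho_{i}(a\otimes 1_{B})$ and $\kappa(b)=\rho_{i}(1_{A_{i}}\otimes b)$ --- this is exactly your idea, made legitimate by the presence of units. It then bootstraps to the general case via the split unitalization sequences $0\to A\to A^{+}\to\C\to 0$ and $0\to B\to B^{+}\to\C\to 0$, using that $\otimes_{\max}$ is exact in each variable and that filtered colimits in $\nCcat$ (hence $\nCalg$) preserve exact sequences (Lemma \ref{wpogjopwergrregwerg9}). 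If you want to keep a universal-property proof, you would need to either restrict the corepresentability argument to the unital case and then run the same unitalization bootstrap, or else directly construct the inverse map $(\colim_{\bI}A)\otimes_{\max}B\to\colim_{\bI}(A\otimes_{\max}B)$ by a Cauchy-sequence argument on elementary tensors using the norm formula of Proposition \ref{roigjowegwregrgweg} together with the estimate $\|x\otimes b\|_{\max}\le\|x\|\,\|b\|$; either way the step you labelled ``purely formal'' is where the real work lies.
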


The Assertion  \ref{eriguhwiegugwergwerg}.\ref{weoigjweorrerggwergr}  seems to be well-known \cite[II.9.6.5]{blackadar_operator_algebras}, but we could not find a detailed proof.
We will show both assertions besides some other interesting results  about tensor products of $C^{*}$-categories  below.

\begin{proof}[Proof of {Proposition}  \ref{eroigjweogieogigw} assuming {Lemma} \ref{eriguhwiegugwergwerg}]
If $\bC$ is in $\nCcat$, then we have a canonical isomorphism 
$${\colim}_{\bC'}\, \bC'\xrightarrow{\cong} \bC\, , $$
 where the colimit runs over the full subcategories of $\bC$ with  finitely  many objects. 
The structure maps of this system are  fully faithful functors which are injective on objects.
 By Lemma \ref{wegoihjiogewrgergegrewf} we therefore get an isomorphism 
\begin{equation}\label{evwojiovfsvsfdvsdfv}
{\colim}_{\bC'}\, A(\bC')\stackrel{\cong}{\to}A( \bC)\, , 
\end{equation} 
where the structure map of the system of $A(\bC')$ are isometric inclusions.
We now consider the diagram
$$\xymatrix{
\colim_{\bC',\bD'}  A(\bC')\otimes_{?} A(\bD')\ar[rr]^-{\cong}_-{\text{Lem.~}\ref{eroigjerwoigerwgergrwegwerg}} \ar[d]^{\cong}&&\colim_{\bC',\bD'} A(\bC'\otimes_{?}\bD')\ar[d] \\
A(\bC)\otimes_{?} A(\bD) \ar@{..>}[rr]&& A(\bC\otimes_{?}\bD)
}$$
where the left vertical isomorphism  {uses 
Lemma}  \ref{eriguhwiegugwergwerg} and \eqref{evwojiovfsvsfdvsdfv} (also for $\bD$),
and the right vertical arrow exists by the universal property of the colimit. 
One  checks that the dotted arrow defined by this square is inverse to 
the canonical homomorphism  since this is true
after restricting to the algebraic tensor products for the uncompleted version $A^{\alg}$ of {the functor} $A$.
\end{proof}

In the following discussion we show a couple of results which prepare the actual proof of Lemma \ref{eriguhwiegugwergwerg}. 
At the end we use  the material in order to derive some additional results  which will be used, e.g.\ in \cite{bel-paschke}.
Our presentation will be selfcontained with one exception: the exactness of the maximal tensor product for $C^{*}$-algebras {\cite[Prop.\ 3.7.1]{brown_ozawa}}, but this does not go into the proof of Lemma \ref{eriguhwiegugwergwerg}.

 We start with an explicit model for filtered colimits in $\nCcat$.  
 We consider a  small filtered  category $\bI$ and a functor $\bC \colon \bI\to \nCcat$. 
In the following construction together with Proposition \ref{roigjowegwregrgweg} we provide an explicit model for the $C^{*}$-category $\colim_{\bI} \bC$.   

\begin{construction}\label{weoigjwoegferwfewfwer}
  The colimit   ${\colim_{\bI}}^{\nsCat}\bC$ of the image of the diagram in $\nsCat$  has the  following  explicit description.  For every $i$ in $\bI$ we let $\iota_{i} \colon \bC_{i} \to    {\colim_{\bI}}^{\nsCat} \bC $ denote the canonical map.  
\begin{enumerate}
\item objects: The set objects of  ${\colim_{\bI}}^{\nsCat}\bC$   is  given by $  \colim_{\bI} \Ob(\bC) $.
    \item \label{rgoijewoigwergergw} morphisms:   For every two objects $\bar C$ and $\bar C'$ in ${\colim_{\bI}}^{\nsCat}\bC$ we can (since $\bI$ is filtered) find $i$ in $\bI$ and objects $C$ and $C'$ in  $\bC_{i}$ such that
 $\bar C=\iota_{i}(C)$ and $\bar C'=\iota_{i}(C')$. We then have  \begin{equation}\label{wefqpoju1o2irwefqewfqef}
\Hom_{{\colim_{\bI}}^{\nsCat}\bC}(\bar C, \bar C')\coloneqq \colim_{(i\to i')\in \bI_{i/}} \Hom_{\bC_{i'}}( \bC(i\to i')(C),\bC(i\to i')(C'))\, ,
\end{equation}
where $  \bI_{i/}$ denotes the slice category of objects under $i$ in $\bI$, and the colimit is taken in $\Vect_{\C}$.
\item composition and involution: These structures are defined in the canonical manner.
\end{enumerate}

We now define a norm on ${\colim_{\bI}}^{\nsCat}\bC$ as follows. 
If $\bar f\colon \bar C\to \bar C'$ is any morphism in ${\colim_{\bI}}^{\nsCat}\bC$, then there exists $i$  and $C,C'$ in $\bC_{i}$ as in Point \ref{rgoijewoigwergergw} above and 
$(i\to i')$ in $\bI_{i/}$ and a morphism $f$ in  $\bC_{i'} $ such that $\iota_{i'}(f)=\bar f$.
 We then define $$\|\bar f\| \coloneqq \lim_{(\phi \colon i'\to i'')\in \bI_{i'\!/}}  \|\phi(f)\|_{C_{i''}}\, .$$
 Since the map 
$(\phi \colon i'\to i'')\mapsto  \|\phi(f)\|_{C_{i''}}$ is decreasing and bounded below by $0$  the limit exists.
Since $\bI$ is filtered the right-hand side does not depend on the choices of $i'$ and $f$.

We form   the completion
$$\bD \coloneqq \overline{{{\colim}_{\bI}^{\nsCat}}\bC}$$
with respect to the norm defined above. This amounts to  forming the  {completion} of the morphism spaces and extending the composition and the involution by continuity. 
Note that this process {also} involves forming the quotient by the subcategory of morphisms with zero norm, {and hence the} map from the original category to its completion is not necessarily injective.
Since the norm in $\bC_{i}$ satisfies the $C^{*}$-equality and inequality for every $i$ in $\bI$ we conclude from the construction that also the norm on $\bD$ has these properties. Consequently,  $\bD$ is an object in ${\nCcat}$.

The family of  structure maps $(\iota_{i})_{i\in \bI}$ provides the first map of the composition
\begin{equation}\label{qfewffqewfeqwfqewfeqwf}
\bC\to \underline{{{\colim}_{\bI}^{\nsCat}}\bC}\to   \underline{\bD}
\end{equation}
in $\Fun(\bI,{\nClincat})$, where $\underline{\smash{-}}$ stands for forming the constant $\bI$-diagram on $-$. 
The second morphism is induced by the inclusion of the colimit into its completion.
Since  the inclusion functor
$\nCcat\to \nClincat$ is  fully faithful,  the 
composition  \eqref{qfewffqewfeqwfqewfeqwf}  is a morphism 
$\bC\to \underline{\bD}$ in $\Fun(\bI,{\nCcat})$, and hence by adjunction corresponds  a functor
\begin{equation}\label{eqwfpokqpowefqwefqwf}
\sigma\colon \colim_{\bI} \bC\to \bD\, .
\end{equation} 
{The following proposition shows that $\bD$ is an explicit model for the $C^{*}$-category $\colim_{\bI} \bC$.}
\hB
\end{construction}

\begin{prop}\label{roigjowegwregrgweg}
The functor $\sigma$ from \eqref{eqwfpokqpowefqwefqwf} is an isomorphism.
\end{prop}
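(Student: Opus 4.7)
The plan is to verify that $\bD$, together with the cocone $(\iota_{i})_{i \in \bI}: \bC \to \underline{\bD}$ built in Construction \ref{weoigjwoegferwfewfwer}, exhibits $\bD$ as the colimit of $\bC$ in $\nCcat$. Once this universal property is in hand, the induced comparison map with $\colim_{\bI}\bC = \compl(\colim_{\bI}^{\nsCat}\bC)$ coming from the adjunction is an isomorphism, and unravelling the definitions identifies it with $\sigma$.

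So consider a competing cocone $\rho: \bC \to \underline{\bE}$ in $\Fun(\bI, \nCcat)$. Forgetting completeness of $\bE$, the universal property of ${\colim_{\bI}}^{\nsCat}\bC$ in $\nsCat$ supplies a unique morphism $\bar\rho: {\colim_{\bI}}^{\nsCat}\bC \to \bE$ of $*$-categories. The key step is the norm estimate
\[
\|\bar\rho(\bar f)\|_{\bE} \;\le\; \|\bar f\|_{\bD}
\]
for every morphism $\bar f$ in ${\colim_{\bI}}^{\nsCat}\bC$. To prove it, choose a representative $f$ in $\bC_{i'}$ with $\iota_{i'}(f) = \bar f$. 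For any $\phi: i' \to i''$ in $\bI_{i'\!/}$, compatibility of $\rho$ with the structure maps of $\bC$ gives $\bar\rho(\bar f) = \rho_{i''}(\bC(\phi)(f))$. Since $\rho_{i''}: \bC_{i''}\to \bE$ is a functor between $C^{*}$-categories it is automatically norm non-increasing, so $\|\bar\rho(\bar f)\|_{\bE} \le \|\bC(\phi)(f)\|_{\bC_{i''}}$. Passing to the limit over $\bI_{i'\!/}$ yields the desired bound. Specialising this argument to $\bE = B$ a $C^{*}$-algebra also shows that the formula defining $\|\cdot\|_{\bD}$ on ${\colim_{\bI}}^{\nsCat}\bC$ really yields a finite (semi)norm satisfying the $C^{*}$-inequality, making ${\colim_{\bI}}^{\nsCat}\bC$ a pre-$C^{*}$-category as claimed in the construction.

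Equipped with this estimate, $\bar\rho$ extends uniquely by continuity to a functor $\tilde\rho: \bD \to \bE$ in $\nCcat$, the extension being possible because $\bE$ is complete and unique because the image of ${\colim_{\bI}}^{\nsCat}\bC$ is dense in $\bD$. This establishes that $\bD$ together with the cocone \eqref{qfewffqewfeqwfqewfeqwf} is initial among cocones on $\bC$ landing in $C^{*}$-categories, which is the universal property of $\colim_{\bI}\bC$. Consequently the canonical comparison functor is an isomorphism, and by construction it agrees with $\sigma$.

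The main obstacle is the norm estimate above: everything else is formal. The estimate itself is clean but requires the correct interaction of three ingredients, namely the explicit description of morphisms in ${\colim_{\bI}}^{\nsCat}\bC$ via \eqref{wefqpoju1o2irwefqewfqef}, the automatic norm non-increasingness of functors in $\nCcat$, and the fact that $\bI_{i'\!/}$ is filtered so that the decreasing net $(\|\bC(\phi)(f)\|)_{\phi\in \bI_{i'\!/}}$ converges to its infimum independently of the chosen representative.
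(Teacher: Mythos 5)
Your proof is correct and rests on exactly the same key estimate as the paper's: the contractivity of $*$-functors into $C^{*}$-categories applied to $\rho_{i''}$, followed by the limit over the filtered slice $\bI_{i'\!/}$ to bound the image norm by $\|\bar f\|_{\bD}$. The only difference is packaging — you establish the full universal property of $\bD$ against an arbitrary target $\bE$, whereas the paper runs the identical estimate for the single target $\bE=\colim_{\bI}\bC$ and uses the resulting continuous extension of $\kappa$ as an explicit inverse to $\sigma$; both are fine.
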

\begin{proof} We construct an inverse.
We have 
a canonical functor $$\kappa\colon {{\colim}_{\bI}^{\nsCat}}\bC\to  \colim_{\bI}\bC$$
which in view of the formula \eqref{qwefqfjoiqfwefqwefeww} is just the completion map.  Using the notation from Point \ref{weoigjwoegferwfewfwer}.\ref{rgoijewoigwergergw}  we have
$$\|\kappa(\bar f)\|_{\colim_{\bI} \bC}=\|\kappa \iota_{i''}\phi(f)\|_{\colim_{\bI} \bC} \le  \| \phi(f)\|_{C_{i''}} $$ for all $(\phi \colon i'\to i'')\in \bI_{i'\!/}$. By considering the limit over $\bI_{i'\!/}$
we conclude that
$$\|\kappa(\bar f)\|_{\colim_{\bI} \bC}\le \|\bar f\|_{\bD}\, .$$ 
This shows that $\kappa$ extends by continuity to a functor 
$ \kappa \colon \bD\to \colim_{\bI}\bC$ which is necessarily inverse to $\sigma$.
\end{proof}
 
As a first application of Proposition \ref{roigjowegwregrgweg} we show  Lemma \ref{eroigjowregwrege9} below. Its specialization to $C^{*}$-algebras has been used in the proof of Proposition \ref{togjoigerggwgergwgr1}.
 
  Let $\bI$ be a   small filtered category.
  \begin{ddd}  We say that $\bI$ is \countably
 filtered if {for} every functor  $\bJ\to \bI$ from a countable category the inclusion $\bJ\to \bI$ extends to the cone over $\bJ$.
 \end{ddd}

\begin{ex}\label{wegoijwoegwergwregwreg}
If $A$ is a $C^{*}$-algebra, then the poset of all separable subalgebras of $A$ is \countably filtered. 
If $\bJ$ is a subset of this poset, then we can extend the inclusion to the cone over $\bJ$ by
sending the cone tip to the separable subalgebra $\overline{\bigcup_{j\in \bJ} A_{j}}$ of $A$, where the closure is taken in $A$.

{By the same argument, if $G$ is a countable group, then the poset of  separable $G$-invariant subalgebras of $A$ is \countably filtered. 
More generally, if $\bC$ is a $G$-$C^{*}$-category, then the poset of
separable $G$-invariant  subcategories is \countably filtered.}
 \hB
\end{ex}

Let $\bC \colon \bI\to \nCcat$ be a diagram indexed by a small filtered category $\bI$, $i$ be in $\bI$, and $\bD$ be a subcategory of $\bC_{i}$.

\begin{lem}\label{eroigjowregwrege9}
Assume:
\begin{enumerate}
\item \label{trohiijerotetrgtreg} $\bD$ {is separable.} 
\item \label{sdffsdfswer} The composition $\bD\to \bC_{i}\xrightarrow{\iota_{i}} \colim_{\bI} \bC$ is zero.
\item \label{werpogwergregweg} $\bI$ is \countably filtered.
\end{enumerate}
Then there exists  a morphism $\phi \colon i\to i'$ in $\bI$ such that the composition
$\bD\to \bC_{i}\xrightarrow{\phi(i\to i')} \bC_{i'}$ is zero.
 \end{lem}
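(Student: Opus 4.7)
The plan is to exploit the explicit model for filtered colimits in $\nCcat$ furnished by Construction \ref{weoigjwoegferwfewfwer} and Proposition \ref{roigjowegwregrgweg}. By Assumption \ref{sdffsdfswer}, for every morphism $f$ of $\bD$ the image $\iota_i(f)$ becomes zero in $\colim_\bI\bC$, and since this colimit is realised as the completion of ${\colim}_\bI^{\nsCat}\bC$, this is equivalent to $\|\iota_i(f)\|=0$. By the explicit formula for the norm this reads
\[
\lim_{(\phi\colon i\to i')\in\bI_{i/}}\|\bC(\phi)(f)\|_{\bC_{i'}}=0.
\]
So for each such $f$ and each $n\in\nat$ we can choose a morphism $\phi_{f,n}\colon i\to i'_{f,n}$ in $\bI$ with $\|\bC(\phi_{f,n})(f)\|<1/n$.

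By Assumption \ref{trohiijerotetrgtreg} the category $\bD$ has countably many objects and each of its (countably many) morphism spaces is separable, so I can choose a countable dense subset $S$ of the morphisms of $\bD$. Performing the above choice only for $f\in S$ and $n\in\nat$ gives a countable family $\{\phi_{f,n}\}_{(f,n)\in S\times\nat}$, which together with the source $i$ and the targets $i'_{f,n}$ forms a countable subdiagram $K\colon\bJ\to\bI$. Here the countable-filteredness Assumption \ref{werpogwergregweg} enters: $K$ extends to a functor on the cone $\bJ^{\triangleright}$, yielding an apex object $i'\in\bI$ together with morphisms $\psi\colon i\to i'$ and $\psi_{f,n}\colon i'_{f,n}\to i'$ satisfying $\psi_{f,n}\circ\phi_{f,n}=\psi$ for every $(f,n)$.

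Using that a functor between $C^*$-categories is contractive on morphism spaces, for every $f\in S$ and every $n\in\nat$ we obtain
\[
\|\bC(\psi)(f)\|=\|\bC(\psi_{f,n})\bigl(\bC(\phi_{f,n})(f)\bigr)\|\le\|\bC(\phi_{f,n})(f)\|<1/n,
\]
hence $\bC(\psi)(f)=0$. Since $\bC(\psi)$ is continuous on morphism spaces and $S$ is dense in the morphisms of $\bD$, it follows that $\bC(\psi)$ annihilates every morphism of $\bD$, so $\psi$ is the desired morphism. The only non-routine point is the passage from the countable family $\{\phi_{f,n}\}$ to a single $\psi$; this is precisely what countable filteredness is designed to deliver, and without some such hypothesis the statement would fail.
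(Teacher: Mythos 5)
Your proposal is correct and follows essentially the same route as the paper's proof: pick a countable dense set of morphisms of $\bD$, use the explicit norm formula from Proposition \ref{roigjowegwregrgweg} to choose, for each such morphism and each $n$, a map $\phi_{f,n}$ shrinking the norm below $1/n$, then invoke countable filteredness to cone off this countable subdiagram and conclude by contractivity and density. The only difference is that you spell out the contractivity estimate $\|\bC(\psi)(f)\|\le\|\bC(\phi_{f,n})(f)\|$ explicitly, which the paper leaves implicit.
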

\begin{proof}
Using the Assumption \ref{trohiijerotetrgtreg} on $\bD$ we can choose a countable set of morphisms $M$ in $\bD$ such that
$M\cap \Hom_{\bD}(D,D')$ is dense for every pair of objects $D,D'$ in $\bD$. By the description of the norm in $ \colim_{\bI} \bC$ given by Proposition \ref{roigjowegwregrgweg} and Assumption \ref{sdffsdfswer},
for every $m$ in $M$ and $n$ in $\nat$ we can find $\phi_{m,n} \colon i\to i_{m,n}$ in $\bI_{i/}$ such that
$\|\phi_{m,n}(m) \|_{C_{i''}}\le\frac{1}{n}$. We let $\bJ$ be the  subcategory of $\bI$   with the set of  objects $\{i_{m,n}\:|\: m\in M , n\in \nat\}$ and the non-identity morphisms $\phi_{m,n} \colon i\to i'$.  Using that $\bI$ is \countably filtered by Assumption \ref{werpogwergregweg} we can now extend the inclusion of $\bJ$ into $\bI$ to the cone over $\bJ$
such that the cone tip is sent to an object  $i'$ of $\bI$.  We let $\phi \colon i\to i'$ be the unique morphism  which factorizes as
$i\xrightarrow{\phi_{m,n}} i_{m,n}\to i'$ for every $m$ in $M$ and $n$ in $\nat$ such that the second morphism belongs to this extension.

For every $m$ in $M$ and $n$ in $\nat$ we have by construction  $\bC(\phi)(m)  =0$.
By the density assumption  on $M$ this implies that $C(\phi)$ annihilates all morphisms of  $\bD$.
\end{proof}
 
{A second application of Proposition \ref{roigjowegwregrgweg} ist the following.} 
{For $C^{*}$-categories, a faithful functor is  the analogue of an isometric inclusion of $C^{*}$-algebras.}
\begin{kor}\label{qoifqjgfefeqwfq}
{Filtered colimits  in $\nCcat$ preserve  {faithful functors}.}
 \end{kor}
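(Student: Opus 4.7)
The plan is to deduce faithfulness from isometry, using the explicit model for filtered colimits given by Proposition \ref{roigjowegwregrgweg}. Concretely, I interpret the statement as: if $\tau \colon \bC \to \bC'$ is a natural transformation of diagrams $\bC, \bC' \colon \bI \to \nCcat$ indexed by a filtered $\bI$, and if each $\tau_i \colon \bC_i \to \bC_i'$ is faithful, then $\colim_\bI \tau \colon \colim_\bI \bC \to \colim_\bI \bC'$ is faithful.

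The first step is the standard observation that a faithful $*$-functor $\phi \colon \bE \to \bE'$ in $\nCcat$ is automatically isometric on every morphism space. Indeed, the restriction of $\phi$ to an endomorphism algebra $\End_\bE(E) \to \End_{\bE'}(\phi(E))$ is an injective $*$-homomorphism of $C^*$-algebras, which is known to be isometric; the general case then follows from the $C^*$-identity $\|f\|^2 = \|f^* f\|_{\End_\bE(C)}$. In particular, each $\tau_i$ is isometric on morphisms.

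Next, I apply Proposition \ref{roigjowegwregrgweg} to both diagrams $\bC$ and $\bC'$. Since $\tau$ is a natural transformation, the algebraic colimit functor ${\colim_\bI}^{\nsCat}\tau$ makes sense and, on morphism spaces, is obtained by applying the filtered colimit of complex vector spaces to the componentwise injective maps $\tau_{i'} \colon \Hom_{\bC_{i'}}(-,-) \to \Hom_{\bC_{i'}'}(\tau_{i'}(-),\tau_{i'}(-))$; filtered colimits of vector spaces preserve injectivity, so ${\colim_\bI}^{\nsCat}\tau$ is injective on morphisms. For a morphism $\bar f$ represented by $f$ in $\bC_{i'}$, the norm formula of Construction \ref{weoigjwoegferwfewfwer} together with the isometry of each $\tau_{i''}$ yields
\[
\|{\colim_\bI}^{\nsCat}\tau(\bar f)\| = \lim_{(i' \to i'') \in \bI_{i'/}} \|\tau_{i''}\bC(i' \to i'')(f)\|_{\bC_{i''}'} = \lim_{(i' \to i'') \in \bI_{i'/}} \|\bC(i' \to i'')(f)\|_{\bC_{i''}} = \|\bar f\|,
\]
so ${\colim_\bI}^{\nsCat}\tau$ is isometric.

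Finally, I pass to the completions described in Proposition \ref{roigjowegwregrgweg}. Since an isometric $*$-functor between pre-$C^*$-categories extends by continuity to an isometric (hence faithful) $*$-functor between their completions, the induced functor $\colim_\bI \tau$ is faithful, as required. The only conceptual obstacle is the automatic isometry of faithful $*$-functors; once this is in hand, everything reduces to the explicit description of the filtered colimit norm provided by the previous proposition.
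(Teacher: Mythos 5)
Your proof is correct and follows exactly the route the paper intends: the paper's own argument simply asserts that the claim "immediately follows from the explicit description of the norm on the colimits given in Proposition \ref{roigjowegwregrgweg}", and your write-up supplies precisely the missing details (faithful $\Rightarrow$ isometric via the $C^*$-identity, isometry of the colimit via the norm formula of Construction \ref{weoigjwoegferwfewfwer}, and passage to completions).
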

\begin{proof}
  {Let $\bI$ be a 
  small filtered
   category  and 
   $(\bA\to \bB) \colon \bI\to \nCcat$
be a natural transformation of functors such that  $\bA_{i}\to \bB_{i}$ is  {faithful} for every $i$ in $\bI$. Then we must show that
the induced morphism  $\colim_{\bI}\bA\to \colim_{\bI} \bB$ is  {faithful}.
But this} immediately follows from the explicit description of the norm on the colimits given in Proposition \ref{roigjowegwregrgweg}.
\end{proof}

 \begin{lem}\label{wpogjopwergrregwerg9}
{Filtered colimits in $\nCcat$ preserve exact sequences.}
 \end{lem}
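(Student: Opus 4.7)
The plan is to argue via the explicit model for filtered colimits in $\nCcat$ provided by Construction \ref{weoigjwoegferwfewfwer} and Proposition \ref{roigjowegwregrgweg}. Given a small filtered category $\bI$ and a level-wise exact sequence $0\to \bA\to \bB\to \bC\to 0$ of functors $\bI\to \nCcat$, I first reduce to showing the exactness of the uncompleted sequence in $\nsCat$ together with the correct metric structure, and then invoke the standard fact that the completion of an exact sequence of pre-$C^{*}$-categories (isometric inclusion of an ideal, quotient with the quotient norm) is an exact sequence of $C^{*}$-categories.

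For the uncompleted sequence in $\nsCat$: on objects, filtered colimits of sets preserve bijections, so the object sets of ${\colim_{\bI}}^{\nsCat}\bA$, ${\colim_{\bI}}^{\nsCat}\bB$, and ${\colim_{\bI}}^{\nsCat}\bC$ agree. On morphism spaces, formula \eqref{wefqpoju1o2irwefqewfqef} realizes the Hom-space in ${\colim_{\bI}}^{\nsCat}$ as a filtered colimit in $\Vect_{\C}$ of the corresponding Hom-spaces at later levels, so exactness follows from the exactness of filtered colimits in $\Vect_{\C}$.

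Next, I would check the behaviour of the norms. Since each $\bA_{i}\to \bB_{i}$ is isometric on morphism spaces, the explicit norm formula from Construction \ref{weoigjwoegferwfewfwer} immediately shows that ${\colim_{\bI}}^{\nsCat}\bA\to {\colim_{\bI}}^{\nsCat}\bB$ is isometric as well (this is essentially the content of Corollary \ref{qoifqjgfefeqwfq}). The main technical point is to verify that the norm on ${\colim_{\bI}}^{\nsCat}\bC$ defined by the construction coincides with the quotient norm induced from ${\colim_{\bI}}^{\nsCat}\bB$. The inequality $\|\bar g\|_{{\colim}^{\nsCat}\bC}\le \|\bar g\|_{\text{quot}}$ is immediate, since for any lift $\bar f$ represented by $f_{i'}\in \bB_{i'}$ of $\bar g$ represented by $g_{i'}$ one has $\|\phi(g_{i'})\|_{\bC_{i''}}\le \|\phi(f_{i'})\|_{\bB_{i''}}$ for every $\phi\colon i'\to i''$ in $\bI_{i'/}$. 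For the opposite inequality, given $\varepsilon>0$, choose $\phi\colon i'\to i''$ such that $\|\phi(g_{i'})\|_{\bC_{i''}}\le \|\bar g\|_{{\colim}^{\nsCat}\bC}+\varepsilon/2$, and then use the quotient norm at level $i''$ to lift $\phi(g_{i'})$ to some $f\in \bB_{i''}$ with $\|f\|_{\bB_{i''}}\le \|\bar g\|_{{\colim}^{\nsCat}\bC}+\varepsilon$. The image $\iota_{i''}(f)\in {\colim_{\bI}}^{\nsCat}\bB$ lifts $\bar g$, and since the defining net of norms is decreasing along $\bI_{i''/}$ its norm is at most $\|\bar g\|_{{\colim}^{\nsCat}\bC}+\varepsilon$.

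Passing to the completion then yields the desired conclusion: the closure of the image of ${\colim_{\bI}}^{\nsCat}\bA$ in $\colim_{\bI}\bB$ is a closed ideal whose completion is $\colim_{\bI}\bA$, the quotient of $\colim_{\bI}\bB$ by this ideal is a $C^{*}$-category, and the natural map from ${\colim_{\bI}}^{\nsCat}\bC$ into this quotient is isometric by the norm identification above and has dense image, hence extends to an isomorphism with $\colim_{\bI}\bC$. The central difficulty is the identification of the colimit norm on $\bC$ with the quotient norm, which reduces to an elementary lifting argument at a single finite stage; everything else is formal.
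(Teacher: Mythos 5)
Your argument is correct, but it takes a genuinely different route from the paper's. The paper's proof is categorical: it encodes the exact sequence as a square with $0[\Ob(\bB)]$ in the lower-left corner which is simultaneously cartesian and cocartesian, observes that the colimit of cocartesian squares is cocartesian (using that $0[-]$ and $\Ob(-)$ are left adjoints to compute the corner), and thereby identifies $\colim_{\bI}\bC$ with the cokernel of $\colim_{\bI}\bA\to\colim_{\bI}\bB$; the only analytic input is then Corollary \ref{qoifqjgfefeqwfq}, which makes $\colim_{\bI}\bA$ a closed (isometrically embedded) ideal and hence the square cartesian. You instead work entirely inside the explicit model of Construction \ref{weoigjwoegferwfewfwer}/Proposition \ref{roigjowegwregrgweg}: exactness of the uncompleted sequence from exactness of filtered colimits in $\Set$ and $\Vect_{\C}$, isometry of the ideal inclusion from the norm formula, and — the real content of your proof — a direct $\varepsilon$-lifting argument at a single stage $i''$ showing that the colimit seminorm on ${\colim_{\bI}}^{\nsCat}\bC$ is the quotient seminorm, after which completion does the rest. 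Both proofs rest on the same two pillars (levelwise exactness of the algebraic colimit and the isometry statement of Corollary \ref{qoifqjgfefeqwfq}); the paper's version outsources the identification of $\colim_{\bI}\bC$ as a quotient to abstract colimit nonsense, whereas yours proves it by hand, at the price of invoking (correctly) the standard facts that the norm on the quotient term of an exact sequence of $C^{*}$-categories is the quotient norm and that completion commutes with quotients of seminormed spaces. Your lifting step is sound: the decreasing-net description of the colimit norm guarantees that the lift chosen at stage $i''$ does not gain norm in the colimit.
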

 \begin{proof}
 {Let $\bI$  be a   small filtered 
 category and consider a diagram of exact sequences
$$0\to \bA\to \bB\to \bC\to 0$$ in $\nCcat$
indexed by $\bI$.
 We must show that the sequence
$$0\to {\colim}_{\bI}\, \bA\to {\colim}_{\bI}\, \bB\to {\colim}_{\bI}\, \bC\to 0$$
is exact.}

 By the definition of an exact sequence in $\nCcat$
 we have bijections  $\Ob(\bA)\cong \Ob(\bB)\cong\Ob(\bC)$. For a set $X$ we let $0[X]$ denote the $C^{*}$-category with the set of objects $X$ and only zero morphisms.
 We write the diagram of exact sequences as a diagram
of squares
$$\xymatrix{\bA\ar[r]\ar[d]&\bB\ar[d]\\0[\Ob(\bB)]\ar[r]&\bC}$$ in $\nCcat$
which are cartesian and cocartesian.
The colimit of cocartesian squares
\begin{equation}\label{frwefqrffewfewfq}
\xymatrix{\colim_{\bI}\bA\ar[r]\ar[d]&\colim_{\bI}\bB\ar[d]\\0[\Ob(\colim_{\bI}\bB)]\ar[r]&\colim_{\bI}\bC}
\end{equation} 
is again cocartesian, where we exploit that the two functors  $0[-] \colon \Set \to \nCcat  $ and $\Ob(-) \colon \nCcat\to  \Set$ are  both left adjoints by \cite[Lem.\ 3.8.1 \& 3.8.2]{crosscat}  and therefore commute with colimits in order to calculate the lower left corner in the colimit. This in particular implies that the image of the functor $\colim_{\bI}\bA\to \colim_{\bI}\bB$ 
is the kernel of the functor $\colim_{\bI}\bB\to \colim_{\bI}\bC$. In order to show that the square in \eqref{frwefqrffewfewfq}  is also cartesian it therefore suffices to show that  $\colim_{\bI}\bA\to \colim_{\bI}\bB$  is isometric. This is exactly  the assertion of Corollary \ref{qoifqjgfefeqwfq} which is applicable here since the inclusions $\bA_{i}\to \bB_{i}$ are isometric for all $i$ in $\bI$.  
\end{proof}

 The following  technical lemma is used in the proof of Lemma \ref{eriguhwiegugwergwerg}.   

 \begin{lem}\label{roigwerogwregwergwregwreg}
 The minimal tensor product {on $\nCalg$} preserves isometric inclusions. 
 \end{lem}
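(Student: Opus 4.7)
The plan is to deduce the lemma directly from the formula \eqref{dcdcdascadscdascadc}, which says that the minimal norm on the algebraic tensor product of $C^*$-algebras is realised by any pair of faithful representations. So let $A' \subseteq A$ and $B' \subseteq B$ be isometric inclusions in $\nCalg$, and let $x$ be a morphism in $A' \otimes^{\alg} B'$. We must show that its norm as an element of $A' \otimes_{\min} B'$ equals its norm as an element of $A \otimes_{\min} B$ under the canonical map.

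First I would choose faithful representations $\alpha \colon A \to B(H)$ and $\beta \colon B \to B(H')$. Since injective $*$-homomorphisms between $C^*$-algebras are automatically isometric (and hence still injective upon restriction), the restrictions $\alpha|_{A'} \colon A' \to B(H)$ and $\beta|_{B'} \colon B' \to B(H')$ are again faithful representations. Applying \eqref{dcdcdascadscdascadc} twice yields
\[
\|x\|_{A' \otimes_{\min} B'} = \|(\alpha|_{A'} \otimes \beta|_{B'})(x)\|_{B(H \otimes H')} = \|(\alpha \otimes \beta)(x)\|_{B(H \otimes H')} = \|x\|_{A \otimes_{\min} B},
\]
where the middle equality holds because the operator $(\alpha|_{A'} \otimes \beta|_{B'})(x)$ is literally equal to $(\alpha \otimes \beta)(x)$ once $x$ is regarded as an element of $A \otimes^{\alg} B$ along the obvious inclusion $A' \otimes^{\alg} B' \hookrightarrow A \otimes^{\alg} B$.

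This shows that the algebraic inclusion extends to an isometric $*$-homomorphism between the minimal completions, which is what was to be proved. There is no real obstacle here: the entire argument is a one-line application of the characterisation of the minimal norm via faithful representations, together with the elementary fact that restrictions of faithful representations remain faithful.
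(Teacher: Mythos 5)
Your proof is correct and follows essentially the same route as the paper: both arguments choose faithful representations of the ambient algebras, observe that their restrictions to the subalgebras remain faithful, and then invoke \eqref{dcdcdascadscdascadc} to see that the two minimal norms are computed by the same spatial operator. The only cosmetic difference is that you treat inclusions in both tensor factors simultaneously, while the paper fixes the second factor.
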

 \begin{proof}
 If $A'\to A$ is an isometric inclusion, then we must show that 
${A'} \otimes_{\min} B\to {A}\otimes_{\min} B$ is again an isometric inclusion. We  choose faithful representations of $\alpha$ and $\beta$ of $A$ and $B$ as above, respectively.
 Then we can use $\alpha_{|A'}$ as a faithful representation of $A'$. 
 The assertion is now clear {from \eqref{dcdcdascadscdascadc}.}
 \end{proof}

We can now prove   Lemma \ref{eriguhwiegugwergwerg}  and hence complete the proof of Proposition \ref{eroigjweogieogigw}.\phantomsection\label{weoigjwoegffrefgrgwgf}
We discuss the cases $?=\min$ and $?=\max$ separately.

\begin{proof}[Proof of {Lemma} \ref{eriguhwiegugwergwerg} in the case $?=\min$]
 Let $\bI$ be a  filtered category, 
  $A \colon \bI\to \nCalg$ be a diagram, and $B$ in $\nCalg$.
Then we consider the canonical map
\begin{equation}\label{wregkgpokpewgeggergwe}
{\colim}_{\bI}\, (A\otimes_{\min} B)\to ({\colim}_{\bI}\, A)\otimes_{\min} B\, .
\end{equation}
We must show that it is an isomorphism. 

Since the structure maps of the diagram $A$ are assumed to be isometric inclusions it follows from the explicit 
description of the colimit given by Proposition \ref{roigjowegwregrgweg} that the canonical maps
$A_{i}\to  \colim_{\bI}A$ are isometric inclusions.  By the Lemma \ref{roigwerogwregwergwregwreg} the homomorphisms
$A_{i}\otimes_{\min}B\to   {(\colim_{\bI}A)}\otimes_{\min}B$  are isometric inclusions, too.
Similarly, the structure maps of the system $A\otimes_{\min} B$ are isometric inclusions, and hence
$A_{i}\otimes_{\min}B\to \colim_{\bI}(A\otimes_{\min}  B)$ is an isometric inclusion. 
This implies by the Proposition \ref{roigjowegwregrgweg} that the canonical map  \eqref{wregkgpokpewgeggergwe} 
is an isometry. Since its
image clearly contains the dense subset ${({\colim_{\bI}}^{\nsAlg}A)}\otimes^{\alg}B$ 
 we conclude that it is an isomorphism.
\end{proof}

 The argument for  proof of Lemma \ref{eriguhwiegugwergwerg}
 in the case of $\otimes_{\min}$ can be used to deduce 
 the following general statement.  We let $\bM,\bN$ be in $\{\nCcat,\nCalg\}$, and we consider two groups $G,H$ and a functor 
  $F\colon\Fun(BG,\bM)\to\Fun(BH,\bN)$.    
 \begin{prop}\label{wegiojwoegerwferwfwerfw}
 Assume:
 \begin{enumerate}
 \item \label{qeriojgegwefrreferwf}
 $F$ preserves  {faithful functors}.\footnote{In the case of $C^{*}$-algebras we interpret {\em faithful functor} as {\em isometric inclusion}.}
 \item  \label{oirjforfqewfqwefqd}For any filtered diagram $\bC \colon \bI\to \Fun(BG,\bM)$ 
 the images of $F(\bC(i))\to F(\colim_{\bI}\bC)$ for all $i$ in $\bI$ together generate 
 $F(\colim_{\bI}\bC)$.
 \end{enumerate}
 Then $F$
preserves filtered colimits 
of diagrams whose structure maps are  {faithful functors}.
 \end{prop}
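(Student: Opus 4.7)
The plan is to show that for a filtered diagram $\bC \colon \bI \to \Fun(BG,\bM)$ with faithful structure maps the canonical comparison morphism
\[
\varphi \colon \colim_{\bI} F(\bC) \lto F(\colim_{\bI}\bC)
\]
is both faithful and essentially surjective (in the sense of having a generating image), hence an isomorphism. Throughout I will use Proposition~\ref{roigjowegwregrgweg} together with its obvious analogue for $\nCalg$ (obtained by viewing $C^{*}$-algebras as one-object $C^{*}$-categories), so the argument treats the four possible combinations of $(\bM,\bN)$ uniformly.

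First I would argue that $\varphi$ is faithful. By Proposition~\ref{roigjowegwregrgweg} the norm of a morphism $\bar f \in \colim_{\bI}\bC$ represented by $f \in \bC(i)$ equals $\lim_{(i\to i')} \|\bC(i\to i')(f)\|$, which under the hypothesis that the structure maps of $\bC$ are faithful collapses to $\|f\|$. Hence every canonical map $\bC(i) \to \colim_{\bI}\bC$ is faithful. Applying hypothesis~\ref{wegiojwoegerwferwfwerfw}.\ref{qeriojgegwefrreferwf}, the induced map $F(\bC(i)) \to F(\colim_{\bI}\bC)$ is faithful for every $i$. The same hypothesis guarantees that the structure maps of $F\circ \bC$ are faithful, so the same explicit-colimit description yields that every $F(\bC(i)) \to \colim_{\bI} F(\bC)$ is faithful as well. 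Since any morphism of $\colim_{\bI} F(\bC)$ is a norm limit of morphisms coming from some $F(\bC(i))$, the commutativity of
\[
\xymatrix{F(\bC(i))\ar[r]\ar[dr]& \colim_{\bI}F(\bC)\ar[d]^{\varphi}\\ & F(\colim_{\bI}\bC)}
\]
forces $\varphi$ to be isometric on a dense set, and by continuity to be faithful on the entire colimit.

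Next I would show that $\varphi$ is surjective. Because $\varphi$ is a faithful $\ast$-functor and $\colim_{\bI} F(\bC)$ is complete, the image of $\varphi$ is a closed $\ast$-sub-object of $F(\colim_{\bI}\bC)$, and it contains each $F(\bC(i))$ by construction. By hypothesis~\ref{wegiojwoegerwferwfwerfw}.\ref{oirjforfqewfqwefqd} these images generate $F(\colim_{\bI}\bC)$, so the closed $\ast$-sub-object they generate is the whole target. Combined with faithfulness, $\varphi$ is therefore an isomorphism, which is the conclusion.

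The main technical hurdle is the faithfulness step, since it depends on the precise norm formula for filtered colimits — but Proposition~\ref{roigjowegwregrgweg} is tailor-made for this. Once faithfulness is in place, the generation hypothesis upgrades the image of $\varphi$ from dense to everything, and the rest is formal.
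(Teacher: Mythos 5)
Your proposal is correct and follows essentially the same route as the paper: the paper also reduces to the argument of Lemma \ref{eriguhwiegugwergwerg} in the $\otimes_{\min}$ case, using the explicit norm formula of Proposition \ref{roigjowegwregrgweg} to show the comparison map is faithful, and then invoking the generation hypothesis to upgrade the closed image to everything. Your spelled-out faithfulness argument (isometric on the dense uncompleted colimit, then extend by continuity) is exactly the intended mechanism.
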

 \begin{proof}
 One argues as in the proof of Lemma \ref{eriguhwiegugwergwerg}
 in the case of $\otimes_{\min}$ replacing $-\otimes_{\min}B$ by
 $C$ that $\colim_{\bI}F(\bC)\to F(\colim_{\bI}\bC)$
 is  {faithful}. Then Assumption \ref{oirjforfqewfqwefqd} implies that this map is an isomorphism.
 \end{proof}

\begin{proof}[Proof of Lem.\ \ref{eriguhwiegugwergwerg} in the case $?=\max$]
In this argument we use that $\otimes_{\max}$ preserves exact sequences of $C^{*}$-algebras in each argument, see e.g.\ \cite[Prop.\ 3.7.1]{brown_ozawa}.
 Let $\bI$ be a  filtered category, 
  $A \colon \bI\to \nCalg$ be a diagram, and $B$ in $\nCalg$.
Then we consider the canonical map
\begin{equation}\label{wregkgpokpewgeggergwe_max}
{\colim}_{\bI}\, (A\otimes_{\max} B)\to ({\colim}_{\bI}\, A)\otimes_{\max} B\, .
\end{equation}
We must show that it is an isomorphism. 

We first consider the special case that $B$ is  in $\Calg$ and $A \colon \bI\to \Calg$. The latter condition means   that $A_{i}$ is unital for every $i$, and  for every morphism $i\to i'$ in $\bI$ the structure map $A_{i}\to A_{i'}$ preserves units.  
To prove that \eqref{wregkgpokpewgeggergwe_max} is an isomorphism in this case, it suffices to show that every homomorphism 
$\colim_{\bI}(A\otimes_{\max} B)\to T$  for every  $T$ in ${\nCalg}$  factorizes over a homomorphism 
$(\colim_{\bI}A)\otimes_{\max} B\to T$.

Let 
$\rho \colon \colim_{\bI}(A\otimes_{\max} B)\to T$ be a homomorphism. 
It determines a compatible family of homomorphisms 
$(\rho_{i} \colon A_{i}\otimes_{\max} B\to T)_{i\in \bI}$.
Using  the  unit    of $B$ we construct 
a compatible family of homomorphisms $(\pi_{i} \colon A_{i}\to T)_{i\in \bI}$ by
$\pi_{i}(a) \coloneqq  \rho_{i}(a\otimes 1_{B} )$.
This family induces a homomorphism $\pi \colon \colim_{\bI} A\to T$.
 For every $i$ in $\bI$ we can construct a homomorphism $\kappa_{i} \colon B\to T$ 
by $\kappa_{i}(b) \coloneqq \rho_{i}(1_{A_{i}}\otimes b)$.  Because $\bI$ is connected, these homomorphisms are independent of $i$.
We will just write $\kappa(b) \coloneqq \kappa_{i}(b)$ for any choice.
Note that $\rho_{i}(a\otimes b)=\pi_{i}(a)\kappa(b)$ for all $i$ in $\bI$ and $a$ in $A_{i}$, $b$ in $B$.
Then we get a map
$\rho' \colon (\colim_{\bI}A)\otimes^{\alg} B\to T$ determined by $\rho'(a\otimes b) \coloneqq \pi(a)\kappa(b)$.
 By the universal property of the maximal tensor product {$\rho'$} extends to $\rho'' \colon (\colim_{\bI}A)\otimes_{\max}  B\to T$. The homomorphism  $\rho$ factorizes over $\rho''$ as desired.

 We now consider the case of a diagram $A \colon \bI\to \nCalg$ but still assume that $B$ is unital.  We have a  split unitalization exact sequence $0\to A\to A^{+}\to \C\to 0$. 
  Applying to this sequence $-\otimes_{\max}B$ we get again a diagram of exact sequences. 
We consider the diagram
$$\xymatrix{0\ar[r] &\colim_{\bI}(A\otimes_{\max} B)\ar[r]\ar[d]^{!}&\colim_{\bI}(A^{+}\otimes_{\max} B)\ar[r]\ar[d]^{\cong}&B \ar[r]\ar@{=}[d]&0\\ 0\ar[r]&(\colim_{\bI}A)\otimes_{\max} B\ar[r] &(\colim_{\bI}A^{+})\otimes_{\max} B\ar[r] &B\ar[r]&0}$$
 The upper horizontal sequence is exact by Proposition \ref{wpogjopwergrregwerg9}.
 The right horizontal maps are obtained from    the canonical maps $A^{+}_{i}\to \C$ by tensoring with $B$.
  The middle vertical map is an isomorphism by the unital case of this lemma shown above. 
 We now argue that the lower horizontal sequence is exact, which will imply that the   arrow marked by $!$ is an isomorphism.
  We have an equivalence {of categories} $\nCalg\xrightarrow{\simeq} \Calg_{/\C}$ given by   $A\mapsto (A^{+}\to \C)$, and whose inverse is given by $(\phi \colon B\to \C) \mapsto \ker(\phi)$. Furthermore, the canonical functor $\Calg_{/\C}\to \Calg$ preserves colimits in view of the adjunction
$$((A\to \C)\mapsto A):\Calg_{/\C}\rightleftarrows \Calg: (B\mapsto (B\oplus \C\to \C))\, .$$
Hence
$${\colim}_{\bI}\, A\cong \ker({\colim}_{\bI}\, (A^{+}\to \C))\cong \ker({\colim}_{\bI}\, A^{+}\to \C)$$
and we have the split exact sequence
$$0\to  {\colim}_{\bI}\,A \to {\colim}_{\bI}\,A^{+} \to \C \to 0\, .$$
We finally use that $-\otimes_{\max}B$ preserves exact sequences. 

%
 
We finally allow $B$ {to be} non-unital. Then we get a diagram 
$$\xymatrix{0\ar[r] &\colim_{\bI}(A\otimes_{\max} B)\ar[r]\ar[d]^{!}&\colim_{\bI}(A\otimes_{\max} B^{+})\ar[r]\ar[d]^{\cong}&\colim_{\bI}A  \ar[r]\ar@{=}[d] &0\\ 0\ar[r]&(\colim_{\bI}A)\otimes_{\max} B\ar[r] &(\colim_{\bI}A)\otimes_{\max} B^{+}\ar[r] &\colim_{\bI}A\ar[r]&0}$$
The lower horizontal sequence is exact since $(\colim_{\bI}A)\otimes_{\max}-$ 
preserves exact sequences.
The middle vertical morphism is an isomorphism by the case considered above since $B^{+}$ is unital.  
The upper horizontal sequence is again exact by Proposition \ref{wpogjopwergrregwerg9}.  
We again conclude that the arrow marked by $!$ is an isomorphism. 
\end{proof}

{We now use    Proposition \ref{eroigjweogieogigw} in order to extend various results from $C^{*}$-algebras to $C^{*}$-categories.}

{\begin{prop}\label{iregowegwgwreggwre}\mbox{}
\begin{enumerate}
\item\label{iregowegwgwreggwre1} The maximal tensor product on $\nCcat$ preserves exact sequences.
\item\label{iregowegwgwreggwre2} The minimal tensor product on $\nCcat$ preserves   faithful functors.
\end{enumerate}
\end{prop}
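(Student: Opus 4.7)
The plan is to reduce both assertions to their already-established counterparts for $C^{*}$-algebras by means of the functor $A$ from Construction \ref{woitgjowrtgggergwergw} and the identification $A(\bC \otimes_{?} \bD) \cong A(\bC) \otimes_{?} A(\bD)$ from Proposition \ref{eroigjweogieogigw} (for $?$ in $\{\min,\max\}$). Throughout I will use that for any $C^{*}$-category $\bC'$ the canonical map $\Hom_{\bC'}(X,Y) \hookrightarrow A(\bC')$ is an isometric inclusion (as in the proof of Lemma \ref{wtrhiotrhterhthebg}), and the ``grading'' by pairs of objects that this induces on $A(\bC')$ is preserved by every morphism in $\nCcat_{\inj}$.

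For part \ref{iregowegwgwreggwre1}, take an exact sequence $0 \to \bA \to \bB \to \bC \to 0$ in $\nCcat$ and arbitrary $\bD$ in $\nCcat$. Since the functors are bijective on objects, they belong to $\nCcat_{\inj}$ and can be pushed through $A$. By \cite[Prop.\ 8.9.2]{crosscat} (already invoked in the proof of Proposition \ref{giuigoerggregrgrweg}) the resulting sequence $0 \to A(\bA) \to A(\bB) \to A(\bC) \to 0$ is exact in $\nCalg$. Applying $- \otimes_{\max} A(\bD)$, which preserves exact sequences by \cite[Prop.\ 3.7.1]{brown_ozawa}, and using Proposition \ref{eroigjweogieogigw}, I obtain an exact sequence
\begin{equation*}
0 \to A(\bA \otimes_{\max} \bD) \to A(\bB \otimes_{\max} \bD) \to A(\bC \otimes_{\max} \bD) \to 0\,.
\end{equation*}
Bijectivity on objects of the candidate sequence in $\nCcat$ is clear from the description of $\Ob(\bA \otimes_{\max} \bD) = \Ob(\bA) \times \Ob(\bD)$. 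For exactness on each morphism space I fix $(X,Y),(X',Y')$ and use that the horizontal maps in the displayed sequence preserve the grading by $\Hom$-spaces, so the exactness restricts to the grade $((X,Y),(X',Y'))$ component, giving the required exactness of $\Hom$-spaces.

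For part \ref{iregowegwgwreggwre2}, let $\iota \colon \bA \to \bB$ be faithful. For each object $X$ of $\bA$ the induced $*$-homomorphism $\End_\bA(X) \to \End_\bB(\iota(X))$ between $C^{*}$-algebras is injective, hence isometric. For any object $Z$ of $\bD$ Lemma \ref{roigwerogwregwergwregwreg} then shows that $\End_\bA(X) \otimes_{\min} \End_\bD(Z) \to \End_\bB(\iota(X)) \otimes_{\min} \End_\bD(Z)$ is an isometric inclusion of $C^{*}$-algebras. The universal property of $\otimes_{\min}$ in Definition \ref{toigwjeriogregergergre} yields a natural isomorphism $\End_{\bA \otimes_{\min} \bD}((X,Z)) \cong \End_\bA(X) \otimes_{\min} \End_\bD(Z)$: any representation of $\End_\bA(X) \otimes^{\alg} \End_\bD(Z)$ on a Hilbert space extends to a representation of $\bA \otimes^{\alg} \bD$ by sending all other objects to the zero Hilbert space. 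Hence $\iota \otimes_{\min} \id_\bD$ is isometric on every endomorphism algebra, and the $C^{*}$-identity $\|f\|^{2} = \|f^{*}f\|$ then upgrades this to isometry on every $\Hom$-space, which is precisely faithfulness.

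The principal obstacle is the descent step in part \ref{iregowegwgwreggwre1}. While the isometric inclusions $\Hom_{-}(X,Y) \hookrightarrow A(-)$ immediately yield injectivity on each $\Hom$-space, extracting surjectivity on a fixed $\Hom$-space from surjectivity on $A(-)$ requires projecting an $A$-level lift onto the relevant grade: in the unital case this is a straightforward compression by the identity projections $1_X$, $1_{X'}$, while in the non-unital case one has to pass to multiplier algebras and use approximate units to realise the grade projection as a bounded operation. Exactness in the middle is easier, since the isometric gradings identify the kernel of the map on $\Hom$-spaces with the intersection of the kernel of the $A$-level map with the given grade.
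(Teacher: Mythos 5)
Your proof of Assertion \ref{iregowegwgwreggwre1} is the paper's argument: push the sequence through $A$, tensor with $A(\bD)$, identify via Proposition \ref{eroigjweogieogigw}, and descend. The ``principal obstacle'' you flag --- recovering exactness of each $\Hom$-space from exactness of the algebra-level sequence --- is disposed of in the paper by citing that each $\Hom$-space is a direct summand of $A(-)$ (\cite[Lem.\ 3.6]{joachimcat}, \cite[Lem.\ 6.7]{crosscat}); your proposed compression/approximate-unit argument is essentially a proof of that cited fact, so there is no gap here.

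For Assertion \ref{iregowegwgwreggwre2} you take a genuinely different route: instead of passing through $A$ (which forces the paper to first assume injectivity on objects and then remove that hypothesis via unitary equivalences), you work object-wise with endomorphism algebras and upgrade to all $\Hom$-spaces by the $C^{*}$-identity. The structure is sound and would even avoid the injectivity-on-objects reduction, but the justification of the key identification $\End_{\bA\otimes_{\min}\bD}((X,Z))\cong\End_{\bA}(X)\otimes_{\min}\End_{\bD}(Z)$ is wrong: a representation of $\End_{\bA}(X)$ does \emph{not} extend to a functor $\bA\to\Hilb$ by sending all other objects to the zero Hilbert space, since for $g\colon X\to X'$ and $f\colon X'\to X$ one would need $\rho(f\circ g)=\rho(f)\rho(g)=0$, i.e.\ $\rho$ would have to annihilate every endomorphism of $X$ factoring through another object (note also that the minimal norm in Definition \ref{toigwjeriogregergergre} is a supremum over \emph{pairs} of representations of the two factors, not over representations of the algebraic tensor product). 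The identification is nevertheless true and your argument can be repaired: every pair $(c,d)$ of representations of the categories restricts on $\End_{\bA}(X)\otimes^{\alg}\End_{\bD}(Z)$ to a pair of algebra representations, which bounds the category norm by the minimal $C^{*}$-norm; conversely, choosing $c,d$ faithful (e.g.\ by composing $\bA\to A(\bA)$ with a faithful representation of $A(\bA)$), the restrictions to the endomorphism algebras are faithful, so by \eqref{dcdcdascadscdascadc} this single pair already realises the minimal norm. With that repair your proof of \ref{iregowegwgwreggwre2} goes through.
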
}
\begin{proof} {The analogue of Assertion \ref{iregowegwgwreggwre1} for $C^{*}$-algebras is 
 well-known, see e.g.\ \cite[Prop.\ 3.7.1]{brown_ozawa}. Let
\begin{equation}\label{fsvfvvfsdvfsvfvfvs}
0\to \bA\to \bB\to \bC\to 0
\end{equation}
be an exact sequence in $\nCcat$, and let $\bD$ be in $\nCcat$. We must show that the
 sequence 
\begin{equation}\label{avadcdscadscadss}
0\to \bA\otimes_{\max} \bD\to \bB\otimes_{\max} \bD\to \bC\otimes_{\max} \bD \to 0
\end{equation}  is exact. 
 The functor 
$A \colon \nCcat_{i}\to \nCalg$ preserves exact sequences \cite[Prop.\ 8.9.2]{crosscat}. 
 Applying $A$ to the exact sequence \eqref{fsvfvvfsdvfsvfvfvs} 
 we get the exact sequence
 $$0\to A(\bA)\to A (\bB)\to  A(  \bC)\to 0$$   in $\nCalg$. 
Since the maximal tensor product  in $\nCalg$ preserves exact  sequences the sequence  
$$0\to A(\bA)\otimes_{\max} A(\bD)\to A (\bB)\otimes_{\max} A(\bD)\to  A(  \bC)\otimes_{\max} A(\bD)\to 0$$
is exact.  By
Proposition \ref{eroigjweogieogigw} in the case of $?=\max$ (whose proof has been completed already)  
the sequence 
$$0\to A(\bA\otimes_{\max} \bD)\to A (\bB\otimes_{\max} \bD)\to  A(  \bC \otimes_{\max} \bD)\to 0$$
is then also exact. {We now  {employ}   that the functor $A$  detects} exactness.  {In order to see this,  note that}  for} all   pairs of  objects $B,B'$ of $\bB$ and $D,D'$ of $\bD$ {we can conclude that the sequence of complex vector spaces}
\begin{align*}
\mathclap{
0\to \Hom_{\bA\otimes_{\max} \bD}((B,D),(B',D')) \to  \Hom_{\bB\otimes_{\max} \bD}((B,D),(B',D')) \to   \Hom_{\bC\otimes_{\max} \bD}((B,D),(B',D'))\to 0
}
\end{align*}
is exact, since these morphism spaces are direct summands of the corresponding algebras by  \cite[Lem.\ 3.6]{joachimcat} {or} \cite[Lem.\ 6.7]{crosscat}. Here we use that the maps in an exact seqence of $C^{*}$-categories are bijective on objects so that we can interpret, e.g., $B$ also as an object of $\bA$ or $\bC$. {Consequently, the sequence in \eqref{avadcdscadscadss} is exact, too.}

{For Assertion \ref{iregowegwgwreggwre2}, we consider a faithful functor   $\phi\colon \bC\to \bD$ and $\bE$ in $\nCcat$.  We first assume that $\phi$ is injective on objects.  
Then we can form the commutative  square
\begin{equation*}\label{}
 \xymatrix{A(\bC\otimes_{\min} \bE)\ar[r]\ar[d]_{Prop. \ref{eroigjweogieogigw}}^{\cong}&A(\bD\otimes _{\min}\bE)\ar[d]_{Prop. \ref{eroigjweogieogigw}}^{\cong }\\ A(\bC)\otimes_{\min} A(\bE)\ar[r]&A(\bD)\otimes_{\min} A(\bE)}
\end{equation*}
Since the functor $A$ preserves isometric inclusions by \cite[Lem. 6.8]{crosscat}, it follows from Lemma \ref{roigwerogwregwergwregwreg} that the lower horizontal map is isometric.  We conclude that the  upper horizontal arrow is isometric, too.
  We now use that $A$ detects isometric inclusions.
In detail, for any  objects $C,C'$ of $\bC$ and $ E,E'$ of $\bE$ 
we have a commutative square of Banach  spaces 
$$\xymatrix{\Hom_{\bC\otimes_{\min}\bE}((C,E),(C',E'))\ar[r]\ar[d]& \Hom_{\bD\otimes_{\min}\bE}((\phi(C),E),(\phi(C'),E'))\ar[d]\\A(\bC\otimes_{\min} \bE)\ar[r]&A(\bD\otimes_{\min} \bE)}$$
As seen above the  lower horizontal morphism is isometric. The
vertical morphisms are isometric by     \cite[Lem. 6.7]{crosscat}.
It follows that the upper horizontal morphism is  isometric.
Since $C,C'$ and $E,E'$ were arbitrary this shows Assertion \ref{iregowegwgwreggwre2}   for functors which are in addition injective on objects.}

Since $-\otimes_{\min}\bE$ sends unitary equivalences to unitary equivalences and hence to isometries we can remove the 
assumption that $\phi$ is injective on objects using the same construction as in the proof of \cite[Thm.\ 18.6]{cank}. 
More precisely we can find a diagram
$$\xymatrix{ &\bD'&\\ \ar[ur]^{(1)}\bC\ar[rr]&&\bD\ar[ul]_{(2)}}$$ where $(1)$ and $(2)$ are injective on objects, $(1)$ is faithful, 
$(2)$ is a unitary equivalence.
We  get $$\xymatrix{ &\bD'\otimes_{\min} \bE&\\ \ar[ur]^{(1)\otimes\bE}\bC\otimes_{\min}\bE\ar[rr]&&\bD\otimes_{\min}\bE\ar[ul]_{(2)\otimes \bE}}$$
Then $(1)\otimes \bE$ is faithful by the case considered above and $(2)\otimes \bE$ is a unitary equivalence.
We conclude that the lower horizontal map is faithful, too.
\end{proof}

The following proposition partially generalizes Lemma \ref{eriguhwiegugwergwerg} from $C^{*}$-algebras to $C^{*}$-categories.
Let $\bI$ be a small category {and}   $\bC \colon \bI\to {\nCcat}$  be a diagram.

\begin{prop}\label{prop_colim_CCat_tensor} {We assume that $\bI$ is filtered.}  \begin{enumerate}
\item \label{wrthokpweferfwerf} {If the structure maps of the diagram are injective on objects, then 
the canonical morphism}
\begin{equation}\label{fqewfoijoiqwefqwefewfq}
\colim_{\bI}(\bC\otimes_{\max} \bD)\to (\colim_{\bI}\bC)\otimes_{\max} \bD
\end{equation} is an isomorphism.
\item \label{hertrrr} If   the structure maps of the diagram are  {faithful functors}, then  
\begin{equation}\label{fqewfoijoidqwefqwefewfq}
\colim_{\bI}(\bC\otimes_{\min} \bD)\to (\colim_{\bI}\bC)\otimes_{\min} \bD
\end{equation}
is an isomorphism.
\end{enumerate}
\end{prop}
\begin{proof}
{We first consider the case of $\otimes_{\max}$.}
{We get}
\begin{eqnarray*}
A({\colim}_{\bI}\,(\bC\otimes_{{\max}} \bD ))& \stackrel{\text{Lem.~}\ref{wegoihjiogewrgergegrewf}}{\cong} & {\colim}_{\bI}\, A(\bC\otimes_{{\max}} \bD) \\
&\stackrel{\text{Prop.~}\ref{eroigjweogieogigw}}{\cong}& {\colim}_{\bI}\,(A(\bC)\otimes_{{\max}} A( \bD)) \\
&\stackrel{\text{Lem.~}\ref{eriguhwiegugwergwerg}.\ref{weoigjweorrerggwergr}}{\cong}& ( {\colim}_{\bI}\, A(\bC))\otimes _{{\max}}A( \bD) \\
& \stackrel{\text{Lem.~}\ref{wegoihjiogewrgergegrewf}}{\cong} &A( {\colim}_{\bI}\, \bC)\otimes_{{\max}} A( \bD) \\
&\stackrel{\text{Prop.~}\ref{eroigjweogieogigw}}{\cong}&
A({\colim}_{\bI}\, \bC \otimes_{{\max}}  \bD)\,.
\end{eqnarray*}
{
We  observe that the functor  in \eqref{fqewfoijoiqwefqwefewfq}
induces a bijection on the level of objects. We then use 
that  functor $A$  detects isomorphisms among functors which are bijections on objects. Therefore 
arguing} as in the proof of Lemma \ref{iregowegwgwreggwre} we remove $A$ {to} conclude that \eqref{fqewfoijoiqwefqwefewfq}
is an isomorphism. 

{In the case of $\otimes_{\min}$ we use that
$-\otimes_{\min} \bD$ preserves  faithful functors by Proposition \ref{iregowegwgwreggwre}.\ref{iregowegwgwreggwre2}
and that the images of $\bC(i) \otimes_{\min}\bD\to 
\colim_{\bI}\bC\otimes_{\min} \bD$
for all $i$ in $\bI$ together generate $\colim_{\bI}\bC\otimes_{\min} \bD$.
We can therefore apply Proposition \ref{wegiojwoegerwferwfwerfw}.}
%
%
 \end{proof}

Let $\bI$ again be a small category, $G$ be a group,  {and let }   $\bC \colon \bI\to \Fun(BG ,\nCcat)$  be a diagram.

\begin{prop}\label{wervervfsdfvsdfvsdfvfsdv} {We assume that $\bI$ is filtered.}  \begin{enumerate}
\item \label{gwergrefwwerfw} If the structure maps of the diagram are injective on objects, then
the canonical morphism 
\begin{equation}\label{fqewfoijoiqwefqwefewfq1}
\colim_{\bI}(\bC\rtimes_{\max} G) \to (\colim_{\bI}\bC)\rtimes_{\max} G
\end{equation} is an isomorphism.
\item  If   the structure maps of the diagram are  faithful functors, then 
\begin{equation}\label{fqewfoijoidqwefqwefewfq1}
\colim_{\bI}(\bC\rtimes_{r} G)\to (\colim_{\bI}\bC)\rtimes_{r} G
\end{equation}
is an isomorphism.
\end{enumerate}
\end{prop}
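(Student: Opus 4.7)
The plan is to mimic the strategy of Proposition \ref{prop_colim_CCat_tensor}: handle the maximal case by reducing to the known behaviour of the algebra functor $A$ together with the corresponding facts for $C^{*}$-algebras, and handle the reduced case by applying the general criterion of Proposition \ref{wegiojwoegerwferwfwerfw}.

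For Assertion \ref{gwergrefwwerfw}, first note that the construction of the maximal crossed product of $C^{*}$-categories in Construction \ref{qreogijqeroigergwgwegwegwergw} (extended from algebras to categories as in \cite{crosscat}) satisfies $\Ob(\bE\rtimes_{\max}G)=\Ob(\bE)$, so the assumption that the structure maps of $\bC$ are injective on objects passes to the diagram $\bC\rtimes_{\max}G$, and the canonical morphism in \eqref{fqewfoijoiqwefqwefewfq1} is bijective on objects. Applying $A$, I would then assemble the chain
\begin{align*}
A\bigl(\colim_{\bI}(\bC\rtimes_{\max}G)\bigr)
& \stackrel{\text{Lem.~}\ref{wegoihjiogewrgergegrewf}}{\cong} \colim_{\bI} A(\bC\rtimes_{\max}G) \\
& \stackrel{[\text{Crosscat, Thm.~6.10}]}{\cong} \colim_{\bI}\bigl(A(\bC)\rtimes_{\max}G\bigr) \\
& \stackrel{\text{Lem.~}\ref{qeriogjowergwergefefwefw}.\ref{trhijwothgwwergwergewg}}{\cong} \bigl(\colim_{\bI} A(\bC)\bigr)\rtimes_{\max}G \\
& \stackrel{\text{Lem.~}\ref{wegoihjiogewrgergegrewf}}{\cong} A(\colim_{\bI}\bC)\rtimes_{\max}G \\
& \stackrel{[\text{Crosscat, Thm.~6.10}]}{\cong} A\bigl((\colim_{\bI}\bC)\rtimes_{\max}G\bigr).
\end{align*}
The map \eqref{fqewfoijoiqwefqwefewfq1} is thus sent by $A$ to an isomorphism, and since it is bijective on objects I would conclude, exactly as in the proof of Proposition \ref{prop_colim_CCat_tensor} (using that $A$ detects isomorphisms among functors bijective on objects via the description of morphism spaces as direct summands in $A$), that \eqref{fqewfoijoiqwefqwefewfq1} is itself an isomorphism.

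For the reduced case, the functor $A$-reduction strategy is less convenient because the hypothesis is only that the structure maps are faithful, not necessarily injective on objects, so Lemma \ref{wegoihjiogewrgergegrewf} does not apply directly. Instead I would invoke Proposition \ref{wegiojwoegerwferwfwerfw} with $F\coloneqq -\rtimes_{r}G$: preservation of faithful functors is \cite[Prop.~12.24]{cank}, and the generation condition must be verified by hand. Concretely, morphisms in $(\colim_{\bI}\bC)\rtimes_{r}G$ are norm-limits of finite sums $\sum_{g\in G}(a_{g},g)$ with each $a_{g}$ a morphism of $\colim_{\bI}\bC$; since the images of the $\bC(i)$ jointly generate the latter as a $C^{*}$-category (Proposition \ref{roigjowegwregrgweg}), the corresponding generators coming from the $\bC(i)\rtimes_{r}G$ jointly generate the target, as required.

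The main obstacle is the verification of this generation step for the reduced crossed product: unlike the maximal case, where the universal property and the $A$-functoriality do all the bookkeeping, in the reduced setting we must argue explicitly from the concrete construction, taking care that faithfulness of the structure maps (rather than injectivity on objects) is what is being assumed, and that faithfulness is preserved by the reduced crossed product so that Proposition \ref{wegiojwoegerwferwfwerfw} applies as stated.
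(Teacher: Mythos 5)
Your proposal is correct and follows essentially the same route as the paper: the maximal case by pushing the diagram through $A$ (Lemma \ref{wegoihjiogewrgergegrewf}, compatibility of $A$ with $-\rtimes_{\max}G$, and Lemma \ref{qeriogjowergwergefefwefw}.\ref{trhijwothgwwergwergewg}) and using that $A$ detects isomorphisms among functors bijective on objects, and the reduced case by verifying the two hypotheses of Proposition \ref{wegiojwoegerwferwfwerfw} via the density of the algebraic crossed product. Your explicit check that injectivity on objects is inherited by the diagram $\bC\rtimes_{\max}G$ is a detail the paper leaves implicit, but the argument is the same.
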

\begin{proof}
The proof is analoguous to the proof of Proposition \ref{prop_colim_CCat_tensor}. We replace $-\otimes_{?}\bD$ by $
-\rtimes_{?}G$. 

For $?=\max$ we use the compatibility of 
$A$ with   the maximal crossed product \cite[Thm 8.6]{crosscat}.
Furthermore we use  Lemma \ref{qeriogjowergwergefefwefw}.\ref{trhijwothgwwergwergewg}  instead of Lemma \ref{eriguhwiegugwergwerg}.\ref{weoigjweorrerggwergr}.

In the case of $?=r$ {we want to apply Proposition \ref{wegiojwoegerwferwfwerfw}. 
First note  
that the reduced crossed product preserves 
faithful functors \cite[Thm.\ 12.24]{cank} verifying Assumption \ref{wegiojwoegerwferwfwerfw}.\ref{qeriojgegwefrreferwf}.  In order to verify     Assumption \ref{wegiojwoegerwferwfwerfw}.\ref{oirjforfqewfqwefqd} 
note that the reduced crossed product $(\colim_{\bI}\bC)\rtimes_{r} G$
is generated by the image of the algebraic crossed product
$(\colim_{\bI}\bC)\rtimes^{\alg}G$. Since $\colim_{\bI}\bC$ in turn  is generated by the images of $C(i)$ for all $i$ in $\bI$ we can conclude that 
$(\colim_{\bI}\bC)\rtimes_{r} G$ is generated by the images of
$C(i)\rtimes^{\alg}G$. Hence
$(\colim_{\bI}\bC)\rtimes_{r} G$ is  in particular generated  by the images of $C(i)\rtimes_{r}G$ for all $ i$ in $\bI$. This verifies Assumption \ref{wegiojwoegerwferwfwerfw}.\ref{oirjforfqewfqwefqd} and   Proposition \ref{wegiojwoegerwferwfwerfw} implies our assertion.}
 \end{proof}

 \begin{rem}
 We do not know whether  in  {Propositions \ref{prop_colim_CCat_tensor}.\ref{wrthokpweferfwerf} and \ref{wervervfsdfvsdfvsdfvfsdv}.\ref{gwergrefwwerfw}}
  the assumption  that the structure maps of the diagram are injective on objects  is really  necessary. 
  \hB
 \end{rem}

\appendix 
 \section{Applications to assembly maps}\label{weoigjegregewfr}

We  now  introduce a $\KK$-valued version of the Davis--L\"uck assembly map by specializing the general constructions from   \cite[Sec.\ 19]{cank}.   We furthermore explain its relation with the classical assembly map appearing in the Baum--Connes conjecture, thereby previewing some of the  results from \cite{bel-paschke}.

We start with $\bC$ in $\Fun(BG,\Ccat)$ 
and let $\ell $ 
be the localization map  from 
{\eqref{adfasdfsxas}}. We let $j^G \colon BG\to G\Orb$ denote the {fully faithful} inclusion of $BG$ into the orbit category of $G$ which sends
the unique object of $BG$ to the orbit $G$.  We then form the left Kan extension 
$j^{G}_{!}\ell (\bC)$ 
as indicated by the dotted arrow in the diagram 
$$\xymatrix{
BG\ar[rr]^{\ell(\bC)}\ar[dr]_{j^{G}}& &\Ccat_{\infty}\,.\\
&G\Orb\ar@{..>}[ur]_{j^{G}_{!} \ell (\bC)}&
}$$
We compose this left Kan extension with the functor $\kkAi$ from \eqref{tgwergergreewff} {(for the trivial group $G$)} and obtain the functor 
\begin{equation}\label{ergwergrefvcdcs}
k^{G}_{\bC} \coloneqq \kkAi(j^{G}_{!}\ell(\bC)) \colon G\Orb\to \KK
\end{equation} which
is an instance   of the functor in \cite[Def.\ 1{9}.3]{cank}. By Elmendorf's theorem this functor determines a 
$\KK$-valued equivariant homology theory 
\begin{equation}\label{gergwredcvdfv}
H(-,k^{G}_{\bC}) \colon G\Top\to \KK \, , \quad X\mapsto H(X,k^{G}_{\bC})
\end{equation}  
on  $G$-topological spaces.

We can calculate the values of the functor $k^{G}_{\bC}$ in \eqref{ergwergrefvcdcs}  explicitly. We consider a subgroup $H$ of $G$.
By the point-wise formula for the left Kan extension, the equivalence
$BH\simeq BG_{/(G/H)}$, by \cite[Thm.\ 7.8]{crosscat}  (expressing the colimit over $BH$ in terms of the maximal crossed product), and  using Proposition \ref{wegoijoihggrgwergegwe}   we get the equivalences 
\begin{eqnarray}
k^{G}_{\bC}(G/H)&\simeq & {\kkAi}(j^{G}_{!}\ell(\bC)(G/H))\label{ergoijerwogwergergrewf}\\
&\simeq & \kkAi(\smash{\colim_{BH}}\ \ell(  \bC))\nonumber
\\&\simeq& \kkA( \bC\rtimes_{\max} H)\nonumber\\
&\simeq & \kkHA(\bC)\rtimes_{\max} H\nonumber\, ,
\end{eqnarray}
where we omitted to write $\Res^{G}_{H}$ at various places.

Let $\cF$ be a family of subgroups of $G$ and denote by  $G_{\cF}\Orb$  the full subcategory of $G\Orb$ of $G$-orbits with stabilizers in $\cF$. {\begin{ddd} The Davis--L\"uck assembly map for the family $\cF$ and the functor $k^{G}_{\bC}$ is defined by
 \begin{equation}\label{fhefiuqhiufeffqwefqwqf}
\Ass_{\cF,k^{G}_{\bC}} \colon \colim_{G_{\cF}\Orb} k^{G}_{\bC}\to  k^{G}_{\bC}(*)\, .
\end{equation}
\end{ddd}}
Expressed in terms of the homology theory $H(-,k^{G}_{\bC})$ in \eqref{gergwredcvdfv} this map is equivalent to the map
$$\Ass_{\cF,k^{G}_{\bC}}\colon H( E_{\cF}G, k^{G}_{\bC}) \to H(*,k^{G}_{\bC})$$
induced by the map $E_{\cF}G\to *$, where $E_{\cF}G$ is a $G$-$CW$-complex representing the homotopy type of the classifying space of $G$ for the family $\cF$.

In the following we explain  the relation of the assembly map \eqref{fhefiuqhiufeffqwefqwqf} with the classical assembly map 
appearing in the Baum--Connes conjecture \cite{baum_connes_higson}.  The details of this comparison  will be developed in  \cite{bel-paschke}.

The constructions in  \cite{bel-paschke} depend {on the choice of a $C^{*}$-category $\bC$  in $\Fun(BG,\nCcat)$  which admits small AV-sums \cite[Def.\ 7.1]{cank}.}
We let {$\bC^{u}$} in $\Fun(BG,\Ccat)$ denote the invariant full subcategory of {$\bC$} of unital objects {\cite[Def.\ {2.14}]{cank}.} We furthermore consider the category  {$\bC^{(G)}_{\std}$} in  $\Fun(BG,\nCcat)$ defined in \cite[Def.\ 2.1{5}]{bel-paschke}.

We use the definition
$$\Kcat \coloneqq \KK(\C,\kkAi(-)) \colon \Ccat_{\infty}\to \Sp^{\la}$$
for the $K$-theory functor for $C^{*}$-categories. If we insert
$k^{G}_{{\bC}^{u}}$ from \eqref{ergwergrefvcdcs} into $  \KK({\C},-)$ we get the functor
$$K^{G,\topp}_{\bC} \coloneqq  \Kcat(j_{!}^{{G}}\ell({\bC}^{u})) \colon G\Orb\to \Sp^{\la}\, .$$
  On the other hand,  we can consider the functor 
\begin{equation}\label{saCOIJHOcsacc}
K^{G,{\An}}_{\bC} \coloneqq \Sigma K^{\lf}_{\kkGA({\bC}^{(G)}_{\std})} \colon G_{\Fin}\Orb\to \Sp^{\la}
\end{equation}
obtained by specializing the coefficients of the functor  in \eqref{wregiojoiwjerogwergrewgwerge} at  $\kkGA({\bC}^{(G)}_{\std})$ and applying the suspension functor $\Sigma$. 

If $H$ is a finite subgroup of $G$, then
we have   an equivalence (we again omitted $\Res^{G}_{H}$ at various places) 
\begin{eqnarray}
K^{G,\topp}_{\bC}(G/H)&\stackrel{\text{def}}{\simeq} &    \KK(\C, k_{{\bC}^{u}}^{G}(G/H))\label{wroighjorggewrgegfv}\\
& \stackrel{\eqref{ergoijerwogwergergrewf}}{\simeq} &    \KK(\C,   \kkHA({\bC}^{u})\rtimes_{\max} H)\nonumber\\
&\stackrel{\ref{wtgijoogwrewegewgrg}.\ref{gjeriogjreerwgwregwerg}}{\simeq} &    \KKH(\C,{\bC}^{u})\nonumber\\
&\stackrel{\eqref{eq_ind_res_adjunction_IC}}{\simeq} &    \KKG(C_{0}(G/H),{\bC}^{u})\nonumber\\&\stackrel{\text{def}}{\simeq}&  K^{\lf}_{\kkGA({\bC}^{u})}(G/H)
%
%
\, . \nonumber
\end{eqnarray}
The right-hand side of \eqref{wroighjorggewrgegfv} looks similar to the evaluation of \eqref{saCOIJHOcsacc} at $G/H$ (up to a suspension), the difference lies  in the coefficient categories
${\bC}^{u}$ and ${\bC}^{(G)}_{\std}$, respectively.
 But as a consequence of the general Paschke duality theorem which will be shown in \cite{bel-paschke} one can indeed show that these functors are {equivalent.} 
In the following we explain this in more detail. 
The comparison involves a third functor
$${K\bC^{G}}  \colon G\Orb  \to \Sp^{\la}$$
{defined in   \cite[Def.\ 12.2]{bel-paschke}. Equivalently, this functor is given by  \cite[Def.\ 19.{12}]{cank} if one sets 
   $\Homol=  \Kcat$ and    replaces  (in the reference)  $\bC$  by $\bC^{u}$.} 
 The notation for the functors 
$ K^{G,\topp}_{\bC}$ and $ {K\bC}^{G} $ which is used in   \cite{cank} is $  {\Kcat}^{G}_{{\bC}^{u},\max}$ and
$  {\Kcat}^{G}_{{\bC}^{u},r}$, respectively.
Because $\Kcat$ is Morita invariant, by \cite[Prop.\ {19.14.3}]{cank}
we have an equivalence
\begin{equation}\label{ffwqewfweddqwd}
(K^{G,\topp}_{\bC})_{|G_{\Fin}\Orb}\to 
({K\bC}^{G})_{|G_{\Fin}\Orb}\, .
\end{equation}
{The}   following equivalence is the left vertical {Paschke duality} equivalence in  \cite[(14.29)]{bel-paschke}:
 \begin{equation}\label{regwergegwer}
 ( {{\Sigma} K\bC}^{G})_{|G_{\Fin}\Orb}  \xrightarrow{\simeq} K^{G,{\An}}_{\bC} \, 
\end{equation}
(note that the r.h.s.\ is only {defined} on $G_{\Fin}\Orb$).
Combining \eqref{regwergegwer}
and \eqref{ffwqewfweddqwd} we obtain the equivalence 
\begin{equation}\label{wergoiweforefrfwe}
({\Sigma }K^{G,\topp}_{\bC})_{|G_{\Fin}\Orb}\xrightarrow{\simeq} K^{G,\An}_{\bC} \, .
\end{equation} 

%

The relation of the assembly map $\Ass_{\Fin,k^{G}_{{\bC}^{u}}}$ from \eqref{fhefiuqhiufeffqwefqwqf} {(note the superscript $u$)} with the classical Baum--Connes assembly map $\Ass^{BC}$ is now best explained by the following diagram: 
\begin{align}
\label{refoijoergwergrefw}\\
\mathclap{\xymatrix{
 \KK(\C,\colim_{G_{\Fin}\Orb} k_{{\bC}^{u}}^{G})\ar[rrr]^-{  \KK(\C,\Ass_{\Fin,k_{{\bC}^{u}}^{G}})} &&&  \KK(\C,k^{G}_{{\bC}^{u}}(*)) &\\
 \colim_{G_{\Fin}\Orb}    K^{G,\topp}_{\bC} \ar[rrr]^{\Ass_{\Fin,K^{G,\topp}_{\bC}}}\ar[u]^{(1)}_{\simeq}\ar[d]_{(2)}^{\simeq}&&&    K^{G,\topp}_{\bC}(*)\ar[u]^{\text{def's}}_{\simeq}\ar[d]^{(3)} &\ar[l]_-{(5)}^-{\simeq}   \Kcat({\bC}^{u}\rtimes_{\max}G)\ar[d]^{(4)}\\  
 \colim_{G_{\Fin}\Orb} {K\bC}^{G} \ar[d]^{\simeq}\ar[rrr]^{\Ass_{\Fin, {K\bC}^{G}}}&&& {K\bC}^{G}(*) \ar[d]^{\simeq}&\ar[l]_-{(6)}^-{\simeq}   \Kcat({\bC}^{u}\rtimes_{r}G)\\
 \ar[d]_{(8)}^{\simeq} H(E_{\Fin}G,  {K\bC}^{G})\ar[rrr]^{\Ass_{\Fin, {K\bC}^{G}}} &&& H(*, {K\bC}^{G}) \ar[d]_{(7)}^{{\simeq}} &\\
 \colim_{W\subseteq E_{\Fin G}} {\KKG}(C_{0}(W), {\bC}^{(G)}_{\std}) \ar[rrr]^-{\Ass^{BC}} &&   &\Kcat({\bC}^{(G)}_{\std}\rtimes_{r}G)&
}} \notag
\end{align}
where the colimit in the lower left corner runs over the $G$-finite subcomplexes of $E_{\Fin}G$.
The arrow  marked by $(1)$ is up to inserting  definitions the canonical map
$$\colim_{G_{\Fin}\Orb} \KK(\C,k^{G}_{{\bC}^{u}})\to  \KK(\C, \colim_{G_{\Fin}\Orb}k^{G}_{{\bC}^{u}})\, .$$
{Using stability of $\KK$ it} is an equivalence since $\kk(\C)$ is a compact object of $\KK$. The upper square commutes by construction.
The arrows marked by $(2)$ and $(3)$ are induced by the natural transformation $c$ from  \cite[Prop.\ {19.14.1}]{cank}. 
The corresponding square commutes by  the
 naturality of this transformation.
 The map $(4)$ is induced by the canonical map from the maximal to the reduced crossed product.
 The equivalence marked by $(5)$  is obtained by inserting the   calculation   \eqref{ergoijerwogwergergrewf}  for $G=H$
into the definitions. 
 The equivalence marked by $(6)$ is justified by
 \cite[Cor.\ 19.13]{cank}. The square involving these two maps commutes by an inspection of the construction of the natural transformation $c$ in \cite[(19.19)]{cank} denoted $\nu$ in loc.~cit.
 The equivalence $(7)$
is  {given by 
  \cite[Prop.\ 13.5]{bel-paschke}.}
 Finally, 
the equivalence $(8)$ 
involves the Paschke duality and is induced by the 
left vertical equivalence in 
   \cite[(14.29)]{bel-paschke}.
%
%
The map $\Ass^{BC}$  {is} defined in \cite[{Def.\ 12.8}]{bel-paschke}.
On the level of homotopy groups it 
 is equal to  the classical Baum--Connes assembly map
\cite{baum_connes_higson}, slightly extended to $G$-$C^{*}$-categories as coefficients.

The following is the second main result of {\cite[Thm.\ 1.9]{bel-paschke}}:
\begin{theorem}
The lower square in \eqref{refoijoergwergrefw} commutes after taking homotopy groups.
\end{theorem}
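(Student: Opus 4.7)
The plan is to reduce the assertion to the naturality of the Paschke duality equivalence established in the companion paper \cite{bel-paschke}, and then to check that both horizontal maps are, after this identification, the colimit of the same family of assembly-type maps indexed over $G$-finite subcomplexes $W\subseteq E_{\Fin}G$.

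First, I would rewrite both assembly maps as filtered colimits over the poset of $G$-finite subcomplexes $W\subseteq E_{\Fin}G$. On the top row, $H(-,K\bC^{G})$ preserves filtered colimits in the space variable (since $E_{\Fin}G\simeq \colim_{W} W$ as a $G$-space and $K\bC^{G}$ is a spectrum-valued functor), so $\Ass_{\Fin,K\bC^{G}}$ is the colimit of the maps $H(W,K\bC^{G})\to H(*,K\bC^{G})$ induced by $W\to *$. On the bottom row, $\Ass^{BC}$ is by construction (see \cite[Def.\ 13.8]{bel-paschke}) the colimit of the maps $\KKG(C_{0}(W),\bC^{(G)}_{\std})\to \Kcat(\bC^{(G)}_{\std}\rtimes_{r}G)$ obtained from Kasparov's descent applied to a canonical class in $\KKG(C_{0}(W),\bC^{(G)}_{\std})$ (for $W$ containing a fixed basepoint orbit), composed with evaluation on this class.

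Second, I would invoke the levelwise version of Paschke duality established in \cite{bel-paschke}: the equivalence (8) is constructed as the colimit of natural equivalences $H(W,K\bC^{G})\simeq \KKG(C_{0}(W),\bC^{(G)}_{\std})$, one for each $G$-finite $W$, and the equivalence (7) at the one-point space refines this to an identification $H(*,K\bC^{G})\simeq \Kcat(\bC^{(G)}_{\std}\rtimes_{r}G)$ via Theorem~\ref{wtgijoogwrewegewgrg}.\ref{werpogkwergrwegwef} (the dual Green--Julg adjunction) combined with the reduced crossed product. It then suffices to show, for each $W$, that the square
\[
\xymatrix{
H(W,K\bC^{G})\ar[r]\ar[d]^{\simeq}& H(*,K\bC^{G})\ar[d]^{\simeq}\\
\KKG(C_{0}(W),\bC^{(G)}_{\std})\ar[r]& \Kcat(\bC^{(G)}_{\std}\rtimes_{r}G)
}
\]
commutes after $\pi_{*}$. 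This in turn reduces to the naturality of the Paschke duality pairing with respect to the proper map $W\to *$, equivalently to the unit map $\C\to C_{0}(W)$ in $\Fun(BG,\nCalg)$, combined with the compatibility of the Kasparov product with the structure map of the Paschke class.

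The main obstacle is proving precisely this compatibility: one must know that the Paschke duality class used to define (8) behaves well under the descent morphism $-\rtimes_{r}G\colon \KKG\to \KK$ and under pre-composition with $\C\to C_{0}(W)$, so that the descent image equals the class giving rise to $\Ass^{BC}$ at $W$. This is a genuinely non-formal matching between the Davis--L\"uck picture (built via the orbit category, left Kan extension and crossed products of $C^{*}$-categories as in \cite[Sec.\ 19]{cank}) and the Kasparov picture (built via $\KKG$ and Kasparov's descent), and is essentially the heart of \cite[Thm.\ 1.7]{bel-paschke}; it is accomplished there by constructing both assembly maps as specializations of a single universal pairing and identifying its two presentations through the change-of-group functors of Section~\ref{sec:change-of-groups}.
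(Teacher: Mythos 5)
Your proposal matches the paper's treatment of this statement: the paper gives no proof here at all, deferring entirely to the companion paper \cite[Thm.\ 1.7]{bel-paschke}, and your sketch correctly locates the one genuinely non-formal ingredient — the compatibility of the Paschke duality class with the descent functor $-\rtimes_{r}G$ and with the maps induced by $W\to *$ — in exactly that reference. The surrounding reduction you outline (writing both assembly maps as filtered colimits over $G$-finite subcomplexes of $E_{\Fin}G$ and checking levelwise naturality of the Paschke equivalence) is a plausible account of how the companion paper's argument is organized, so there is nothing in the present paper to compare against beyond the citation itself.
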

The commutative diagram in \eqref{refoijoergwergrefw}
 implies the compatibility
of the Baum--Connes and the Davis--L\"uck assembly maps. This fact has been stated in \cite{hamped}, and a complete proof   has recently been given in 
\cite{kranz}   {(see also \cite[Sec.\ 15]{bel-paschke} for  a detailed review)}
 using   methods which are completely different from the ones in \cite{bel-paschke}.

\bibliographystyle{alpha}
\bibliography{forschung}

\end{document}